\newtheorem*{theorem*}{Theorem}
\newtheorem{theorem}{Theorem}[chapter]
\newtheorem{claim}[theorem]{Claim}
\newtheorem{lemma}[theorem]{Lemma}
\newtheorem{corollary}[theorem]{Corollary}
\newtheorem{remark}[theorem]{Remark}
\newtheorem{proposition}[theorem]{Proposition}
\def\del{\partial}
\def\dbar{\bar\partial}
\def\ddbar{\del\dbar}
\DeclareMathOperator{\Ric}{Ric}
\def\o{\omega}
\numberwithin{equation}{chapter}
\title{\textbf{\LARGE{Geometric pluripotential theory on K\"ahler manifolds}}}
\author{\Large{Tam\'as Darvas}\vspace{0.2cm}}
\date{\normalsize{UNIVERSITY OF MARYLAND}}
\begin{document}
\maketitle 

\chapter*{\centering \begin{normalsize}Abstract\end{normalsize}}
\thispagestyle{empty}
\begin{quotation}
\noindent 
Finite energy pluripotential theory accommodates the variational theory of equations of complex Monge--Amp\`ere type arising in K\"ahler geometry. Recently it has been discovered that many of the potential spaces involved have a rich metric geometry, effectively turning the variational problems in question into problems of infinite dimensional convex optimization, yielding existence results for solutions of the underlying  complex Monge--Amp\`ere equations. The purpose of this survey is to describe these developments from basic principles. 
\end{quotation}
\clearpage

\tableofcontents 
\setcounter{page}{1}

\chapter*{Preface}
\addcontentsline{toc}{chapter}{Preface}  
A circle of problems, going back to Calabi \cite{clb}, asks to find K\"ahler metrics with special curvature properties on a compact K\"ahler manifold $(X,\o)$. Of special interest are the \emph{K\"ahler--Einstein} (KE) metrics $\tilde \o$, that are cohomologous to $\o$, and  whose Ricci curvature is proportional to the metric tensor, i.e., 
$$\Ric \tilde \o = \lambda \tilde \o.$$ 
Existence of such metrics on $(X,\o)$ is only possible under cohomological restrictions, in particular the first Chern class needs to be a scalar multiple of the K\"ahler class $[\o]$: 
\begin{equation}\label{eq: coh_cond}\tag{$\star$}
c_1(X) = \lambda [\o].
\end{equation}
When $\lambda \leq 0$, by the work of Aubin and Yau it is always possible to find a unique KE metric on $(X,\o)$ \cite{A,Y}. The case of Fano manifolds, structures that satisfy \eqref{eq: coh_cond} with $\lambda >0$, is much more  intricate. In particular, the problem of finding KE metrics in this case is equivalent with solving the following global scalar equation of complex Monge--Amp\`ere type on $X$:
\begin{equation}\label{eq: intr_KEeq}\tag{$\star\star$}
(\o + i\ddbar u)^n = e^{-\lambda u + f_0}\o^n,
\end{equation}
where $f_0$ is a fixed smooth function on $X$.
The solution $u$ belongs to $\mathcal H_\o$, the set of smooth functions (potentials) that satisfy $\o + i\ddbar u >0$, which is an open subset of $C^\infty(X)$. As is well known, this  equation does not always admit a solution, and our desire is to characterize Fano manifolds that admit KE metrics.

Switching point of view, the KE problem has a very rich \emph{variational} theory as well. Indeed,  Mabuchi and Ding \cite{m, ding} introduced functionals 
$$\mathcal K: \mathcal H_\o \to \Bbb R \ \textup{ and } \ \mathcal F: \mathcal H_\o \to \Bbb R$$  whose minimizers are exactly the KE potentials, the solutions of \eqref{eq: intr_KEeq}  (see \eqref{eq: F_def} and \eqref{eq: Ken_def} for precise definition of these functionals). As a result, KE metrics exist if and only if the minimizer set of $\mathcal K$ (or $\mathcal F$) is non--empty. Along these lines we ask ourselves: what conditions guarantee existence/uniqueness of minimizers? Our source of inspiration will be the following elementary finite--dimensional result, which will allow to turn the variational approach into a problem of infinite--dimensional \emph{convex optimization}:

\begin{theorem*} Suppose $F:\Bbb R^n \to \Bbb R$ is a strictly convex functional. If $F$ has a minimizer it has to be unique. Regarding the existence, the following are equivalent:\vspace{0.1cm}\\
(i) $F$ has a (unique) minimizer $x_0 \in \Bbb R^n$.\\
(ii) $F$ is proper, i.e., there exists $C,D>0$ such that $F(x) \geq C|x| -D, \ x \in \Bbb R^n$.
\end{theorem*}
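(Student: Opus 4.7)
The plan is to break the theorem into three parts: uniqueness, the easy implication (ii)$\Rightarrow$(i), and the harder implication (i)$\Rightarrow$(ii), interpreting $x, x_0$ as elements of $\mathbb{R}^n$ and $|\cdot|$ as the Euclidean norm (the statement clearly intends $\mathbb{R}^n$ throughout).

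For \emph{uniqueness}, I would argue directly from strict convexity: if $x_0 \neq x_1$ were both minimizers, then $F\bigl(\tfrac12(x_0+x_1)\bigr) < \tfrac12 F(x_0) + \tfrac12 F(x_1) = \min F$, a contradiction. For \emph{(ii)$\Rightarrow$(i)}, the linear coercivity $F(x) \geq C|x| - D$ forces every sublevel set $\{F \leq M\}$ to be bounded. Since convex functions on all of $\mathbb{R}^n$ are automatically continuous, these sublevel sets are also closed, hence compact. Taking $M = F(0)$ and applying Weierstrass yields a minimizer.

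The real work is \emph{(i)$\Rightarrow$(ii)}. Assume $F$ attains its minimum at $x_0$; after replacing $F$ by $x \mapsto F(x+x_0) - F(x_0)$ we may take $x_0 = 0$ and $F(0) = 0$. For each unit vector $v \in S^{n-1}$, the restriction $\phi_v(t) := F(tv)$ is a strictly convex function of $t \in \mathbb{R}$ whose unique minimum is at $t=0$; in particular $F(v) = \phi_v(1) > 0$. The elementary one-dimensional convexity inequality $\phi_v(1) \leq \tfrac{1}{t}\phi_v(t) + \bigl(1-\tfrac{1}{t}\bigr)\phi_v(0)$, valid for $t \geq 1$, gives $F(tv) \geq t\, F(v)$, i.e.
$$F(x) \geq |x|\, F\bigl(x/|x|\bigr) \quad \text{for } |x| \geq 1.$$
Since $F$ is continuous and strictly positive on the compact sphere $S^{n-1}$, the constant $C := \min_{v \in S^{n-1}} F(v)$ is strictly positive. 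Hence $F(x) \geq C|x|$ for $|x| \geq 1$; on the unit ball $F \geq 0 \geq C|x|-C$, so $F(x) \geq C|x| - C$ globally, giving the desired properness with $D = C$.

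The main obstacle is pinpointing where \emph{strict} convexity is indispensable: ordinary convexity alone allows $F$ to be constant along some ray through the minimizer (consider $F(x,y) = x^2$), in which case $F\bigl(x/|x|\bigr)$ would vanish on part of $S^{n-1}$ and no positive constant $C$ could be extracted. Strict convexity is exactly what guarantees $F|_{S^{n-1}} > 0$, and the compactness of $S^{n-1}$ then upgrades this pointwise positivity to a uniform lower bound, which is the quantitative content of properness.
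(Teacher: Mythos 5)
Your proof is correct and follows essentially the same route as the paper's sketch: uniqueness from strict convexity; (ii)$\Rightarrow$(i) from coercivity, boundedness, and continuity of finite convex functions; and (i)$\Rightarrow$(ii) via compactness of the unit sphere centered at $x_0$, positivity of the minimum of $F - F(x_0)$ there (coming from uniqueness), and a one-dimensional convexity estimate to propagate that bound radially. You merely spell out explicitly the radial inequality $F(tv) \geq tF(v)$ that the paper compresses into ``Using convexity of $F$, one concludes.''
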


We give a sketch of the elementary proof. Uniqueness of minimizers is a consequence of strict convexity. That properness of $F$ implies existence of a minimizer follows from the fact that a bounded $F$--minimizing sequence subconverges to some $x_0 \in \Bbb R$, and that convex functions are continuous. That existence of a (unique) minimizer $x_0 \in \Bbb R$ implies properness of $F$ follows from the fact that the unit sphere $\Bbb S^{n-1}(x_0, 1)$ is compact, hence $C:=\inf_{\Bbb S^{n-1}(x_0, 1)}(F(x) - F(x_0))=F(y)-F(x_0)$ for some  $y \in \Bbb S^{n-1}(x_0, 1)$. Uniqueness of minimizers  implies that $C>0$. Using convexity of $F$, one concludes that $F(x) \geq C|x|-D$ for some $D>0$. 

The above simple argument already sheds light on what needs to be accomplished in our infinite dimensional setting to obtain an analogous result for existence/uniqueness of KE metrics. First, we need to understand the convexity of $\mathcal K$ and $\mathcal F$. Second, even in the above short argument we have used twice that $\Bbb R^n$ is complete. Consequently, an adequate metric structure needs to be chosen on $\mathcal H_\o$, and its completion needs to be understood. Third, (pre)compactness of spheres/balls in this new metric geometry needs to be explored.  

Regarding convexity, unfortunately $\mathcal K$ and $\mathcal F$ are not convex along the straight line segments of $\mathcal H_\o$. In order to address this, Mabuchi, Semmes and Donaldson independently introduced a non-positively curved  Riemannian $L^2$ type metric on $\mathcal H_\o$ that produces geodesics along which $\mathcal K$ and $\mathcal F$ are indeed convex \cite{m,s,do}. Inspired from this, a careful analysis of infinite dimensional spaces led Bando--Mabuchi to prove uniqueness of KE metrics \cite{bm}, and later Berman--Berndtsson to discover even more general uniqueness results \cite{bb}. 

On the other hand, there is strong evidence to suggest that the $L^2$ geometry of Mabuchi--Semmes--Donaldson alluded to above does not have the right compactness properties to allow for a characterization of existence of KE metrics. In order to address this, one needs to introduce more general $L^p$ type  Finsler metrics on $\mathcal H_\o$ and compute the metric completion of the related path length metric spaces $(\mathcal H_\o,d_p)$ \cite{da1,da2}. 

After sufficient metric theory is developed, it is apparent that the $L^1$ geometry of $\mathcal H_\o$ will be the one we should focus on. Indeed, sublevel sets of $\mathcal K$ restricted to spheres/balls are $d_1$--precompact, allowing to establish an equivalence between existence of KE metrics and $d_1$--properness of $\mathcal K$ and $\mathcal F$. Lastly, $d_1$--properness can be expressed using simple analytic means. This allowed the author and Y.A. Rubinstein to verify numerous related conjectures of Tian \cite{dr2} going back to the nineties \cite{t1,t3}.

\vspace{-0.3cm}\paragraph{Structure of the survey.} The aim of this work is to give a self contained introduction to special K\"ahler metrics using pluripotential theory/infinite--dimensional geometry. 

In Chapter 1, we give a very brief introduction to Orlicz spaces that are generalizations of the classical $L^p$ spaces. Our treatment will be rather minimalistic and we refer to \cite{rr} for a complete treatment.

In Chapter 2, we develop some background in finite energy pluripotential theory, necessary for later developments, closely following the original treatises of Guedj--Zeriahi and collaborators \cite{gz05,gz,BBGZ,BBEGZ}, that were inspired by work of Cegrell \cite{Ce} in the local case. We refer to these works for a comprehensive treatment, as well as the recent excellent textbook \cite{gzbook}. 

Chapter 3 contains the main technical machinery presented in this work. Here we introduce the $L^p$ Finsler geometry of the space of K\"ahler potentials $\mathcal H_\o$, and compute the metric completion of this space with respect to the corresponding path length metrics $d_p$. The $d_p$--completions of $\mathcal H_\o$ will be identified with $\mathcal E_p(X,\o)$, the finite energy spaces of Guedj--Zeriahi described in the previous chapter (Theorem \ref{thm: EpComplete}). In particular, we can endow these spaces with a rich \emph{metric geometry}, inspiring the title of this work. 

In Chapter 4 we discuss applications to existence/uniqueness of KE metrics on Fano manifolds. First we describe an abstract  properness/existence principle (Theorem \ref{thm: ExistencePrinc}) that adapts the above finite--dimensional Theorem to our infinite--dimensional setting. As we verify the assumptions of this principle, we will present self contained proofs of the Bando--Mabuchi uniqueness theorem (Theorem \ref{thm: BMuniqueness}) \cite{bm} and the Matsushima theorem about reductivity of the automorphism group of a KE manifold (Proposition \ref{prop: Matsushima_thm}) \cite{mat}. After this, in Theorem \ref{thm: F_func_properness} and Theorem \ref{thm: K-energy_properness} we resolve different versions of Tian's conjectures \cite{t1,t3} characterizing existence of KE metrics in terms of energy properness, following \cite{dr2}.

\vspace{-0.3cm}\paragraph{Prerequisites.} An effort has been made to keep prerequisites at a minimum. However due to size constraints, such requirements on part of the reader are inevitable. We assume that our reader is familiar with the basics of Bedford--Taylor theory of the complex Monge--Amp\`ere operator. Mastery of \cite[Chapters I-III]{bl3} or \cite[Chapter I and Chapter III.1-3]{De} is more then sufficient, and for a thorough treatment we highly recommend the recent textbook \cite{gzbook}. We also assume that our reader is familiar with the basics of K\"ahler geometry, though we devote a section in the appendix to introduce our terminology, and recall some of the essentials. For a comprehensive introduction into K\"ahler geometry we refer the reader to the recent textbook \cite{sze}, as well as \cite{De,we}.  

Though our main focus is the pluripotential theoretic point of view, some results in this survey rest on important regularity theorems regarding equations of complex Monge--Amp\`ere type. Due to space constraints we cannot present a  detailed proof of these theorems, but we will isolate their statements and keep them at a minimum, while providing precise references at all times.
 
\vspace{-0.3cm}\paragraph{Relation to other works.} As stressed above, our focus in this work is on self--contained treatment of the chosen topics. On the down side, we could not devote enough space to the vast historical developments of the subject, and for such a treatment we refer to the survey \cite{r2}, that discusses similar topics using a more chronological approach. 

Without a doubt the choice of topics represent our bias and limitations, and many important recent developments could not be surveyed. In particular, recent breakthroughs on K--stability (the work of Chen--Donaldson--Sun \cite{cds}, Tian \cite{t2}, Chen--Wang--Sun \cite{cws}, Berman--Boucksom--Jonsson \cite{bbj}) could not be presented, and we refer to \cite{do2,t5} for recent surveys on this topic. For results about the quantization of the geometry of the space of K\"ahler metrics, we refer to the original papers \cite{br2,CS12,do1,PS06,PS07,dlr}, as well as the survey \cite{PS09} along with references therein.
Geometric flows  could not be discussed either, and we refer to \cite{beg,BBEGZ,bdl1,dh, st1,st2} for work on the Ricci and Calabi flows that uses the theoretical machinery described in this survey.

The relation with constant scalar curvature K\"ahler (csck) metrics is also not elaborated. A preliminary version of this memoir appeared on the website of the author in the early months of 2017. Since then a number of important works have appeared building on the topics presented in this work: Chen--Cheng cracked the PDE theory of the csck equation \cite{cc2,cc3}, allowing to fully prove a converse of a theorem by Berman--Darvas--Lu \cite{bdl2}. Very recently He--Li pointed out that the contents of this survey generalize to Sasakian manifolds \cite{hl18}, paving the way to existence theorems for canonical metrics in that context as well.

\vspace{-0.3cm}\paragraph{Acknowledgments.} I thank  P. Gupta, W. He, L. Lempert, J. Li, C.H. Lu, Y.A. Rubinstein, K. Smith, V. Tosatti and the anonymous referees for their suggested corrections, careful remarks, and precisions. Also, I thank the students of MATH868D at the University of Maryland for their intriguing questions and relentless interest throughout the Fall of 2016. Chapter 3 is partly based on work done as a graduate student at Purdue University, and I am indebted to L. Lempert for encouragement and guidance. Chapter 4  surveys to some extent joint work with Y. Rubinstein, and I am grateful for his mentorship over the years. This research was partially supported by
NSF grant DMS-1610202 and BSF grant 2012236.

\chapter{A primer on Orlicz spaces}
Plainly speaking, Orlicz spaces are generalizations of $L^p$ Banach spaces. As we will see in our later study, we will prefer working with Orlicz norms over $L^p$ norms, since a careful choice of weight makes Orlicz norms smooth away from the origin. The same cannot be said about $L^p$ norms. We give here a brief and self-contained introduction, only touching on aspects that will be needed later. For a more thorough treatment we refer to \cite{rr}.

Suppose $(\Omega,\Sigma, \mu)$ is a measure space with $\mu(\Omega)=1$ and $(\chi,\chi^*)$ is a complementary pair of {Young weights}. This means that $\chi:\Bbb R \to \Bbb R^+ \cup \{ \infty \}$ is convex, even, lower semi-continuous (lsc)  and satisfies the normalizing conditions 
$$\chi(0)=0, \ \  1 \in \partial \chi(1).$$ 
Recall that $\partial \chi(l) \subset \Bbb R$ is the set of subgradients to $\chi$ at $l$, i.e., $v \in \partial \chi(l)$ if and only if $\chi(l) + v h \leq \chi(l+h), \ h \in \Bbb R$. As described, $\chi$ is simply a \emph{normalized Young weight}. The complement $\chi^*$ is the Legendre transform of $\chi$:
\begin{equation}\label{eq: Legendre_trans_def}
\chi^*(h) = \sup_{l \in \Bbb R} (lh - \chi(l)).
\end{equation}
Using convexity of $\chi$ and the above identity, one can verify that $\chi^*$ is also a normalized Young weight. Additionally $(\chi,\chi^*)$ satisfies the \emph{Young identity} and \emph{inequality}:
\begin{equation}\label{eq: YoungIdIneq}
\chi(a) + \chi^*(\chi'(a))=a\chi'(a), \ \chi(a) + \chi^*(b) \geq ab, \ a,b \in \Bbb R, \ \chi'(a) \in \partial \chi(a),
\end{equation}
in particular, due to our normalization: $\chi(1) + \chi^*(1)=1$. 

The most typical example to keep in mind is the pair $\chi_p(l)=|l|^p/p$ and $\chi^*_p(l)=|l|^q/q$, where $p,q > 1$ and $1/p + 1/q = 1$.
Let $L^\chi(\mu)$ be the following space of measurable functions:
$$L^\chi(\mu)=\Big\{ f:\Omega \to \Bbb R \cup \{ \infty,-\infty\}: \ \exists r >0 \textup{ s.t. } \int_\Omega \chi(rf) d\mu < \infty\Big\}.$$
One can introduce the following  norm on $L^\chi(\mu)$:
\begin{equation}\label{eq: OrliczNormDef}
\|f\|_{\chi,\mu}=\inf \Big\{ r > 0 : \ \int_\Omega \chi\Big(\frac{f}{r}\Big) d\mu \leq \chi(1) \Big\}.
\end{equation}
The set $\{f \in L^\chi(\mu): \int_\Omega \chi(f) d\mu \leq \chi(1)\}$ is convex and symmetric in $L^\chi(\mu)$, hence $\|\cdot\|_{\chi,\mu}$ is nothing but the Minkowski seminorm of this set. This is the content of the following lemma:

\begin{lemma} Suppose $f,g \in L^\chi(\mu)$. Then $\| f+ g\|_{\chi,\mu} \leq \| f\|_{\chi,\mu} + \| g\|_{\chi,\mu}.$
\end{lemma}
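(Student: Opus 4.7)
The plan is to follow the hint and view $\|\cdot\|_{\chi,\mu}$ as the Minkowski functional of the convex symmetric set $K := \{f \in L^\chi(\mu) : \int_\Omega \chi(f)\, d\mu \leq \chi(1)\}$. Convexity of $K$ comes from convexity of $\chi$, symmetry from $\chi$ being even, and subadditivity then reduces to a direct convexity computation.

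The crucial preliminary step is to show that the infimum in \eqref{eq: OrliczNormDef} is attained, i.e., if $a := \|f\|_{\chi,\mu} \in (0,\infty)$, then $\int_\Omega \chi(f/a)\, d\mu \leq \chi(1)$. Since $\chi$ is convex, even, non-negative with $\chi(0)=0$, the restriction of $\chi$ to $[0,\infty)$ is nondecreasing. I would pick a sequence $a_n \searrow a$ with $\int \chi(f/a_n)\, d\mu \leq \chi(1)$; the integrands then increase monotonically in $n$ and converge pointwise to $\chi(f/a)$, with lower semicontinuity of $\chi$ used to rule out problematic jumps. Monotone convergence closes the argument.

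With this in hand, the subadditivity is a one-line convexity calculation. Assume $a := \|f\|_{\chi,\mu}$ and $b := \|g\|_{\chi,\mu}$ are positive and finite; write
$$\frac{f+g}{a+b} = \frac{a}{a+b}\cdot \frac{f}{a} + \frac{b}{a+b}\cdot \frac{g}{b},$$
apply convexity of $\chi$ pointwise, and integrate against $\mu$ to obtain
$$\int_\Omega \chi\!\left(\frac{f+g}{a+b}\right) d\mu \leq \frac{a}{a+b}\int_\Omega \chi\!\left(\frac{f}{a}\right) d\mu + \frac{b}{a+b}\int_\Omega \chi\!\left(\frac{g}{b}\right) d\mu \leq \chi(1),$$
where the last inequality uses the previous step. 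By the definition of the norm, this forces $\|f+g\|_{\chi,\mu} \leq a+b$. The edge cases are routine: if either norm is $+\infty$ the inequality is vacuous, while $\|f\|_{\chi,\mu}=0$ forces $f=0$ a.e.\ via a Fatou argument on $\int \chi(f/r)\, d\mu \leq \chi(1)$ as $r \to 0^+$, since a nontrivial normalized Young weight necessarily satisfies $\chi(t) \to \infty$ as $|t| \to \infty$.

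The main obstacle is the attainment step — transferring the defining bound from the approximating sequence $a_n \searrow a$ to the infimum value $a$ itself — which is precisely where lower semicontinuity of $\chi$ is indispensable. Once this is secured, the triangle inequality is merely convexity bookkeeping.
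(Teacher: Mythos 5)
Your proof is correct, but it takes a longer route than the paper's. You first establish an attainment fact — that if $a=\|f\|_{\chi,\mu}>0$ then $\int_\Omega \chi(f/a)\,d\mu\le\chi(1)$, via monotone convergence along $a_n\searrow a$ together with lower semicontinuity of $\chi$ — and then run the convexity computation directly at the minimizing pair $(a,b)=\bigl(\|f\|_{\chi,\mu},\|g\|_{\chi,\mu}\bigr)$. The paper sidesteps attainment entirely: it takes \emph{arbitrary} $r_1,r_2>0$ satisfying $\int_\Omega\chi(f/r_1)\,d\mu\le\chi(1)$ and $\int_\Omega\chi(g/r_2)\,d\mu\le\chi(1)$, shows by the same convexity estimate that $\int_\Omega\chi\bigl((f+g)/(r_1+r_2)\bigr)\,d\mu\le\chi(1)$, hence $\|f+g\|_{\chi,\mu}\le r_1+r_2$, and only then takes the infimum over all admissible $r_1,r_2$. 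That version needs nothing beyond convexity of $\chi$ — no monotone convergence, no lsc. Your extra lemma is a genuinely useful byproduct (it is essentially the mechanism behind \eqref{eq: OrliczNormId}, which the paper establishes later under the extra $\mathcal W^+_p$ hypothesis), but it is not needed for the triangle inequality itself; if brevity is the goal, working with approximants and passing to the infimum at the end is the cleaner move.
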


\begin{proof}Suppose $\int_\Omega \chi(f/r_1)d\mu\leq \chi(1), \ \int_\Omega \chi(g/r_2)d\mu \leq \chi(1)$ for some $r_1,r_2 > 0$. Convexity of $\chi$ implies that 
\begin{flalign}
\int_\Omega \chi\Big(\frac{f+g}{r_1 + r_2}\Big) d\mu \leq \frac{r_1}{r_1 + r_2}\int_\Omega \chi\Big(\frac{f}{r_1}\Big) d\mu  + \frac{r_2}{r_1 + r_2}\int_\Omega \chi\Big(\frac{g}{r_2}\Big) d\mu \leq \chi(1).
\end{flalign}
Hence $\|f+g \|_{\chi,\mu} \leq r_1 + r_2$, finishing the argument.
\end{proof}

\noindent Together with the previous one, the next lemma implies that $(L^\chi(\mu),\| \cdot \|_{\chi,\mu})$ is a normed space:

\begin{lemma} Suppose $f \in L^\chi(\mu)$. Then $\| f\|_{\chi,\mu} =0$ implies that $f=0$ a.e. with respect to $\mu$.
\end{lemma}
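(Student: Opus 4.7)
The plan is to argue by contradiction, exploiting the superlinear growth of $\chi$ at infinity together with the monotonicity of the map $r\mapsto \int_\Omega \chi(f/r)\,d\mu$.

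First, I would unpack what $\|f\|_{\chi,\mu}=0$ really means. Because $\chi$ is convex and even with $\chi(0)=0$, the function $t\mapsto \chi(t)$ is nondecreasing in $|t|$. Consequently, if $\int_\Omega \chi(f/r_0)\,d\mu \leq \chi(1)$ holds for some $r_0>0$, then the same inequality holds for every $r\geq r_0$. The set $S=\{r>0:\int_\Omega \chi(f/r)\,d\mu \leq \chi(1)\}$ is therefore an upper interval, and $\inf S=0$ forces $S=(0,\infty)$. Hence
\[
\int_\Omega \chi(f/r)\,d\mu \leq \chi(1) \quad \text{for every } r>0.
\]

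Second, I would record the fact that $\chi$ is coercive. Since $1\in\partial\chi(1)$, the subgradient inequality gives $\chi(1+h)\geq \chi(1)+h$ for all $h\in\Bbb R$; applied with $h=t-1$ for $t\geq 1$, this yields $\chi(t)\geq t-1+\chi(1)\geq t-1$, so $\chi(t)\to\infty$ as $t\to\infty$. Also, evaluating the subgradient inequality at $h=-1$ shows $\chi(1)\leq 1<\infty$, so the right-hand side above is a finite constant.

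Third, suppose for contradiction that $f$ is not zero almost everywhere. Then there exists $\delta>0$ such that $A:=\{x\in\Omega:|f(x)|>\delta\}$ has $\mu(A)>0$. On $A$ we have $|f/r|>\delta/r$, and by the monotonicity of $\chi$ on $[0,\infty)$,
\[
\int_\Omega \chi(f/r)\,d\mu \;\geq\; \int_A \chi(\delta/r)\,d\mu \;=\; \chi(\delta/r)\,\mu(A).
\]
As $r\to 0^+$, the coercivity of $\chi$ makes $\chi(\delta/r)\to\infty$, so the left-hand side blows up, contradicting the uniform bound by $\chi(1)$ obtained in the first step. (If one wants to handle the degenerate case $\chi(1)=0$, one notes that the inequality then forces $\chi(f/r)=0$ a.e.; since the zero set of $\chi$ is a bounded symmetric interval $[-a,a]$, one concludes $|f|\leq ar$ a.e.\ for every $r>0$, giving $f=0$ a.e.)

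There is no real obstacle here; the only subtlety is recognizing that $\|f\|_{\chi,\mu}=0$ must be read as a statement valid for \emph{all} small $r$, which is what activates the coercivity of $\chi$ to detect any set of positive measure on which $f\neq 0$.
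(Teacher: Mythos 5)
Your proof is correct. Both your argument and the paper's rest on the same key fact — the normalization $1 \in \partial\chi(1)$ gives the linear lower bound $\chi(t) \geq t - 1 + \chi(1)$ for $t \geq 1$ — but you deploy it differently. The paper proves a direct estimate $\int_{\{n|f|>1\}}(|f| - 1/n)\,d\mu \leq \chi(1)/n$ and lets $n \to \infty$ to conclude $\int_{\{|f|>0\}}|f|\,d\mu = 0$; this also records the inclusion $L^\chi(\mu)\subset L^1(\mu)$, which the paper reuses. You instead argue by contradiction with a Chebyshev-type bound on the superlevel set $A=\{|f|>\delta\}$: the inequality $\chi(\delta/r)\mu(A)\leq\chi(1)$ cannot survive $r\to 0^+$ once $\mu(A)>0$. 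Your route is leaner as a standalone proof, while the paper's earns the $L^1$ inclusion as a byproduct. One small remark: your parenthetical for the case $\chi(1)=0$ is correct but superfluous, since in that case the main inequality already reads $\chi(\delta/r)\mu(A)\leq 0$, which is impossible for small $r$ by the very coercivity $\chi(\delta/r)\geq \delta/r - 1$ you established, so the special handling via the zero set $[-a,a]$ is not needed.
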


\begin{proof} As $1 \in \partial \chi(1)$ and $\chi \geq 0$, it follows that 
\begin{equation}
l \leq \chi(1) + l \leq \chi(1 + l)  \textup{ for all } l \geq 0.\label{eq: chi_ineq}
\end{equation}
As a consequence of this inequality, $L^\chi(\mu) \subset L^1(\mu)$. Also, since $\|f \|_{\chi,\mu}=0$, it follows that $\int_\Omega \chi(n f) d\mu \leq \chi(1)$ for all $n \in \Bbb N$. By \eqref{eq: chi_ineq} we can write:
$$ \int_{\{ n|f| >1 \}} \Big(|f| - \frac{1}{n}\Big)d\mu \leq \int_{\{ n|f| >1\}} \frac{\chi(n |f|)}{n} d\mu \leq \int_\Omega \frac{\chi(n f)}{n} d\mu \leq \frac{\chi(1)}{n}.$$
Applying the dominated convergence theorem to this inequality gives  $\int_{|f| > 0}|f|d\mu =0$, finishing the proof.
\end{proof}

Though we will not make use of it, one can also show that $(L^\chi(\mu),\| \cdot \|_{\chi,\mu})$ is complete, hence it is a Banach space (see \cite[Theorem 3.3.10]{rr}). The reason we work with a complementary pair of Young weights is because in this setting the H\"older inequality holds:
\begin{proposition}\label{prop: Holder} For $f \in L^\chi(\mu)$ and $g \in L^{\chi^*}(\mu)$ we have
\begin{equation}\label{eq: HolderIneq}
\int_\Omega fg d\mu \leq \|f\|_{\chi,\mu} \|g\|_{\chi^*,\mu}, \ f \in L^\chi(\mu), \ g \in L^{\chi^*}(\mu).
\end{equation}
\end{proposition}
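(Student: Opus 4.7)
The natural approach is to derive Hölder's inequality directly from the pointwise Young inequality $\chi(a)+\chi^*(b) \geq ab$ stated in \eqref{eq: YoungIdIneq}, combined with the normalization $\chi(1)+\chi^*(1)=1$. The strategy mimics the classical proof of Hölder's inequality in $L^p$, where the pointwise inequality $ab \leq a^p/p + b^q/q$ plays the analogous role.

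First, I would dispose of trivial cases: if either norm is zero, the preceding lemma shows the corresponding function vanishes a.e., and both sides of \eqref{eq: HolderIneq} are zero. So assume $\|f\|_{\chi,\mu},\|g\|_{\chi^*,\mu} > 0$. Fix $r > \|f\|_{\chi,\mu}$ and $s > \|g\|_{\chi^*,\mu}$. The key preliminary observation is that $\int_\Omega \chi(f/r)\,d\mu \leq \chi(1)$ (and similarly for $g$ with $\chi^*$), even though the infimum in \eqref{eq: OrliczNormDef} might not be attained. This follows from monotonicity: by definition of infimum, there exists $r_0 \in (\|f\|_{\chi,\mu}, r]$ with $\int_\Omega \chi(f/r_0)\,d\mu \leq \chi(1)$, and since $\chi$ is even, convex, and vanishes at the origin, $t \mapsto \chi(tf(x))$ is nondecreasing in $|t|$, so $\chi(f/r) \leq \chi(f/r_0)$ pointwise.

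Next, apply Young's inequality \eqref{eq: YoungIdIneq} pointwise to the pair $(f(x)/r,\, g(x)/s)$:
\begin{equation*}
\frac{f(x)g(x)}{rs} \leq \chi\!\Big(\frac{f(x)}{r}\Big) + \chi^*\!\Big(\frac{g(x)}{s}\Big).
\end{equation*}
Integrating over $\Omega$ and using the previous step together with $\chi(1)+\chi^*(1)=1$ yields
\begin{equation*}
\frac{1}{rs}\int_\Omega fg\,d\mu \leq \chi(1) + \chi^*(1) = 1,
\end{equation*}
so $\int_\Omega fg\,d\mu \leq rs$. Letting $r \downarrow \|f\|_{\chi,\mu}$ and $s \downarrow \|g\|_{\chi^*,\mu}$ gives \eqref{eq: HolderIneq}.

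The only genuine subtlety is the sub-step verifying that $\int_\Omega \chi(f/r)\,d\mu \leq \chi(1)$ holds for every $r$ strictly above the norm, not just at the infimum. This is where the even/convex/$\chi(0)=0$ structure of the Young weight is essential; absent monotonicity of $t \mapsto \chi(tf(x))$ in $|t|$, one would have to appeal to Fatou and lower semicontinuity of $\chi$ to pass to the infimum, which is the standard alternative route. Everything else is a transparent integration of the Young inequality.
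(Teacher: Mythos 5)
Your proof is correct and follows the same route as the paper: pointwise Young inequality applied to $(f/r, g/s)$, integration, the normalization $\chi(1)+\chi^*(1)=1$, and a limit in $r,s$. You add a useful clarification the paper leaves implicit (namely, that $\int_\Omega \chi(f/r)\,d\mu \leq \chi(1)$ for every $r$ strictly above the Orlicz norm, via monotonicity of $\chi$), but the argument is otherwise identical.
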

\begin{proof}  Let $r_1 > \|f\|_{\chi,\mu}$ and $r_2 > \|g\|_{\chi^*,\mu}$.  Using both the Young inequality and the identity \eqref{eq: YoungIdIneq}, \eqref{eq: HolderIneq} follows in the following manner:
$$\int_\Omega \frac{fg}{r_1r_2} d\mu \leq \int_\Omega \chi\Big(\frac{f}{r_1}\Big) d\mu  + \int_\Omega \chi^*\Big(\frac{g}{r_2}\Big) d\mu \leq \chi(1) + \chi^*(1)=1. \vspace{-0.2in}$$
\end{proof}

\noindent Orlicz spaces can be quite general and in our study we will be interested in spaces whose normalized Young weight is finite and  satisfies the  growth estimate
\begin{equation}\label{eq: GrowthEst}
l\chi'(l) \leq p\chi(l), \ l >0,
\end{equation}
for some $p \geq 1$. To clarify, $\chi'(l)$ is just an arbitrary subgradient of $\chi$ at $l$. For such weights we write $\chi \in \mathcal W^+_p$, following the notation of \cite{gz}. As it turns out, weights $\chi$ that satisfy \eqref{eq: GrowthEst} can be thought of as distant cousins  of the homogeneous $L^p$ weight $|l|^p/p \in \mathcal W^+_p$:
\begin{proposition} For $\chi \in \mathcal W^+_p, \ p \geq 1$ and $0 < \varepsilon < 1$ we have
\begin{equation}\label{eq: GrowthControl}
\varepsilon^p \chi(l) \leq \chi(\varepsilon l) \leq \varepsilon \chi(l), \ l > 0.
\end{equation}
\end{proposition}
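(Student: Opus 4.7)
For the upper bound $\chi(\varepsilon l) \leq \varepsilon \chi(l)$, I would use nothing more than convexity together with the normalization $\chi(0)=0$: writing $\varepsilon l = \varepsilon\cdot l + (1-\varepsilon)\cdot 0$ and applying Jensen at these two points gives $\chi(\varepsilon l) \leq \varepsilon \chi(l) + (1-\varepsilon)\chi(0) = \varepsilon \chi(l)$. This half does not use the growth estimate \eqref{eq: GrowthEst} at all.

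The lower bound $\varepsilon^p \chi(l) \leq \chi(\varepsilon l)$ is where \eqref{eq: GrowthEst} enters. Fix $l>0$ and consider the auxiliary function
\[
F(t) = \frac{\chi(tl)}{t^p}, \quad t>0.
\]
The plan is to show that $F$ is non-increasing on $(0,\infty)$; applied at $t=\varepsilon$ and $t=1$ this gives exactly the desired inequality. To prove monotonicity, one formally computes
\[
F'(t) = \frac{l\,\chi'(tl)\,t - p\,\chi(tl)}{t^{p+1}} = \frac{(tl)\chi'(tl) - p\,\chi(tl)}{t^{p+1}},
\]
which is $\leq 0$ by the growth estimate \eqref{eq: GrowthEst} applied at $tl$.

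The main subtlety is that $\chi$, being merely convex, need not be differentiable everywhere, so $\chi'$ in \eqref{eq: GrowthEst} denotes only a subgradient. To handle this rigorously, I would instead work with $G(t) = \log \chi(tl)$ on the set where $\chi(tl)>0$ (if $\chi(l_0)=0$ for all $l_0$ in some interval around $0$, both inequalities are trivial on that interval since $\chi$ is then identically zero there by convexity plus $\chi(0)=0$). On its domain, $G$ is convex, hence locally absolutely continuous, and its derivative is $l\,\chi'(tl)/\chi(tl)$ almost everywhere, where $\chi'$ is the (a.e. defined) classical derivative of the convex function $\chi$. By \eqref{eq: GrowthEst} this gives $G'(t) \leq p/t$ a.e., and integrating from $\varepsilon$ to $1$ yields
\[
\log \chi(l) - \log\chi(\varepsilon l) \leq p\log(1/\varepsilon),
\]
which rearranges to $\varepsilon^p \chi(l) \leq \chi(\varepsilon l)$.

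The only real obstacle is this measure-theoretic bookkeeping around subgradients; once it is dispatched the result is a clean consequence of convexity and the scaling estimate \eqref{eq: GrowthEst}. I would present the argument by using $F(t)=\chi(tl)/t^p$ and passing to the logarithmic derivative as above so that absolute continuity of convex functions justifies the integration step cleanly.
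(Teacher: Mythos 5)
Your proof follows essentially the same route as the paper: the upper bound is convexity together with $\chi(0)=0$, and the lower bound comes from integrating the logarithmic-derivative inequality $\chi'(h)/\chi(h) \leq p/h$ over $[\varepsilon l, l]$. The one place where the two arguments diverge is the handling of the potential zero set of $\chi$. The paper sidesteps the issue entirely: it replaces $\chi$ by $\chi_\delta(l) := \chi(l) + \delta|l|$, observes that $\chi_\delta$ still lies in $\mathcal W^+_p$ (this uses $p \geq 1$, since $l\chi_\delta'(l) = l\chi'(l)+\delta l \leq p\chi(l) + p\delta l = p\chi_\delta(l)$) and is strictly positive on $(0,\infty)$, integrates $\chi'_\delta/\chi_\delta \leq p/h$ to get $\varepsilon^p\chi_\delta(l) \leq \chi_\delta(\varepsilon l)$, and then lets $\delta \to 0$. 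Your alternative of arguing directly on $\{\chi > 0\}$ has a small gap: you dismiss the vanishing set by noting that both sides vanish when $l$ lies in it, but you don't address the case $\chi(\varepsilon l)=0$ with $\chi(l)>0$, where $\log\chi(\varepsilon l)=-\infty$ and the integration argument breaks down. That case in fact cannot occur (the growth estimate forces $\chi'_+(s) \leq p\chi(s)/s$, so a Gronwall argument shows that $\chi$ vanishing at any $l_0 > 0$ forces $\chi \equiv 0$, contradicting $1 \in \partial\chi(1)$), but that requires a separate argument; the $\delta$-perturbation dispatches the entire question with one line, which is why the paper uses it.
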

\begin{proof}
The second estimate follows from convexity of $\chi$. For the first estimate we notice that for any $\delta >0$ the weight $\chi_\delta(l) := \chi(l) + \delta|l|$ also satisfies \eqref{eq: GrowthEst}.  As $\chi_{\delta}(h) >0$ for $h >0$, we can integrate $\chi'_\delta(h)/\chi_\delta(h) \leq p/h$ from $\varepsilon l$ to $l$ to obtain:
$$\varepsilon^p \chi_\delta(l) \leq \chi_\delta(\varepsilon l).$$
Letting $\delta \to 0$ the desired estimate follows.
\end{proof}

Estimate \eqref{eq: GrowthControl} immediately implies that for $f \in L^\chi(\mu)$ the function $l \to \int_\Omega \chi(l f)d\mu$ is continuous, hence we have
\begin{equation}\label{eq: OrliczNormId}
\|f\|_{\chi,\mu}=\alpha>0 \textup{ if and only if } \int_\Omega \chi\Big( \frac{f}{\alpha}\Big)d\mu=\chi(1).
\end{equation}

To simplify future notation, we introduce the increasing functions 
$$M_p(l) = \max\{l,l^p\},  \ \ m_p(l) = \min\{l,l^p\},$$ for $p >0, l \geq 0$. Observe that $M_p \circ m_{1/p}(l) = m_p \circ M_{1/p}(l)=l$. As a consequence of \eqref{eq: GrowthControl} and \eqref{eq: OrliczNormId}, we immediately obtain the following  estimates, characterizing Orlicz norm convergence:
\begin{proposition} \label{prop: NormIntegralEst} If $\chi \in \mathcal W_p^+$  and $f \in L^\chi(\mu)$ then
\begin{equation}
 m_p(\|f\|_{\chi,\mu}) \leq \frac{\int_\Omega \chi(f)d\mu}{\chi(1)} \leq M_p(\|f\|_{\chi,\mu}).
\end{equation}
\begin{equation}
 m_{1/p}\Big(\frac{\int_\Omega \chi(f)d\mu}{\chi(1)}\Big) \leq \|f\|_{\chi,\mu} \leq M_{1/p}\Big(\frac{\int_\Omega \chi(f)d\mu}{\chi(1)}\Big).
\end{equation}
As a result, for a sequence $\{ f_j\}_{j \in \Bbb N}$ we have $\|f_j\|_{\chi,\mu} \to 0$ if and only if $\int_\Omega \chi(f_j)d\mu \to 0$. Also, $\|f_j\|_{\chi,\mu} \to N$ for $N > 0$ if and only if $\int_\Omega \chi(f_j/N)d\mu \to \chi(1)$.
\end{proposition}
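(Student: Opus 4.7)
The plan is to set $\alpha := \|f\|_{\chi,\mu}$ and exploit the identity \eqref{eq: OrliczNormId}, which rewrites the norm condition as $\int_\Omega \chi(f/\alpha)\,d\mu = \chi(1)$ whenever $\alpha > 0$. The trivial case $\alpha = 0$ is handled separately: the previous lemma gives $f = 0$ a.e., so both sides of the first estimate vanish and the inequality is vacuous. Thus I may assume $\alpha > 0$ for the rest of the argument.

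The first estimate will follow by comparing $\chi(f/\alpha)$ and $\chi(f)$ via the growth control \eqref{eq: GrowthControl}. This naturally splits into two cases. If $\alpha \geq 1$, apply \eqref{eq: GrowthControl} with $\varepsilon = 1/\alpha \in (0,1]$ to $l = |f|$ (using that $\chi$ is even) to get $\alpha^{-p}\chi(f) \leq \chi(f/\alpha) \leq \alpha^{-1}\chi(f)$ pointwise; integrating and using $\int \chi(f/\alpha)\,d\mu = \chi(1)$ yields $\alpha\chi(1) \leq \int\chi(f)\,d\mu \leq \alpha^{p}\chi(1)$, which is exactly $m_p(\alpha)\chi(1) \leq \int\chi(f)\,d\mu \leq M_p(\alpha)\chi(1)$. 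If $\alpha < 1$, apply \eqref{eq: GrowthControl} instead with $\varepsilon = \alpha$ to $l = |f|/\alpha$, giving $\alpha^p\chi(f/\alpha) \leq \chi(f) \leq \alpha\chi(f/\alpha)$; integrating yields $\alpha^p\chi(1) \leq \int\chi(f)\,d\mu \leq \alpha\chi(1)$, again matching $m_p(\alpha)\chi(1)$ and $M_p(\alpha)\chi(1)$ since $\alpha < 1$.

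For the second estimate I would observe that on $[0,\infty)$ the functions $M_p$ and $m_p$ are increasing bijections with $M_p^{-1} = m_{1/p}$ and $m_p^{-1} = M_{1/p}$ (this is the identity $M_p \circ m_{1/p} = m_p \circ M_{1/p} = \mathrm{id}$ quoted in the text). Applying these inverses to the first estimate inverts both inequalities and delivers $m_{1/p}(\int\chi(f)d\mu/\chi(1)) \leq \alpha \leq M_{1/p}(\int\chi(f)d\mu/\chi(1))$.

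The two convergence statements are then automatic. For $\|f_j\|_{\chi,\mu}\to 0$ iff $\int\chi(f_j)d\mu \to 0$, read off from the upper bounds on each side, noting $M_p(0^+) = M_{1/p}(0^+) = 0$. For the $N > 0$ convergence, substitute $g_j := f_j/N$, so $\|g_j\|_{\chi,\mu} = \|f_j\|_{\chi,\mu}/N$; the first estimate applied to $g_j$ combined with continuity of $m_p$ and $M_p$ at $1$ (both equal $1$ there) gives the equivalence between $\|g_j\|_{\chi,\mu} \to 1$ and $\int\chi(g_j)d\mu \to \chi(1)$. The only mildly delicate point is the case split $\alpha \lessgtr 1$ in the first estimate; everything else is bookkeeping via monotonicity and the explicit formulas for $m_p$, $M_p$.
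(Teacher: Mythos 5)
Your proof is correct and spells out exactly the argument the paper intends; the paper itself treats the proposition as an immediate consequence of the growth estimate \eqref{eq: GrowthControl} and the norm identity \eqref{eq: OrliczNormId}, and your case split $\|f\|_{\chi,\mu}\gtrless 1$ together with the inversion via $M_p\circ m_{1/p}=m_p\circ M_{1/p}=\mathrm{id}$ is the natural way to flesh out that ``immediately.''
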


Later in this survey we will need to approximate certain Orlicz norms with Orlicz norms having \emph{smooth} $\mathcal W^+_p$-weights. The following two approximation results, which are by no means optimal, will be useful in our treatment:

\begin{proposition} \label{prop: approx_lemma}Suppose $\chi \in \mathcal W^+_p$ and $\{\chi_k \}_{k \in \Bbb N}$ is a sequence of normalized Young weights that converges uniformly on compacts to $\chi.$ Let $f$ be a bounded $\mu$--measurable function on $X$. Then $f \in L^\chi(\mu),L^{\chi_k}(\mu), \ k \in \Bbb N$ and we have that
$$\lim_{k \to +\infty}\| f\|_{\chi_k,\mu} = \| f\|_{\chi,\mu}.$$
\end{proposition}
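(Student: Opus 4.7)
The plan is to reduce the claim to the characterization \eqref{eq: OrliczNormId} of the Orlicz norm on the $\chi$ side, and then to push the Minkowski-gauge definition \eqref{eq: OrliczNormDef} across the limit $k\to\infty$ using uniform convergence on compacts. First I would note that the normalization $1\in\partial\chi(1)$ together with $\chi(0)=0$ forces $\chi(1)\leq 1$, and the same for each $\chi_k(1)$. Fixing $M>0$ with $|f|\leq M$ $\mu$-a.e., the bound $|f/M|\leq 1$ and convexity/evenness of $\chi_k$ yield $\chi_k(f/M)\leq \chi_k(1)$ pointwise, hence $\int_\Omega \chi_k(f/M)\,d\mu \leq \chi_k(1)$, showing $f\in L^{\chi_k}(\mu)$ with $\|f\|_{\chi_k,\mu}\leq M$; the same bound holds for $\chi$. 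Writing $\alpha := \|f\|_{\chi,\mu}$ and $\alpha_k := \|f\|_{\chi_k,\mu}$, the case $\alpha=0$ forces $f=0$ $\mu$-a.e.\ and so $\alpha_k=0$ for all $k$; hence I may assume $\alpha>0$.

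Next I would extract strict inequalities on the $\chi$ side from the $\mathcal W^+_p$ hypothesis. Let $I(r) := \int_\Omega \chi(f/r)\,d\mu$ for $r>0$. The identity \eqref{eq: OrliczNormId} gives $I(\alpha)=\chi(1)$. Applying the upper half of \eqref{eq: GrowthControl} with $\varepsilon=\alpha/r\in(0,1)$ for $r>\alpha$ yields $\chi(f/r)\leq (\alpha/r)\chi(f/\alpha)$ pointwise, so $I(r)\leq (\alpha/r)\chi(1)<\chi(1)$. Symmetrically, with $\varepsilon=r/\alpha$ for $0<r<\alpha$ one gets $\chi(f/\alpha)\leq (r/\alpha)\chi(f/r)$, i.e., $I(r)\geq (\alpha/r)\chi(1)>\chi(1)$. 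These strict gaps are exactly what the approximation step will convert into inequalities for $\alpha_k$.

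Finally I would pass to the limit. For fixed $r>0$, $f/r$ takes values in the compact interval $[-M/r,M/r]$, so uniform convergence $\chi_k\to\chi$ on this interval together with $\mu(\Omega)=1$ gives $I_k(r):=\int_\Omega \chi_k(f/r)\,d\mu\to I(r)$, and also $\chi_k(1)\to\chi(1)$. For $r>\alpha$, $I(r)<\chi(1)$ then forces $I_k(r)\leq \chi_k(1)$ for all large $k$, hence $\alpha_k\leq r$ and $\limsup_k \alpha_k\leq \alpha$. For $0<r<\alpha$, $I(r)>\chi(1)$ gives $I_k(r)>\chi_k(1)$ for $k$ large; since $s\mapsto I_k(s)$ is non-increasing (each $\chi_k$ is convex, even, with $\chi_k(0)=0$, hence non-decreasing on $[0,\infty)$), the inequality $I_k(s)>\chi_k(1)$ persists for every $s\in(0,r]$, which excludes such $s$ from the infimum in \eqref{eq: OrliczNormDef} and forces $\alpha_k\geq r$, giving $\liminf_k \alpha_k\geq \alpha$. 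The main obstacle I anticipate is that the $\chi_k$ are not assumed to lie in $\mathcal W^+_p$, so the clean identity \eqref{eq: OrliczNormId} is unavailable on the $\chi_k$ side; this is precisely why the argument has to fall back on the Minkowski-gauge definition and on monotonicity of $I_k$ rather than on a direct analog.
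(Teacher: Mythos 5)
Your proof is correct and follows essentially the same route as the paper's: extract strict inequalities $\int\chi(f/r)\,d\mu \lessgtr \chi(1)$ for $r$ on either side of $\|f\|_{\chi,\mu}$ from the $\mathcal W^+_p$ growth estimate, pass these to $\chi_k$ using uniform convergence on the compact range of $f/r$ together with $\mu(\Omega)=1$, and convert them into two-sided bounds on $\|f\|_{\chi_k,\mu}$. Your treatment is slightly more explicit than the paper's in two places — you derive the strict separation directly from \eqref{eq: GrowthControl} rather than citing \eqref{eq: OrliczNormId}, and you spell out the monotonicity of $r\mapsto\int\chi_k(f/r)\,d\mu$ needed for the lower bound on $\|f\|_{\chi_k,\mu}$ — but these are presentation details, not a different argument.
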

\begin{proof} Suppose $N=\| f\|_{\chi,\mu}$. If $N=0$, then $f=0$ a.e. with respect to $\mu$ implying that $\| f\|_{\chi_k,\mu} = \| f\|_{\chi,\mu}=0$. So we assume that $N >0$. As $\chi \in \mathcal W^+_p$, by \eqref{eq: OrliczNormId}, for any $\varepsilon >0$ there exists $\delta >0$ such that
$$\int_X \chi\Big(\frac{f}{(1+\varepsilon)N}\Big) d\mu<\chi(1)-2\delta< \chi(1) <\chi(1)+2\delta<\int_X \chi\Big(\frac{f}{(1-\varepsilon)N}\Big) d\mu.$$
As $\chi_k$ tends uniformly on compacts to $\chi$, $f$ is bounded and $\mu(X)=1$, it follows from the dominated convergence theorem that for $k$ big enough we have
$$\int_X \chi_k\Big(\frac{f}{(1+\varepsilon)N}\Big) d\mu<\chi_k(1)-\delta< \chi_k(1) <\chi_k(1)+\delta<\int_X \chi_k\Big(\frac{f}{(1-\varepsilon)N}\Big) d\mu.$$
This implies that $(1-\varepsilon)N \leq  \| f\|_{\chi_k,\mu} \leq (1+\varepsilon)N$, from which the conclusion follows.
\end{proof}
\begin{proposition} \label{prop: approx_lemma2}Given $\chi \in \mathcal W^+_p$, there exists $\chi_k \in \mathcal W^+_{p_k} \cap C^\infty(\Bbb R),$ $k \in \Bbb N$, with $\{p_k\}_k$ possibly unbounded, such that $\chi_k \to \chi$ uniformly on compacts.
\end{proposition}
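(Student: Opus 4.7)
The plan is to mollify $\chi$ with a smooth even kernel and then rescale to restore the normalization conditions. Fix a smooth, even, non-negative mollifier $\rho$ supported in $[-1,1]$ with $\int\rho=1$, let $\rho_k(y)=k\rho(ky)$, and put $c_k:=(\chi*\rho_k)(0)$ and
\[\tilde\chi_k(l):=(\chi*\rho_k)(l)-c_k.\]
The function $\tilde\chi_k$ is smooth, even, convex, with $\tilde\chi_k(0)=\tilde\chi_k'(0)=0$, and non-negative (being convex and even with minimum attained at $0$). Continuity of $\chi$ gives $c_k\to 0$ and $\chi*\rho_k\to\chi$ uniformly on compacts, hence $\tilde\chi_k\to\chi$ uniformly on compacts. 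To recover $1\in\partial\chi_k(1)$, I would use that $\tilde\chi_k'=\chi'*\rho_k$ together with the support bound on $\rho_k$ to obtain $\tilde\chi_k'(1-2/k)\le\chi_-'(1)\le 1\le\chi_+'(1)\le\tilde\chi_k'(1+2/k)$; the intermediate value theorem then produces $b_k\in[1-2/k,1+2/k]$ with $\tilde\chi_k'(b_k)=1$ and $b_k\to 1$. Setting $\chi_k(l):=\tilde\chi_k(b_k l)/b_k$ yields a smooth, even, convex, non-negative function that vanishes at $0$, satisfies $\chi_k'(1)=1$, and converges to $\chi$ uniformly on compacts.

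The nontrivial task is verifying the growth condition $l\chi_k'(l)\le p_k\chi_k(l)$ for suitable $p_k$. After the substitution $u=b_k l$, this reduces to $u\tilde\chi_k'(u)\le p_k\tilde\chi_k(u)$. For $u>1/k$, every point $u-y$ with $y$ in the support of $\rho_k$ is positive, so the hypothesis $(u-y)\chi'(u-y)\le p\chi(u-y)$ applies. Writing $u\chi'(u-y)=(u-y)\chi'(u-y)+y\chi'(u-y)$ and integrating against $\rho_k$ yields
\[u\tilde\chi_k'(u)\le p(\chi*\rho_k)(u)+\int y\,\chi'(u-y)\,\rho_k(y)\,dy.\]
Because $y$ is non-decreasing and $\chi'(u-y)$ is non-increasing in $y$, while $\rho_k\,dy$ is an even probability measure, Chebyshev's integral inequality forces the last integral to be non-positive. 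Thus $u\tilde\chi_k'(u)\le p(\tilde\chi_k(u)+c_k)$, so on the region $\{u:\tilde\chi_k(u)\ge c_k\}$ the ratio is bounded by $2p$. Since $c_k\to 0$, $\tilde\chi_k\to\chi$, and $\chi>0$ on $(0,\infty)$ (a consequence of Proposition 1.4 together with $\chi(1)>0$, which follows from $1\in\partial\chi(1)$ and the subgradient inequality $\chi(2)\ge\chi(1)+1$), the complement of this region is contained in a bounded interval $(0,U_k]$.

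On that bounded complement I would fall back on smoothness. The fact that $\chi>0$ on $(0,\infty)$ implies that $\chi'$ is non-constant on $[-1/k,1/k]$, which gives $\tilde\chi_k''(0)=(\chi''*\rho_k)(0)>0$; a Taylor expansion at the origin then shows $u\tilde\chi_k'(u)/\tilde\chi_k(u)\to 2$ as $u\to 0^+$. Continuity of this ratio on $(0,\infty)$, together with its finite limits at $0$ and at infinity, forces it to be globally bounded by some finite $p_k$, establishing $\chi_k\in\mathcal W^+_{p_k}$. The main obstacle is precisely this small-$u$ regime: mollification inevitably introduces the drift $c_k$ that spoils a uniform Chebyshev bound, and absorbing it near the origin may force $p_k$ to grow with $k$—which is exactly why the statement permits $\{p_k\}$ to be unbounded.
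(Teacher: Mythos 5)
Your proposal is correct and takes a genuinely different path from the paper's in the key step. The mollification-plus-rescaling setup is essentially the same as the paper's (you divide by $b_k$ while the paper only dilates the argument, but both restore $\chi_k'(1)=1$ and both converge uniformly on compacts). The divergence is in how the growth bound $l\chi_k'(l)\le p_k\chi_k(l)$ is established. The paper does not attempt to verify it for the mollified weight; it modifies $\tilde\chi_k$ by ``surgery'' on $|x|<1/k$ and $|x|>k$ and states without detail that this can be done. You instead prove directly that the mollified-and-rescaled weight already lies in some $\mathcal W^+_{p_k}$: the Chebyshev integral inequality gives $u\tilde\chi_k'(u)\le p(\tilde\chi_k(u)+c_k)$ for $u>1/k$, which yields the bound $2p$ outside a shrinking neighborhood of the origin, and a Taylor expansion driven by $\tilde\chi_k''(0)>0$ handles the remaining compact set. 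This is sharper than what the paper claims, since it shows the modification step is actually unnecessary, and the Chebyshev observation (the first moment term $\int y\,\chi'(u-y)\rho_k(y)\,dy$ is non-positive because $y$ is non-decreasing, $\chi'(u-y)$ is non-increasing, and $\rho_k$ is even) is a clean trick that the paper avoids entirely.

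Two small points to tighten. First, the claim $\tilde\chi_k''(0)>0$ requires $\rho>0$ on $(-1,1)$, not merely supported there. Otherwise the mass of the measure $\chi''$ inside $(-1/k,1/k)$ could be concentrated where $\rho_k$ vanishes (for instance $\chi(t)=|t|$, so $\chi''=2\delta_0$, with $\rho(0)=0$ gives $\tilde\chi_k''(0)=2\rho_k(0)=0$). Any standard bump with $\rho>0$ on the open support fixes this. Second, the phrase ``finite limits at $0$ and at infinity'' should really be ``limit $2$ at $0$ and boundedness by $2p$ for large $u$''; the ratio need not converge at infinity, but boundedness is all the argument uses.
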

\begin{proof} There is great freedom in constructing the sequence $\chi_k$. First we smoothen and normalize $\chi$, introducing the sequence $\tilde \chi_k$ in the process:
$$\tilde \chi_k(l) = (\delta_{k} \star \chi)(h_k l) - (\delta_{k} \star \chi)(0), \ l \in \Bbb R, k \in \Bbb N^*.$$
Here $\delta$ is a typical choice of bump function that is smooth, even, has support in $(-1,1)$, and $\int_\Bbb R \delta(t)dt =1$. Accordingly, $\delta_k(\cdot):=k \delta(k(\cdot))$, and $h_k >0$ is chosen in such a way that $\tilde \chi_k$ becomes normalized ($\tilde \chi_k(0)=0$ and $\tilde \chi_k'(1)=1$). As $\chi$ is normalized it follows that $\tilde \chi'_k(1 - \frac{1}{k}) \leq 1 \leq \tilde \chi'_k(1 + \frac{1}{k})$. In particular,
$1-1/k \leq h_k \leq 1 +1/k$, hence the $\tilde \chi_k$ are normalized Young weights that converge to $\chi$ uniformly on compacts.

As $\chi$ is strictly increasing on $\Bbb R^+$, so is each $\tilde \chi_k$, but
our construction does not seem to guarantee that $\tilde \chi_k \in \mathcal W^+_{p_k}$ for some $p_k \geq 1$. This can be fixed by changing smoothly the values of $\tilde \chi_k$ on the sets $|x| < 1/k$ and $|x| > k$ in such a manner that the altered weights $\chi_k$ satisfy the estimate
$$l \chi'_k(l) \leq p_k \chi_k(l), \ l >0,$$
for some $p_k  \geq 1$. Since $\tilde \chi_k'(t)>0, \ t >0$ and  $\tilde \chi'_k(0)=0$, this can be done using elementary calculus.
The sequence $\{\chi_k\}_k$ thus obtained satisfies the required properties.
\end{proof}

\chapter{Finite energy pluripotential theory on K\"ahler manifolds}

In this section we introduce the basics of finite energy pluripotential theory on K\"ahler manifolds. Our treatment is by no means comprehensive and we refer the readers to \cite[Chapter 8-10]{gzbook} for a more elaborate study. Our short introduction closely follows \cite{gz} and \cite{da1}. More precisely, Sections 2.1--2.3 are based on \cite{gz}, and Section 2.4 is based on \cite{da1}.

The \emph{plurifine} topology of an open set $U \subset \Bbb C^n$ is the coarsest topology making all plurisubharmonic (psh) functions on $U$ continuous. Clearly, the resulting topology on $U$ is finer then the Euclidean one. The motivating result behind this notion is that of Bedford--Taylor \cite{BT1}, according to which the complex Monge--Amp\`ere operator is local with respect to the plurifine topology, i.e., if $\phi^j,\psi^j \in \textup{PSH}\cap L^\infty_{\textup{loc}}(U)$ and $V \subset U$ is plurifine open such that $\phi^j|_V=\psi^j|_V$, then 
\begin{equation}\label{eq: GZ_MP_local}
\mathbbm{1}_{V}i\ddbar \phi^1\wedge \ldots \wedge i\ddbar \phi^n=\mathbbm{1}_{V}i\ddbar \psi^1 \wedge \ldots \wedge i \ddbar \psi^n.
\end{equation}
When dealing with a $n$ dimensional  K\"ahler manifold $(X,\o)$, the largest class of potentials one can consider is that of $\o$-\emph{psh}  functions:
$$\textup{PSH}(X,\o)=\{ u \textup{ is quasi-plurisubharmonic on }X \textup{, and } \o_u := \o + i\ddbar u \geq 0 \textup{ as currents} \}.$$
In concrete terms, $u \in \textup{PSH}(X,\o)$ means that $g + u|_U \in \textup{PSH}(U)$, for all open sets $U \subset X$ on which the metric $\o$ can be written as $\o = i\ddbar g, \ g \in C^\infty(U)$. We immediately get that $u$ is usc and $u \in L^1(X)$.

Given the local nature of \eqref{eq: GZ_MP_local}, it generalizes to K\"ahler manifolds in a straightforward manner, and in particular we have the following identity for $u^j,v \in \textup{PSH}(X,\o) \cap L^\infty$ that we will use numerous times below: 
\begin{equation}\label{eq: GZ_MP}
\mathbbm{1}_{\cap_j \{u^j>v\}}\o_{\max(u^1,v)} \wedge \ldots \wedge \o_{\max(u^n,v)}=\mathbbm{1}_{\cap_j \{u^j>v\}}\o_{u^1} \wedge \ldots \wedge \o_{u^n}.
\end{equation}

\section{Full mass $\o$-psh functions}

One of the cornerstones of Bedford--Taylor theory is associating a complex Monge--Amp\`ere measure to bounded psh functions. Their construction adapts  to elements of $\textup{PSH}(X,\o)\cap L^\infty$ in a straightforward manner. 

As it turns out, it is possible to generalize the Bedford--Taylor construction to all elements of $\textup{PSH}(X,\o)$, as we describe now. First we consider the case of products. Let $v^j \in \textup{PSH}(X,\o), \ j \in \{1,\ldots,n\}$ and by  $v^j_h = \max (v^j,-h) \in \textup{PSH}(X,\o) \cap L^\infty, \ h \in \Bbb R$ we denote the \emph{canonical cutoffs} of $v^j$. If $h_1 < h_2$ then \eqref{eq: GZ_MP} implies that
$$\mathbbm{1}_{\cap_j \{v^j>-h_1\}}\o_{v^1_{h_1}}\wedge \ldots \wedge \o_{v^n_{h_1}}=\mathbbm{1}_{\cap_j \{v^j>-h_1\}}\o_{v^1_{h_2}} \wedge \ldots \wedge \o_{v^n_{h_2}} \leq \mathbbm{1}_{\cap_j \{v^j>-h_2\}}\o_{v^1_{h_2}} \wedge \ldots \wedge \o_{v^n_{h_2}},
$$
hence $\{ \mathbbm{1}_{\cap_j \{v^j>-h\}}\o_{v^1_{h}} \wedge \ldots \wedge \o_{v^n_{h}}\}_h$ is an increasing sequence of Borel measures on $X$. This leads to the notion of the \emph{non-pluripolar product} $\o_{v^1} \wedge \ldots \wedge \o_{v^n}$, that generalizes the corresponding Bedford--Taylor construction:
\begin{equation}\label{eq: CMA_general_def}
\o_{v^1} \wedge \ldots \wedge \o_{v^n} := \lim_{h \to \infty} \mathbbm{1}_{\cap_j \{v^j>-h\}}\o_{v^1_{h}} \wedge \ldots \wedge \o_{v^n_{h}}.
\end{equation}
When $v= v^1 = \ldots =v^n$, the above construction yields $\omega^n_u:= \omega_u \wedge \ldots \wedge \omega_u$, the \emph{non-pluripolar complex Monge-Ampere measure} of $v$:
$$\omega_v^n:= \lim_{h \to \infty} \mathbbm{1}_{\{v >-h\}} \omega_{v_h}^n.$$
 By the above definition $\int_B \o_v^n = \lim_{h \to \infty}  \int_B \mathbbm{1}_{\{v>-h\}} \o_{v_h}^n$ for all Borel sets $B \subset X$. This is stronger than simply saying that $\o_v^n$ is the weak limit of $\mathbbm{1}_{\{v>-h\}} \o_{v_h}^n$. 

Before we examine this construction more closely, let us recall a few facts about smooth elements of $\textup{PSH}(X,\o)$. Of great importance in this work is the set of smooth \emph{K\"ahler potentials}:  
\begin{equation}\label{eq: H_o_def}
\mathcal H_\o = \{u \in C^\infty(X), \ \o + i\ddbar u >0 \}.
\end{equation}
Clearly, $\mathcal H_\o \subset \textup{PSH}(X,\o)$ and in fact any element of $\textup{PSH}(X,\o)$ can be approximated by a decreasing sequence in $\mathcal H_\o$:

\begin{theorem}[\cite{bk}] \label{thm: BK_approx} Given $u \in \textup{PSH}(X,\o)$, there exists a decreasing sequence $\{u_k\}_k \subset \mathcal H_\o$ such that $u_k \searrow u$. 
\end{theorem}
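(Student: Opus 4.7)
The plan is to combine local smoothing by convolution on coordinate charts with a global gluing via Demailly's regularized maximum, following the approach of Błocki--Kołodziej.

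First I would cover $X$ by finitely many coordinate balls $U_1, \ldots, U_N$ on which $\o$ admits smooth strictly psh local potentials $\o|_{U_\alpha} = i \ddbar \phi_\alpha$, and choose nested refinements $V_\alpha \Subset W_\alpha \Subset U_\alpha$ still covering $X$. On $U_\alpha$ the function $v_\alpha := u + \phi_\alpha$ is psh, so its mollification $v_\alpha \star \rho_\varepsilon$ by a radial symmetric mollifier is smooth and psh on the $\varepsilon$-shrinkage, and decreases monotonically to $v_\alpha$ as $\varepsilon \searrow 0$ by the submean value property. Subtracting $\phi_\alpha$ gives smooth local $\o$-psh approximants $u_{\alpha,\varepsilon}$ on $W_\alpha$ with $u_{\alpha,\varepsilon} \searrow u$ as $\varepsilon \searrow 0$. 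Moreover, since $i\ddbar(v_\alpha \star \rho_\varepsilon) \geq (i\ddbar \phi_\alpha) \star \rho_\varepsilon \to \o$ uniformly on compacts, for $\varepsilon$ small we have $\o + i\ddbar u_{\alpha,\varepsilon} \geq \tfrac{1}{2}\o$ on $W_\alpha$, leaving room for later perturbations.

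Next I would glue the $u_{\alpha,\varepsilon}$ into a single smooth global $\o$-psh function by taking Demailly's regularized maximum $\tilde u_\varepsilon := M_\eta(u_{1,\varepsilon}, \ldots, u_{N,\varepsilon})$, where $M_\eta$ is a smooth symmetric convex combination of translates of the ordinary maximum that preserves $\o$-plurisubharmonicity, is monotone in each argument, and coincides with $\max$ wherever one input dominates the others by more than $\eta$. For $\tilde u_\varepsilon$ to be well-defined near $\partial W_\alpha$ and at points only covered by some of the charts, I would perturb each $u_{\alpha,\varepsilon}$ near $\partial W_\alpha$ by subtracting a nonnegative smooth function (small enough that the strict $\o$-psh slack $\tfrac12\o$ is preserved) that drops it well below its neighbors in a collar, so that the regularized max there ignores $u_{\alpha,\varepsilon}$. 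Monotonicity of both $M_\eta$ and each $\varepsilon \mapsto u_{\alpha,\varepsilon}$ then yields that $\tilde u_\varepsilon$ itself decreases in $\varepsilon$, and sampling $\varepsilon_k \searrow 0$ gives the required sequence $u_k \in \mathcal H_\o$ with $u_k \searrow u$.

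The main obstacle is the gluing step, which requires carefully tuning the perturbations, the cutoff scales, and the parameter $\eta$ so that (a) every point of $X$ is still dominated by some local approximant in its chart neighborhood, so that $\tilde u_\varepsilon$ converges pointwise to $u$ rather than to $u$ plus a nonnegative defect; (b) the perturbations are compatible with the strict $\o$-psh slack $\tfrac{1}{2}\o$ established above, so that $\tilde u_\varepsilon \in \mathcal H_\o$; and (c) the construction is uniform in $\varepsilon$ small, so that the monotonicity in $\varepsilon$ is preserved globally. Once this geometric bookkeeping of the cover is arranged, the analytic properties of $M_\eta$---smoothness, preservation of $\o$-plurisubharmonicity, and monotonicity---automatically complete the argument.
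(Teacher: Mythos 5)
Your strategy (local mollification in coordinate charts, then global gluing with cutoffs and a regularized maximum) is the same broad strategy the paper follows, but two concrete steps in your argument fail as written.

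First, the strict positivity claim is incorrect. You write that $i\ddbar(v_\alpha\star\rho_\varepsilon)\geq(i\ddbar\phi_\alpha)\star\rho_\varepsilon$, hence $\o+i\ddbar u_{\alpha,\varepsilon}\geq\tfrac12\o$ for small $\varepsilon$. But $v_\alpha=u+\phi_\alpha$ is only psh, so all one knows is $i\ddbar(v_\alpha\star\rho_\varepsilon)=(\o_u)\star\rho_\varepsilon\geq 0$; there is no lower bound against $\o$ when $\o_u$ degenerates. Consequently $u_{\alpha,\varepsilon}$ is merely $\o$-psh, with no room for the perturbations $\varepsilon\eta_\alpha/C$ or for the regularized max to stay in $\mathcal H_\o$. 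The paper's fix is to accept that the glued function is only $(1+\tfrac1k)\o$-psh, and then to rescale at the end, replacing $w_k\in\mathcal H_{(1+\frac1k)\o}$ by $u_k=\tfrac1k+(1-\tfrac1k)w_k\in\mathcal H_\o$. Some such trade-off of the background form and a final rescaling is unavoidable here.

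Second, and more seriously, you need $u_{\alpha,\varepsilon}-u_{\beta,\varepsilon}\to 0$ locally uniformly on chart overlaps for the cutoff-collar argument to discard the chart-dependent approximants near $\partial W_\alpha$, and for $\tilde u_\varepsilon$ to converge to $u$ rather than to $u$ plus a positive defect. For a general $u\in\textup{PSH}(X,\o)$ this fails: mollifications of the same psh germ in different coordinates differ, and if $u$ has a positive Lelong number at $x_0$ the discrepancy near $x_0$ is of order $c\log|DF|$ and does not tend to zero as $\varepsilon\searrow 0$. The paper addresses precisely this point: its Lemma A.3 shows that the discrepancy vanishes uniformly on compacts exactly when the Lelong numbers are zero, and its Proposition A.2 is therefore stated only for such $u$. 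For arbitrary $u\in\textup{PSH}(X,\o)$ — which may well have positive Lelong numbers — the paper first replaces $u$ by its bounded cutoffs $v_k=\max(u,-k)$ (which trivially have zero Lelong numbers), regularizes each $v_k$, and then diagonalizes. This preliminary reduction is the missing step in your proposal; without it, the gluing you describe does not produce a sequence decreasing to $u$.
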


We give a proof of this result in Appendix A.2. Among other things, this theorem implies that the total volume of $X$ is the same for all currents $\o_u$ with  bounded potential:
\begin{lemma} \label{lem: const_volume_lem} if $v \in \textup{PSH}(X,\o) \cap L^\infty$, then $\int_X \o_v^n = \int_X \o^n =: \textup{Vol}(X)$.
\end{lemma}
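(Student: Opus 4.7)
The plan is to reduce to the smooth case via the Bl\'ocki--Ko\l odziej approximation (Theorem \ref{thm: BK_approx}) and then conclude by the continuity of the Monge--Amp\`ere operator along uniformly bounded decreasing sequences of $\o$-psh functions.

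First I would verify the identity for $v \in \mathcal H_\o$. In this smooth case $\o_v - \o = i\ddbar v$ is exact, and the binomial expansion of $(\o + i\ddbar v)^n$ produces only one term with no factor of $i\ddbar v$, namely $\o^n$, while each of the remaining $n$ summands is globally exact. Stokes' theorem on the closed manifold $X$ then yields $\int_X \o_v^n = \int_X \o^n$, which is simply the statement that the volume is a cohomological invariant of the K\"ahler class $[\o]$.

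For a general $v \in \textup{PSH}(X,\o) \cap L^\infty$, Theorem \ref{thm: BK_approx} supplies a decreasing sequence $\{v_k\} \subset \mathcal H_\o$ with $v_k \searrow v$. Since $v_k \leq v_1$ and $v_k \geq v \geq -\|v\|_{L^\infty}$, the sequence is uniformly bounded on $X$. For any $h > \|v\|_{L^\infty}$ the canonical cutoff satisfies $v_h = v$ and $\{v > -h\} = X$, so definition \eqref{eq: CMA_general_def} collapses to the usual Bedford--Taylor measure of the bounded potential $v$. Working in charts where $\o = i\ddbar g$, the classical Bedford--Taylor continuity theorem for uniformly bounded decreasing sequences of psh functions then gives weak convergence $\o_{v_k}^n \to \o_v^n$ as Borel measures on $X$. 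Pairing this convergence with the continuous test function $1$ and invoking the smooth case yields
\begin{equation*}
\int_X \o_v^n \;=\; \lim_{k\to\infty} \int_X \o_{v_k}^n \;=\; \int_X \o^n,
\end{equation*}
as claimed.

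The only nontrivial input is the Bedford--Taylor continuity theorem under decreasing uniformly bounded limits, which the survey imports as a prerequisite; no real obstacle arises here. The cohomological step in the smooth case and the check that \eqref{eq: CMA_general_def} reproduces the Bedford--Taylor construction when $v$ is bounded are both routine.
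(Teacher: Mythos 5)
Your proposal is correct and follows essentially the same route as the paper's own proof: Stokes' theorem in the smooth case, then Demailly/B\l{}ocki--Ko\l{}odziej approximation plus Bedford--Taylor continuity along decreasing sequences to pass to the bounded case. The extra observations you supply (uniform boundedness of the approximating sequence and the check that \eqref{eq: CMA_general_def} collapses to the usual Bedford--Taylor measure for bounded $v$) are correct and just make explicit what the paper leaves implicit.
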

\begin{proof} By an application of Stokes theorem, the statement holds for $v \in \mathcal H_\o$. The general result follows after we approximate $v \in \textup{PSH}(X,\o) \cap L^\infty$ with a decreasing sequence $v_k \in \mathcal H_\o$ (Theorem \ref{thm: BK_approx}), and we use Bedford--Taylor theory (\cite[Theorem 2.2.5]{bl3}) to conclude that $\o_{v_k}^n \to \o_v^n$ weakly. 
\end{proof}

In contrast with the above, given our definition \eqref{eq: CMA_general_def}, it is clear that we only have $\int_X\o_v^n \leq \textup{Vol}(X)$ for $v \in \textup{PSH}(X,\o)$. This leads to the natural definition of $\o$-psh functions of \emph{full mass}:
$$\mathcal E(X,\o) := \{v \in \textup{PSH}(X,\o) \textup{ s.t. } \int_X \o_v^n = \textup{Vol}(X) \}.$$

Though $\textup{PSH}(X,\o) \cap L^\infty \subsetneq \mathcal E(X,\o)$ (see the examples of Section 2.3), many of the properties that hold for bounded $\o$-psh functions, still hold for (unbounded) elements of $\mathcal E(X,\o)$ as well, like the \emph{comparison principle}:
\begin{proposition}\textup{\cite[Theorem 1.5]{gz}}\label{prop: comp_princ_E} Suppose $u,v \in \mathcal E(X,\o)$. Then
\begin{equation}\label{eq: comp_princ_E}
\int_{\{v < u\}}\o_u^n \leq \int_{\{v < u\}}\o_v^n. 
\end{equation} 
\end{proposition}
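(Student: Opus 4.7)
The plan is to reduce to the Bedford--Taylor comparison principle for bounded $\o$-psh functions (available from the assumed prerequisites) using the canonical cutoffs $u_h = \max(u,-h)$ and $v_h = \max(v,-h)$, then let $h \to \infty$ while exploiting the full-mass hypothesis to kill the resulting error. Applied to the bounded pair $(u_h, v_h)$, the classical principle gives
$$\int_{\{v_h < u_h\}} \o_{u_h}^n \leq \int_{\{v_h < u_h\}} \o_{v_h}^n.$$
A short four-case analysis on the relative sizes of $u, v$ and $-h$ identifies this set as $\{v_h < u_h\} = \{v < u\} \cap \{u > -h\}$ (note in particular that $\{u \leq -h\}$ is excluded on both sides).

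Next I would translate both sides into statements about $\o_u^n$ and $\o_v^n$. The defining increasing limit \eqref{eq: CMA_general_def} combined with plurifine locality \eqref{eq: GZ_MP_local} on the plurifine open set $\{u > -h\}$ (where $u_h = u$ is bounded, matching any larger cutoff $u_{h'}$) yields the key identity $\mathbbm{1}_{\{u > -h\}} \o_u^n = \mathbbm{1}_{\{u > -h\}} \o_{u_h}^n$, so the left-hand side becomes $\int_{\{v < u\} \cap \{u > -h\}} \o_u^n$. An analogous identity holds for $v$ on $\{v > -h\}$.

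The main work is bounding the right-hand side. Splitting along $\{v > -h\}$ versus $\{v \leq -h\}$, the first piece rewrites via the $v$-analogue of the above locality identity as $\int_{\{v < u\} \cap \{u > -h\} \cap \{v > -h\}} \o_v^n \leq \int_{\{v < u\}} \o_v^n$. The second piece I bound crudely by $\int_{\{v \leq -h\}} \o_{v_h}^n$, and here the full-mass hypothesis enters decisively: since $v_h$ is bounded, Lemma \ref{lem: const_volume_lem} gives
$$\int_{\{v \leq -h\}} \o_{v_h}^n = \textup{Vol}(X) - \int_{\{v > -h\}} \o_{v_h}^n = \textup{Vol}(X) - \int_{\{v > -h\}} \o_v^n,$$
which tends to $0$ as $h \to \infty$ precisely because $\int_X \o_v^n = \textup{Vol}(X)$.

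Finally, letting $h \to \infty$ on the left, monotone convergence gives $\int_{\{v < u\} \cap \{u > -h\}} \o_u^n \nearrow \int_{\{v < u\}} \o_u^n$, since $\{v < u\} \subset \{u > -\infty\}$ automatically, and chaining the estimates yields \eqref{eq: comp_princ_E}. The main obstacle is the defect term on $\{v \leq -h\}$: without a full-mass hypothesis on $v$, the bounded measure $\o_{v_h}^n$ could retain nontrivial mass on the sub-level sets of $v$ in the limit, and this is exactly where the assumption $v \in \mathcal E(X,\o)$ is used essentially (note the hypothesis $u \in \mathcal{E}(X,\o)$ is in fact not needed for this argument).
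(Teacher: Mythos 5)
Your proof is correct and takes a genuinely different route from the paper's. The paper proves the bounded case from scratch (via \eqref{eq: GZ_MP}, Lemma \ref{lem: const_volume_lem}, and an $\varepsilon$-perturbation of $v$), then in the unbounded case truncates $u$ and $v$ at \emph{independent} levels $k,l$, chains the inclusions $\{v_l < u\} \subset \{v_l < u_k\} \subset \{v < u_k\}$, passes $l \to \infty$, then $k \to \infty$, and finishes with a second $\varepsilon$-perturbation to upgrade $\{v \leq u\}$ to $\{v < u\}$. You instead invoke the bounded global comparison principle as a black box, cut both functions at a \emph{single} level $h$, and identify the contact set exactly: $\{v_h < u_h\} = \{v < u\} \cap \{u > -h\}$. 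Because the identification already yields $\{v < u\}$ rather than $\{v \leq u\}$, no $\varepsilon$-perturbation is needed, and after pushing both sides through the plurifine-locality identity $\mathbbm{1}_{\{w > -h\}}\o_w^n = \mathbbm{1}_{\{w > -h\}}\o_{w_h}^n$ the whole reduction collapses to the single defect term $\int_{\{v \leq -h\}}\o_{v_h}^n$, which full mass of $v$ kills. Your version is leaner in bookkeeping and, more substantively, makes the role of the hypotheses transparent: as you observe, only $v \in \mathcal E(X,\o)$ is used, and $u$ may be an arbitrary element of $\textup{PSH}(X,\o)$ — a sharpening that the paper's symmetric hypothesis and two-parameter truncation tend to obscure, though its argument also in fact only uses full mass of $v$ if one reads the $k \to \infty$ step via monotone convergence on $\{u > -k\}$ rather than full convergence of $\o_{u_k}^n$. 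The one caveat worth flagging is that the bounded comparison principle on a compact manifold is not verbatim among the stated Bedford--Taylor prerequisites; it is a standard consequence of them (indeed exactly what the first half of the paper's proof supplies from \eqref{eq: GZ_MP} and Lemma \ref{lem: const_volume_lem}), so your reliance is defensible but should be called out explicitly rather than attributed to the references.
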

\begin{proof} First we show \eqref{eq: comp_princ_E} for $u,v$ bounded. Using \eqref{eq: GZ_MP} we can write:
\begin{flalign*}
\int_{\{v < u\}}\o_u^n =& \int_{\{v < u\}}\o_{\max(u,v)}^n=\int_X \o_{\max(u,v)}^n - \int_{\{ u \leq v\}} \o_{\max(u,v)}^n \\
\leq& \int_X \o_{\max(u,v)}^n - \int_{\{ u < v\}} \o_{\max(u,v)}^n.
\end{flalign*}
Using Lemma \ref{lem: const_volume_lem} and \eqref{eq: GZ_MP} we can continue to write:
$$\int_{\{v<u\}}\o_u^n  \leq \int_X \o_{v}^n - \int_{\{ u < v\}} \o_{v}^n = \int_{\{v\leq u\}}\o_v^n.$$
Noticing that $\{ v < u\} = \cup_{\varepsilon > 0} \{v + \varepsilon < u\}=\cup_{\varepsilon > 0} \{v + \varepsilon \leq  u\}$, we can replace $v$ with $v + \varepsilon$ in the above inequality and let $\varepsilon \to 0$ to obtain \eqref{eq: comp_princ_E} for bounded potentials.

In general, let $u_l = \max(u,-l), v_k=\max(u,-k), \ l,k \in \Bbb N$ be the canonical cutoffs of $u,v$. For these, by the above we have
$\int_{\{v_l < u_k\}} \o_{u_k}^n \leq \int_{\{v_l < u_k\}} \o_{v_l}^n$. This inequality together with the inclusions $\{ v_l < u \} \subset \{v_l < u_k \} \subset \{v < u_k \}$ gives 
$$\int_{\{v_l < u\}} \o_{u_k}^n \leq \int_{\{v < u_k\}} \o_{v_l}^n.$$
We can let $l \to \infty$, and using the definition of $\o_{v_l}^n$ from \eqref{eq: CMA_general_def} it follows that $\int_{\{v < u\}} \o_{u_k}^n \leq \int_{\{v < u_k\}} \o_{v}^n.$ Now letting $k \to \infty$ and using again \eqref{eq: CMA_general_def} we obtain
$$\int_{\{v < u\}} \o_{u}^n \leq \int_{\{v <  u\}} \o_{v}^n.$$
\end{proof}

Of great importance in what follows will be the \emph{monotonicity property} of $\mathcal E(X,\o)$: 
\begin{proposition}\textup{\cite[Proposition 1.6]{gz}} \label{prop: monotonicity_E} Suppose $u \in \mathcal E(X,\o)$ and $v \in \textup{PSH}(X,\o)$. If $u \leq v$ then $v \in \mathcal E(X,\o)$.
\end{proposition}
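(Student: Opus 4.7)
The plan is to show $\int_X \omega_v^n \geq \textup{Vol}(X)$; combined with the trivial upper bound $\int_X \omega_v^n \leq \textup{Vol}(X)$ this will give $v \in \mathcal E$. I would apply the comparison principle of Proposition \ref{prop: comp_princ_E} to $u$ and to the bounded truncation $v_k := \max(v,-k) \in \mathcal E$, exploiting the ordering $u \leq v \leq v_k$.

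Since $u \leq v_k$ with equality precisely on $\{u = v, v \geq -k\}$, the set $\{u < v_k\}$ decomposes as $\{u < v\} \sqcup \{u = v < -k\}$, and Proposition \ref{prop: comp_princ_E} gives
$$\int_{\{u < v\}} \omega_{v_k}^n + \int_{\{u = v < -k\}} \omega_{v_k}^n \leq \int_{\{u < v\}} \omega_u^n + \int_{\{u = v < -k\}} \omega_u^n.$$
I would then argue that both integrals over $\{u = v < -k\}$ vanish as $k \to \infty$. For the $\omega_u^n$-integral, the hypothesis $u \in \mathcal E$ yields $\omega_u^n(\{u = -\infty\}) = 0$, so $\omega_u^n(\{u < -k\}) \to 0$. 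For the $\omega_{v_k}^n$-integral, plurifine locality \eqref{eq: GZ_MP_local} applied on the plurifine open set $\{v < -k\}$---where $v_k$ reduces to the constant $-k$---identifies $\omega_{v_k}^n$ with $\omega^n$ there, and since $\{u = v < -k\} \subset \{u < -k\}$ one has $\omega^n(\{u < -k\}) \to 0$ as $u \in L^1(\omega^n)$.

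Passing to the limit and using plurifine locality \eqref{eq: GZ_MP_local} once more---this time to identify $\omega_{v_k}^n$ with $\omega_v^n$ on the plurifine open set $\{v > -k\}$ via the defining formula \eqref{eq: CMA_general_def}---yields the inequality $\int_{\{u < v\}} \omega_v^n \leq \int_{\{u < v\}} \omega_u^n$. Complementing this with the mass identity $\int_X \omega_{v_k}^n = \textup{Vol}(X)$ from Lemma \ref{lem: const_volume_lem} gives the reverse inequality $\int_{\{u = v\}} \omega_v^n \geq \int_{\{u = v\}} \omega_u^n$ on the coincidence set. Summing these and comparing with $\int_X \omega_u^n = \textup{Vol}(X)$ should deliver $\int_X \omega_v^n \geq \textup{Vol}(X)$.

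The hardest step will be this final mass-accounting, since the two inequalities obtained run in opposite directions on $\{u < v\}$ versus $\{u = v\}$. The resolution is to track the ``mass defect'' $\textup{Vol}(X) - \int_X \omega_v^n$: by construction it is concentrated on the shrinking sets $\{v \leq -k\}$, which telescope into $\{v = -\infty\} \subset \{u = -\infty\}$. Since $u \in \mathcal E$ forbids $\omega_u^n$ from charging this pluripolar set, the comparison-principle estimates force the defect to vanish, and the hypothesis $u \leq v$ is used essentially in placing the singular locus of $v$ inside that of $u$.
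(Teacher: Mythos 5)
Your derivation gives you, in the limit $k \to \infty$, two inequalities:
\begin{equation*}
\int_{\{u < v\}} \o_v^n \leq \int_{\{u < v\}} \o_u^n \qquad \text{and} \qquad \int_{\{u = v\}} \o_v^n \geq \int_{\{u = v\}} \o_u^n.
\end{equation*}
You correctly observe that these run in opposite directions, but the resolution you sketch does not close the gap. Adding them and comparing with $\int_X \o_u^n = \textup{Vol}(X)$ is simply inconclusive: to get $\int_X \o_v^n \geq \textup{Vol}(X)$ from the second inequality you would need $\int_{\{u < v\}} \o_v^n \geq \int_{\{u < v\}} \o_u^n$, which is the \emph{reverse} of the first. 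So at the end of the mass accounting you are left with nothing. The phrase ``the comparison-principle estimates force the defect to vanish'' is exactly the claim that still has to be proved, and the estimates you have derived do not force it. Concretely, the mass defect is $\textup{Vol}(X) - \int_X \o_v^n = \lim_k \int_{\{v \leq -k\}}\o_{v_k}^n$, and you need to dominate $\int_{\{v \leq -k\}}\o_{v_k}^n$ by something tending to zero; but in your decomposition of $\{u<v_k\}$ the set $\{v \leq -k\}$ never appears on the left of a comparison-principle inequality, only the set $\{u=v<-k\}\subsetneq\{v\leq-k\}$.

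The obstruction is that the comparison principle requires a \emph{strict} inequality in the defining set, and on the region $\{v\leq -k\}$ the inclusion $u\leq v$ can degenerate to $u_{2k}=v_k$ (e.g.\ on $\{u=v=-k\}$), so you cannot force the sublevel set $\{v\leq -k\}$ inside a non-contact set of the form $\{\phi<\psi\}$ with $\phi,\psi$ built directly from $u$ and $v$. The paper resolves precisely this by first proving $v/2 \in \mathcal E(X,\o)$: writing $\psi := v/2$, normalizing $u \leq v < -2$, and using the chain
\begin{equation*}
\{\psi_j \leq -j\} \subset \{u_{2j} < \psi_j - j + 1\} \subset \{u_{2j} \leq -j\},
\end{equation*}
where the middle inclusion is \emph{strict} because on $\{v \leq -2j\}$ one has $u_{2j} = -2j < -2j + 1 = \psi_j - j + 1$. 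The comparison principle then yields $\int_{\{\psi_j \leq -j\}}\o_{\psi_j}^n \leq \int_{\{u_{2j}\leq -j\}}\o_{u_{2j}}^n \to 0$, giving $\psi\in\mathcal E$; the final step uses $\o_{\psi_j} \geq \tfrac12 \o_{v_{2j}}$ to transfer this to $v$. Without some analogue of this rescaling to manufacture a strict gap of definite size inside the sublevel sets of $v$, your approach stalls at the mass-accounting step.
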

\begin{proof} Set $\psi := v/2$. We can assume without loss of generality that $u \leq v < -2$, hence $\psi < -1$.  This choice of normalization allows to write the following inclusions for the canonical cutoffs $u_j,v_j,\psi_j$:
$$\{\psi \leq -j  \}=\{\psi_j \leq -j  \} \subset \{u_{2j} < \psi_j -j +1\} \subset \{u_{2j} \leq -j \}.$$
Using these inclusions and the comparison principle (Proposition \ref{prop: comp_princ_E}) we conclude:
$$\int_{\{\psi_j \leq -j\}}\o_{\psi_j}^n \leq \int_{\{u_{2j} < \psi_j -j +1\}}\o_{\psi_j}^n \leq \int_{\{u_{2j} < \psi_j -j +1\}}\o_{u_{2j}}^n \leq \int_{\{u_{2j} \leq -j\}}\o_{u_{2j}}^n.$$
Using Lemma \ref{lem: const_volume_lem} and \eqref{eq: GZ_MP} we get $\int_{\{u_{2j} \leq -j\}}\o_{u_{2j}}^n = \int_{\{u_{j} \leq -j\}}\o_{u_{j}}^n=\int_{\{u \leq -j\}}\o_{u_{j}}^n$, hence $\int_{\{\psi_j \leq -j\}}\o_{\psi_j}^n \to 0$ as $j \to \infty$, implying that $\psi=v/2 \in \mathcal E(X,\o)$.

Now we argue that in fact $v \in \mathcal E(X,\o)$. For the canonical cutoffs we have the identity $v_{2j}/2 = \psi_j, \ j \in \Bbb N$, hence we get the estimate $\o_{\psi_j} = \o/2 + \o_{v_{2j}}/2 \geq \o_{v_{2j}}/2$. This allows to conclude that
$$\int_{\{v \leq -2j\}}\o_{v_{2j}}^n \leq 2^n \int_{\{v \leq -2j\}}\o_{\psi_{j}}^n = 2^n \int_{\{\psi \leq -j\}}\o_{\psi_{j}}^n,$$
hence the left hand integral converges to zero as $j \to \infty$, since $\psi \in \mathcal E(X,\omega)$. Using \eqref{eq: CMA_general_def}, this implies that $\int_X \o_v^n = \textup{Vol}(X)$, i.e., $v \in \mathcal E(X,\o)$ as desired. 
\end{proof}

Lastly we note that formula \eqref{eq: GZ_MP} generalizes to full mass $\omega$-psh functions as well:
\begin{lemma}\label{lem: MP_forE} For $u,v \in \mathcal E(X,\o)$ we have $ \mathbbm{1}_{\{u>v\}}\o_{\max(u,v)}^n=\mathbbm{1}_{\{u>v\}}\o_u^n$.
\end{lemma}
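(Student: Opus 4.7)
The plan is to reduce to the bounded case via the canonical cutoffs $u_h=\max(u,-h)$ and $v_h=\max(v,-h)$, and then pass to the limit using the definition \eqref{eq: CMA_general_def} of the Monge--Amp\`ere measure for elements of $\mathcal E(X,\o)$. By Proposition \ref{prop: monotonicity_E}, $\max(u,v)\in \mathcal E(X,\o)$, so both sides of the desired identity are well defined. Since the identity is a statement about Borel measures, it is enough to show that $\int_B \o_u^n = \int_B \o_{\max(u,v)}^n$ for every Borel set $B\subset \{u>v\}$.

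The key combinatorial input is the set-theoretic identity
$$\{u_h > v_h\} \;=\; \{u>v\}\cap\{u>-h\},$$
which follows by a short case analysis: if $u>v$ and $u>-h$ then $u_h=u>\max(v,-h)=v_h$; if $u>v$ and $u\leq -h$, then $v<u\leq -h$ forces $u_h=v_h=-h$; and if $u\leq v$ then clearly $u_h\leq v_h$. Moreover $\max(u_h,v_h)=\max(u,v,-h)$ is exactly the canonical cutoff $\max(u,v)_h$. Consequently, the bounded-potential version \eqref{eq: GZ_MP} applied to $u_h, v_h \in \textup{PSH}(X,\o)\cap L^\infty$ yields
$$\mathbbm{1}_{\{u_h>v_h\}}\o_{\max(u,v)_h}^n \;=\; \mathbbm{1}_{\{u_h>v_h\}}\o_{u_h}^n.$$

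For any Borel set $B\subset \{u>v\}$, I would now intersect with $B$ and integrate. Because $\max(u,v)=u$ on $\{u>v\}$, we have $B\cap\{u>-h\}=B\cap\{\max(u,v)>-h\}$, and the set identity above gives $B\cap\{u_h>v_h\}=B\cap\{u>-h\}=B\cap\{\max(u,v)>-h\}$. Hence
$$\int_B \mathbbm{1}_{\{u>-h\}}\,\o_{u_h}^n \;=\; \int_B \mathbbm{1}_{\{\max(u,v)>-h\}}\,\o_{\max(u,v)_h}^n.$$
Letting $h\to \infty$ and invoking the defining formula \eqref{eq: CMA_general_def} on both sides (which is a genuine monotone limit of Borel measures, not just a weak limit) produces $\int_B \o_u^n=\int_B \o_{\max(u,v)}^n$, completing the argument.

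The only delicate point I anticipate is making sure the passage to the limit is legitimate on an arbitrary Borel $B\subset\{u>v\}$; this is precisely what is built into \eqref{eq: CMA_general_def}, which is stronger than weak convergence and permits testing against $\mathbbm{1}_B$ directly. Everything else is bookkeeping with the canonical cutoffs and the plurifine-local identity \eqref{eq: GZ_MP} for bounded $\o$-psh functions.
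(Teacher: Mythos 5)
Your proof is correct, and it takes a cleaner route than the paper's. The paper uses the asymmetric cutoffs $u_j$ and $v_{j+1}$, chosen precisely so that $\{u>v\}\subset\{u_j>v_{j+1}\}$; it then controls the discrepancy between $\mathbbm{1}_{\{u>v\}}\o_{u_j}^n$ and $\mathbbm{1}_{\{u_j>v_{j+1}\}}\o_{u_j}^n$ by bounding it with $\mathbbm{1}_{\{u\leq -j\}}\o_{u_j}^n\to 0$, and this last step genuinely invokes $u\in\mathcal E(X,\o)$ (since $\int_{\{u\leq -j\}}\o_{u_j}^n=V-\int_{\{u>-j\}}\o_{u_j}^n\to 0$ is a full-mass statement). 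You instead use the same-level cutoffs $u_h,v_h$, and your key observation is the exact identity $\{u_h>v_h\}=\{u>v\}\cap\{u>-h\}$, which means that for any Borel $B\subset\{u>v\}$ the restricted measure $\mathbbm{1}_B\mathbbm{1}_{\{u_h>v_h\}}\o_{u_h}^n$ is literally $\mathbbm{1}_B\mathbbm{1}_{\{u>-h\}}\o_{u_h}^n$ (and similarly with $\max(u,v)$ in place of $u$, since $\max(u,v)=u$ on $B$). This lines up \emph{exactly} with the defining increasing limit \eqref{eq: CMA_general_def}, so you pass to the limit with no error term to estimate and, notably, without ever using that $u,v$ have full mass. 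In fact your argument proves the identity for arbitrary $u,v\in\textup{PSH}(X,\o)$, a slightly more general statement than the lemma; the paper's route is tied to $\mathcal E(X,\o)$ through the final convergence step. Your invocation of Proposition \ref{prop: monotonicity_E} to ensure $\max(u,v)\in\mathcal E(X,\o)$ is harmless but actually unnecessary for your argument, since $\o_w^n$ is defined by \eqref{eq: CMA_general_def} for every $w\in\textup{PSH}(X,\o)$.
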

\begin{proof}Proposition \ref{prop: monotonicity_E} implies that $\alpha:= \max(u,v) \in \mathcal E(X,\o)$. For the canonical cutoffs we observe that $\max(u_j,v_{j+1}) = \max(u,v, -j) = \alpha_j$.
As the cutoffs are bounded we have 
\begin{equation} \label{eq: MP_intermediate}
\mathbbm{1}_{\{u_j>v_{j+1}\}}\o_{\alpha_j}^n=\mathbbm{1}_{\{u_j>v_{j+1}\}}\o_{u_j}^n.
\end{equation}
The definition of $\o_{u}^n$ and $\o_\alpha^n$ \eqref{eq: CMA_general_def} implies that $\mathbbm{1}_{\{u>v\}}\o_{u_j}^n \to \mathbbm{1}_{\{u>v\}}\o_{u}^n$ and $\mathbbm{1}_{\{u>v\}}\o_{\alpha_j}^n \to \mathbbm{1}_{\{u>v\}}\o_{\alpha}^n$. Since $\{u > v \} \subset \{u_j > v_{j+1} \}$ 
and $ \{u_j > v_{j+1}\} \setminus \{u > v \} \subset \{ u \leq  -j\}$, it follows that 
$$0 \leq (\mathbbm{1}_{\{u_j>v_{j+1}\}}-\mathbbm{1}_{\{u>v\}})\o_{u_j}^n \leq \mathbbm{1}_{\{u \leq -j\}}\o_{u_j}^n \to 0.$$
Since $ \{u_j > v_{j+1}\} \setminus \{u > v \} \subset \{\max(u,v) \leq  -j\}$ we also obtain that $0 \leq (\mathbbm{1}_{\{u>v\}}-\mathbbm{1}_{\{u_j>v_{j+1}\}})\o_{\alpha_j}^n \leq \mathbbm{1}_{\{\alpha \leq -j\}}\o_{\alpha_j}^n \to 0.$
Combining these last facts with \eqref{eq: MP_intermediate}, and taking the limit, we arrive at the desired result.
\end{proof}




\section{Finite energy classes}

By considering weights $\chi \in \mathcal W^+_p$, one can introduce various finite energy subclasses of $\mathcal E(X,\o)$, important in our later geometric study:  
\begin{equation}
\label{eq: E_chi_def}
\mathcal E_\chi(X,\o) := \{u \in \mathcal E(X,\o) \textup{ s.t. } E_\chi(u) < \infty \},
\end{equation}
where $E_\chi$ is the $\chi$\emph{--energy} defined by the expression:
$$E_\chi(u) := \int_X \chi(u) \o_u^n.$$ 
Recall from Chapter 1 that the condition $E_\chi(u) < \infty$ is equivalent to $u \in L^\chi(\o_u^n)$. Of special importance are the weights $\chi_p(t):=|t|^p/p$ and the associated finite energy classes 
\begin{equation}\label{eq: E_p_def}
\mathcal E_p(X,\o):= \mathcal E_{\chi_p}(X,\o).
\end{equation}

By the next result, to test membership in $\mathcal E_\chi(X,\omega)$ it is enough to test the finiteness condition $E_\chi(u) < \infty$ on the canonical cutoffs: 

\begin{proposition}\textup{\cite[Proposition 1.4]{gz}}\label{prop: cutoff_aprox} Suppose $u \in \mathcal E(X,\o)$ with canonical cutoffs $\{u_k\}_{k \in \Bbb N}$. If $h: \Bbb R^+ \to \Bbb R^+$ is continuous and increasing 
 then
$$\int_X h(|u|) \o_u^n< \infty \textup{ if and only if } \limsup_{k}\int_X h(|u_k|)\o_{u_k}^{n}<\infty.$$
Additionally, if the above conditions hold, then $\int_X h(|u|) \o_u^n= \lim_{k \to \infty}\int_X h(|u_k|)\o_{u_k}^{n}.$
\end{proposition}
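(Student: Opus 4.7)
The plan is to establish the exact identity
\begin{equation*}
\int_X h(|u_k|)\,\o_{u_k}^n \;=\; \int_{\{u > -k\}} h(|u|)\,\o_u^n \;+\; h(k)\,\o_u^n(\{u \le -k\}) \qquad (\dagger)
\end{equation*}
for every $k$, and then read off both assertions of the proposition from $(\dagger)$ via routine convergence theorems. Most of the work goes into proving $(\dagger)$; the rest is bookkeeping.

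To obtain $(\dagger)$, I would split the left side along the Borel partition $\{u > -k\} \sqcup \{u \le -k\}$. On $\{u > -k\}$ the cutoff satisfies $u_k = u$, so $h(|u_k|) = h(|u|)$; moreover, as derived in the discussion preceding \eqref{eq: CMA_general_def}, the sequence $\mathbbm{1}_{\{u > -h\}}\o_{u_h}^n$ is already stationary on $\{u > -k\}$ for $h \ge k$ (a direct use of \eqref{eq: GZ_MP}), so passing to the limit gives $\mathbbm{1}_{\{u > -k\}}\o_{u_k}^n = \mathbbm{1}_{\{u > -k\}}\o_u^n$. On $\{u \le -k\}$ we have $u_k \equiv -k$, hence $h(|u_k|) \equiv h(k)$; and here the full-mass hypothesis $u \in \mathcal E(X,\o)$ enters via
\begin{equation*}
\o_{u_k}^n(\{u \le -k\}) = \textup{Vol}(X) - \o_{u_k}^n(\{u > -k\}) = \textup{Vol}(X) - \o_u^n(\{u > -k\}) = \o_u^n(\{u \le -k\}),
\end{equation*}
using Lemma \ref{lem: const_volume_lem} for the first equality and $\int_X \o_u^n = \textup{Vol}(X)$ for the last. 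Combining these two computations produces $(\dagger)$.

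With $(\dagger)$ in hand, monotonicity of $h$ gives $h(k) \le h(|u|)$ on $\{u \le -k\}$, so $\int_X h(|u_k|)\,\o_{u_k}^n \le \int_X h(|u|)\,\o_u^n$, which controls the $\limsup$ in one direction. Conversely, dropping the nonnegative boundary term in $(\dagger)$ yields $\int_{\{u > -k\}} h(|u|)\,\o_u^n \le \int_X h(|u_k|)\,\o_{u_k}^n$; since $\{u > -k\} \nearrow \{u > -\infty\}$ and, directly from \eqref{eq: CMA_general_def}, $\o_u^n(\{u = -\infty\}) = 0$, monotone convergence gives $\int_X h(|u|)\,\o_u^n \le \liminf_k \int_X h(|u_k|)\,\o_{u_k}^n$, supplying the reverse implication.

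Finally, assuming both quantities are finite, the equality of limits follows once we verify that the boundary term in $(\dagger)$ vanishes. But monotonicity of $h$ yields $h(k)\,\o_u^n(\{u \le -k\}) \le \int_{\{u \le -k\}} h(|u|)\,\o_u^n$, and the right side tends to $0$ by dominated convergence with the integrable dominator $h(|u|) \in L^1(\o_u^n)$. The only genuinely delicate step in the whole argument is the mass-equality $\o_{u_k}^n(\{u \le -k\}) = \o_u^n(\{u \le -k\})$ in the derivation of $(\dagger)$; it is precisely there that the hypothesis $u \in \mathcal E(X,\o)$ plays its essential role, and without it one has only the inequality $\int_X h(|u_k|)\,\o_{u_k}^n \le \int_X h(|u|)\,\o_u^n + h(k)(\textup{Vol}(X) - \int_X \o_u^n)$.
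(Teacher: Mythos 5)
Your argument is correct, and it takes a cleaner path than the paper for one of the two directions. The paper's proof of "$\limsup_k \int_X h(|u_k|)\o_{u_k}^n < \infty \Rightarrow \int_X h(|u|)\o_u^n < \infty$" invokes weak precompactness of the measures $h(|u_k|)\o_{u_k}^n$, extracts a subsequential weak limit $\mu$, and then uses a standard lemma on increasing sequences of lower semicontinuous functions to bound $\int_X h(|u|)\o_u^n \le \mu(X)$. For the other direction and for the equality of limits, the paper uses the same plurifine decomposition you do, arriving at the bound $\big|\int_X h(|u_k|)\o_{u_k}^n - \int_X h(|u|)\o_u^n\big| \le 2\int_{\{u\le -k\}}h(|u|)\o_u^n$, which is exactly your boundary-term estimate. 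What you do differently is to promote that decomposition to the explicit identity $(\dagger)$, and then observe that $(\dagger)$ alone, combined with monotone convergence and $\o_u^n(\{u=-\infty\})=0$, yields \emph{both} implications plus the limit identity, with no weak compactness, no extracted subsequence, and no appeal to semicontinuity of $h(|u_k|)$ (which is also why you can avoid the paper's harmless normalization $u\le 0$). The mass-equality $\o_{u_k}^n(\{u\le -k\})=\o_u^n(\{u\le -k\})$ is indeed where $u\in\mathcal E(X,\o)$ enters and where, in the non-full-mass case, $(\dagger)$ degrades to the one-sided inequality you record; that remark accurately isolates the role of the hypothesis.
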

\begin{proof} If $\limsup_{k}\int_X h(|u_k|)\o_{u_k}^{n}<C$ then the family of measures $\{h(\max(0,|u_k|))\o_{u_k}^n\}_k$ is precompact, hence one can extract a weakly converging subsequence of measures $\{h(\max(0,|u_{k_j}|))\o_{u_{k_j}}^n\}_{k_j} \to \mu$ with $\mu(X) < C$. By the definition of $\o_u^n$ \eqref{eq: CMA_general_def} it follows that  $\o_{u_{k_j}}^n \to \o_u^n$ weakly. Since $\{h(\max(0,|u_{k_j}|)) \}_{k_j}$ is a sequence of lsc functions increasing to $h(\max(0,|u|))$, a standard measure theoretic lemma implies that $ \int_X h(\max(0,|u|)) \o_u^n \leq \mu(X)$ (see \cite[Lemma A2.2]{bl3}). This implies that $ \int_X h(|u|) \o_u^n$ is finite.

Now assume that $\int_X h(|u|)\o_u^n \leq C$. By \eqref{eq: GZ_MP} and the definition of $\o_u^n$ it follows that $\mathbbm{1}_{\{u>-k\}}\o_{u}^n=\mathbbm{1}_{\{u>-k\}}\o_{u_k}^n$, implying that $\int_{\{u \leq -k\}}\o_{u_k}^n=\int_{\{u \leq -k\}}\o_{u}^n$. As a consequence, we can write the following:
\begin{flalign*}
\Big|\int_X h (|u_k|)\o_{u_k}^n -  \int_X h (|u|)\o_{u}^n\Big| &\leq \int_{\{u \leq  -k\}} h(k) \o_{u_k}^n + \int_{\{u \leq  -k\}} h(|u|) \o_{u}^n\\
&=  h(k)\int_{\{u \leq  -k\}} \o_{u}^n + \int_{\{u \leq  -k\}} h(|u|) \o_{u}^n\\
&\leq  2\int_{\{u \leq  -k\}} h(|u|) \o_{u}^n.
\end{flalign*}
Since $\int_{\{u=-\infty \}} h(|u|)\o_u^n=0$, it follows that $\int_X h (|u_k|)\o_{u_k}^n$ is bounded above and moreover $\int_X h (|u_k|)\o_{u_k}^n \to \int_X h (|u|)\o_{u}^n$.
\end{proof}
With the aid of the previous proposition, we can prove our next result, sometimes called the \emph{``fundamental estimate"}:

\begin{proposition} \textup{\cite[Lemma 3.5]{gz}}\label{prop: Energy_est} Suppose $\chi\in \mathcal W^+_p$ and $u,v \in \mathcal E_\chi(X,\o)$ satisfies $u\leq v \leq 0$. Then 
\begin{equation}\label{eq: Fund_Est}
E_\chi(v) \leq  (p+1)^n E_\chi(u).
\end{equation} 
\end{proposition}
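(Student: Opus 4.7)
The plan is first to reduce to the case where both $u$ and $v$ are bounded, and then to handle the bounded case by an iterative integration-by-parts estimate that exchanges one factor of $\o_v$ for a factor of $\o_u$ at the cost of a multiplicative constant $p+1$ per exchange.

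\textbf{Reduction to bounded potentials.} I would let $u_j = \max(u,-j)$ and $v_j = \max(v,-j)$ be the canonical cutoffs; these are bounded $\o$-psh with $u_j \leq v_j \leq 0$. Assuming \eqref{eq: Fund_Est} holds for bounded potentials, I would apply Proposition \ref{prop: cutoff_aprox} with the continuous increasing function $\chi|_{\mathbb{R}^+}$ to obtain $E_\chi(u_j) \to E_\chi(u)$ and $E_\chi(v_j) \to E_\chi(v)$ (both limits being finite since $u,v \in \mathcal E_\chi$), and conclude the general case by passing to the limit.

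\textbf{Iteration in the bounded case.} For bounded $u \leq v \leq 0$, I aim to establish the one-step estimate
$$
\int_X \chi(-v)\,\o_v^k \wedge \o_u^{n-k} \;\leq\; (p+1) \int_X \chi(-v)\,\o_v^{k-1} \wedge \o_u^{n-k+1}, \qquad k = 1,\dots,n.
$$
Iterating from $k=n$ down to $k=1$ will yield $\int_X \chi(-v)\,\o_v^n \leq (p+1)^n \int_X \chi(-v)\,\o_u^n$. Since $\chi$ is even, non-negative, and increasing on $\mathbb{R}^+$, while $0 \leq -v \leq -u$, one has $\chi(-v) \leq \chi(-u)$ pointwise, so the right-hand side is controlled by $(p+1)^n E_\chi(u)$; the left-hand side is exactly $E_\chi(v)$.

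\textbf{The one-step IBP estimate, and the main obstacle.} Setting $T = \o_v^{k-1} \wedge \o_u^{n-k}$, a closed positive $(n-1,n-1)$-current, the difference between the two sides of the one-step inequality equals $\int \chi(-v)\, i\ddbar(v-u) \wedge T$. I would first smooth $\chi$ using Propositions \ref{prop: approx_lemma2} and \ref{prop: approx_lemma} in order to justify derivative operations, and then integrate by parts (legitimate by Bedford--Taylor theory since $u,v$ are bounded) to transform this integral into one involving $\int \chi'(-v)\, i\del v \wedge \dbar(v-u) \wedge T$. The positivity of $T$ makes $(f,g) \mapsto \int \chi'(-v)\, i\del f \wedge \dbar g \wedge T$ a positive Hermitian form on smooth functions, so Cauchy--Schwarz applies to the mixed-derivative cross terms. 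Combining this with the growth condition $(-v)\chi'(-v) \leq p\,\chi(-v)$ and the convexity inequality $\chi'(-v)(v-u) \leq \chi(-u)-\chi(-v)$ should allow one to absorb the derivative terms and bound the difference by $p\int \chi(-v)\,\o_u \wedge T$. The technical heart of the argument lies precisely here: balancing the positivity of the mixed current $T$ against the convexity of $\chi$ and the growth condition in $\mathcal{W}_p^+$, while pinning down the sharp constant $p+1$, is the main obstacle; the smoothing of $\chi$ and the approximation by bounded potentials are secondary technicalities that the earlier propositions are tailored to handle.
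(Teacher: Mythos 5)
Your overall strategy matches the paper's: reduce to bounded potentials via canonical cutoffs and Proposition~\ref{prop: cutoff_aprox}, then prove a one-step wedge-exchange inequality by integration by parts and iterate it $n$ times. The reduction step is fine. But the one-step estimate you set up differs from the paper's in a detail that turns out to be essential, and your sketch of its proof has a genuine gap.

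You keep $\chi(v)$ (equivalently $\chi(-v)$) in the integrand throughout the iteration and defer the pointwise bound $\chi(v)\leq\chi(u)$ to the very end. After integrating by parts you face
$$
\int_X \chi(v)\, i\ddbar(v-u)\wedge T \;=\; \int_X \chi'(-v)\, i\partial v\wedge\dbar v\wedge T \;-\; \int_X \chi'(-v)\, i\partial v\wedge\dbar u\wedge T.
$$
The first term is $\geq 0$ (wrong sign for an upper bound), and the cross-term has no definite sign. Your plan to handle the cross-term by Cauchy--Schwarz produces a factor $\bigl(\int_X \chi'(-v)\, i\partial u\wedge\dbar u\wedge T\bigr)^{1/2}$, and this is not controlled by the target $\int_X\chi(v)\,\o_u\wedge T$: the form $i\partial u\wedge\dbar u$ is not dominated by $\o_u$. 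You correctly flag this as ``the main obstacle,'' but it is not merely technical --- I don't see how to close the argument along this route, and the reason is structural. The pointwise current inequality that drives the paper's argument, namely $i\ddbar\chi(u)\geq\chi'(u)\,\o_u$ (valid since $\chi''\geq0$, $\chi'(u)\leq0$, $u\leq0$), becomes useless in your setup: you would need to multiply it by $v-u\geq0$, which preserves rather than flips the inequality, and even then it would output a factor of $\o_v$, not $\o_u$, so the iteration cannot advance.

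The paper avoids this entirely by keeping $\chi(u)$ --- the potential of larger modulus --- fixed through the whole iteration, and by using the split $\o_v=\o+i\ddbar v$ rather than $\o_v=\o_u+i\ddbar(v-u)$. One-step:
$$
\int_X \chi(u)\,\o_v^{j+1}\wedge\o_u^{n-j-1} \;=\; \int_X \chi(u)\,\o\wedge\o_v^{j}\wedge\o_u^{n-j-1} \;+\; \int_X v\, i\ddbar\chi(u)\wedge\o_v^{j}\wedge\o_u^{n-j-1}.
$$
Because $v\leq 0$, multiplying $i\ddbar\chi(u)\geq\chi'(u)\o_u$ by $v$ flips it into the upper bound $v\,i\ddbar\chi(u)\leq v\chi'(u)\o_u$; then $v\chi'(u)=|v|\chi'(|u|)\leq|u|\chi'(|u|)\leq p\,\chi(u)$, where the middle step uses $|v|\leq|u|$ and the last uses $\chi\in\mathcal W^+_p$. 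The $\o$-term is bounded by $\int\chi(u)\,\o_v^j\wedge\o_u^{n-j}$ because the correction $-\int\chi'(u)\,i\partial u\wedge\dbar u\wedge T\geq0$. Everything has a definite sign and the constant $p+1$ falls out with no Cauchy--Schwarz. At the very start (not end) one passes $E_\chi(v)=\int\chi(v)\o_v^n\leq\int\chi(u)\o_v^n$ and then iterates. The fix to your argument is therefore to move $\chi(v)\leq\chi(u)$ from the end to the beginning, run the iteration with $\chi(u)$ fixed in the integrand, and split off $\o$ rather than $\o_u$ from $\o_v$ so that the factor $v\leq0$ can do the sign-flipping work. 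The smoothing of $\chi$ you propose is then not needed: the pointwise inequalities above hold for any convex Young weight in $\mathcal W^+_p$ using a subgradient in place of $\chi'$.
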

\begin{proof} First assume that $u,v \in \mathcal H_\omega$. We first show the following estimate:
\begin{equation}\label{eq: Fund_Est_intermediate}
\int_{X} \chi(u) \o_v^{j+1} \wedge \o_u^{n-j-1} \leq (p+1)\int_{X} \chi(u) \o_v^{j} \wedge \o_u^{n-j}.
\end{equation}
To show this estimate, integration by parts gives 
\begin{equation}\label{eq: integ_parts}
\int_X \chi(u)\o_v^{j+1} \wedge \o_u^{n-j-1} = \int_X \chi(u)\o \wedge \o_v^{j} \wedge \o_u^{n-j-1} + \int_X v i\ddbar \chi(u) \wedge \o_v^{j} \wedge \o_u^{n-j-1}.
\end{equation}
The first integral on the right hand side is bounded above by $\int_X \chi(u)\o_v^{j} \wedge \o_u^{n-j}$. Indeed, as $u \leq 0$  we have $\int_X i\chi'(u)\del u \wedge \dbar u \wedge \o_{v}^{j} \wedge \o_u^{n-j-1} \leq 0$.  

Concerning the second integral on the right hand side of \eqref{eq: integ_parts} we notice that  $i\ddbar \chi(u) = i\chi''(u)\partial u \wedge \dbar u + \chi'(u)i\ddbar u \geq \chi'(u) \o_u$, as currents. Indeed, this is straitforward for smooth $\chi$, and in general it is possible to use approximation.

Since $v \leq 0$ we can write
\begin{flalign*}\int_X v i\ddbar \chi(u) \wedge \o_v^{j} \wedge \o_u^{n-j-1} &\leq \int_X v \chi'(u) \o_v^{j} \wedge \o_u^{n-j}  = \int_X |v| \chi'(|u|) \o_v^{j} \wedge \o_u^{n-j}\\
& \leq \int_X |u| \chi'(|u|) \o_v^{j} \wedge \o_u^{n-j} \leq p \int_X \chi(u) \o_v^{j} \wedge \o_u^{n-j},
\end{flalign*}
where in the last inequality we have used that $\chi \in \mathcal W^+_p$. Combining the above with \eqref{eq: integ_parts} yields  \eqref{eq: Fund_Est_intermediate}. Iterating \eqref{eq: Fund_Est_intermediate} $n$ times and using the fact that $u \leq v$ gives \eqref{eq: Fund_Est} for $u,v \in \mathcal H_\omega$.

In case $u,v \in \textup{PSH}(X,\omega) \cap L^\infty$, by adding small constants, we can assume that $u < v < 0$. We get $E_\chi(v_k) \leq (p+1)^n E_\chi(u_k)$, where $u_k,v_k \in \mathcal H_\omega$ are decreasing smooth approximants of $u,v$, given by Theorem \ref{thm: BK_approx}, satisfying $u_k \leq v_k \leq 0$. An application of \cite[Theorem 4.26]{gzbook} now gives the result for $u,v$ bounded.

In case $u,v \in \mathcal E_\chi(X,\o)$, we know that $E_\chi(u_k) \leq (p+1)^n E_\chi(v_k)$ for the canonical cutoffs $u_k:=\max(u,-k),v_k:=\max(v,-k)$. Proposition \ref{prop: cutoff_aprox} allows to take the limit $k \to \infty$, to obtain \eqref{eq: Fund_Est}.
\end{proof}

As a corollary of this result, we obtain the \emph{monotonicity property} for $\mathcal E_\chi(X,\o)$:

\begin{corollary}\label{cor: monotonicity_E_chi} Suppose $u \in \mathcal E_\chi(X,\o)$ and $v \in \textup{PSH}(X,\o)$. If $u \leq v$ then $v \in \mathcal E_\chi(X,\o)$.
\end{corollary}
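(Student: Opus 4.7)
The plan is to combine the monotonicity of $\mathcal E(X,\o)$ from Proposition \ref{prop: monotonicity_E} with the fundamental estimate of Proposition \ref{prop: Energy_est} and the approximation by canonical cutoffs of Proposition \ref{prop: cutoff_aprox}. First, since $u \in \mathcal E(X,\o)$ and $u \le v$, Proposition \ref{prop: monotonicity_E} immediately gives $v \in \mathcal E(X,\o)$, so the only thing left to verify is the finiteness $E_\chi(v) < \infty$.

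Next I would normalize. Because $v \in \textup{PSH}(X,\o)$ is upper semicontinuous on compact $X$, it is bounded above by some constant $M$. Replacing $u$ and $v$ by $u - M$ and $v - M$ preserves the inequality $u \le v \le 0$; it also preserves the hypothesis $u - M \in \mathcal E_\chi(X,\o)$, since by evenness and the growth estimate \eqref{eq: GrowthControl}, one has a pointwise bound $\chi(u - M) \le C \chi(u) + D$ (split into $\{|u| \ge |M|\}$, where $\chi(|u| + |M|) \le \chi(2|u|) \le 2^p \chi(|u|)$, and its complement, where $\chi(u-M)$ is bounded), while $\o_{u-M}^n = \o_u^n$. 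So we may assume from the outset that $u \le v \le 0$.

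Let $u_k = \max(u,-k)$ and $v_k = \max(v,-k)$ be the canonical cutoffs. Then $u_k \le v_k \le 0$ and both are bounded, so the bounded case of Proposition \ref{prop: Energy_est} (which is established in its proof prior to taking limits, and uses only boundedness, not a priori $\chi$-finite energy) yields
\begin{equation*}
E_\chi(v_k) \le (p+1)^n E_\chi(u_k), \qquad k \in \mathbb{N}.
\end{equation*}
Since $u \in \mathcal E_\chi(X,\o)$, Proposition \ref{prop: cutoff_aprox} applied with $h = \chi|_{\mathbb R^+}$ gives $\limsup_k E_\chi(u_k) = E_\chi(u) < \infty$. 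Consequently $\limsup_k E_\chi(v_k) \le (p+1)^n E_\chi(u) < \infty$, and applying the ``only if'' direction of Proposition \ref{prop: cutoff_aprox} to $v \in \mathcal E(X,\o)$ then concludes $E_\chi(v) < \infty$, i.e., $v \in \mathcal E_\chi(X,\o)$.

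The only slightly delicate step is the normalization to $v \le 0$ while preserving $u \in \mathcal E_\chi(X,\o)$, since $\chi$ does not scale linearly under translation; this is handled by the growth estimate \eqref{eq: GrowthControl}. Once the reduction is in place, the argument is a direct composition of the monotonicity of $\mathcal E$, the bounded-case fundamental estimate, and the cutoff approximation, with no further obstacles.
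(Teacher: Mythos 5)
Your proof is correct and follows essentially the same route as the paper: reduce to $u \le v \le 0$, invoke Proposition \ref{prop: monotonicity_E} for $v \in \mathcal E(X,\o)$, apply the fundamental estimate to the cutoffs, and conclude via Proposition \ref{prop: cutoff_aprox}. The only cosmetic difference is that the paper compares $u$ directly with $v_k$ (both already lie in $\mathcal E_\chi(X,\o)$, so Proposition \ref{prop: Energy_est} applies as stated), whereas you compare $u_k$ with $v_k$ and use the bounded case; the extra care you take in justifying the translation step $u \mapsto u - M$ via the growth bound $\chi(2l) \le 2^p\chi(l)$ is a valid spelling-out of the paper's unremarked ``WLOG''.
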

\begin{proof}We can assume without loss of generality that $u \leq v \leq 0$. Proposition \ref{prop: monotonicity_E} implies that $v \in \mathcal E(X,\o)$. Also for the canonical cutoffs $v_k$ we have $u \leq v_k$, hence $E_\chi(v_k) \leq (p+1)^n E_\chi(u)$ for all $k \in \Bbb N$. Proposition \ref{prop: cutoff_aprox} now gives that $E_\chi(v) \leq (p+1)^n E_\chi(u)$, finishing the proof.
\end{proof}

\noindent The next result says that if $u,v \in \mathcal E_\chi(X,\o)$, then in fact $u \in L^\chi(\o_v^n)$:
\begin{proposition}\textup{\cite[Proposition 3.6]{gz}}\label{prop: mixed_finite_prop: Energy_est} Suppose $u,v \in \mathcal E_\chi(X,\o), \ \chi \in \mathcal W^+_p$. If $u,v \leq 0$ then 
$$\int_X \chi(u) \o_v^n \leq p2^{p}\big(E_\chi(u) + E_\chi(v)\big)$$
\end{proposition}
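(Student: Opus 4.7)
The plan is to combine the layer-cake representation of $\chi(u)$ with the comparison principle to reduce the mixed Monge--Amp\`ere integral to the $\chi$-energies. Since $u \leq 0$ and $\chi$ is even and convex with $\chi(0)=0$, one can write $\chi(u)=\int_0^\infty \chi'(s)\mathbbm{1}_{\{u<-s\}}\,ds$ (interpreting $\chi'$ as the right derivative, which is monotone) and Fubini yields
$$\int_X \chi(u)\o_v^n = \int_0^\infty \chi'(s)\,\o_v^n(\{u<-s\})\,ds.$$

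The key step is the measure-theoretic inequality
$$\o_v^n(\{u<-s\}) \leq \o_u^n(\{u<-s/2\}) + \o_v^n(\{v \leq -s/2\}).$$
To obtain this, split $\{u<-s\}$ based on whether $v\leq -s/2$ or $v>-s/2$. The first piece is trivially contained in $\{v \leq -s/2\}$. On the second piece one has $u+s/2<-s/2<v$, so the set sits inside $\{u+s/2<v\}$; applying Proposition \ref{prop: comp_princ_E} to $v$ and $u+s/2\in \mathcal E(X,\o)$ (noting that $\o_{u+s/2}^n=\o_u^n$) gives $\o_v^n(\{u+s/2<v\})\leq \o_u^n(\{u+s/2<v\})$, and $v\leq 0$ forces $\{u+s/2<v\}\subseteq\{u<-s/2\}$, finishing the claim.

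Plugging in and substituting $s=2t$ gives
$$\int_X \chi(u)\o_v^n \leq 2\int_0^\infty \chi'(2t)\big[\o_u^n(\{u<-t\})+\o_v^n(\{v\leq -t\})\big]\,dt.$$
The final ingredient is the estimate $\chi'(2t)\leq p\, 2^{p-1}\chi'(t)$, obtained by chaining
$$\chi'(2t)\leq \frac{p\,\chi(2t)}{2t} \leq \frac{p\,2^{p-1}\chi(t)}{t} \leq p\,2^{p-1}\chi'(t),$$
where the first inequality comes from \eqref{eq: GrowthEst}, the second from \eqref{eq: GrowthControl}, and the third from the convexity bound $\chi(t)\leq t\chi'(t)$ (a consequence of $\chi(0)=0$ and monotonicity of $\chi'$). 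Reversing the layer-cake representation then identifies the two remaining integrals as $E_\chi(u)$ and $E_\chi(v)$, yielding exactly $p\,2^p(E_\chi(u)+E_\chi(v))$. The main obstacle is purely technical: $\chi$ is only assumed convex, so $\chi'$ must be understood as a subgradient (e.g., the right derivative); this poses no real difficulty, as every inequality above is valid for any subgradient choice, and no smoothing approximation is required since Proposition \ref{prop: comp_princ_E} is already available for the full class $\mathcal E(X,\o)$.
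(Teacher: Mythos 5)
Your proof is correct and follows essentially the same route as the paper: the layer-cake representation, the growth estimate $\chi'(2t)\le p2^{p-1}\chi'(t)$, the set inclusion $\{u<-2t\}\subset\{u+t<v\}\cup\{v\le -t\}$, and a single application of the comparison principle (Proposition~\ref{prop: comp_princ_E}). The two write-ups differ only in trivial organization: you prove the pointwise measure inequality before integrating, while the paper integrates first and then applies the set inclusion under the integral; and your derivation of $\chi'(2t)\le p2^{p-1}\chi'(t)$ by chaining the three inequalities directly is a clean replacement for the paper's device of momentarily adding $\delta|t|$ to $\chi$ — you correctly observe that each inequality in the chain holds for arbitrary subgradients (any degeneracy of $\chi$ or $\chi'$ at a point forces both sides to vanish there), so no regularization is needed.
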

\begin{proof} We first show that $\chi'(2t) \leq p 2^{p-1} \chi'(t), \ t >0$. For this, after possibly adding $\delta |t|$ to $\chi(t)$, we can momentarily assume that $\chi'(t),\chi(t)>0$ for any $t > 0$. 

By convexity we have $\chi(t)/t \leq \chi'(t)$, and \eqref{eq: GrowthControl} gives $\chi(2t)/\chi(t)\leq 2^p$, hence we can write:
\begin{equation}\label{eq: chi'_ineq}
\frac{\chi'(2t)}{\chi'(t)}=\frac{2t\chi'(2t)}{\chi(2t)}\cdot\frac{\chi(2t)}{2\chi(t)}\cdot \frac{\chi(t)}{t\chi'(t)} \leq p2^{p-1}.
\end{equation}
Consequently we have the following sequence of inequalities:
$$\int_X \chi(u) \o_v^n = \int_0^\infty \chi'(t) \o_v^n\{ |u| >t \}dt \leq 2^{p}p \int_0^\infty\chi'(t)\o_v^n\{ |u| >2t \} dt$$
Noticing that $\{u < -2t \} \subset \{u < -t + v \} \cup \{ v < -t\}$ we can continue to write:
\begin{flalign*}
\int_X \chi(u) \o_v^n \leq  & 2^{p}p \Big(\int_0^\infty\chi'(t)\o_v^n\{ u < -t + v \} dt + \int_0^\infty\chi'(t)\o_v^n\{ v < -t \} dt\Big)\\
\leq & 2^{p}p \Big(\int_0^\infty\chi'(t)\o_u^n\{ u < -t + v \} dt +E_\chi(v)\Big)\\
\leq & 2^{p}p \Big(\int_0^\infty\chi'(t)\o_u^n\{ u < -t\} dt +E_\chi(v)\Big) = 2^{p}p \big(E_\chi(u) +E_\chi(v)\big),
\end{flalign*}
where in the second line we have used Proposition \ref{prop: comp_princ_E} and in the last line we have used that $\{u < -t + v \} \subset \{u < -t \}$.
\end{proof}

We note the following result about decreasing approximants and the class $\mathcal E_\chi(X,\omega)$:
 
\begin{lemma}\textup{\cite[Proposition 5.6]{gz}} \label{lem: E_semicont} Suppose $\chi\in \mathcal W^+_p$ and $\{u_k\} _{k \in \Bbb N} \subset \mathcal E_\chi(X,\o)$ is a sequence decreasing to $u \in \textup{PSH}(X,\o)$. If
$\sup_k E_\chi(u_k) < \infty$ then $u \in \mathcal E_\chi(X,\o)$ and 
$$E_\chi(u) \leq (p+1)^n\limsup_k E_\chi(u_k).$$
\end{lemma}
\begin{proof} Let $u^l_k = \max(u_k,-l)$ and $u^l=\max(u,-l)$ be the canonical cutoffs. As $-l \leq u^l \leq u^k_l$,  by Proposition \ref{prop: Energy_est} and  \cite[Theorem 4.26]{gzbook} we get that $E_\chi(u^l)=\lim_k E_\chi(u^l_k) \leq (p+1)^n \limsup_k E_\chi(u_k).$ Finally, Proposition \ref{prop: cutoff_aprox} gives that $u \in \mathcal E_\chi(X,\o)$, along with the desired estimate. 
\end{proof}

\section{Examples and singularity type of finite energy potentials}

It is clear that $\textup{PSH}(X,\o) \cap L^\infty \subset \mathcal E_p(X,\o)$ for all $p \geq 1$. In this short section we will show that this inclusion is always strict, as one can construct unbounded elements of $\mathcal E_p(X,\o)$. However the content of our first result is that the singularity type of full mass potentials is always mild, even when they are unbounded.

Given $u \in \textup{PSH}(X,\o)$ and $x_0 \in X$ it is possible to measure the local singularity of $u$ at $x_0$ using the \emph{Lelong number} $\mathcal L(u,x_0)$, whose definition we now recall:
\begin{equation}\label{eq: Lelong_def}
\mathcal L(u,x_0):= \sup \{ r \geq 0 \ | \ u(x) \leq r \log |x| + C_r \ \forall x \in U_r \ \textup{ for some } C_r>0\},
\end{equation}
where $U_r \subset \Bbb C^n$ is some coordinate neighborhood of $x_0$ that identifies $x_0$ with $0 \in \Bbb C^n$. Roughly speaking, $\mathcal L(u,x_0)$ measures the extent to which the singularity of $u$ at $x_0$ is logarithmic. For an extensive treatment of Lelong numbers we refer to \cite[Section~2.3]{gzbook}.

To begin, we observe that elements of $\mathcal E(X,\o)$ have singularity so mild that Lelong numbers can not detect them: 

\begin{proposition}\textup{\cite[Corollary 1.8]{gz}}\label{prop: Lelong_E} If $u \in \mathcal E(X,\o)$ then all Lelong numbers of $u$ are zero.
\end{proposition}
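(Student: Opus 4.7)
I would argue by contrapositive: assume $\mathcal L(u,x_0)=\gamma>0$ at some $x_0\in X$ and show that $\int_X \o_u^n<\textup{Vol}(X)$, contradicting $u\in\mathcal E(X,\o)$.

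Pass to a local chart around $x_0=0$ in which $\o=i\ddbar g$ with $g$ smooth, and set $\phi:=u+g\in\textup{PSH}(B_{r_0})$; this has Lelong number $\gamma$ at $0$ since $g$ is smooth. For any fixed $\gamma'\in(0,\gamma)$, the defining inequality \eqref{eq: Lelong_def} yields
$$\phi(z)\le \gamma'\log|z|+C\qquad\text{on } B_{r_0}.$$
In particular, for the canonical cutoffs $u_h=\max(u,-h)$, the function $u_h$ becomes the constant $-h$ on a tiny inner ball $B_{r_h}$ with $r_h=e^{-(h+C')/\gamma'}\to 0$, and $\o_{u_h}=\o$ throughout the open set $\{u<-h\}$.

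The crux is a local Bedford--Taylor comparison with the bounded psh model
$$\ell_h(z):=\max\bigl(\gamma'\log(|z|/r_0),\;-h+C''\bigr),$$
which vanishes on $\partial B_{r_0}$. A direct scale-invariance argument shows that $(i\ddbar \ell_h)^n$ is supported on a single sphere and carries total mass $c_n(\gamma')^n$, where $c_n>0$ is a dimensional constant independent of $h$. Choosing $C''$ so that $\phi_h+\textrm{const}\le \ell_h$ on the support of $(i\ddbar \ell_h)^n$ and $\phi_h+\textrm{const}\ge \ell_h$ on $\partial B_{r_0}$, the classical comparison principle for bounded psh functions forces
$$\int_{B_{r_0}}(i\ddbar \phi_h)^n\ \ge\ c_n(\gamma')^n,$$
and absorbing the smooth $g$-shift gives $\int_{B_{r_0}}\o_{u_h}^n\ge c_n(\gamma')^n/2$ for all $h$ large.

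Finally split $\int_{B_{r_0}}\o_{u_h}^n$ into contributions from $\{u>-h\}$, $\{u=-h\}$, $\{u<-h\}$ inside $B_{r_0}$. By \eqref{eq: CMA_general_def} the first summand converges to $\int_{B_{r_0}}\o_u^n\le \textup{Vol}(X)$; since $\o_{u_h}=\o$ on the open set $\{u<-h\}$ and the $\o$-measure of $\{u<-h\}$ tends to $0$ by integrability of $u$, the third summand vanishes. Hence the ``surface mass'' $\int_{\{u=-h\}\cap B_{r_0}}\o_{u_h}^n$ stays bounded below by $c_n(\gamma')^n/4$ for large $h$, and consequently
$$\int_X\o_u^n\ =\ \textup{Vol}(X)-\lim_{h\to\infty}\int_{\{u\le -h\}}\o_{u_h}^n\ \le\ \textup{Vol}(X)-c_n(\gamma')^n/4\ <\ \textup{Vol}(X),$$
the sought contradiction. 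The principal obstacle is the local comparison step: one must carefully align the additive constants and the smooth $g$-shift so that the Bedford--Taylor comparison applies with bounds uniform in $h$; once that is in place, the decomposition and the passage to the limit off $\{u>-h\}$ are routine consequences of \eqref{eq: CMA_general_def} and the integrability of $u$.
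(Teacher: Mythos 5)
Your overall strategy is the same as the paper's in spirit: a positive Lelong number at $x_0$ should force a definite amount of Monge--Amp\`ere mass to escape to the polar set in the cutoff limit \eqref{eq: CMA_general_def}, contradicting full mass. The decomposition over $\{u>-h\}$, $\{u=-h\}$, $\{u<-h\}$ and the final arithmetic are correct given the claimed lower bound. But the local Bedford--Taylor comparison step, which you rightly flag as the crux, has a \emph{structural} gap, not just a matter of aligning constants. To run the comparison you need simultaneously $\phi_h+D\le\ell_h$ on the transition sphere of $\ell_h$ and $\phi_h+D\ge\ell_h$ near $\partial B_{r_0}$ (so that $\{\phi_h+D<\ell_h\}\Subset B_{r_0}$). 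Since $\ell_h\equiv 0$ on $\partial B_{r_0}$ for $h$ large, the second condition requires $\phi_h\ge -D$ on $\partial B_{r_0}$ uniformly in $h$, i.e., a lower bound on $\phi=u+g$ along the sphere. But a psh function with a positive Lelong number can have a dense, large polar set (think $\phi=\gamma'\log|z_1|+C$, whose polar hyperplane meets every sphere), and the one-sided estimate $\phi\le\gamma'\log|z|+C$ gives no control from below. Taking $D=D_h$ dependent on $h$ does not help: the condition on the transition sphere forces $D_h\le C''-\sup g$ and $D_h\le -\gamma'\log r_0 - C$, which are upper bounds independent of $h$, while the boundary condition would require $D_h\to\infty$.

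The paper sidesteps the comparison entirely by a change of protagonist: it constructs an explicit global $v\in\textup{PSH}(X,\o)$ with $v\circ\varphi=c\log|x|$ on a chart ball, observes that $\mathcal L(u,x_0)>0$ implies $u\le\varepsilon v+1/\varepsilon$ for small $\varepsilon>0$, and then invokes the monotonicity property of $\mathcal E(X,\o)$ (Proposition \ref{prop: monotonicity_E}) to reduce to showing $\varepsilon v\notin\mathcal E(X,\o)$. For the \emph{concrete} potential $\varepsilon v$, the non-pluripolar mass is computed directly from the definition \eqref{eq: CMA_general_def}: on $B(0,1)$ one has $\o_{\max(\varepsilon v,-k)}^n\ge (i\ddbar\max(c\varepsilon\log|x|,-k))^n$ pointwise by positivity of the cross terms in the binomial expansion, and the right-hand side has total mass $(2\pi c\varepsilon)^n$ on a shrinking sphere contained in $\{\varepsilon v\le -k\}$. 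No boundary comparison is needed because the model is exact, not just an upper bound. The monotonicity property is precisely the tool that converts the intractable $u$ into the tractable $v$; your proof never uses it, which is why you end up needing a boundary estimate you cannot obtain.
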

\begin{proof} Let $x_0 \in X$ and $U \subset X$ a coordinate neighborhood of $x_0$, identifying $x_0$ with $0 \in \Bbb C^n$ via a biholomorphism $\varphi : B(0,2) \to U$. Let $v \in \textup{PSH}(X,\o)$ such that $v$ is smooth on $X \setminus \{x_0 \}$ and $v \circ \varphi|_{B(0,1)} = c \log |x|$ for some $c >0$. Using a partition of unity, such $v$ can be easily constructed.

If $\mathcal L(u,x_0)>0$, then for some $\varepsilon >0$ we will have $u \leq \varepsilon v + 1/\varepsilon$. The monotonicity property (Proposition \ref{prop: monotonicity_E}) now implies that $\varepsilon v \in \mathcal E(X,\o)$. However this cannot happen, yielding a contradiction. Indeed, by the  definition of $\o_{\varepsilon v}^n$ (see \eqref{eq: CMA_general_def}) we have that 
\begin{flalign*}
\int_X \o_{\varepsilon v}^n &=   \textup{Vol}(X) - \lim_{k \to \infty}\int_{\{\varepsilon  v \leq -k\}} \o_{\max(\varepsilon v,-k)}^n\\
&\leq   \textup{Vol}(X) - \lim_{k \to \infty} \int_{B(0,1)} \big(i\ddbar \max (c\varepsilon\log|x|, -k)\big)^n  \\
&= \textup{Vol}(X) - c^n \varepsilon^n (2\pi)^n,
\end{flalign*} 
where in the last line we have used  that $ \big(i\ddbar\max(\log |z|, \log r)\big)^n =  (2\pi)^n d\sigma_{\partial B(0,r)}$, where  $d\sigma_{\partial B(0,r)}$ is the Euclidean probability surface measure of $\partial B(0,r)$ (see the exercise following \cite[Corollary 2.2.7]{bl3}).
\end{proof}

The purpose of our next proposition is to give a flexible construction for unbounded elements of $\mathcal E_p(X,\o)$:

\begin{proposition}\textup{\cite[Example 2.14]{gz},  \cite[Proposition 5]{bl2}}\label{prop: E_p_examples} Suppose that  $u \in \textup{PSH}(X,\o)$, $ u < -1$ and $\alpha \in (0,1/2)$. Then $v = -(- u)^{\alpha} \in \mathcal E_p(X,\o)$ for all $p \in [1,(1-\alpha)/\alpha)$.
\end{proposition}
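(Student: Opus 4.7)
Put $f(t) := -(-t)^\alpha$ for $t \le -1$, so that $v = f\circ u$, and compute $f'(t) = \alpha(-t)^{\alpha-1} \in (0,\alpha) \subset (0,1)$ and $f''(t) = \alpha(1-\alpha)(-t)^{\alpha-2} > 0$. The identity
$$\o + i\ddbar f(u) = (1-f'(u))\,\o + f'(u)\,\o_u + f''(u)\,i\del u \wedge \dbar u \ge 0$$
already shows $v \in \textup{PSH}(X,\o)$ (made rigorous by Theorem~\ref{thm: BK_approx} applied to the cutoffs below). For $k \ge 1$ let $u_k := \max(u,-k)$ and $v_k := f(u_k)$; each $v_k$ is a bounded $\o$-psh function, hence lies in $\mathcal E_p(X,\o)$, and $v_k \searrow v$ since $f$ is increasing and $u_k \searrow u$. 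By Corollary~\ref{cor: E_semicont} it suffices to verify $\sup_k E_{\chi_p}(v_k) = p^{-1}\sup_k \int_X (-u_k)^{p\alpha}\,\o_{v_k}^n < \infty$.

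\textbf{Expansion and Type A bounds.} Since $(i\del u_k\wedge\dbar u_k)^2 = 0$, expanding the above identity gives
\begin{align*}
\o_{v_k}^n &= \sum_{j=0}^n \binom{n}{j}(1-f'(u_k))^{n-j}(f'(u_k))^j\, \o^{n-j}\wedge\o_{u_k}^j\\
&\quad + n\sum_{j=0}^{n-1}\binom{n-1}{j}(1-f'(u_k))^{n-j-1}(f'(u_k))^j f''(u_k)\, \o^{n-j-1}\wedge\o_{u_k}^j \wedge i\del u_k\wedge\dbar u_k.
\end{align*}
Using $1-f'(u_k) \le 1$ and $(f'(u_k))^j = \alpha^j(-u_k)^{j(\alpha-1)}$, the $j$-th term of the first (Type~A) sum contributes at most $\alpha^j\int_X (-u_k)^{p\alpha+j(\alpha-1)}\,\o^{n-j}\wedge\o_{u_k}^j$ to $E_{\chi_p}(v_k)$. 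For $j = 0$ this is bounded by $\int_X(-u)^{p\alpha}\o^n$, which is finite because $\o$-psh functions belong to $L^q(\o^n)$ for every $q\ge 1$ (a consequence of Skoda-type exponential integrability). For $j \ge 1$, the hypothesis $p < (1-\alpha)/\alpha$ forces $p\alpha + j(\alpha-1) < 0$, so $(-u_k \ge 1)$ makes the integrand $\le \o^{n-j}\wedge\o_{u_k}^j$, with integral $\textup{Vol}(X)$ by Lemma~\ref{lem: const_volume_lem}.

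\textbf{Type B bounds via integration by parts.} For the Type~B sum set $\gamma_j := 2 - \alpha(p+1) - j(\alpha-1)$; a direct check shows $p < (1-\alpha)/\alpha$ is equivalent to $\gamma_0 > 1$, so $\gamma_j \ge \gamma_0 > 1$ for every $0 \le j \le n-1$. Each Type~B term is controlled by a constant times $\int_X (-u_k)^{-\gamma_j}\,i\del u_k\wedge\dbar u_k \wedge T_j$ with $T_j := \o^{n-j-1}\wedge\o_{u_k}^j$ closed. When $\gamma_j \neq 2$, applying $i\ddbar$ to $(-u_k)^{2-\gamma_j}$ and isolating the $i\del u_k\wedge\dbar u_k$ factor gives
$$(2-\gamma_j)(1-\gamma_j)(-u_k)^{-\gamma_j}\,i\del u_k\wedge\dbar u_k = i\ddbar\bigl[(-u_k)^{2-\gamma_j}\bigr] + (2-\gamma_j)(-u_k)^{1-\gamma_j}(\o_{u_k}-\o).$$
Integrating against $T_j$, the $i\ddbar$-term vanishes by Stokes and dividing by $(2-\gamma_j)$ yields
$$\int_X (-u_k)^{-\gamma_j}\,i\del u_k\wedge\dbar u_k\wedge T_j = \frac{1}{\gamma_j-1}\!\left[\int_X (-u_k)^{1-\gamma_j}\o\wedge T_j - \int_X (-u_k)^{1-\gamma_j}\o_{u_k}\wedge T_j\right] \le \frac{\textup{Vol}(X)}{\gamma_j-1},$$
using that the LHS is $\ge 0$, $(-u_k)^{1-\gamma_j}\le 1$, and $\int_X T_j\wedge\o = \textup{Vol}(X)$. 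The borderline $\gamma_j = 2$ case is handled identically with $\log(-u_k)$ in place of $(-u_k)^{2-\gamma_j}$, giving the same bound. Combined with the Type~A estimate, $\sup_k E_{\chi_p}(v_k) < \infty$, and Corollary~\ref{cor: E_semicont} concludes $v \in \mathcal E_p(X,\o)$. The main obstacle is precisely this Type~B manipulation: the singular factor $f''(u_k) = \alpha(1-\alpha)(-u_k)^{\alpha-2}$ must be absorbed by an $i\ddbar$ primitive, and the hypothesis $p < (1-\alpha)/\alpha$ is sharp because it is exactly what makes all relevant exponents of $(-u_k)$ strictly negative, keeping every Type~B bound finite.
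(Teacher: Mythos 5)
Your proof follows the same strategy as the paper's: expand $\o_v^n$, separate the terms containing $i\del u\wedge\dbar u$, control them by a single integration by parts, and observe that the hypothesis $p<(1-\alpha)/\alpha$ is exactly what keeps the relevant exponents of $(-u)$ negative (your $\gamma_j\ge\gamma_0>1$ is the same observation as the paper's choice $a>0$, and the bound $\textup{Vol}(X)/(\gamma_j-1)$ coincides with the paper's $\textup{Vol}(X)/a$). The one structural difference is your choice of approximating sequence, and that is where a gap opens up. You set $u_k:=\max(u,-k)$ and $v_k:=f(u_k)$, which are merely bounded $\o$-psh, and then carry out the pointwise chain rule for $\o_{v_k}$, the binomial expansion of $\o_{v_k}^n$, and the Stokes identity $\int_X i\ddbar\bigl[(-u_k)^{2-\gamma_j}\bigr]\wedge T_j=0$ as if $u_k$ were smooth. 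None of these steps is classical for functions that are only in $\textup{PSH}(X,\o)\cap L^\infty$; each needs a Bedford--Taylor justification (or a further smooth regularization, and a passage to the limit in the weighted integrals), which you do not supply. The parenthetical ``made rigorous by Theorem~\ref{thm: BK_approx} applied to the cutoffs below'' does not close the gap: Theorem~\ref{thm: BK_approx} produces a decreasing sequence of smooth K\"ahler potentials, not the cutoffs $\max(u,-k)$, and the rest of your argument never actually invokes those smooth approximants.

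The paper sidesteps this entirely by assuming $u\in\mathcal H_\o$ with $u\le -1$ from the start, so that $v=-(-u)^\alpha\in\mathcal H_\o$ and the estimate \eqref{eq: o_v_est} and the one-line integration by parts $a\int_X(-u)^{-a-1}i\del u\wedge\dbar u\wedge\o_u^k\wedge\o^{n-k-1}\le\textup{Vol}(X)$ are honest smooth computations; only afterwards does it pass to a general $u\in\textup{PSH}(X,\o)$ by taking Blocki--Kolodziej smooth decreasing approximants $u_k\in\mathcal H_\o$ and applying Corollary~\ref{cor: E_semicont}. Your argument is repaired by making that same move: do the differential-geometric work for $u\in\mathcal H_\o$ first, then approximate. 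One more minor point: for the $j=0$ Type~A term your appeal to Skoda-type $L^q$-integrability is overkill, since $p\alpha<1$ and $(-u)^{p\alpha}\le 1+|u|$ already reduces it to $u\in L^1(\o^n)$, which is the bound the paper records.
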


\begin{proof} First assume that $u \in \mathcal H_\o$ and $u \leq -1$. For $v = -(-u)^{\alpha}$ we have
\begin{flalign*}
\o_{v}&=\alpha(1-\alpha) (-u)^{\alpha - 2} i \partial u  \wedge \dbar u + \alpha (-u)^{\alpha - 1}\o_{u} + (1 - \alpha (-u)^{\alpha - 1}) \o \nonumber\\
&\leq (-u)^{\alpha - 2} i \partial u  \wedge \dbar u  + (-u)^{\alpha -1}\o_{u} + \o. 
\end{flalign*}
Consequently $v \in \mathcal H_\o$, and for some $C:=C(\alpha) > 1$ we have
\begin{equation}\label{eq: o_v_est}
\o_v^n \leq C\Big( \sum_{k=0}^{n-1}  (-u)^{\alpha - 2 + k(\alpha - 1)} i \partial u  \wedge \dbar u \wedge \o_{u}^k \wedge \o ^{n-1 -k} + \sum_{k=0}^{n}  (-u)^{k(\alpha - 1)} \o_u^k \wedge \o^{n-k}\Big).
\end{equation}
For arbitrary $a > 0$, integration by parts gives the following estimates
\begin{flalign*}
a\int_X (-u)^{-a-1}i\partial u \wedge \dbar u \wedge \o_u^k \wedge \o^{n-k-1} &=  \int_X i \partial (-u)^{-a} \wedge \dbar u \wedge \o_u^k \wedge \o^{n-k-1}\\
&=-\int_X  (-u)^{-a} i \ddbar u \wedge \o_u^k \wedge \o^{n-k-1}\\
& \leq \int_X  (-u)^{-a} \o_u^k \wedge \o^{n-k} \leq \textup{Vol}(X),
\end{flalign*}
where in the last inequality we have used that $(-u)^{-a} \leq 1$. This estimate and \eqref{eq: o_v_est} implies  that for any $b \in (0,1-\alpha)$ we have
$$\int_X |v|^{\frac{b}{\alpha}} \o_v^n=\int_X |u|^{b} \o_v^n \leq C(\o, \alpha, b) \Big( 1 + \int_X |u| \o^n \Big).$$

Returning to the general case $ u \in \textup{PSH}(X,\o)$, let $\{ u_k\}_k\subset \mathcal H_\o$ be a sequence of smooth potentials decreasing to $u$ (Theorem \ref{thm: BK_approx}). We can assume that $u_k < -1$, hence the above inequality implies that $\int_X |v_k|^{b/\alpha} \o_{v_k}^n$ is uniformly bounded for $v_k :=-(-u_k)^{\alpha}$. Consequently, Lemma \ref{lem: E_semicont} implies that  $v \in \mathcal E_p(X,\o)$ for any $p \in (0,(1-\alpha)/\alpha)$.
\end{proof}

\section{Envelopes of finite energy classes}

Envelope constructions are ubiquitous throughout pluripotential theory. In our setting, given an usc function $f: X \to [-\infty,\infty)$, the simplest envelope one can consider is
\begin{equation}
\label{eq: P_env_def}
P(f):= \sup \{u \in \textup{PSH}(X,\o)  \textup{ s.t. }  u \leq f\}.
\end{equation}
As we know, the supremum of a family of $\o$-psh functions may not be $\o$-psh, as $P(f)$ may not be usc to begin with. However by \cite[Theorem 1.2.3]{bl3}  the usc regularization $P(f)^*$ is indeed $\o$-psh. As $f$ is usc, we obtain that $P(f)^* \leq f^* = f$, immediately giving that $P(f)^*$ is a candidate in the definition of $P(f)$, hence $P(f)=P(f)^*$, i.e., $P(f) \in \textup{PSH}(X,\o)$.

Slightly generalizing the above concept, for usc functions $\{f_1,f_2,\ldots,f_k \}$ we introduce the \emph{rooftop envelope} $$P(f_1,f_2,\ldots,f_k):=P(\min(f_1,f_2,\ldots,f_k)).$$

When $f$ is smooth (or just continuous), as $P(f)$ is usc, we obtain that the \emph{non-contact set} $\{f > P(f) \} \subset X$ is open. A classical Perron type argument (see \cite[Corollary 9.2]{BT1} or \cite[Proposition 1.4.10]{bl3}). yields that 
\begin{equation}\label{eq: coinc_vanish}
\o_{P(f)}^n(\{f > P(f) \})=0
\end{equation}  
This observation suggests that $P(f)$ can have at most bounded but not continuous second derivatives. This is mostly  confirmed by the next result, whose proof is provided in the appendix:

\begin{theorem}[Theorem \ref{thm: DR_reg}, \textup{\cite[Theorem 2.5]{DR1}}]\label{thm: reg_thm_for_P(f)} Given $f_1,...,f_k \in C^\infty(X)$, then $P(f_1,f_2,...,f_k) \in C^{1,\alpha}(X), \ \alpha \in (0,1)$. More precisely, the following estimate holds: 
$$\|P(f_1,f_2,...,f_k)\|_{C^{1,\bar 1}} \leq C(X,\o,\| f_1\|_{C^{1,\bar 1}},\| f_2\|_{C^{1,\bar 1}},\ldots,\| f_k\|_{C^{1,\bar 1}}).$$
\end{theorem}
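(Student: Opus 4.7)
The plan is to realize $u := P(f_1, \ldots, f_k)$ as a limit of smooth solutions of an approximating family of complex Monge--Amp\`ere equations with a diverging parameter, then extract a uniform $C^{1,\bar 1}$ bound that passes to the limit. Setting $m := \min(f_1, \ldots, f_k)$, for each $\beta > 0$ I would seek $u_\beta \in \mathcal H_\o$ solving
$$\o_{u_\beta}^n = c_\beta \Big(\sum_{i=1}^k e^{\beta(u_\beta - f_i)}\Big)\o^n,$$
with $c_\beta > 0$ chosen so that total masses agree. Existence of a smooth $u_\beta$ follows from Yau's theorem via the continuity method, and uniqueness from the monotonicity of the right-hand side in $u_\beta$. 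A uniform upper bound $u_\beta \leq m + C/\beta$ comes from the maximum principle: at a maximum of $u_\beta - m$, if $f_{i_0}$ locally realizes the minimum then $\o_{u_\beta} \leq \o_{f_{i_0}}$ there, forcing the exponentials to be uniformly bounded. A uniform lower bound follows by comparison against any fixed bounded $\o$-psh barrier sitting below $m$.

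The heart of the argument is to show $\textup{tr}_\o \o_{u_\beta} \leq C$ with $C$ independent of $\beta$. I would apply the maximum principle to $H := \log \textup{tr}_\o \o_{u_\beta} - A u_\beta$ for sufficiently large $A > 0$, computing its Laplacian with respect to $\o_{u_\beta}$ via the standard Aubin--Yau type formula and differentiating the Monge--Amp\`ere equation twice. The potentially dangerous contribution comes from the $\beta$-dependent right-hand side. Introducing the convex weights $w_i := e^{\beta(u_\beta - f_i)}/\sum_j e^{\beta(u_\beta - f_j)}$, one observes the identity
$$\tfrac{1}{\beta}\log \sum_i e^{\beta(u_\beta - f_i)} = u_\beta - m_\beta, \qquad m_\beta := -\tfrac{1}{\beta}\log\sum_i e^{-\beta f_i},$$
so that the equation rewrites as $\o_{u_\beta}^n = c_\beta e^{\beta(u_\beta - m_\beta)}\o^n$. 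The soft-min $m_\beta$ satisfies the one-sided Hessian bound $i\ddbar m_\beta \leq \sum_i w_i \, i\ddbar f_i$; the negative variance term $-\beta(\sum_i w_i \partial f_i \wedge \bar\partial f_i - \partial m_\beta \wedge \bar\partial m_\beta)$, which is the source of non-uniformity of $\|m_\beta\|_{C^{1,\bar 1}}$, carries the favorable sign when traced against $\o_{u_\beta}$. Consequently the $\beta$-factor is absorbed and the estimate closes in terms of $\max_i \|f_i\|_{C^{1,\bar 1}}$ alone.

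Finally I would show $u_\beta \to u$ as $\beta \to \infty$: the upper bound $u_\beta \leq m + C/\beta$ forces any weak-$*$ limit to be a candidate in the definition of $u$, hence $(\limsup u_\beta)^* \leq u$, while a standard comparison-principle argument yields $u \leq u_\beta + O(1/\beta)$, giving uniform convergence; the uniform $C^{1,\bar 1}$ bound then transfers to $u$. The main obstacle is precisely the uniform Laplacian estimate: the factor of $\beta$ produced whenever one differentiates $e^{\beta(u_\beta - f_i)}$ is what makes the argument delicate, and its successful handling rests on the twin facts that (i) the $i\ddbar u_\beta$ part of this factor is already built into $\o_{u_\beta}$ and can be absorbed, and (ii) the concavity of $\tfrac{1}{\beta}\log\sum_i e^{\beta\cdot}$ ensures that only convex averages (rather than pointwise extrema with unfavorable sign) of the individual obstacle Hessians intrude into the estimate, keeping the final bound in terms of $\max_i \|f_i\|_{C^{1,\bar 1}}$.
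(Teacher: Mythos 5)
Your route is the same in philosophy as the paper's: both realize the envelope as a zero-temperature limit of a penalized Aubin--Yau equation and extract a $\beta$-uniform Laplacian bound via the Aubin--Yau/Siu inequality applied to $\log\textup{tr}_\o\o_{u_\beta}-Au_\beta$, then pass to the limit. Where you diverge is in the treatment of several obstacles. The paper first proves Theorem \ref{thm: BD_reg} for a single smooth $f$, noting that the Laplacian estimate only uses an \emph{upper} bound on $\Delta^\o f$; the $k$-obstacle statement is then deduced from the observation that $g:=\min(f_1,\ldots,f_k)$ is Lipschitz with $\Delta^\o g$ bounded above (by concavity of $\min$), a reduction which in fact still requires one further, unstated, regularization of $g$ before the $k=1$ theorem applies. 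You instead insert all $k$ obstacles directly into the penalized equation through the softmin $m_\beta$, which is already smooth, and isolate the variance term giving the uniform one-sided Hessian bound $i\ddbar m_\beta\leq\sum_iw_i\,i\ddbar f_i$. This makes the reduction completely transparent: the $\beta$-dependent obstacle carries only a $\beta$-uniform upper Laplacian bound, which is all the estimate consumes, so the argument closes without ever regularizing $\min(f_j)$. The one thing to fix is a sign slip: it is the softmin $m_\beta=-\tfrac1\beta\log\sum_ie^{-\beta f_i}$ that is concave in $(f_1,\ldots,f_k)$, not $\tfrac1\beta\log\sum_ie^{\beta\,\cdot}$ (a softmax, which is convex); the inequality you write down is correct, so this is purely a matter of wording.
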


By a  bound on the $C^{1,\bar1}$ norm of $P(f)$ we mean a uniform bound on all mixed second order derivatives $\partial^2 P(f)/\partial z_j \bar \partial z_k$. Since $P(f)$ is $\o$-psh, this is equivalent to saying that $\Delta^\o P(f)$ is bounded, and by the Calderon--Zygmund estimate \cite[Chapter 9, Lemma 9.9]{GT}, we automatically obtain that $P(f_1,f_2,\ldots,f_k) \in C^{1,\alpha}(X), \alpha  < 1$. We introduce the following subspace of $\textup{PSH}(X,\o)$:
\begin{equation}\label{eq: H_01bar1_def}
\mathcal H_\o^{1,\bar 1}:=\{u \in \textup{PSH}(X,\o) \  \textup{ s.t. } \ \|u\|_{C^{1,\bar 1}} < \infty \}. 
\end{equation}

When $k=1$, the above result was first proved in \cite{bd} using the Kiselman technique for attenuation of singularities. An independent ``PDE proof"  has been given by Berman \cite{Brm2}, and we present this in the appendix (see Theorem \ref{thm: BD_reg}). The proof of the general case $k \geq 1$ was given in the paper \cite{DR1}, that provided a detailed regularity analysis for the rooftop envelopes introduced above. By reduction to the case $k=1$,  we prove this more general result in the appendix as well (see Theorem \ref{thm: DR_reg}). Lastly, we mention that very recently it was shown by Tosatti \cite{To} and Chu-Zhou \cite{ChZh} that in fact it is possible to bound the full Hessian of $P(f_1,\ldots,f_k)$ in terms of the Hessians of $f_1,\ldots,f_k$. 

With the above introduced notation, we will use Theorem \ref{thm: reg_thm_for_P(f)} in the following form:
\begin{corollary}\label{cor: rooftop_env_reg} If $u_0,u_1,\ldots,u_k \in \mathcal H_\o^{1,\bar 1}$ then $P(u_0,u_1,\ldots,u_k) \in \mathcal H_\o^{1,\bar 1}$.
\end{corollary}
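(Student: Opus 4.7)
The plan is to reduce to Theorem \ref{thm: reg_thm_for_P(f)} by a smoothing argument. For each $j=0,1,\ldots,k$, I would choose $u_j^\varepsilon \in C^\infty(X)$ with $u_j^\varepsilon \to u_j$ uniformly on $X$ as $\varepsilon \to 0$ and with $\|u_j^\varepsilon\|_{C^{1,\bar 1}} \leq \|u_j\|_{C^{1,\bar 1}}+1$ uniformly in $\varepsilon$. Such approximants are constructed routinely via convolution with a standard mollifier in finitely many coordinate charts glued by a partition of unity; since the mixed second derivatives of $u_j$ already lie in $L^\infty$, the mollifications preserve a uniform $L^\infty$ bound on $\partial^2/\partial z_\alpha \bar\partial z_\beta$. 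Note that the $u_j^\varepsilon$ need not be $\o$-psh at all, since the regularity theorem is for arbitrary smooth inputs.

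Applied to these smooth tuples, Theorem \ref{thm: reg_thm_for_P(f)} delivers a uniform estimate $\|P(u_0^\varepsilon, \ldots, u_k^\varepsilon)\|_{C^{1,\bar 1}} \leq C$, where $C$ depends on $X$, $\o$ and the numbers $\|u_j\|_{C^{1,\bar 1}}+1$ but not on $\varepsilon$. To identify the limit I would invoke the elementary Lipschitz-type stability
\[
|P(f_0,\ldots,f_k) - P(g_0,\ldots,g_k)| \leq \max_j \|f_j - g_j\|_\infty,
\]
which is immediate from the definition: if $v \in \textup{PSH}(X,\o)$ satisfies $v \leq \min_j f_j$, then the translate $v - \max_j \|f_j-g_j\|_\infty$ lies below $\min_j g_j$ and is therefore a candidate in the supremum defining $P(g_0,\ldots,g_k)$; taking sup and exchanging the roles of $f$ and $g$ yields the bound. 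Applying this with $f_j = u_j^\varepsilon$ and $g_j = u_j$ gives $P(u_0^\varepsilon,\ldots,u_k^\varepsilon) \to P(u_0,\ldots,u_k)$ uniformly on $X$.

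Finally, the uniform $C^{1,\bar 1}$ bound is stable under uniform limits: the mixed second distributional derivatives of $P(u_0^\varepsilon,\ldots,u_k^\varepsilon)$ are bounded in $L^\infty$ independently of $\varepsilon$, so a weak-$*$ extraction in $L^\infty$ produces an $L^\infty$ bound on the distributional derivatives $\partial^2 P(u_0,\ldots,u_k)/\partial z_\alpha \bar\partial z_\beta$. Since $P(u_0,\ldots,u_k)$ is automatically $\o$-psh by definition of the envelope, it therefore lies in $\mathcal H_\o^{1,\bar 1}$. The only delicate point I anticipate is the smoothing step, where one must arrange uniform $C^{1,\bar 1}$ control and uniform convergence simultaneously; this is routine once one reduces to coordinate charts. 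The whole argument hinges on the quantitative nature of Theorem \ref{thm: reg_thm_for_P(f)}: the constant in its estimate depends only on the $C^{1,\bar 1}$ norms of the inputs, which is precisely what permits passage to $C^{1,\bar 1}$ limits.
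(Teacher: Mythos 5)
Your proposal is correct and follows essentially the same route as the paper: mollify each $u_j$ to smooth approximants with uniform $C^{1,\bar 1}$ control and uniform convergence, invoke Theorem \ref{thm: reg_thm_for_P(f)} to get a uniform $C^{1,\bar 1}$ bound on the envelopes of the approximants, and pass to the limit using uniform convergence of the envelopes and weak-$*$ stability of the $L^\infty$ bound on mixed second derivatives. You fill in details (the Lipschitz stability of the envelope and the weak-$*$ extraction) that the paper leaves implicit, but the argument is the same.
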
 
\begin{proof} Let $f_j^i \in C^\infty(X)$ be such that $f_j^i \to u_j$ uniformly, and the mixed second derivatives of $f_j^i$ are uniformly bounded. By the previous theorem, the mixed second derivatives of $P(f^i_0,f^i_1,\ldots,f^i_k)$ are uniformly bounded as well. Since $P(f^i_0,f^i_1,\ldots,f^i_k) \to P(u_0,u_1,\ldots,u_k)$ uniformly, the result follows.
\end{proof}

\noindent As a consequence of the above corollary, we get a volume partition formula for $\o_{P(u_0,u_1)}^n$:

\begin{proposition}\textup{\cite[Proposition 2.2]{da1}} \label{prop: MA_form} For $u_0,u_1 \in \mathcal H_\o^{1,\bar 1}$, we introduce the contact sets $\Lambda_{u_0} = \{ P(u_0,u_1)=u_0\}$ and  $\Lambda_{u_1} = \{ P(u_0,u_1)=u_1\}$. Then the following partition formula holds:
\begin{equation}\label{eq: MA_partition}
\o_{P(u_0,u_1)}^n= \mathbbm{1}_{\Lambda_{u_0}}\o_{u_0}^n + \mathbbm{1}_{\Lambda_{u_1} \setminus \Lambda_{u_0}}\o_{u_1}^n.
\end{equation}
\end{proposition}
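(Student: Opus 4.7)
The plan is to combine a Perron-type vanishing of $\omega_\phi^n$ off the contact sets (supplied by \eqref{eq: coinc_vanish}) with a pointwise a.e.\ matching of Monge--Amp\`ere densities on each contact set, using the $C^{1,\bar 1}$ regularity of the rooftop envelope from Corollary \ref{cor: rooftop_env_reg}.

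Set $\phi := P(u_0,u_1)$. By Corollary \ref{cor: rooftop_env_reg}, $\phi \in \mathcal H_\o^{1,\bar 1}$, so $\phi, u_0, u_1$ all lie in $W^{2,p}_{\textup{loc}}$ for every finite $p$, and their Monge--Amp\`ere measures are absolutely continuous with respect to the background Lebesgue class, with $L^\infty$ densities. For the first step, I would apply \eqref{eq: coinc_vanish} to the continuous function $f := \min(u_0,u_1)$, for which $P(f) = \phi$; this gives $\omega_\phi^n(\{f > \phi\}) = 0$, and since $\{f > \phi\} = X \setminus (\Lambda_{u_0} \cup \Lambda_{u_1})$, the measure $\omega_\phi^n$ is concentrated on $\Lambda_{u_0} \cup \Lambda_{u_1}$.

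The heart of the argument is to show $\mathbbm{1}_{\Lambda_{u_0}}\omega_\phi^n = \mathbbm{1}_{\Lambda_{u_0}}\omega_{u_0}^n$; the analogue for $u_1$ follows by symmetry, and restricting to the disjoint piece $\Lambda_{u_1}\setminus \Lambda_{u_0}$ then yields \eqref{eq: MA_partition}. Consider the nonnegative function $g := u_0 - \phi$, which vanishes precisely on $\Lambda_{u_0}$. Thanks to the $W^{2,p}$ regularity, $g$ admits a pointwise second order Taylor expansion almost everywhere (an Alexandrov/Calder\'on--Zygmund type result), and by the Lebesgue density theorem almost every point of $\Lambda_{u_0}$ is a density point of $\Lambda_{u_0}$. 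At a point $x_0$ meeting both properties, $g$ attains its minimum value $0$, so $\nabla g(x_0) = 0$ and $D^2 g(x_0) \geq 0$; moreover, the density condition furnishes sequences $x_k \to x_0$ in $\Lambda_{u_0}$ whose direction vectors $(x_k - x_0)/|x_k - x_0|$ are dense in the unit sphere, and plugging them into the Taylor expansion of $g$ (which equals zero along such sequences) forces $\langle D^2 g(x_0) v, v\rangle = 0$ for all $v$, hence $D^2 g(x_0) = 0$. In particular $\del\dbar \phi = \del\dbar u_0$ almost everywhere on $\Lambda_{u_0}$, which gives the desired identity of measures.

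The main obstacle is precisely this second step: upgrading the trivial pointwise equality $\phi = u_0$ on $\Lambda_{u_0}$ into genuine equality of the top-order mixed Hessians. This is not a direct application of the plurifine locality \eqref{eq: GZ_MP_local}, since $\Lambda_{u_0}$ is typically only closed rather than plurifine open, and the boundary $\del \Lambda_{u_0}$ can carry positive Lebesgue measure that cannot be ignored. The argument fundamentally rests on Corollary \ref{cor: rooftop_env_reg}: without the $C^{1,\bar 1}$ regularity of $\phi$, the Monge--Amp\`ere measures would only be currents rather than $L^\infty$ densities, and the density-point Taylor expansion argument would be unavailable.
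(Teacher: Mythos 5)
Your argument is correct and follows the same two-step architecture as the paper's: first, the Perron vanishing \eqref{eq: coinc_vanish} applied to the continuous function $f=\min(u_0,u_1)$ concentrates $\omega_{P(u_0,u_1)}^n$ on $\Lambda_{u_0}\cup\Lambda_{u_1}$; second, the Monge--Amp\`ere densities of $P(u_0,u_1)$ and $u_0$ (resp.\ $u_1$) are shown to agree a.e.\ on $\Lambda_{u_0}$ (resp.\ $\Lambda_{u_1}$). Where you diverge is in the proof of the second step. The paper simply invokes \cite[Chapter 7, Lemma 7.7]{GT} for the standard Sobolev fact that if two $W^{2,p}_{\mathrm{loc}}$ functions coincide on a measurable set then all their second-order partials coincide a.e.\ on that set (applied twice, after the Calder\'on--Zygmund upgrade from bounded Laplacian to $W^{2,p}$ for every $p<\infty$). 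You instead re-derive this fact from scratch: using Calder\'on--Zygmund a.e.\ pointwise twice-differentiability of $g=u_0-P(u_0,u_1)\geq 0$, you note that at an a.e.\ density point $x_0\in\Lambda_{u_0}$ where the Taylor expansion exists one has $\nabla g(x_0)=0$ and $D^2g(x_0)\geq 0$, and then the cone/density argument furnishes unit vectors $v$ dense in the sphere that can be realized as limits of directions to points of $\Lambda_{u_0}$, forcing $\langle D^2g(x_0)v,v\rangle=0$ and hence $D^2g(x_0)=0$. This is a legitimate hands-on proof of the cited Sobolev lemma; it makes the geometric mechanism visible (the quadratic form must annihilate all directions coming out of the density set), at the cost of invoking pointwise differentiability a.e.\ and the Lebesgue density theorem in place of the lighter $W^{1,1}$ chain rule. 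One minor economy you leave on the table: for the Monge--Amp\`ere identity you only need $\partial\bar\partial P(u_0,u_1)=\partial\bar\partial u_0$ a.e.\ on $\Lambda_{u_0}$, not vanishing of the full real Hessian of $g$, but your stronger conclusion is harmless and indeed cleaner to state.
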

\begin{proof} As pointed out in \eqref{eq: coinc_vanish}, $\o_{P(u_0,u_1)}^n$ is concentrated on the coincidence set $\Lambda_{u_0} \cup \Lambda_{u_1}.$ Having bounded Laplacian implies that all second order partials of $P(u_0,u_1)$ are in any $L^p(X), \ p <\infty$ \cite[Chapter 9, Lemma 9.9]{GT}. It follows from \cite[Chapter 7, Lemma 7.7]{GT} that on $\Lambda_{u_0}$ all the second order partials of $P(u_0,u_1)$ and $u_0$ agree a.e., and the analogous statement holds on $\Lambda_{u_1}$. Hence, using \cite[Proposition 2.1.6]{bl3}  one can write:
\begin{flalign*}
\o_{P(u_0,u_1)}^n&=\mathbbm{1}_{\Lambda_{u_0} \cup \Lambda_{u_1}}\o_{P(u_0,u_1)}^n \nonumber =\mathbbm{1}_{\Lambda_{u_0}}\o_{u_0}^n + \mathbbm{1}_{\Lambda_{u_1} \setminus \Lambda_{u_0}}\o_{u_1}^n,
\end{flalign*}
finishing the proof.
\end{proof}
The partition formula (\ref{eq: MA_partition}) is at the core of many theorems presented later in this survey. Interestingly, it fails to hold evein in the slightly more general case of Lipschitz potentials $$u_0,u_1 \in \mathcal H^{0,1}_\o:= \textup{PSH}(X,\o) \cap C^{0,1}(X).$$ For a counterexample, suppose $\dim X = 1$ and $g_x$ is the $\o-$Green function with pole at $x \in X$. Such function is characterized by the property $\int_X g_x \o =0$ and $\o + i\partial \bar \partial g_x = \delta_x$. We choose $u_0 = \max\{g_x,0\}$ and $u_1 =0$. In this case $P(u_0,u_1)=0, \ \Lambda_{u_0} = \{ g_x \leq 0\}$ and $\Lambda_{u_1} \setminus \Lambda_{u_0}= X \setminus \Lambda_{u_0} \neq \emptyset$. As
$ \textup{Vol}(X) = \int_{\Lambda_{u_0}} \o_{u_0}^n = \int_X \o_{P(u_0,u_1)}^n,$ it is seen that the right hand side of (\ref{eq: MA_partition}) has total integral greater then the left hand side, hence they can not equal.

As $\int_X \o_{u_1}^n$ is finite, it follows that $\o_{u_1}^n(\{u_0 = u_1 +\tau\})>0$ only for a countable number of values $\tau \in \Bbb R$. Consequently, \eqref{eq: MA_partition} implies the following observation:
\begin{remark} \label{rem: MA_form_remark} Given $u_0,u_1 \in \mathcal H_\o^{1,\bar 1}$, for any $\tau \in \Bbb R$ outside a countable set we have:
$$\o_{P(u_0,u_1+\tau)}^n= \mathbbm{1}_{\Lambda_{u_0}}\o_{u_0}^n + \mathbbm{1}_{\Lambda_{u_1+\tau}}\o_{u_1}^n.$$
In particular, $\textup{Vol}(X)=\int_{\{P(u_0,u_1 + \tau)=u_0\}}\o_{u_0}^n + \int_{\{P(u_0,u_1 + \tau)=u_1 + \tau\}}\o_{u_1}^n$.
\end{remark}
 
Corollary \ref{cor: rooftop_env_reg} simply says that the operation $(u_0,u_1) \to P(u_0,u_1)$ is closed inside the class $\mathcal H_\o^{1,\bar 1}$. The next proposition tells that the same holds inside the finite energy classes as well:
 
\begin{proposition}\textup{\cite[Lemma 3.4]{da1}} \label{prop: env_exist} Suppose $\chi \in \mathcal W^+_p$ and ${u_0},{u_1} \in \mathcal E_\chi(X,\o)$. Then $P(u_0,u_1) \in \mathcal E_\chi(X,\o)$, and if $u_0,u_1 \leq 0$ then following estimate holds:
\begin{equation}\label{eq: e_est}
E_\chi(P({u_0},{u_1})) \leq (p+1)^{2n}(E_\chi({u_0}) + E_\chi({u_1})).
\end{equation}
\end{proposition}

\begin{proof} As $P(u_0-c,u_1-c)=P(u_0,u_1)-c$ for $c \in \Bbb R$, it follows that without loss of generality we can assume that $u_0,u_1 <0$.

By Theorem \ref{thm: BK_approx}, it is possible to find negative potentials $u^j_0,u^j_1 \in \mathcal H_\o$ that decrease to $u_0,u_1$. Furthermore, by Proposition \ref{prop: MA_form} we can also assume that $P(u^j_0,u^j_1) \in \mathcal H_\o^{1,\bar 1}$ satisfies
\begin{equation}\label{eq: MA_approx_part}
\o_{P(u_0^j,u_1^j)}^n \leq \mathbbm{1}_{\Lambda_{u^j_0}}\o_{u^j_0}^n + \mathbbm{1}_{\Lambda_{u_1^j}}\o_{u_1^j}^n.
\end{equation}
Using this formula we  can write:
\begin{flalign*}E_\chi(P({u^j_0},{u^j_1}))&= \int_X \chi (P({u^j_0},{u_1^j}))\o_{P({u^j_0},{u^j_1})}^n \\
&\leq \int_{\{P({u^j_0},{u^j_1}) = {u^j_0}\}}\chi({u^j_0})\o_{{u^j_0}}^n + \int_{\{P({u^j_0},{u^j_1}) = {u^j_1}\}}\chi({u_1^j})\o_{u^j_1}^n \\
&\leq \int_X\chi({u_0^j})\o_{u_0^j}^n + \int_X\chi({u_1^j})\o_{u_1^j}^n  = E_\chi({u^j_0}) + E_\chi({u^j_1})\\
&\leq (p+1)^n (E_\chi({u_0}) + E_\chi({u_1})),
\end{flalign*}
where in the last line we have used Proposition \ref{prop: Energy_est}. As $P(u^j_0,u^j_1)$ decreases to $P(u_0,u_1)$, by  Lemma \ref{lem: E_semicont} we have $P(u_0,u_1) \in \mathcal E_\chi(X,\o)$, and \eqref{eq: e_est} holds.
\end{proof}

Observe that $t u_0 + (1-t)u_1 \geq P(u_0,u_1)$ for any $t \in [0,1]$, hence as a consequence of the previous proposition and the monotonicity property of $\mathcal E_\chi(X,\o)$ (Corollary \ref{cor: monotonicity_E_chi}) we obtain that $\mathcal E_\chi(X,\o)$ is convex:
\begin{corollary}\label{cor: E_chi_convex} If $u_0,u_1 \in \mathcal E_\chi(X,\o)$ then $tu_0+(1-t)u_1 \in \mathcal E_\chi(X,\o)$ for any $t \in [0,1]$. 
\end{corollary}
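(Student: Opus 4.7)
The plan is to follow the hint given just before the corollary, which essentially reduces the convexity statement to two previously established results: the envelope membership (Proposition \ref{prop: env_exist}) and the monotonicity property (Corollary \ref{cor: monotonicity_E_chi}).

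First I would observe that the convex combination $v_t := tu_0 + (1-t)u_1$ lies in $\textup{PSH}(X,\o)$, because $\textup{PSH}(X,\o)$ is a convex cone: indeed $\o_{v_t} = t\o_{u_0} + (1-t)\o_{u_1} \geq 0$. Next, I would note the pointwise inequality
\begin{equation*}
v_t = tu_0 + (1-t)u_1 \geq \min(u_0,u_1) \geq P(u_0,u_1),
\end{equation*}
where the first inequality is immediate, and the second follows from the very definition of the rooftop envelope $P(u_0,u_1)=P(\min(u_0,u_1))$ as the supremum of $\o$-psh candidates lying below $\min(u_0,u_1)$.

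By Proposition \ref{prop: env_exist}, $P(u_0,u_1) \in \mathcal E_\chi(X,\o)$ whenever $u_0,u_1 \in \mathcal E_\chi(X,\o)$. Combining this with the above pointwise bound $v_t \geq P(u_0,u_1)$ and with the monotonicity property of $\mathcal E_\chi(X,\o)$ (Corollary \ref{cor: monotonicity_E_chi}), which states that any $\o$-psh function dominating an element of $\mathcal E_\chi(X,\o)$ itself belongs to $\mathcal E_\chi(X,\o)$, we conclude that $v_t \in \mathcal E_\chi(X,\o)$, as required.

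There is essentially no obstacle here: all the analytic work has already been done in Proposition \ref{prop: env_exist} (construction of the rooftop envelope within $\mathcal E_\chi$) and in Corollary \ref{cor: monotonicity_E_chi} (monotonicity of the class). The only content of the corollary is the elementary observation that a convex combination dominates the rooftop envelope, which turns the convexity of $\mathcal E_\chi(X,\o)$ into a direct consequence.
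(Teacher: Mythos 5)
Your proof is correct and follows exactly the argument the paper gives: bound $tu_0 + (1-t)u_1$ from below by $P(u_0,u_1)$, invoke Proposition \ref{prop: env_exist} to place the envelope in $\mathcal E_\chi(X,\o)$, and finish with the monotonicity property (Corollary \ref{cor: monotonicity_E_chi}). Your additional remarks — that $\textup{PSH}(X,\o)$ is convex (needed to apply the monotonicity property) and the intermediate bound via $\min(u_0,u_1)$ — are just the elementary steps the paper leaves implicit.
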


Before we can establish the continuity property of the Monge--Amp\`ere operator along monotonic sequences of potentials, we need to establish the following auxilliary result, which states that if $u \in \mathcal E_\chi(X,\o)$, then it is possible to find a $\tilde \chi$ with bigger growth than $\chi$ such that $u \in \mathcal E_{\tilde \chi}(X,\o)$ still holds:
\begin{lemma}\label{lem: CompBiggerEn}
Suppose $u \in \mathcal E_\chi(X,\omega), \ \chi \in \mathcal W^+_{p}$. Then there exists $\tilde{\chi} \in \mathcal W^+_{2p+1}$ such that $ \chi(t) \leq \tilde \chi(t)$, $\chi(t)/\tilde{\chi}(t) \searrow 0$ as $t \to \infty$, and $u \in \mathcal E_{\tilde{\chi}}(X,\o)$.
\end{lemma}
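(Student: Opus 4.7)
I will construct $\tilde\chi$ as the even function defined on $[0,\infty)$ by
\[
\tilde\chi(t) = \int_0^t \chi'(s)\,\psi(s)\,ds,
\]
where $\psi\colon[0,\infty)\to[1,\infty)$ is a non-decreasing, continuous, piecewise linear auxiliary function tending to infinity, chosen adapted to $u$. This ansatz automatically makes $\tilde\chi$ convex (since $\chi'\psi$ is non-decreasing on $[0,\infty)$), ensures $\tilde\chi\geq\chi$ (as $\psi\geq 1$), and the integration-by-parts identity
\[
\tilde\chi(t) = \chi(t)\psi(t) - \int_0^t \chi(s)\psi'(s)\,ds
\]
will drive both the $\mathcal W^+_{2p+1}$ growth bound and the asymptotic $\chi/\tilde\chi\to 0$.

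To choose $\psi$, I use that $\nu := \chi(|u|)\o_u^n$ is a finite measure because $u\in\mathcal E_\chi$, so $G(t) := \nu(\{|u|\geq t\})\searrow 0$. I pick a sequence $1\leq s_1<s_2<\cdots$ satisfying simultaneously $G(s_k)\leq 2^{-k}G(0)$ and the geometric spacing $s_{k+1}\geq(p+1)\,s_k$; these two conditions are compatible since enlarging $s_{k+1}$ only helps the integrability condition. I then set $\psi\equiv 1$ on $[0,s_1]$ and take $\psi$ linear on each $[s_k,s_{k+1}]$ with $\psi(s_k)=k$. The spacing $s_{k+1}-s_k\geq\frac{p}{p+1}s_{k+1}$, combined with $\psi\geq 1$, yields the crucial bound $s\psi'(s)\leq\frac{p+1}{p}\psi(s)$ a.e.

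The rest is a mechanical chain of estimates. Combining $s\chi'(s)\leq p\chi(s)$ with $\chi(s)/s\leq\chi'(s)$ (which follows from convexity and $\chi(0)=0$) and the bound on $s\psi'/\psi$, one obtains $\chi(s)\psi'(s)\leq\frac{p+1}{p}\chi'(s)\psi(s)$, hence $\int_0^t \chi(s)\psi'(s)\,ds\leq\frac{p+1}{p}\tilde\chi(t)$. The integration-by-parts identity then yields the reverse control $\tilde\chi(t)\geq\frac{p}{2p+1}\chi(t)\psi(t)$. From this: (i) $t\tilde\chi'(t)=t\chi'(t)\psi(t)\leq p\chi(t)\psi(t)\leq(2p+1)\tilde\chi(t)$, giving the $\mathcal W^+_{2p+1}$ growth; (ii) $\chi(t)/\tilde\chi(t)\leq(2p+1)/(p\psi(t))\to 0$, and differentiating $\chi/\tilde\chi$ produces a numerator equal to $\chi'(t)(\tilde\chi(t)-\chi(t)\psi(t))\leq 0$, so the convergence is monotone; (iii) integrability reads $\int_X\tilde\chi(|u|)\,\o_u^n\leq\int_X\chi(|u|)\psi(|u|)\,\o_u^n=\int\psi\,d\nu\leq\sum_k(k+1)G(s_k)<\infty$. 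The normalization $1\in\partial\tilde\chi(1)$ is free: arranging $s_1\geq 1$ makes $\psi\equiv 1$ on a neighborhood of $t=1$, so $\tilde\chi$ and $\chi$ differ by a constant there and the normalization of $\chi$ transfers.

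The main difficulty is identifying the correct balance between the two competing demands on $\psi$: the $\mathcal W^+_{2p+1}$ estimate forces $\psi$ to grow at most polynomially of degree $(p+1)/p$, while the integrability $\int\psi\,d\nu<\infty$ requires $\psi\to\infty$ quickly enough along typical values of $|u|$. The geometric spacing $s_{k+1}\geq(p+1)s_k$ is precisely what reconciles the two, and once that spacing is in place the rest of the argument is bookkeeping.
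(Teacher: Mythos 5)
Your construction is correct, and all the estimates check out: convexity of $\tilde\chi$ follows because $\chi'\psi$ is non-decreasing on $[0,\infty)$; the spacing $s_{k+1}\geq(p+1)s_k$ really does give $s\psi'(s)\leq\frac{p+1}{p}\psi(s)$ a.e., which, via $\chi(s)\leq s\chi'(s)$ and the integration-by-parts identity, yields $\frac{p}{2p+1}\chi(t)\psi(t)\leq\tilde\chi(t)\leq\chi(t)\psi(t)$; from this the $\mathcal W^+_{2p+1}$ bound, the convergence $\chi/\tilde\chi\to0$, and its monotonicity all follow. The integrability $E_{\tilde\chi}(u)\leq\int\psi\,d\nu\leq G(0)+\sum_k(k+1)G(s_k)<\infty$ is correct (note that $G(t)\to0$ because $E_\chi(u)<\infty$ forces $\o_u^n(\{|u|=\infty\})=0$). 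One tiny imprecision: on $[0,s_1]$ the weights $\tilde\chi$ and $\chi$ actually \emph{coincide} (both vanish at the origin), not merely ``differ by a constant''; this is what transfers the normalization $1\in\partial\chi(1)$.

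Your route differs from the paper's in presentation though not in underlying mechanism. The paper builds $\tilde\chi$ as an increasing limit $\tilde\chi=\lim_k\tilde\chi_k$, where each step doubles the derivative beyond a large threshold $t_k$; the threshold is chosen \emph{implicitly} so that $\sup_t t\tilde\chi_k'/\tilde\chi_k$ stays below $2p+1$ (which works because after each doubling the asymptotic ratio returns to $p$) and the energy stays bounded. In your version the multiplier $\psi$ is written down once and for all as a piecewise-linear function, and the conditions on the breakpoints are explicit: a geometric spacing $s_{k+1}\geq(p+1)s_k$ (for the growth estimate) and a geometric decay $G(s_k)\leq2^{-k}G(0)$ (for the energy). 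What your approach buys is a closed formula $\tilde\chi=\int_0^\cdot\chi'\psi$ from which the two-sided bound $\tilde\chi\asymp\chi\psi$ and the $\mathcal W^+_{2p+1}$ estimate fall out transparently via integration by parts; the price is having to isolate the right spacing condition, whereas the paper sidesteps this by observing that each doubling is harmless near $t=\infty$. Both produce a slowly growing multiplier of $\chi'$ and both are correct; yours is arguably cleaner to verify because the construction is not iterative.
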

\begin{proof} The weight ${\tilde{\chi}}:\Bbb R^+ \to \Bbb R^+$ will be constructed as an increasing limit of weights ${\tilde{\chi}}^j \in \mathcal W^+_{2p+1}, j \in \Bbb N$ constructed below.
We set ${\tilde{\chi}}_0(t) = \chi$. Let $t_0 \in \Bbb R^+$, to be specified later. We define ${\tilde{\chi}}_1:\Bbb R^+ \to \Bbb R^+$ by the formula
$${\tilde{\chi}}_1(t) = \begin{cases} {\tilde{\chi}}_0(t), & \mbox{if } { t \leq t_1}
\\{\tilde{\chi}}_0(t_1) + 2({\tilde{\chi}}_0(t) - {\tilde{\chi}}_0(t_1)), & \mbox{if } { t >t_1. } \end{cases}$$
Notice that ${\tilde{\chi}}_1$ satisfies $\chi \leq \tilde \chi_1$ and the following also hold
\begin{equation}\label{CompWeightEst1}
\sup_{t >0}\frac{|t{\tilde{\chi}}_1'(t)|}{|{\tilde{\chi}}_1(t)|}\leq  \sup_{t >0}\frac{2|t{\tilde{\chi}}_0'(t)|}{|{\tilde{\chi}}_0(t)|} < 2p + 1,
\end{equation}
\begin{equation}\label{CompWeightEst2}
\lim_{t \to \infty}\frac{|t{\tilde{\chi}}_1'(t)|}{|{\tilde{\chi}}_1(t)|}=p.
\end{equation}
We can choose $t_1$ to be big enough such that $E_{{\tilde{\chi}}_1}(u) < E_\chi(u) + 1$.

Now pick $t_2 > t_1$, again specified later. One defines ${\tilde{\chi}}_2:\Bbb R^+ \to \Bbb R^+$ in a similar manner:
$${\tilde{\chi}}_2(t) = \begin{cases} {\tilde{\chi}}_1(t), & \mbox{if } { t \leq t_2}
\\{\tilde{\chi}}_1(t_2) + 2({\tilde{\chi}}_1(t) - {\tilde{\chi}}_1(t_2)), & \mbox{if } { t >t_2. } \end{cases}$$
As $\eqref{CompWeightEst2}$ holds for ${\tilde{\chi}}_1$, it is possible to choose $t_2>t_1$ big enough so that the ${\tilde{\chi}}_2$-analogs of \eqref{CompWeightEst1},\eqref{CompWeightEst2} are satisfied and $E_{{\tilde{\chi}}_2}(u) < E_\chi(u) + 1$. 

We define ${\tilde{\chi}}_k, k \in \Bbb N$, following the above procedure. As $\lim_{t \to \infty} {\tilde{\chi}}_k(t)/\chi(t)=2^k$, the limit weight ${\tilde{\chi}}(t) = \lim_{k \to \infty} {\tilde{\chi}}_k(t)$ is seen to satisfy the requirements of the lemma.
\end{proof}

The following result, allowing to take weak limits of certain measures, will be used in many different contexts throughout the survey:

\begin{proposition}\label{prop: MA_cont} Assume that  $\{ \phi_k\}_{k \in \Bbb N},\{ \psi_k\}_{k \in \Bbb N},\{ v^j_k\}_{k \in \Bbb N} \subset \mathcal E_\chi(X,\o)$ decrease (increase  a.e.) to $\phi,\psi, v^j \in \mathcal E_\chi(X,\o)$ respectively, $j \in \{1,\ldots,n\}$. Suppose that \\
(i) $\psi_k \leq \phi_k$;\\ 
(ii) $h:\Bbb R \to \Bbb R$ is continuous  with $\limsup_{|l| \to \infty} \frac{|h(l)|}{\chi(l)} \leq C$ for some $C>0$; \\
Then $h(\phi_k - \psi_k)\o_{v^1_k}\wedge \ldots \wedge \o_{v^n_k} \to h(\phi-\psi)\o_{v^1}\wedge \ldots \wedge \o_{v^n}$ weakly.
\end{proposition}

We will apply this porposition mostly for $h = \chi$ and $h = 1$. In the latter case this proposition simply tells that the non-pluripolar products, as defined in \eqref{eq: CMA_general_def}, converge weakly along monotonic sequences of $\mathcal E_\chi(X,\o)$\vspace{0.1cm}. 

\noindent \textbf{Remark.} Though in this work we will only deal with $\chi \in \mathcal W^+_p$, the same argument yields the result for concave weights $\chi \in \mathcal W^-$  (introduced in \cite{gz}) as well.

\begin{proof} Let $\gamma \in C^\infty(X).$ We can suppose without loss of generality that all the potentials involved are negative. 
First we suppose that there exists $L >1$ such that $-L < \phi,\phi_k,\psi,\psi_k,v,v_k < 0$, and prove the theorem under this assumption. \cite[Theorem 4.26]{gzbook} gives this immediately, but we give a more elementary argument instead. Given $\varepsilon >0$ one can find an open $O \subset X$ such that $\textup{Cap}_X(O) < \varepsilon$ and $\phi,\phi_k,\psi,\psi_k,v^j,v^j_k$ are all continuous on $X \setminus O$ (\cite[Theorem 2.2]{bl3} or \cite[Definition 2.4, Corollary 2.8]{gz05}). We have
\begin{flalign}\label{eq: some1}
\int_X \gamma \big[h(\phi_k - \psi_k)&- h( \phi - \psi)\big]\o_{v^1_k}\wedge \ldots \wedge \o_{v^n_k}= \nonumber \\
&=\int_O + \int_{X \setminus O} \gamma \big[h(\phi_k - \psi_k) - h( \phi - \psi)\big]\o_{v^1_k}\wedge \ldots \wedge \o_{v^n_k}.
\end{flalign}
The integral on $O$ is bounded by $\varepsilon,$ where $C:=C(L,\gamma)$. The second integral tends to $0$ as on the closed set $X \setminus O$ we have $\phi_k \to \phi$ and $\psi_k \to \psi$ uniformly. We also have
\begin{flalign}\label{eq: some2}
\int_X \gamma h(\phi - \psi)\o_{v^1_k}\wedge \ldots \wedge \o_{v^n_k} - \int_X \gamma h({\phi - \psi})\o_{v^1}\wedge \ldots \wedge \o_{v^n} \to 0,
\end{flalign}
as the function $h({\phi - \psi})$ is quasi--continuous and bounded. Indeed, quasi--continuity and boundedness implies again that for all $\varepsilon >0$ one can find an open $\tilde O \subset X$ such that $\textup{Cap}_X(\tilde O) < \varepsilon$ and $h({\phi - \psi})$ is continuous on $X \setminus \tilde O$. Furthermore, by Tietze's extension theorem we can extend $h({\phi - \psi})|_{X \setminus \tilde O}$ to a continuous function $\alpha$ on $X$. As $\textup{Cap}_X(\tilde O) < \varepsilon$,  we have that $\int_X |h({\phi - \psi}) - \alpha| \o_{v^1_k}\wedge \ldots \wedge \o_{v^n_k} \leq C\varepsilon$ and $\int_X|h({\phi - \psi}) - \alpha| \o_{v^1}\wedge \ldots \wedge \o_{v^n} \leq C\varepsilon$. On the other hand $\int_X \alpha \o_{v^1_k}\wedge \ldots \wedge \o_{v^n_k} \to \int_X \alpha \o_{v^1}\wedge \ldots \wedge \o_{v^n}$ by Bedford-Taylor theory (see \cite[Theorem 2.2.5]{bl3}). Putting these facts together we get \eqref{eq: some2}. Finally, \eqref{eq: some1} and \eqref{eq: some2} together give the proposition for bounded potentials.

Now we argue that the result also holds when $\phi,\phi_k,\psi,\psi_k,v,v_k$ are unbounded. For this we only need to show that
\begin{flalign}\label{uniformlimit}
\int_X \gamma h(\phi_k - \psi_k)\o_{v^1_k}\wedge \ldots \wedge \o_{v^n_k} - \int_X \gamma h(\phi^L_k - \psi^L_k)\o_{v^{1L}_k}\wedge \ldots \wedge \o_{v^{nL}_k} \to 0
\end{flalign}
\begin{flalign}\label{uniformlimit2}
\int_X \gamma h(\phi - \psi)\o_{v^1}\wedge \ldots \wedge \o_{v^n}- \int_X\gamma  h(\phi^L - \psi^L)\o_{v^{1L}}\wedge \ldots \wedge \o_{v^{nL}} \to 0
\end{flalign}
as $L \to \infty$, uniformly with respect to $k$, where $v^{jL} = \max (v^j,-L)$ and $v^{jL}_k,\psi^L_k,\psi^L,\phi^L_k,\phi^L$  are defined similarly. 

We only argue \eqref{uniformlimit} as the proof of \eqref{uniformlimit2} is identical. By Proposition \ref{prop: env_exist} there exists $\beta \in \mathcal E_\chi(X,\omega)$ such that $\beta \leq \phi,\phi_k,\psi,\psi_k,v^j,v^j_k$ for any $k$, due to the monotonicity of our sequences. For example, in case all sequences are decreasing, we can take $\beta := P(\phi,\psi, v_1,,\ldots,v_n)$. The other cases are treated similarly.

By \eqref{eq: GZ_MP_local} the non-pluripolar product is local in the plurifine topology, hence both integrands in \eqref{uniformlimit} are the same on $\{\beta > -L\}$. As a result we can write 
\begin{flalign}\label{eq: interm_est}
&\int_X  \gamma h(\phi_k - \psi_k)\o_{v^1_k}\wedge \ldots \wedge \o_{v^n_k} - \int_X \gamma h(\phi^L_k - \psi^L_k)\o_{v^{1L}_k}\wedge \ldots \wedge \o_{v^{nL}_k} = \nonumber \\
& = \int_{\{\beta \leq -L \}} \gamma h(\phi_k - \psi_k)\o_{v^1_k}\wedge \ldots \wedge \o_{v^n_k} - \int_{\{\beta \leq -L \}} \gamma h(\phi^L_k - \psi^L_k)\o_{v^{1L}_k}\wedge \ldots \wedge \o_{v^{nL}_k}
\end{flalign}
As a result, to argue \eqref{uniformlimit} it is enough to show that both  terms in the above expression converge to zero. We now focus on  the second expression. As $0 \geq \psi^L_k - \phi^L_k \geq \beta$ we can write: 
\begin{flalign*}
\bigg|\int_{\{\beta \leq -L \}} \gamma & h(\phi^L_k - \psi^L_k)\o_{v^{1L}_k}\wedge \ldots \wedge \o_{v^{nL}_k}\bigg|  \leq C \int_{\{\beta \leq -L \}}  \chi(\beta)\o_{v^{1L}_k}\wedge \ldots \wedge \o_{v^{nL}_k}\\
& \leq C \int_{\{\beta \leq -L \}}  \chi(v^{1L}_k + \ldots + v^{nL}_k + \beta)\o^n_{\frac{v^{1L}_k + \ldots + v^{nL}_k + \beta}{n+1}}\\
& \leq C \int_{\{v^{1L}_k + \ldots + v^{nL}_k + \beta \leq -L \}}  \chi(v^{1L}_k + \ldots + v^{nL}_k + \beta)\o^n_{\frac{v^{1L}_k + \ldots + v^{nL}_k + \beta}{n+1}}
\\
& \leq C \int_{\{\frac{v^{1L}_k + \ldots + v^{nL}_k + \beta}{n+1} \leq -\frac{L}{n+1} \}}  \chi\Big(\frac{v^{1L}_k + \ldots + v^{nL}_k + \beta}{n+1}\Big)\o^n_{\frac{v^{1L}_k + \ldots + v^{nL}_k + \beta}{n+1}},
\end{flalign*}
where in the last line we have used \eqref{eq: GrowthControl}. To continue, we introduce the potentials $\eta : = ({v^{1}_k + \ldots + v^{n}_k + \beta})/({n+1})$ and $\eta^L : = ({v^{1L}_k + \ldots + v^{nL}_k + \beta})/({n+1})$. These potentials are in $\mathcal E_\chi(X,\omega)$ by Corollary \ref{cor: E_chi_convex}.  We also pick ${\tilde{\chi}} \in \mathcal W^+_{2p+1}$ as in the previous lemma, for $\eta \in \mathcal E_\chi(X,\omega)$. We can continue the above estimates in the following manner:
\begin{flalign*}
&= C \int_{\{\eta^L \leq -\frac{L}{n+1} \}}  \chi(\eta^L)\o^n_{\eta^L}  \leq C \frac{\chi(L/(n+1))}{\tilde \chi(L/(n+1))}\int_{\{\eta^L \leq -\frac{L}{n+1} \}}  \tilde \chi(\eta^L)\o^n_{\eta^L}\\
& = C \frac{\chi(L/(n+1))}{\tilde \chi(L/(n+1))} E_{\tilde \chi}(\eta^L) \leq C \frac{\chi(L/(n+1))}{\tilde \chi(L/(n+1))} E_{\tilde \chi}(\eta),
\end{flalign*}
where in the last step we used Proposition \ref{prop: Energy_est}. As $\chi(L)/\tilde \chi(L) \to 0$, this implies that the second term in \eqref{uniformlimit} converges to zero.  

Since 
\begin{flalign*}
\bigg| \int_{\{\beta \leq -L \}} \gamma h(\phi_k - \psi_k)\o_{v^1_k}\wedge \ldots \wedge \o_{v^n_k} \bigg| \leq C \int_{\{\beta \leq -L \}}  \chi(\beta)\o_{v^1_k}\wedge \ldots \wedge \o_{v^n_k} \to 0,
\end{flalign*}
the same argument gives that the first term in \eqref{eq: interm_est} converges to zero as well, finishing the proof of \eqref{uniformlimit}.
\end{proof}


Lastly, we prove the \emph{domination principle} for the class $\mathcal E_1(X,\o)$ (recall \eqref{eq: E_p_def}). We mention that these results also hold more generally for the class $\mathcal E(X,\o)$, by a theorem of S. Dinew \cite{Di,BL12}. The short proof below was pointed out to us by C.H. Lu, and it is based on the arguments of \cite{DDL17}.

\begin{proposition}
	\label{prop: domination principle}
	Let $\phi,\psi\in \mathcal E_1(X,\o)$. If $\psi\leq \phi$ almost everywhere with respect to $\o_{\phi}^n$ then $\psi\leq \phi$. 
\end{proposition}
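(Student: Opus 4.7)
The plan is to combine the comparison principle with an envelope argument and the integration-by-parts identity valid in $\mathcal{E}_1(X,\o)$.

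After normalizing $\phi,\psi\leq 0$, the first step is to promote the hypothesis from $\o_\phi^n$ to $\o_\psi^n$. For $t\in(0,1)$, Corollary \ref{cor: E_chi_convex} gives $u_t:=(1-t)\phi+t\psi\in\mathcal{E}_1(X,\o)$. Since $\{\phi<u_t\}=\{\phi<\psi\}$, the comparison principle (Proposition \ref{prop: comp_princ_E}) yields
$$\int_{\{\phi<\psi\}}\o_{u_t}^n\;\leq\;\int_{\{\phi<\psi\}}\o_\phi^n\;=\;0,$$
and expanding $\o_{u_t}=(1-t)\o_\phi+t\o_\psi$ gives the current inequality $\o_{u_t}^n\geq t^n\o_\psi^n$, which forces $\o_\psi^n(\{\phi<\psi\})=0$.

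Next, introduce $v:=\max(\phi,\psi)\in\mathcal{E}_1(X,\o)$ (Corollary \ref{cor: monotonicity_E_chi}); note $v\geq\phi$. By Lemma \ref{lem: MP_forE} together with the previous step, $\o_v^n$ is concentrated on $\{\phi\geq\psi\}=\{v=\phi\}$, and the hypothesis confines $\o_\phi^n$ to the same set. Consequently
$$\int_X(v-\phi)\,\o_v^n\;=\;0\;=\;\int_X(v-\phi)\,\o_\phi^n.$$

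The remaining step is to conclude $v\equiv\phi$ (so that $\psi\leq\phi$) from this vanishing. For this I would invoke the standard $\mathcal{E}_1$ integration-by-parts identity
$$\int_X(v-\phi)(\o_v^n-\o_\phi^n)\;=\;-\sum_{k=0}^{n-1}\int_X i\del(v-\phi)\wedge\dbar(v-\phi)\wedge\o_v^k\wedge\o_\phi^{n-1-k},$$
whose LHS is zero by the display above, forcing every nonnegative summand on the right to vanish. The main obstacle is precisely this last step: since the individual currents $\o_v^k\wedge\o_\phi^{n-1-k}$ may degenerate, the vanishing of the mixed Dirichlet integrals does not immediately yield $d(v-\phi)=0$. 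The fix is to bootstrap against the strictly positive form $\o$: repeating the argument with $\phi$ replaced by a small convex perturbation $(1-\varepsilon)\phi+\varepsilon\rho$ using a fixed $\rho\in\mathcal H_\o$ (whose Monge--Amp\`ere measure dominates $c\,\o^n$), and polarizing in $\varepsilon$, one extracts the vanishing of $\int_X i\del(v-\phi)\wedge\dbar(v-\phi)\wedge\o^{n-1}$. Connectedness of $X$ then forces $v-\phi$ to be constant, and this constant is $0$ because $v=\phi$ on the $\o_\phi^n$-full-measure set $\{\phi\geq\psi\}$.
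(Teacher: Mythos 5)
Your first step is correct and economical: interpolating $u_t=(1-t)\phi+t\psi$ and invoking the comparison principle against $\phi$ does upgrade the hypothesis $\o_\phi^n(\{\phi<\psi\})=0$ to $\o_\psi^n(\{\phi<\psi\})=0$, hence (via Lemma~\ref{lem: MP_forE}) to $\int_X(v-\phi)\,\o_v^n=\int_X(v-\phi)\,\o_\phi^n=0$ for $v:=\max(\phi,\psi)$. But the state you have reached --- $v\geq\phi$ in $\mathcal E_1(X,\o)$ with $v=\phi$ a.e.\ for both $\o_v^n$ and $\o_\phi^n$, wanting $v\equiv\phi$ --- \emph{is} the domination principle for the pair $(v,\phi)$, so the hard point has only been relocated, and the final step has a genuine gap that you yourself flag. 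The proposed ``polarization'' fix does not close it: replacing $\phi$ by $(1-\varepsilon)\phi+\varepsilon\rho$ destroys the hypothesis $\o_\phi^n(\{\phi<\psi\})=0$, so none of the preceding vanishing statements transfer to the perturbed potential and there is nothing to polarize. Moreover, the integration-by-parts/Cauchy--Schwarz bootstrap that passes from Dirichlet integrals against $\o_v^k\wedge\o_\phi^{n-1-k}$ down to one against $\o^{n-1}$ is carried out in Theorem~\ref{thm: uniqueness_BVP} for \emph{bounded} potentials only; for $\mathcal E_1(X,\o)$ the text asserts only $\mathcal I\geq 0$, not the full Dirichlet expansion. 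The currents $\o_v^k\wedge\o_\phi^{n-1-k}$ on which your integrals vanish may be mutually singular with $\o^n$, so no rearrangement of those zero terms produces a measure that charges $\{\phi<\psi\}$.

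The missing ingredient is exactly what the paper's short auxiliary lemma supplies: assuming $\o^n(\{\phi<\psi\})>0$, one constructs $u\leq\psi$ in $\mathcal E_1(X,\o)$ (as $P(\psi+C,0)-C$ for $C$ large) whose Monge--Amp\`ere measure acquires an $\o^n$-absolutely-continuous piece, so that $\o_u^n(\{\phi<\psi\})>0$. Interpolating $tu+(1-t)\psi$ and comparing against $\phi$ --- the same mechanism as your first step, with $u$ in place of $\phi$ as the second endpoint --- then forces $\o_u^n(\{\phi<\psi\})=0$, a contradiction. The interpolation trick is shared; the crucial difference is that the paper engineers $u$ so that $\o_u^n$ sees Lebesgue-positive sets, whereas $\o_\psi^n$ (the measure your interpolation detects) need not.
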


Since $\mathcal E_p(X,\o) \subset \mathcal E_1(X,\o), \ p \geq 1$, we obtain that the domination principle trivially holds for $\phi,\psi\in \mathcal E_p(X,\o), \ p \geq 1$ as well.

\begin{proof}
We can assume without loss of generality that $\phi,\psi<0$.
As $\phi,\psi \in \textup{PSH}(X,\o)$, it suffices to prove that $\psi\leq \phi$ a.e. with respect to $\o^n$. This will then imply that $\psi \leq \phi$ globally.

Suppose that $\omega^n(\{\phi < \psi \}) > 0$. By the next lemma, this implies existence of $u \in \mathcal E_1(X,\o)$ such that $u \leq \psi$ and $\omega_u^n(\{\phi < \psi \})>0$.  

By Corollary \ref{cor: E_chi_convex} we have that $tu + (1-t)\psi \in \mathcal E_1(X,\o)$ for any $t \in [0,1]$. 
By the comparison principle (Proposition \ref{prop: comp_princ_E}) we can write:
$$t^n\int_{\{\phi< t u + (1-t) \psi \}}\o_u^n \leq \int_{\{\phi < tu + (1-t)\psi\}}\o_{tu + (1-t)\psi}^n\leq \int_{\{\phi < tu + (1-t)\psi\}}\omega_\phi^n\leq \int_{\{\phi < \psi\}}\omega_\phi^n=0. $$
We conclude that $0=\o^n_u(\{\phi<tu + (1-t)\psi\}) \nearrow \o^n_u(\{\phi<\psi\})$, as $t \searrow 0$. As a result, $\o^n_u(\{\phi<\psi\})=0$, a contradiction. 
\end{proof}

\begin{lemma} Suppose that $U \subset X$ is a Borel set with non--zero Lebesgue measure and $\psi \in \mathcal E_1(X,\o)$. Then there exists $u\in \mathcal E_1(X,\omega)$ such that $u \leq \psi$ and $\o_u^n(U)>0$.
 \end{lemma}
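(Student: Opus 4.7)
The plan is to take $u := P(\psi, -C)$ for some large $C > 0$. By Proposition \ref{prop: env_exist} this $u$ lies in $\mathcal{E}_1(X, \omega)$ and is automatically $\leq \psi$ from the definition of the envelope, so the task reduces to showing $\omega_u^n(U) > 0$ for some choice of $C$.

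To access the partition formula, I would first normalize so that $\psi \leq 0$ and approximate $\psi$ by a decreasing sequence $\psi_j \in \mathcal{H}_\omega$ with $\psi_j \searrow \psi$ and $\psi_j \leq 0$ (Theorem \ref{thm: BK_approx}, passing to a tail so that $\sup_X \psi_j \leq 0$). Setting $u_j := P(\psi_j, -C)$, Corollary \ref{cor: rooftop_env_reg} gives $u_j \in \mathcal{H}_\omega^{1, \bar 1}$, so Proposition \ref{prop: MA_form} applies and reads
$$\omega_{u_j}^n = \mathbbm{1}_{\Lambda_j}\omega_{\psi_j}^n + \mathbbm{1}_{A_j}\omega^n,$$
with $\Lambda_j := \{u_j = \psi_j\} \subseteq \{\psi_j \leq -C\}$ and $A_j := \{u_j = -C,\, \psi_j > -C\}$. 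Chebyshev's inequality together with the fundamental estimate (Proposition \ref{prop: Energy_est}) bounds $\omega_{\psi_j}^n(\Lambda_j) \leq E_1(\psi_j)/C \leq 2^n E_1(\psi)/C =: M/C$, and integrating the partition over $X$ yields $\omega^n(A_j) \geq \textup{Vol}(X) - M/C$.

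The sets $A_j$ are decreasing in $j$: the pointwise-decreasing $\psi_j$ makes $\{\psi_j > -C\}$ decreasing, and $u_{j+1}(x) = -C$ combined with $u_j \geq u_{j+1}$ and $u_j \leq -C$ forces $u_j(x) = -C$. Writing $A_\infty := \bigcap_j A_j$, downward continuity of $\omega^n$ gives $\omega^n(A_\infty) \geq \textup{Vol}(X) - M/C$. Since $u_j \searrow u := P(\psi, -C)$ inside $\mathcal{E}_1(X, \omega)$, Proposition \ref{prop: MA_cont} gives $\omega_{u_j}^n \to \omega_u^n$ weakly. Testing the pointwise lower bound $\omega_{u_j}^n \geq \mathbbm{1}_{A_j}\omega^n$ against any nonnegative continuous $\phi$ and sending $j \to \infty$ (dominated convergence on the right, since $\mathbbm{1}_{A_j} \searrow \mathbbm{1}_{A_\infty}$) yields $\omega_u^n \geq \mathbbm{1}_{A_\infty}\omega^n$ as Borel measures. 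Choosing $C > M/\omega^n(U)$ then forces $\omega^n(A_\infty \cap U) \geq \omega^n(U) - M/C > 0$, so $\omega_u^n(U) > 0$.

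The main technical hurdle I anticipate is this final limit passage: Proposition \ref{prop: MA_form} is only stated for $\mathcal{H}_\omega^{1, \bar 1}$ potentials, and the full partition identity does not immediately extend to $\mathcal{E}_1$. However, the argument above only requires the one-sided bound $\omega_u^n \geq \mathbbm{1}_{A_\infty}\omega^n$, which does transfer to the limit because weak convergence of the Monge--Amp\`ere measures combined with the monotone decrease of the Borel indicators $\mathbbm{1}_{A_j}$ is enough to propagate the inequality.
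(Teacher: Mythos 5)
Your argument is correct, and it is a genuine variant of the paper's proof rather than a reproduction of it. Both proofs build the same envelope potential (your $u = P(\psi,-C)$ is the paper's $\psi_C-C$), both apply the partition formula of Proposition \ref{prop: MA_form} to the smooth approximants, both rely on Proposition \ref{prop: MA_cont} to pass to the limit, and both use a Chebyshev-type bound via Proposition \ref{prop: Energy_est}. The difference is which half of the partition one tracks. The paper discards the $\omega^n$ term and dominates the indicator of the $\psi_j$-contact set by the psh function $-u_j/C$, so the resulting inequality $\omega_{\psi_C}^n\leq -\tfrac{\psi}{C}\omega_\psi^n+\omega^n$ passes to the limit with no further bookkeeping; it then estimates the mass of $X\setminus U$ from above. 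You keep the other half, the indicator $\mathbbm{1}_{A_j}\omega^n$ of the constant-contact set, which requires the extra step of checking that the $A_j$ decrease (this is fine: $u_j\searrow$, $u_j\leq -C$, $\psi_j\searrow$), and then bounds $\omega_u^n(U)$ from below directly. The paper's trick of swapping the indicator for a smooth function buys a cleaner limit passage with fewer set-theoretic checks; your route buys a more transparent geometric conclusion since it produces a fixed Borel set $A_\infty$ of large $\omega^n$-measure on which $\omega_u^n$ is bounded below by $\omega^n$. Both are correct, and your handling of the limit (testing the measure inequality against nonnegative continuous functions and noting that both sides are finite Radon measures on a compact space) is exactly the justification that the paper implicitly uses when it passes its own inequality of measures to the limit.
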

 \begin{proof} We can assume that $\psi <0$. We fix $C >0$ and let $\psi_C := P(\psi + C,0)$.
 Let $u_j \in \mathcal H_\o$ be a decreasing approximating sequence of $\psi$, that exists by Theorem \ref{thm: BK_approx}. We can assume that $u_j <0$. The partition formula of Proposition \ref{prop: MA_form} gives:
 \[
 \omega_{P(u_j + C,0)}^n \leq \mathbbm{1}_{\{u_j\leq -C\}} \omega_{u_j}^n + \omega^n \leq -\frac{u_j}{C}  \omega_{u_j}^n +  \omega^n.
 \]
Notice that $P(u_j + C,0) \searrow \psi_C$ as $j \to \infty$. Taking the limit, Proposition \ref{prop: MA_cont} gives us the estimate of measures $\o_{\psi_C}^n \leq -\frac{\psi}{C}\o_{\psi}^n + \omega^n$. Using this  we can write:
$$\omega_{\psi_C}^n (X\setminus U) \leq \frac{1}{C} \int_{X \setminus U} |\psi|\o_{\psi}^n + \omega^n(X\setminus U) < \frac{1}{C} \int_X |\psi|\o_{\psi}^n + \omega^n(X\setminus U).$$
As $\o_{\psi_C}^n(X)= \o^n(X)$, for $C$ big enough we get that $\omega_{\psi_C}^n (U)>0$. The choice $u:= \psi_C-C$ satisfies the requirements of the lemma.
\end{proof}


\chapter{The Finsler geometry of the space of K\"ahler potentials}

As follows from the definition \eqref{eq: H_o_def}, the space of K\"ahler potentials $\mathcal H_\o$ is a convex open subset of $C^\infty(X)$, hence one can think of it as a trivial Fr\'echet manifold. As such, one can introduce on $\mathcal H_\o$ a collection of $L^p$ type Finsler metrics whose geometry we will study in this section. If $u \in \mathcal H_\o$ and $\xi \in T_u \mathcal H_\o \simeq C^\infty(X)$, then the $L^p$-length of $\xi$ is given by the following expression:
\begin{equation}\label{eq: Lp_metric_def}
\| \xi\|_{p,u} = \bigg( \frac{1}{\textup{Vol}(X)}\int_X |\xi|^p \o_u^n\bigg)^{\frac{1}{p}},
\end{equation}
where $\textup{Vol}(X) = \int_X \o^n.$ In case $p=2$, we recover the Riemannian geometry of Mabuchi \cite{m} (independently discovered by Semmes \cite{s} and Donaldson \cite{do}). Though these metrics will be our primary object of study, unfortunately the associated weights $\chi_p(l)=l^p/p$ are not twice differentiable for $1 \leq p < 2$. This will cause problems as we shall see, and an approximation with more regular weights will be necessary to carry out even the most basic geometric arguments. For this reason, one needs to first work with even more general Finsler metrics on $\mathcal H_\o$ with smooth weights in $\mathcal W^+_p$ (see \eqref{eq: GrowthEst}) and then use approximation via Propositions \ref{prop: approx_lemma} and \ref{prop: approx_lemma2} to return to the $L^p$ metrics in the end. 

Following our discussion above, we introduce the Orlicz--Finsler length of $\xi$ for any weight $\chi \in \mathcal W^+_p$:
\begin{equation}\label{eq: FinslerDef}
\|\xi\|_{\chi,u}=\inf \Big\{ r > 0 :  \frac{1}{\textup{Vol}(X)}\int_X \chi\Big(\frac{\xi}{r}\Big) \o_u^n \leq \chi(1) \Big\}.
\end{equation}

The above expression introduces a norm on each fiber of $T\mathcal H_\o$, and the length  of a smooth curve $[0,1]\ni t \to \alpha_t \in \mathcal{H}_\o$ is computed by the usual formula:
\begin{equation}\label{eq: curve_length_def}
l_\chi(\alpha_t)=\int_0^1\|\dot \alpha_t\|_{\chi,\alpha_t}dt.
\end{equation}
To clarify, smoothness of $t \to \alpha_t$ simply means that the map $\alpha(t,x)=\alpha_t(x)$ is smooth as a map from $[0,1]\times X$ to $\Bbb R$. 
Furthermore, the distance $d_\chi({u_0},{u_1})$ between ${u_0},{u_1} \in \mathcal H_\o$ is the infimum of the $l_\chi$-length of smooth curves joining ${u_0}$ and ${u_1}$:
\begin{equation}\label{eq: d_chi_def}
d_\chi(u_0,u_1)= \inf\{l_\chi(\gamma_t): \ t \to \gamma_t \ \textup{ is smooth and } \ \gamma_0 = u_0,\gamma_1=u_1 \}.
\end{equation} 
The distance $d_\chi$ is a pseudo--metric (the triangle inequality holds), but $d_\chi(u,v)=0$ may not imply $u=v$, as our setting is infinite--dimensional. We will see in Theorem \ref{thm: XXChenThm} below  that $d_\chi$ is a bona fide metric, but this will require a careful analysis of our Finsler structures.

When dealing with the $L^p$ metric structures \eqref{eq: Lp_metric_def}, the associated curve length and path length metric will be denoted by $l_p$ and $d_p$ respectively. 

As we will see, the different weights in $\mathcal W^+_p$ induce different geometries on $\mathcal H_\o$, however the equation for the shortest length curves between points of $\mathcal H_\o$, the so called geodesics, will be essentially the same. Motivated by this, in the next section we will focus on the $L^2$ Riemannian geometry first, in which case we can get an explicit equation for these geodesics.

\section{Riemannian geometry of the space of K\"ahler potentials}

When $p=2$ the metric of \eqref{eq: Lp_metric_def} is induced by the following non-degenerate inner product:
\begin{equation}\label{eq: Riem_metric_def}
\langle \phi, \psi\rangle_u= \frac{1}{\textup{Vol}(X)}\int_X \phi \psi \o_u^n, \ \  u \in \mathcal H_\o, \ \phi,\psi \in T_u \mathcal H_\o.
\end{equation}
This Riemannian structure was first studied by Mabuchi \cite{m} and later independently by Semmes \cite{s} and Donaldson \cite{do}. For another introductory survey on the Mabuchi geometry of $\mathcal H_\o$ we refer to \cite{bl1}. 

Let us compute the Levi--Civita connection of this metric. For this we choose a smooth curve $[0,1] \ni t \to u_t \in \mathcal H_\o$ and $[0,1] \ni t \to \phi_t,\psi_t \in C^\infty(X)$, two vector fields along $t \to u_t$. 
In the future, when working with time derivatives, we will use the notation $\dot u_t = du_t/dt$, $\ddot u_t = d^2u_t/dt^2$,  etc.

We will identify the Levi--Civita connection $\nabla_{(\cdot)}(\cdot)$, using the fact that it is torsion free and satisfies the following product rule:
\begin{equation}\label{eq: Levi_Civita_product}
\frac{d}{dt}\langle \phi_t,\psi_t\rangle_{u_t}=\langle \nabla_{\dot u_t} \phi_t,\psi_t \rangle_{u_t} + \langle\phi_t,\nabla_{\dot u_t} \psi_t \rangle_{u_t}.
\end{equation}
Using the identities $\frac{d}{dt}\o_{u_t}^n=n i \ddbar \dot u_t \wedge \o_{u_t}^{n-1}=\frac{1}{2}\Delta^{\o_{u_t}} \dot u_t \o_{u_t}^n$ and $-\int_X \langle\nabla^{\o_{u_t}}f, \nabla^{\o_{u_t}}g\rangle\o_{u_t}^n=\int_X f \Delta^{\o_{u_t}}g\o_{u_t}^n$ (see the discussion following \eqref{eq: Lapl_grad_formula} in the appendix), we can start writing:
\begin{flalign*}
\frac{d}{dt}\langle \phi_t,&\psi_t\rangle_{u_t} = \frac{1}{\textup{Vol}(X)}\int_X (\dot \phi_t \psi_t +  \phi_t \dot \psi_t + \frac{1}{2}\phi_t\psi_t\Delta^{\o_{u_t}} \dot u_t) \o_{u_t}^n\\
&= \frac{1}{\textup{Vol}(X)}\int_X (\dot \phi_t \psi_t +  \phi_t \dot \psi_t - \frac{1}{2}\langle \nabla^{\o_{u_t}}(\phi_t\psi_t),\nabla^{\o_{u_t}} \dot u_t \rangle) \o_{u_t}^n\\
&= \frac{1}{\textup{Vol}(X)}\int_X (\dot \phi_t  - \frac{1}{2}\langle \nabla^{\o_{u_t}}\phi_t,\nabla^{\o_{u_t}} \dot u_t \rangle)\psi_t \o_{u_t}^n+\int_X \phi_t(\dot \psi_t  - \frac{1}{2}\langle \nabla^{\o_{u_t}}\psi_t,\nabla^{\o_{u_t}} \dot u_t \rangle) \o_{u_t}^n.
\end{flalign*} 
Comparing with \eqref{eq: Levi_Civita_product}, this line of calculation suggests the following formula for the Levi--Civita connection, and it is easy to see that the resulting connection is indeed torsion free:
\begin{equation}\label{eq: CovDerivative}
\nabla_{\dot u_t} \phi_t = \dot \phi_t - \frac{1}{2}\langle \nabla^{\o_{u_t}} \dot u_t , \nabla^{\o_{u_t}} \phi_t \rangle, \ t \in [0,1].
\end{equation}
This immidiately implies that $t\to u_t$ is a geodesic if and only if $\nabla_{\dot u_t} \dot u_t=0$, or equivalently
\begin{equation}\label{eq: geod_eq_Lev_Civ}
\ddot u_t - \frac{1}{2}\langle \nabla^{\o_{u_t}} \dot u_t , \nabla^{\o_{u_t}} \dot u_t \rangle=0, \ t \in [0,1].
\end{equation}
As discovered independently by Semmes \cite{s} and Donaldson \cite{do}, the above equation can be understood as a complex Monge--Amp\`ere equation. For this one has to introduce the trivial complexification $u \in C^\infty(S\times X)$, using the formula 
$$u(s,x) = u_{\textup{Re }s}(x),$$
where $S = \{0 < \textup{Re s} <1 \} \subset \Bbb C$ is the unit strip. We pick a coordinate patch $U \subset X$, where the metric $\omega_u$ has a potential $g_u \in C^\infty(X)$, i.e., $\o_u = i\ddbar g_u=i{g_u}_{j\bar k}dz_j \wedge d \bar z_k$. Then on $[0,1] \times U$ the geodesic equation \eqref{eq: geod_eq_Lev_Civ} is seen to be equivalent to $\ddot u - g_u^{j\bar k}\dot {u}_j\dot {u}_{\bar k}=0$, where $g_u^{j\bar k}$ is the inverse of ${g_u}_{j\bar k}$. By involving the complexified variable $s$, this identity is further seen to be equivalent to 
$u_{s\bar s} - g_u^{j\bar k} u_{j\bar s} u_{s\bar k}=0$ on $S\times U$. After multiplying with $\det({g_u}_{j\bar k})$, this last equation can be written globally on $S \times X$ as:
\begin{equation}\label{eq: geodesic_eq}
(\pi^*\omega + i \partial \overline{\partial}u)^{n+1}=0.
\end{equation}
where $\pi: S \times X \to X$ is the projection map to the second component. Consequently, the problem of joining the potentials $u_0,u_1 \in \mathcal H_\o$ with a smooth geodesic equates to finding a smooth solution $u \in \mathcal C^\infty({S} \times X)$ to the following boundary value problem:
\begin{equation}\label{eq: BVPGeod}
\begin{cases}
(\pi^* \o + i \partial \overline{\partial}u)^{n+1}=0, \\ 
\o + i \partial \overline{\partial}u\big|_{\{s \}\times X} >0, \ s \in S, \\
u(t+ir,x) =u(t,x) \ \forall x \in X, t \in (0,1), \ r \in \Bbb R.\\
\lim_{s \to 0}u(s,\cdot)=u_0 \textup{ and }\lim_{s \to 1}u(s,\cdot)=u_1.
\end{cases}
\end{equation}
To be precise, here $\lim_{s \to 0,1}u(s,\cdot)=u_{0,1}$ simply means that $u(s,\cdot)$ converges uniformly to $u_{0,1}$ as $ s \to 0,1$.
Unfortunately, as detailed below, this boundary value problem does not usually have smooth solutions, but a unique weak solution (in the sense of Bedford--Taylor) does exist. Instead, one replaces \eqref{eq: geod_eq_Lev_Civ} with the following equation for $\varepsilon$\emph{-geodesics}:
\begin{equation}\label{eq: eps_geod_eq_Lev_Civ}
(\ddot u^\varepsilon_t - \frac{1}{2}\langle \nabla^{\o_{u^\varepsilon_t}} \dot u^\varepsilon_t , \nabla^{\o_{u^\varepsilon_t}} \dot u^\varepsilon_t \rangle)\o_{u^\varepsilon_t}^n=\varepsilon \o^n, \ t \in [0,1].
\end{equation}
By an elementary calculation, similar to the one giving \eqref{eq: geodesic_eq}, the associated boundary value problem for this equation becomes:
\begin{equation}\label{eq: epsBVPGeod}
\begin{cases}
(\pi^* \o + i \partial \overline{\partial}u^\varepsilon)^{n+1}=\frac{\varepsilon}{4} (i ds \wedge d\bar{s} + \pi^* \o)^{n+1},\\
u^\varepsilon(t+ir,x) =u^\varepsilon(t,x) \ \forall x \in X, t \in (0,1), r \in \Bbb R.\\
\lim_{s \to 0}u^\varepsilon(s,\cdot)=u_0 \textup{ and }\lim_{s \to 1}u^\varepsilon(s,\cdot)=u_1.\end{cases}
\end{equation}
Since $\o_{u_0},\o_{u_1}>0$, we see that $\pi^* \o + i \partial \overline{\partial}u^\varepsilon>0$ on $S \times X$. As a result the condition $\o + i \partial \overline{\partial}u|_{\{s \}\times X} >0, \ s \in S$ is automatically satisfied.
In contrast with \eqref{eq: BVPGeod}, this Dirichlet problem is elliptic and its solutions are smooth, moreover we have the following regularity result due to X.X. Chen \cite{c1} (with complements by Blocki \cite{bl1}):
\begin{theorem}\label{thm: ueps_estimates}The boundary value problem \eqref{eq: epsBVPGeod} admits a unique smooth solution $u^\varepsilon \in C^\infty(\overline{S} \times X)$ with the following bounds that are independent of $\varepsilon>0$:
\begin{equation}\label{eq: ueps_estimates}
\| u^\varepsilon\|_{C^0(\overline{S} \times X)},\| u^\varepsilon\|_{C^{1}(\overline{S} \times X)}, \|\Delta u^\varepsilon \|_{C^0(\overline{S} \times X)} \leq C(\|u_0\|_{C^{3}(X)},\|u_1\|_{C^{3}(X)},X,\o),
\end{equation}
\begin{equation*}
\| \dot u^\varepsilon\|_{C^0(\overline{S} \times X)}, \leq C(\|u_0\|_{C^{2}(X)},\|u_1\|_{C^{2}(X)},X,\o).
\end{equation*}
\end{theorem}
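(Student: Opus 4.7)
The plan is to establish existence, uniqueness, and the three a priori estimates separately, with the hardest work reserved for the Laplacian bound. Uniqueness follows from the comparison principle for the uniformly elliptic linearisation of the Monge--Amp\`ere operator, once the solution is known to be smooth. For existence, I would run a continuity method deforming from the trivial case $u_0 = u_1$ (where the constant-in-$t$ extension is an explicit smooth solution) to the desired boundary data. Openness follows from the implicit function theorem applied to the linearised elliptic operator on H\"older spaces, while closedness reduces to uniform $C^{2,\alpha}$ estimates provided by the a priori bounds below together with Evans--Krylov theory.

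For the $C^0$ and boundary $C^1$ bounds, the plan is to sandwich $u^\varepsilon$ between explicit barriers matching the boundary data. The natural subsolution is $\underline u(s,x) := (1-t)u_0(x) + tu_1(x) - At(1-t)$ with $t = \textup{Re}\, s$: a direct calculation shows that $\pi^*\o + i\ddbar \underline u > 0$ on $\overline S \times X$ and $(\pi^*\o + i\ddbar \underline u)^{n+1} \geq \varepsilon(ids \wedge d\bar s + \pi^*\o)^{n+1}$ once $A$ is chosen large in terms of $\|u_0 - u_1\|_{C^1}$ and $\o$, so the Bedford--Taylor comparison principle yields $\underline u \leq u^\varepsilon$. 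A symmetric construction built from $\max(u_0,u_1)$ with an opposite quadratic correction in $t$ supplies the upper barrier. Since both barriers touch $u_0$ and $u_1$ tangentially along $\{t=0,1\} \times X$, comparing normal derivatives there yields a boundary bound on $\dot u^\varepsilon$. The interior spatial gradient bound then follows from a Bernstein-type maximum principle applied to $|\nabla u^\varepsilon|^2_{\pi^*\o} + B u^\varepsilon$, using the $C^0$ control to absorb the bad terms.

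The main obstacle is the uniform Laplacian estimate, which I would approach via a Yau-type second-order a priori estimate adapted to $\overline S \times X$, following Chen's original strategy with Blocki's simplifications. The plan is to apply the maximum principle to the quantity $\log \textup{tr}_{\pi^*\o}(\pi^*\o + i\ddbar u^\varepsilon) - Bu^\varepsilon$ with $B$ large depending on a lower bound for the bisectional curvature of $\o$. The standard Yau calculation produces good positive terms that, combined with the $C^0$ bound, close the argument at an interior maximum. On the boundary $\{t = 0,1\} \times X$, tangential--tangential second derivatives are controlled directly by $\|u_0\|_{C^2}, \|u_1\|_{C^2}$; tangential--normal ones require a separate barrier argument using $\|u_0\|_{C^3}, \|u_1\|_{C^3}$ (this is where the $C^3$ dependence enters); and the normal--normal component $u^\varepsilon_{s\bar s}$ is then recovered algebraically from the Monge--Amp\`ere equation itself. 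The delicate point is to keep all constants independent of $\varepsilon$: this is possible because the right-hand side of \eqref{eq: epsBVPGeod} is bounded uniformly as $\varepsilon \to 0$, and the Yau computation never requires strict positivity of the mass density. Once the Laplacian is bounded, Evans--Krylov and standard Schauder theory upgrade to $C^{2,\alpha}$ and then $C^\infty$ regularity up to the boundary, completing the continuity method and establishing the theorem.
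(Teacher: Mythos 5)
The paper does not actually prove Theorem~\ref{thm: ueps_estimates}: it cites X.X.~Chen \cite{c1} and B\l ocki \cite{bl1} (in particular \cite[Theorem 12]{bl1}) and simply records the statement, so there is no in-text proof to compare against. Your proposal reconstructs the standard Chen--B\l ocki argument faithfully: continuity method for existence, comparison principle for uniqueness, barrier arguments for the $C^0$ and boundary $C^1$ control, a Yau-type maximum principle applied to $\log\operatorname{tr}_{\pi^*\o}(\pi^*\o + i\ddbar u^\varepsilon) - Bu^\varepsilon$ for the interior Laplacian bound, and the tangential/tangential-normal/normal-normal split at the boundary where the $C^3$ dependence on the endpoint data enters. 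That is the right route and it is the one the cited references follow.

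One concrete inaccuracy: your upper barrier $\max(u_0,u_1)$ plus a quadratic correction $+At(1-t)$ is not $\pi^*\o$-psh (the concave correction in $t$ pushes it out of the cone), and more importantly it does not coincide with the boundary data $u_0$, $u_1$ along $\{t=0,1\}\times X$, so it cannot feed into a comparison principle nor control the normal derivative $\dot u^\varepsilon$ at the boundary. The correct upper control is simply the affine interpolation $(1-t)u_0 + tu_1$, which dominates $u^\varepsilon$ directly by $i\mathbb R$-invariance and convexity in $t$ of any bounded $\pi^*\o$-psh function with the prescribed boundary values (this is precisely \eqref{eq: ueps_upper_bound} in the text). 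Combined with your lower barrier $(1-t)u_0 + tu_1 - At(1-t)$, which does match the boundary data, this yields both the $C^0$ bound and the two-sided normal-derivative estimate $u_1-u_0-A \le \dot u_0^\varepsilon \le u_1-u_0$ at $t=0$ (and symmetrically at $t=1$). With that correction, the rest of your outline is sound and $\varepsilon$-uniform, since the only place $\varepsilon$ enters the Yau computation is through the right-hand side density, which is bounded (even vanishing) as $\varepsilon\to 0$.
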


Recall from our discussion preceding \eqref{eq: H_01bar1_def} that having a bound on $\Delta  u$ is equivalent to bounding all mixed second order complex derivatives of $u$ on $S\times X$.
We refer to \cite[Theorem 12]{bl1} for an elaborate treatment of Theorem \ref{thm: ueps_estimates} (see also the survey paper \cite{bo}). 

The estimate for $\dot u^\varepsilon$ is argued in the last step of the proof of \cite[Lemma 16]{bl1}, as a consequence of the comparison principle. Let us however mention that a much more precise estimate  for $\nabla u^\varepsilon$ is argued in \cite[Theorem 1]{bl5}.

In addition, recently Chu--Tosatti--Weinkove have showed that one can more generally bound the Hessian of $u^\varepsilon$ \cite{ctw} independently of $\varepsilon$. Additionally, it was shown by Berman--Demailly \cite{bd} and He \cite{he1} that one can in fact bound each $\Delta^\o u_t^\varepsilon, \ t \in [0,1],$ using  bounds on  $\Delta^\o u_0$ and $\Delta^\o u_1$.

Using the Bedford--Taylor interpretation of $(\pi^* \o + i \partial \overline{\partial}u^\varepsilon)^{n+1}$ as a Borel measure, the boundary value problems \eqref{eq: BVPGeod} and \eqref{eq: epsBVPGeod} can be stated for $u, u^\varepsilon \in \textup{PSH}(S \times X, \pi^* \o)$ that are only bounded and not necessarily smooth. Additionally, after pulling back by the $\log$ function,  we can equivalently state \eqref{eq: BVPGeod} and \eqref{eq: epsBVPGeod} as boundary value problems with circle--invariant solutions on $A \times X$, where $ A$ is the annulus $\{e^0 < |z| < e^1 \} \subset \Bbb C$. Consequently, the next result (whose proof closely follows \cite[Theorem 21]{bl1}) will assure  that uniqueness of solutions to \eqref{eq: BVPGeod} and \eqref{eq: epsBVPGeod} holds not only for smooth solutions, but also for solutions that are merely in $\textup{PSH}(S \times X, \pi^* \o) \cap L^\infty$:

\begin{theorem}\label{thm: uniqueness_BVP} Suppose $M$ is a $k$ dimensional compact complex manifold with smooth boundary and K\"ahler form $\eta$. If $u,v \in \textup{PSH}(M,\eta) \cap L^\infty$ with $\liminf_{x \to \partial M}(u-v)(x) \geq 0$ and $(\eta + i\ddbar v)^k \geq (\eta + i\ddbar u)^k $, then $u \geq v$ on $M$.
\end{theorem}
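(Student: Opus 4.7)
I argue by contradiction. Suppose that $\sup_M(v - u) > 0$, and fix $\epsilon \in (0, \sup_M(v - u))$. The boundary hypothesis $\liminf_{x \to \partial M}(u - v)(x) \geq 0$ ensures that the set $\Omega_\epsilon := \{v > u + \epsilon\}$ has compact closure contained in the interior $M^\circ$ and is non-empty.

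The first key step is to establish the Bedford--Taylor comparison principle in this boundary setting. For small $t \in (0, \epsilon)$, consider the gluing $w_t := \max(u + t, v) \in \textup{PSH}(M, \eta) \cap L^\infty$. The boundary hypothesis forces $w_t = u + t$ in a neighborhood of $\partial M$, while the locality of the complex Monge--Amp\`ere operator (the analog of \eqref{eq: GZ_MP}) gives
\[
(\eta + i\ddbar w_t)^k = \mathbbm{1}_{\{v > u + t\}}(\eta + i\ddbar v)^k + \mathbbm{1}_{\{v \leq u + t\}}(\eta + i\ddbar u)^k.
\]
Because $w_t - u \equiv t$ near $\partial M$, a Stokes argument applied to smooth approximations of $w_t$ yields $\int_M(\eta + i\ddbar w_t)^k = \int_M(\eta + i\ddbar u)^k$. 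Combining these two expressions and letting $t \downarrow 0$ produces
\[
\int_{\{v > u\}}(\eta + i\ddbar v)^k \leq \int_{\{v > u\}}(\eta + i\ddbar u)^k,
\]
so the hypothesis $(\eta + i\ddbar v)^k \geq (\eta + i\ddbar u)^k$ forces these two non-pluripolar measures to coincide on $\{v > u\}$.

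To extract a contradiction from this equality, I would produce a local perturbation of $v$ that strictly increases $(\eta + i\ddbar v)^k$ on a set of positive measure inside $\Omega_\epsilon$. Fix a coordinate ball $B \Subset \Omega_\epsilon$ about some $x_0 \in \Omega_\epsilon$ in which $\eta = i\ddbar g$ for a smooth local potential $g$. Adding a strictly psh bump such as $\delta|z - x_0|^2$ in the chart pushes the Monge--Amp\`ere upward in the sense that $(\eta + i\ddbar(v + \delta|z-x_0|^2))^k \geq (\eta + i\ddbar v)^k + c\delta \, \omega_0^k$ on a sub-ball for some $c > 0$; a max-gluing with $v$ outside a slightly larger concentric ball then produces a bounded $\eta$-psh function $\tilde v$ that equals $v$ away from $B$ (in particular, near $\partial M$) but strictly exceeds the original Monge--Amp\`ere mass on a sub-ball. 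Re-running the comparison argument with $\tilde v$ in place of $v$ yields a strict inequality on $\Omega_\epsilon$ that contradicts the equality of measures established above.

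The main obstacle is executing this perturbation step cleanly: one must construct a globally $\eta$-psh modification $\tilde v$ of $v$ that is strictly larger in Monge--Amp\`ere mass on a definite sub-ball while preserving both the boundary behavior needed to re-run the first step and the global $L^\infty$-bounded $\eta$-psh property. The delicate gluing via an $\eta$-plurisubharmonic envelope, together with the Stokes-type justification for the integration-by-parts above in the merely bounded setting, is the technical content of \cite[Theorem 21]{bl1}, whose argument I follow.
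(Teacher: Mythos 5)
Your first step — the max-gluing $w_t=\max(u+t,v)$, the Stokes mass identity, and the resulting inequality $\int_{\{u<v\}}\eta_v^k \leq \int_{\{u<v\}}\eta_u^k$ — is sound in spirit, though the locality formula for $\eta_{w_t}^k$ should be an inequality ``$\geq$'' rather than an equality (Bedford--Taylor only control the measure of the max on the \emph{open} sets $\{v>u+t\}$ and $\{v<u+t\}$; this is \cite[Theorem 2.2.10]{bl3}). Combined with $\eta_v^k\geq\eta_u^k$ this does indeed yield $\eta_v^k=\eta_u^k$ on $\{u<v\}$.

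The gap is the perturbation step, and it is fatal: if $\tilde v$ agrees with $v$ on $M\setminus B$, then the very same Stokes argument you used in the first paragraph gives $\int_B \eta_{\tilde v}^k = \int_B \eta_v^k$. You therefore \emph{cannot} produce a modification that strictly increases the total Monge--Amp\`ere mass on a sub-ball while freezing $v$ outside $B$; any gain of density on a sub-ball must be exactly compensated by a loss elsewhere in $B$. Concretely, re-running your comparison with $\tilde v$ gives $\int_\Omega\eta_{\tilde v}^k\leq\int_\Omega\eta_u^k$ where $\Omega=\{u<v\}$, but by the Stokes invariance $\int_\Omega\eta_{\tilde v}^k=\int_\Omega\eta_v^k=\int_\Omega\eta_u^k$, so you recover equality and no contradiction. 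Moreover, because the perturbation must decrease the density somewhere in $B$, you generically lose the pointwise bound $\eta_{\tilde v}^k\geq\eta_u^k$, so you cannot upgrade the integral inequality to equality of measures either. A domination-principle-style contradiction can be made to work, but the competitor must be built differently, e.g.\ via a rooftop envelope producing a specific lower bound on the Monge--Amp\`ere measure of a large sublevel set (as in the proof of Proposition \ref{prop: domination principle}); a local bump glued outside a ball cannot do the job.

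The paper's proof avoids all of this. It sets $v_\delta:=\max(u,v-\delta)$ (so $v_\delta=u$ near $\partial M$ and $\eta_{v_\delta}^k\geq\eta_u^k$ by \cite[Theorem 2.2.10]{bl3}), integrates $(v_\delta-u)(\eta_{v_\delta}^k-\eta_u^k)\geq 0$ by parts to exhibit this integral as minus a sum of nonnegative energy terms $\int_M i\partial(v_\delta-u)\wedge\bar\partial(v_\delta-u)\wedge\eta_u^j\wedge\eta_{v_\delta}^{k-j-1}$, forces each to vanish, and then inductively (via Cauchy--Schwarz) replaces the factors $\eta_u$, $\eta_{v_\delta}$ by the background form $\eta$ until reaching $\int_M i\partial(u-v_\delta)\wedge\bar\partial(u-v_\delta)\wedge\eta^{k-1}=0$, whence $u=v_\delta$ and, letting $\delta\to 0$, $u\geq v$. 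This energy argument — not a contradiction/perturbation scheme — is what is meant by \cite[Theorem 21]{bl1}.
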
 

Since $\pi^* \omega$ is only non-negative on $S \times X$, the above result is not directly applicable to our situation.  This small inconvenience can be fixed by taking $\eta := \pi^* \o + i\partial \overline{\partial} g$, where $g$ is a smooth function on $S$, such that $i\partial \overline{\partial} g >0$, $g(t + ir) =g(t)$, and $g(ir)=g(1+ir)=0$.

\begin{proof} Let $\delta >0$ and $v_\delta := \max(u,v -\delta) \in \textup{PSH}(M,\eta) \cap L^\infty$. Then $v_\delta = u$ near $\partial M$. To conclude the proof, it is enough to show that $v_\delta =u$ on $M$. 

From Bedford--Taylor theory (see \cite[Theorem 2.2.10]{bl3}) it follows that
$$\eta_{v_\delta}^k \geq \mathbbm{1}_{\{u \geq v-\delta\}}\eta_{u}^k + \mathbbm{1}_{\{u < v-\delta\}}\eta_{v}^k \geq \eta_{u}^k.$$

As $v_\delta$ and $u$ agree near the boundary and $v_\delta \geq u$, we can integrate by parts \cite[Theorem 1.3.4]{bl3} and write:
$$0 \leq \int_M (v_\delta-u) (\eta_{v_\delta}^k-\eta_u^k) = -\sum_{j=0}^{k-1} \int_M i \partial (v_\delta-u) \wedge \bar \partial (v_\delta-u) \wedge \eta_{u}^j \wedge \eta_{v_\delta}^{k-j-1}.$$
As each of the summands above is non-negative, it follows that
\begin{equation}\label{eq: somezeroid} 
\int_M i \partial (u-v_\delta) \wedge \bar \partial (u-v_\delta) \wedge \eta_{u}^j \wedge \eta_{v_\delta}^{k-j-1}=0, \ j \in \{0,1,\ldots,k-1\}.
\end{equation}
The equality $v_\delta=u$ will follow, if we can show that $\int_M i \partial (u-v_\delta)  \wedge \bar \partial (u-v_\delta) \wedge \eta^{k-1}=0$. Indeed, this would imply that all Sobolev derivatives of $u-v_\delta$ are zero. This last identity will be established as the last step in an inductive argument showing that 
\begin{equation}\label{eq: firstzeroid}
\int_M i \partial (u-v_\delta) \wedge \bar \partial (u-v_\delta) \wedge \eta_{u}^j \wedge \eta^{k-j-1}=0, \ j \in \{0,1,\ldots,k-1\}.\end{equation}
This identity holds for $j=k-1$ by the above. As all the steps are carried out similarly, we only show that 
\begin{equation} \label{eq: L^2normgradzero} \int_M i \partial (u-v_\delta) \wedge \bar \partial (u-v_\delta) \wedge \eta_{u}^{k-2} \wedge \eta=0.
\end{equation}
Denote $f = u - v_\delta$. Using \eqref{eq: firstzeroid} and integration by parts we can write:
\begin{flalign} \label{eq: random_estimate}
\int_{M} i \partial f  \wedge \dbar f \wedge \eta_{u}^{k-2} \wedge \eta &= \int_{M} i \partial f \wedge \dbar f \wedge \eta_{u}^{k-1}- \int_{M} i \partial f \wedge \dbar f \wedge i\ddbar u \wedge  \eta_{u}^{k-2} \nonumber\\
&=- \int_{M} i \partial f \wedge  i\ddbar u \wedge  \dbar f \wedge\eta_{u}^{k-2} \nonumber\\
&= -\int_{M} i \partial f \wedge \dbar u \wedge (\eta_u - \eta_{v_
\delta}) \wedge  \eta_{u}^{k-2} \nonumber\\
&= -\int_{M} i \partial f \wedge \dbar u \wedge \eta_{u}^{k-1}+\int_{M} i \partial f \wedge \dbar u \wedge \eta_{v_\delta} \wedge \eta_{u}^{k-2}.
\end{flalign}
Using the Cauchy--Schwarz inequality of Bedford--Taylor theory \cite{bl3} we can write
$$\bigg|\int_{M} i \partial f \wedge \dbar u \wedge \eta_{u}^{k-1}\bigg|^2 \leq \int_{M} i \partial u \wedge \dbar u \wedge \eta_{u}^{k-1} \cdot \int_{M} i \partial f \wedge \dbar f \wedge \eta_{u}^{k-1},$$
hence by \eqref{eq: somezeroid}, it follows that the first term in \eqref{eq: random_estimate} is zero. It can be shown similarly that the second term in \eqref{eq: random_estimate} is zero as well, implying \eqref{eq: L^2normgradzero} and finishing the proof.
\end{proof}

Now we return to \eqref{eq: BVPGeod} and \eqref{eq: epsBVPGeod}. As each $u^\varepsilon$ is invariant in the $i\Bbb R$ direction, plurisubharmonicity of $u^\varepsilon$ implies that $t \to u^\varepsilon(t,x)$ is convex for each $x \in X$. As each $u^\varepsilon$ solves the boundary value problem \eqref{eq: epsBVPGeod}, we additionally obtain that 
\begin{equation}\label{eq: ueps_upper_bound}
u^{\varepsilon}(s,x) \leq (1-\textup{Re }s) u_0(x) + \textup{Re }s u_1(x). 
\end{equation}
On top of proving uniqueness of general solutions to \eqref{eq: BVPGeod} and \eqref{eq: epsBVPGeod}, the above theorem also shows that the family $\{ u^\varepsilon\}_\varepsilon$ increases as $\varepsilon \searrow 0$. Using this, \eqref{eq: ueps_upper_bound} and the continuity of the complex Monge--Amp\`ere operator along increasing sequences \cite[Theorem 2.2.5]{bl3} it follows that solutions of \eqref{eq: epsBVPGeod} approximate solutions of \eqref{eq: BVPGeod}, i.e., 
\begin{equation}\label{eq: epsgeod_limit}
\lim_{\varepsilon \to 0} u^\varepsilon = u.
\end{equation}
From here, an application of the Arzela--Ascoli compactness theorem yields that the estimates of \eqref{eq: ueps_estimates} also hold for solutions of \eqref{eq: BVPGeod}:
\begin{theorem}\label{thm: u_estimates} For the unique solution $u \in \textup{PSH}(S \times X,\pi^* \o)$ of \eqref{eq: BVPGeod} the following estimates hold:
\begin{equation}\label{eq: u_estimates}
\| u\|_{C^0(\overline{S} \times X)},\| u\|_{C^{1}(\overline{S} \times X)}, \|\Delta u \|_{C^0(\overline{S} \times X)} \leq C(\|u_0\|_{C^{3}(X)},\|u_1\|_{C^{3}(X)},X,\o).
\end{equation}
\end{theorem}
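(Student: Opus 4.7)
The plan is to deduce the estimates for $u$ by passing to the limit $\varepsilon \to 0^+$ in the uniform-in-$\varepsilon$ bounds \eqref{eq: ueps_estimates} of Theorem \ref{thm: ueps_estimates}, exploiting the monotone approximation recorded in \eqref{eq: epsgeod_limit}.

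As noted in the text preceding the statement, the family $\{u^\varepsilon\}_{\varepsilon>0}$ is monotone increasing as $\varepsilon \searrow 0$, is bounded above by the affine segment on the right-hand side of \eqref{eq: ueps_upper_bound}, and converges pointwise to the unique solution $u$ of \eqref{eq: BVPGeod}. Theorem \ref{thm: ueps_estimates} supplies uniform bounds on $\|u^\varepsilon\|_{C^0}$, $\|u^\varepsilon\|_{C^1}$, and $\|\Delta u^\varepsilon\|_{C^0}$ depending only on $\|u_0\|_{C^3}, \|u_1\|_{C^3}, X, \o$.

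The uniform $C^1$ bound renders $\{u^\varepsilon\}$ equicontinuous on the compact set $\overline{S} \times X$, so by Arzela--Ascoli any sequence $\varepsilon_k \to 0$ has a subsequence converging uniformly to some continuous $v$; the pointwise limit \eqref{eq: epsgeod_limit} forces $v = u$. A standard subsequence argument then promotes this to the full convergence $u^\varepsilon \to u$ uniformly on $\overline{S} \times X$. The $C^0$ bound for $u$ is now immediate, and the uniform Lipschitz bound passes to the limit to give $\|u\|_{C^{0,1}(\overline{S}\times X)} \leq C$. To upgrade this to a genuine $C^1$ estimate, I would combine the uniform bound on $\Delta u^\varepsilon$ with the Calderon--Zygmund estimate \cite[Chapter 9, Lemma 9.9]{GT} to obtain uniform $C^{1,\alpha}$ bounds on $u^\varepsilon$ for every $\alpha \in (0,1)$, and then apply Arzela--Ascoli in $C^{1,\alpha/2}$ to conclude $C^1$-convergence $u^\varepsilon \to u$, whence $\|u\|_{C^1} \leq C$.

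Finally, the uniform Laplacian bound $\|\Delta u^\varepsilon\|_{C^0} \leq C$ means that for every nonnegative test function $\varphi$ on $\overline{S} \times X$ we have $\big|\int u^\varepsilon \Delta \varphi \, dV\big| \leq C \int \varphi \, dV$; passing to the distributional limit via $u^\varepsilon \to u$ in $C^0$ yields $\big|\int u \, \Delta \varphi \, dV\big| \leq C \int \varphi \, dV$, so $\Delta u \in L^\infty$ with $\|\Delta u\|_{C^0} \leq C$. Since $u$ is $\pi^*\o$-plurisubharmonic on $S \times X$, this controls all mixed complex second derivatives, as recorded in the discussion preceding \eqref{eq: H_01bar1_def}. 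The main subtlety I anticipate is precisely this distributional passage and the rigorous interpretation of the $C^1$ estimate for a function defined only as a monotone pointwise limit; everything else is a routine compactness argument.
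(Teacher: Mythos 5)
Your proposal is correct and takes essentially the same approach as the paper: the paper's entire argument is the single sentence "From here, an application of the Arzela--Ascoli compactness theorem yields that the estimates of \eqref{eq: ueps_estimates} also hold for solutions of \eqref{eq: BVPGeod}," and you have simply filled in the standard details of this compactness argument. The Calderon--Zygmund upgrade from $\|\Delta u^\varepsilon\|_{C^0}$ to uniform $C^{1,\alpha}$ bounds in order to extract $C^1$-convergence, and the distributional passage for the Laplacian bound, are exactly the right way to make the sketch rigorous.
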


By the examples of  \cite{lv,dl} this regularity result is essentially optimal, as no higher order estimates are possible for weak solutions.  Often we will refer to the unique solution of  \eqref{eq: BVPGeod} as the \emph{weak} $C^{1,\bar 1}$--\emph{geodesic} joining $u_0,u_1 \in \mathcal H_\o$, and it will be denoted as
\begin{equation}\label{eq: weak_geod_def}
[0,1] \ni t \to u_t \in \mathcal H_\o^{1,\bar 1}.
\end{equation}

Moving on, as follows from standard elliptic PDE theory, given a smooth family of boundary data for \eqref{eq: epsBVPGeod} one obtains a corresponding smooth family of solutions. As another simple consequence of the uniqueness theorem, we obtain a uniform bound on the rate of change of this family as the parameter changes:  

\begin{corollary}[\cite{c1}]\label{cor: duds_estimate} 
Suppose $[0,1] \ni \rho \to u_0(\cdot,\rho),u_1(\cdot,\rho) \in \mathcal H_\o$ is smooth family of boundary data for \eqref{eq: epsBVPGeod}. For the smooth family of solutions $[0,1] \ni \rho \to u^\varepsilon(\cdot,\cdot,\rho) \in C^\infty(\overline {S} \times X)$ we have the following estimate for all  $\varepsilon>0$:
$$\Big\|\frac{du^\varepsilon}{d\rho}\Big\|_{C^0( \overline{S} \times X \times [0,1])} \leq \max\Big(\Big\|\frac{du_0}{d\rho}\Big\|_{C^0(X \times [0,1])},\Big\|\frac{du_1}{d\rho}\Big\|_{C^0(X \times [0,1])}\Big).$$
\end{corollary}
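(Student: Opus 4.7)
The plan is to exploit uniqueness of solutions to the $\varepsilon$-geodesic Monge--Amp\`ere equation \eqref{eq: epsBVPGeod} via the comparison principle (Theorem \ref{thm: uniqueness_BVP}). The crucial observation is that the right hand side $\frac{\varepsilon}{4}(ids\wedge d\bar s + \pi^*\o)^{n+1}$ depends only on the background geometry, not on the solution $u^\varepsilon$ itself. Hence if one shifts a solution by a constant, one obtains another solution, and two solutions ordered on the boundary must remain ordered in the interior.

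Concretely, write $M := \max\big(\|du_0/d\rho\|_{C^0(X\times[0,1])},\|du_1/d\rho\|_{C^0(X\times[0,1])}\big)$ and fix $\rho_0,\rho_1 \in [0,1]$. By the fundamental theorem of calculus applied in the $\rho$-variable, the boundary data satisfy
\begin{equation*}
|u_j(x,\rho_1) - u_j(x,\rho_0)| \leq M|\rho_1 - \rho_0|, \qquad x \in X, \ j = 0,1.
\end{equation*}
Define
\begin{equation*}
v_1(s,x) := u^\varepsilon(s,x,\rho_0) + M|\rho_1-\rho_0|, \qquad v_2(s,x) := u^\varepsilon(s,x,\rho_1).
\end{equation*}
Since adding a real constant does not alter $i\partial\bar\partial$, both $v_1$ and $v_2$ solve the same Monge--Amp\`ere equation $(\pi^*\o + i\partial\bar\partial v)^{n+1} = \frac{\varepsilon}{4}(ids\wedge d\bar s + \pi^*\o)^{n+1}$ on $S\times X$, and by construction $v_1 \geq v_2$ on the boundary $\partial S \times X$.

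Next I would invoke Theorem \ref{thm: uniqueness_BVP} to deduce $v_1 \geq v_2$ on the whole strip, which rewrites as $u^\varepsilon(s,x,\rho_1) - u^\varepsilon(s,x,\rho_0) \leq M|\rho_1 - \rho_0|$; swapping the roles of $\rho_0$ and $\rho_1$ yields the matching lower bound, and dividing by $|\rho_1 - \rho_0|$ and letting $\rho_1 \to \rho_0$ produces the claimed pointwise bound on $du^\varepsilon/d\rho$. The one technical nuisance is that Theorem \ref{thm: uniqueness_BVP} is stated for a compact complex manifold with boundary carrying a genuine K\"ahler form, while $\pi^*\o$ is only semi-positive on the non-compact strip $\overline{S}\times X$. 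This is handled exactly as in the remark following Theorem \ref{thm: uniqueness_BVP}: replace $\pi^*\o$ by the K\"ahler form $\eta := \pi^*\o + i\partial\bar\partial g$ where $g$ is a smooth strictly subharmonic $i\Bbb R$-invariant function on $S$ vanishing on $\partial S$, and use the $i\Bbb R$-invariance of $u^\varepsilon$ to pass to the compact annulus model $A\times X$ via $s \mapsto e^s$. The main obstacle is just keeping this reduction bookkeeping clean; once that is done, uniqueness applies verbatim and the rest of the argument is a one-line comparison.
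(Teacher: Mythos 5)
Your proposal is correct and follows essentially the same route as the paper: both apply the comparison principle of Theorem \ref{thm: uniqueness_BVP} to the constant-shifted solutions at two nearby parameter values, using the fact that adding a constant leaves $(\pi^*\o + i\partial\bar\partial \cdot)^{n+1}$ unchanged, and then pass to the difference quotient. The paper's version is terser (it takes $\delta \to 0$ from a fixed $\rho_0$ rather than naming $v_1,v_2$ and then symmetrizing), but the content, including the appeal to the K\"ahler form $\eta = \pi^*\o + i\partial\bar\partial g$ and the annulus model to meet the hypotheses of the uniqueness theorem, is identical.
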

\begin{proof}Let $\rho_0 \in [0,1]$ and $C > \max(\|{du_0}/{d\rho}\|_{C^0(X \times [0,1])},\|{du_1}/{d\rho}\|_{C^0(X \times [0,1])}).$ Then Theorem \ref{thm: uniqueness_BVP} implies that
$$u^\varepsilon(s,x,\rho_0) - C\delta \leq  u^\varepsilon(s,x,\rho_0+\delta) \leq u^\varepsilon(s,x,\rho_0) + C\delta, \ (s,x) \in \overline{S} \times X$$
for all $\delta \in \Bbb R$ such that $\rho_0 + \delta \in [0,1]$. This gives that ${|u^\varepsilon(s,x,\rho_0+\delta)-u^\varepsilon(s,x,\rho_0)|}/{\delta} \leq C,$
implying the desired estimate.
\end{proof}

As a last consequence of the uniqueness theorem, we note the following concrete estimate for the tangent vectors of weak $C^{1,\bar 1}$-geodesics: 
\begin{lemma}\label{lem: dot_u_est} Given $u_0,u_1 \in \mathcal H_\o$, let $[0,1]  \ni t \to u_t \in \mathcal H_\o$ be the weak $C^{1,\bar 1}$--geodesic joining $u_0,u_1$. Then the following estimate holds:
\begin{equation}\label{eq: dot_u_est}
\| \dot u_t\|_{C^0(X)} \leq \| u_0 - u_1\|_{C^0(X)}, \ \ \ t \in [0,1].
\end{equation}
\end{lemma}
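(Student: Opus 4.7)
The approach is to combine convexity of $t \mapsto u_t(x)$, inherited from $\pi^*\omega$-plurisubharmonicity and $\textup{Im}\, s$-invariance of the complexification $u$, with a maximum-principle comparison against two explicit linear-in-$t$ competitors. Set $M := \|u_0 - u_1\|_{C^0(X)}$. By Theorem \ref{thm: u_estimates} the weak geodesic satisfies $u \in C^{1,\bar 1}(\overline{S} \times X)$, so $\dot u_t(x)$ is a classical continuous function and endpoint derivatives are taken in the ordinary sense.

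Convexity of $t \mapsto u_t(x)$ already implies $\dot u_0(x) \leq \dot u_t(x) \leq \dot u_1(x)$, so it suffices to prove $\dot u_1(x) \leq M$ and $\dot u_0(x) \geq -M$. For this, introduce on $S \times X$ the two auxiliary functions
$$ b(s,x) := u_1(x) - (1 - \textup{Re}\, s)M, \qquad d(s,x) := u_0(x) - (\textup{Re}\, s)M. $$
Both are $\textup{Im}\, s$-invariant; adding a linear function of $\textup{Re}\, s$ is pluriharmonic in $s$, so $\pi^*\omega + i\partial\bar\partial b = \pi^*\omega_{u_1} \geq 0$, and analogously for $d$. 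Moreover $(\pi^*\omega + i\partial\bar\partial b)^{n+1} = 0 = (\pi^*\omega + i\partial\bar\partial d)^{n+1}$, since the underlying $(1,1)$-forms live entirely in the $n$ holomorphic $X$-directions. Hence $b$ and $d$ are bounded admissible competitors for the homogeneous Monge--Amp\`ere equation \eqref{eq: BVPGeod}. The definition of $M$ ensures $b \leq u$ and $d \leq u$ on $\partial S \times X$, with equality holding at $t = 1$ for $b$ and at $t = 0$ for $d$. Applying Theorem \ref{thm: uniqueness_BVP} (after passing to an annulus and replacing $\pi^*\omega$ with the genuine K\"ahler form $\pi^*\omega + i\partial\bar\partial g$ for a suitable strictly subharmonic $g$, exactly as in the remark following that theorem) yields $b \leq u$ and $d \leq u$ throughout $S \times X$. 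Since $u - b$ is a non-negative $C^1$ function vanishing at $t = 1$, its left $t$-derivative there is $\leq 0$; using $\dot b_t \equiv M$ this gives $\dot u_1(x) \leq M$, and the symmetric argument with $u - d$, using $\dot d_t \equiv -M$, gives $\dot u_0(x) \geq -M$.

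The main obstacle is recognizing that convexity alone is insufficient: it only produces the ``wrong-side'' bounds $\dot u_0(x) \leq M$ and $\dot u_1(x) \geq -M$ via the mean value estimate $\dot u_0^+(x) \leq u_1(x) - u_0(x) \leq \dot u_1^-(x)$, so the complementary bounds genuinely require that $u$ solve the geodesic equation and must be extracted from the comparison principle. A secondary technicality, namely the degeneracy of $\pi^*\omega$ along the $s$-direction, prevents direct application of Theorem \ref{thm: uniqueness_BVP}, but is resolved by the standard perturbation $\pi^*\omega \leadsto \pi^*\omega + i\partial\bar\partial g$ which does not affect any of the computations above.
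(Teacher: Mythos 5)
Your proposal is correct and follows essentially the same route as the paper: after reducing via convexity to endpoint bounds, you compare against linear-in-$t$ subsolutions using Theorem \ref{thm: uniqueness_BVP}. The only cosmetic difference is that the paper constructs the single competitor $v_t = u_0 - Ct$ (your $d$) and then invokes the reversed geodesic $t \mapsto u_{1-t}$ to get the other endpoint bound, whereas you write down both competitors $b$ and $d$ explicitly, avoiding the reversal; this is a matter of presentation, not substance.
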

\begin{proof} Let $C := \sup_X |u_0 - u_1|$. Convexity in the $t$ variable implies that $\dot u_0 \leq \dot u_t \leq \dot u_1$. Hence we only need to show that $-C \leq \dot u_0$ and $\dot u_1 \leq C$.

Examining \eqref{eq: geod_eq_Lev_Civ}, it is clear that the curve $[0,1] \in t \to v_t:= u_0 - C t \in \mathcal H_\o$ is a smooth geodesic connecting $u_0$ and $u_0 - C$. Hence its complexification $v$ is a solution to \eqref{eq: BVPGeod}.

As $u_0 - C \leq u_1$, Theorem \ref{thm: uniqueness_BVP} gives that that $v_t \leq u_t, \ t \in [0,1]$, implying that $-C \leq \dot u_0$.

The same trick applied to the ``reverse" $C^{1,\bar 1}$--geodesic $t \to u_{1-t}$ gives $-C \leq - \dot u_1$, finishing the proof.
\end{proof}

\section{The Orlicz geometry of the space of K\"ahler potentials}

As discussed in the beginning of the present chapter, in our study of $L^p$ Finsler metrics on $\mathcal H_\o$, we need to return to the full generality of Orlicz--Finsler metrics \eqref{eq: FinslerDef} on $\mathcal H_\o$ with weight in $\mathcal W^+_p$. We will do this in this section, and our main theorem connects the $d_\chi$ pseudo--distance (see \eqref{eq: d_chi_def}) with the weak $C^{1,\bar 1}$ geodesic of the $L^2$ Mabuchi geometry (see \eqref{eq: weak_geod_def}), in the process showing that $d_\chi$ is indeed a bona fide metric:

\begin{theorem}\textup{\cite[Theorem 1]{da2}}\label{thm: XXChenThm} If $\chi \in \mathcal W^+_p, \ p \geq 1$ then $(\mathcal H_\o,d_\chi)$ is a metric space and for any $u_0,u_1 \in \mathcal H_\o$ the weak $C^{1,\bar 1}$ geodesic $t \to u_t$ connecting $u_0,u_1$ satisfies:
\begin{equation}\label{eq: ChiDistGeodFormula}
d_\chi(u_0,u_1)=\|\dot u_t \|_{\chi,u_t}, \ t \in [0,1].
\end{equation}
\end{theorem}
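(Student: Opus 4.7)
The plan is to establish \eqref{eq: ChiDistGeodFormula} via matched upper and lower bounds using the smooth $\varepsilon$-geodesics as intermediaries; the metric property will then follow. By Propositions \ref{prop: approx_lemma2} and \ref{prop: approx_lemma}, I may first approximate $\chi$ by $\chi_k\in\mathcal W^+_{p_k}\cap C^\infty(\mathbb R)$ with $\chi_k\to\chi$ uniformly on compacts; since all velocities involved are uniformly bounded (Theorem \ref{thm: ueps_estimates} and Lemma \ref{lem: dot_u_est}), Orlicz norms converge on these bounded data, and hence so do the curve-length functionals and the $d_{\chi_k}$'s to $d_\chi$. Thus I assume throughout that $\chi\in C^2(\mathbb R)$.

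\textbf{Step 1 (almost-constant speed of the $\varepsilon$-geodesic).} For $r>0$ and the smooth $\varepsilon$-geodesic $u^\varepsilon_t$, the identity $\frac{d}{dt}\omega_{u^\varepsilon_t}^n=\tfrac{1}{2}\Delta^{\omega_{u^\varepsilon_t}}\dot u^\varepsilon_t\,\omega_{u^\varepsilon_t}^n$ together with integration by parts and the $\varepsilon$-geodesic equation \eqref{eq: eps_geod_eq_Lev_Civ} gives
\begin{equation*}
\frac{d}{dt}\int_X \chi\!\Big(\frac{\dot u^\varepsilon_t}{r}\Big)\omega_{u^\varepsilon_t}^n \;=\; \frac{\varepsilon}{r}\int_X \chi'\!\Big(\frac{\dot u^\varepsilon_t}{r}\Big)\omega^n.
\end{equation*}
Since $|\dot u^\varepsilon_t|$ is bounded independently of $\varepsilon$ by Theorem \ref{thm: ueps_estimates}, the right side is $O(\varepsilon)$ uniformly in $r$ away from $0$; via \eqref{eq: OrliczNormId} this forces $\|\dot u^\varepsilon_t\|_{\chi,u^\varepsilon_t}$ to be constant in $t$ up to an error $O(\varepsilon)$.

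\textbf{Step 2 (upper bound).} Since $u^\varepsilon$ is a smooth curve joining $u_0,u_1$, by Step 1
\begin{equation*}
d_\chi(u_0,u_1)\;\leq\; l_\chi(u^\varepsilon)\;=\;\int_0^1\!\|\dot u^\varepsilon_t\|_{\chi,u^\varepsilon_t}\,dt\;=\;\|\dot u^\varepsilon_0\|_{\chi,u^\varepsilon_0}+O(\varepsilon).
\end{equation*}
As $\varepsilon\searrow 0$, the uniform $C^{1,\alpha}$ bounds from Theorem \ref{thm: ueps_estimates} (via the Laplacian control and Calderón--Zygmund) and $u^\varepsilon\nearrow u$ from \eqref{eq: epsgeod_limit} yield, through Arzelà--Ascoli, uniform convergence $\dot u^\varepsilon\to\dot u$; combined with $\omega_{u^\varepsilon_t}^n\to\omega_{u_t}^n$ weakly, this gives $\|\dot u^\varepsilon_t\|_{\chi,u^\varepsilon_t}\to\|\dot u_t\|_{\chi,u_t}$, whence $d_\chi(u_0,u_1)\leq\|\dot u_t\|_{\chi,u_t}$.

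\textbf{Step 3 (lower bound --- main obstacle).} Let $[0,1]\ni s\mapsto\gamma_s$ be any smooth competitor with $\gamma_0=u_0,\gamma_1=u_1$, and for each $s$ let $\tau\mapsto v^{\varepsilon,s}_\tau$ be the $\varepsilon$-geodesic joining $u_0$ to $\gamma_s$. Set $L^\varepsilon(s):=l_\chi(v^{\varepsilon,s})=\|\dot v^{\varepsilon,s}_\tau\|_{\chi,v^{\varepsilon,s}_\tau}+O(\varepsilon)$; then $L^\varepsilon(0)=0$ and $L^\varepsilon(1)=\|\dot u^\varepsilon_0\|_{\chi,u^\varepsilon_0}+O(\varepsilon)$. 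The crucial estimate is
\begin{equation*}
\frac{dL^\varepsilon}{ds}(s)\;\leq\;\|\dot\gamma_s\|_{\chi,\gamma_s}+O(\varepsilon),
\end{equation*}
from which integration and the limit $\varepsilon\to 0$ give $\|\dot u_t\|_{\chi,u_t}\leq l_\chi(\gamma)$; an infimum over $\gamma$ yields the desired lower bound. The plan to obtain it is to differentiate in $s$ the normalizing identity $\int_X \chi(\dot v^{\varepsilon,s}_\tau/L^\varepsilon(s))\omega_{v^{\varepsilon,s}_\tau}^n=\chi(1)\mathrm{Vol}(X)$ (valid at each $\tau$ up to $O(\varepsilon)$ by Step 1), then integrate by parts in $\tau$ using the symmetry $\partial_s\dot v=\partial_\tau(\partial_s v)$, invoking the $\varepsilon$-geodesic equation for $v^{\varepsilon,s}$ to absorb the interior terms into $O(\varepsilon)$. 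The boundary at $\tau=0$ vanishes because $v^{\varepsilon,s}_0=u_0$ is $s$-independent, while at $\tau=1$ one has $\partial_s v^{\varepsilon,s}_1=\dot\gamma_s$; the resulting boundary integral over $X$ is controlled by $\|\dot\gamma_s\|_{\chi,\gamma_s}$ via Hölder's inequality \eqref{eq: HolderIneq}, together with the uniform estimates of Theorem \ref{thm: ueps_estimates} and the $C^0$-bound $\|\partial_s v^{\varepsilon,s}\|_{C^0}\leq\|\dot\gamma_s\|_{C^0}$ of Corollary \ref{cor: duds_estimate}.

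\textbf{Step 4 (metric property).} Combining Steps 2 and 3 yields \eqref{eq: ChiDistGeodFormula}. For $d_\chi$ to be a genuine metric only non-degeneracy remains: if $d_\chi(u_0,u_1)=0$ then $\|\dot u_t\|_{\chi,u_t}=0$; by Proposition \ref{prop: NormIntegralEst} this gives $\int_X\chi(\dot u_t)\omega_{u_t}^n=0$, hence $\dot u_t=0$ $\omega_{u_t}^n$-a.e., and combining the uniform bound of Lemma \ref{lem: dot_u_est} with the convexity of $u$ in $t$ on $S\times X$ (plurisubharmonicity) and the Bedford--Taylor convergence $\omega^n_{u^\varepsilon_t}\to\omega^n_{u_t}$ along the smooth approximants forces $u$ to be $t$-independent, giving $u_0=u_1$. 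The main obstacle throughout is the variational inequality in Step 3; the boundary cancellation at $\tau=0$ and the constancy of speed from Step 1 are precisely what make that derivative computation feasible.
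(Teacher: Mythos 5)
Your strategy mirrors the paper's: approximate $\chi$ by smooth $\mathcal W^+_{p_k}$ weights, use the smooth $\varepsilon$-geodesics as intermediaries, exploit the near-constancy of their Orlicz-speed, and obtain the reverse inequality by differentiating the length of a sweep of $\varepsilon$-geodesics and integrating. Steps 1, 2, and 4 are essentially the paper's Lemma \ref{lem: dotintegralest}, Proposition \ref{prop: EpsGeodTanEst}, and the concluding claim in the proof of Theorem \ref{thm: XXChenThm}; Step 4 does need the extra observation that $\omega_{u_0}^n$ is a strictly positive smooth measure and $\dot u_0$ is continuous, so that $\|\dot u_0\|_{\chi,u_0}=0$ really forces $\dot u_0\equiv 0$ and, by convexity, $u_0=u_1$.

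The gap is in Step 3, and it is exactly the degeneracy the paper's argument is designed to dodge. You fix $u_0$ and join it by $\varepsilon$-geodesics to $\gamma_s$; but at $s=0$ you are joining $u_0$ to $\gamma_0=u_0$, so the $\varepsilon$-geodesic has speed of size $O(\varepsilon)$ and the geodesic degenerates as $\varepsilon\to 0$. The $O(\varepsilon)$ estimates that power your scheme come from Proposition \ref{prop: EpsGeodTanEst} (and Corollary \ref{cor: EpsGeodDiffCor}), whose constants and admissible range $\varepsilon<\varepsilon_0$ depend on a \emph{lower bound} for $\|\chi(\gamma_s-u_0)\|_{L^1(\omega^n)}$. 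That lower bound is not uniform down to $s=0$, so your bound $\frac{dL^\varepsilon}{ds}\le\|\dot\gamma_s\|_{\chi,\gamma_s}+O(\varepsilon)$ is not valid near $s=0$ with an $s$-uniform error, and you cannot integrate from $0$ to $1$. (A secondary but related issue: differentiating an identity that only holds "up to $O(\varepsilon)$" does not control the $s$-derivative of that error; the paper differentiates the exact normalizing identity $\int_X\chi(\dot v/\|\dot v\|_\chi)\,\omega_v^n=\chi(1)$ via Proposition \ref{prop: OrliczNormDiff}, and only afterwards compares $\|\dot v_\tau\|_\chi$ to the length.) This is why the paper keeps the fixed endpoint $\phi$ away from the sweep $\psi([0,1])$ and then extracts the conclusion by a limit $h\to 1$ of partial sweeps, using Lemma \ref{lem: dot_u_est} to kill the residual geodesic segment. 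Your argument can be repaired in the same spirit, by integrating only over $s\in[\delta,1]$ and then sending $\delta\to 0$ (bounding $L^\varepsilon(\delta)\lesssim\|\gamma_\delta-u_0\|_{C^0}\to 0$ via Lemma \ref{lem: dot_u_est} applied to these geodesics), but as written the proof does not close.
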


Although the metric spaces $(\mathcal H_\o, d_\chi)$ are not quasi isometric for different $\chi$ (as we will see, they have different metric completions), it is remarkable that the \emph{same} weak $C^{1 \bar 1}$--geodesic is ``length minimzing'' for \emph{all} $d_\chi$ metric structures. Additionally, as we will see in the next sections, this same curve is an honest metric geodesic in the completion of each space $(\mathcal H_\o, d_\chi)$.

In the proof of Theorem \ref{thm: XXChenThm}, we first show the result for Finsler metrics with smooth weight $\chi$, and afterwards use approximation via Proposition \ref{prop: approx_lemma} and Proposition \ref{prop: approx_lemma2} to establish the result for all metrics with weight in $\mathcal W^+_p$, which includes as particular case the $L^p$ metrics. 

In the particular case of the $L^2$ metric, Theorem \ref{thm: XXChenThm} was obtained by X.X. Chen \cite{c1} and our proof in case of Finsler metrics with smooth weight follows his ideas and a careful differential analysis of Orlicz norms.

To ease the technical nature of future calculations, for the rest of this section we assume the normalizing condition
\begin{equation}\label{eq: vol_normalization}
V:=\textup{Vol}(X)=\int_X \o^n = 1.
\end{equation}
This can always be achieved by rescaling the K\"ahler metric $\o$.
Given a differentiable normalized Young weight $\chi$, the differentiability of the associated norm $\| \cdot\|_\chi$ (see \eqref{eq: OrliczNormDef}) is well understood (see \cite[Chapter VII]{rr}). As a first step in proving Theorem \ref{thm: XXChenThm}, we adapt \cite[Theorem VII.2.3]{rr} to our setting:

\begin{proposition}\textup{\cite[Proposition 3.1]{da2}}\label{prop: OrliczNormDiff} Suppose $\chi \in \mathcal W^+_p \cap C^\infty(\Bbb R)$. Given a smooth curve $(0,1) \ni t \to u_t \in \mathcal H_\o$, and a vector field $(0,1) \ni t \to f_t \in C^\infty(X)$ along this curve with $f_t \not \equiv 0, \ t\in (0,1)$, the following formula holds:
\begin{equation}\label{eq: OrliczNormDiffEq}
\frac{d}{dt}\|f_t\|_{\chi,u_t} = \frac{\int_X \chi' \Big(\frac{f_t}{\|f_t\|_{\chi,u_t}}\Big)\nabla_{\dot u_t} f_t \omega_{u_t}^n}{\int_X \chi'\Big(\frac{f_t}{\|f_t\|_{\chi,u_t}}\Big) \frac{f_t}{\|f_t\|_{\chi,u_t}}  \omega_{u_t}^n},
\end{equation}
where $\nabla_{(\cdot)}(\cdot)$ is the covariant derivative from \eqref{eq: CovDerivative}.
\end{proposition}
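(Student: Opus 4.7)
Write $\alpha(t):=\|f_t\|_{\chi,u_t}$. Since $f_t\not\equiv 0$ and $\chi\in \mathcal W^+_p$, Proposition \ref{prop: NormIntegralEst} and \eqref{eq: OrliczNormId} give $\alpha(t)>0$ and the defining identity
\begin{equation}\label{eq: plan_defining_id}
\int_X \chi\!\left(\frac{f_t}{\alpha(t)}\right)\!\o_{u_t}^n=\chi(1).
\end{equation}
The strategy is to differentiate \eqref{eq: plan_defining_id} implicitly in $t$ and solve for $\dot\alpha$. The right-hand side being constant is what produces the quotient on the right of \eqref{eq: OrliczNormDiffEq}; the numerator will appear after an integration by parts converts the $t$-derivative of $\o_{u_t}^n$ into a gradient term that assembles with $\dot f_t$ into the Levi-Civita covariant derivative from \eqref{eq: CovDerivative}.

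The first step is to justify that $\alpha$ is $C^1$. Consider $G(t,\beta):=\int_X \chi(f_t/\beta)\o_{u_t}^n$ for $\beta>0$. Smoothness of $\chi$, $f_t$ and of $t\mapsto u_t$ allows differentiation under the integral and shows $G\in C^1$ on $(0,1)\times(0,\infty)$. Its $\beta$-derivative is
$$\partial_\beta G(t,\beta)=-\frac{1}{\beta^2}\int_X \chi'\!\left(\tfrac{f_t}{\beta}\right)f_t\,\o_{u_t}^n=-\frac{1}{\beta}\int_X \chi'\!\left(\tfrac{f_t}{\beta}\right)\tfrac{f_t}{\beta}\,\o_{u_t}^n,$$
which is strictly negative at $\beta=\alpha(t)$: indeed $s\chi'(s)\geq 0$ (since $\chi$ is even, convex and non-negative), and $s\chi'(s)>0$ on $\{s\neq 0\}$, while $f_t\neq 0$ on a set of positive $\o_{u_t}^n$-measure. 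The implicit function theorem then makes $\alpha(t)$ a $C^1$ function and we can differentiate \eqref{eq: plan_defining_id} in $t$.

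The next step carries out that differentiation. Passing $\partial_t$ under the integral gives
\begin{equation}\label{eq: plan_diff}
0=\int_X \chi'\!\left(\tfrac{f_t}{\alpha}\right)\!\frac{\alpha\dot f_t-\dot\alpha f_t}{\alpha^2}\,\o_{u_t}^n+\int_X \chi\!\left(\tfrac{f_t}{\alpha}\right)\frac{d}{dt}\o_{u_t}^n.
\end{equation}
Using $\frac{d}{dt}\o_{u_t}^n=\tfrac{1}{2}\Delta^{\o_{u_t}}\dot u_t\,\o_{u_t}^n$ and integration by parts (as in the derivation of \eqref{eq: CovDerivative}), the second integral becomes
$$-\frac{1}{2\alpha}\int_X \chi'\!\left(\tfrac{f_t}{\alpha}\right)\langle \nabla^{\o_{u_t}} f_t,\nabla^{\o_{u_t}}\dot u_t\rangle\,\o_{u_t}^n.$$
Substituting back into \eqref{eq: plan_diff} and grouping the $\dot f_t$ term with the gradient term, the bracket reproduces exactly $\nabla_{\dot u_t} f_t$ from \eqref{eq: CovDerivative}. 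Solving algebraically for $\dot\alpha$ yields \eqref{eq: OrliczNormDiffEq}.

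The only real obstacle is confirming that $\partial_\beta G(t,\alpha(t))\neq 0$ so that the implicit function theorem applies; this is handled by the positivity argument above, which hinges on $\chi$ being smooth, even, convex and normalized, and on $f_t\not\equiv 0$. Everything else is routine differentiation under the integral sign and the standard integration by parts used to identify the Mabuchi Levi-Civita connection.
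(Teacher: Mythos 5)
Your proof is correct and follows essentially the same route as the paper's: introduce $G(t,\beta)=\int_X\chi(f_t/\beta)\,\o_{u_t}^n$, verify $\partial_\beta G<0$ at $\beta=\alpha(t)$, apply the implicit function theorem to the identity $G(t,\alpha(t))=\chi(1)$, and then convert the variation of $\o_{u_t}^n$ into the gradient term via integration by parts so that it combines with $\dot f_t$ into $\nabla_{\dot u_t}f_t$. One small point worth tightening: the strict positivity $s\chi'(s)>0$ for $s\neq 0$ is not a consequence of evenness, convexity and non-negativity alone; it requires $\chi\in\mathcal W_p^+$ (the paper deduces it from \eqref{eq: GrowthControl}, which gives $\chi>0$ on $(0,\infty)$ and hence $\chi'(l)\geq\chi(l)/l>0$ by convexity).
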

\begin{proof}
We introduce the smooth function $F: \Bbb R^+ \times (0,1) \to \Bbb R$ given by
$$F(r,t) = \int_X \chi\Big(\frac{f_t}{r}\Big)\o_{u_t}^n.$$
As $\chi \in \mathcal W^+_p$, by \eqref{eq: GrowthControl} we have $\chi'(l) >0, \ l >0$ and $\chi'(l) <0, \ l <0$. As $t \to f_t$ is non-vanishing, it follows that
$$\frac{d}{dr} F(r,t)=- \frac{1}{r^2}\int_X {f_t} \chi'\Big(\frac{f_t}{r}\Big) \omega_{u_t}^n < 0$$
for all $r > 0,\ t \in (0,1)$. Using the fact that 
$F(\|f_t\|_{\chi,u_t},t) = \chi(1)$ (see \eqref{eq: OrliczNormId}),  an application of the implicit function theorem yields that the map $t \to \|f_{t}\|_{\chi,u_t}$ is differentiable and the following formula holds:
$$\frac{d}{dt}\|f_t\|_{\chi,u_t} =
\frac{\int_X \Big[\dot f_t \chi'\Big(\frac{f_t}{\|f_t\|_{\chi,u_t}}\Big) + \frac{1}{2} \|f_t\|_{\chi,u_t} \chi\Big(\frac{f_t}{\|f_t\|_{\chi,u_t}}\Big) \Delta^{\omega_{u_t}} \dot u_t \Big] \omega_{u_t}^n}{\int_X \frac{f_t}{\|f_t\|_{\chi,u_t}} \chi'\Big(\frac{f_t}{\|f_t\|_{\chi,u_t}}\Big) \omega_{u_t}^n}.$$
Recalling the formula for the covariant derivative \eqref{eq: CovDerivative}, an integration by parts yields \eqref{eq: OrliczNormDiffEq}.
\end{proof}

The estimate for $\varepsilon$--geodesics (see \eqref{eq: eps_geod_eq_Lev_Civ}) from the following technical lemma will be of great use in our later study:

\begin{lemma} \label{lem: dotintegralest} Suppose $\chi \in \mathcal W^+_p$ and $u_0,u_1 \in \mathcal H_\o$. Then  the $\varepsilon$--geodesic $[0,1] \ni t \to u^\varepsilon_t \in \mathcal H_\o$ connecting $u_0,u_1$ satisfies the following estimate:
\begin{equation}\label{eq: dotintegralest}\int_X \chi( \dot u^\varepsilon_t ) \o_{u^\varepsilon_t}^n \geq \max\Big(\int_X \chi(\min(u_1 - u_0,0))\o_{u_0}^n,\int_X \chi(\min(u_0 - u_1,0))\o_{u_1}^n\Big) -\varepsilon R > 0,
\end{equation}
for all $t \in [0,1]$, where $R:=R(\chi, \| u_0\|_{C^2}, \| u_1\|_{C^2})$.
\end{lemma}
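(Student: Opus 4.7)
The natural strategy is to show that the function $t\mapsto I(t):=\int_X \chi(\dot u^\varepsilon_t)\,\o_{u^\varepsilon_t}^n$ is almost constant (with error $O(\varepsilon)$), and then estimate $I(0)$ and $I(1)$ from below using the convexity of the $\varepsilon$-geodesic in $t$. This reduces the lemma to a differentiation identity driven by the $\varepsilon$-geodesic equation \eqref{eq: eps_geod_eq_Lev_Civ}.

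First I would compute $I'(t)$ directly. Since $\frac{d}{dt}\o_{u^\varepsilon_t}^n = \frac{1}{2}\Delta^{\o_{u^\varepsilon_t}}\dot u^\varepsilon_t\cdot \o_{u^\varepsilon_t}^n$, one has
\[
I'(t)=\int_X \chi'(\dot u^\varepsilon_t)\,\ddot u^\varepsilon_t\,\o_{u^\varepsilon_t}^n + \tfrac{1}{2}\int_X \chi(\dot u^\varepsilon_t)\,\Delta^{\o_{u^\varepsilon_t}}\dot u^\varepsilon_t\,\o_{u^\varepsilon_t}^n.
\]
Integration by parts on the second integral gives $-\tfrac{1}{2}\int_X \chi'(\dot u^\varepsilon_t)|\nabla^{\o_{u^\varepsilon_t}}\dot u^\varepsilon_t|^2\,\o_{u^\varepsilon_t}^n$, so the combined expression becomes $\int_X \chi'(\dot u^\varepsilon_t)\bigl(\ddot u^\varepsilon_t-\tfrac{1}{2}|\nabla^{\o_{u^\varepsilon_t}}\dot u^\varepsilon_t|^2\bigr)\o_{u^\varepsilon_t}^n$. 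Invoking the $\varepsilon$-geodesic equation \eqref{eq: eps_geod_eq_Lev_Civ}, this equals $\varepsilon \int_X \chi'(\dot u^\varepsilon_t)\,\o^n$, so
\[
|I'(t)| \;\le\; \varepsilon\sup_{|s|\le M}|\chi'(s)|,
\]
where $M$ is a uniform $C^0$ bound on $\dot u^\varepsilon_t$ furnished by Theorem \ref{thm: ueps_estimates}. This yields $|I(t)-I(0)|,\,|I(t)-I(1)| \le \varepsilon R$ with $R$ depending only on $\chi$ and on $C^k$ norms of $u_0,u_1$.

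Next I would invoke convexity of $t\mapsto u^\varepsilon_t$ (a consequence of plurisubharmonicity of its $i\Bbb R$-invariant complexification on $S\times X$) to obtain the pointwise inequalities
\[
\dot u^\varepsilon_0(x) \;\le\; u_1(x)-u_0(x) \;\le\; \dot u^\varepsilon_1(x),\qquad x\in X.
\]
Since $\chi$ is even, convex and vanishes at $0$, it is non-decreasing in $|\cdot|$; in particular, for any $a\in\Bbb R$ and $b\le 0$ with $a\le b$ we have $\chi(a)\ge \chi(b)=\chi(\min(b,0))$, while if $b\ge 0$ then $\chi(a)\ge 0=\chi(\min(b,0))$. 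Applying this at each $x\in X$ to $a=\dot u^\varepsilon_0(x)$ and $b=u_1(x)-u_0(x)$ gives $\chi(\dot u^\varepsilon_0)\ge \chi(\min(u_1-u_0,0))$ pointwise on $X$, and symmetrically (using $-\dot u^\varepsilon_1\le u_0-u_1$ together with evenness) $\chi(\dot u^\varepsilon_1)\ge \chi(\min(u_0-u_1,0))$. Integrating against $\o_{u_0}^n$ and $\o_{u_1}^n$ respectively, then combining with the two $\varepsilon$-close bounds $I(t)\ge I(0)-\varepsilon R$ and $I(t)\ge I(1)-\varepsilon R$, yields \eqref{eq: dotintegralest}. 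The strict positivity at the end of the chain holds (for small enough $\varepsilon$) as soon as $u_0\not\equiv u_1$.

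The main obstacle is the derivative computation: it requires that all differentiation-under-the-integral and integration-by-parts steps be justified by the smoothness of $u^\varepsilon$ on $\overline{S}\times X$ (guaranteed by Theorem \ref{thm: ueps_estimates}) and that the constant $M$ bounding $|\dot u^\varepsilon_t|$ be uniform in $(t,\varepsilon)$, which again follows from the uniform $C^1$ estimate of Theorem \ref{thm: ueps_estimates}. Everything after that is elementary convexity and monotonicity of $\chi$.
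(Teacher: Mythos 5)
Your argument is correct and matches the paper's proof essentially step for step: you compute $\frac{d}{dt}\int_X\chi(\dot u^\varepsilon_t)\o_{u^\varepsilon_t}^n$, integrate by parts to recognize the covariant derivative $\nabla_{\dot u^\varepsilon_t}\dot u^\varepsilon_t$ inside the integral, and apply the $\varepsilon$-geodesic equation and the $C^1$ bound from Theorem~\ref{thm: ueps_estimates} to get $|I'(t)|\le\varepsilon R$, while the endpoint lower bounds come from the pointwise convexity inequality $\dot u^\varepsilon_0\le u_1-u_0\le\dot u^\varepsilon_1$ and the fact that $\chi$ is even, non-negative and increasing in $|\cdot|$. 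The paper phrases the endpoint estimate by restricting to the set $\{u_0\ge u_1\}$, whereas you argue pointwise on all of $X$ using $\chi(\min(\cdot,0))$ — this is the same inequality written two ways, so there is no substantive difference.
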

\begin{proof}
As $t \to u^\varepsilon_t(x)$ is convex for any $x \in X$, on the set $\{ u_0 \geq u_1 \}$ the estimate $\dot u^\varepsilon_0 \leq u_1 - u_0\leq 0$ holds, hence
$$\int_X \chi( \dot u^\varepsilon_0 ) \o_{u_0}^n \geq \int_X \chi(\min(u_1 - u_0,0))\o_{u_0}^n.$$
We can similarly deduce that
$$\int_X \chi( \dot u^\varepsilon_1 ) \o_{u_1}^n \geq \int_X \chi(\min(u_0 - u_1,0))\o_{u_1}^n.$$
Let us assume momentarily that $\chi$ is smooth. For $t \in [0,1]$, using the Riemannian connection $\nabla_{(\cdot)}(\cdot)$ (see \eqref{eq: CovDerivative}) and the fact that $t \to u^\varepsilon_t$ is an $\varepsilon$--geodesic (see \eqref{eq: eps_geod_eq_Lev_Civ}), we can write:
$$
\Big|\frac{d}{dt} \int_X \chi(\dot u^\varepsilon_t) \o_{u^\varepsilon_t}^n\Big|=\Big|\int_X \chi'(\dot u^\varepsilon_t) \nabla_{\dot u^\varepsilon_t} \dot u^\varepsilon_t\o_{u^\varepsilon_t}^n\Big|= \varepsilon \Big|\int_X \chi'(\dot u^\varepsilon_t) \o^n\Big| \leq \varepsilon R(\chi, \| u_0\|_{C^2}, \| u_1\|_{C^2}),$$
where in the last estimate we have used that $\dot u^\varepsilon_t$ is uniformly bounded in terms of  $\| u_0\|_{C^2}, \| u_1\|_{C^2}$ (Theorem \ref{thm: ueps_estimates}). For  general $\chi \in \mathcal W^+_p$, since $\chi$ is convex, an approximation argument yields the same estimate. 

After putting together the last three estimates, \eqref{eq: dotintegralest} follows. 
\end{proof}

As a consequence of the previous two results we obtain the following corollary:

\begin{corollary} \label{cor: EpsGeodDiffCor} Suppose $\chi \in \mathcal W^+_p \cap C^\infty(\Bbb R)$ and $u_0,u_1 \in \mathcal H_\o, \ u_0 \neq u_1$. Then there exists $\varepsilon_0 >0$ dependent on upper bounds for $\| u_0\|_{C^2(X)}, \| u_1\|_{C^2(X)}$ and lower bounds for $\|\chi(u_1 -u_0)\|_{L^1(\o^n)}, \o_{u_0}^n /\o^n$ and $\o_{u_1}^n /\o^n$, such that for all $\varepsilon \in (0,\varepsilon_0)$ the $\varepsilon$--geodesic $[0,1] \ni t \to u^\varepsilon_t \in \mathcal H_\o$ of \eqref{eq: epsBVPGeod}, connecting $u_0,u_1$ satisfies:
\begin{equation}\label{eq: EpsGeodDiffEq}
\frac{d}{dt}\|\dot u^\varepsilon_t\|_{\chi,u^\varepsilon_t}=\varepsilon\frac{\int_X \chi'\Big(\frac{\dot u^\varepsilon_t}{\|\dot u^\varepsilon_t\|_{\chi,u^\varepsilon_t}}\Big)\o^n} {\int_X \frac{\dot u^\varepsilon_t}{\|\dot u^\varepsilon_t\|_{\chi,u^\varepsilon_t}} \chi'\Big(\frac{\dot u^\varepsilon_t}{\|\dot u^\varepsilon_t\|_{\chi,u^\varepsilon_t}}\Big) \omega_{u^\varepsilon_t}^n}, \ t \in [0,1].
\end{equation} 
\end{corollary}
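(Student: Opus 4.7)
The plan is to apply Proposition 3.1 with $f_t = \dot u^\varepsilon_t$ and to rewrite the resulting numerator using the $\varepsilon$-geodesic equation \eqref{eq: eps_geod_eq_Lev_Civ}. The computation itself is short, but two things must be checked before Proposition 3.1 is usable: first, that $t \to \dot u_t^\varepsilon$ is a smooth vector field along $t \to u_t^\varepsilon$ (which follows from smoothness of $u^\varepsilon$, granted by Theorem \ref{thm: ueps_estimates}); second, that $\dot u_t^\varepsilon \not\equiv 0$ on $X$ for each $t \in [0,1]$. The last point is precisely where the quantitative $\varepsilon_0$ in the statement gets pinned down.

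First I would extract a uniform positive lower bound for the right-hand side of \eqref{eq: dotintegralest}. Since $\chi$ is even and $\chi(l)=\chi(|l|)$, for any $t$,
\[
\int_X \chi(\min(u_1-u_0,0))\,\o_{u_0}^n+\int_X\chi(\min(u_0-u_1,0))\,\o_{u_1}^n\ \geq\ m\,\| \chi(u_1-u_0)\|_{L^1(\o^n)},
\]
where $m>0$ is a lower bound for the smooth densities $\o_{u_i}^n/\o^n$. In particular, the maximum on the right-hand side of \eqref{eq: dotintegralest} is at least $\tfrac12 m\,\|\chi(u_1-u_0)\|_{L^1(\o^n)}=:A>0$. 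Choosing $\varepsilon_0 := A/(2R)$ with $R=R(\chi,\|u_0\|_{C^2},\|u_1\|_{C^2})$ as in Lemma \ref{lem: dotintegralest}, for every $\varepsilon \in (0,\varepsilon_0)$ and every $t\in[0,1]$ we obtain $\int_X \chi(\dot u_t^\varepsilon)\,\o_{u_t^\varepsilon}^n \geq A/2>0$, and hence $\dot u_t^\varepsilon \not\equiv 0$.

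With this in place, Proposition \ref{prop: OrliczNormDiff} applied to $f_t=\dot u_t^\varepsilon$ along the smooth curve $t\to u_t^\varepsilon$ yields
\[
\frac{d}{dt}\|\dot u_t^\varepsilon\|_{\chi,u_t^\varepsilon}
=\frac{\int_X \chi'\!\Big(\tfrac{\dot u_t^\varepsilon}{\|\dot u_t^\varepsilon\|_{\chi,u_t^\varepsilon}}\Big)\,\nabla_{\dot u_t^\varepsilon}\dot u_t^\varepsilon\ \o_{u_t^\varepsilon}^n}{\int_X \tfrac{\dot u_t^\varepsilon}{\|\dot u_t^\varepsilon\|_{\chi,u_t^\varepsilon}}\,\chi'\!\Big(\tfrac{\dot u_t^\varepsilon}{\|\dot u_t^\varepsilon\|_{\chi,u_t^\varepsilon}}\Big)\,\o_{u_t^\varepsilon}^n}.
\]
Now I would use the $\varepsilon$-geodesic equation: the definition of the Levi--Civita connection \eqref{eq: CovDerivative} gives $\nabla_{\dot u_t^\varepsilon}\dot u_t^\varepsilon=\ddot u_t^\varepsilon-\tfrac12\langle\nabla^{\o_{u_t^\varepsilon}}\dot u_t^\varepsilon,\nabla^{\o_{u_t^\varepsilon}}\dot u_t^\varepsilon\rangle$, and \eqref{eq: eps_geod_eq_Lev_Civ} says that this quantity multiplied by $\o_{u_t^\varepsilon}^n$ equals $\varepsilon\,\o^n$ as measures. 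Substituting this identity into the numerator turns $\chi'(\cdot)\,\nabla_{\dot u_t^\varepsilon}\dot u_t^\varepsilon\,\o_{u_t^\varepsilon}^n$ into $\varepsilon\,\chi'(\cdot)\,\o^n$, which produces exactly formula \eqref{eq: EpsGeodDiffEq}.

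The main obstacle is really just the bookkeeping for $\varepsilon_0$: one must see that the ``quantitative $u_0\ne u_1$'' input enters through the lower bound on $\|\chi(u_1-u_0)\|_{L^1(\o^n)}$, and the control on how much $\o_{u_i}^n$ can concentrate away from $\{u_0\ne u_1\}$ enters through the density lower bounds, both of which interact with the error term $\varepsilon R$ in Lemma \ref{lem: dotintegralest}. Once non-vanishing of $\dot u_t^\varepsilon$ is secured uniformly in $t$, everything else is a direct substitution.
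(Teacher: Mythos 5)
Your proposal matches the paper's proof exactly: apply Proposition \ref{prop: OrliczNormDiff} to $f_t = \dot u_t^\varepsilon$, use Lemma \ref{lem: dotintegralest} to choose $\varepsilon_0$ so that $\dot u_t^\varepsilon$ is non-vanishing, and substitute the $\varepsilon$-geodesic equation \eqref{eq: eps_geod_eq_Lev_Civ} into the numerator. You fill in more quantitative detail on how the stated dependencies of $\varepsilon_0$ arise, but the argument is the same.
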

\begin{proof}This is a simple application of the formula of Proposition \ref{prop: OrliczNormDiff} for the $\varepsilon$--geodesic $t \to u^\varepsilon_t$ and the vector field $t \to f_t := \dot u_t$. Indeed, by the previous lemma, one can choose $\varepsilon_0>0$ as indicated, so that $\dot u_t$ is non-vanishing for all $t \in [0,1]$, hence the assumptions of Proposition \ref{prop: OrliczNormDiff} are satisfied.
\end{proof}

Continuing to focus on smooth weights $\chi$, we establish concrete bounds for the $\chi$--length of tangent vectors along the $\varepsilon$--geodesics and their derivatives. This is the analog of \cite[Lemma 13]{bl1} in our more general setting:

\begin{proposition} \label{prop: EpsGeodTanEst} Suppose $\chi \in \mathcal W^+_p$ and $u_0,u_1 \in \mathcal H_\o, \ u_0 \neq u_1$. Then there exists $\varepsilon_0 >0$  such that for any $\varepsilon \in(0,\varepsilon_0)$ the $\varepsilon$--geodesic $[0,1] \ni t \to u^\varepsilon_t \in \mathcal H_\o$ connecting $u_0,u_1$ satisfies:\vspace{0.1cm}\\
\noindent(i) $\| \dot u^\varepsilon_t\|_{\chi,u^\varepsilon_t} > R_0, \ t \in [0,1]$\vspace{0.1cm}.\\
\noindent(ii) $\big|\frac{d}{dt}\|\dot u^\varepsilon_t\|_{\chi,u^\varepsilon_t}\big| \leq \varepsilon R_1, \ t \in [0,1]$,\vspace{0.1cm}\\
where $\varepsilon_0>0,R_0>0,R_1>0$ depend on upper bounds for $\| u_0\|_{C^2(X)}$, $\| u_1\|_{C^2(X)}$ and lower bounds for $\|\chi(u_1 -u_0)\|_{L^1(\o^n)}, \o_{u_0}^n /\o^n$ and $\o_{u_1}^n /\o^n$.
\end{proposition}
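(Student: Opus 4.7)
The plan is to derive (i) directly from Lemma \ref{lem: dotintegralest} via the Orlicz norm characterization of Proposition \ref{prop: NormIntegralEst}, and then to deduce (ii) by plugging the pointwise lower bound $R_0$ into the differentiation formula of Corollary \ref{cor: EpsGeodDiffCor}, bounding the numerator crudely and the denominator using the Young identity.

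For (i), the first task is to convert the ``max'' lower bound of Lemma \ref{lem: dotintegralest} into something controlled purely by the data listed in the statement. Using that $\chi$ is even, the two terms in the max rewrite as $\int_{\{u_1\le u_0\}} \chi(u_0-u_1)\omega_{u_0}^n$ and $\int_{\{u_0\le u_1\}}\chi(u_0-u_1)\omega_{u_1}^n$. Applying the assumed lower bounds $\omega_{u_i}^n \ge c_i \omega^n$ and summing the two pieces shows that this max is at least a positive multiple of $\|\chi(u_1-u_0)\|_{L^1(\omega^n)}$. Thus choosing $\varepsilon_0$ small enough so that the error term $\varepsilon R$ in Lemma \ref{lem: dotintegralest} is dominated by half of this, we obtain a strictly positive lower bound
$$\int_X \chi(\dot u_t^\varepsilon)\,\omega_{u_t^\varepsilon}^n \ge A > 0, \quad t\in[0,1],$$
with $A$ depending only on the listed data. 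Since $\chi\in\mathcal W_p^+$, Proposition \ref{prop: NormIntegralEst} (equivalently \eqref{eq: OrliczNormId}) then converts this into the desired bound $\|\dot u_t^\varepsilon\|_{\chi,u_t^\varepsilon}\ge R_0 > 0$. In particular, $\dot u_t^\varepsilon\not\equiv 0$ for every $t$, which validates the hypotheses of Corollary \ref{cor: EpsGeodDiffCor}.

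For (ii), I would insert the bound from (i) into the formula
$$\frac{d}{dt}\|\dot u_t^\varepsilon\|_{\chi,u_t^\varepsilon}=\varepsilon\,\frac{\int_X \chi'\bigl(\dot u_t^\varepsilon/\|\dot u_t^\varepsilon\|_{\chi,u_t^\varepsilon}\bigr)\,\omega^n}{\int_X \bigl(\dot u_t^\varepsilon/\|\dot u_t^\varepsilon\|_{\chi,u_t^\varepsilon}\bigr)\,\chi'\bigl(\dot u_t^\varepsilon/\|\dot u_t^\varepsilon\|_{\chi,u_t^\varepsilon}\bigr)\,\omega_{u_t^\varepsilon}^n}.$$
For the numerator: Lemma \ref{lem: dot_u_est} and Theorem \ref{thm: ueps_estimates} give $\|\dot u_t^\varepsilon\|_{C^0}\le \|u_0-u_1\|_{C^0}\le C(\|u_i\|_{C^2})$, so together with $\|\dot u_t^\varepsilon\|_{\chi,u_t^\varepsilon}\ge R_0$ the argument of $\chi'$ lies in a bounded interval; since $\chi'$ is continuous the numerator is bounded by a constant depending on the listed data. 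For the denominator I would use the Young identity \eqref{eq: YoungIdIneq}: since $a\chi'(a) = \chi(a)+\chi^*(\chi'(a))\ge\chi(a)$ pointwise (noting $\chi'(a)$ and $a$ have the same sign and $\chi^*\ge 0$), the denominator is at least
$$\int_X \chi\Bigl(\frac{\dot u_t^\varepsilon}{\|\dot u_t^\varepsilon\|_{\chi,u_t^\varepsilon}}\Bigr)\omega_{u_t^\varepsilon}^n=\chi(1)>0$$
by the defining identity \eqref{eq: OrliczNormId}. Combining these estimates yields $\bigl|\tfrac{d}{dt}\|\dot u_t^\varepsilon\|_{\chi,u_t^\varepsilon}\bigr|\le \varepsilon R_1$ with $R_1$ depending only on the listed data.

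The main technical subtlety I anticipate is verifying the dependencies claimed in the statement, particularly making sure that the conversion in (i) from the ``max'' expression in Lemma \ref{lem: dotintegralest} to the data $\|\chi(u_1-u_0)\|_{L^1(\omega^n)}$, $\omega_{u_i}^n/\omega^n$ is done in a way that isolates the $C^2$ norms of $u_0,u_1$ only inside the error constant $R$ (where they legitimately belong). The Young identity step in (ii) is the clean conceptual ingredient that makes the denominator bound $\varepsilon$-uniform; this is where the structure $\chi\in\mathcal W_p^+$ (rather than just convexity) is essential.
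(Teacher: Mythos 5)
Your proof is correct and follows the same route as the paper's: (i) is obtained by combining Lemma \ref{lem: dotintegralest} with Proposition \ref{prop: NormIntegralEst}, and (ii) is obtained by inserting the Young identity into the formula of Corollary \ref{cor: EpsGeodDiffCor} to bound the denominator from below by $\chi(1)$ and then using (i) together with the uniform $C^0$ bound on $\dot u_t^\varepsilon$ from Theorem \ref{thm: ueps_estimates} to bound the numerator. Your additional unpacking of part (i) — rewriting the two terms of the max via evenness of $\chi$ and comparing against $\|\chi(u_1-u_0)\|_{L^1(\omega^n)}$ using $\omega_{u_i}^n \geq c_i\,\omega^n$ — is a correct fleshing-out of a step the paper leaves implicit, and your pointwise reading $a\chi'(a)\geq\chi(a)$ is the same use of the Young identity the paper makes, just phrased pointwise rather than integral by integral.
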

\begin{proof}The estimate of (i) follows from \eqref{eq: dotintegralest} and Proposition \ref{prop: NormIntegralEst}. To establish (ii) we assume momentarily that $\chi \in  C^\infty(\Bbb R)$, and we shrink $\varepsilon_0$ enough to satisfy the requirements of Corollary \ref{cor: EpsGeodDiffCor}. Using the Young identity \eqref{eq: YoungIdIneq} we can write:
\begin{flalign}\label{epstangentest}
\Big|\frac{d}{dt}\|\dot u^\varepsilon_t\|_{\chi,u^\varepsilon_t}\Big|&=\varepsilon\frac{\Big|\int_X \chi'\Big(\frac{\dot u^\varepsilon_t}{\|\dot u^\varepsilon_t\|_{\chi,u^\varepsilon_t}}\Big)\o^n\Big|} {\int_X \frac{\dot u^\varepsilon_t}{\|\dot u^\varepsilon_t\|_{\chi,u^\varepsilon_t}} \chi'\Big(\frac{\dot u^\varepsilon_t}{\|\dot u^\varepsilon_t\|_{\chi,u^\varepsilon_t}}\Big) \omega_{u^\varepsilon_t}^n}=\varepsilon\frac{\Big|\int_X \chi'\Big(\frac{\dot u^\varepsilon_t}{\|\dot u^\varepsilon_t\|_{\chi,u^\varepsilon_t}}\Big)\o^n\Big|} {\chi(1)+\int_X \chi^*\Big(\chi'\Big(\frac{\dot u^\varepsilon_t}{\|\dot u^\varepsilon_t\|_{\chi,u^\varepsilon_t}}\Big)\Big) \omega_{u^\varepsilon_t}^n}\leq \nonumber \\
&\leq \frac{\varepsilon}{\chi(1)}\int_X \chi'\Big(\frac{\dot u^\varepsilon_t}{\|\dot u^\varepsilon_t\|_{\chi,u^\varepsilon_t}}\Big)\o^n.
\end{flalign}
Using (i) and the fact that $\dot u^\varepsilon_t$ is uniformly bounded in terms of  $\| u_0\|_{C^2}, \| u_1\|_{C^2}$ (Theorem \ref{thm: ueps_estimates}) the estimate of (ii) follows for smooth $\chi$. For general $\chi$ we simply use the approximation result Proposition \ref{prop: approx_lemma} to conclude that \eqref{epstangentest} still holds, where $\frac{d}{dt}$ is interpreted as a  Lipschitz derivative.
\end{proof}

We can now establish the main geometric estimate for $\varepsilon$--geodesics, which generalizes the corresponding statement for the $L^2$--metric (\cite[Theorem 14]{bl1}):

\begin{proposition}[\cite{da2}] Suppose $\chi \in \mathcal W^+_p \cap C^\infty(\Bbb R)$, $[0,1] \ni s \to \psi_s \in\mathcal H_\o$ is a smooth curve, $\phi \in \mathcal H_\o \setminus \psi([0,1])$ and  $\varepsilon >0$.  We denote by $u^\varepsilon \in C^\infty([0,1]\times [0,1] \times X)$ the smooth function for which $[0,1] \ni t \to u^\varepsilon_t(\cdot,s):= u^\varepsilon(t,\cdot,s) \in \mathcal H_\o$ is the $\varepsilon$--geodesic connecting $\phi$ and $\psi_s$. There exists $\varepsilon_0(\phi,\psi) >0$ such that for any $\varepsilon \in(0,\varepsilon_0)$ the following holds:
$$l_\chi(u^\varepsilon_t(\cdot,0)) \leq l_\chi(\psi) + l_\chi(u^\varepsilon_t(\cdot,1)) + \varepsilon R,$$
for some $R(\phi,\psi,\chi,\varepsilon_0) >0$ independent of $\varepsilon >0$.
\end{proposition}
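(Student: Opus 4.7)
The plan is to set $L(s) := l_\chi(u^\varepsilon_t(\cdot, s))$ and establish the two-sided bound $|L(0) - L(1)| \leq l_\chi(\psi) + \varepsilon R$ by controlling $|L'(s)|$ pointwise. First I would shrink $\varepsilon_0$ so that the hypotheses of Proposition \ref{prop: EpsGeodTanEst} hold uniformly in $s \in [0,1]$; this is possible because $\phi \notin \psi([0,1])$ and the curve $s \to \psi_s$ is smooth on a compact parameter interval, giving uniform lower bounds on $\|\chi(\psi_s-\phi)\|_{L^1(\o^n)}$ and uniform upper bounds on $\|\psi_s\|_{C^2}$. Consequently the constants $R_0, R_1$ from Proposition \ref{prop: EpsGeodTanEst} may be taken independent of $s$.

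By Proposition \ref{prop: EpsGeodTanEst}(ii) the function $t \mapsto \|\partial_t u^\varepsilon(t,\cdot,s)\|_{\chi, u^\varepsilon(t,\cdot,s)}$ oscillates in $t$ by at most $\varepsilon R_1$, so
$$L(s) = \|\partial_t u^\varepsilon(1,\cdot,s)\|_{\chi, \psi_s} + \delta(s), \qquad |\delta(s)| \leq \varepsilon R_1.$$
Writing $f_s := \partial_t u^\varepsilon(1,\cdot,s)$, the main step is to differentiate the right-hand side in $s$ via Proposition \ref{prop: OrliczNormDiff} applied to the curve $s \to \psi_s$ and the vector field $f_s$, which is nonvanishing by Proposition \ref{prop: EpsGeodTanEst}(i). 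The resulting expression features $\nabla_{\dot\psi_s} f_s$ in the numerator; torsion-freeness of the Levi--Civita connection on the two-parameter family $(t,s) \mapsto u^\varepsilon(t,\cdot,s)$ identifies this with $\nabla_{f_s} \dot\psi_s$. A double application of Young's inequality \eqref{eq: YoungIdIneq}---once to rewrite the denominator in the form $\chi(1)+\int_X \chi^*(\chi'(f_s/\|f_s\|_{\chi,\psi_s}))\o_{\psi_s}^n$ as in the proof of Proposition \ref{prop: EpsGeodTanEst}(ii), and once to the numerator with $a = \dot\psi_s/\|\dot\psi_s\|_{\chi,\psi_s}$ and $b = \chi'(f_s/\|f_s\|_{\chi,\psi_s})$---together with an integration by parts, is expected to yield
$$\Big|\frac{d}{ds}\|f_s\|_{\chi,\psi_s}\Big| \leq \|\dot\psi_s\|_{\chi,\psi_s} + \varepsilon R',$$
where the $\varepsilon$-correction encodes the mixed partial $\partial_s\partial_t u^\varepsilon(1,\cdot,s)$ appearing in the Levi--Civita expansion \eqref{eq: CovDerivative}. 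Integrating over $s \in [0,1]$ and absorbing the oscillation term delivered by $\delta$, one concludes
$$|L(0) - L(1)| \leq \int_0^1 \|\dot\psi_s\|_{\chi,\psi_s}\,ds + \varepsilon(R' + 2R_1) = l_\chi(\psi) + \varepsilon R,$$
with $R := R' + 2R_1$.

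The principal obstacle is extracting the clean bound $\|\dot\psi_s\|_{\chi,\psi_s} + O(\varepsilon)$ from the derivative formula. This requires controlling the mixed partial $\partial_s\partial_t u^\varepsilon(1,\cdot,s)$, equivalently the initial $t$-derivative of the Jacobi-type field $t \mapsto \partial_s u^\varepsilon(t,\cdot,s)$ (which vanishes at $t=0$ and equals $\dot\psi_s$ at $t=1$). Here I would use the $\varepsilon$-geodesic equation \eqref{eq: eps_geod_eq_Lev_Civ} differentiated in $s$, together with the uniform $C^{1,\bar1}$ estimates of Theorem \ref{thm: ueps_estimates} and the boundary-variation bound of Corollary \ref{cor: duds_estimate}, to show that the discrepancy between $\nabla_{\dot\psi_s} f_s$ weighted by the Young kernel and $\dot\psi_s$ itself is controlled by $\varepsilon$ times a constant depending only on $\phi$, $\psi$, and $\chi$; this would close the argument.
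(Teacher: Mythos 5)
Your reduction from $l_\chi$ to the terminal speed $\|f_s\|_{\chi,\psi_s}$ with an $O(\varepsilon)$ error via Proposition \ref{prop: EpsGeodTanEst}(ii) is fine, and several of the ingredients you list (uniformity of the constants over $s$, torsion-freeness, Young's inequality) do appear in the paper's argument. But the order in which you deploy them creates a gap you cannot close: by evaluating at $t=1$ \emph{before} differentiating in $s$, you end up having to estimate $\nabla_{\dot\psi_s}f_s = \nabla_{\partial_t}(\partial_s u^\varepsilon)\big|_{t=1}$, the initial velocity (at the $t=1$ end) of the Jacobi-type field $t\mapsto\partial_s u^\varepsilon(t,\cdot,s)$. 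Young's inequality with $a=\dot\psi_s/\|\dot\psi_s\|$ and $b=\chi'(f_s/\|f_s\|)$ controls the pairing $\int_X \dot\psi_s\,\chi'(\cdot)\,\o_{\psi_s}^n$, not the pairing against the Jacobi velocity $\nabla_{\partial_t}(\partial_s u^\varepsilon)|_{t=1}$ which actually sits in the numerator. Your proposed repair---showing the discrepancy between $\partial_t\partial_s u^\varepsilon|_{t=1}$ and $\dot\psi_s$ is $O(\varepsilon)$---is not available: this discrepancy is not a small-temperature artifact, it is governed by the curvature of the Mabuchi metric and the spreading of the geodesic family, and it persists as $\varepsilon\to 0$ (already in flat one-dimensional models the Jacobi velocity and the Jacobi boundary value need not coincide unless the geodesics are literally straight lines in a linear coordinate). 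Neither Theorem \ref{thm: ueps_estimates} nor Corollary \ref{cor: duds_estimate} bounds this mixed second derivative in terms of the boundary value.

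What the paper does instead is keep the $t$-integral alive when differentiating $l_\chi(u^\varepsilon_t(\cdot,s))$ in $s$, so that after the torsion swap $\nabla_{\frac{du}{ds}}\dot u = \nabla_{\dot u}\frac{du}{ds}$ and the product rule, a genuine total $t$-derivative $\frac{d}{dt}\int_X\chi'(\cdot)\frac{du}{ds}\,\o_u^n$ appears; the fundamental theorem of calculus over $t\in[0,1]$ then converts this into the boundary value of $\frac{du}{ds}$ at $t=1$ (namely $\dot\psi_s$ itself, not its $t$-derivative), with the $t=0$ term vanishing because $\phi$ is fixed. All the genuinely uncontrollable Jacobi-velocity type terms get swept into correction terms multiplied by $\nabla_{\dot u}\dot u$ or $\frac{d}{dt}\|\dot u\|_{\chi,u}$, which are $O(\varepsilon)$ by the geodesic equation and Proposition \ref{prop: EpsGeodTanEst}(ii). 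Also note the paper only needs and only proves a one-sided bound $\frac{d}{ds}l_\chi \geq -\|\dot\psi_s\|_{\chi,\psi_s}+\varepsilon R$, which is what a single application of Young's inequality delivers; your two-sided $|L'(s)|\leq\|\dot\psi_s\|_{\chi,\psi_s}+\varepsilon R'$ is stronger than necessary and would require yet another argument. To repair your proof you would essentially have to restore the $t$-integral and the integration by parts in $t$, at which point you recover the paper's proof.
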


\begin{proof}Whenever it will not cause confusion, we will drop sub/superscripts when addressing the $\varepsilon$--geodesic $t \to u^\varepsilon_t(\cdot,s)$. To avoid cumbersome notation, derivatives in the $t$--direction will be denoted by dots, derivatives in the $s$--direction will be denoted by $d/ds$, and sometimes we also omit dependence on $(t,s)$.  

Fix $s \in [0,1]$. By Proposition \ref{prop: OrliczNormDiff} and Proposition \ref{prop: EpsGeodTanEst}(i) there exists $\varepsilon_0(\phi,\psi)>0$ such that for $\varepsilon \in (0,\varepsilon_0)$ the following holds:
\begin{flalign*}
\frac{d}{ds}l_\chi(u_t(\cdot,s))&= \int_0^1 \frac{d}{ds} \|\dot u(t,\cdot, s))\|_{\chi,u(t,s)}dt = \int_0^1 \frac{\int_X \chi' \Big(\frac{\dot u}{\|\dot u\|_{\chi,u}}\Big)\nabla_{\frac{du}{ds}}\dot u \omega_{u}^n}{\int_X \chi'\Big(\frac{\dot u}{\|\dot u\|_{\chi,u}}\Big) \frac{\dot u}{\|\dot u\|_{\chi,u}}  \omega_{u}^n}dt
\end{flalign*}
Using the Young identity \eqref{eq: YoungIdIneq} and the fact that $\nabla_{(\cdot)}(\cdot)$ is a Riemannian connection, we can continue:
\begin{flalign}\label{eq: anothercalc}
&=\int_0^1 \frac{\int_X \chi' \Big(\frac{\dot u}{\|\dot u\|_{\chi,u}}\Big)\nabla_{\frac{du}{ds}}\dot u \omega_{u}^n}{\chi(1) + \int_X \chi^*\Big(\chi'\Big(\frac{\dot u}{\|\dot u\|_{\chi,u}}\Big)\Big)\omega_{u}^n}dt \nonumber\\
&=\int_0^1 \frac{\int_X \chi' \Big(\frac{\dot u}{\|\dot u\|_{\chi,u}}\Big)\nabla_{\dot u}\frac{du}{ds} \omega_{u}^n}{\chi(1) + \int_X \chi^*\Big(\chi'\Big(\frac{\dot u}{\|\dot u\|_{\chi,u}}\Big)\Big)\omega_{u}^n}dt\nonumber\\
&=\int_0^1 \frac{\frac{d}{dt}\int_X \chi' \Big(\frac{\dot u}{\|\dot u\|_{\chi,u}}\Big)\frac{du}{ds}\omega_{u}^n -\int_X \frac{du}{ds} \nabla_{\dot u}\Big(\chi' \Big(\frac{\dot u}{\|\dot u\|_{\chi,u}}\Big)\Big) \omega_{u}^n}{\chi(1) + \int_X \chi^*\Big(\chi'\Big(\frac{\dot u}{\|\dot u\|_{\chi,u}}\Big)\Big)\omega_{u}^n}dt.
\end{flalign}
We make the following side computation:
\begin{equation}\label{eq: interimnabla}
\nabla_{\dot u}\Big(\chi' \Big(\frac{\dot u}{\|\dot u\|_{\chi,u}}\Big)\Big)\o_{u}^n=\chi'' \Big(\frac{\dot u}{\|\dot u\|_{\chi,u}}\Big)\Big( \frac{\nabla_{\dot u}\dot u}{\|\dot u\|_{\chi,u}} - \frac{\dot u}{\|\dot u\|_{\chi,u}^2}\frac{d}{dt}\|\dot u\|_{\chi,u}\Big)\o_u^n.
\end{equation}
After possibly further shrinking $\varepsilon_0(\phi,\psi) >0$, from Proposition \ref{prop: EpsGeodTanEst}(i)(ii) and \eqref{eq: eps_geod_eq_Lev_Civ} it follows that $\|\dot u\|_{\chi,u}$ is uniformly bounded away from zero and both $\nabla_{\dot u} \dot u\o_u^n$ and $\frac{d}{dt} \|\dot u \|_{\chi,u}$ are of the form $\varepsilon R$, where $R$ is an uniformly bounded quantity for $\varepsilon < \varepsilon_0(\phi,\psi)$. 

Furthermore, it follows from Theorem \ref{thm: ueps_estimates}  that $\dot u$ is uniformly bounded independently of $\varepsilon$. All of this implies that the quantity of \eqref{eq: interimnabla} is also of the form $\varepsilon R$. Lastly, Corollary \ref{cor: duds_estimate} implies that $du/ds$ is uniformly bounded as well, hence putting the above together it follows that the second term in the numerator of \eqref{eq: anothercalc} is also of the form $\varepsilon R$, and we can continue to write:
\begin{flalign*}
&= \int_0^1 \frac{\frac{d}{dt}\int_X \chi' \Big(\frac{\dot u}{\|\dot u\|_{\chi,u}}\Big)\frac{du}{ds} \o_{u}^n}{\chi(1) + \int_X \chi^*\Big(\chi'\Big(\frac{\dot u}{\|\dot u\|_{\chi,u}}\Big)\Big)\o_{u}^n}dt +\varepsilon R
\end{flalign*}
As $\chi^*$ is the Legendre transform of $\chi$, it follows that ${\chi^*}' (\chi'(l))=l, \ l \in \Bbb R$. Using this, our prior observations and the chain rule, we obtain that the expression
$$\frac{d}{dt}\left( \chi(1)+\int_X \chi^*\Big(\chi'\Big(\frac{\dot u}{\|\dot u\|_{\chi,u}}\Big)\Big)\o_{u}^n\right)=\int_X \frac{\dot u}{\|\dot u\|_{\chi,u}} \chi''\Big(\frac{\dot u}{\|\dot u\|_{\chi,u}}\Big)\nabla_{\dot u}\Big(\frac{\dot u}{\|\dot u\|_{\chi,u}}\Big)\o_{u}^n$$
is again of magnitude $\varepsilon R$, hence in our sequence of calculations we can write
\begin{flalign}\label{lastestimate}
&= \int_0^1 \frac{d}{dt} \frac{\int_X \chi' \Big(\frac{\dot u}{\|\dot u\|_{\chi,u}}\Big)\frac{du}{ds} \omega_{u}^n}{\chi(1) + \int_X \chi^*\Big(\chi'\Big(\frac{\dot u}{\|\dot u\|_{\chi,u}}\Big)\Big)\omega_{u}^n}dt +\varepsilon R \nonumber\\
&=\frac{\int_X \chi' \Big(\frac{\dot u(1,s)}{\|\dot u(1,s)\|_{\chi,\psi}}\Big)\frac{d \psi(s)}{ds} \omega_{\psi}^n}{\chi(1) + \int_X \chi^*\Big(\chi'\Big(\frac{\dot u(1,s)}{\|\dot u(1,s)\|_{\chi,\psi}}\Big)\Big)\omega_{\psi}^n} + \varepsilon R \nonumber\\
&\geq -\Big\|\frac{d\psi(s)}{ds}\Big\|_{\chi, \psi} + \varepsilon R,
\end{flalign}
where in the last line we have used the Young inequality \eqref{eq: YoungIdIneq} in the following manner:
\begin{flalign*}
\frac{{\int_X \chi' \Big(\frac{\dot u(1,s)}{\|\dot u(1,s)\|_{\chi,\psi}}\Big)\frac{d \psi(s)}{ds} \omega_{\psi}^n}}{\| {d \psi}/{ds}\|_{\chi,\psi}}&\geq- \int_X \Big[ \chi \Big( \frac{{d \psi}/{ds}}{\| {d \psi}/{ds}\|_{\chi,\psi}}\Big) + \chi^*\Big(\chi' \Big(\frac{\dot u(1,s)}{\|\dot u(1,s)\|_{\chi,\psi}}\Big)\Big) \Big]\omega_{\psi}^n\\
&=-\Big[\chi(1) + \int_X \chi^*\Big(\chi' \Big(\frac{\dot u(1,s)}{\|\dot u(1,s)\|_{\chi,\psi}}\Big)\Big) \omega_{\psi}^n\Big].
\end{flalign*}
Integrating estimate \eqref{lastestimate} with respect to $s$ yields the desired inequality.
\end{proof}

With the previous result established, there is no more need to differentiate expressions involving the $l_\chi$ length of curves, hence we can return to general Finsler metrics on $\mathcal H_\o$, with possibly non--smooth weight $\chi \in \mathcal W^+_p$, and prove Theorem \ref{thm: XXChenThm} in the process:

\begin{proof}[Proof of Theorem \ref{thm: XXChenThm}] First we show the following identity for the weak $C^{1,\bar 1}$--geodesic joining $u_0,u_1$:
\begin{equation}\label{eq: distgeodformula1}
d_\chi(u_0,u_1)=l_\chi(u_t).
\end{equation}
We can assume that $u_0 \neq u_1$. By Theorem \ref{thm: ueps_estimates} and \eqref{eq: epsgeod_limit},\eqref{eq: u_estimates}, the smooth $\varepsilon$--geodesics $u^\varepsilon$ connecting $u_0,u_1$ $C^{1,\alpha}$--converge to the weak $C^{1,\bar 1}$ geodesic $u$, connecting $u_0,u_1$. As a result, $\dot u^\varepsilon_t$ converges uniformly to $\dot u_t$. Next we argue that the lengths of these tangent vectors converge as well:

\begin{claim}$\|\dot u^\varepsilon_t\|_{\chi,u^\varepsilon_t} \to \|\dot u_t\|_{\chi,u_t}$ as $\varepsilon \to 0$.
\end{claim}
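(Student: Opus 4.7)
The plan is to exploit the implicit characterization \eqref{eq: OrliczNormId} of the Orlicz norm, which says that if one sets $N^\varepsilon := \|\dot u^\varepsilon_t\|_{\chi,u^\varepsilon_t}$, then
\[
\int_X \chi\!\left(\frac{\dot u^\varepsilon_t}{N^\varepsilon}\right)\o_{u^\varepsilon_t}^n \;=\; \chi(1),
\]
and to pass to the limit in this identity. Assume $u_0 \neq u_1$ (the other case is trivial). First I would pin down that $\{N^\varepsilon\}$ lies in a compact subinterval of $(0,\infty)$: the uniform upper bound on $\dot u^\varepsilon_t$ from Theorem \ref{thm: ueps_estimates} combined with Proposition \ref{prop: NormIntegralEst} bounds $N^\varepsilon$ from above, while Proposition \ref{prop: EpsGeodTanEst}(i) provides a positive lower bound.

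Next, along an arbitrary subsequence $\varepsilon_k \to 0$ with $N^{\varepsilon_k} \to N_\infty \in (0,\infty)$, I would show $N_\infty = \|\dot u_t\|_{\chi,u_t}$, which by the subsequential principle forces convergence of the full net. The two analytic ingredients needed are: (a) $\dot u^{\varepsilon_k}_t \to \dot u_t$ uniformly on $X$, which follows from the $C^{1,\alpha}$ convergence $u^\varepsilon \to u$ on $\overline{S}\times X$ already invoked in establishing \eqref{eq: u_estimates}; and (b) $\o_{u^{\varepsilon_k}_t}^n \to \o_{u_t}^n$ weakly as Borel measures on $X$, which follows from Bedford–Taylor continuity along monotone sequences (\cite[Theorem 2.2.5]{bl3}), since Theorem \ref{thm: uniqueness_BVP} was already used to show that $u^\varepsilon_t \nearrow u_t$ as $\varepsilon \searrow 0$.

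Combining (a) with the continuity of $\chi$ (as $\chi$ is convex and finite-valued) and $N^{\varepsilon_k} \to N_\infty > 0$, one sees that $\chi(\dot u^{\varepsilon_k}_t / N^{\varepsilon_k})$ converges uniformly on $X$ to the continuous bounded function $\chi(\dot u_t/N_\infty)$. I would then split
\[
\int_X \chi\!\left(\frac{\dot u^{\varepsilon_k}_t}{N^{\varepsilon_k}}\right)\o_{u^{\varepsilon_k}_t}^n - \int_X \chi\!\left(\frac{\dot u_t}{N_\infty}\right)\o_{u_t}^n
\]
into a term controlled in sup–norm by the uniform convergence times the uniformly bounded total mass $\textup{Vol}(X)$ (using Lemma \ref{lem: const_volume_lem}), plus the term $\int_X \chi(\dot u_t/N_\infty)(\o_{u^{\varepsilon_k}_t}^n - \o_{u_t}^n)$, which vanishes by (b) applied to a fixed continuous test function. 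The limit identity $\int_X \chi(\dot u_t/N_\infty)\o_{u_t}^n = \chi(1)$ and another application of \eqref{eq: OrliczNormId} then deliver $N_\infty = \|\dot u_t\|_{\chi,u_t}$.

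The main obstacle is the delicate simultaneous convergence of integrands and measures; this is why it is essential to factor out the positive lower bound on $N^\varepsilon$ supplied by Proposition \ref{prop: EpsGeodTanEst}(i), since otherwise the rescaled arguments $\dot u^\varepsilon_t / N^\varepsilon$ could escape to infinity and destroy continuity of $\chi$ evaluated on them. Everything else is a standard two–ingredient weak convergence argument.
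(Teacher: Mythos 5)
Your argument is correct and matches the paper's own proof essentially verbatim: the same two-sided bound on $N^\varepsilon$ from Proposition \ref{prop: EpsGeodTanEst}(i) and Theorem \ref{thm: ueps_estimates}, the same reduction to identifying all cluster points, and the same combination of uniform convergence of rescaled tangent vectors with weak convergence of the Monge--Amp\`ere measures fed into \eqref{eq: OrliczNormId}. The only difference is that you spell out the uniform-integrand-plus-fixed-test-function decomposition in a little more detail than the paper does.
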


From Proposition \ref{prop: EpsGeodTanEst}(i) and Theorem \ref{thm: ueps_estimates} it follows that there exists $C_2>C_1>0$ such that for small enough $\varepsilon >0$ we have
$$0 < C_1 \leq  \|\dot u^\varepsilon_t\|_{\chi,u^\varepsilon_t} \leq C_2.$$
In particular, we only have to argue that all cluster points of the set $\{ \|\dot u^\varepsilon_t\|_{\chi,u^\varepsilon_t} \}_{\varepsilon}$ are equal to $\|\dot u_t\|_{\chi,u_t}$. Let $N$ be such a cluster point, and after taking a subsequence, we can assume that $\|\dot u^\varepsilon_t\|_{\chi,u^\varepsilon_t} \to N$ as $\varepsilon \to 0$. By uniform convergence of tangent vectors, we obtain that $\dot u^\varepsilon_t/ \|\dot u^\varepsilon_t\|_{\chi,u^\varepsilon_t}$ converges to $\dot u_t/N$ uniformly as well. Since $\omega_{u^\varepsilon_t}^n \to \omega_{u_t}^n$ weakly, this allows to conclude that
$$\chi(1)=\int_X \chi\bigg(\frac{\dot u^\varepsilon_t}{\|\dot u^\varepsilon_t\|_{\chi,u^\varepsilon_t}}\bigg) \o^n_{u^\varepsilon_t} \to \int_X \chi\bigg( \frac{\dot u_t}{N}\bigg) \o^n_{u_t}, \textup{ as } \varepsilon \to 0.$$
Using \eqref{eq: OrliczNormId} we get that $N = \|\dot u_t\|_{\chi,u_t}$, finishing the proof of the claim.

Using the claim, we can apply the  dominated convergence theorem to conclude that
\begin{equation}\label{eq: length_epsgeod_lim}
\lim_{\varepsilon \to 0} l_\chi(u^{\varepsilon}_t) = l_\chi(u_t),
\end{equation}
hence $d_\chi(u_0,u_1) \leq l_\chi(u_t)$. To prove the reverse inequality, and with that establishing \eqref{eq: distgeodformula1}, we assume first that $\chi \in \mathcal W^+_p \cap C^\infty$. We have to prove that
\begin{equation}\label{eq: reverseineq}
l_\chi(\phi_t) \geq l_\chi(u_t)
\end{equation}
for all smooth curves $[0,1] \ni t \to \phi_t \in \mathcal H$ connecting $u_0,u_1$. We can assume that $u_1 \not \in \phi[0,1)$ and let $h \in [0,1)$. Letting $\varepsilon \to 0$ in the estimate of the previous result, by \eqref{eq: length_epsgeod_lim} we obtain that
$$l_\chi(u_{1-t}) \leq l_\chi(\phi_t|_{[0,h]}) + l_\chi(w^h_t),$$
where $[0,1] \ni t \to u_{1-t},w^h_t \in \mathcal H_\o^{1,\bar 1}$ are the weak $C^{1,\bar 1}$--geodesic segments joining $u_1,u_0$ and $u_1,\phi_h$ respectively. As $h \to 1$, by Lemma \ref{lem: dot_u_est} we have $l_\chi(w^h_t)\to 0$ and we obtain \eqref{eq: reverseineq} for smooth weights $\chi$.

For general $\chi \in \mathcal W^+_p$ by Proposition \ref{prop: approx_lemma2} there exists a sequence $\chi_k \in \mathcal W^+_{p_k} \cap C^\infty(\Bbb R)$ such that $\chi_k$ converges to $\chi$ uniformly on compacts. From what we just proved it follows that
$$\int_0^1 \| \dot \phi_t\|_{\chi_k,\phi_t}dt=l_{\chi_k}(\phi_t) \geq l_{\chi_k}(u_t)=\int_0^1 \| \dot u_t\|_{\chi_k,u_t}dt.$$
Using Proposition \ref{prop: approx_lemma} and the dominated convergence theorem ($\dot \phi_t, \dot u_t$ are uniformly bounded), we can take the limit in this last estimate to conclude \eqref{eq: reverseineq}, which gives \eqref{eq: distgeodformula1}.
Formula \eqref{eq: ChiDistGeodFormula} follows now from the fact that $l_\chi(u_t)=\int_0^1 \| \dot u_l\|_{\chi,u_l}dl$ and Lemma \ref{lem: chilengthgeodconst} below.

Finally, if $u_0\neq u_1$ then after taking the limit $\varepsilon \to 0$ in the estimate of Lemma \ref{lem: dotintegralest} we obtain that  $\dot u_0 \not\equiv 0$, hence $d_\chi(u_0,u_1)=\| \dot u_0\|_{\chi,u_0}>0$. This implies that $(\mathcal H, d_\chi)$ is a metric space, as claimed.
\end{proof}

According to the last lemma of this section, the $\chi$--length of tangent vectors along a $C^{1,\bar 1}$--geodesic is always constant. This parallels a similar result of Berndtsson \cite[Proposition 2.2]{br2}. 

\begin{lemma}\label{lem: chilengthgeodconst} Given $u_0,u_1 \in \mathcal H_\o$, let $[0,1] \ni t \to u_t \in \mathcal H_\o^{1,\bar 1}$ be the weak $C^{1,\bar 1}$--geodesic connecting $u_0,u_1$. Then for any $\chi \in \mathcal W^+_p$ and  $t_0,t_1 \in [0,1]$ the following hold:\\
\begin{equation}\label{eq: chilengthgeodconst}
\|\dot u_{t_0} \|_{\chi,u_{t_0}}=\|\dot u_{t_1} \|_{\chi,u_{t_1}}.
\end{equation}
\end{lemma}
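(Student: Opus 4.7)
The strategy is to reduce the problem to $\varepsilon$--geodesics, for which Proposition \ref{prop: EpsGeodTanEst}(ii) already supplies the key differential estimate $|\tfrac{d}{dt}\|\dot u^\varepsilon_t\|_{\chi,u^\varepsilon_t}| \leq \varepsilon R_1$ — essentially asserting that the $\chi$--length of the tangent vector is approximately constant along $\varepsilon$--geodesics, with error tending to $0$ as $\varepsilon \to 0$. The plan is first to run this argument for smooth weights $\chi \in \mathcal W^+_p \cap C^\infty(\mathbb R)$, then pass to general weights via the smooth approximation from Proposition \ref{prop: approx_lemma2}.

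\textbf{Step 1 (smooth weight case).} Assume $\chi \in \mathcal W^+_p \cap C^\infty(\mathbb R)$ and $u_0 \neq u_1$ (the case $u_0 = u_1$ is trivial since the geodesic is constant). Let $u^\varepsilon$ be the $\varepsilon$--geodesic joining $u_0,u_1$. For all sufficiently small $\varepsilon$, Proposition \ref{prop: EpsGeodTanEst}(ii) integrates to
\begin{equation*}
\bigl|\,\|\dot u^\varepsilon_{t_0}\|_{\chi,u^\varepsilon_{t_0}} - \|\dot u^\varepsilon_{t_1}\|_{\chi,u^\varepsilon_{t_1}}\bigr| \leq \varepsilon R_1 |t_0 - t_1|.
\end{equation*}

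\textbf{Step 2 (pass to the limit $\varepsilon \to 0$).} I next show that $\|\dot u^\varepsilon_t\|_{\chi,u^\varepsilon_t} \to \|\dot u_t\|_{\chi,u_t}$ as $\varepsilon \to 0$, exactly as in the claim within the proof of Theorem \ref{thm: XXChenThm}. Proposition \ref{prop: EpsGeodTanEst}(i) gives a uniform positive lower bound, and Proposition \ref{prop: NormIntegralEst} combined with the uniform bound on $\dot u^\varepsilon_t$ from Theorem \ref{thm: ueps_estimates} yields a uniform upper bound. If $N$ is any cluster point, then using uniform convergence $\dot u^\varepsilon_t \to \dot u_t$ (from $C^{1,\alpha}$--convergence of $u^\varepsilon \to u$) together with weak convergence $\o_{u^\varepsilon_t}^n \to \o_{u_t}^n$, one finds $\int_X \chi(\dot u_t/N)\,\o_{u_t}^n = \chi(1)$, so $N = \|\dot u_t\|_{\chi,u_t}$ by \eqref{eq: OrliczNormId}. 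Letting $\varepsilon \to 0$ in the inequality of Step 1 then forces $\|\dot u_{t_0}\|_{\chi,u_{t_0}} = \|\dot u_{t_1}\|_{\chi,u_{t_1}}$.

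\textbf{Step 3 (general weights $\chi \in \mathcal W^+_p$).} Using Proposition \ref{prop: approx_lemma2}, select smooth $\chi_k \in \mathcal W^+_{p_k} \cap C^\infty(\mathbb R)$ converging uniformly on compacts to $\chi$. Step 2 gives $\|\dot u_{t_0}\|_{\chi_k,u_{t_0}} = \|\dot u_{t_1}\|_{\chi_k,u_{t_1}}$ for each $k$. Since $\dot u_t$ is bounded by Lemma \ref{lem: dot_u_est} and $\o_{u_t}^n/V$ is a probability measure (recall the normalization $V = 1$), Proposition \ref{prop: approx_lemma} applied fiberwise at each $t$ gives $\|\dot u_t\|_{\chi_k,u_t} \to \|\dot u_t\|_{\chi,u_t}$. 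Passing to the limit $k \to \infty$ yields \eqref{eq: chilengthgeodconst}.

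The only real subtlety is verifying that the hypotheses of Proposition \ref{prop: EpsGeodTanEst} hold so that $\varepsilon_0 > 0$ — but since $u_0,u_1 \in \mathcal H_\o$ are smooth with $u_0 \neq u_1$, the required $C^2$--bounds and lower bounds on $\|\chi(u_1 - u_0)\|_{L^1(\o^n)}$ and the Monge--Amp\`ere densities are automatic. There is no serious obstacle; the result is essentially a limiting version of the $\varepsilon$--approximate conservation law in Proposition \ref{prop: EpsGeodTanEst}(ii), and the technical machinery for both the $\varepsilon \to 0$ and the $\chi_k \to \chi$ passages is already in place.
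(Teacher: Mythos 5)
Your proof is correct and follows the same route as the paper's, which invokes the approximate conservation law of Proposition~\ref{prop: EpsGeodTanEst}(ii) and the convergence of $\chi$--norms along $\varepsilon$--geodesics established in the Claim within the proof of Theorem~\ref{thm: XXChenThm}, then lets $\varepsilon \to 0$. Your Step 3 usefully makes explicit the final passage from smooth to general $\chi \in \mathcal W^+_p$ via Propositions~\ref{prop: approx_lemma2} and~\ref{prop: approx_lemma}, a step the paper's terse proof leaves implicit even though Proposition~\ref{prop: EpsGeodTanEst} and that Claim are established there only for $\chi \in \mathcal W^+_p \cap C^\infty(\mathbb{R})$.
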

\begin{proof} As we already argued in the proof of the previous result, for the $\varepsilon$--geodesics $t \to u^\varepsilon_t$ joining $u_0,u_1$ we have
$\| \dot u^\varepsilon_{t_0}\|_{\chi,u^\varepsilon_{t_0}} \to \| \dot u_{t_0}\|_{\chi,u_{t_0}}$  and $\| \dot u^\varepsilon_{t_1}\|_{\chi,u^\varepsilon_{t_1}} \to \| \dot u_{t_1}\|_{\chi,u_{t_1}}$ as $\varepsilon \to 0$. Furthermore Proposition \ref{prop: EpsGeodTanEst}(ii) implies that $|\| \dot u^\varepsilon_{t_1}\|_{\chi,u^\varepsilon_{t_1}}-\|\dot u^\varepsilon_{t_0}\|_{\chi,u^\varepsilon_{t_0}}| \leq |t_1 -t_0|\varepsilon R_1$. Putting all of this together, and letting $\varepsilon \to 0$, \eqref{eq: chilengthgeodconst} follows.
\end{proof}

\section{The weak geodesic segments of $\textup{PSH}(X,\o)$}

As noted after Theorem \ref{thm: u_estimates}, it is not possible to join smooth potentials $u_0,u_1$ with a geodesic staying inside $\mathcal H_\o$. The main point of the present section is to show that a similar phenomenon does not occur if $u_0,u_1$ is allowed to be more singular. Indeed, as we will see, if $u_0,u_1$ are bounded, or they are from a finite energy space, then it is possible to define a weak geodesic connecting them that is also bounded or stays inside the finite energy space respectively. Our study will connect properties of weak geodesics with that of envelopes, and we will make good use of the results of Section 2.4.

When considering the boundary value problem \eqref{eq: BVPGeod}, we constructed the (weak) solution $u$ as the limit of solutions to the family of elliptic problems \eqref{eq: epsBVPGeod}. Moving away from this idea, as noted by Berndtsson \cite[Section 2.1]{brn1}, it possible to describe the (weak) solution $u$  in another way, using a slight generalization of the classical Perron--Bremmerman envelope from the local theory. 

The advantage of this approach is that one can consider very general boundary data in \eqref{eq: BVPGeod}. Indeed, to begin, let  $u_0,u_1 \in \textup{PSH}(X,\o)$. In the future, we will refer to $i\Bbb R$--invariant elements of $\textup{PSH}(S\times X, \pi^*\o)$ as \emph{weak subgeodesics} (recall that $S =\{ 0<\textup{Re }z <1 \} \subset \Bbb C$). This name is justified by the following formula:
\begin{equation}\label{eq: udef1}
u = \sup_{v \in \mathcal S}v,
\end{equation}
where $\mathcal S$ is the following family of weak subgeodesics:
$$ \mathcal S = \{ (0,1) \ni t \to v_t \in \text{PSH}(X,\o) \textup{ is a subgeodesic with }\lim_{t \to 0,1}v_t \leq u_{0,1} \}.$$
As we know, the supremum of a family of $\pi^* \o$-psh functions may not be $\pi^* \o$-psh, and the first step is to show that $u$, as defined in \eqref{eq: udef1}, is $\pi^* \o$-psh nonetheless. Indeed, by convexity in the $t$ variable, each member of $w_t \in \mathcal S$ satisfies $w_t \leq (1-t)u_0 + tu_1$, hence this also holds for the supremum $u$:
\begin{equation}\label{eq: u_upperbound}
u_t \leq (1-t)u_0 + t u_1.
\end{equation}
By taking the usc regularization of the above inequality, we conclude that the same inequality holds with $u^*$ in place of $u$:
$$u^*_t \leq (1-t)u_0 + t u_1.$$
We obtain that $u^* \in \mathcal S$, hence $u^* \leq u$ by \eqref{eq: udef1}. Trivially $u \leq u^*$, and this implies that $u = u^* \in \textup{PSH}(S \times X,\pi^* \o)$.  

We will call the curve $[0,1] \ni t \to u_t \in \textup{PSH}(X,\o)$ resulting from the construction of \eqref{eq: udef1} the \emph{weak geodesic} connecting $u_0,u_1$. This terminology is justified by the following result, which says that \eqref{eq: udef1} gives the unique solution to \eqref{eq: BVPGeod} for boundary data that is merely bounded:
\begin{lemma} \label{lem: boundedBVP_solution} When $u_0,u_1 \in \textup{PSH}(X,\o) \cap L^\infty$ then the unique bounded $\pi^*\o$-psh solution of \eqref{eq: BVPGeod} is given by \eqref{eq: udef1}.
\end{lemma}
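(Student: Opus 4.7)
The plan is to verify that the envelope $u$ from \eqref{eq: udef1} satisfies every condition of \eqref{eq: BVPGeod} and then invoke Theorem \ref{thm: uniqueness_BVP} for uniqueness. First I would settle boundedness and boundary behavior simultaneously. The upper bound $u_t \leq (1-t)u_0 + t u_1$ was already recorded in \eqref{eq: u_upperbound}. For a matching lower bound, I would exhibit an explicit bounded member of $\mathcal{S}$, namely
$$ v(s, x) = \max\bigl(u_0(x) - A\,\textup{Re}\,s,\; u_1(x) - A(1 - \textup{Re}\,s)\bigr) $$
with $A > 2(\|u_0\|_{L^\infty} + \|u_1\|_{L^\infty})$. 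Each argument of the max is a sum of an $\o$-psh function in $x$ and a pluriharmonic function of $s$, hence is $\pi^*\o$-psh; the maximum of two such is still $\pi^*\o$-psh and manifestly $i\Bbb R$-invariant. The choice of $A$ forces $v(0,x) = u_0(x)$ and $v(1,x) = u_1(x)$, so $v \in \mathcal S$ and consequently $u \geq v$. Together with the upper bound this gives global boundedness of $u$ and sandwiches $\lim_{s \to 0,1} u(s, \cdot) = u_{0,1}$, which is the boundary condition demanded by \eqref{eq: BVPGeod}.

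Next I would establish the degenerate Monge--Amp\`ere equation $(\pi^*\o + i\ddbar u)^{n+1} = 0$ by a Perron--balayage argument, showing that $u$ is maximal among bounded $\pi^*\o$-psh functions. Fix any small ball $B \Subset S \times X$; after adding $i\ddbar g$ for a smooth strictly convex function $g$ of $\textup{Re}\,s$ (as in the remark following Theorem \ref{thm: uniqueness_BVP}), the form $\eta := \pi^*\o + i\ddbar g$ becomes K\"ahler on a neighborhood of $\overline B$. Solve the local Dirichlet problem on $B$ for $(\eta + i\ddbar \tilde u)^{n+1} = 0$ with $\tilde u = u + g$ on $\partial B$, apply Theorem \ref{thm: uniqueness_BVP} on $\overline B$ to deduce $w := \tilde u - g \geq u$ on $B$, and then glue $w$ on $B$ with $u$ on the complement. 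The glued function $\hat u$ is $\pi^*\o$-psh by the standard gluing lemma (equality on $\partial B$), retains the envelope-defining boundary values at $\{0,1\} \times X$, and by choosing $B$ invariant under $s \mapsto s + i\tau$ (or averaging over the $i\Bbb R$-action) remains $i\Bbb R$-invariant. Hence $\hat u \in \mathcal S$ and the definition of $u$ forces $\hat u \leq u$; combined with $w \geq u$ this gives $w = u$ on $B$, so the degenerate Monge--Amp\`ere equation holds on $B$ and, as $B$ was arbitrary, on all of $S \times X$.

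Uniqueness is then immediate: if $u'$ were another bounded $\pi^*\o$-psh solution, then after the same shift by $g$ both $u+g$ and $u'+g$ solve $(\eta + i\ddbar \cdot)^{n+1} = 0$ on $S \times X$ with respect to the K\"ahler form $\eta$, and applying Theorem \ref{thm: uniqueness_BVP} on $S_\varepsilon \times X$ for small $\varepsilon > 0$ and letting $\varepsilon \to 0$ yields $u = u'$. The main obstacle in the plan is guaranteeing that the balayage modification in the second paragraph really lands back in $\mathcal S$, i.e., preserves the $i\Bbb R$-invariance of $u$; this is handled cleanly by transferring to the $S^1$-invariant annulus picture $A \times X$ (with $A = \{e^0 < |z| < e^1\}$) and averaging the local modification over the $S^1$-action, which preserves both plurisubharmonicity and the homogeneous Monge--Amp\`ere condition.
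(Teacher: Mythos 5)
Your proposal is correct and follows essentially the same route as the paper: an upper bound from \eqref{eq: u_upperbound}, a lower bound from the explicit subgeodesic $\max(u_0 - C\,\textup{Re}\,s,\ u_1 - C(1-\textup{Re}\,s))$ (the paper uses $C=\|u_1-u_0\|_{L^\infty}$, a tighter constant than your $A$, but the effect is identical), the degenerate Monge--Amp\`ere equation via the classical Perron--Bremermann balayage (which the paper only cites, while you spell out the gluing and the $S^1$-averaging needed to stay in $\mathcal S$), and uniqueness from Theorem \ref{thm: uniqueness_BVP}. The only small imprecision is invoking Theorem \ref{thm: uniqueness_BVP} on $S_\varepsilon\times X$: that domain is non-compact and one does not control $u-u'$ on the new lateral boundary; the clean fix, which you yourself mention elsewhere, is to pass to the compact annulus picture $A\times X$ and apply the theorem there directly.
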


As a result of this lemma, for $u_0,u_1 \in \textup{PSH}(X,\o) \cap L^\infty$, we will call the weak geodesic $t \to u_t$ connecting $u_0,u_1$ a \emph{bounded geodesic}.

\begin{proof} Let $C := \|u_1 - u_0 \|_{L^\infty}$. It is easy to see that $t \to  (u_0 - C t)$ and $t \to (u_1 -C(1-t))$ are (sub)geodesics that are both members of $\mathcal S$, hence so is their maximum $v_t := \max(u_0 - Ct, u_1 - C(1-t)) \in \mathcal S$. 
This and \eqref{eq: u_upperbound} gives
\begin{equation}\label{eq: u_boundary_est}
\max(u_0 - Ct, u_1 - C(1-t)) \leq u_t \leq (1-t)u_0 + tu_1.
\end{equation}
Consequently, $u \in \textup{PSH}(S \times X,\pi^*\o) \cap L^\infty$ and  $\lim_{t \to 0,1}u_t = u_{0,1}$. The classical Perron--Bremmerman argument can now be adapted to this setting to give $(\pi^* \o + i\ddbar u)^{n+1}=0$. Lastly, uniqueness of $u$ is a consequence of Theorem \ref{thm: uniqueness_BVP}.
\end{proof}

Turning back to non-bounded endpoints $u_0,u_1$, it turns out that even the very general weak geodesic segment $t \to u_t$ connecting $u_0,u_1$ exhibits some structure, as we will see in the next two results:

\begin{proposition}[\cite{da1}]  \label{prop: weak_geod_approx} Suppose $u^k_0,u_0,u^k_1,u_1 \in \textup{PSH}(X,\o)$ are such that $u^k_0 \searrow u_0$ and $u^k_1 \searrow u_1$. Let $[0,1] \ni t \to u^k_t,u_t \in \textup{PSH}(X,\o)$ be the weak geodesics connecting $u^k_0,u^k_1$ and $u_0,u_1$ respectively. Then the following hold:\\
(i) $u^k_t \searrow u_t, \ t \in [0,1]$.\\
(ii) For any $t_1,t_2 \in [0,1]$ we have that $[0,1] \ni l \to u_{(1-l)t_1 + l t_2} \in \textup{PSH}(X,\o)$ is the weak geodesic joining $u_{t_1}$ and $u_{t_2}$.
\end{proposition}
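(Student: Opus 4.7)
The plan for (i) is to work directly with the Perron-type definition \eqref{eq: udef1}. Let $\mathcal{S}^k$ denote the family of subgeodesics with boundary data $\leq u^k_0, u^k_1$, and $\mathcal{S}$ the analogous family for $u_0, u_1$. Since $u_0 \leq u^{k+1}_0 \leq u^k_0$ and similarly at $t = 1$, we have inclusions $\mathcal{S} \subset \mathcal{S}^{k+1} \subset \mathcal{S}^k$, so taking suprema gives $u \leq u^{k+1} \leq u^k$ pointwise. Hence the decreasing limit $v := \lim_k u^k$ exists and satisfies $v \geq u$. To obtain the reverse inequality, I would show $v \in \mathcal{S}$: it is $\pi^*\o$-psh as a decreasing limit of $\pi^*\o$-psh functions, $i\Bbb R$-invariant by inheritance, and from the universal bound $u^k_t \leq (1-t)u^k_0 + t u^k_1$ valid for each $k$, monotone convergence yields $v_t \leq (1-t)u_0 + t u_1$, so $\limsup_{t \to 0} v_t \leq u_0$ and $\limsup_{t \to 1} v_t \leq u_1$. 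This places $v$ in $\mathcal{S}$ and hence $v \leq u$, completing (i).

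For (ii), I would first dispose of the case $u_0, u_1 \in \textup{PSH}(X,\o) \cap L^\infty$. Assuming WLOG $t_1 < t_2$, the affine map $\phi: S \to \{t_1 < \textup{Re } s < t_2\}$ given by $\phi(s) = t_1 + s(t_2 - t_1)$ is a biholomorphism, and $\pi^*\o$ is pulled back to itself since it has no $s$-dependence. Then $w(s,x) := u(\phi(s), x)$ is a bounded $\pi^*\o$-psh function on $S \times X$ satisfying $(\pi^*\o + i\ddbar w)^{n+1} = 0$ with boundary values $u_{t_1}, u_{t_2}$. By Lemma \ref{lem: boundedBVP_solution}, $w$ is precisely the weak geodesic joining $u_{t_1}$ to $u_{t_2}$, which is exactly the assertion of (ii) in the bounded case.

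For general $u_0, u_1 \in \textup{PSH}(X,\o)$, I would apply the BK approximation (Theorem \ref{thm: BK_approx}) to obtain $u^k_0, u^k_1 \in \mathcal{H}_\o$ decreasing to $u_0, u_1$, and let $u^k_t$ be the corresponding bounded weak geodesics. The bounded case applied to $u^k$ says that $l \mapsto u^k_{(1-l)t_1 + l t_2}$ is the weak geodesic from $u^k_{t_1}$ to $u^k_{t_2}$. Applying part (i) first to the sequences $u^k_0 \searrow u_0$, $u^k_1 \searrow u_1$, and then to $u^k_{t_1} \searrow u_{t_1}$, $u^k_{t_2} \searrow u_{t_2}$ (which holds by the first application), the two decreasing limits must coincide: $u_{(1-l)t_1 + l t_2} = \lim_k u^k_{(1-l)t_1 + l t_2}$ is the weak geodesic from $u_{t_1}$ to $u_{t_2}$. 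The main obstacle lies in part (i) — verifying that the monotone limit $v$ inherits the correct upper boundary behavior at $t = 0, 1$ without any regularity of $u_0, u_1$; the uniform bound $u^k_t \leq (1-t)u^k_0 + tu^k_1$ is the key ingredient that makes this work even when $u_0, u_1$ are unbounded.
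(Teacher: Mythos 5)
Your proof is correct and follows essentially the same route as the paper's. For (i) the candidate-and-monotone-limit argument is identical, and for (ii) both you and the paper reduce to the bounded case via monotone approximation (you use smooth BK approximants, the paper uses canonical cutoffs---an inessential difference) and then identify the restriction with the weak geodesic via uniqueness of bounded solutions (Lemma \ref{lem: boundedBVP_solution}). The one step you leave implicit is verifying that $w(s,\cdot)=u(\phi(s),\cdot)$ actually satisfies the boundary condition in \eqref{eq: BVPGeod}, namely that $w(s,\cdot)\to u_{t_1}$ and $w(s,\cdot)\to u_{t_2}$ \emph{uniformly} as $\mathrm{Re}\,s\to 0,1$; this is exactly what the paper spells out, and it follows from the uniform Lipschitz-in-$t$ control given by the sandwich \eqref{eq: u_boundary_est} together with $t$-convexity of $u_t$.
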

\begin{proof} By the definition of $u^k\in \textup{PSH}(S \times X,\pi^*\o)$ \eqref{eq: udef1} it is clear that  $u^k $ is decreasing in $k$ and $v := \lim_k u_k \in \textup{PSH}(S \times X,\pi^*\o)$. As $u$ is a candidate in the definition of each $u_k$, it follows that $u \leq u^k$, hence also $u \leq v$. For the other direction, by \eqref{eq: u_upperbound} we have that $u^k_t \leq (1-t)u^k_0 + t u^k_1$ hence we can take the limit to obtain that
$$v \leq (1-t)u_0 + t u_1.$$
Consequently $v$ is a candidate for $u$, giving that $v \leq u$, finishing the proof of (i).

Now we turn to proving (ii). Let $u^k_0=\max(u_0,-k)$ and $u^k_1 = \max(u_1,-k)$ be the canonical cutoffs and let $t \to u^k_t$ be the bounded geodesics joining $u^k_0$ and $u^k_1$. Part (i) implies that $u^k_{t_1} \searrow u_{t_1}$ and $u^k_{t_2} \searrow u_{t_2}$. Hence, applying  (i) again, it is enough to prove that  $[0,1] \ni l \to u^k_{(1-l)t_1 + l t_2} \in \textup{PSH}(X,\o)$ is the bounded/weak  geodesic joining $u^k_{t_1},u^k_{t_2}$.

Now \eqref{eq: u_boundary_est} implies that each $t \to u^k_t$ is Lipschitz continuous in the $t$ variable, hence $u^k_{t} \to u^k_{t_{1,2}} $ uniformly as ${t \to t_{1,2}}$. By Lemma \ref{lem: boundedBVP_solution}, $[0,1] \ni l \to u^k_{(1-l)t_1 + l t_2} \in  \textup{PSH}(X,\o)$ is indeed the unique bounded/weak geodesic joining $u^k_{t_1}$ and $u^k_{t_2}$.
\end{proof}

The next result connects weak geodesics to the rooftop envelopes of Section 3.3:

\begin{lemma}[\cite{da1}] \label{lem: Leg_transf_Envelope} Suppose $u_0,u_1 \in \textup{PSH}(X,\o)$ and $t \to u_t$ is the  weak geodesic  connecting $u_0,u_1$. Then the following holds:
$$\inf_{t \in (0,1)} (u_t - t\tau) = P(u_0,u_1 -\tau), \ \tau \in \Bbb R.$$
\end{lemma}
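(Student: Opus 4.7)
My approach is to prove the two inequalities separately, after a preliminary reduction. First, I would reduce to the case $\tau = 0$ by observing that the map $v \mapsto v - (\operatorname{Re}s)\tau$ is a bijection between $\pi^*\o$-psh $i\Bbb R$-invariant subgeodesics with boundary data $(u_0, u_1)$ and those with data $(u_0, u_1 - \tau)$, since $(\operatorname{Re}s)\tau$ is pluriharmonic on $S$. This transformation preserves the supremum in \eqref{eq: udef1}, so the weak geodesic joining $u_0$ and $u_1 - \tau$ is exactly $t \to u_t - t\tau$. Relabelling, the claim reduces to $\inf_{t \in (0,1)} u_t = P(u_0, u_1)$.

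The inequality $P(u_0,u_1) \leq \inf_t u_t$ is immediate: the constant path $t \to P(u_0,u_1)$ is a (trivial) subgeodesic whose boundary values satisfy $P(u_0,u_1) \leq u_0$ and $P(u_0,u_1) \leq u_1$, so by the maximality in \eqref{eq: udef1} we get $P(u_0,u_1) \leq u_t$ for every $t$. For the reverse direction, set $\phi(x) := \inf_{t \in (0,1)} u_t(x)$. As a pointwise infimum of usc functions, $\phi$ is itself usc. From the upper bound $u_t \leq (1-t)u_0 + tu_1$ of \eqref{eq: u_upperbound}, letting $t \to 0$ and $t \to 1$ yields $\phi \leq u_0$ and $\phi \leq u_1$ respectively. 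Hence, provided one can show $\phi \in \textup{PSH}(X,\o)$, $\phi$ becomes an admissible candidate in the definition of $P(u_0,u_1)$, giving the desired inequality $\phi \leq P(u_0,u_1)$.

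The technical heart of the argument is therefore the $\o$-plurisubharmonicity of $\phi$, and I would appeal to Kiselman's minimum principle. The total-space potential $u(s,x) \in \textup{PSH}(S\times X, \pi^*\o)$ is by construction invariant under translations in $\operatorname{Im}(s)$; working in a local chart where $\o = i\ddbar g$, the function $(s,x) \to u(s,x) + g(x)$ is plurisubharmonic and tube-invariant in $s$. Kiselman's principle asserts that the partial infimum over $\operatorname{Re}(s)$ of such a tube-invariant psh function over a bounded real interval produces a psh function of $x$. Applied to the strips $\{\varepsilon < \operatorname{Re}(s) < 1-\varepsilon\}$, the auxiliary functions $\phi_\varepsilon(x) := \inf_{t \in (\varepsilon, 1-\varepsilon)} u_t(x)$ are thus $\o$-psh. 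As $\varepsilon \searrow 0$, $\phi_\varepsilon \searrow \phi$ pointwise, and $\phi$ is not identically $-\infty$ since the easy direction already gives $P(u_0,u_1) \leq \phi$. A decreasing limit of $\o$-psh functions that is not identically $-\infty$ is $\o$-psh, so $\phi \in \textup{PSH}(X,\o)$, closing the argument. The main obstacle is precisely this invocation of Kiselman's principle: the tube-invariance of the weak geodesic is what drives the regularity of the infimum, and the passage from relatively compact subintervals to the full open interval $(0,1)$ must be handled via the monotone limit.
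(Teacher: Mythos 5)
Your proof is correct and follows essentially the same route as the paper: reduce to $\tau=0$ by translating the subgeodesic family, get the easy inequality from the constant subgeodesic $t \mapsto P(u_0,u_1)$, and get the reverse inequality from Kiselman's minimum principle applied to the tube-invariant potential $u(s,x)$ together with the upper bound $u_t \leq (1-t)u_0 + tu_1$. The only difference is cosmetic: you pass through truncated strips $\{\varepsilon < \operatorname{Re}s < 1-\varepsilon\}$ and take a decreasing limit, whereas the paper invokes Kiselman directly on the full interval, which the cited statement \cite[Chapter I, Theorem 7.5]{De} already permits for unbounded convex tube domains.
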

\begin{proof}Notice that $t \to v_t:= u_t - \tau t$ is the weak geodesic connecting $u_0,u_1-\tau$, hence the proof of the general case reduces to the particlar case $\tau =0$.

By definition we have $P(u_0,u_1) \leq u_0,u_1$. As a result, for the constant (sub)geodesic $t \to h_t :=P(u_0,u_1)$ we have $h \in \mathcal S$. This trivially gives $h_t \leq u_t, \ t \in [0,1]$, hence $P(u_0,u_1) \leq \inf_{t \in (0,1)} u_t$.

For the reverse inequality, we use the Kiselman minimum principle \cite[Chapter I, Theorem 7.5]{De}, which guarantees that $w:=\inf_{t \in (0,1)}u_t \in \textup{PSH}(X,\o)$. Using this and \eqref{eq: u_upperbound} we obtain that $w \leq u_0,u_1$ hence $w$ is a candidate for $P(u_0,u_1)$, i.e., $w \leq P(u_0,u_1)$, finishing the proof.
\end{proof}

We can now relate the super--level sets of tangent vectors along weak geodesics to contact sets of rooftop envelopes:

\begin{lemma}[\cite{da1}] \label{lem: sublevel_lemma}Suppose $u_0,u_1 \in \textup{PSH}(X,\o)$. Let $t \to u_t$ be the weak geodesic joining $u_0,u_1$. Then for any $\tau \in \Bbb R$ we have
$$\{ \dot u_0 \geq \tau \} = \{ P(u_0,u_1 - \tau)=u_0\}.$$
\end{lemma}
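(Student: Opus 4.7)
The plan is to exploit convexity in $t$ together with Lemma \ref{lem: Leg_transf_Envelope} to convert the condition on $\dot u_0$ into the envelope condition directly. Since the weak geodesic $u \in \textup{PSH}(S \times X, \pi^* \o)$ is $i\Bbb R$--invariant, for each fixed $x \in X$ the function $t \mapsto u_t(x)$ is convex on $(0,1)$ with limit $u_{0,1}(x)$ at the endpoints. In particular, the difference quotient $t \mapsto (u_t(x) - u_0(x))/t$ is monotone non--decreasing in $t$, so the right derivative
\[
\dot u_0(x) = \lim_{t \to 0^+} \frac{u_t(x) - u_0(x)}{t} = \inf_{t \in (0,1)} \frac{u_t(x) - u_0(x)}{t}
\]
is well--defined with values in $[-\infty, +\infty]$ (with the standard convention when $u_0(x) = -\infty$, in which case both sides of the desired identity include $x$ trivially since $P(u_0, u_1 - \tau) \leq u_0 = -\infty$ there).

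First I would run the following chain of equivalences for $x$ with $u_0(x) > -\infty$. The condition $\dot u_0(x) \geq \tau$ holds if and only if $(u_t(x) - u_0(x))/t \geq \tau$ for every $t \in (0,1)$, which is the same as $u_t(x) - t\tau \geq u_0(x)$ for every $t \in (0,1)$. Taking the infimum over $t$ this is equivalent to
\[
\inf_{t \in (0,1)} \big( u_t(x) - t\tau \big) \geq u_0(x).
\]
By Lemma \ref{lem: Leg_transf_Envelope} the left--hand side equals $P(u_0, u_1 - \tau)(x)$, so the inequality reads $P(u_0, u_1 - \tau)(x) \geq u_0(x)$. Since by definition $P(u_0, u_1 - \tau) \leq u_0$ everywhere, this forces equality $P(u_0, u_1-\tau)(x) = u_0(x)$, and conversely equality at $x$ obviously gives the inequality.

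For the edge case $u_0(x) = -\infty$, both sides of the asserted identity contain $x$: the envelope satisfies $P(u_0, u_1 - \tau)(x) \leq u_0(x) = -\infty$, so $x \in \{P(u_0, u_1 - \tau) = u_0\}$; on the geodesic side, the convex function $t \mapsto u_t(x)$ takes the value $-\infty$ at $t = 0$, so by the convention for the right derivative at such a point we have $\dot u_0(x) = +\infty \geq \tau$. I don't expect any genuine obstacle here: the whole argument rests on two standard facts, namely that geodesics are convex in $t$ and that infima of convex difference quotients at the left endpoint agree with the right derivative. The only mild subtlety is bookkeeping at points where $u_0(x) = -\infty$, which is handled by the monotonicity of $P$ under the inequality $P(u_0, u_1 - \tau) \leq u_0$.
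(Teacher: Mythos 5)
Your proof is correct and follows essentially the same route as the paper: both invoke Lemma \ref{lem: Leg_transf_Envelope} to rewrite $P(u_0,u_1-\tau)$ as $\inf_{t}(u_t - t\tau)$, and then use convexity of $t \mapsto u_t(x)$ to identify the condition $\inf_t(u_t(x)-t\tau) \geq u_0(x)$ with $\dot u_0(x) \geq \tau$. You simply unpack the "convexity implies" step of the paper in more detail and add the bookkeeping at points where $u_0(x) = -\infty$, which the paper leaves implicit.
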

\begin{proof} By the previous result we have $\inf_{t \in [0,1]}(u_t - t\tau)= P(u_0,u_1 - \tau).$ Given $x \in X$, it follows that $P(u_0,u_1 - \tau t)(x)=u_0(x)$ if and only if $\inf_{t \in [0,1]}(u_t(x) - \tau t)=u_0(x)$. Convexity in the $t$ variable implies that this last identity is equivalent to $\dot u_0(x) \geq \tau$.
\end{proof}

With the aid of Lemma \ref{lem: Leg_transf_Envelope}, we can show that weak geodesics with endpoints in finite energy classes stay inside finite energy classes:

\begin{proposition}[\cite{da1}] \label{prop: geod_Echi} Suppose $u_0,u_1 \in \mathcal E_\chi(X,\o), \ \chi \in \mathcal W^+_p, \ p \geq 1$. Then for the weak geodesic $t \to u_t$ connecting $u_0,u_1$ we have that $u_t \in \mathcal E_\chi(X,\o)$ for all $t \in [0,1]$. In case $u_0,u_1 \leq 0$ the following estimate holds:
$$E_\chi(u_t) \leq (p+1)^{2n}(E_\chi(u_0) + E_\chi(u_1)), \ t \in [0,1].$$
\end{proposition}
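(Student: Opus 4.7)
The plan is to reduce the statement to two already-established facts: the rooftop envelope lives in $\mathcal E_\chi(X,\o)$ with a controlled energy (Proposition \ref{prop: env_exist}), and the finite energy class is closed under upward monotonicity with a ``fundamental" energy comparison (Corollary \ref{cor: monotonicity_E_chi} and Proposition \ref{prop: Energy_est}). The bridge between the weak geodesic and the envelope is Lemma \ref{lem: Leg_transf_Envelope}.

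First I would apply Lemma \ref{lem: Leg_transf_Envelope} with $\tau=0$ to obtain
\[
\inf_{t\in(0,1)}u_t \;=\; P(u_0,u_1),
\]
so in particular $u_t\ge P(u_0,u_1)$ for every $t\in[0,1]$ (the endpoints being immediate since $P(u_0,u_1)\le u_0,u_1$). By Proposition \ref{prop: env_exist}, $P(u_0,u_1)\in\mathcal E_\chi(X,\o)$. Since $u_t\in\textup{PSH}(X,\o)$ by the very definition of the weak geodesic and $u_t\ge P(u_0,u_1)$, the monotonicity property (Corollary \ref{cor: monotonicity_E_chi}) forces $u_t\in\mathcal E_\chi(X,\o)$ for each $t\in[0,1]$. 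This handles the qualitative statement.

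For the quantitative bound, assume $u_0,u_1\le 0$. From the universal upper estimate \eqref{eq: u_upperbound} for weak geodesics we have $u_t\le(1-t)u_0+tu_1\le 0$, and we already know $P(u_0,u_1)\le u_t\le 0$. Now apply Proposition \ref{prop: Energy_est} (the ``fundamental estimate", applicable since all three potentials lie in $\mathcal E_\chi(X,\o)$ and are non-positive) to the pair $P(u_0,u_1)\le u_t$, then chain it with the envelope energy estimate \eqref{eq: e_est} from Proposition \ref{prop: env_exist}:
\[
E_\chi(u_t)\;\le\;(p+1)^n\,E_\chi\bigl(P(u_0,u_1)\bigr)\;\le\;(p+1)^{2n}\bigl(E_\chi(u_0)+E_\chi(u_1)\bigr),
\]
which is exactly the claimed inequality.

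There is really no serious obstacle here: both the inclusion and the energy bound collapse into a two-line argument once one recognizes that the infimum of the weak geodesic is the rooftop envelope $P(u_0,u_1)$. The only minor point to check carefully is that $u_t\in\textup{PSH}(X,\o)$ for the interior times so that Corollary \ref{cor: monotonicity_E_chi} can be invoked, but this is built into the construction of weak geodesics via \eqref{eq: udef1}. The same strategy would in fact give uniform bounds on any $E_\chi(u_t-c)$ by instead using $P(u_0,u_1-\tau)+t\tau\le u_t$ with $\tau\ne 0$, which is what makes Lemma \ref{lem: Leg_transf_Envelope} such a flexible tool in this setting.
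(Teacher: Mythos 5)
Your proof is correct and follows essentially the same route as the paper: both use Lemma \ref{lem: Leg_transf_Envelope} to conclude $P(u_0,u_1)\le u_t$, then Proposition \ref{prop: env_exist} together with Corollary \ref{cor: monotonicity_E_chi} for membership in $\mathcal E_\chi(X,\o)$, and finally chain Proposition \ref{prop: Energy_est} with the envelope estimate \eqref{eq: e_est} for the quantitative bound. The only cosmetic difference is that you spell out the sign check $u_t\le 0$ via \eqref{eq: u_upperbound} and note explicitly that $u_t\in\textup{PSH}(X,\o)$ before invoking the monotonicity property — both of which the paper leaves implicit.
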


As a consequence of this proposition, for $u_0,u_1 \in \mathcal E_\chi(X,\o)$ we will call the the curve $[0,1] \ni t \to u_t \in \mathcal E_\chi(X,\o)$ the \emph{finite energy geodesic} connecting $u_0,u_1$.

\begin{proof} To start, Proposition \ref{prop: env_exist} implies that $P(u_0,u_1) \in \mathcal E_\chi(X,\o)$. By Lemma \ref{lem: Leg_transf_Envelope}, we have that $P(u_0,u_1) \leq u_t$, hence by the monotonicity property (Corollary \ref{cor: monotonicity_E_chi}) it follows that $u_t \in \mathcal E_\chi(X,\o), \ t \in [0,1]$. 

When $u_0,u_1 \leq 0$, \eqref{eq: u_upperbound} implies that $u_t\leq 0, \ t \in [0,1]$. To finish the proof, by Proposition \ref{prop: env_exist} and Proposition \ref{prop: Energy_est} we have the following estimates:
$$E_\chi(u_t) \leq (p+1)^n E_\chi(P(u_0,u_1)) \leq (p+1)^{3n}(E_\chi(u_0)+E_\chi(u_1)).$$
\end{proof}

\section{Extension of the $L^p$ metric structure to finite energy spaces}

For the rest of this chapter we will focus only various $L^p$ Finsler geometries of $\mathcal H_\o$. Most of the results we present also have analogs for the more general Orlicz Finsler structures discussed in the previous sections (see \cite{da2}). Having later applications in mind, we do not seek the greatest generality, and we leave it to the interested reader to adapt our argument to more general metrics. As done it previously, we will assume the volume normalization condition \eqref{eq: vol_normalization} throughout this section as well. 

Given $u_0,u_1 \in \mathcal E_p(X,\o)$, by Theorem \ref{thm: BK_approx} there exists decreasing sequences $u^k_0, u^k_1 \in \mathcal H_\o$ such that $u^k_0 \searrow u_0$ and $u^k_1 \searrow u_1$. We propose to define the distance $d_p(u_0,u_1)$ by the formula:
\begin{equation}\label{eq: dp_def_general}
d_p(u_0,u_1)=\lim_{k\to \infty}d_p(u^k_0,u^k_1).
\end{equation}
We will show that the above limit exists and it is also independent of the approximating sequences. Developing this further, our main result of this section is the following:
\begin{theorem}[\cite{da2}]  \label{thm: e2space}$(\mathcal E_p(X,\o), d_p)$ is a geodesic pseudo--metric space that extends $(\mathcal H_\o,d_p)$. Additionally, for $u_0,u_1 \in \mathcal E_p(X,\o)$ the finite energy geodesic $t \to u_t$ joining $u_0,u_1$ (given by Proposition \ref{prop: geod_Echi}) is a $d_p$-geodesic.
\end{theorem}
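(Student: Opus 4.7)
The plan is to define $d_p(u_0,u_1)$ via \eqref{eq: dp_def_general}, show it is well-defined, verify the pseudo-metric axioms, and identify the finite energy geodesic of Proposition \ref{prop: geod_Echi} as a constant-speed $d_p$-geodesic realizing this distance.

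First I would fix $u_0, u_1 \in \mathcal E_p(X,\o)$ together with decreasing approximations $u^k_0, u^k_1 \in \mathcal H_\o$ supplied by Theorem \ref{thm: BK_approx}. Let $[0,1] \ni t \to u^k_t \in \mathcal H_\o^{1,\bar 1}$ be the associated bounded $C^{1,\bar 1}$-geodesics, and let $[0,1] \ni t \to u_t \in \mathcal E_p(X,\o)$ denote the finite energy geodesic from Proposition \ref{prop: geod_Echi}. Proposition \ref{prop: weak_geod_approx}(i) gives $u^k_t \searrow u_t$ pointwise, and Theorem \ref{thm: XXChenThm} provides
\[
d_p(u^k_0, u^k_1) = \|\dot u^k_0\|_{p, u^k_0} = \bigg( \int_X |\dot u^k_0|^p \, \o_{u^k_0}^n \bigg)^{1/p}.
\]
Set $\dot u_0(x) := \lim_{t \to 0^+}(u_t(x) - u_0(x))/t$, which is defined at $\o_{u_0}^n$-a.e. $x$ since $\{u_0 = -\infty\}$ is pluripolar (Proposition \ref{prop: Lelong_E}). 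The heart of the argument is then the convergence of $L^p$ norms,
\[
\int_X |\dot u^k_0|^p \, \o_{u^k_0}^n \longrightarrow \int_X |\dot u_0|^p \, \o_{u_0}^n.
\]
For this I would use Lemma \ref{lem: sublevel_lemma} to identify super-level sets of the tangent vectors with contact sets of rooftop envelopes, $\{\dot u^k_0 \geq \tau\} = \{P(u^k_0, u^k_1 - \tau) = u^k_0\}$ and $\{\dot u_0 \geq \tau\} = \{P(u_0, u_1 - \tau) = u_0\}$. The monotone envelope convergence $P(u^k_0, u^k_1 - \tau) \searrow P(u_0, u_1 - \tau)$, a layer-cake decomposition in $\tau$, and Proposition \ref{prop: MA_cont} applied with $h(l) = |l|^p$ together transfer envelope-level convergence to convergence of the integrals. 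Uniform integrability of the tails is controlled by the fundamental estimate of Proposition \ref{prop: Energy_est} and the energy bound $E_p(u_t) \leq (p+1)^{2n}(E_p(u_0) + E_p(u_1))$ from Proposition \ref{prop: geod_Echi}. Consequently the limit in \eqref{eq: dp_def_general} exists and equals the intrinsic quantity $\|\dot u_0\|_{L^p(\o_{u_0}^n)}$, independent of the chosen approximation, and the construction clearly extends $d_p|_{\mathcal H_\o}$ (take constant approximations).

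Symmetry is inherited from $\mathcal H_\o$, and the triangle inequality is obtained by passing $d_p(u^k_0, u^k_2) \leq d_p(u^k_0, u^k_1) + d_p(u^k_1, u^k_2)$ to the limit, showing that $d_p$ is a pseudo-metric on $\mathcal E_p(X,\o)$. For the geodesic property of $t \to u_t$, Proposition \ref{prop: weak_geod_approx}(ii) identifies $[0,1] \ni l \to u_{(1-l)s + lt}$ as the finite energy geodesic joining $u_s, u_t$. Applying the length formula just established to this restriction, combined with the constant-speed property (Lemma \ref{lem: chilengthgeodconst} pushed through the approximation procedure), gives $d_p(u_s, u_t) = |t - s|\, d_p(u_0, u_1)$, certifying that $t \to u_t$ is a constant-speed $d_p$-geodesic.

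The main obstacle will be the $L^p$-norm convergence above: neither the tangent vectors $\dot u^k_0$ nor the measures $\o_{u^k_0}^n$ converge in a naively straightforward manner, and both vary simultaneously in $k$. The envelope description of super-level sets (Lemma \ref{lem: sublevel_lemma}) paired with the monotone-convergence behavior of complex Monge--Amp\`ere measures (Proposition \ref{prop: MA_cont}) is what reduces the question to objects well-behaved under monotone limits, modulo a delicate uniform-integrability check near the unbounded locus of $u_0$ driven by the finite energy hypothesis.
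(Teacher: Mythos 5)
Your overall architecture is close to the paper's (extend $d_p$ by \eqref{eq: dp_def_general}, verify pseudo-metric axioms, identify the Proposition~\ref{prop: geod_Echi} geodesic as a $d_p$-geodesic), but there is a genuine gap in the central convergence step, and it is precisely the step that is hard.

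You claim $\int_X |\dot u^k_0|^p \o_{u^k_0}^n \to \int_X |\dot u_0|^p \o_{u_0}^n$ can be obtained from a layer-cake decomposition over the super-level sets $\{\dot u^k_0 \geq \tau\}=\{P(u^k_0,u^k_1-\tau)=u^k_0\}$ together with Proposition~\ref{prop: MA_cont}. But after the layer-cake rewriting you are integrating $\mathbbm{1}_{\{P(u^k_0,u^k_1-\tau)=u^k_0\}}$ against $\o_{u^k_0}^n$, and indicator functions of contact sets are not of the form $h(\phi_k-\psi_k)$ for any \emph{continuous} $h$, which is what Proposition~\ref{prop: MA_cont} requires. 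Weak convergence $\o_{u^k_0}^n \to \o_{u_0}^n$ plus decreasing convergence of the envelopes does not let you pass to the limit in the masses of the contact sets; in general the contact sets are not even monotone in $k$. (A possible escape via Remark~\ref{rem: MA_form_remark}, which restricts $\tau$ to a countable exceptional set, would need to be set up explicitly and is not indicated.) So the key convergence — where, as you yourself note, both $\dot u^k_0$ and $\o_{u^k_0}^n$ vary simultaneously — remains unproved. A related symptom: you also assert with no argument that $\int_X |\dot u_0|^p \o_{u_0}^n < \infty$, and that the limit is independent of the approximating sequences; both would follow if the central convergence were established, but neither is supported by what you wrote.

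The paper avoids this difficulty entirely by only ever letting \emph{one} endpoint vary at a time. Well-definedness and approximant-independence of $d_p$ are handled first (Lemma~\ref{lem: IntDistEst} gives the Cauchy estimate $d_p(u_l,u_k)^p \le (p+1)^n \int_X |u-u_l|^p \o_u^n$; a Dini argument then decouples the two approximating sequences). For the geodesic property, the paper never attempts your double limit: it applies Lemma~\ref{lem: geod_tangent_limit}, which fixes the initial endpoint $v_0$ (hence the reference measure $\o_{v_0}^n$) and only decreases the terminal endpoint $v^j_1$, so the tangent vectors $\dot v^j_0$ converge pointwise monotonically against a \emph{fixed} measure and dominated convergence finishes it. That one-variable-at-a-time structure is exactly what your sketch needs and lacks. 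Either adopt that route, or supply a substantially more careful argument (e.g., reducing to countably many $\tau$ via Remark~\ref{rem: MA_form_remark} and handling the measure convergence by hand) to justify the simultaneous limit.
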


Recall that a pseudo--metric is just a metric that may not satisfy the non-degeneracy condition. Also, given a pseudo--metric space $(M,d)$, we say that a curve $[0,1] \ni t \to \gamma_t \in M$ is a $d$\emph{--geodesic} if 
\begin{equation}\label{eq: d_geod_def}
d(\gamma_{t_1},\gamma_{t_2}) = |t_1 - t_2|d(\gamma_{0},\gamma_{1}), \ \ t_1,t_2 \in [0,1].
\end{equation}
A \emph{geodesic pseudo--metric space} $(M,d)$ is pseudo--metric space in which any two points can be joining by a $d$--geodesic.

In the next section we will show that in fact $d_p$ is in fact a bona fide metric, but this will require additional machinery. Finally, as the last major theorem of this chapter, we will prove that the resulting metric space $(\mathcal E_p(X,\o), d_p)$ is the completion of $(\mathcal H_\o,d_p)$. 

The proof of Theorem \ref{thm: e2space} will be split into a sequence of lemmas and propositions. Our first one is an estimate for ``comparable" potentials:

\begin{proposition}\label{prop: Mdist_est}Suppose $u,v \in \mathcal H_\o$ with $u \leq v$. Then we have:
\begin{equation}\label{eq: Mdist_est}
\max\Big( \frac{1}{2^{n+p}}\int_X|v-u|^p \o_u^n, \int_X|v-u|^p \o_v^n \Big) \leq d_p(u,v)^p \leq \int_X|v-u|^p \o_u^n.
\end{equation}
\end{proposition}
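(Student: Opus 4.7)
The plan is to run all three estimates off a single object, namely the weak $C^{1,\bar 1}$-geodesic $t\mapsto u_t$ from $u$ to $v$ associated to the homogeneous Monge--Amp\`ere boundary value problem \eqref{eq: BVPGeod}. Three features of this curve will be used repeatedly: (i) $u\le u_t\le v$ for all $t$, which follows from Theorem~\ref{thm: uniqueness_BVP} by testing $u_t$ against the constant subgeodesics $t\mapsto u$ and $t\mapsto v$ (both of which solve $(\pi^*\o+i\ddbar\,\cdot)^{n+1}=0$); (ii) convexity in $t$ of $u_t(x)$, which makes $\dot u_t$ non-decreasing and, together with $v-u=\int_0^1\dot u_t\,dt$, yields the pointwise sandwich $0\le\dot u_0\le v-u\le\dot u_1$; and (iii) the constant-speed identity $d_p(u,v)^p=\int_X\dot u_t^{\,p}\,\o_{u_t}^n$ supplied by Theorem~\ref{thm: XXChenThm}. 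Reading (iii) at $t=0$ against $\dot u_0\le v-u$ gives the upper bound $d_p(u,v)^p\le\int_X(v-u)^p\,\o_u^n$, and reading it at $t=1$ against $\dot u_1\ge v-u$ gives $\int_X(v-u)^p\,\o_v^n\le d_p(u,v)^p$, which already establishes a sharper form of the $\o_v^n$-half of the lower bound.

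\textbf{Midpoint reduction.} The substantive half of the proposition is $\int_X(v-u)^p\,\o_u^n\le 2^{n+p}d_p(u,v)^p$. I would factor this through the linear midpoint $m:=(u+v)/2\in\mathcal H_\o$, which is legitimate since $\o_m=\tfrac{1}{2}(\o_u+\o_v)$ is K\"ahler. Expanding binomially, $\o_m^n=2^{-n}\sum_{k=0}^n\binom{n}{k}\o_u^k\wedge\o_v^{n-k}\ge 2^{-n}\o_u^n$, and applying the $\o_v^n$-lower bound already proved above to the pair $(u,m)$ (here $u\le m$ and $m-u=(v-u)/2$) one obtains
\[
\int_X(v-u)^p\,\o_u^n\le 2^n\int_X(v-u)^p\,\o_m^n=2^{n+p}\int_X(m-u)^p\,\o_m^n\le 2^{n+p}d_p(u,m)^p,
\]
so the proof is reduced to the monotonicity $d_p(u,m)\le d_p(u,v)$.

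\textbf{Main obstacle.} The monotonicity $d_p(u,m)\le d_p(u,v)$ is the only genuinely non-formal step and is where the argument again invokes Theorem~\ref{thm: uniqueness_BVP}. Let $t\mapsto w_t$ be the weak $C^{1,\bar 1}$-geodesic from $u$ to $m$. Both $u_t$ and $w_t$ solve the homogeneous Monge--Amp\`ere equation of \eqref{eq: BVPGeod} on $S\times X$, and they satisfy $w_t\le u_t$ on the boundary of the strip (equality at $t=0$ and $w_1=m\le v=u_1$); Theorem~\ref{thm: uniqueness_BVP} therefore yields $w_t\le u_t$ throughout $[0,1]\times X$. Since $u_t-w_t$ vanishes at $t=0$ and is non-negative afterwards, and since both curves are $C^{1,\alpha}$ in $(t,x)$ by Theorem~\ref{thm: u_estimates}, dividing by $t$ and letting $t\searrow 0$ produces the pointwise bound $\dot w_0\le\dot u_0$; both derivatives are non-negative by the analogue of (ii) applied separately to each geodesic. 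Raising to the $p$-th power and integrating against $\o_u^n$ then yields $d_p(u,m)^p=\int_X\dot w_0^{\,p}\,\o_u^n\le\int_X\dot u_0^{\,p}\,\o_u^n=d_p(u,v)^p$ via the constant-speed identity. The main conceptual hurdle is precisely this monotonicity: it is geometrically natural (pushing the endpoint from $m$ further out to $v$ ought not to shorten the distance from $u$), but it seems to require the comparison principle for complex Monge--Amp\`ere rather than any direct length-of-curve estimate.
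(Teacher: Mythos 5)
Your proof is correct and follows essentially the same route as the paper's: the convexity sandwich $0\le\dot u_0\le v-u\le\dot u_1$ together with the constant-speed formula from Theorem~\ref{thm: XXChenThm} gives the two outer bounds, and the $\o_u^n$-half of the lower bound is obtained by passing to the midpoint $(u+v)/2$ and using $\o_u^n\le 2^n\o_{(u+v)/2}^n$. The only difference is presentational: the paper isolates the monotonicity $d_p\bigl((u+v)/2,u\bigr)\le d_p(v,u)$ as Lemma~\ref{lem: NaiveCompare}, whereas you prove exactly that statement inline via the comparison principle of Theorem~\ref{thm: uniqueness_BVP}.
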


\begin{proof} Suppose $[0,1] \ni t \to w_t \in \mathcal H_\o^{1,\bar 1}$ is the $C^{1,\bar 1}$-geodesic segment joining $w_0 = u$ and $w_1 = v$. By \eqref{eq: ChiDistGeodFormula} we have
$$d_p(u,v)^p=\int_X |\dot w_0|^p \o_u^n=\int_X |\dot w_1|^p \o_v^n.$$ 
Since $u \leq v$, we have that $u \leq w_t$, as follows from \eqref{eq: udef1} (or Theorem \ref{thm: uniqueness_BVP}). Since $(t,x) \to w_t(x)$ is convex in the $t$ variable, we get $0 \leq \dot w_0 \leq v-u \leq \dot w_1$, and together with the above identity we obtain part of \eqref{eq: Mdist_est}:
\begin{equation}\label{eq: Mdist_est_interm}
\int_X |v-u|^p \o_v^n \leq d_p(u,v)^p \leq \int_X |v-u|^p \o_u^n.
\end{equation}
Now we prove the rest of \eqref{eq: Mdist_est}. Using $ \o_{u}^n \leq 2^{n}\o_{(u+v)/2}^n$ we obtain that 
$$\frac{1}{2^{n+p}}\int_X |v - u|^p \o_{u}^n \leq \int_X \Big|u - \frac{u+v}{2}\Big|^p\o^n_{(u+v)/2}.$$
Since $u \leq (u+v)/2$, the first estimate of \eqref{eq: Mdist_est_interm} allows us to continue and write:
$$\frac{1}{2^{n+p}}\int_X |v - u|^p \o_{u}^n \leq d_p \Big(\frac{u + v}{2},u\Big)^p.$$
The lemma below implies that $d_p((u + v)/2,u) \leq d_p(v,u)$, giving the remaining estimate in \eqref{eq: Mdist_est}.
\end{proof}

\begin{lemma} \label{lem: NaiveCompare}Suppose $u,v,w \in \mathcal H_\o$ and $u \geq v \geq w$. Then $d_p(v,w) \leq d_p(u,w)$ and $d_p(u,v) \leq d_p(u,w)$.
\end{lemma}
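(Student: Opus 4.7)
The plan is to derive both inequalities from a pointwise comparison of tangent vectors to the weak $C^{1,\bar 1}$-geodesics joining the three pairs, and then apply the tangent-length formula $d_p(\cdot,\cdot)=\|\dot\gamma_t\|_{p,\gamma_t}$ from Theorem \ref{thm: XXChenThm}. The key tool for comparing geodesics pointwise will be their description as Perron envelopes of subgeodesics (formula \eqref{eq: udef1}).

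For $d_p(u,v)\leq d_p(u,w)$: let $[0,1]\ni t\to \gamma_t,\alpha_t\in \mathcal H_\o^{1,\bar 1}$ be the weak $C^{1,\bar 1}$-geodesics joining $u,v$ and $u,w$ respectively. Since $w\leq v$, the curve $\alpha$ lies in the Perron family defining the weak geodesic from $u$ to $v$, so $\alpha_t\leq\gamma_t$ for every $t\in [0,1]$, with equality at $t=0$. A difference-quotient argument at $t=0$ then yields $\dot\alpha_0\leq\dot\gamma_0$ pointwise on $X$. On the other hand, since $u$ dominates both $v$ and $w$, the upper bound \eqref{eq: u_upperbound} gives $\gamma_t\leq u=\gamma_0$ and $\alpha_t\leq u=\alpha_0$, forcing $\dot\gamma_0,\dot\alpha_0\leq 0$. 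Combining these two facts gives $|\dot\gamma_0|\leq|\dot\alpha_0|$ pointwise, and integrating against $\o_u^n$ after invoking Theorem \ref{thm: XXChenThm} at $t=0$ gives the first claim.

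For $d_p(v,w)\leq d_p(u,w)$ the situation will be symmetric at the other endpoint. Letting $\delta_t$ be the weak geodesic from $v$ to $w$, the condition $v\leq u$ places $\delta$ in the Perron family defining $\alpha$, so $\delta_t\leq \alpha_t$ with equality at $t=1$. A difference-quotient argument at $t=1$ now gives $\dot\alpha_1\leq\dot\delta_1$. Moreover, the constant curve $t\to w$ is a subgeodesic for both Perron families, so $\alpha_t,\delta_t\geq w=\alpha_1=\delta_1$, forcing $\dot\alpha_1,\dot\delta_1\leq 0$. Hence $|\dot\delta_1|\leq|\dot\alpha_1|$ pointwise, and Theorem \ref{thm: XXChenThm} at $t=1$ (now integrating against $\o_w^n$) finishes the second claim.

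The only real subtlety will be that the geodesics involved are merely $C^{1,\bar 1}$ (not smooth), so the difference-quotient arguments that upgrade the inequalities $\alpha\leq\gamma$ and $\delta\leq\alpha$ to pointwise statements about $\dot\gamma_0,\dot\alpha_0,\dot\alpha_1,\dot\delta_1$ must appeal to the joint $C^{1,\alpha}$-regularity supplied by Theorem \ref{thm: u_estimates}, which ensures these time derivatives exist and are continuous on $X$. I emphasize that the argument will deliberately avoid Proposition \ref{prop: Mdist_est}, since that proposition itself invokes the present lemma.
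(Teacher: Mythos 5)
Your proof is correct and is essentially the paper's own argument, just reparametrized: you compare two weak $C^{1,\bar 1}$-geodesics sharing an endpoint, use the Perron/comparison principle to order them, read off the sign of the tangent vector at the shared endpoint from monotonicity of the endpoints, and conclude via the constant-speed formula of Theorem \ref{thm: XXChenThm}. The paper takes $w$ as the shared base point and runs both geodesics upward (to $v$ and to $u$) for the first inequality and says the second is ``proved similarly,'' which is exactly what you carry out explicitly; your separate treatment of the two inequalities (sharing $u$ at $t=0$ for one, sharing $w$ at $t=1$ for the other) is a cosmetic rearrangement of the same idea.
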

\begin{proof} We introduce the  $C^{1,\bar 1}$ geodesics $[0,1] \ni t \to \alpha_t,\beta_t \in \mathcal H_\o^{1,\bar 1}$ connecting $\alpha_0 := w,\alpha_1 := v$ and $\beta_0 := w,\beta_1 := u$ respectively. From \eqref{eq: udef1} (or Theorem \ref{thm: uniqueness_BVP}) it follows that both of these curves are increasing in $t$. Additionally, $\alpha \leq \beta$ by  Theorem \ref{thm: uniqueness_BVP}. As $\alpha_0=\beta_0$, it  follows that $0 \leq \dot \alpha_0  \leq \dot \beta_0$. Using this and Theorem \ref{thm: XXChenThm} we obtain that $d_p(w,v) \leq d_p(w,u)$. The estimate $d_p(u,v) \leq d_p(u,w)$ is proved similarly.
\end{proof}

Next we turn our attention to smooth approximants of finite energy potentials:

\begin{lemma} Suppose $u \in \mathcal E_p(X,\o)$ and $\{ u_k\}_{k} \subset \mathcal H_\o$ is a sequence decreasing to $u$. Then $d_p(u_l,u_k) \to 0$ as $l,k \to \infty$.\label{lem: IntDistEst}
\end{lemma}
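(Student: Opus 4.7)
The plan is to reduce the distance between smooth approximants to an integral via Proposition \ref{prop: Mdist_est}, then exploit the monotonic continuity of complex Monge--Amp\`ere measures (Proposition \ref{prop: MA_cont}) together with a dominated convergence argument. Since $\{u_k\}$ is decreasing, for any $l \geq k$ we have $u_l \leq u_k$, so Proposition \ref{prop: Mdist_est} applies and gives
\begin{equation*}
d_p(u_l,u_k)^p \leq \int_X (u_k - u_l)^p \o_{u_l}^n.
\end{equation*}
Thus the problem reduces to showing the right-hand side tends to $0$ as $l,k \to \infty$.

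Next I would handle the inner limit $l \to \infty$ with $k$ fixed. Taking $\phi_l := u_k$ (a constant sequence trivially ``decreasing'' to $u_k$), $\psi_l := v_l := u_l \searrow u$, and the continuous weight $h(t) = |t|^p$ (which satisfies $|h(t)|/\chi_p(t) = p$), the hypotheses of Proposition \ref{prop: MA_cont} are met since $\psi_l = u_l \leq u_k = \phi_l$ for $l \geq k$. The proposition then yields the weak convergence of measures $(u_k - u_l)^p \o_{u_l}^n \to (u_k - u)^p \o_{u}^n$. Since $X$ is compact, pairing against the constant function $1$ gives $\int_X (u_k - u_l)^p \o_{u_l}^n \to \int_X (u_k - u)^p \o_u^n$ as $l \to \infty$.

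The outer limit $k \to \infty$ is handled by dominated convergence. Since $u \in \mathcal{E}_p(X,\o)$, $u$ is finite $\o_u^n$-a.e., so $u_k - u \searrow 0$ pointwise $\o_u^n$-a.e. The dominating function is $u_1 - u$ (as $0 \leq u_k - u \leq u_1 - u$), and $(u_1 - u)^p \in L^1(\o_u^n)$ because $u \in L^p(\o_u^n)$ (from $E_{\chi_p}(u) < \infty$) and $u_1 \in L^p(\o_u^n)$ by Proposition \ref{prop: mixed_finite_prop: Energy_est} applied to the pair $u, u_1 \in \mathcal{E}_p(X,\o)$ (after shifting both by a constant to make them negative, which does not affect the integral). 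Hence $\int_X (u_k - u)^p \o_u^n \to 0$ as $k \to \infty$.

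To combine, given $\varepsilon > 0$, first pick $k_0$ with $\int_X (u_{k_0} - u)^p \o_u^n < \varepsilon/2$, then pick $L_0 \geq k_0$ so that $\int_X (u_{k_0} - u_l)^p \o_{u_l}^n < \varepsilon$ for all $l \geq L_0$. For any $l \geq k \geq L_0$ we have $u_l \leq u_k \leq u_{k_0}$, whence $(u_k - u_l)^p \leq (u_{k_0} - u_l)^p$ and
\begin{equation*}
d_p(u_l,u_k)^p \leq \int_X (u_k - u_l)^p \o_{u_l}^n \leq \int_X (u_{k_0} - u_l)^p \o_{u_l}^n < \varepsilon.
\end{equation*}
I expect no serious obstacle; the only mildly delicate point is verifying that Proposition \ref{prop: MA_cont} applies with a constant sequence in one of the slots, but the monotonicity hypothesis $\psi_l \leq \phi_l$ still holds and the proof of that proposition accommodates this case.
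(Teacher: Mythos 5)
Your proof is correct, but it takes a genuinely different route from the paper's. After the common first step (Proposition \ref{prop: Mdist_est} reduces the distance to $\int_X |u_k-u_l|^p\,\o_{u_l}^n$), the paper dispatches the whole thing in one line by applying the fundamental estimate of Proposition \ref{prop: Energy_est} relative to the \emph{shifted} reference form $\o_{u_l}$: viewing $u-u_l$ and $u_k-u_l$ as negative elements of $\mathcal E_p(X,\o_{u_l})$ with $u-u_l\leq u_k-u_l\leq 0$, it gets the uniform-in-$k$ bound $d_p(u_l,u_k)^p\leq (p+1)^n\int_X|u-u_l|^p\,\o_u^n$, and then a single dominated convergence argument in $l$ finishes. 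You instead fix $k$ and invoke the monotone continuity of Monge--Amp\`ere measures (Proposition \ref{prop: MA_cont}) to pass to the limit in $l$, obtaining $\int_X(u_k-u)^p\,\o_u^n$; then dominated convergence in $k$ sends this to $0$; and you supply the (necessary, and correctly executed) bookkeeping with $k_0$, $L_0$, and the inequality $(u_k-u_l)^p\leq(u_{k_0}-u_l)^p$ to combine the double limit into a Cauchy statement. Both approaches are sound. The paper's trick of changing the reference form buys a bound depending on $l$ alone and a one-step argument; yours avoids that reference-form change at the cost of an extra limit and the final $\varepsilon$-combination, but relies on a heavier continuity theorem (Proposition \ref{prop: MA_cont}) in place of the lighter fundamental estimate.
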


\begin{proof}
We can suppose that $l \leq k$. Then $u_k\leq u_l$, hence by Proposition \ref{prop: Mdist_est} we have:
$$d_p(u_l,u_k)^p \leq \int_X|u_k-u_l|^p\o_{ u_k}^n.$$
We clearly have $u - u_l, u_k - u_l \in \mathcal E_p(X,\o_{u_l})$ and $u - u_l\leq u_k - u_l\leq0$. Hence, applying Proposition \ref{prop: Energy_est} for the class $\mathcal E_p(X,\o_{u_l})$ we obtain that
\begin{equation}\label{eq: estimate}
d_p(u_l,u_k)^p\leq (p+1)^n\int_X|u-u_l|^p\o_u^n.
\end{equation}
As $u_l$ decreases to $u \in \mathcal E_p(X,\o)$, it follows from the dominated convergence theorem that $d_p(u_l,u_k) \to 0$ as $l,k \to \infty$.
\end{proof}

Our next lemma confirms that the way we proposed to extend the $d_p$ to $\mathcal E_p(X,\o)$ (see \eqref{eq: dp_def_general}) does not have inconsistencies:

\begin{lemma} Given $u_0,u_1 \in \mathcal E_p(X,\o)$, the limit in \eqref{eq: dp_def_general} is finite and independent of the approximating sequences $u^k_0, u^k_1 \in \mathcal H_\o$. \end{lemma}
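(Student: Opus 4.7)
I would split the argument into (i) existence and finiteness of the limit, and (ii) its independence from the choice of decreasing approximating sequence. For (i), the reverse triangle inequality in $(\mathcal H_\omega,d_p)$ gives
$$
|d_p(u_0^k,u_1^k) - d_p(u_0^l,u_1^l)| \leq d_p(u_0^k,u_0^l) + d_p(u_1^k,u_1^l),
$$
and Lemma \ref{lem: IntDistEst} forces both summands on the right to vanish as $k,l \to \infty$. Hence $\{d_p(u_0^k,u_1^k)\}_k$ is Cauchy in $\mathbb{R}$, so it converges; uniform boundedness follows from the same triangle inequality applied at the fixed reference point, namely $d_p(u_0^k,u_1^k) \leq d_p(u_0^k,u_0^1) + d_p(u_0^1,u_1^1) + d_p(u_1^1,u_1^k)$.

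For (ii), let $\{v_0^k\},\{v_1^k\} \subset \mathcal H_\omega$ be any second pair of decreasing approximations. The same triangle inequality reduces the problem to showing $d_p(u_0^k,v_0^k) \to 0$ whenever both sequences in $\mathcal H_\omega$ decrease to the same $u_0 \in \mathcal E_p(X,\omega)$. My plan is to dominate both $u_0^k$ and $v_0^k$ by a common smooth $\omega$-psh approximant. Set $m_k := \max(u_0^k,v_0^k) \in \textup{PSH}(X,\omega) \cap C^0(X)$. Because $m_k$ is continuous, Theorem \ref{thm: BK_approx} combined with Dini's theorem yields $m_k^\varepsilon \in \mathcal H_\omega$ with $m_k \leq m_k^\varepsilon \leq m_k + \varepsilon$ uniformly on $X$. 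Since $u_0^k \leq m_k^\varepsilon$, Proposition \ref{prop: Mdist_est} gives
$$
d_p(u_0^k,m_k^\varepsilon)^p \leq \int_X (m_k^\varepsilon - u_0^k)^p \omega_{u_0^k}^n \leq 2^{p-1}\int_X |v_0^k - u_0^k|^p \omega_{u_0^k}^n + 2^{p-1}\varepsilon^p,
$$
using $m_k^\varepsilon - u_0^k \leq (v_0^k - u_0^k)^+ + \varepsilon$, and an analogous inequality holds for $d_p(v_0^k,m_k^\varepsilon)^p$ with $\omega_{v_0^k}^n$ in place of $\omega_{u_0^k}^n$. Choosing $\varepsilon = 1/k$ and invoking the triangle inequality via $m_k^{1/k}$, the task reduces to proving that $\int_X |v_0^k - u_0^k|^p \omega_{u_0^k}^n \to 0$ and $\int_X |v_0^k - u_0^k|^p \omega_{v_0^k}^n \to 0$.

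For both, expand $|v_0^k - u_0^k|^p \leq 2^{p-1}(|v_0^k - u_0|^p + |u_0^k - u_0|^p)$ and apply Proposition \ref{prop: MA_cont} with $\chi = \chi_p$, $h(l) = |l|^p$, $\phi_k$ either of $v_0^k,u_0^k$, $\psi_k \equiv u_0$, and $v_k$ either of $u_0^k,v_0^k$; all the monotonicity and ordering hypotheses are immediate, and the conclusion is the weak convergence $h(\phi_k - \psi_k) \omega_{v_k}^n \to 0$, which on the compact $X$ (test against $f \equiv 1$) forces total mass convergence to zero. The main obstacle is precisely that the measures $\omega_{u_0^k}^n$ and $\omega_{v_0^k}^n$ vary with $k$, so naive dominated convergence is unavailable; Proposition \ref{prop: MA_cont} is exactly the instrument designed to handle this, and coupling it with the comparable-potentials estimate of Proposition \ref{prop: Mdist_est} via the continuous max-envelope $m_k$ is the delicate piece of the argument.
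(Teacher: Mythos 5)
Your proof is correct, and part (ii) takes a genuinely different route from the paper. Part (i), the existence and finiteness of the limit, is identical: triangle inequality plus Lemma \ref{lem: IntDistEst}. For independence, the paper first perturbs both sequences to be \emph{strictly} decreasing, then uses Dini's theorem to interleave them — finding, for each fixed $k$, an index $j_k$ such that $v^j_0 < u^k_0$ for $j \geq j_k$ — so that the fundamental estimate \eqref{eq: estimate} applies directly to comparable pairs at mismatched indices. You instead stay at a fixed index $k$, dominate both $u^k_0$ and $v^k_0$ by the smooth envelope $m_k^\varepsilon \approx \max(u^k_0,v^k_0)$ obtained from Theorem \ref{thm: BK_approx} plus Dini, and apply Proposition \ref{prop: Mdist_est} to the comparable pairs $(u^k_0, m_k^\varepsilon)$ and $(v^k_0, m_k^\varepsilon)$. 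The price is that you then face the ``mixed'' integrals $\int_X |v^k_0 - u_0|^p \omega_{u^k_0}^n$ and $\int_X |u^k_0 - u_0|^p \omega_{v^k_0}^n$, where the sequence in the integrand and the sequence in the measure are unrelated; these do not fall to the fundamental estimate alone, which is why you correctly reach for the heavier Proposition \ref{prop: MA_cont} (and your choice of $\phi_k, \psi_k, v_k$ and $h = |\cdot|^p$ satisfies its hypotheses). In short: the paper's interleaving buys monotonicity between the two sequences and so stays within the fundamental estimate, at the cost of a fiddly index-matching argument; your envelope construction avoids the index-matching but requires the convergence theorem for Monge--Amp\`ere measures along monotone sequences. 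Both are sound.
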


In particular, this result implies that for $u_0,u_1 \in \mathcal H_\o$ the distance $d_p(u_0,u_1)$ will be the same according to both \eqref{eq: dp_def_general} and our original definition in \eqref{eq: d_chi_def}. Lastly, the triangle inequality will also hold, hence $d_p$ is a pseudo--metric on $\mathcal E_p(X,\o)$, as claimed in Theorem \ref{thm: e2space}.

\begin{proof} By the triangle inequality and Lemma \ref{lem: IntDistEst} we can write:
$$|d_p(u^l_0,u^l_1)-d_p(u^k_0,u^k_1)| \leq d_p(u^l_0,u^k_0) +d_p(u^l_1,u^k_1) \to 0, \ l,k \to \infty, $$
proving that $d_p(u^k_0,u^k_1)$ is indeed convergent.

Now we prove that the limit in \eqref{eq: dp_def_general} is independent of the choice of approximating sequences. Let $v^l_0, v^l_1 \in \mathcal H_\o$ be different approximating sequences. By adding small constants if necessary, we can arrange that the sequences $u^l_0, u^l_1$, respectively $v^l_0, v^l_1$, are strictly decreasing to $u_0,u_1$.

Fixing $k$ for the moment, the sequence $\{\max\{ u^{k+1}_0,v^j_0\}\}_{j \in \Bbb N}$ decreases pointwise to $u^{k+1}_0$. By Dini's lemma the convergence is uniform, hence there exists $j_k\in \Bbb N$ such that for any $j \geq j_k$ we have $v^j_0 < u^k_0$. By repeating the same argument we can also assume that $v^j_1 < u^k_1$ for any $j \geq j_k$. By the triangle inequality again
$$|d_p(u^k_0,u^k_1)-d_p(v^j_0,v^j_1)| \leq d_p(u^k_0,v^j_0) +d_p(u^k_1,v^j_1), \ j \geq j_k. $$
From \eqref{eq: estimate} it follows that for $k$ big enough the quantities $d(u^j_0,v^k_0)$, $d(u^j_1,v^k_1), \ j \geq j_k$ are arbitrarily small, hence $d_p(u_0,u_1)$ is independent of the choice of approximating sequences. 

When $u_0,u_1 \in \mathcal H_
\o$, then it is possible to approximate with a constant sequence, hence our argument implies that the restriction of $d_p$ (as extended in \eqref{eq: dp_def_general}) to $\mathcal H_\o$ coincides with the original definition in \eqref{eq: d_chi_def}. 
\end{proof}

By the previous result, the triangle inequality is inherited by the extension of $d_p$ to $\mathcal E_p(X,\o)$, making $(\mathcal E_p(X,\o),d_p)$ a pseudo--metric space. In the last part of  this section we turn our attention to showing that finite energy geodesic segments of Proposition \ref{prop: geod_Echi} are in fact $d_p$--geodesics:

\begin{lemma} Suppose $u_0,u_1 \in \mathcal E_p(X,\o)$ and $[0,1]\ni t \to u_t \in \mathcal E_p(X,\o)$ is the finite energy geodesic segment connecting $u_0,u_1$. Then $t\to u_t$ is a $d_p$-geodesic.
\end{lemma}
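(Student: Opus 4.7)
The plan is to reduce first to the smooth endpoint case $u_0, u_1 \in \mathcal H_\o$ via the $\varepsilon$-geodesic machinery of Section 3.2, and then pass to the limit for general finite energy potentials using definition \eqref{eq: dp_def_general} together with Proposition \ref{prop: weak_geod_approx}(i).

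For the smooth case, I would first establish the one-sided bound $d_p(u_{t_1}, u_{t_2}) \leq (t_2 - t_1)\, d_p(u_0, u_1)$. Consider the $\varepsilon$-geodesic $t \to u^\varepsilon_t$ solving \eqref{eq: epsBVPGeod}. Its restriction to $[t_1, t_2]$ is a smooth curve in $\mathcal H_\o$ joining $u^\varepsilon_{t_1}$ to $u^\varepsilon_{t_2}$, so by the definition of $d_p$:
$$d_p(u^\varepsilon_{t_1}, u^\varepsilon_{t_2}) \leq \int_{t_1}^{t_2} \|\dot u^\varepsilon_t\|_{p, u^\varepsilon_t}\, dt.$$
Proposition \ref{prop: EpsGeodTanEst}(ii) shows the integrand is $\|\dot u^\varepsilon_0\|_{p, u^\varepsilon_0} + O(\varepsilon)$, so the right-hand side equals $(t_2 - t_1)(\|\dot u^\varepsilon_0\|_{p, u^\varepsilon_0} + O(\varepsilon))$. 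As $\varepsilon \to 0$, the Claim in the proof of Theorem \ref{thm: XXChenThm} gives $\|\dot u^\varepsilon_0\|_{p, u^\varepsilon_0} \to d_p(u_0, u_1)$, and a sandwich argument (using smooth $w^j_{t_i} \searrow u_{t_i}$ from Theorem \ref{thm: BK_approx} combined with Proposition \ref{prop: Mdist_est}) gives $d_p(u^\varepsilon_{t_1}, u^\varepsilon_{t_2}) \to d_p(u_{t_1}, u_{t_2})$.

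Writing the resulting inequality for each of the three subintervals $[0, t_1], [t_1, t_2], [t_2, 1]$ and summing yields $d_p(u_0, u_{t_1}) + d_p(u_{t_1}, u_{t_2}) + d_p(u_{t_2}, u_1) \leq d_p(u_0, u_1)$. The reverse triangle inequality forces equality in each summand, proving the lemma when $u_0, u_1 \in \mathcal H_\o$. For general $u_0, u_1 \in \mathcal E_p(X, \o)$, pick $u^k_0, u^k_1 \in \mathcal H_\o$ decreasing to $u_0, u_1$; Proposition \ref{prop: weak_geod_approx}(i) yields $u^k_{t_i} \searrow u_{t_i}$, and invoking \eqref{eq: dp_def_general} (with a secondary smooth approximation of the bounded $u^k_{t_i}$'s, whose error is again controlled by Proposition \ref{prop: Mdist_est} as in Lemma \ref{lem: IntDistEst}) gives
$$d_p(u_{t_1}, u_{t_2}) = \lim_k d_p(u^k_{t_1}, u^k_{t_2}) = \lim_k (t_2 - t_1) d_p(u^k_0, u^k_1) = (t_2 - t_1) d_p(u_0, u_1).$$

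The main obstacle is the continuity step $d_p(u^\varepsilon_{t_1}, u^\varepsilon_{t_2}) \to d_p(u_{t_1}, u_{t_2})$, which requires controlling $d_p$-distances between smooth $u^\varepsilon_{t_i}$ and merely $C^{1,\bar 1}$ targets $u_{t_i}$. The fix is to insert smooth decreasing barriers $w^j_{t_i} \searrow u_{t_i}$ and use $u^\varepsilon_{t_i} \leq u_{t_i} \leq w^j_{t_i}$ together with Proposition \ref{prop: Mdist_est}: the resulting integral errors $\int_X (w^j_{t_i} - u^\varepsilon_{t_i})^p\, \o_{u^\varepsilon_{t_i}}^n$ are driven to zero by a diagonal $(j, \varepsilon)$ argument, combining uniform convergence $u^\varepsilon_{t_i} \nearrow u_{t_i}$ with the weak convergence of the Monge--Amp\`ere masses.
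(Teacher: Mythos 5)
Your argument takes a genuinely different route from the paper's, and it works modulo one technical caveat noted below. The paper first establishes the single endpoint formula $d_p(u_0,u_l)=l\,d_p(u_0,u_1)$ for $l\in[0,1]$, and does so by comparing the $C^{1,\bar1}$-geodesic from $u_0^k$ to $u_l^k$ with the one from $u_0^k$ to a carefully constructed smooth approximant $w_l^k$; the crucial technical input is Lemma \ref{lem: geod_tangent_limit}, which exploits monotone convergence of initial tangent vectors (via convexity in $t$ plus dominated convergence) to show that the $p$-norms $\int_X|\dot v_0^j|^p\o_{v_0}^n$ converge. It then deduces the two-endpoint statement by reparametrization via Proposition \ref{prop: weak_geod_approx}(ii). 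Your approach replaces this by a metric-geometric squeeze: you bound the $\varepsilon$-geodesic curve length on each subinterval, exploit near-constancy of $\|\dot u_t^\varepsilon\|_{p,u_t^\varepsilon}$, and then force equality in the chain $d_p(u_0,u_{t_1})+d_p(u_{t_1},u_{t_2})+d_p(u_{t_2},u_1)\le d_p(u_0,u_1)$ via the triangle inequality. This is cleaner conceptually and avoids Lemma \ref{lem: geod_tangent_limit} entirely, at the cost of needing the continuity step $d_p(u_{t_i}^\varepsilon,u_{t_j}^\varepsilon)\to d_p(u_{t_i},u_{t_j})$, which the paper's route never confronts because it works exclusively with explicit integral formulas for tangent norms.

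Two places require more care than you have given. First, you invoke Proposition \ref{prop: EpsGeodTanEst}(ii) directly for the $L^p$ norm, but that proposition requires a smooth weight $\chi\in\mathcal W_p^+\cap C^\infty(\Bbb R)$, and the weight $\chi_p(l)=|l|^p/p$ fails to be $C^\infty$ at the origin when $1\le p<2$. One fixes this exactly as in the proof of Theorem \ref{thm: XXChenThm}, by first running the argument for a smooth weight $\chi_k$ approximating $\chi_p$ (Proposition \ref{prop: approx_lemma2}) and then passing to the limit via Proposition \ref{prop: approx_lemma}. A slicker alternative bypasses Proposition \ref{prop: EpsGeodTanEst}(ii) altogether: the Claim inside the proof of Theorem \ref{thm: XXChenThm} gives $\|\dot u_t^\varepsilon\|_{p,u_t^\varepsilon}\to\|\dot u_t\|_{p,u_t}$ pointwise in $t$, Lemma \ref{lem: chilengthgeodconst} says the limit is the constant $d_p(u_0,u_1)$, and since $\|\dot u_t^\varepsilon\|_{p,u_t^\varepsilon}$ is uniformly bounded (Theorem \ref{thm: ueps_estimates}), dominated convergence yields $\int_{t_1}^{t_2}\|\dot u_t^\varepsilon\|_{p,u_t^\varepsilon}\,dt\to(t_2-t_1)d_p(u_0,u_1)$ directly. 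Second, in the passage to general $u_0,u_1\in\mathcal E_p(X,\o)$, the intermediate potentials $u_{t_i}^k$ lie in $\mathcal H_\o^{1,\bar1}$ rather than $\mathcal H_\o$, so \eqref{eq: dp_def_general} cannot be applied verbatim; the diagonal construction implicit in your phrase ``secondary smooth approximation'' must be spelled out (one must choose, for each $k$, a smooth decreasing approximant close enough that the resulting diagonal sequence is itself monotone and converges to $u_{t_i}$). Both gaps are repairable with tools already available at this point in the text, so your alternative strategy does deliver the lemma.
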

\begin{proof} First we prove that
\begin{flalign}\label{eq: geod_half}
 d_p(u_0,u_l)=ld_p(u_0,u_1), \ \ l \in [0,1].
\end{flalign}
By Theorem \ref{thm: BK_approx}, suppose $u^k_0,u^k_1 \in \mathcal H_\o$ are strictly decreasing approximating sequences of $u_0,u_1$ and let $[0,1]\ni t \to u^k_t \in \mathcal H_\o^{1,\bar 1}$ be the decreasing sequence of $C^{1,\bar 1}$ geodesics connecting $u_0^k,u_1^k$. By the definition of \eqref{eq: dp_def_general} and Theorem \ref{thm: XXChenThm} we can write: 
$$d_p(u_0,u_1)^p=\lim_{k\to \infty}d_p(u^k_0,u^k_1)^p=\lim_{k\to \infty}\int_X |\dot u_0^k|^p \o_{u^k_0}^n.$$

By Proposition \ref{prop: weak_geod_approx}(i), the geodesic segments $[0,1]\ni t \to u^k_t \in \mathcal H_\o^{1,\bar 1}$ are decreasing pointwise to $[0,1]\ni t \to u_t\in \mathcal E_p(X,\o)$. In particular, this implies that $u^k_l \searrow u_l$.
We want to find a decreasing sequence $\{w^k_l\}_k \subset \mathcal H_\o$ such that $u^k_l \leq w^k_l$, $w^k_l \searrow u_l$ and
\begin{equation}\label{eq: approx_lim}
l^pd_p(u_0^k,u^k_1)^p - d_p(u^k_0, w^k_l)^p=l^p\int_X |\dot u^k_0|^p \o_{u^k_0}^n - \int_X |\dot w^k_0|^p \o_{u^k_0}^n \to 0 \textup{ as } k \to \infty,
\end{equation}
where $t \to w^k_t$ is the $C^{1,\bar 1}$--geodesic segment connecting $u^k_0$ and $w^k_l$. By the definition of $d_p$, letting $k \to \infty$ in \eqref{eq: approx_lim} would give us \eqref{eq: geod_half}.

Finding such sequence $w_l^k$ is always possible by an (iterated) application of Lemma \ref{lem: geod_tangent_limit} applied to $v_0 := u^k_0$ and $v_1 := u^k_l$, as we observe that the bounded geodesic segment connecting $u^k_0$ and $u^k_l$ is exactly $t \to u_{lt}^k$ (see Proposition \ref{prop: weak_geod_approx}(ii)).

Finally, to finish the proof, we argue that for $t_1,t_2 \in [0,1], \ t_1 \leq t_2$ we have
\begin{equation}\label{eq: geod_def1}
d_p(u_{t_1},u_{t_2})=(t_2 -t_1)d_p(u_0,u_1).
\end{equation}
Let $h_0 = u_{t_2}$ and $h_1 = u_0$. From Proposition \ref{prop: weak_geod_approx}(ii) it follows that $[0,1] \ni t \to h_t := u_{t_2(1-t)} \in \mathcal E_p(X,\o)$ is the finite energy geodesic connecting $h_0,h_1$. Applying \ref{eq: geod_half} to $t \to h_t$ and $l = 1 - t_1/t_2$ we obtain
$$(1 - t_1/t_2)d_p(u_{t_2},u_0)=d_p(u_{t_2},u_{t_1}).$$ Now applying \eqref{eq: geod_half} for $t \to u_t$ and $l = t_2$ we have
$$d_p(u_0, u_{t_2})=t_2 d_p(u_0,u_1).$$ Putting these last two formulas together we obtain \eqref{eq: geod_def1}, finishing the proof.
\end{proof}

\begin{lemma} \label{lem: geod_tangent_limit} Suppose $v_0,v_1 \in \textup{PSH}(X,\o) \cap L^\infty$ and $\{v^j_1 \}_{j \in \Bbb N}\subset \textup{PSH}(X,\o) \cap L^\infty$ is sequence decreasing to $v_1$. By $[0,1] \ni t \to v_t,v_t^j \in \textup{PSH}(X,\o) \cap L^\infty$ we denote the bounded geodesics connecting $v_0,v_1$ and $v_0, v^j_1$ respectively. By convexity in the $t$ variable, we can introduce $\dot v_0 = \lim_{t \to 0}(v_t - v_0)/t$ and $\dot v^j_0 = \lim_{t \to 0}(v^j_t - v_0)/t$, and the following holds:
$$\lim_{j \to \infty}\int_X |\dot v^j_0|^p \o_{v_0}^n = \int_X |\dot {v_0}|^p \o_{v_0}^n.$$
\end{lemma}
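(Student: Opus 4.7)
My plan is to establish monotone pointwise convergence $\dot v_0^j \searrow \dot v_0$ together with a uniform $L^\infty$ bound on $\{\dot v_0^j\}_j$, and then invoke the dominated convergence theorem against the finite Borel measure $\omega_{v_0}^n$.

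First I would extract the pointwise convergence. By Proposition \ref{prop: weak_geod_approx}(i), $v_t^j \searrow v_t$ pointwise for each $t \in [0,1]$. Convexity of $t \mapsto v_t^j(x)$ and $t \mapsto v_t(x)$ (inherited from plurisubharmonicity in the complexified variable) lets me rewrite the tangent vectors as infima:
$$\dot v_0^j(x) = \inf_{t \in (0,1]}\frac{v_t^j(x) - v_0(x)}{t}, \qquad \dot v_0(x) = \inf_{t \in (0,1]}\frac{v_t(x) - v_0(x)}{t}.$$
Since $v_1^j \geq v_1$, Theorem \ref{thm: uniqueness_BVP} gives $v_t^j \geq v_t$, so $\dot v_0^j \geq \dot v_0$; monotonicity of $v_t^j$ in $j$ then makes $\dot v_0^j$ itself decreasing in $j$. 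To identify the pointwise limit, given $\varepsilon>0$ I would pick $t_0>0$ with $(v_{t_0}(x)-v_0(x))/t_0 < \dot v_0(x)+\varepsilon$ and exploit $v_{t_0}^j(x)\searrow v_{t_0}(x)$ at this fixed $t_0$ to conclude $\lim_j \dot v_0^j(x) \leq \dot v_0(x)+2\varepsilon$ for all $j$ large, yielding $\dot v_0^j(x)\searrow \dot v_0(x)$.

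For the uniform bound, since $v_1^j \searrow v_1$ with all terms in $L^\infty$, we have $v_1 \leq v_1^j \leq v_1^1$, and therefore $M := \sup_j \|v_0 - v_1^j\|_{L^\infty} < \infty$. The sandwich from the proof of Lemma \ref{lem: boundedBVP_solution},
$$\max\bigl(v_0 - \|v_0 - v_1^j\|_{L^\infty}\,t,\ v_1^j - \|v_0 - v_1^j\|_{L^\infty}(1-t)\bigr) \leq v_t^j \leq (1-t)v_0 + t\,v_1^j,$$
immediately yields $-M \leq \dot v_0^j \leq v_1^j - v_0 \leq M$, so $|\dot v_0^j|^p \leq M^p$ uniformly in $j$.

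The conclusion then follows from the dominated convergence theorem, since $\omega_{v_0}^n$ is a finite Borel measure. The only mildly delicate point is the interchange of $\inf_t$ with $\lim_j$ needed for pointwise convergence of the tangent vectors; this is handled cleanly by the convexity of the bounded geodesics in the $t$ variable, and the uniform $L^\infty$ control on $\dot v_0^j$ is then routine bookkeeping with the Perron--Bremmerman sandwich used to prove Lemma \ref{lem: boundedBVP_solution}.
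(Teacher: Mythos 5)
Your proposal is correct and follows exactly the paper's approach: establish the uniform $L^\infty$ bound on the tangent vectors from the Perron--Bremmerman sandwich, deduce $\dot v_0^j \searrow \dot v_0$ pointwise from convexity in $t$ and the common endpoint, and conclude by dominated convergence. You simply spell out in more detail the pointwise-limit step that the paper leaves as an assertion.
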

\begin{proof} By \eqref{eq: u_boundary_est} there exists $C >0$ such that $\|\dot v_0\|_{L^\infty(X)}, \|\dot v^j_0\|_{L^\infty(X)} \leq C$. We also have $v \leq v^j, \ j \in \Bbb N$ by \eqref{eq: udef1} (or Theorem \ref{thm: uniqueness_BVP}). As we have convexity in the $t$ variable and all our bounded geodesics share the same starting point, it also follows that $\dot v^j_0 \searrow \dot v_0$ pointwise. Consequently, the lemma follows now from Lebesgue's dominated convergence theorem.
\end{proof}

\section{The Pythagorean formula and applications}

In this section we explore the geometry of the operator $(u,v) \to P(u,v)$ restricted to the spaces $\mathcal E_p(X,\o)$. The main focus will be on the following result, establishing a metric relationship between the vertices of the ``triangle" $(u,v,P(u,v))$. This will help in proving that $d_p$ is non--degenerate, that $v \to P(u,v)$ is a $d_p$--contraction, and a number of other properties. Again, throughout this section we  will assume the volume normalization condition \eqref{eq: vol_normalization} holds. The main result of this section is the Pythagorean formula of \cite{da2}: 

\begin{theorem}
 [Pythagorean formula, \textup{\cite[Corollary 4.14]{da2}}] \label{thm: pythagorean} Given $u_0,u_1 \in \mathcal E_p(X,\o)$, we have $P(u_0,u_1)\in \mathcal E_p(X,\o)$ and
$$d_p(u_0,u_1)^p = d_p(u_0,P(u_0,u_1))^p +  d_p(P(u_0,u_1),u_1)^p.$$
\end{theorem}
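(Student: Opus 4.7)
The plan proceeds in two phases: first, reduce to smooth boundary data; second, exploit the partition formula for $\o_\phi^n$ (Proposition~\ref{prop: MA_form}) together with a fine analysis of the tangent vectors of three weak $C^{1,\bar 1}$-geodesics. For the reduction, take decreasing sequences $u_j^k \in \mathcal{H}_\o$ with $u_j^k \searrow u_j$ (Theorem~\ref{thm: BK_approx}). By Proposition~\ref{prop: env_exist} and the standard monotone continuity of $P$ along decreasing sequences, $P(u_0^k, u_1^k) \searrow P(u_0, u_1) \in \mathcal{E}_p(X,\o)$, and each term in the desired identity is continuous under this approximation by the construction of $d_p$ on $\mathcal{E}_p$ (Theorem~\ref{thm: e2space} and \eqref{eq: dp_def_general}). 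So it suffices to treat $u_0, u_1 \in \mathcal{H}_\o$.

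In the smooth case, set $\phi := P(u_0, u_1) \in \mathcal{H}_\o^{1,\bar 1}$ (Corollary~\ref{cor: rooftop_env_reg}) and introduce three weak $C^{1,\bar 1}$-geodesics: $u_t$ from $u_0$ to $u_1$, $\alpha_t$ from $u_0$ to $\phi$, and $\beta_t$ from $\phi$ to $u_1$. The key geometric observation is that $\alpha_t$ and $\beta_t$ are \emph{constant} on the contact sets $\Lambda_{u_j} := \{\phi = u_j\}$: for $x \in \Lambda_{u_0}$ the constant family $(s,y)\mapsto\phi(y)$ is an admissible subgeodesic in \eqref{eq: udef1} (its boundary limits $\phi$ satisfy $\phi \leq u_0$ and $\phi \leq \phi$), so $\alpha_t(x) \geq \phi(x) = u_0(x)$; on the other hand, convexity in $t$ with $\alpha_0(x) = \alpha_1(x) = u_0(x)$ gives $\alpha_t(x) \leq u_0(x)$. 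Hence $\alpha_t \equiv u_0 \equiv \phi$ on $\Lambda_{u_0}$, and in particular $\dot\alpha_0 = \dot\alpha_1 = 0$ there; analogously $\dot\beta_0 = \dot\beta_1 = 0$ on $\Lambda_{u_1}$.

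Applying Theorem~\ref{thm: XXChenThm} at the $u_0$-endpoint of $\alpha$ yields $d_p(u_0,\phi)^p = \int_X|\dot\alpha_0|^p\o_{u_0}^n$. Applying Theorem~\ref{thm: XXChenThm} at the $\phi$-endpoint of $\beta$ together with the partition formula $\o_\phi^n = \mathbbm{1}_{\Lambda_{u_0}}\o_{u_0}^n + \mathbbm{1}_{\Lambda_{u_1}\setminus\Lambda_{u_0}}\o_{u_1}^n$ gives $d_p(\phi, u_1)^p = \int_{\Lambda_{u_0}}|\dot\beta_0|^p\o_{u_0}^n$, since the $\Lambda_{u_1}\setminus\Lambda_{u_0}$ piece vanishes by triviality of $\beta$ on $\Lambda_{u_1}$. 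The crucial technical step is to identify the tangent-vector integrands with those of $u_t$, namely
\begin{equation*}
\dot\alpha_0 = \dot u_0 \text{ on } \Lambda_{u_0}^c \ \ (\o_{u_0}^n\text{-a.e.}), \qquad |\dot\beta_0| = |\dot u_0| \text{ on } \Lambda_{u_0} \ \ (\o_{u_0}^n\text{-a.e.}).
\end{equation*}
These identifications are extracted from Lemma~\ref{lem: sublevel_lemma}: each super-level set $\{\dot\alpha_0\ge\tau\}$, $\{\dot\beta_0\ge\tau\}$, $\{\dot u_0\ge\tau\}$ is a contact set of a rooftop envelope of the form $\{P(\cdot,\cdot-\tau)=\cdot\}$, and a careful comparison exploiting the identities $P(u_0,\phi) = \phi$ and $P(\phi,u_1) = \phi$ (both from $\phi\leq u_0,u_1$) together with the partition formula applied to the auxiliary envelopes shows that the level sets agree, in $\o_{u_0}^n$-measure, on the indicated subsets.

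With the identifications in place, the triviality $\dot\alpha_0 = 0$ on $\Lambda_{u_0}$ gives $d_p(u_0,\phi)^p = \int_{\Lambda_{u_0}^c}|\dot u_0|^p\o_{u_0}^n$, while $d_p(\phi,u_1)^p = \int_{\Lambda_{u_0}}|\dot u_0|^p\o_{u_0}^n$; summing and invoking $d_p(u_0,u_1)^p = \int_X|\dot u_0|^p\o_{u_0}^n$ from Theorem~\ref{thm: XXChenThm} delivers the Pythagorean identity. The main obstacle is precisely the tangent-vector identification above: matching the distributions of the initial tangent vectors of three distinct geodesics on $\Lambda_{u_0}$ and its complement, which requires delicate tracking of how contact sets of the auxiliary envelopes $P(u_0,\cdot)$, $P(\phi,\cdot)$, and $P(u_0,u_1-\tau)$ interact via Lemma~\ref{lem: sublevel_lemma}.
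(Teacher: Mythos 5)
Your skeleton is the same as the paper's: reduce to $u_0,u_1\in\mathcal H_\o$ by monotone approximation (the paper uses Corollary~\ref{cor: d_p_monotone_limit}), apply the partition formula of Proposition~\ref{prop: MA_form}, and compare tangent vectors along the bounded geodesics through Lemma~\ref{lem: sublevel_lemma}. Your $\beta_t$ is exactly the auxiliary geodesic the paper calls $\tilde u_t$; the paper obtains the $\alpha$-side instead by a reversal argument (using \eqref{eq: lengthequal1}--\eqref{eq: lengthequal2}), while you propose to attack it directly --- a harmless variation. The pointwise identities $\dot\alpha_0=\dot u_0$ on $\Lambda_{u_0}^c$ and $\dot\beta_0=\dot u_0$ on $\Lambda_{u_0}$ do in fact hold everywhere, not merely $\o_{u_0}^n$-a.e., and yield the theorem once the two gaps below are filled.

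Two places in your write-up do not close as stated. First, you invoke Theorem~\ref{thm: XXChenThm} to compute $d_p(u_0,\phi)$ and $d_p(\phi,u_1)$, but that theorem requires both endpoints in $\mathcal H_\o$, whereas $\phi=P(u_0,u_1)$ is generically only $\mathcal H_\o^{1,\bar 1}$. The extension you need is Proposition~\ref{prop: distgeod_general}, whose proof is a nontrivial piece of work (a decreasing smooth approximation of $\phi$, Lemma~\ref{lem: geod_tangent_limit}, and the lemma establishing \eqref{eq: lengthequal}); it cannot be subsumed as a one-line citation of Theorem~\ref{thm: XXChenThm}. Second, the identities you name as the engine of the ``careful comparison,'' $P(u_0,\phi)=\phi$ and $P(\phi,u_1)=\phi$, are merely the trivial $\tau=0$ instances of Lemma~\ref{lem: sublevel_lemma} and do not by themselves produce the level-set matching. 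The identities that actually do the work are: for $\tau<0$, $P(u_0,\phi-\tau)=P(u_0,u_1-\tau)$ (a short rooftop argument using $\phi=P(u_0,u_1)$, namely if $v\leq u_0$ and $v\leq u_1-\tau$ then $v+\tau\leq\phi$), which combined with $\dot\alpha_0\leq0$ gives $\dot\alpha_0=\dot u_0$ on $\Lambda_{u_0}^c$; and for $\tau>0$, $P(\phi,u_1-\tau)=P(u_0,u_1,u_1-\tau)=P(u_0,u_1-\tau)$, which after intersecting $\{P(u_0,u_1-\tau)=\phi\}$ with $\Lambda_{u_0}$ and using $\dot\beta_0\geq0$ gives $\dot\beta_0=\dot u_0$ on $\Lambda_{u_0}$. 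Spell these out and replace Theorem~\ref{thm: XXChenThm} by Proposition~\ref{prop: distgeod_general} at the two points above, and your argument closes.
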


The name of the above formula comes from the particular case $p=2$, in which case it suggests that $u_0,u_1$ and $P(u_0,u_1)$ form a right triangle with hypotenuse $t \to u_t$. 

Before we give the proof of this result, we  argue how it implies the non--degeneracy of $d_p$, giving that $(\mathcal E_p(X,\o),d_p)$ is a metric space:

\begin{proposition}\label{prop: dp_nondegeneracy} Given $u_0,u_1 \in \mathcal E_p(X,\o)$ if $d_p(u_0,u_1)=0$ then $u_0=u_1$.
\end{proposition}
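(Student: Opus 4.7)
The plan is to apply the Pythagorean formula of Theorem~\ref{thm: pythagorean}: from $d_p(u_0,u_1)=0$ it follows that $d_p(u_0,P(u_0,u_1))=d_p(P(u_0,u_1),u_1)=0$. Writing $\psi := P(u_0,u_1) \in \mathcal E_p(X,\o)$, we have $\psi \leq u_0$ and $\psi \leq u_1$, so it suffices to prove the comparable version of the statement: \emph{if $\psi, u \in \mathcal E_p(X,\o)$ satisfy $\psi \leq u$ and $d_p(\psi,u)=0$, then $\psi=u$.} Applying this to the pairs $(\psi,u_0)$ and $(\psi,u_1)$ then yields $u_0 = \psi = u_1$.

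For the comparable case, I would first approximate by strictly decreasing sequences $\psi^k,u^k \in \mathcal H_\o$ with $\psi^k \searrow \psi$, $u^k \searrow u$, and crucially $\psi^k \leq u^k$ for every $k$. This last coupling can be arranged by the same Dini--diagonal device used in the proof that the definition \eqref{eq: dp_def_general} is independent of approximants: starting with strictly decreasing $\tilde u^k \searrow u$ and $\tilde\psi^j \searrow \psi$ in $\mathcal H_\o$, the continuous functions $\max(\tilde u^k,\tilde\psi^j)$ decrease pointwise in $j$ to the continuous function $\tilde u^k$ (since $\tilde u^k > u \geq \psi$), so Dini's theorem supplies indices $j_k$ with $\tilde\psi^{j_k} \leq \tilde u^k + 1/k$; a suitable subsequence of $\{\tilde u^k + 1/k\}$ together with the corresponding $\tilde\psi^{j_k}$ provides the required coupled pair.

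With $\psi^k \leq u^k$ in hand, Proposition~\ref{prop: Mdist_est} gives
\begin{equation*}
\int_X |u^k - \psi^k|^p \, \o_{\psi^k}^n \;\leq\; 2^{n+p}\, d_p(\psi^k,u^k)^p.
\end{equation*}
The right side tends to $2^{n+p}\, d_p(\psi,u)^p = 0$ by \eqref{eq: dp_def_general}. For the left side I would invoke Proposition~\ref{prop: MA_cont} with the weight $\chi = \chi_p \in \mathcal W^+_p$ and the continuous test function $h(l) = |l|^p$, which satisfies $|h(l)|/\chi_p(l) = p$. Taking $\phi_k := u^k$, $\psi_k := \psi^k$, $v_k := \psi^k$ (all decreasing, with $\psi_k \leq \phi_k$ and $\psi_k \leq v_k$), the proposition yields the weak convergence of measures $|u^k - \psi^k|^p\, \o_{\psi^k}^n \to |u - \psi|^p\, \o_{\psi}^n$, and in particular convergence of the total masses. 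Hence $\int_X |u - \psi|^p \,\o_\psi^n = 0$, so $u = \psi$ almost everywhere with respect to $\o_\psi^n$. The domination principle (Proposition~\ref{prop: domination principle}) then upgrades this to $u \leq \psi$ globally; combined with $\psi \leq u$ this gives $u = \psi$, as desired.

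The main technical obstacle is arranging the coupled monotone approximation $\psi^k \leq u^k$ with both sequences smooth and decreasing to the right limits, since this coupling is precisely what activates the sharp lower estimate of Proposition~\ref{prop: Mdist_est}; without it the argument has no foothold. The weak-convergence step is essentially routine once Proposition~\ref{prop: MA_cont} is invoked with the identity $|l|^p = p\,\chi_p(l)$, and the concluding use of the domination principle is a one-line application.
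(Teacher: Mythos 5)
Your proof is correct and takes essentially the same route as the paper: Pythagorean formula, then the lower $d_p$-estimate against $\o_{P(u_0,u_1)}^n$, then the domination principle. The only cosmetic difference is that you re-derive the extension of Proposition~\ref{prop: Mdist_est} to $\mathcal E_p(X,\o)$ inline (coupled decreasing approximants plus Proposition~\ref{prop: MA_cont}), whereas the paper simply cites Lemma~\ref{lem: Mdist_est_gen}, which is proved by exactly the argument you give.
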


\begin{proof} By Theorem \ref{thm: pythagorean} it follows that $d_p(u_0,P(u_0,u_1))=0$ and $d_p(u_1,P(u_0,u_1))=0$. By the first estimate of the next lemma, which generalizes Proposition \ref{prop: Mdist_est}, it follows that $u_0=P(u_0,u_1)$ a.e. with respect to $\o_{P(u_0,u_1)}^n$, and similarly, $u_1= P(u_0,u_1)$ a.e. with respect to $\o_{P(u_0,u_1)}^n$. By the domination principle (Proposition \ref{prop: domination principle}) it follows that $u_0 \leq P(u_0,u_1)$ and $u_1 \leq P(u_0,u_1)$. As the reverse inequalities are trivial, we obtain that $u_0 = P(u_0,u_1)=u_1$. 
\end{proof}

\begin{lemma}\label{lem: Mdist_est_gen} Suppose $u,v \in \mathcal E_p(X,\o)$ with $u \leq v$. Then we have:
\begin{equation}\label{eq: Mdist_est_gen}
\max\Big( \frac{1}{2^{n+p}}\int_X|v-u|^p \o_u^n, \int_X|v-u|^p \o_v^n \Big) \leq d_p(u,v)^p \leq \int_X|v-u|^p \o_u^n.
\end{equation}
\end{lemma}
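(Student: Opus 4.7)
The plan is to reduce to the case of smooth K\"ahler potentials, which was handled by Proposition \ref{prop: Mdist_est}, via an order-preserving decreasing approximation, and then pass to the limit using the continuity of the complex Monge--Amp\`ere operator along decreasing sequences together with the extended definition of $d_p$ on $\mathcal E_p(X,\o)$.

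First I would construct sequences $\{a_k\},\{b_k\} \subset \mathcal H_\o$ with $a_k \searrow u$, $b_k \searrow v$, and $a_k \leq b_k$ for every $k$. Starting from a smooth decreasing approximation $a_k \searrow u$ given by Theorem \ref{thm: BK_approx}, the continuous $\o$-psh function $\max(a_k, v)$ dominates $a_k$ and decreases pointwise to $\max(u,v)=v$. Smoothing each $\max(a_k, v)$ from above via Theorem \ref{thm: BK_approx} and performing a diagonal extraction that relies on Dini's theorem on compact $X$ produces the desired smooth sequence $b_k \searrow v$ preserving both the pointwise bound $a_k \leq b_k$ and the monotonicity in $k$.

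Second, I would apply Proposition \ref{prop: Mdist_est} to each pair $(a_k, b_k) \subset \mathcal H_\o$ with $a_k \leq b_k$, to obtain
\[
\tfrac{1}{2^{n+p}} \max\Big(\int_X |b_k - a_k|^p \o_{a_k}^n,\ \int_X |b_k - a_k|^p \o_{b_k}^n\Big) \leq d_p(a_k, b_k)^p \leq \int_X |b_k - a_k|^p \o_{a_k}^n.
\]
The middle term converges to $d_p(u,v)^p$ by the definition \eqref{eq: dp_def_general}. For the two flanking integrals, I apply Proposition \ref{prop: MA_cont} with $\chi = \chi_p \in \mathcal W^+_p$ and $h(l) = |l|^p$ (for which $|h|/\chi \equiv p$ satisfies the required growth condition), identifying its $\phi_k,\psi_k$ with $b_k,a_k$ and its background sequence with $a_k$ or $b_k$; the hypotheses $a_k \leq b_k$ and $a_k \leq a_k$ (resp.\ $a_k \leq b_k$) hold by construction. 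Testing the resulting weak convergence of measures against the constant function $1$ on the compact manifold $X$ yields
\[
\int_X |b_k - a_k|^p \o_{a_k}^n \to \int_X |v-u|^p \o_u^n, \qquad \int_X |b_k - a_k|^p \o_{b_k}^n \to \int_X |v-u|^p \o_v^n,
\]
and \eqref{eq: Mdist_est_gen} follows by taking $k \to \infty$ in the displayed triple inequality.

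The main technical obstacle is the order-preserving construction in step one: applying Theorem \ref{thm: BK_approx} independently to $u$ and $v$ does not in general produce smooth approximants satisfying $a_k \leq b_k$. The workaround is to smooth the continuous $\o$-psh envelope $\max(a_k, v)$, which lies above $a_k$ automatically and decreases to $v$; the delicate bookkeeping is to arrange the diagonal so that the smoothed $b_k$ remain simultaneously above the corresponding $a_k$ and monotonically decreasing in $k$, which is where repeated use of Dini's theorem on compact $X$ enters.
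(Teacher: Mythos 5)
Your plan coincides with the paper's: construct smooth decreasing sequences $u_k \searrow u$, $v_k \searrow v$ in $\mathcal H_\o$ with $u_k \leq v_k$, apply Proposition \ref{prop: Mdist_est} to each pair, and pass to the limit --- via Proposition \ref{prop: MA_cont} (with $h(l)=|l|^p$, $\chi=\chi_p$, tested against the constant function $1$) for the two integrals, and via definition \eqref{eq: dp_def_general} for the middle term. Your identifications of $\phi_k,\psi_k,v_k$ in the two applications of Proposition \ref{prop: MA_cont} are correct, and the growth hypothesis $|h|/\chi_p \equiv p$ is satisfied.

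The one step that does not close as written is the Dini-based diagonal extraction. Dini's theorem requires the pointwise decreasing limit of continuous functions to itself be continuous, but $\max(a_k,v)$ and $v$ are only upper semicontinuous in general, so you cannot invoke uniform convergence to force the monotonicity $b_{k+1}\leq b_k$. The desired order-preserving approximation is nonetheless available; two standard fixes: (a) perturb by a vanishing constant --- with $w_k^j \searrow \max(a_k,v)$ smooth and $b_k := w_k^{j(k)} + 1/k$ one has $\max(a_{k+1},v) \leq \max(a_k,v) < b_k$ with $b_k$ continuous, so the open sets $\{w_{k+1}^j < b_k - \tfrac{1}{k+1}\}$ increase to $X$ and compactness (not Dini) yields an admissible $j(k+1)$; or (b) note that the regularization underlying Theorem \ref{thm: BK_approx} (Proposition \ref{prop: BKapprox_app}) is assembled from sup-convolutions and regularized maxima, both of which preserve the order $\leq$, so applying the construction simultaneously to $u$ and $v$ at the same scale $\delta$ directly gives $u_k \leq v_k$ without any post-processing. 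The paper itself leaves this point implicit, so you were right to address it --- just beware the hypotheses of Dini.
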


\begin{proof} By Proposition \ref{prop: mixed_finite_prop: Energy_est}, all the integrals in \eqref{eq: Mdist_est_gen} are finite. By Theorem \ref{thm: BK_approx}, we can choose $u_k,v_k \in \mathcal H_\o$ such that $v_k \searrow v$ and $u_k \searrow u$ and $u_k \leq v_k$. By Proposition \ref{prop: Mdist_est}, estimate \eqref{eq: Mdist_est_gen} holds for $u_k,v_k$. Using Proposition \ref{prop: MA_cont} and definition \eqref{eq: dp_def_general} we can take the limit $k \to \infty$ in the estimates for $u_k,v_k$, and obtain \eqref{eq: Mdist_est_gen}.
\end{proof}

By the next corollary we will only need to prove Theorem \ref{thm: pythagorean} for smooth potentials $u_0,u_1$: 

\begin{corollary} \label{cor: d_p_monotone_limit}
If $\{w_k\}_{k \in \Bbb N} \subset \mathcal E_p(X,\o)$ decreases (increases a.e.) to $w \in \mathcal E_p(X,\o)$ then $d_p(w_k,w)\to 0$.
\end{corollary}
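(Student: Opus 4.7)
The plan is to combine the two-sided estimate of Lemma~\ref{lem: Mdist_est_gen} with the appropriate convergence theorem for Monge--Amp\`ere integrals. First, I observe that in either case one may assume pointwise comparability: if $w_k \searrow w$ then certainly $w \leq w_k$, while if $w_k \nearrow w$ almost everywhere, then since $\sup_k w_k$ agrees with $w$ a.e.\ and $w$ is usc, one has $(\sup_k w_k)^* = w$, hence $w_k \leq w$ pointwise.

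In the decreasing case, Lemma~\ref{lem: Mdist_est_gen} applied with $u = w$, $v = w_k$ gives
$$d_p(w, w_k)^p \leq \int_X |w_k - w|^p \, \omega_w^n.$$
The reference measure $\omega_w^n$ is \emph{fixed}, the integrand decreases pointwise to zero, and it is dominated by $(|w|+|w_1|)^p$, which lies in $L^1(\omega_w^n)$ by Proposition~\ref{prop: mixed_finite_prop: Energy_est}. Dominated convergence closes this case.

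In the increasing case, Lemma~\ref{lem: Mdist_est_gen} applied with $u = w_k$, $v = w$ yields
$$d_p(w_k, w)^p \leq \int_X |w - w_k|^p \, \omega_{w_k}^n,$$
and now the reference measure $\omega_{w_k}^n$ varies with $k$, so a bare dominated convergence argument is unavailable. The plan here is to invoke Proposition~\ref{prop: MA_cont} with the choice $h(t) = |t|^p$ (which satisfies $|h(l)|/\chi_p(l) = p$, hence the growth assumption), constant sequence $\phi_k = w$, and $\psi_k = v_k = w_k$, all of which increase a.e.\ to $w$. The structural hypotheses $\psi_k \leq \phi_k$ and $\psi_k \leq v_k$ reduce to $w_k \leq w$ and $w_k \leq w_k$, both tautological. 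The conclusion is that the measures $|w - w_k|^p \, \omega_{w_k}^n$ converge weakly to $|w-w|^p \, \omega_w^n = 0$. Since $X$ is compact, pairing against the constant $1 \in C(X)$ then gives $\int_X |w - w_k|^p \, \omega_{w_k}^n \to 0$, as required.

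The only real obstacle is the increasing case, where the varying reference measures $\omega_{w_k}^n$ prevent a direct dominated convergence argument against a single background measure; Proposition~\ref{prop: MA_cont} is precisely designed to overcome this, provided one sets up the triple $(\phi_k, \psi_k, v_k)$ correctly, using the constant sequence $\phi_k = w$ against the varying diagonal $\psi_k = v_k = w_k$.
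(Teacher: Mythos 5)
Your proof is correct and uses the same two ingredients as the paper (Lemma \ref{lem: Mdist_est_gen} and Proposition \ref{prop: MA_cont}); the paper simply keeps both measures together, bounding $d_p(w,w_k)^p \leq \int_X |w-w_k|^p(\o_{w_k}^n + \o_w^n)$ and applying Proposition \ref{prop: MA_cont} once to that sum, whereas you split into the decreasing and increasing cases, noting that in the decreasing case the reference measure is fixed and plain dominated convergence suffices. This is a minor and legitimate variation, not a different route.
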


\begin{proof} By the previous lemma, we have $d_p(w,w_k)^p \leq \int_X |w -w_k|^p (\o_{w_k}^n + \o_{w}^n)$. We can use Proposition \ref{prop: MA_cont} again to conclude that $d_p(w,w_k) \to 0$.
\end{proof}

As $P(u_0,u_1) \in \mathcal H_\o^{1,\bar 1}$ for $u_0,u_1 \in \mathcal H_\o$ (Corollary \ref{cor: rooftop_env_reg}), we need to generalize Theorem \ref{thm: XXChenThm} for endpoints in $\mathcal H_\o^{1,\bar 1}$, before we can prove Theorem \ref{thm: pythagorean}. The first step is the next lemma:

\begin{lemma} Suppose $u_0,u_1 \in \mathcal H_\o^{1,\bar 1}$ and $t \to u_t$ is the bounded geodesic connecting them. Then the following holds:
\begin{equation}\label{eq: lengthequal}
\int_X |\dot u_0|^p \o_{u_0}^n=\int_X |\dot u_1|^p \o_{u_1}^n
\end{equation}
\end{lemma}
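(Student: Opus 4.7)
The plan is to approximate $u_0, u_1 \in \mathcal H_\o^{1,\bar 1}$ by decreasing sequences of smooth K\"ahler potentials and pass to the limit in the smooth-endpoint identity furnished by Theorem \ref{thm: XXChenThm}. Using Theorem \ref{thm: BK_approx}, pick $u_0^k, u_1^k \in \mathcal H_\o$ with $u_0^k \searrow u_0$ and $u_1^k \searrow u_1$; let $t \to u_t^k$ denote the weak $C^{1,\bar 1}$-geodesics joining them. By Proposition \ref{prop: weak_geod_approx}(i), $u_t^k \searrow u_t$ pointwise. Theorem \ref{thm: XXChenThm} applied to $\chi_p(l) = |l|^p/p$, combined with Lemma \ref{lem: chilengthgeodconst}, yields the smooth-case identity
\[
\int_X |\dot u_0^k|^p \o_{u_0^k}^n \;=\; V\, d_p(u_0^k, u_1^k)^p \;=\; \int_X |\dot u_1^k|^p \o_{u_1^k}^n,
\]
where $V = \textup{Vol}(X)$. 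By the definition of $d_p$ on $\mathcal E_p(X,\o)$ from Section~3.4 (in particular \eqref{eq: dp_def_general} and Lemma \ref{lem: IntDistEst}), $V d_p(u_0^k, u_1^k)^p \to V d_p(u_0, u_1)^p$ as $k \to \infty$, so the task reduces to showing that each of the left-hand integrals converges to $\int_X |\dot u_0|^p \o_{u_0}^n$ and $\int_X |\dot u_1|^p \o_{u_1}^n$ respectively.

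To bridge the tangent vectors, I introduce for $\tau \in (0, 1]$ the secant-slope quantities
\[
B_\tau^k := \int_X \Big|\frac{u_\tau^k - u_0^k}{\tau}\Big|^p \o_{u_0^k}^n, \qquad B_\tau := \int_X \Big|\frac{u_\tau - u_0}{\tau}\Big|^p \o_{u_0}^n.
\]
For fixed $\tau$, the convergence $B_\tau^k \to B_\tau$ as $k \to \infty$ is handled by splitting $|u_\tau^k - u_0^k|^p = \big((u_\tau^k - u_0^k)^+\big)^p + \big((u_0^k - u_\tau^k)^+\big)^p$ and applying Proposition \ref{prop: MA_cont} to each summand with $\phi_k = \max(u_\tau^k, u_0^k) \searrow \max(u_\tau, u_0)$ together with $\psi_k \in \{u_0^k, u_\tau^k\}$ as appropriate; on the coincidence set $\{u_0^k \geq u_\tau^k\}$, Lemma \ref{lem: MP_forE} identifies $\o_{\max(u_\tau^k, u_0^k)}^n$ with $\o_{u_0^k}^n$, so that all integrals align under $\o_{u_0^k}^n$. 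For each fixed $k$, convexity in $t$ yields $(u_\tau^k - u_0^k)/\tau \searrow \dot u_0^k$ monotonically as $\tau \to 0^+$, uniformly bounded by $\|u_1^k - u_0^k\|_{C^0}$ from Lemma \ref{lem: dot_u_est}, so Lebesgue dominated convergence gives $B_\tau^k \to \int_X |\dot u_0^k|^p \o_{u_0^k}^n$; the analogous argument yields $B_\tau \to \int_X |\dot u_0|^p \o_{u_0}^n$ as $\tau \to 0^+$.

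The main obstacle is interchanging the $k \to \infty$ and $\tau \to 0^+$ limits in the double limit $\lim_k \lim_\tau B_\tau^k = \lim_\tau \lim_k B_\tau^k$, since the rate in the dominated convergence above depends a priori on $k$. I would address this by establishing a quantitative, $k$-uniform estimate of the form $\big|B_\tau^k - \int_X |\dot u_0^k|^p \o_{u_0^k}^n\big| \leq \eta(\tau)$ with $\eta(\tau) \to 0$, exploiting the identity $\int_X |\dot u_t^k|^p \o_{u_t^k}^n = \int_X |\dot u_0^k|^p \o_{u_0^k}^n$ for every $t \in [0,1]$ supplied by Lemma \ref{lem: chilengthgeodconst}, the monotonicity of secant slopes in $\tau$, and the uniform $C^0$ bounds on tangent vectors. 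Once the interchange is justified, one obtains $\int_X |\dot u_0|^p \o_{u_0}^n = V d_p(u_0, u_1)^p = \int_X |\dot u_1|^p \o_{u_1}^n$, completing the proof.
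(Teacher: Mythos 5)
Your plan has a genuine gap, and the gap is exactly the difficulty that the paper's argument is structured to circumvent. You reduce the claim to showing that $\int_X |\dot u_0^k|^p \o_{u_0^k}^n \to \int_X |\dot u_0|^p \o_{u_0}^n$ where both endpoints vary simultaneously, and you honestly flag that the interchange of the $k\to\infty$ and $\tau\to 0^+$ limits needs a $k$-uniform modulus $\eta(\tau)$. That modulus is not available from the results you cite: the $C^{1,\bar 1}$ (and in particular the $\ddot u^k_t$) estimates of Theorem \ref{thm: ueps_estimates}/\ref{thm: u_estimates} depend on $\|u_0^k\|_{C^3},\|u_1^k\|_{C^3}$, and these blow up as $k\to\infty$ when $u_0,u_1$ are only in $\mathcal H_\o^{1,\bar 1}$. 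Lemma \ref{lem: chilengthgeodconst} gives constancy of $\int_X |\dot u^k_t|^p\o_{u^k_t}^n$ in $t$ at each fixed $k$ but says nothing about the rate at which secant slopes approach $\dot u^k_0$, and the uniform $C^0$ bound on tangent vectors only gives dominated-convergence at fixed $k$. Your sketch of the uniform estimate is not a proof, and I do not see how to complete it with the paper's toolbox.

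The paper does not approximate at all here. It proves the identity directly for $u_0,u_1\in\mathcal H_\o^{1,\bar 1}$ by splitting into the positive and negative parts of $\dot u_0$, writing e.g.\ $\int_{\{\dot u_0>0\}}|\dot u_0|^p\o_{u_0}^n = p\int_0^\infty\tau^{p-1}\,\o_{u_0}^n(\{\dot u_0\geq\tau\})\,d\tau$, and then using Lemma \ref{lem: sublevel_lemma} to identify $\{\dot u_0\geq\tau\}$ with the contact set $\{P(u_0,u_1-\tau)=u_0\}$ and Remark \ref{rem: MA_form_remark} to express $\o_{u_0}^n$ of this contact set via the partition formula for $\o_{P(u_0,u_1-\tau)}^n$, whose total mass is $\textup{Vol}(X)$. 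This yields a symmetric expression for both sides at once, with no approximation and no limit interchange. The single-endpoint approximation result Lemma \ref{lem: geod_tangent_limit} (where only $u_1$ varies and the geodesics share the starting point, so $\dot v_0^j\searrow\dot v_0$ monotonically) is what makes the paper's Proposition \ref{prop: distgeod_general} go through, and the present lemma is the tool that lets the paper pass to the limit one endpoint at a time, switching endpoints in the middle. Deriving the lemma from the $d_p$ formula, as you propose, inverts this logical order and runs head-on into the double-approximation problem.
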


Recall that it is not yet known if $t \to u_t$ is $C^1$ in the $t$-direction when $u_0,u_1 \in \mathcal H_\o^{1,\bar 1}$. However, since $t \to u_t$ is $t$-convex, it makes sense to define $\dot u_0$ as the right derivative at $t =0$ and $\dot u_1$ as the left derivative at $t =0$.

\begin{proof} To obtain \eqref{eq: lengthequal} we prove:
\begin{equation}\label{eq: lengthequal1}\int_{\{\dot u_0 >0\}} |\dot u_0|^p\o_{u_0}^n=\int_{\{\dot u_1 > 0\}} |\dot u_1|^p \o_{u_1}^n,
\end{equation}
\begin{equation}\label{eq: lengthequal2} \int_{\{\dot u_0 <0\}} |\dot u_0|^p\o_{u_0}^n=\int_{\{\dot u_1 <0\}} |\dot u_1|^p\o_{u_1}^n.
\end{equation}
Using Remark \ref{rem: MA_form_remark} and Lemma \ref{lem: sublevel_lemma} multiple times we can write:
\begin{flalign*} \int_{\{\dot u_0 > 0\}} |\dot u_0|^p \o_{u_0}^n&= p\int_0^{\infty} \tau^{p-1}\o_{u_0}^n(\{{\dot u_0 \geq \tau}\})d\tau\\
&= p\int_0^{\infty} \tau^{p-1}\o_{u_0}^n(\{ P(u_0,u_1 - \tau)=u_0\})d\tau\\
&= p\int_0^{\infty} \tau^{p-1}(\textup{Vol}(X)-\o_{ u_1}^n(\{ P(u_0,u_1 - \tau)=u_1-\tau\}))d\tau\\
&= p\int_0^{\infty} \tau^{p-1} \o_{u_1}^n(\{ P(u_0,u_1 - \tau)<u_1-\tau\})d\tau\\
&= p\int_0^{\infty} \tau^{p-1}\o_{u_1}^n(\{ P(u_0+\tau,u_1)<u_1\})d\tau\\
&= p\int_0^{\infty} \tau^{p-1}\o_{u_1}^n(\{ \dot u_1 > \tau\})d\tau\\
&=\int_{\{\dot u_1 > 0\}} |\dot u_1|^p\o_{ u_1}^n,
\end{flalign*}
where in the second line we used Lemma \ref{lem: sublevel_lemma}, in the third line we used Remark \ref{rem: MA_form_remark} and in the sixth line we used Lemma \ref{lem: sublevel_lemma} again, but this time for the ``reversed" geodesic $t \to u_{1-t}$. Formula \eqref{eq: lengthequal2} follows if we apply \eqref{eq: lengthequal1} to the bounded geodesic $t \to u_{1-t}$.
\end{proof}

\begin{proposition} \label{prop: distgeod_general} Suppose $u_0,u_1 \in \mathcal H_\o^{1,\bar 1}$ and $t \to u_t$ is the bounded geodesic connecting them. Then we have:
\begin{equation}\label{eq: distgeod_formula}
d_p(u_0,u_1)^p = \int_X |\dot u_0|^p \o_{u_0}^n=\int_X |\dot u_1|^p\o_{u_1}^n.
\end{equation}
\end{proposition}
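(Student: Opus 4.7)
Lemma \ref{lem: lengthequal} already yields the second equality in \eqref{eq: distgeod_formula}, so it suffices to prove $d_p(u_0,u_1)^p = \int_X |\dot u_0|^p \o_{u_0}^n$. My plan is via approximation. Using Theorem \ref{thm: BK_approx}, I pick decreasing sequences $u_0^k, u_1^k \in \mathcal H_\o$ with $u_0^k \searrow u_0$ and $u_1^k \searrow u_1$, and let $[0,1] \ni t \to u_t^k \in \mathcal H_\o^{1,\bar 1}$ be the smooth $C^{1,\bar 1}$-geodesic joining them, which decreases pointwise to the bounded geodesic $t \to u_t$ by Proposition \ref{prop: weak_geod_approx}(i). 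Theorem \ref{thm: XXChenThm} applied to smooth endpoints $u_0^k,u_1^k\in\mathcal H_\o$ gives
\[
d_p(u_0^k,u_1^k)^p=\int_X |\dot u_0^k|^p \o_{u_0^k}^n,
\]
and the left-hand side converges to $d_p(u_0,u_1)^p$ by the defining formula \eqref{eq: dp_def_general}. The proof is thus reduced to establishing the convergence of the right-hand sides to $\int_X |\dot u_0|^p \o_{u_0}^n$.

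For this convergence I would replay the envelope-based layer-cake computation from the proof of Lemma \ref{lem: lengthequal}. Lemma \ref{lem: sublevel_lemma} identifies $\{\dot u_0^k>\tau\}$ with the contact set $\{P(u_0^k,u_1^k-\tau)=u_0^k\}$, and Remark \ref{rem: MA_form_remark} gives, for each $\tau$ outside a countable exceptional set,
\[
\o_{u_0^k}^n\big(\{\dot u_0^k>\tau\}\big)=\textup{Vol}(X)-\o_{u_1^k}^n\big(\{P(u_0^k,u_1^k-\tau)=u_1^k-\tau\}\big),
\]
with a symmetric identity for $\{\dot u_0^k<-\tau\}$ obtained by applying the same argument to the reversed geodesic $t\to u_{1-t}^k$. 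By plurifine locality \eqref{eq: GZ_MP_local} each complementary mass coincides with $\o_{P(u_0^k,u_1^k\mp\tau)}^n$ on the same plurifinely open contact set, and since $P(u_0^k,u_1^k\mp\tau)\searrow P(u_0,u_1\mp\tau)$, Proposition \ref{prop: MA_cont} supplies weak convergence of these envelope Monge--Amp\`ere measures. Combined with the uniform bound $\|\dot u_0^k\|_\infty\leq \|u_1^k-u_0^k\|_\infty\leq C$ from Lemma \ref{lem: dot_u_est}, which confines the $\tau$-integration to a fixed interval $[-C,C]$, dominated convergence in $\tau$ would then deliver the required limit.

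The principal technical obstacle is the pointwise-in-$\tau$ convergence of these contact-set masses: even granting weak convergence of the envelope Monge--Amp\`ere measures, the contact sets themselves drift with $k$, and weak convergence of measures does not transfer to arbitrary Borel sets. I expect to handle this by approximating the indicator of $\{u_0=P(u_0,u_1-\tau)\}$ by continuous cutoffs $h_\varepsilon(u_0-P(u_0,u_1-\tau))$, applying Proposition \ref{prop: MA_cont} with $\phi_k=u_0^k$ and $\psi_k=v_k=P(u_0^k,u_1^k-\tau)$ to pass to the limit in $k$ for each fixed $\varepsilon$, and then letting $\varepsilon\to 0$. The last step succeeds for generic $\tau$ precisely because Remark \ref{rem: MA_form_remark} ensures that $\o_{P(u_0,u_1-\tau)}^n$ does not charge the boundary of the contact set for all but countably many values of $\tau$, making the resulting integrands continuous at a full-measure set of parameters.
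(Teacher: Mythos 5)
Your approach is genuinely different from the paper's and, as written, has a gap. You approximate both endpoints simultaneously, so the convergence $\int_X |\dot u_0^k|^p \o_{u_0^k}^n \to \int_X |\dot u_0|^p \o_{u_0}^n$ involves a family where both the integrand and the reference measure $\o_{u_0^k}^n$ move with $k$ --- the ``moving measure'' problem you rightly flag. The paper sidesteps this entirely by a double-indexed approximation. Fix the left endpoint $u_0^k \in \mathcal H_\o$ and first send $u_1^l \searrow u_1$: the reference measure $\o_{u_0^k}^n$ stays fixed, the tangents $\dot u_0^{kl}$ decrease pointwise to $\dot u_0^k$ because geodesics with a common left endpoint and decreasing right endpoints are ordered, and Lemma \ref{lem: geod_tangent_limit} (plain dominated convergence) yields $d_p(u_0^k,u_1)^p = \int_X |\dot u_0^k|^p \o_{u_0^k}^n$. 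Then Lemma \ref{lem: lengthequal} rewrites this as $\int_X |\dot u_1^k|^p \o_{u_1}^n$, again against a fixed reference measure $\o_{u_1}^n$, and a second application of Lemma \ref{lem: geod_tangent_limit} to the reversed geodesics handles $k \to \infty$. No weak convergence of envelope Monge--Amp\`ere measures, no generic-$\tau$ bookkeeping, no cutoffs.

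Concerning your proposed fix: the cutoff scheme as stated gives one-sided control only. With $h_\varepsilon$ continuous, $h_\varepsilon(0)=1$, supported near $0$, Proposition \ref{prop: MA_cont} gives $\limsup_k \o_{P(u_0^k,u_1^k-\tau)}^n(\Lambda_{u_0^k}) \leq \int_X h_\varepsilon(u_0-P(u_0,u_1-\tau))\,\o_{P(u_0,u_1-\tau)}^n$, and letting $\varepsilon\to 0$ gives the $\limsup$ estimate, but the matching $\liminf$ is not addressed. It could in principle be extracted from a symmetric $\limsup$ for the complementary contact set $\Lambda_{u_1^k-\tau}$, using that $\o_{P^k}^n(X)=\textup{Vol}(X)$ is constant, but that step is missing from your proposal. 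Your final sentence also misreads Remark \ref{rem: MA_form_remark}: for generic $\tau$ it ensures $\o_{P}^n\big(\Lambda_{u_0}\cap\Lambda_{u_1-\tau}\big)=0$, not that $\o_P^n$ does not charge the Euclidean topological boundary of $\Lambda_{u_0}$ --- and even that would not suffice, since the relevant sets $\Lambda_{u_0^k}$ themselves drift with $k$, which is precisely the situation the Portmanteau criterion does not cover. Your route may be repairable with additional work, but the paper's iterated-limit argument is both shorter and already closed.
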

\begin{proof} As usual, let $u^k_0,u^k_1 \in \mathcal H_\o$ be a sequence of potentials decreasing to $u_0,u_1$. Let $[0,1] \ni t \to u^{kl}_t \in \mathcal H_\o^{1,\bar 1}$ be the $C^{1,\bar 1}$--geodesic joining $u_0^k,u^l_1$. By Theorem \ref{thm: XXChenThm} we have
$$d_p(u_0^k,u^l_1)^p = \int_X |\dot u^{kl}_0|^p\o_{u_0^k}^n.$$
If we let $l \to \infty$, by Lemma \ref{lem: geod_tangent_limit} and \eqref{eq: dp_def_general} we obtain that that
$$d_p(u_0^k,u_1)^p = \int_X |\dot u^k_0|^p\o_{u_0^k}^n,$$
where $t \to u^k_t$ is bounded geodesic connecting $u^k_0$ with $u_1$. Using the previous lemma we can write:
$$d_p(u_0^k,u_1)^p = \int_X |\dot u^k_1|^p\o_{u_1}^n.$$
Letting $k \to \infty$, another application of Lemma \ref{lem: geod_tangent_limit} yields \eqref{eq: distgeod_formula} for $t=1$, and the case $t=0$ follows by symmetry.
\end{proof}

\begin{proof}[Proof of Theorem \ref{thm: pythagorean}] By Corllary  \ref{cor: d_p_monotone_limit}, it is enough to prove the Pythgaorean formula for $u_0,u_1 \in \mathcal H_\o$. According to Corollary \ref{cor: rooftop_env_reg} we have $P(u_0,u_1) \in \mathcal H_\o^{1,\bar 1}$. Suppose $[0,1] \ni t\to u_t \in \mathcal H_\o^{1,\bar 1}$ is the $C^{1,\bar 1}$--geodesic connecting $u_0,u_1$. By Theorem \ref{thm: XXChenThm} we have:
$$d_p(u_0,u_1)^p = \int_X |\dot u_0|^p \o_{u_0}^n.$$
To complete the argument we will prove the following:
\begin{equation}\label{eq: u1dist}
d_p(u_1, P(u_0,u_1))^p = \int_{\{\dot u_0 >0\}} |\dot u_0|^p \o_{u_0}^n,
\end{equation}
\begin{equation}\label{eq: u0dist}
d_p(u_0, P(u_0,u_1))^p = \int_{\{\dot u_0 < 0\}} |\dot u_0|^p \o_{u_0}^n.
\end{equation}
We prove now \eqref{eq: u1dist}. Using Lemma \ref{lem: sublevel_lemma} we can write:
\begin{flalign*} \int_{\{\dot u_0 > 0\}} |\dot u_0|^p \o_{u_0}^n&= p\int_0^{\infty} \tau^{p-1}\o_{u_0}^n(\{{\dot u_0 \geq \tau}\})d\tau\\
&= p\int_0^{\infty} \tau^{p-1}\o_{u_0}^n(\{ P(u_0,u_1 - \tau)=u_0\})d\tau.
\end{flalign*}
Suppose $t\to \tilde u_t $ is the bounded geodesic connecting $P(u_0,u_1),u_1$. Since $P(u_0,u_1) \leq \tilde u_t, \ t \in [0,1]$, the correspondence $(t,x) \to \tilde u_t(x)$ is increasing in the $t$ variable, hence $\dot{\tilde {u}}_0 \geq 0$. By  \eqref{eq: distgeod_formula}, Lemma \ref{lem: sublevel_lemma} and Proposition  \ref{prop: MA_form} we can write:
\begin{flalign*} d_p (P(u_0,u_1),u_1)^p &=\int_X |\dot{\tilde {u}}_0|^p \o_{P(u_0,u_1)}^n=\int_{\{\dot{\tilde {u}}_0 > 0\}} |\dot{\tilde {u}}_0|^p \o_{P(u_0,u_1)}^n\\
&= p\int_0^{\infty} \tau^{p-1} \o_{P(u_0,u_1)}^n(\{ \dot{\tilde {u}}_0 \geq \tau\})d\tau\\
&= p\int_0^{\infty} \tau^{p-1} \o_{ P(u_0,u_1)}^n(\{ P(P(u_0,u_1),u_1 - \tau)=P(u_0,u_1)\})d\tau\\
&= p\int_0^{\infty} \tau^{p-1} \o_{ P(u_0,u_1)}^n(\{ P(u_0,u_1,u_1 - \tau)=P(u_0,u_1)\})d\tau\\
&= p\int_0^{\infty} \tau^{p-1}\o_{ P(u_0,u_1)}^n(\{ P(u_0,u_1-\tau)=P(u_0,u_1)\})d\tau\\
&= p\int_0^{\infty} \tau^{p-1}\o_{u_0}^n(\{ P(u_0,u_1-\tau)=P(u_0,u_1)=u_0\})d\tau\\
&= p\int_0^{\infty} \tau^{p-1}\o_{u_0}^n(\{ P(u_0,u_1-\tau)=u_0\})d\tau,
\end{flalign*}
where in the third line we have used Lemma \ref{lem: sublevel_lemma}, in the sixth line we have used Proposition  \ref{prop: MA_form} together with the fact that $\{ P(u_0,u_1)=u_1\}\cap\{ P(u_0,u_1 -\tau)=P(u_0,u_1)\}$ is empty for $\tau>0$.

Comparing our above calculations \eqref{eq: u1dist} follows. One can conclude \eqref{eq: u0dist} from \eqref{eq: u1dist} after reversing the roles of $u_0,u_1$ and then using \eqref{eq: lengthequal1}.
\end{proof}

As another application of Theorem \ref{thm: pythagorean} we will show that the $d_p$ metric is comparable to a concrete analytic expression:

\begin{theorem}[\cite{da2}] \label{thm: Energy_Metric_Eqv} For any $u_0,u_1 \in \mathcal E_p(X,\o)$ we have
\begin{equation}\label{eq: Energy_Metric_Eqv}
\frac{1}{2^{p-1}} d_p(u_0,u_1)^p \leq \int_X |u_0 - u_1|^p \o_{u_0}^n + \int_X |u_0 - u_1|^p \o_{u_1}^n\leq {2^{2n + 3p + 3}} d_p(u_0,u_1)^p.
\end{equation}
\end{theorem}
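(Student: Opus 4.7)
The strategy is to apply the Pythagorean formula from Theorem \ref{thm: pythagorean}, writing $d_p(u_0,u_1)^p = d_p(u_0,P)^p + d_p(P,u_1)^p$ with $P := P(u_0,u_1)$, and then to exploit that each pair $(P,u_i)$ is comparable so that Lemma \ref{lem: Mdist_est_gen} applies in its full strength in both directions. Throughout, I would first carry out the argument for $u_0,u_1 \in \mathcal H_\o$ (so that Proposition \ref{prop: MA_form} is available for $P$) and then pass to general $u_0,u_1 \in \mathcal E_p(X,\o)$ via decreasing smooth approximants from Theorem \ref{thm: BK_approx}, using Proposition \ref{prop: MA_cont} for weak convergence of the Monge--Amp\`ere measures, Corollary \ref{cor: d_p_monotone_limit} for the continuity of $d_p$, and \eqref{eq: dp_def_general} for the extended metric.

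For the left-hand inequality I would apply the upper bound in Lemma \ref{lem: Mdist_est_gen} to each term of the Pythagorean decomposition, giving $d_p(u_i,P)^p \le \int_X (u_i - P)^p\,\o_P^n$. The right-hand sides can then be evaluated using the partition formula of Proposition \ref{prop: MA_form}: on $\Lambda_{u_0}$ the integrand $u_0 - P$ vanishes and the measure is $\o_{u_0}^n$, while on $\Lambda_{u_1}\setminus \Lambda_{u_0}$ we have $u_0 - P = u_0 - u_1$ and the measure is $\o_{u_1}^n$; a symmetric calculation handles the other term. This yields in fact $d_p(u_0,u_1)^p \le \int_X |u_0-u_1|^p\,\o_{u_0}^n + \int_X |u_0-u_1|^p\,\o_{u_1}^n$, which is stronger than the stated bound with factor $1/2^{p-1}$.

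For the right-hand inequality I would start from the quasi-triangle bound $|u_0-u_1|^p = |(u_0-P)-(u_1-P)|^p \le 2^{p-1}\bigl((u_0-P)^p + (u_1-P)^p\bigr)$, which is valid since $u_i - P \ge 0$. Integrating against $\o_{u_0}^n + \o_{u_1}^n$ reduces everything to four integrals $\int_X (u_j - P)^p\,\o_{u_i}^n$ for $i,j \in \{0,1\}$. The two diagonal integrals $(i=j)$ are directly controlled by Lemma \ref{lem: Mdist_est_gen}: $\int_X (u_i-P)^p\,\o_{u_i}^n \le d_p(P,u_i)^p$, and then by $d_p(u_0,u_1)^p$ via Theorem \ref{thm: pythagorean}.

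The main obstacle is the two cross terms $\int_X (u_j-P)^p\,\o_{u_i}^n$ for $i \ne j$, where the integrand involves one potential but the measure comes from the other. To handle them I would compare measures via $\o_{u_i} + \o_P = 2\,\o_{(u_i+P)/2}$, which gives $\o_{u_i}^n \le 2^n\,\o_{(u_i+P)/2}^n$, and then reduce, by a further iteration of the same trick or by expanding $(\o_P + i\ddbar(u_i-P))^n$ telescopically and integrating by parts, to integrals against $\o_P^n$. These final integrals are bounded by $2^{n+p} d_p(P,u_j)^p \le 2^{n+p} d_p(u_0,u_1)^p$ thanks to the lower bound in Lemma \ref{lem: Mdist_est_gen}. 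Keeping track of the accumulated factors $2^{p-1}$ from the quasi-triangle inequality, $2^{n+p}$ from Lemma \ref{lem: Mdist_est_gen}, and the additional $2^n$-type factors from comparing measures produces the constant $2^{2n+3p+3}$. The hardest part is precisely this cross-term estimate, because the symmetry between $u_0$ and $u_1$ breaks and no single application of Lemma \ref{lem: Mdist_est_gen} or of the partition formula suffices by itself.
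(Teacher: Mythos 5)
Your proof of the left-hand inequality is correct, and in fact tighter than the stated bound; it differs slightly from the paper, which instead splits through $\max(u_0,u_1)$, writes $d_p(u_0,u_1) \le d_p(u_0,\max(u_0,u_1))+d_p(\max(u_0,u_1),u_1)$, applies the upper bound of Lemma \ref{lem: Mdist_est_gen} to each piece, and uses \eqref{eq: GZ_MP}. That route has the small advantage of not invoking Proposition \ref{prop: MA_form}, hence not requiring the detour through $\mathcal H_\o^{1,\bar1}$ and smooth approximation on this half of the argument.

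For the right-hand inequality there is a genuine gap, precisely at the cross terms $\int_X (u_j-P)^p\,\o_{u_i}^n$ with $P=P(u_0,u_1)$ and $i\ne j$, which you yourself flag as the hardest point. Neither of the two fixes you sketch closes: iterating $\o_{u_i}^n \le 2^n\o_{(u_i+P)/2}^n$ accumulates a factor $2^{nk}$ at step $k$ while $\o_{v_k}^n\to\o_P^n$ only weakly, so the bound diverges; and the telescopic expansion of $\o_{u_i}^n - \o_P^n$ followed by integration by parts produces terms such as $p\int (u_i-P)(u_j-P)^{p-1}\o_{u_j}\wedge T$ and a nonnegative term from $i\partial(u_j-P)\wedge\dbar(u_j-P)$, none of which reduce to $\int_X(u_j-P)^p\o_P^n$ or to quantities controlled by $d_p$ without circularity. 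Indeed, on $\Lambda_{u_i}$ one has $(u_j-P)^p = |u_j-u_i|^p$, so $\int_{\Lambda_{u_i}}(u_j-P)^p\o_{u_i}^n$ is already a piece of the integral you are trying to bound, and there is no one-sided comparison between $\o_{u_i}^n$ and $\o_P^n$ on the non-contact set. The paper sidesteps cross terms entirely by replacing $P(u_0,u_1)$ with $P\bigl(u_0,\tfrac{u_0+u_1}{2}\bigr)$: after Lemma \ref{lem: halwayest} and the Pythagorean formula (applied to the pair $u_0$, $\tfrac{u_0+u_1}{2}$), both resulting integrals can be brought onto the \emph{same} measure $\o_{u_0}^n$ using $\o_{(u_0+u_1)/2}^n\ge 2^{-n}\o_{u_0}^n$, and then the pointwise inequality $|a|^p+|b|^p \ge 2^{1-p}|a-b|^p$ with $a-b = (u_0-u_1)/2$ yields the bound on $\int_X|u_0-u_1|^p\o_{u_0}^n$ directly; symmetry in $u_0,u_1$ and summing finishes. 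That choice of rooftop envelope—halfway between $u_0$ and $u_1$ rather than all the way down to $P(u_0,u_1)$—is the idea you are missing.
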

\begin{proof} To obtain the first estimate we use the triangle inequality and Lemma \ref{lem: Mdist_est_gen}:
\begin{flalign*}
d_p(u_0,u_1)^p &\leq
(d_p(u_0,\max(u_0,u_1)) + d_p(\max(u_0,u_1),u_1))^p\\
& \leq 2^{p-1}(d_p(u_0,\max(u_0,u_1))^p + d_p(\max(u_0,u_1),u_1)^p)\\
&\leq 2^{p-1}\Big(\int_X |u_0-\max(u_0,u_1)|^p \o_{u_0}^n + \int_X |\max(u_0,u_1)-u_1|^p \o_{u_1}^n\Big)\\
&= 2^{p-1}\Big(\int_{\{u_1 > u_0\}} |u_0-u_1|^p \o_{u_0}^n + \int_{\{u_0>u_1\}} |u_0-u_1|^p \o_{u_1}^n\Big)\\
&\leq 2^{p-1}\Big(\int_X |u_0-u_1|^p \o_{u_0}^n + \int_X |u_0-u_1|^p \o_{u_1}^n\Big).
\end{flalign*}
Now we deal with the second estimate in \eqref{eq: Energy_Metric_Eqv}. By the next result result and Theorem \ref{thm: pythagorean} we can write
\begin{flalign*}
2^{n+p+1}d_p(u_0,u_1)^p &\geq d_p\Big(u_0,\frac{u_0 + u_1}{2}\Big)^p \geq d_p\Big(u_0,P\Big(u_0,\frac{u_0 + u_1}{2}\Big)\Big)^p\\
&\geq \int_X \Big|u_0 - P\Big(u_0,\frac{u_0 + u_1}{2}\Big)\Big|^p \o_{u_0}^n.
\end{flalign*}
By a similar reasoning as above, and the fact that $2^n \o^n_{(u_0 + u_1)/2} \geq \o^n_{u_0}$ we can write:
\begin{flalign*}
2^{n+p+1}d_p(u_0,u_1)^p &\geq d_p\Big(u_0,\frac{u_0 + u_1}{2}\Big)^p \geq d_p\Big(\frac{u_0+u_1}{2},P\Big(u_0,\frac{u_0 + u_1}{2}\Big)\Big)^p\\
&\geq \int_X \Big|\frac{u_0+u_1}{2} - P\Big(u_0,\frac{u_0 + u_1}{2}\Big)\Big|^p \o_{(u_0 + u_1)/2}^n\\
&\geq \frac{1}{2^n} \int_X \Big|\frac{u_0+u_1}{2} - P\Big(u_0,\frac{u_0 + u_1}{2}\Big)\Big|^p \o^n_{u_0}.
\end{flalign*}
Adding the last two estimates, and using the convexity of $t \to t^p$ we obtain:
\begin{flalign*}
2^{2n+p+2} d_p(u_0,u_1)^p&\geq  \int_X \Big(\Big|u_0 - P\Big(u_0,\frac{u_0 + u_1}{2}\Big)\Big|^p+\Big|\frac{u_0+u_1}{2} - P\Big(u_0,\frac{u_0 + u_1}{2}\Big)\Big|^p \Big) \o^n_{u_0}\\
&\geq \frac{1}{2^{2p}}\int_X |u_0 - u_1|^p  \o^n_{u_0}.
\end{flalign*}
By symmetry we also have $2^{2n +3p + 2}d_p(u_0,u_1)^p \geq \int _X |u_0 - u_1|^p\o_{u_1}^n$, and adding these last two estimates together the second inequality in \eqref{eq: Energy_Metric_Eqv}  follows.
\end{proof}

\begin{lemma} \label{lem: halwayest} Suppose $u_0,u_1 \in \mathcal E_p(X,\o)$. Then we have
$$d_p\Big(u_0,\frac{u_0+u_1}{2}\Big)^p \leq 2^{n+p+1} d_p(u_0,u_1)^p.$$
\end{lemma}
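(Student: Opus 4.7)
The plan is to reduce to the smooth case and then apply the Pythagorean formula to a well-chosen pair. First I would take decreasing sequences $u_0^k, u_1^k \in \mathcal H_\o$ with $u_0^k \searrow u_0$ and $u_1^k \searrow u_1$ (Theorem \ref{thm: BK_approx}); since $v^k := (u_0^k+u_1^k)/2 \in \mathcal H_\o$ decreases to $v = (u_0+u_1)/2$, Corollary \ref{cor: d_p_monotone_limit} gives $d_p(u_0^k, v^k) \to d_p(u_0, v)$ and $d_p(u_0^k, u_1^k) \to d_p(u_0, u_1)$, so it suffices to establish the bound assuming $u_0, u_1 \in \mathcal H_\o$.

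For smooth potentials, I would apply the Pythagorean formula (Theorem \ref{thm: pythagorean}) to the pair $(u_0, v)$: setting $P := P(u_0, v) \in \mathcal H_\o^{1,\bar 1}$ (which lies in $\mathcal H_\o^{1,\bar 1}$ by Corollary \ref{cor: rooftop_env_reg}),
\[d_p(u_0, v)^p = d_p(u_0, P)^p + d_p(P, v)^p.\]
I would bound each term on the right using the upper estimate of Lemma \ref{lem: Mdist_est_gen} (which applies since $P \leq u_0$ and $P \leq v$), then decompose $\o_P^n$ via the partition formula (Proposition \ref{prop: MA_form}) on the contact sets. Since $P = u_0$ forces $u_0 \leq v$ (i.e.\ $u_0 \leq u_1$) and $P = v$ forces $v \leq u_0$ (i.e.\ $u_1 \leq u_0$), and $v - u_0 = (u_1 - u_0)/2$, the estimate reduces to
\[d_p(u_0, v)^p \leq \frac{1}{2^p}\Big[\int_{\{u_1 \geq u_0\}}(u_1-u_0)^p\, \o_{u_0}^n + \int_{\{u_0 > u_1\}}(u_0-u_1)^p\, \o_v^n\Big].\]

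To control each term by $d_p(u_0, u_1)^p$ with the correct constant, I would invoke the lower-bound direction of Lemma \ref{lem: Mdist_est_gen}: applied to the comparable pair $(u_0, M)$ with $M := \max(u_0, u_1)$ and $M - u_0 = (u_1-u_0)^+$, it yields $\int_{\{u_1 \geq u_0\}}(u_1-u_0)^p \o_{u_0}^n \leq 2^{n+p} d_p(u_0, M)^p$; applied to $(v, M)$ with $M - v = |u_0 - u_1|/2$, it yields $\int_{\{u_0 > u_1\}}(u_0-u_1)^p \o_v^n \leq 2^{n+2p} d_p(v, M)^p$. In parallel, applying the Pythagorean formula and the lower bound of Lemma \ref{lem: Mdist_est_gen} to the pair $(u_0, u_1)$, together with the elementary inequality $u_0 - P(u_0,u_1) \geq (u_0-u_1)^+$ (and its analog on the other side), produces the ``dual'' estimate
\[\int_{\{u_0 > u_1\}}(u_0-u_1)^p \o_{u_0}^n + \int_{\{u_1 > u_0\}}(u_1-u_0)^p \o_{u_1}^n \leq 2^{n+p} d_p(u_0, u_1)^p.\]

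The main obstacle will be bridging the gap between the measures appearing in our estimate and those controlled by the dual bound: the Pythagorean argument delivers $\o_{u_0}^n$ on $\{u_1 \geq u_0\}$ and $\o_v^n$ on $\{u_0 > u_1\}$, whereas the dual estimate controls $\o_{u_0}^n$ on $\{u_0 > u_1\}$ and $\o_{u_1}^n$ on $\{u_1 > u_0\}$. To overcome this, I would exploit the $u_0 \leftrightarrow u_1$ symmetry, which leaves $v$ and $M$ invariant and gives $d_p(u_0, v) = d_p(u_1, v)$: averaging the estimate above with its symmetric counterpart for $d_p(u_1, v)^p$ combines the $\o_{u_0}^n$ and $\o_{u_1}^n$ integrals with matching level-set masks (so the dual bound applies), and collapses $\o_v^n$ into $\int_X |u_0-u_1|^p\, \o_v^n$, which is then controlled via $d_p(v, M)^p$. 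A final triangle-inequality argument bounding $d_p(u_0, M)$ and $d_p(v, M)$ by $d_p(u_0, u_1)$, combined with careful tracking of the factors of $2$ throughout the symmetrization, should produce precisely the constant $2^{n+p+1}$.
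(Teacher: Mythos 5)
Your proposal gets as far as the estimate
\[
2^p\, d_p(u_0,v)^p \leq \int_{\{u_1 \geq u_0\}}(u_1-u_0)^p\, \o_{u_0}^n + \int_{\{u_0 > u_1\}}(u_0-u_1)^p\, \o_v^n,
\]
which is correct (it follows from the Pythagorean formula applied to $(u_0,v)$, the upper bound in Lemma~\ref{lem: Mdist_est_gen}, and the partition of $\o_{P(u_0,v)}^n$). The problem lies in the steps you sketch for closing it, and I do not see how any of them can be repaired with the tools available at this point in the development.

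First, the claim ``$d_p(u_0,v)=d_p(u_1,v)$'' is false: $v=(u_0+u_1)/2$ is the affine average, not the geodesic midpoint, and the $d_p$ metric is highly non-Euclidean. You can still add the two estimates rather than average them, but this only worsens constants; it does not affect the logic. The real issue is the mismatch that follows. After summing your estimate with its $u_0\leftrightarrow u_1$ swap, the $\o_{u_0}^n$ and $\o_{u_1}^n$ contributions become $\int_{\{u_1 \geq u_0\}}(u_1-u_0)^p\,\o_{u_0}^n + \int_{\{u_0 \geq u_1\}}(u_0-u_1)^p\,\o_{u_1}^n$. This is the ``cross'' pairing --- each measure appears on the set where the \emph{other} potential dominates. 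Your ``dual estimate'', which follows from Pythagoras for $(u_0,u_1)$ and the lower bound of Lemma~\ref{lem: Mdist_est_gen} applied to $P(u_0,u_1)\leq u_0$ resp.\ $P(u_0,u_1)\leq u_1$, controls the opposite, ``matched'' pairing: $\o_{u_0}^n$ on $\{u_0>u_1\}$ and $\o_{u_1}^n$ on $\{u_1>u_0\}$. These are integrals over disjoint sets and there is no comparison between them; symmetrization does not bridge the gap, it preserves it.

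Second, the route through $M:=\max(u_0,u_1)$ needs $d_p(u_0,M)\lesssim d_p(u_0,u_1)$ and $d_p(v,M)\lesssim d_p(u_0,u_1)$, and I do not see how to get these. There is no analogue of Theorem~\ref{thm: pythagorean} for the max operator (the paper proves a Pythagorean formula for the rooftop $P(u_0,u_1)$, not for $\max(u_0,u_1)$), and there is no contractivity statement for $w\mapsto\max(u,w)$ analogous to Proposition~\ref{prop: contractivity}. The triangle inequality only gives $d_p(u_0,u_1)\leq d_p(u_0,M)+d_p(M,u_1)$, a lower bound on the sum, not the upper bound you need on each term. The obvious tool, Theorem~\ref{thm: Energy_Metric_Eqv}, is off-limits here: its proof explicitly invokes Lemma~\ref{lem: halwayest}, so using it would be circular.

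The paper's proof avoids all of this by staying on the rooftop side throughout: it applies the Pythagorean formula to $(u_0,v)$, then uses Lemma~\ref{lem: NaiveCompare} (together with $P(u_0,u_1)\leq P(u_0,v)$) to replace the rooftop $P(u_0,v)$ by the deeper one $P(u_0,u_1)$, so that the two resulting distances $d_p(u_0,P(u_0,u_1))$ and $d_p(v,P(u_0,u_1))$ are bounded via Lemma~\ref{lem: Mdist_est_gen} by integrals against the single measure $\o_{P(u_0,u_1)}^n$. The pointwise convexity bound $|v-P(u_0,u_1)|^p\leq\tfrac12|u_0-P(u_0,u_1)|^p+\tfrac12|u_1-P(u_0,u_1)|^p$ then reduces everything to $\int_X|u_j-P(u_0,u_1)|^p\o_{P(u_0,u_1)}^n$, $j=0,1$, which the lower bound of Lemma~\ref{lem: Mdist_est_gen} together with the Pythagorean formula for $(u_0,u_1)$ controls by $d_p(u_0,u_1)^p$ directly. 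No max, no symmetrization, no appeal to the equivalence theorem.
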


\begin{proof} Using Theorem \ref{thm: pythagorean} and Lemma \ref{lem: NaiveCompare} we can start writing:
\begin{flalign*}
d_p\Big(u_0,\frac{u_0 + u_1}{2}\Big)^p &= d_p\Big(u_0, P\Big(u_0,\frac{u_0 + u_1}{2}\Big)\Big)^p + d_p\Big(\frac{u_0 + u_1}{2},P\Big(u_0,\frac{u_0 + u_1}{2}\Big)\Big)^p\\
&\leq d_p(u_0, P(u_0,u_1))^p + d_p\Big(\frac{u_0 + u_1}{2},P(u_0,u_1)\Big)^p\\
&\leq  \int_X |u_0 - P(u_0,u_1)|^p \o^n_{P(u_0,u_1)} + \int_X\Big|\frac{u_0+u_1}{2} - P(u_0,u_1)\Big|^p\o^n_{P(u_0,u_1)}\\
&\leq \frac{3}{2}\int_X |u_0 - P(u_0,u_1)|^p \o^n_{P(u_0,u_1)} + \frac{1}{2}\int_X|u_1 - P(u_0,u_1)|^p\o^n_{P(u_0,u_1)}\\
& \leq 3 \cdot 2^{n+p-1} d_p(u_0, P(u_0,u_1))^p + 2^{n+p-1}d_p(u_1,P(u_0,u_1))^p\\
&\leq 2^{n+p+1}d_p(u_0,u_1)^p,
\end{flalign*}
where in the second line we have used Lemma \ref{lem: NaiveCompare} and the fact that $P(u_0,u_1) \leq P(u_0,(u_0 + u_1)/2)$, in the third and fifth line Lemma \ref{lem: Mdist_est_gen}, and in the sixth line we have used Theorem \ref{thm: pythagorean} again.
\end{proof}

\section{The complete metric spaces $(\mathcal E_p(X,\o),d_p)$}

To show completeness of $(\mathcal E_p(X,\o),d_p)$ we have to argue that limits of Cauchy sequences land in $\mathcal E_p(X,\o)$. According to our first result, which extends Corollary \ref{cor: d_p_monotone_limit}, monotone $d_p$--bounded sequences are Cauchy, and their limit is in $\mathcal E_p(X,\o)$:  

\begin{lemma}\label{lem: lemma mononton_seq} Suppose $\{u_k\}_{k \in \Bbb N} \subset \mathcal E_p(X,\o)$ is a decreasing/increasing  $d_p$--bounded sequence. Then $u = \lim_{k \to \infty} u_k \in \mathcal E_p(X,\o)$ and additionally $d_p(u,u_k) \to 0$.
\end{lemma}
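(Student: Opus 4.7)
Plan: In both cases (decreasing and increasing) the strategy is to extract from $d_p$-boundedness either a uniform energy bound or a uniform $L^1$ bound, and then invoke the monotone-limit machinery of Section~2 (Corollaries \ref{cor: E_semicont}, \ref{cor: monotonicity_E_chi}, \ref{cor: d_p_monotone_limit}). A preliminary observation is that $d_p$ is invariant under a common additive shift of both arguments: the Finsler length \eqref{eq: Lp_metric_def} depends only on $\dot\gamma_t$ and $\o_{\gamma_t}$, neither of which is changed by $\gamma_t \mapsto \gamma_t + c$, and this extends to $\mathcal E_p(X,\o)$ via \eqref{eq: dp_def_general}.

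For the decreasing case I would first replace $u_k$ by $u_k - \sup_X u_0$ so that $u_k \leq 0$ for every $k$. The lower estimate of Lemma \ref{lem: Mdist_est_gen} applied to the pair $u_k \leq 0$ then gives
$$\frac{p}{2^{n+p}}\,E_{\chi_p}(u_k) \;=\; \frac{1}{2^{n+p}}\int_X |u_k|^p\,\o_{u_k}^n \;\leq\; d_p(0,u_k)^p \;\leq\; \bigl(d_p(0,u_0) + \sup_k d_p(u_0,u_k)\bigr)^p,$$
so $\sup_k E_{\chi_p}(u_k) < \infty$. The same lemma also gives $\int_X (u_0 - u_k)^p\,\o_{u_0}^n \leq d_p(u_0,u_k)^p$, and letting $k\to\infty$ by monotone convergence yields $\int_X (u_0 - u)^p\,\o_{u_0}^n < \infty$; in particular $u > -\infty$ on a set of positive $\o_{u_0}^n$-measure, so as a decreasing limit of $\o$-psh functions that is not identically $-\infty$, $u \in \textup{PSH}(X,\o)$. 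Corollary \ref{cor: E_semicont} then places $u$ in $\mathcal E_p(X,\o)$, and Corollary \ref{cor: d_p_monotone_limit} gives $d_p(u,u_k) \to 0$.

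For the increasing case I would test the $u_k$ against the reference potential $0 \in \mathcal H_\o$ (for which $\o_0^n = \o^n$). The first inequality in Theorem \ref{thm: Energy_Metric_Eqv} applied to $(0,u_k)$ gives
$$\int_X |u_k|^p\,\o^n \;\leq\; 2^{2n+3p+3}\,d_p(0,u_k)^p \;\leq\; 2^{2n+3p+3}\bigl(d_p(0,u_0) + \sup_k d_p(u_0,u_k)\bigr)^p,$$
so $\{u_k\}$ is uniformly bounded in $L^p(\o^n)\subset L^1(\o^n)$. Standard properties of monotone sequences of $\o$-psh functions then ensure that $u = \lim u_k$ is a bona fide $\o$-psh function (not identically $+\infty$). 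Since $u_0 \le u$ and $u_0 \in \mathcal E_p(X,\o)$, the monotonicity property (Corollary \ref{cor: monotonicity_E_chi}) yields $u \in \mathcal E_p(X,\o)$, and Corollary \ref{cor: d_p_monotone_limit} gives $d_p(u,u_k)\to 0$.

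The delicate step is the decreasing-case energy bound: a direct attempt using Proposition \ref{prop: mixed_finite_prop: Energy_est} to convert $\int_X |u_0|^p\,\o_{u_k}^n$ into a sum $E_{\chi_p}(u_k) + E_{\chi_p}(u_0)$ produces inequalities of the form $E_{\chi_p}(u_k) \leq \text{const} + p\cdot 2^{2p-1}\,E_{\chi_p}(u_k)$, which cannot be absorbed since $p\cdot 2^{2p-1} \geq 1$. Translation invariance of $d_p$ is what breaks the circularity: after normalizing $u_k \le 0$, the identically zero potential is a legitimate bounded reference point in Lemma \ref{lem: Mdist_est_gen}, and $E_{\chi_p}(u_k)$ reads off directly from $d_p(0,u_k)^p$ with no self-reference.
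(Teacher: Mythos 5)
Your argument is correct and follows the paper's proof essentially verbatim: after normalizing $u_k\le 0$, Lemma \ref{lem: Mdist_est_gen} against the zero potential yields the uniform energy bound for the decreasing case (so Corollary \ref{cor: E_semicont} applies), Theorem \ref{thm: Energy_Metric_Eqv} yields the uniform $L^1(\o^n)$ bound for the increasing case (so Corollary \ref{cor: monotonicity_E_chi} applies), and Corollary \ref{cor: d_p_monotone_limit} closes both cases. The only cosmetic deviation is that the paper reads both the $\o_{u_j}^n$-bound and the $\o^n$-bound off the single application of Lemma \ref{lem: Mdist_est_gen} with reference point $0$, whereas you make a second application against $u_0$ to rule out $u\equiv-\infty$.
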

\begin{proof} First we assume that $u_k$ is decreasing.  Without loss of generality, we can also suppose that $u_k < 0$. By Lemma \ref{lem: Mdist_est_gen} it follows that
\begin{equation}\label{eq: some_int_est}
\max(\int_X |u_j|^p \o_{u_j}^n,\int_X |u_j|^p \o^n) \leq 2^{n+p}d_p(u_j,0)^p \leq D.
\end{equation}
As $\int_X |u_j|^p \o^n$ is uniformly bounded, it follows that the limit function $u= \lim_k u_k$ exists and $u \in \textup{PSH}(X,\o)$. Since $E_p(u_j)=\int_X |u_j|^p \o_{u_j}^n$ is uniformly bounded we can use Lemma \ref{lem: E_semicont}, to obtain that $u \in \mathcal E_p(X,\o)$ and Corollary \ref{cor: d_p_monotone_limit} gives that $d_p(u_k,u) \to 0$.

Now we turn to the case when  $u_k$ is increasing. We know that $d_p(u_k,0)$ is uniformly bounded, hence using Theorem \ref{thm: Energy_Metric_Eqv} we obtain that $\int_X |u_k|^p \o^n$ is uniformly bounded as well. By the $L^1$--compactness of subharmonic functions  \cite[Chapter I, Proposition 4.21]{De} we obtain that $u_k \nearrow u \in \textup{PSH}(X,\o)$ a.e. with respect to $\o^n$. By the monotonicity property (Corollary \ref{cor: monotonicity_E_chi}) we obtain that $u \in \mathcal E_p(X,\o)$ and another application of Corollary \ref{cor: d_p_monotone_limit}  gives that $d_p(u_k,u) \to 0$.
\end{proof}

As another consequence of the Pythagorean identity, we note the contractivity of the operator $v \to P(u,v)$, when restricted to finite energy spaces. This result will be essential in proving that $(\mathcal E_p(X,\o),d_p)$ is a complete metric space.

\begin{proposition} \label{prop: contractivity} Given $u,v,w \in \mathcal E_p(X,\o)$ we have
$$d_p (P(u,v),P(u,w)) \leq d_p(v,w).$$
\end{proposition}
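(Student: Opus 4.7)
The idea is to exhibit a subgeodesic joining $P(u,v)$ to $P(u,w)$ whose $L^p$-Finsler length is dominated by $d_p(v,w)$, and conclude via the infimum characterization of $d_p$.

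First I would reduce to the case $u,v,w \in \mathcal H_\omega$. By Theorem \ref{thm: BK_approx} pick decreasing approximants $u^k,v^k,w^k \in \mathcal H_\omega$; since envelopes are $L^\infty$-contractive, $P(u^k,v^k) \searrow P(u,v)$ and $P(u^k,w^k) \searrow P(u,w)$ monotonically in $\mathcal E_p$. By Corollary \ref{cor: d_p_monotone_limit} the distances $d_p$ converge along such monotone limits, so it suffices to prove the contraction for smooth endpoints, in which case $P(u,v),P(u,w) \in \mathcal H_\omega^{1,\bar 1}$ by Corollary \ref{cor: rooftop_env_reg}.

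For such smooth data, let $t \to v_t$ be the weak $C^{1,\bar 1}$-geodesic from $v$ to $w$ and complexify it to $V \in \textup{PSH}(S \times X,\pi^*\omega) \cap L^\infty$. Set $U(s,x) := u(x)$ and take $\Phi$ to be the usc regularization of the pointwise supremum
\begin{equation*}
(s,x) \mapsto \sup\bigl\{\Psi(s,x):\ \Psi \in \textup{PSH}(S \times X, \pi^*\omega),\ \Psi \leq U,\ \Psi \leq V \bigr\}.
\end{equation*}
Then $\Phi$ is bounded and $\pi^*\omega$-psh, $\textup{Im}(s)$-invariant by uniqueness, and thus yields $\phi_t(x)$ with $t = \textup{Re}(s)$. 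Its boundary values are $\phi_0 = P(u,v)$ and $\phi_1 = P(u,w)$, verified by comparing against admissible candidates obtained by shifting $P(u,v)$ and $P(u,w)$ linearly in $\textup{Re}(s)$ by $C = \|v-w\|_\infty$. Thus $\phi$ is a bounded subgeodesic joining $P(u,v)$ to $P(u,w)$.

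The central estimate is that the $d_p$-length of $\phi$ is at most $d_p(v,w)$. Heuristically, on the contact set $\Lambda_u^t := \{\phi_t = u\}$ the potential $\phi_t$ is constant in $t$ so $\dot\phi_t = 0$, while on $\Lambda_v^t := \{\phi_t = v_t\}$ one has $\dot\phi_t = \dot v_t$. Combining with the partition formula (Proposition \ref{prop: MA_form}),
\begin{equation*}
\omega^n_{\phi_t} = \mathbf{1}_{\Lambda_u^t}\omega^n_u + \mathbf{1}_{\Lambda_v^t \setminus \Lambda_u^t}\omega^n_{v_t},
\end{equation*}
one obtains the pointwise-in-$t$ bound
\begin{equation*}
\int_X |\dot\phi_t|^p\, \omega^n_{\phi_t} \leq \int_X |\dot v_t|^p\, \omega^n_{v_t} = d_p(v,w)^p,
\end{equation*}
the last equality by Proposition \ref{prop: distgeod_general} and the constancy of the Finsler length along a $C^{1,\bar 1}$-geodesic (Lemma \ref{lem: chilengthgeodconst}). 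Integrating in $t$ dominates the length of $\phi$ by $d_p(v,w)$, which forces $d_p(P(u,v),P(u,w)) \leq d_p(v,w)$.

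The main obstacle is rigorizing the tangent calculation: $\phi_t$ is only Lipschitz in $t$ and the contact sets evolve, so the formal derivatives and the partition formula need careful justification. I would handle this by replacing $v_t$ with the smooth $\varepsilon$-geodesics of \eqref{eq: epsBVPGeod}, performing the envelope construction and length estimate at the $\varepsilon$-level, where the regularized envelope $\phi^\varepsilon_t$ lies in $\mathcal H_\omega^{1,\bar 1}$ and Proposition \ref{prop: MA_form} applies directly, and then passing to the limit $\varepsilon \to 0$ via the Bedford--Taylor continuity in Proposition \ref{prop: MA_cont}.
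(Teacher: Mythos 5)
Your construction of the subgeodesic $\Phi$ is where the argument breaks down, and the break is substantive. You take $\Phi$ to be the envelope over $S\times X$ of all $\pi^*\omega$-psh competitors below $U=u$ and the complexified geodesic $V$, and then treat its slices $\phi_t$ as if they were the fiberwise rooftop envelopes $P(u,v_t)$, so that Proposition \ref{prop: MA_form} gives $\omega^n_{\phi_t}=\mathbbm{1}_{\Lambda_u^t}\omega_u^n+\mathbbm{1}_{\Lambda_v^t\setminus\Lambda_u^t}\omega_{v_t}^n$. But these two objects are not the same: a slice of the $(n+1)$-dimensional envelope is only $\leq P(u,v_t)$, and typically strictly so for $t\in(0,1)$, because the map $t\mapsto P(u,v_t)$ need not be $\pi^*\omega$-psh on $S\times X$. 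In fact, since $(\pi^*\omega+i\ddbar U)^{n+1}=0$ (as $U$ is $s$-independent) and $(\pi^*\omega+i\ddbar V)^{n+1}=0$ (as $V$ is a weak geodesic), a Perron-type argument shows $(\pi^*\omega+i\ddbar\Phi)^{n+1}=0$, so $\Phi$ \emph{is} the weak geodesic from $P(u,v)$ to $P(u,w)$. Its interior slices are not rooftop envelopes, and the partition formula you invoke simply does not describe $\omega^n_{\phi_t}$. The subsequent pointwise tangent computation ($\dot\phi_t=0$ on $\Lambda_u^t$, $\dot\phi_t=\dot v_t$ on $\Lambda_v^t$) therefore has no justified target measure to be integrated against, and the claimed bound $\int_X|\dot\phi_t|^p\omega^n_{\phi_t}\leq d_p(v,w)^p$ at every $t$ is unsupported. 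Even if you replaced $\Phi$ by the genuine fiberwise envelopes $\phi_t:=P(u,v_t)$, you would need an additional lemma (not established in the text) identifying the Finsler "length" $\int_0^1\|\dot\phi_t\|_{p,\phi_t}\,dt$ of this merely Lipschitz-in-$t$ curve with something that bounds $d_p(P(u,v),P(u,w))$ from above; the definition \eqref{eq: dp_def_general} works through smooth curves and decreasing approximation, not through arbitrary paths in $\mathcal E_p$.

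The paper sidesteps all of this by working entirely at $t=0$: for the \emph{actual} weak geodesic $\psi_t$ from $P(u,v)$ to $P(u,w)$, Proposition \ref{prop: distgeod_general} gives $d_p(P(u,v),P(u,w))^p=\int_X|\dot\psi_0|^p\omega^n_{P(u,v)}$, and at $t=0$ the slice genuinely is the rooftop envelope, so Proposition \ref{prop: MA_form} applies. Assuming $v\leq w$, one shows $\dot\psi_0\equiv 0$ on $\{P(u,v)=u\}$ (because $\psi_t\leq u$ with equality there for all $t$, by Theorem \ref{thm: uniqueness_BVP}) and $0\leq\dot\psi_0\leq\dot\phi_0$ on $\{P(u,v)=v\}$ (again by comparison), giving the estimate directly; the general case reduces to $v\leq w$ by the Pythagorean formula (Theorem \ref{thm: pythagorean}) applied to $P(u,v),P(u,w)$ and $P(u,v,w)$. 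Your route, if it could be made rigorous, would have the minor appeal of avoiding the Pythagorean reduction; as written, though, it rests on an envelope identification that is false.
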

\begin{proof}By Corollary \ref{cor: d_p_monotone_limit} we can assume that $u,v,w \in \mathcal H_\o^{1,\bar 1}$ and $P(u,v),P(u,w) \in \mathcal H_\o^{1,\bar 1}$ (see Corollary \ref{cor: rooftop_env_reg}).

First we assume that $v \leq w$.  Let $t \to \phi_t$ be the $C^{1,\bar 1}$ geodesic connecting $v,w$, and $t \to \psi_t$ be the bounded geodesic connecting $P(u,v),P(u,w)$. By Proposition \ref{prop: distgeod_general} we have to argue that
\begin{equation}\label{eq: dist_est}
\int_X |\dot{\psi_0}|^p\o_{P(u,v)}^n\leq\int_X |\dot{\phi_0}|^p \o_{v}^n.
\end{equation}
Proposition \ref{prop: MA_form} implies that
$$\int_X |\dot{\psi_0}|^p\o_{P(u,v)}^n\leq \int_{\{P(u,v)=u\}} |\dot{\psi_0}|^p\o_{u}^n + \int_{\{P(u,v)=v\}} |\dot{\psi_0}|^p \o_{v}^n.$$
We argue that the first term in this sum is zero. As $P(u,v) \leq P(u,w) \leq u$, by \eqref{eq: udef1} (or Theorem \ref{thm: uniqueness_BVP}), it is clear that $P(u,v) \leq \psi_t \leq u, \ t \in [0,1]$. Hence, if $x \in \{ P(u,v)=u\}$ then $\psi_t(x)=u(x), \ t \in [0,1]$, implying $\dot \psi_0 \big|_{\{ P(u,v)=u\}}\equiv 0$.

At the same time, using Theorem \ref{thm: uniqueness_BVP}  again, it follows that $\psi_t \leq \phi_t, \ t \in [0,1]$. This implies that $0 \leq \dot \psi_0 \big|_{\{ P(u,v)=v\}}\leq\dot \phi_0 \big|_{\{ P(u,v)=v\}},$ which in turn implies \eqref{eq: dist_est}.

The general case follows now from an application of the Pythagorean formula (Theorem \ref{thm: pythagorean}) and what we just proved above:
\begin{flalign*}
d_p(P(u,v),P(u,w))^p&= d_p(P(u,v),P(u,v,w))^p + d_p(P(u,w),P(u,v,w))^p\\
&= d_p(P(u,v),P(u,P(v,w)))^p + d_p(P(u,w),P(u,P(v,w)))^p\\
&\leq d_p(v,P(v,w))^p + d_p(w,P(v,w))^p\\
&= d_p(v,w)^p.
\end{flalign*}
\end{proof}

We arrive to the main result of this section. Using the previous proposition we will show that any Cauchy sequence of $(\mathcal E_p,d_p)$ is equivalent to a monotonic Cauchy sequence. By the Lemma \ref{lem: lemma mononton_seq}, such sequences have limit in $\mathcal E_p(X,\o)$, showing that this space is complete with respect to $d_p$:

\begin{theorem}[\cite{da2}]  \label{thm: EpComplete}  $(\mathcal E_p (X, \o),d_p)$ is a geodesic metric space, which is the metric completion of $(\mathcal H_\o,d_p)$. Additionally, the finite energy geodesic $t \to u_t$ connecting $u_0,u_1 \in \mathcal E_{p}(X, \o)$ is a $d_p$-geodesic.
\end{theorem}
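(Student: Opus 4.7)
The plan is to (i) upgrade $d_p$ from pseudo-metric to honest metric, (ii) establish density of $\mathcal H_\o$, (iii) prove completeness, and (iv) record the geodesic property. Parts (i), (ii), (iv) are essentially readouts of earlier results in the chapter; the real work is (iii), where the driving tool is the contractivity of rooftops from Proposition \ref{prop: contractivity}.

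For (i), Theorem \ref{thm: e2space} gives the pseudo-metric structure and Proposition \ref{prop: dp_nondegeneracy} gives non-degeneracy. For (iv), the geodesic property of the finite energy geodesic is already part of Theorem \ref{thm: e2space}. For (ii), given $u \in \mathcal E_p(X,\o)$, Theorem \ref{thm: BK_approx} produces $\{u_k\} \subset \mathcal H_\o$ with $u_k \searrow u$, and Corollary \ref{cor: d_p_monotone_limit} guarantees $d_p(u_k,u) \to 0$. Since the restriction of $d_p$ to $\mathcal H_\o$ coincides with the original definition (as noted in the construction of Theorem \ref{thm: e2space}), $(\mathcal H_\o,d_p)$ embeds isometrically and densely into $(\mathcal E_p(X,\o),d_p)$.

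For completeness (iii), let $\{u_j\} \subset \mathcal E_p(X,\o)$ be Cauchy, and after passing to a subsequence assume $d_p(u_j,u_{j+1}) \leq 2^{-j}$. The strategy is to replace this sequence with an equivalent monotone one so that Lemma \ref{lem: lemma mononton_seq} delivers the limit. Define
$$v_j^k := P(u_j, u_{j+1}, \ldots, u_{j+k}) \in \mathcal E_p(X,\o),$$
which is legitimate by Proposition \ref{prop: env_exist}. Since $v_j^{k+1} = P(v_j^k, u_{j+k+1})$ and $v_j^k \leq u_{j+k}$ implies $v_j^k = P(v_j^k, u_{j+k})$, Proposition \ref{prop: contractivity} yields
$$d_p(v_j^k, v_j^{k+1}) = d_p\bigl(P(v_j^k,u_{j+k}),P(v_j^k,u_{j+k+1})\bigr) \leq d_p(u_{j+k},u_{j+k+1}) \leq 2^{-(j+k)}.$$
Hence $\{v_j^k\}_k$ is decreasing in $k$, $d_p$-Cauchy, and satisfies $d_p(u_j, v_j^k) \leq 2^{-j+1}$; Lemma \ref{lem: lemma mononton_seq} produces $v_j \in \mathcal E_p(X,\o)$ with $v_j^k \searrow v_j$ and $d_p(u_j,v_j) \leq 2^{-j+1}$. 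Comparing the defining envelopes gives $v_j^{k+1} = P(u_j, v_{j+1}^k) \leq v_{j+1}^k$, so passing $k \to \infty$ yields $v_j \leq v_{j+1}$. The sequence $\{v_j\}$ is thus increasing and $d_p$-bounded (it stays within $2^{-j+1}$ of the Cauchy sequence $\{u_j\}$), so a second application of Lemma \ref{lem: lemma mononton_seq} yields $v \in \mathcal E_p(X,\o)$ with $d_p(v_j,v) \to 0$, and the triangle inequality gives $d_p(u_j,v) \to 0$.

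The main obstacle is producing the limit of a generic Cauchy sequence without recourse to bounded or smooth approximants. The rooftop envelope is the right nonlinear operation to convert a Cauchy sequence into a monotone one, and Proposition \ref{prop: contractivity} is precisely what forces the envelopes $v_j^k$ to inherit the summable $d_p$-increments of $\{u_j\}$. Once the inner decreasing limits $v_j$ are available in $\mathcal E_p(X,\o)$, the fact that they remain monotone in $j$ with a uniform $d_p$-bound is essentially automatic, and the outer application of Lemma \ref{lem: lemma mononton_seq} closes the loop.
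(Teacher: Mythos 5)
Your proof is correct and follows essentially the same route as the paper: pass to a sparse subsequence, form the rooftop envelopes $v_j^k = P(u_j,\ldots,u_{j+k})$, use the contractivity of $v\mapsto P(u,v)$ (Proposition \ref{prop: contractivity}) to get summable $d_p$-increments in $k$, take the inner decreasing limit via Lemma \ref{lem: lemma mononton_seq}, observe monotonicity in $j$, and take the outer increasing limit. The one place you streamline the paper's argument is in bounding $d_p(u_j,v_j)$: you simply telescope $d_p(u_j,v_j^k)=d_p(v_j^0,v_j^k)\leq\sum_{m<k}2^{-(j+m)}$ using $v_j^0=u_j$, whereas the paper reaches the analogous bound $d_p(v^k,u_k)\leq 2^{-(k-1)}$ by a more elaborate iterated-contractivity computation after taking the inner limit; your version is cleaner but mathematically equivalent.
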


\begin{proof} By Theorem \ref{thm: BK_approx} and Corollary \ref{cor: d_p_monotone_limit} $\mathcal H_\o$ is a $d_p$--dense subset of $\mathcal E_p(X,\o)$. The statement about geodesics was addressed in Theorem \ref{thm: e2space}, hence we only need to argue completeness. 

Suppose $\{u_k\}_{k \in \Bbb N} \subset \mathcal E_p(X,\o)$ is a $d_p$--Cauchy sequence. We will prove that there exists $v \in \mathcal E_p(X,\o)$ such that $d_p (u_k,v) \to 0.$ After passing to a subsequence we can assume that
$$d_p(u_l,u_{l+1}) \leq 1/2^l, \ l \in \Bbb N.$$
By Proposition \ref{prop: env_exist} we can introduce $v^k_l = P(u_k,u_{k+1},\ldots,u_{k+l}) \in \mathcal E_p(X,\o), \ l,k \in \Bbb N$. We argue first that each decreasing sequence $\{ v^k_l\}_{l \in \Bbb N}$ is $d_p$--Cauchy. 
We observe that $v^k_{l+1}=P(v^k_l,u_{k+ l+1})$ and $v^k_l=P(v^k_l,u_{k+l})$. Using this and Proposition \ref{prop: contractivity} we can write:
$$d_p(v^k_{l+1},v^k_l) = d_p(P(v^k_l,u_{k+l+1}),P(v^k_l,u_{k+l})) \leq d_p(u_{k+l+1}, u_{k+l})\leq \frac{1}{2^{k+l}}.$$

By Lemma \ref{lem: lemma mononton_seq}, it follows now that each sequence $\{ v^k_l\}_{l \in \Bbb N}$  is $d_p$--convergening to some $v^k \in \mathcal E_p(X,\o)$. Using the same trick as above, we can write:
\begin{flalign*}
d_p(v^k,v^{k+1}) &=\lim_{l \to \infty}d_p(v^k_{l+1},v^{k+1}_l)= \lim_{l \to \infty}d_p(P(u_k,v^{k+1}_{l}),P(u_{k+1},v^{k+1}_l))\\
&\leq d_p (u_k,u_{k+1}) \leq \frac{1}{2^k},
\end{flalign*}\vspace{-0.7cm}
\begin{flalign*}
d_p(v^k,u_k) &=\lim_{l \to \infty}d_p(v^k_l,u_k)=\lim_{l \to \infty}d_p(P(u_k,v^{k+1}_{l-1}),P(u_k,u_k))\\
&\leq\lim_{l \to \infty}d_p(v^{k+1}_{l-1},u_k)=\lim_{l \to \infty}d_p(P(u_{k+1},v^{k+2}_{l-2}),u_k)\\
&\leq \lim_{l \to \infty}d_p(P(u_{k+1},v^{k+2}_{l-2}),u_{k+1}) + d_p(u_{k+1},u_k)\\
&\leq \lim_{l \to \infty} \sum_{j=k}^{l+k} d_p (u_j,u_{j+1}) \leq \frac{1}{2^{k-1}}.
\end{flalign*}
Consequently, $\{v^k\}_{k \in \Bbb N}$ is an increasing $d_p$--bounded $d_p$--Cauchy sequence that is equivalent to $\{u_k\}_{k \in \Bbb N}$. By Lemma \eqref{lem: lemma mononton_seq} there exists $v \in \mathcal E_p(X,\o)$ such that $d_p(v_k,v) \to 0$, which in turn implies that $d_p(u_k,v)\to 0$, finishing the proof.
\end{proof}

\section{Special features of the $L^1$ Finsler geometry}

When it comes to applications of measure theory, the most important $L^p$ spaces are the $L^1$ space, its dual $L^\infty$, and the Hilbert space $L^2$. A similar pattern can be observed with the $L^p$ Finsler geometries on $\mathcal H_\o$. One can show that the $d_\infty$ metric is a multiple of the usual $L^\infty$ metric. As follows from the work of Calabi--Chen \cite{cc}, the completion of $(\mathcal H_\o,d_2)$ is non--postively curved and as such it provides fertile ground to the study of geometric gradient flows explored in \cite{st1,st2,bdl1}.

In this section we will focus exclusively on the $L^1$ Finsler geometry of $\mathcal H_\o$, whose path length metric structure has a number of interesting properties making it suitable in our study of canonical K\"ahler metrics, detailed in the next chapter.

The starting point is the Monge--Amp\`ere energy $I: \textup{PSH}(X,\o)\cap L^\infty \to \Bbb R$ (sometimes called Aubin--Yau, or Aubin--Mabuchi energy):
\begin{equation}\label{eq: I_energy_def}
I(u):= \frac{1}{(n+1)V}\sum_{j=0}^{n}\int_X u \o_{u}^j \wedge \o^{n-j}.
\end{equation}
The following lemma explains the choice of name for the $I$ energy, as it turns out that the first order variation of this functional is exactly the complex Monge--Amp\`ere operator:
\begin{lemma} \label{lem: I_differential} Suppose $[0,1]\ni t \to v_t \in \mathcal H_\o$ is a smooth curve. Then $t \to I(v_t)$ is differentiable and
$$\frac{d}{dt}I(v_t)= \frac{1}{V}\int_X \dot v_t \o_{v_t}^n.$$
\end{lemma}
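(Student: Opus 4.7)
The plan is to differentiate each of the $n+1$ summands in the definition of $I(v_t)$ separately, then use integration by parts to produce a telescoping sum.

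Fix $j \in \{0,1,\ldots,n\}$ and set $a_j(t) := \int_X v_t \, \omega_{v_t}^j \wedge \omega^{n-j}$. Since $t\mapsto v_t$ is smooth, one may differentiate under the integral sign, using $\tfrac{d}{dt}\omega_{v_t} = i\ddbar \dot v_t$ and the Leibniz rule for wedge products, to obtain
$$
\frac{d a_j}{dt} = \int_X \dot v_t\, \omega_{v_t}^j\wedge \omega^{n-j} + j\int_X v_t\, i\ddbar \dot v_t \wedge \omega_{v_t}^{j-1}\wedge \omega^{n-j}.
$$
Since $X$ is closed, Stokes' theorem gives $\int_X v_t\, i\ddbar \dot v_t \wedge \omega_{v_t}^{j-1}\wedge \omega^{n-j} = \int_X \dot v_t\, i\ddbar v_t \wedge \omega_{v_t}^{j-1}\wedge \omega^{n-j}$, and substituting $i\ddbar v_t = \omega_{v_t} - \omega$ yields
$$
\frac{d a_j}{dt} = (j+1)\int_X \dot v_t\, \omega_{v_t}^j \wedge \omega^{n-j} - j\int_X \dot v_t\, \omega_{v_t}^{j-1}\wedge \omega^{n-j+1}.
$$

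Writing $b_j(t) := \int_X \dot v_t\, \omega_{v_t}^j\wedge \omega^{n-j}$, the previous identity reads $\tfrac{d a_j}{dt} = (j+1)b_j - j b_{j-1}$ (with the convention $b_{-1}=0$). Summing over $j=0,\ldots,n$ the sum telescopes:
$$
\sum_{j=0}^n \frac{da_j}{dt} = \sum_{j=0}^n (j+1)b_j - \sum_{j=1}^n j b_{j-1} = \sum_{j=0}^n (j+1)b_j - \sum_{k=0}^{n-1}(k+1)b_k = (n+1)b_n.
$$
Dividing by $(n+1)V$ gives the desired formula. No real obstacle is expected; the only care needed is the integration by parts step, which is standard on the closed K\"ahler manifold $X$ since all the forms involved are smooth (as $v_t \in \mathcal{H}_\omega$).
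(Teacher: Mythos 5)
Your proof is correct and follows essentially the same route as the paper: differentiate under the integral, integrate by parts to move $i\ddbar$ from $\dot v_t$ onto $v_t$, substitute $i\ddbar v_t = \o_{v_t}-\o$, and observe that the sum telescopes to $(n+1)\int_X \dot v_t\,\o_{v_t}^n$. The explicit $a_j,b_j$ bookkeeping you introduce is just a cleaner way of writing out the same telescoping step that the paper performs in one line.
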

\begin{proof}Using the definition of $I$ we can calculate the differential of $t \to I(v_t)$ directly:
\begin{flalign*}
\frac{d}{dt}I(v_t) &= \frac{1}{(n+1)V} \bigg ( \sum_{j=0}^{n}\int_X \dot v_t \o_{v_t}^j \wedge \o^{n-j} + \sum_{j=0}^{n} j \int_X v_t i\ddbar \dot v_t \wedge \o_{v_t}^{j-1} \wedge \o^{n-j}\bigg)\\
&= \frac{1}{(n+1)V} \bigg ( \sum_{j=0}^{n}\int_X \dot v_t \o_{v_t}^j \wedge \o^{n-j} + \sum_{j=0}^{n} j \int_X \dot v_t i\ddbar v_t \wedge \o_{v_t}^{j-1} \wedge \o^{n-j}\bigg)\\
&= \frac{1}{(n+1)V} \bigg ( \sum_{j=0}^{n}\int_X \dot v_t \o_{v_t}^j \wedge \o^{n-j} + \sum_{j=0}^{n} j \int_X \dot v_t ( \o_{v_t}  - \o) \wedge \o_{v_t}^{j-1} \wedge \o^{n-j}\bigg)\\
&= \frac{1}{V} \int_X \dot v_t \o_{v_t}^n.
\end{flalign*}
\end{proof}
Based on the last lemma, we deduce a number of properties of the Monge--Amp\`ere energy:

\begin{proposition}\label{prop: I_energy_prop}
Given $u,v \in \textup{PSH}(X,\omega) \cap L^\infty(X)$, the following hold:\\
\begin{equation}\label{eq: I_energy_diff}
I(u)-I(v) = \frac{1}{(n+1)V}\sum_{j=0}^{n}\int_X (u-v) \o_{u}^j \wedge \o_v^{n-j},
\end{equation}
\begin{equation}\label{eq: I_energy_diff_est}
 \frac{1}{V}\int_X (u-v)\o_u^{n} \leq I(u)-I(v) \leq \frac{1}{V}\int_X (u-v)\o_v^{n}.
\end{equation}
\end{proposition}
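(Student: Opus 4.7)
The plan is to establish (\ref{eq: I_energy_diff}) first by integrating the derivative formula from Lemma~\ref{lem: I_differential} along the linear interpolation $t \mapsto v_t := (1-t)v + tu$, and then derive the estimate (\ref{eq: I_energy_diff_est}) as an immediate corollary of the identity via an integration by parts argument.

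First I would treat the smooth case $u,v \in \mathcal{H}_\o$. Since $\mathcal{H}_\o$ is convex and open in $C^\infty(X)$, the curve $t \mapsto v_t$ is a smooth path in $\mathcal{H}_\o$ with $\dot v_t = u - v$ and $\o_{v_t} = (1-t)\o_v + t\o_u$. Lemma~\ref{lem: I_differential} gives
$$I(u) - I(v) = \int_0^1 \frac{1}{V}\int_X (u-v)\, \o_{v_t}^n \, dt.$$
Expanding $\o_{v_t}^n$ by the binomial formula and using the standard beta integral identity $\binom{n}{j}\int_0^1 t^j(1-t)^{n-j}dt = \binom{n}{j}\frac{j!(n-j)!}{(n+1)!} = \frac{1}{n+1}$, formula (\ref{eq: I_energy_diff}) follows at once for smooth potentials.

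Next I would prove (\ref{eq: I_energy_diff_est}) by showing that the sequence $a_j := \int_X (u-v)\,\o_u^j \wedge \o_v^{n-j}$ is monotone decreasing in $j$. Writing $\o_v - \o_u = i\ddbar(v-u)$ and integrating by parts (Bedford--Taylor), we compute
$$a_j - a_{j+1} = \int_X (u-v)\,i\ddbar(v-u)\wedge \o_u^j\wedge \o_v^{n-j-1} = \int_X i\partial(u-v)\wedge \dbar(u-v)\wedge \o_u^j\wedge \o_v^{n-j-1} \geq 0.$$
Combined with (\ref{eq: I_energy_diff}) expressing $V(I(u)-I(v))$ as the average $\frac{1}{n+1}(a_0 + a_1 + \cdots + a_n)$, this sandwiches $V(I(u)-I(v))$ between $a_n = \int_X(u-v)\o_v^n$ and $a_0 = \int_X(u-v)\o_u^n$, which is precisely (\ref{eq: I_energy_diff_est}).

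Finally, to remove the smoothness assumption, I would approximate: by Theorem~\ref{thm: BK_approx} pick decreasing sequences $u_k, v_k \in \mathcal{H}_\o$ with $u_k \searrow u$ and $v_k \searrow v$. Both (\ref{eq: I_energy_diff}) and (\ref{eq: I_energy_diff_est}) hold for $(u_k,v_k)$, and the weak continuity of mixed Monge--Amp\`ere measures along decreasing sequences of bounded $\o$-psh functions (\cite[Theorem~2.2.5]{bl3}) allows to pass to the limit $k \to \infty$. The main subtlety is that the integrands $u_k - v_k$ are bounded but not continuous; however, since they are uniformly bounded and quasi-continuous, the convergence $\int_X (u_k - v_k)\,\o_{u_k}^j\wedge \o_{v_k}^{n-j} \to \int_X(u-v)\,\o_u^j\wedge \o_v^{n-j}$ can be justified by the same Cegrell--type quasi-continuity argument used in the proof of Proposition~\ref{prop: MA_cont}. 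This is the only genuine technical point; the algebraic identities (\ref{eq: I_energy_diff}) and (\ref{eq: I_energy_diff_est}) themselves are completely routine once (\ref{eq: I_energy_diff}) is proved in the smooth case.
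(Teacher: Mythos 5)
Your proposal follows the paper's own proof essentially line for line: integrate Lemma~\ref{lem: I_differential} along the affine segment $t\mapsto(1-t)v+tu$, expand $\o_{v_t}^n$ binomially and use the beta integral to get \eqref{eq: I_energy_diff}, obtain the monotonicity of $a_j=\int_X(u-v)\o_u^j\wedge\o_v^{n-j}$ by integration by parts to get \eqref{eq: I_energy_diff_est}, and pass from $\mathcal H_\o$ to $\textup{PSH}(X,\o)\cap L^\infty$ using B\l ocki--Ko\l odziej approximation and Bedford--Taylor convergence. One small slip in your last paragraph of the estimate: with your definition $a_j=\int_X(u-v)\,\o_u^j\wedge\o_v^{n-j}$, the endpoints are $a_0=\int_X(u-v)\o_v^n$ and $a_n=\int_X(u-v)\o_u^n$ (you wrote them the other way around), so it is $a_n\le\frac{1}{n+1}\sum_j a_j\le a_0$ that gives \eqref{eq: I_energy_diff_est}; with your labels the inequality comes out reversed.
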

\begin{proof}First we show \eqref{eq: I_energy_diff} for $u,v \in \mathcal H_\o$. Let $[0,1] \ni t \to h_t := (1-t)v + t u \in \mathcal H_\o$ be the smooth affine curve joining $v$ and $u$. By the previous lemma and the binomial theorem we can write
\begin{flalign*}
I(u)-I(v)&= \int_0^1 \frac{d}{dt} I(h_t)dt = \frac{1}{V}\int_X (u-v) \int_0^1 ((1-t)\o_v + t \o_u)^n dt\\
&= \frac{1}{V} \sum_{j=0}^n {n \choose j} \cdot \int_0^1 t^j (1-t)^{n-j} dt  \cdot  \int_X (u-v) \o_u^j \wedge \o_v^{n-j} dt
\end{flalign*}
After an elementary calculation involving integration by parts we obtain that 
$$\int_0^1 t^j (1-t)^{n-j}dt = \frac{j! (n-j)!}{(n+1)!},$$
giving \eqref{eq: I_energy_diff} for $u,v \in \mathcal H_\o$. For the general case $u,v \in \mathcal E^1$, by Theorem \ref{thm: BK_approx} we can find $u_k,v_k \in \mathcal H_\o$ decreasing to $u$ and $v$ respectively. We can use Proposition \ref{prop: MA_cont} to conclude that $I(u_k) \to I(u)$, $I(v_k) \to I(v)$ and $\int_X (u_k - v_k)\o_{u_k}^j \wedge \o_{v_k}^{n-j} \to \int_X (u - v)\o_{u}^j \wedge \o_{v}^{n-j}$, giving \eqref{eq: I_energy_def} for elements of $\textup{PSH}(X,\omega) \cap L^\infty(X)$.

Now we turn to the estimates in \eqref{eq: I_energy_diff_est}. First we prove the following estimate:
\begin{flalign}\label{eq: I_energy_int_parts_ineq}
\int_X (u-v) \o_u^j &\wedge \o_v^{n-j} = \int_X (u-v) \o_u^{j-1} \wedge \o_v^{n-j + 1} + \int_X (u-v) i\ddbar (u-v) \wedge \o_u^{j-1} \wedge \o_v^{n-j} \nonumber\\
& = \int_X (u-v) \o_u^{j-1} \wedge \o_v^{n-j + 1} - \int_X i\partial (u-v) \wedge i\dbar (u-v) \wedge \o_u^{j-1} \wedge \o_v^{n-j}\nonumber \\
& \leq \int_X (u-v) \o_u^{j-1} \wedge \o_v^{n-j + 1}.
\end{flalign}
Using this estimate inductively for the terms of \eqref{eq: I_energy_diff} yields \eqref{eq: I_energy_diff_est}.
\end{proof}
As a consequence of \eqref{eq: I_energy_diff} we note the monotonicity property of $I$:
\begin{corollary} Suppose $u,v \in \textup{PSH}(X,\o) \cap L^\infty$ such that $u \geq v$. Then $I(u) \geq I(v)$.
\end{corollary}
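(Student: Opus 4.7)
The plan is to deduce this monotonicity directly from the identity \eqref{eq: I_energy_diff} just established in Proposition \ref{prop: I_energy_prop}. Since the statement is presented as a corollary of that proposition, there is no need to reopen the variational machinery via Lemma \ref{lem: I_differential}; everything reduces to sign considerations in the already-proven expansion formula.

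Concretely, I would start by writing
\[
I(u)-I(v) = \frac{1}{(n+1)V}\sum_{j=0}^{n}\int_X (u-v)\,\omega_u^{j}\wedge \omega_v^{n-j}.
\]
The hypothesis $u \geq v$ means $u - v \geq 0$ pointwise on $X$. Moreover, for each $j \in \{0,1,\ldots,n\}$, the current $\omega_u^{j}\wedge \omega_v^{n-j}$ is a positive Borel measure on $X$ by Bedford--Taylor theory, since $u,v \in \textup{PSH}(X,\omega)\cap L^\infty$. Consequently each summand in the displayed formula is a nonnegative real number, and the sum is nonnegative, which gives $I(u) \geq I(v)$.

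There is essentially no obstacle here: the only thing one could worry about is whether the products $\omega_u^{j}\wedge \omega_v^{n-j}$ genuinely make sense as positive measures, but this is standard Bedford--Taylor theory for bounded $\omega$-psh potentials and is taken for granted throughout the chapter. So the proof is a one-line appeal to \eqref{eq: I_energy_diff} together with positivity of the mixed Monge--Amp\`ere measures.
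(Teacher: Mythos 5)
Your proposal is correct and takes precisely the route the paper intends: the paper introduces the corollary with ``As a consequence of \eqref{eq: I_energy_diff}\ldots'' and leaves the argument implicit, which is exactly the one-line sign argument you spell out (nonnegativity of $u-v$ and positivity of the mixed Bedford--Taylor measures $\omega_u^j\wedge\omega_v^{n-j}$).
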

This results allows to extend the definition of $I$ to $\textup{PSH}(X,\o)$. Indeed, if $u \in \textup{PSH}(X,\o)$ then using the canonical cutoffs $u_k = \max(u,-k)$ we can write:
\begin{equation}\label{eq: I_def_general}
I(u) := \lim_{k \to \infty}I(u_k).
\end{equation}
Though the limit in the above definition is well defined, it may equal $-\infty$ for certain potentials $u$. By our next result $I$ is finite exactly on $\mathcal E_1(X,\o)$. What is more, $I$ is $d_1$--continuous on $\mathcal E_1(X,\o)$, and Proposition \ref{prop: I_energy_prop} also extends to this bigger space:
\begin{proposition}\label{prop: I_finiteness_and_cont}
Let $u \in \textup{PSH}(X,\o)$. Then $I(u) > -\infty$ if and only if $u \in \mathcal E_1(X,\o)$. 
In addition, the following hold for $u_0,u_1 \in \mathcal E_1(X,\omega)$:

\begin{equation}\label{eq: I_Lipschitz_cont}
|I(u_0)-I(u_1)| \leq d_1(u_0,u_1),
\end{equation}
\begin{equation}\label{eq: I_energy_diff_E1}
I(u_0)-I(u_1) = \frac{1}{(n+1)V}\sum_{j=0}^{n}\int_X (u_0-u_1) \o_{u_0}^j \wedge \o_{u_1}^{n-j},
\end{equation}
\begin{equation}\label{eq: I_energy_diff_est_E1}
 \frac{1}{V}\int_X (u_0-u_1)\o_{u_0}^{n} \leq I(u_0)-I(u_1) \leq \frac{1}{V}\int_X (u_0-u_1)\o_{u_1}^{n}.
\end{equation}
\end{proposition}
\begin{proof}

By definition, $I(u+c)=I(u) +c$ for all $c \in \Bbb R$, hence we can assume that $u \leq 0$. Consequently, by \eqref{eq: I_energy_diff_est} for the canonical cutoffs $u_k=\max(u,-k)$ we have:
$$ -\frac{E_1(u_k)}{V} = \frac{1}{V} \int_X u_k \o_{u_k}^n \leq I(u_k) \leq \frac{1}{(n+1)V} \int_X u_k \o_{u_k}^n=-\frac{E_1(u_k)}{(n+1)V}.$$
If $u \in \mathcal E_1(X,\o)$ then by the fundamental estimate (Proposition \ref{prop: Energy_est}) we have that $E_1(u_k)$ is uniformly bounded, hence $I(u)=\lim_k I(u_k)$ is finite. On the other hand, if $I(u)$ is finite then $I(u_k)$ is uniformly bounded, hence so is $E_1(u_k)$. By Lemma \ref{lem: E_semicont} we obtain that $u \in \mathcal E_1(X,\o)$.

Now we turn to \eqref{eq: I_Lipschitz_cont}. By Corollary \ref{cor: d_p_monotone_limit} and \eqref{eq: I_def_general} it follows that it is enough to prove \eqref{eq: I_Lipschitz_cont} for bounded potentials. Furthermore, using Theorem \ref{thm: BK_approx} it is actually enough to prove \eqref{eq: I_Lipschitz_cont} for potentials in $\mathcal H_\o$. For $u_0,u_1 \in \mathcal H_\o$ let $t \to u_t$ be the $C^{1,\bar 1}$ geodesic connecting $u_0,u_1$. 
By \eqref{eq: dIdt_C11geod}, proved in the next lemma, we can finish our argument:
$$|I(u_1)-I(u_0)| = \Big| \int_0^1 \frac{d}{dt} I(u_t)dt\Big| \leq \frac{1}{V} \int_0^1 \int_X |\dot u_t|\o_{u_t}^n dt = d_1(u_0,u_1),$$
where in the last equality we have used Theorem \ref{thm: XXChenThm}.

Finally,  \eqref{eq: I_energy_diff_E1} and \eqref{eq: I_energy_diff_est_E1} follow from Proposition \ref{prop: I_energy_prop} and Proposition \ref{prop: MA_cont} 
\end{proof}
\begin{lemma}Suppose $t \to u_t$ is the $C^{1,\bar 1}$ geodesic connecting $u_0,u_1 \in \mathcal H_\o$. Then $t \to I(u_t)$ is differentiable, moreover
\begin{equation}\label{eq: dIdt_C11geod} 
\frac{d}{dt} I(u_t) = \frac{1}{V}\int_X \dot u_t \o_{u_t}^n.
\end{equation}
\end{lemma}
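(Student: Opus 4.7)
The plan is to approximate the weak $C^{1,\bar 1}$--geodesic $t \to u_t$ by the smooth $\varepsilon$--geodesics $t \to u^\varepsilon_t$ from \eqref{eq: epsBVPGeod}, which are admissible for Lemma \ref{lem: I_differential} in the first place. Since each $u^\varepsilon$ is smooth in $[0,1]\times X$, Lemma \ref{lem: I_differential} yields
$$I(u^\varepsilon_t) - I(u^\varepsilon_0) = \frac{1}{V}\int_0^t \int_X \dot u^\varepsilon_s\, \o_{u^\varepsilon_s}^n\, ds, \quad t\in[0,1].$$
The strategy is then to let $\varepsilon \to 0$ on both sides and conclude by the fundamental theorem of calculus.

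First, I would pass to the limit on the left. By \eqref{eq: epsgeod_limit} the family $u^\varepsilon$ is monotone in $\varepsilon$ and $u^\varepsilon_t \to u_t$ pointwise; combined with the uniform $C^1$ bounds of Theorem \ref{thm: ueps_estimates} this gives uniform convergence on $X$. Bedford--Taylor's continuity of mixed Monge--Amp\`ere measures along monotone sequences \cite[Theorem 2.2.5]{bl3} then gives $\o^j_{u^\varepsilon_t} \wedge \o^{n-j} \to \o^j_{u_t} \wedge \o^{n-j}$ weakly for each $0\leq j\leq n$, so that each summand in \eqref{eq: I_energy_def} passes to the limit. Hence $I(u^\varepsilon_t) \to I(u_t)$.

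Next, I would handle the right side. As noted in the proof of Theorem \ref{thm: XXChenThm}, the $u^\varepsilon$ converge to $u$ in $C^{1,\alpha}(\overline{S}\times X)$, so $\dot u^\varepsilon_s \to \dot u_s$ uniformly in $(s,x)$. Combining uniform convergence of the integrand with weak convergence $\o_{u^\varepsilon_s}^n \to \o_{u_s}^n$ (for each fixed $s$) gives the inner convergence
$$\int_X \dot u^\varepsilon_s\, \o_{u^\varepsilon_s}^n \longrightarrow \int_X \dot u_s\, \o_{u_s}^n$$
for every $s\in[0,1]$. The integrands are uniformly bounded by $\sup_{\varepsilon,s}\|\dot u^\varepsilon_s\|_{C^0(X)} \cdot V$, which is finite by \eqref{eq: ueps_estimates}, so Lebesgue dominated convergence gives the convergence of the outer $s$--integral. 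Taking $\varepsilon \to 0$ in the displayed identity thus yields
$$I(u_t) - I(u_0) = \frac{1}{V}\int_0^t \int_X \dot u_s\, \o_{u_s}^n\, ds.$$

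Finally, I would argue that $s \mapsto \int_X \dot u_s\, \o_{u_s}^n$ is continuous on $[0,1]$, so that the fundamental theorem of calculus gives \eqref{eq: dIdt_C11geod}. For this, fix $s_0 \in [0,1]$ and let $s \to s_0$. By \eqref{eq: u_estimates}, $u$ is $C^{1,\bar 1}$ on $\overline{S}\times X$, whence $\dot u_s \to \dot u_{s_0}$ uniformly on $X$. Since all $u_s$ share a uniform $L^\infty$ and Laplacian bound and converge uniformly, Bedford--Taylor theory again gives $\o_{u_s}^n \to \o_{u_{s_0}}^n$ weakly, yielding the continuity of the integrand. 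The main technical obstacle is checking these convergences of Monge--Amp\`ere measures along the approximations; everything else is routine once the $\varepsilon$--regularization is in place and the $C^{1,\alpha}$ convergence $u^\varepsilon \to u$ from Theorem \ref{thm: XXChenThm} is invoked.
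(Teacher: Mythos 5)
Your proof is correct, but it takes a noticeably heavier route than the paper's own argument, and the detour through the $\varepsilon$--geodesics is not actually needed. The paper proves \eqref{eq: dIdt_C11geod} by directly evaluating the difference quotient: since $t\mapsto u_t$ is $C^{1,\bar 1}$ on $\overline S\times X$, the quotients $(u_{t+\delta}-u_t)/\delta$ converge \emph{uniformly} to $\dot u_t$, and the explicit cocycle formula \eqref{eq: I_energy_diff} gives
$$\frac{I(u_{t+\delta})-I(u_t)}{\delta}=\frac{1}{(n+1)V}\sum_{j=0}^n\int_X \frac{u_{t+\delta}-u_t}{\delta}\,\o_{u_{t+\delta}}^j\wedge\o_{u_t}^{n-j},$$
which tends to $\frac{1}{V}\int_X\dot u_t\,\o_{u_t}^n$ because $\o_{u_{t+\delta}}^j\wedge\o_{u_t}^{n-j}\to\o_{u_t}^n$ weakly. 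That is a three-line argument once one notices that \eqref{eq: I_energy_diff} already turns the difference $I(u_{t+\delta})-I(u_t)$ into a single explicit integral. By contrast, your argument first regularizes to the smooth $\varepsilon$--geodesics, passes the identity from Lemma \ref{lem: I_differential} to the limit on both sides, and then argues continuity of $s\mapsto\int_X\dot u_s\,\o_{u_s}^n$ in order to invoke the fundamental theorem of calculus. Each of these steps is valid — the $C^{1,\alpha}$-convergence $u^\varepsilon\to u$ on $\overline S\times X$, the uniform convergence $\dot u^\varepsilon\to\dot u$, the Bedford--Taylor weak convergence of the Monge--Amp\`ere measures, and the uniform $C^1$ bounds all hold as you cite them — but they reprove, in integrated form, exactly what the direct difference-quotient computation gives immediately. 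The only thing the $\varepsilon$--approximation buys you here is avoiding the (very mild) observation that uniform convergence of $(u_{t+\delta}-u_t)/\delta$ plus weak convergence of measures lets you pass to the limit term-by-term in \eqref{eq: I_energy_diff}; since you already use both of those facts in your own argument, the regularization is unnecessary overhead.
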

\begin{proof} As $t \to u_t$ is $C^{1,\bar 1}$ it follows that $(u_{t +\delta}-u_t)/\delta \to \dot u_t$ uniformly as $\delta \to 0$. Using this and \eqref{eq: I_energy_diff} we can write:
\begin{flalign}
\frac{d}{dt}I(u_t)&=\lim_{\delta \to 0} \frac{I(u_{t + \delta}) - I(u_t)}{\delta} = \frac{1}{(n+1)V} \sum_{j=0}^n\lim_{\delta \to 0} \int_X \frac{u_{t + \delta} - u_t}{\delta}\o_{u_{t + \delta}}^j \wedge \o_{u_{t}}^{n-j} \nonumber\\
&= \frac{1}{V}\int_X \dot u_t \o_{u_t}^n,
\end{flalign}
where we have used that $\o_{u_{t+\delta}}^j\wedge\o_{u_t}^{n-j}\to \o_{u_t}^n$ weakly.
\end{proof}
Further linking the Monge--Amp\`ere energy to the $L^1$ Finsler geometry is the fact that $I$ is affine along finite energy geodesics:
\begin{proposition}\label{prop: I_linrear} Suppose $u_0,u_1 \in \mathcal E_1(X,\o)$ and $t \to u_t$ is the finite energy geodesic connecting $u_0,u_1$. Then $t \to I(u_t)$ is affine: $I(u_t)=(1-t)I(u_0) + t I(u_1), \ t \in [0,1].$
\end{proposition}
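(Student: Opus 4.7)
The plan is to prove the affineness first for smooth endpoints via a second derivative calculation along the $\varepsilon$-geodesics, and then pass to finite energy endpoints by a monotone approximation argument, exploiting the $d_1$-continuity of $I$ established in Proposition \ref{prop: I_finiteness_and_cont}.

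First I would treat the case $u_0,u_1\in\mathcal H_\o$. Let $[0,1]\ni t\to u^\varepsilon_t\in\mathcal H_\o$ be the smooth $\varepsilon$-geodesic connecting $u_0,u_1$ given by \eqref{eq: eps_geod_eq_Lev_Civ}, and let $t\to u_t\in\mathcal H_\o^{1,\bar1}$ be the weak $C^{1,\bar1}$-geodesic, so that $u^\varepsilon_t\to u_t$ uniformly on $[0,1]\times X$ as $\varepsilon\to 0$ (by \eqref{eq: epsgeod_limit} and \eqref{eq: u_estimates}). Since $t\to u^\varepsilon_t$ is smooth, Lemma \ref{lem: I_differential} gives
$$\frac{d}{dt}I(u^\varepsilon_t)=\frac{1}{V}\int_X \dot u^\varepsilon_t\,\o_{u^\varepsilon_t}^n.$$
Differentiating once more, using $\tfrac{d}{dt}\o^n_{u^\varepsilon_t}=ni\ddbar\dot u^\varepsilon_t\wedge\o^{n-1}_{u^\varepsilon_t}$, integration by parts, and the $\varepsilon$-geodesic equation $\ddot u^\varepsilon_t \o^n_{u^\varepsilon_t}=\tfrac12|\nabla^{\o_{u^\varepsilon_t}}\dot u^\varepsilon_t|^2\o^n_{u^\varepsilon_t}+\varepsilon\o^n$, the $\tfrac12|\nabla\dot u^\varepsilon_t|^2\o^n_{u^\varepsilon_t}$ contributions cancel and I obtain the clean identity
$$\frac{d^2}{dt^2}I(u^\varepsilon_t)=\varepsilon.$$
Hence $A^\varepsilon(t):=I(u^\varepsilon_t)-(1-t)I(u^\varepsilon_0)-tI(u^\varepsilon_1)$ satisfies $A^\varepsilon(0)=A^\varepsilon(1)=0$ and $(A^\varepsilon)''=\varepsilon$, so $|A^\varepsilon(t)|\leq\varepsilon/8$ on $[0,1]$. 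Since $u^\varepsilon_t\to u_t$ uniformly, the weak continuity of the Monge--Amp\`ere operator \cite[Theorem 2.2.5]{bl3} yields $I(u^\varepsilon_t)\to I(u_t)$ pointwise, so letting $\varepsilon\to 0$ gives $I(u_t)=(1-t)I(u_0)+tI(u_1)$.

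For general $u_0,u_1\in\mathcal E_1(X,\o)$, I would approximate by decreasing sequences $u^k_0,u^k_1\in\mathcal H_\o$ via Theorem \ref{thm: BK_approx}. Let $t\to u^k_t\in\mathcal H_\o^{1,\bar1}$ be the $C^{1,\bar1}$-geodesics joining $u^k_0$ and $u^k_1$. By Proposition \ref{prop: weak_geod_approx}(i), $u^k_t\searrow u_t$ pointwise for each $t\in[0,1]$, where $t\to u_t$ is the finite energy geodesic (which stays in $\mathcal E_1(X,\o)$ by Proposition \ref{prop: geod_Echi}). Corollary \ref{cor: d_p_monotone_limit} gives $d_1(u^k_t,u_t)\to 0$, and then the $d_1$-Lipschitz estimate \eqref{eq: I_Lipschitz_cont} of Proposition \ref{prop: I_finiteness_and_cont} yields $I(u^k_t)\to I(u_t)$. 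Each $t\to I(u^k_t)$ is affine by the first step, and a pointwise limit of affine functions on $[0,1]$ is affine, which concludes the proof.

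The main obstacle is the cancellation computation giving $(d^2/dt^2)I(u^\varepsilon_t)=\varepsilon$; every other ingredient is a packaged consequence of earlier results in the chapter. Once that identity is in hand, the rest is a pure continuity/approximation argument which the framework of Chapter 2 and the preceding sections of Chapter 3 are tailored to handle.
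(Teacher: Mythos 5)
Your proof is correct and follows essentially the same route as the paper: compute $\frac{d^2}{dt^2}I(u_t^\varepsilon)=\varepsilon$ along the $\varepsilon$-geodesic (the paper phrases this via $\langle 1,\nabla_{\dot u^\varepsilon_t}\dot u^\varepsilon_t\rangle_{u^\varepsilon_t}$ and the Levi--Civita connection, while you carry out the equivalent integration-by-parts cancellation directly), pass to $\varepsilon\to0$ to get affineness for smooth endpoints, and then use the decreasing approximation from Theorem \ref{thm: BK_approx} with Proposition \ref{prop: weak_geod_approx}, Corollary \ref{cor: d_p_monotone_limit}, and the $d_1$-continuity of $I$ for the general case. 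The only cosmetic difference is that the paper invokes the explicit difference formula \eqref{eq: I_energy_diff} to pass the uniform limit $u^\varepsilon_t\nearrow u_t$ through $I$, whereas you appeal to Bedford--Taylor continuity — both are valid.
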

\begin{proof} First we show that $t \to I(u_t)$ is affine for the $C^{1,\bar 1}$ geodesic connecting $u_0,u_1 \in \mathcal H_\o$. Let $[0,1] \ni t \to u^{\varepsilon}_t \in \mathcal H_\o$ be the smooth $\varepsilon$--geodesic connecting $u_0,u_1$ (see \eqref{eq: epsBVPGeod}). By Lemma \ref{lem: I_differential} we have that 
$$\frac{d}{dt}I(u_t^\varepsilon)= \frac{1}{V} \int_X \dot u_t^\varepsilon \o_{u_t^\varepsilon}^n  = \langle 1,\dot u_t^\varepsilon \rangle_{u_t^\varepsilon}.$$
As we take one more derivative of the above formula, we can use the $L^2$ Mabuchi Levi--Civita connection (see \eqref{eq: CovDerivative}) and \eqref{eq: eps_geod_eq_Lev_Civ} to obtain:
\begin{equation}\label{eq: I_second_deriv_epsgeod}
\frac{d^2}{dt^2}I(u_t^\varepsilon)= \langle 1,\nabla_{\dot u_t^\varepsilon}\dot u_t^\varepsilon \rangle_{u_t^\varepsilon} = \frac{1}{V}\int_X \nabla_{\dot u_t^\varepsilon}\dot u_t^\varepsilon \o_{u_t^\varepsilon}^n= \frac{1}{V}\int_X \varepsilon \o^n= \varepsilon.
\end{equation}
For fixed $t \in [0,1]$ we have that $u^\varepsilon_t \nearrow u_t$ uniformly, hence \eqref{eq: I_energy_diff} gives that $I(u^\varepsilon_t) \to I(u_t)$. By an elementary argument, \eqref{eq: I_second_deriv_epsgeod} implies that $t \to I(u_t) = \lim_{\varepsilon \to 0} I(u_t^\varepsilon)$ is affine.

Next we show the result for $u_0,u_1 \in \mathcal E_1(X,\o)$. In this case let $u^k_0,u^k_1 \in \mathcal H_\o$ be  decreasing approximating sequences that exist by Theorem \ref{thm: BK_approx}. Let $ t \to u^{k}_t$ be the $C^{1,\bar 1}$ geodesics joining $u^k_0,u^k_1$, and $t \to u_t$ be the finite energy geodesic joining $u_0,u_1$. By Proposition \ref{prop: weak_geod_approx} it follows that for fixed $t$ we have $u^k_t \searrow u_t$. Corollary \ref{cor: d_p_monotone_limit} now gives that $d_1(u^k_t,u_t) \to 0$. We can now use Proposition \ref{prop: I_finiteness_and_cont} to conclude that $I(u^k_t) \to I(u_t)$. Since  $t \to I(u^k_t)$ is affine, so is $t \to I(u_t)$.
\end{proof}

By revisiting the Pythagorean formula of the $L^1$ geometry (Theorem \ref{thm: pythagorean}) we obtain the following concrete formula for the $d_1$ metric in terms of the Monge--Amp\`ere energy:
\begin{proposition} \label{prop: d_1_I_formula} If $u,v \in \mathcal E_1(X,\o)$ then 
\begin{equation}\label{eq: d_1_I_formula}
d_1(u,v)=I(u) + I(v) -2 I(P(u,v)).
\end{equation}
In particular, if $u \geq v$ then $d_1(u,v)=I(u) - I(v)$.
\end{proposition}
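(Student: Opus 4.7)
My strategy is to reduce the identity to the comparable case $u \geq v$ via the Pythagorean formula, and then handle that case by combining the $L^1$ length formula with the affineness of $I$ along geodesics. Since $P(u,v) \in \mathcal E_1(X,\o)$ by Proposition \ref{prop: env_exist}, Theorem \ref{thm: pythagorean} with $p=1$ gives
\begin{equation*}
d_1(u,v) = d_1(u,P(u,v)) + d_1(v,P(u,v)),
\end{equation*}
and since $P(u,v) \leq u,v$, the claimed identity follows once we establish $d_1(\phi,\psi) = I(\phi) - I(\psi)$ whenever $\phi \geq \psi$ in $\mathcal E_1(X,\o)$.

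For the comparable case I first treat $\phi,\psi \in \mathcal H_\o$ with $\phi \geq \psi$. Let $t \to u_t$ be the $C^{1,\bar 1}$ geodesic joining them. Since $\psi \leq \phi$, the constant curve $t\mapsto\psi$ lies in the family $\mathcal S$ of \eqref{eq: udef1}, so $u_t \geq \psi = u_1$ for all $t$; combined with the convexity of $t \to u_t(x)$, this forces $t \to u_t(x)$ to be non-increasing, and hence $\dot u_t \leq 0$. Theorem \ref{thm: XXChenThm} together with the identity $\tfrac{d}{dt}I(u_t) = \tfrac{1}{V}\int_X \dot u_t \,\o_{u_t}^n$ (the lemma preceding Proposition \ref{prop: I_linrear}) then yields
\begin{equation*}
d_1(\phi,\psi) = \tfrac{1}{V}\int_X |\dot u_t|\o_{u_t}^n = -\tfrac{1}{V}\int_X \dot u_t\,\o_{u_t}^n = -\tfrac{d}{dt}I(u_t),
\end{equation*}
which by the affineness of $t\to I(u_t)$ (Proposition \ref{prop: I_linrear}) equals $I(\phi) - I(\psi)$.

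To extend the comparable formula to $\phi,\psi \in \mathcal E_1(X,\o)$ with $\phi \geq \psi$, I use a two-stage approximation. Take $\psi_k \in \mathcal H_\o$ with $\psi_k \searrow \psi$ via Theorem \ref{thm: BK_approx}, and set $w_k := \max(\phi,\psi_k)$. Since $\phi$ is upper semicontinuous on the compact $X$ it is bounded above, while $w_k \geq \psi_k$ is bounded below, so each $w_k \in \textup{PSH}(X,\o)\cap L^\infty$; moreover $w_k \searrow \phi$ and $w_k \in \mathcal E_1(X,\o)$ by Corollary \ref{cor: monotonicity_E_chi}. A further application of Theorem \ref{thm: BK_approx} gives $w_{k,l} \in \mathcal H_\o$ with $w_{k,l} \searrow w_k$; since $w_{k,l} \geq w_k \geq \psi_k$, the previous smooth step applies and yields $d_1(w_{k,l},\psi_k) = I(w_{k,l}) - I(\psi_k)$, and passing $l \to \infty$ via Corollary \ref{cor: d_p_monotone_limit} and Proposition \ref{prop: I_finiteness_and_cont} gives $d_1(w_k,\psi_k) = I(w_k) - I(\psi_k)$. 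Letting $k \to \infty$ and invoking the same continuity results once more produces $d_1(\phi,\psi) = I(\phi) - I(\psi)$.

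The main technical point is engineering the approximations so that the inequality $w_{k,l} \geq \psi_k$ is preserved throughout, which is what allows the comparable identity to propagate from $\mathcal H_\o$ to $\mathcal E_1(X,\o)$. The $\max$ construction plus a second smoothing via Theorem \ref{thm: BK_approx} does this cleanly, since $\max$ preserves $\o$-plurisubharmonicity and the smoothing stays above its target; the rest of the argument is a routine application of the monotone convergence framework already developed in Corollary \ref{cor: d_p_monotone_limit} and Proposition \ref{prop: I_finiteness_and_cont}.
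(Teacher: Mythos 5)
Your proof is correct and follows essentially the same route as the paper: first establish $d_1(u,v)=I(u)-I(v)$ for $u\geq v$ via the $C^{1,\bar 1}$-geodesic, the monotonicity of $\dot u_t$, and formula \eqref{eq: dIdt_C11geod}, then feed this into the Pythagorean formula. The one place where you add genuine content is the approximation step: the paper just cites Theorem \ref{thm: BK_approx}, Corollary \ref{cor: d_p_monotone_limit} and Proposition \ref{prop: I_finiteness_and_cont} and asserts that the comparable identity passes from $\mathcal H_\o$ to $\mathcal E_1(X,\o)$, without explaining how to keep the ordering $u\geq v$ intact under Bedford--Kolodziej smoothing; your two-stage construction with $w_k=\max(\phi,\psi_k)$ followed by a second decreasing smoothing $w_{k,l}\searrow w_k$ handles this cleanly and is a legitimate way to fill in that gap. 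The only stylistic difference is that the paper derives $\dot u_t\geq 0$ from Theorem \ref{thm: uniqueness_BVP} while you use the subgeodesic family $\mathcal S$ of \eqref{eq: udef1}; these are equivalent mechanisms.
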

\begin{proof} We show that $d_1(u,v)=I(u) - I(v)$ when $u \geq v$. Then the Pythagorean formula (Theorem \ref{thm: pythagorean}) will imply \eqref{eq: d_1_I_formula}. By Theorem \ref{thm: BK_approx} , Corollary \ref{cor: d_p_monotone_limit} and Proposition \ref{prop: I_finiteness_and_cont} it is enough to show $d_1(u,v)=I(u) - I(v)$ for $u,v \in \mathcal H_\o$. Let $[0,1] \ni t \to u_t \in \mathcal H_\o^{1,\bar 1}$ be the $C^{1,\bar 1}$ geodesic joining $u_0:=v$ and $u_1:=u$. As $u_1 \geq u_0$, by Theorem \ref{thm: uniqueness_BVP} we obtain that $u_0 \leq u_t$. Using convexity in the $t$ variable, we obtain that $0 \leq \dot u_0 \leq \dot u_t.$
Since $\dot u_t \geq 0$, by Theorem \ref{thm: XXChenThm} we can conclude that
$$d_1(u,v)= \frac{1}{V}\int_0^1 \int_X \dot u_t \o_{u_t}^n dt = \int_0^1 \frac{d}{dt} I(u_t) dt=I(u_1)-I(u_0)=I(u)-I(v),$$
where in the second equality we have used \eqref{eq: dIdt_C11geod}.   
\end{proof}

Recall that the correspondence $u \to \o_u$ is one-to-one up to a constant. To  make the correspondence between metrics and potentials one-to-one, we need to restrict the map $u \to \o_u$ to a hyper--surface of $\mathcal H_\o$. There are many normalizations that come to mind, but the most convenient choice is to use the following space and its completion:
\begin{equation}\label{eq: H_0_def}
\mathcal H_\o \cap I^{-1}(0)= \{u \in \mathcal H_\o \textup{ s.t. } I(u)=0 \} \ \  \textup{ and }  \ \ \mathcal E_1(X,\o) \cap I^{-1}(0).
\end{equation}
As $I:\mathcal E_1(X,\o) \to \Bbb R$ is $d_1$-continuous, we see that $\mathcal E_1(X,\o) \cap I^{-1}(0)$ is indeed the $d_1$--completion of  $\mathcal H_\o \cap I^{-1}(0)$. We focus on the preimage of $I$ due to the conclusion of Proposition \ref{prop: I_linrear}. Indeed, according to this result, if $u_0,u_1 \in \mathcal E_1(X,\o) \cap I^{-1}(0)$ then $t \to u_t$, the finite energy geodesic connecting $u_0,u_1$, satisfies $u_t \in \mathcal E_1(X,\o) \cap I^{-1}(0)$, hence the ``hypersurface" $\mathcal E_1(X,\o) \cap I^{-1}(0)$ is totally geodesic.

The $J$ energy is closely related to the Monge--Amp\`ere energy and is given by the following formula:
\begin{equation}\label{eq: J_def}
J(u) = \frac{1}{V}\int_X u \o^n - I(u).
\end{equation}
By \eqref{eq: I_energy_diff_est_E1} it follows that $J(u) \geq 0$ for all $u \in \mathcal E_1(X,\o)$. 

For $u \in \mathcal E_1(X,\o) \cap I^{-1}(0)$, the $J$ energy is given by the especially simple formula $J(u)=\frac{1}{V}\int_X u \o^n$. Additionally, on $\mathcal E_1(X,\o) \cap I^{-1}(0)$ the growth of the $d_1$ metric and the $J$ energy is closely related:
\begin{proposition}[\cite{da2,dr2}] \label{prop: d_1_growth_J}
There exists $C=C(X,\o)> 1$ such that
\begin{equation}\label{eq: d_1_growth_J}
\frac{1}{C}J(u) -C \leq d_1(0,u) \leq C J(u) + C, \ \ u \in \mathcal E_1(X,\o) \cap I^{-1}(0).
\end{equation}
\end{proposition}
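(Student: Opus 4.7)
The plan is to prove the two inequalities in \eqref{eq: d_1_growth_J} separately, leveraging the explicit formulas for $d_1$ established in this section.

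For the upper bound, I first apply the Pythagorean identity of Proposition \ref{prop: d_1_I_formula}: since $I(u)=I(0)=0$,
$$d_1(0,u) \;=\; -2\,I(P(u,0)).$$
Set $c:=\sup_X u$ and $\tilde u:=u-c\le 0$. Monotonicity of $I$ forces $c \ge I(u)=0$, while $\tilde u\le u$ and $\tilde u\le 0$ make $\tilde u$ a candidate in the defining supremum of $P(u,0)$, so $\tilde u\le P(u,0)$. Applying monotonicity of $I$ once more,
$$-I(P(u,0))\le -I(\tilde u) = -I(u)+c = c.$$
Since $\tilde u\in \textup{PSH}(X,\o)$ has $\sup\tilde u=0$, the $L^1$--boundedness of sup--normalized $\o$--psh functions (\cite[Chapter I, Proposition 4.21]{De}) yields $C_0=C_0(X,\o)$ with $\|\tilde u\|_{L^1(\o^n)}\le C_0$. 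As $J$ is translation invariant,
$$J(u)=J(\tilde u)=\frac{1}{V}\int_X\tilde u\,\o^n + c \;\ge\; c - \frac{C_0}{V},$$
so $c\le J(u)+C_0/V$ and $d_1(0,u)\le 2J(u)+2C_0/V$.

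For the lower bound, I invoke Theorem \ref{thm: Energy_Metric_Eqv} with $p=1$ and $u_0=0$, $u_1=u$:
$$d_1(0,u)\;\ge\;\frac{1}{2^{2n+6}}\left(\int_X |u|\,\o^n+\int_X|u|\,\o_u^n\right)\;\ge\;\frac{1}{2^{2n+6}}\int_X|u|\,\o^n\;\ge\;\frac{V}{2^{2n+6}}\,J(u),$$
using $|u|\ge u$ and the identity $\int_X u\,\o^n = V J(u)$ valid on $I^{-1}(0)$. Choosing $C$ large enough to dominate $2$, $2C_0/V$ and $2^{2n+6}/V$ then delivers \eqref{eq: d_1_growth_J}.

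The crux of the argument is the recognition that on $I^{-1}(0)$ the quantity $\sup_X u$ is the natural intermediary between $d_1(0,u)$ and $J(u)$: the $L^1$--boundedness of sup--normalized $\o$--psh functions combined with translation invariance of $J$ forces $|\sup_X u-J(u)|\le C_0/V$, which then makes the upper bound a one--line monotonicity estimate via the envelope inequality $u-\sup_X u\le P(u,0)$. The lower bound, by contrast, is essentially free once one notes that in Theorem \ref{thm: Energy_Metric_Eqv} only the $\o^n$--integral of $u$ needs to be retained to recover $VJ(u)$.
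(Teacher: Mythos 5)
Your proof is correct and follows essentially the same path as the paper's: the lower bound is exactly the paper's application of Theorem \ref{thm: Energy_Metric_Eqv}, and the upper bound uses the same identity $d_1(0,u)=-2I(P(0,u))$ from Proposition \ref{prop: d_1_I_formula} together with the envelope inequality $u-\sup_X u\le P(0,u)$ and monotonicity of $I$. The only cosmetic difference is that you derive the comparison $\sup_X u \lesssim J(u)+C$ by shifting to $\tilde u = u-\sup_X u$ and invoking $L^1$--boundedness of sup--normalized $\o$--psh potentials, whereas the paper packages this same fact as Lemma \ref{lem: sup_int_psh_eqv}; the content is identical.
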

\begin{proof} Let  $u \in \mathcal E_1(X,\o) \cap I^{-1}(0)$. By Theorem \ref{thm: Energy_Metric_Eqv} we have 
$$J(u)=\frac{1}{V} \int_X u \o^n \leq \frac{1}{V} \int_X |u| \o^n \leq Cd_1(0,u),$$
implying the first estimate in \eqref{eq: d_1_growth_J}. For the second estimate, since $I(u)=0$, we have that $\sup_X u \geq 0$ and 
\begin{equation}\label{eq: somethingd_1}
d_1(0,u)=-2 I(P(0,u)).
\end{equation}
Clearly, $u - \sup_X u \leq \min(0,u)$, so 
$u - \sup_X u \leq P(0,u)$. Thus, $-\sup_X u = I(u - \sup_X u) \leq I(P(0,u)).$ 
Combined with \eqref{eq: somethingd_1}, we obtain that $
d_1(0,u)=-2I(P(0,u))\leq 2 \sup_X u.$
Finally, according to the next basic lemma, $\sup_X u \leq C' \int_X u \o^n + C'$ for some $C'(X,\o) > 1$, finishing the proof.
\end{proof}

\begin{lemma}\label{lem: sup_int_psh_eqv}
There exists $C := C(X,\o) > 1$ such that for any $u \in \textup{PSH}(X,\o)$ we have
$$\frac{1}{V} \int_X u \o^n \leq \sup_X u \leq  \frac{1}{V} \int_X u \o^n + C.$$
\end{lemma}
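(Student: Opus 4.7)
The left inequality is immediate: since $u \leq \sup_X u$ pointwise, integrating against $\o^n/V$ gives $\tfrac1V \int_X u\,\o^n \leq \sup_X u$. For the right inequality, replacing $u$ by $u - \sup_X u$ shifts both sides by the same constant, so it suffices to show the existence of $C = C(X,\o) > 0$ such that
\begin{equation*}
\int_X u\,\o^n \geq -CV \quad \text{for every } u \in \textup{PSH}(X,\o) \text{ with } \sup_X u = 0.
\end{equation*}

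My plan is to deduce this from a uniform $L^1$-bound on the normalized family $K := \{u \in \textup{PSH}(X,\o) : \sup_X u = 0\}$ via a classical sub-mean-value propagation argument. First I would fix a finite cover $\{U_i\}_{i=1}^N$ of $X$ by coordinate balls on which $\o = i\ddbar g_i$ for smooth bounded potentials $g_i$; on each $U_i$ the function $u + g_i$ is plurisubharmonic in the Euclidean sense. The cover can be arranged so that, by compactness and connectedness of $X$, any two members may be joined by a chain of length $\leq N$ whose consecutive members overlap in a set of Lebesgue measure bounded below uniformly.

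The propagation then runs as follows. By upper semicontinuity and $\sup_X u = 0$, there is a point $x_0 \in X$ with $u(x_0) \geq -\tfrac12$; say $x_0 \in U_1$. The classical sub-mean-value inequality applied to $u + g_1$ on a small Euclidean ball $B$ around $x_0$ gives $\tfrac1{|B|}\int_B u\,d\lambda \geq -C_1$, and by Chebyshev this forces $u \geq -2C_1$ on a subset of $B$ of measure at least $|B|/2$. One then selects a point in the overlap with the next cover member where this lower bound holds and re-applies the sub-mean-value inequality there. Iterating at most $N$ times yields uniform integral lower bounds $\int_{B_i} u\,d\lambda \geq -C'|B_i|$ on balls $B_i \subset U_i$ for every $i$. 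Since $\o^n$ has smooth density bounded above and below by constants times Lebesgue measure on each chart, summing produces $\int_X u\,\o^n \geq -CV$.

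The main obstacle is the chain propagation itself: because the sub-mean-value inequality controls only the mean of $u$, each step requires an auxiliary Chebyshev argument to locate a point where $u$ is bounded below before re-centering. Each iteration loses a constant factor, but the total loss is controlled by $N = N(X,\o)$. A more abstract alternative bypasses this bookkeeping by invoking the $L^1$-compactness of $K$ (a classical consequence of Hartogs' lemma), on which the continuous linear functional $u \mapsto \int_X u\,\o^n$ attains its minimum, yielding the required constant $-C$.
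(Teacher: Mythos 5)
Your overall strategy coincides with the paper's: normalize to $\sup_X u = 0$, fix a finite cover by coordinate charts with local K\"ahler potentials, and propagate an integral lower bound from chart to chart via the sub-mean-value property, starting at the point where the supremum is attained. The place where you and the paper diverge is the propagation step. You re-locate a good \emph{point} via Chebyshev and re-apply the sub-mean-value inequality there; the paper instead exploits that $u \leq 0$, so a lower bound on $\int_{B} u\,d\mu$ transfers immediately (without any Chebyshev) to $\int_{S} u\,d\mu$ for every subset $S \subset B$, and then invokes the monotonicity of averages over concentric balls for psh functions to pass from a small ball sitting in the overlap to a larger ball covering the next unit chart. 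The paper's mechanism is cleaner because it only ever manipulates integral bounds, never pointwise ones, and avoids Chebyshev entirely.

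One caveat in your write-up: the statement ``$u \geq -2C_1$ on a subset of $B$ of measure at least $|B|/2$'' guarantees a good point in the overlap only if the overlap has measure greater than $|B|/2$, which need not hold. To make the Chebyshev step airtight you must pick the threshold $M$ depending on the lower bound $m_0$ for the overlap measure, namely $M > \bigl(\int_B |u|\,d\lambda\bigr)/m_0$, so that the bad set has measure strictly less than $m_0$. Since $m_0$ depends only on the fixed cover this is harmless, but the constant $|B|/2$ as written does not do the job. Your compactness alternative (minimizing the continuous functional $u \mapsto \int_X u\,\o^n$ over the $L^1$-compact normalized family) is exactly the proof from \cite[Proposition 1.7]{gz05} that the paper explicitly cites and then deliberately sets aside in favor of the constructive argument.
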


This is a well known result in pluripotential theory. A proof using compactness can be found in \cite[Proposition 1.7]{gz05}. Below we give a constructive argument using only the sub-mean value property of psh functions.

\begin{proof}
The first estimate is trivial. To argue the second estimate, it is enough to show that $\int_X u \omega^n$ is uniformly bounded from below, for all $u \in \textup{PSH}(X,\omega)$ with $\sup_X u =0$. We fix such potential $u$ for the rest of the proof.

We pick nested coordinate charts $U_k \subset W_k \subset X$, such that $\{U_k\}_{1 \leq k \leq N}$ covers $X$, and there exist holomorphic diffeomorphisms $\varphi_k: B(0,4) \to W_k $ such that  $\varphi_k(B(0,1))=U_k$ and there exists $\psi_k \in C^\infty(W_k)$ such that $\o|_{W_k} = i\ddbar \psi_k$. Notice that it is enough to show the existence of $C:=C(X,\omega)<0$ such that
\begin{equation}\label{eq: patch_estimate}
\int_{B(0,1)} u \circ \varphi_j d\mu \geq C, \ \ \ j \in \{1,\ldots,N\},
\end{equation}
where $d\mu$ is the Euclidean measure on $\Bbb C^n$.

Using our setup, we obtain that $\psi_k + u \in \textup{PSH}(W_k)$ for all $k \in \{1,\ldots,N\}$.
As $u$ is usc, its supremum is realized at some $x_1 \in X$, i.e. $u \leq u(x_1)=0$. As $\{U_k\}_k$ covers $X$, $x_1 \in U_l$ for some $l \in \{1,\ldots,N \}$. For simplicity, we can assume that $l=1$.

Let $z_1:=\varphi^{-1}_1(x_1) \in B(0,1)$. As $B(z_1,2)\subset B(0,4)$, by the  sub-meanvalue property of psh functions we can write:
\begin{equation}\label{eq: sup_int_est_interm}
\psi_1(x_1)=u\circ \varphi_1(z_1) + \psi_1 \circ \varphi_1(z_1) \leq \frac{1}{\mu(B(z_1,2))}\int_{B(z_1,2)} (u\circ \varphi_1 +\psi_1 \circ \varphi_1)d\mu.
\end{equation}
Since $u \leq 0$ and $B(0,1) \subset B(z_1,2)$, there exists $C_1 < 0$, independent of $u$, such that
\begin{equation}\label{eq: patch_estimate_1}
\int_{B(0,1)} u \circ \varphi_1 d\mu \geq C_1.
\end{equation}
As $\{U_k \}_{k}$ is a covering of $X$, $U_1$ needs to intersect at least another member of the covering. We can assume that $U_1 \cap U_2$ is non-empty.

Since $u \leq 0$ and \eqref{eq: patch_estimate_1} holds, there exists $x_2 \in U_2\cap U_1,r_2 \in (0,1)$ and $\tilde C_1<0$, independent of $u$, such that for $z_2 = \varphi_2^{-1}(x_2)$ we have $\varphi_2(B(z_2,r_2)) \subset U_1 \cap U_2$ and 
$$ \frac{1}{\mu(B(z_2,r_2))}\int_{B(z_2,r_2)}( u \circ \varphi_2 + \psi_2 \circ \varphi_2)d\mu \geq \tilde C_1.$$
Since $ u \circ \varphi_2 + \psi_2 \circ \varphi_2$ is psh in $B(0,4)$, we can further write that:
$$ \frac{1}{\mu(B(z_2,2))}\int_{B(z_2,2)} ( u \circ \varphi_2 + \psi_2 \circ \varphi_2) d\mu \geq \frac{1}{\mu(B(z_2,r_2))}\int_{B(z_2,r_2)}  ( u \circ \varphi_2 + \psi_2 \circ \varphi_2) d\mu \geq \tilde C_1.$$
Since $B(0,1) \subset B(z_2,2)$ and $u \leq 0$, from here we conclude the existence of $C_2 <0$, independent of $u$, such that 
\begin{equation}\label{eq: patch_estimate_2}
\int_{B(0,1)} u \circ \varphi_2 d\mu \geq C_2.
\end{equation}
Continuing the above process, after $N-2$ more steps we eventually arrive at \eqref{eq: patch_estimate}.
\end{proof}

\section{Relation to classical notions of convergence}

Theorem \ref{thm: Energy_Metric_Eqv} gave a  characterization of $d_p$--convergence using concrete analytic expressions. When dealing with $d_1$, it turns out that an even more convenient characterization can be given with the help of the Monge--Amp\`ere energy. This is the content of the next theorem, which also shows that $d_1$--convergence implies classical $L^1$ convergence of potentials and also the weak convergence of the associated complex Monge--Amp\`ere measures:

\begin{theorem}[\cite{da2}] \label{thm: d_1-convergence} Suppose $u_k,u \in \mathcal E_1(X,\o)$. Then the following hold:\\
\noindent (i) $d_1(u_k,u) \to 0$ if and only if $\int_X |u_k -u|\o^n \to 0$ and $I(u_k) \to I(u)$.\\
\noindent (ii) If $d_1(u_k,u) \to 0$ then $\o_{u_k}^n \to \o_u^n$ weakly, and $\int_X |u_k - u|\o_v^n \to 0$ for any $v \in \mathcal E_1(X,\o)$.
\end{theorem}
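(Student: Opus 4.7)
The plan is to establish (i) first, then derive (ii), since both parts share the same $L^1(\omega^n)$ convergence at their core. The Lipschitz estimate of Proposition \ref{prop: I_finiteness_and_cont} immediately handles the $I$-convergence in both directions of (i), so the substantive work lies in the $L^1(\omega^n)$-statement and its refinement in (ii).

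For the reverse direction of (i), assume $\int_X|u_k-u|\omega^n\to 0$ and $I(u_k)\to I(u)$. Set $w_k:=\max(u_k,u)$, which lies in $\mathcal E_1(X,\omega)$ by Corollary \ref{cor: monotonicity_E_chi}. Since $u,u_k\le w_k$, Proposition \ref{prop: d_1_I_formula} applied to the pairs $(u,w_k)$ and $(u_k,w_k)$, combined with the triangle inequality, yields
\begin{equation*}
d_1(u_k,u)\le d_1(u_k,w_k)+d_1(w_k,u)=2I(w_k)-I(u_k)-I(u),
\end{equation*}
so it suffices to show $I(w_k)\to I(u)$. By the sub-subsequence principle we may assume $u_k\to u$, hence $w_k\to u$, almost everywhere. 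The upper envelopes $\tilde w_l:=(\sup_{j\ge l}w_j)^*\in\textup{PSH}(X,\omega)$ are decreasing, lie in $\mathcal E_1$ by Corollary \ref{cor: monotonicity_E_chi}, and a standard Hartogs-type argument shows $\tilde w_l\searrow u$. Corollary \ref{cor: d_p_monotone_limit} and Proposition \ref{prop: I_finiteness_and_cont} then give $I(\tilde w_l)\to I(u)$, and the squeeze $I(u)\le I(w_l)\le I(\tilde w_l)$ closes the step.

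The forward direction of (i) and all of (ii) reduce to producing $L^1(\omega^n)$-convergence of $u_k$ to $u$ from $d_1(u_k,u)\to 0$. Using translation invariance of $d_1$, the normalized $v_k:=u_k-I(u_k)\in I^{-1}(0)$ satisfy $d_1(v_k,v)\to 0$ with $v:=u-I(u)$; Proposition \ref{prop: d_1_growth_J} then bounds $J(v_k)=V^{-1}\int_X v_k\,\omega^n$, and Lemma \ref{lem: sup_int_psh_eqv} yields uniform bounds on $\sup_X u_k$ and on $\|u_k\|_{L^1(\omega^n)}$. Classical $L^1$-compactness of $\omega$-psh functions together with the sub-subsequence principle reduces matters to identifying any a.e.\ cluster point $\tilde u\in\textup{PSH}(X,\omega)$ of $\{u_k\}$ with $u$. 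The Pythagorean formula (Theorem \ref{thm: pythagorean}) combined with Proposition \ref{prop: d_1_I_formula} gives $d_1(u,P(u_k,u))\to 0$ with $P(u_k,u)\le u$; the envelope argument of the previous paragraph, applied now to the lower potentials $P(u_k,u)$, yields $P(u_k,u)\to u$ in $L^1(\omega^n)$, and since $P(u_k,u)\le u_k$, passing to the limit produces $\tilde u\ge u$. Conversely, the upper semicontinuity of $I$ along $L^1$-convergent sequences (obtained via the decreasing envelopes $(\sup_{j\ge l}u_{k_j})^*$ and Proposition \ref{prop: I_finiteness_and_cont}) gives $I(\tilde u)\ge I(u)$; combined with $\tilde u\ge u$, the rigidity statement that $\phi\le\psi$ in $\mathcal E_1$ with $I(\phi)=I(\psi)$ forces $\phi=\psi$---a consequence of the expansion \eqref{eq: I_energy_diff} and the domination principle (Proposition \ref{prop: domination principle})---then yields $\tilde u=u$.

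The remaining statements of (ii), namely the weak convergence $\omega_{u_k}^n\to\omega_u^n$ and the $L^1(\omega_v^n)$-convergence for $v\in\mathcal E_1$, follow by the same sub-subsequence strategy: after extracting $u_k\to u$ a.e., one applies Proposition \ref{prop: MA_cont} to suitable monotone approximants (such as the decreasing envelopes $\tilde w_l\searrow u$ constructed above), using the pointwise sandwich $|u_k-u|\le(w_k-u)+(w_k-u_k)$ to upgrade a.e.\ convergence to the required integral statements. The main obstacle throughout is the identification $\tilde u=u$ of the previous paragraph; this is a purely pluripotential-theoretic rigidity fact resting on the domination principle and the upper semicontinuity of the Monge--Amp\`ere energy, rather than on any metric estimate, which is why the argument must pass through a.e.\ limits and monotone envelopes instead of remaining at the level of $d_1$-norms.
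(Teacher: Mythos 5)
Your proof splits cleanly into two parts, and they fare very differently.

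The reverse direction of (i) is correct and takes a genuinely different route from the paper's. The paper goes through the $\mathcal I$--functional estimates of Proposition \ref{prop: I_script_est} (bounding $\int_X u_k\o_u^n - \int_X u\o_u^n$, then $\mathcal I(u,u_k)$, then $\int_X(u_k-u)\o_{u_k}^n$, and finally invoking Theorem \ref{thm: Energy_Metric_Eqv}). You instead observe that the triangle inequality through $w_k=\max(u_k,u)$ combined with Proposition \ref{prop: d_1_I_formula} reduces everything to $I(w_k)\to I(u)$, which you get from a sub-subsequence argument, the decreasing envelopes $(\sup_{j\ge l}w_j)^*\searrow u$, Corollary \ref{cor: d_p_monotone_limit} and the $d_1$--continuity of $I$. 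This bypasses Proposition \ref{prop: I_script_est} entirely and is a nice shortcut for that implication.

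The forward direction of (i) and both claims in (ii) have a genuine gap, and it sits exactly where you flag "the identification $\tilde u=u$". After extracting a sub-subsequence with $u_k\to\tilde u$ a.e.\ and in $L^1(\o^n)$, you correctly prove $\tilde u\ge u$ via the $P(u_k,u)$ argument: since $P(u_k,u)\le u$, upper semicontinuity of $I$ gives $I(\phi)\ge I(u)$ for any $L^1$--cluster point $\phi$ of $P(u_k,u)$, while $\phi\le u$ gives $I(\phi)\le I(u)$, so $I(\phi)=I(u)$ and the domination principle forces $\phi=u$. That works precisely because the two inequalities point in opposite directions. But applied to $\tilde u$ itself, the same upper semicontinuity gives only $I(\tilde u)\ge \limsup_k I(u_k)=I(u)$, and $\tilde u\ge u$ gives $I(\tilde u)\ge I(u)$ as well: both inequalities point the same way. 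There is no rigidity statement you can invoke from $\tilde u\ge u$ and $I(\tilde u)\ge I(u)$ alone, and nothing in your argument supplies the reverse bound $I(\tilde u)\le I(u)$ (equivalently $\tilde u\le u$). Note that $I$ is genuinely not lower semicontinuous under $L^1(\o^n)$--convergence, so you cannot hope to flip the inequality by a soft argument. The paper's proof of exactly this step, and of both parts of (ii), runs through Proposition \ref{prop: I_script_est} and Corollary \ref{cor: int_d1_est}, which deliver the quantitative bound $\int_X|u_k-u|\o_\psi^n\le \tilde f_C(d_1(u_k,u))$ for any $d_1$--bounded $\psi$ --- in particular $\psi=0$, yielding $\int_X|u_k-u|\o^n\to 0$ directly, and $\psi=v$ for the second claim of (ii). This transfer of control from $\o_{u}^n,\o_{u_k}^n$ (where Theorem \ref{thm: Energy_Metric_Eqv} lives) to $\o^n$ and $\o_v^n$ is the analytic heart that your soft compactness argument does not replace. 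For the weak convergence $\o_{u_k}^n\to\o_u^n$ the same issue recurs: Proposition \ref{prop: MA_cont} applies to monotone sequences, and the pointwise sandwich $P(u_k,u)\le u_k\le(\sup_{j\ge l}w_j)^*$ does not induce an ordering of the Monge--Amp\`ere measures, so there is no direct route from it to the weak convergence without the estimate \eqref{eq: I_script_est1}.
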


The proof of this result is given in a number of propositions and lemmas below. The new analytic input will be given by the bi--functional $\mathcal I(\cdot,\cdot):\mathcal E_1(X,\o)\times\mathcal E_1(X,\o) \to \Bbb R$ and its properties:
\begin{equation}\label{eq: I_script_def}
\mathcal I(u_0,u_1)= \int_X (u_0 - u_1)(\o_{u_1}^n - \o_{u_0}^n).
\end{equation}

Observe that by Lemma \ref{prop: mixed_finite_prop: Energy_est} the above expression is indeed finite. Moreover, if $d_1(u_j,u) \to 0$ then by Theorem \ref{thm: Energy_Metric_Eqv} we get $\mathcal I(u_j,u) \to 0$. As it turns out, the relationship between $d_1$ and $\mathcal I$ is much deeper then this simple observation might suggest.

When $u_0,u_1$ are bounded, then Bedford--Taylor theory allows to integrate by parts and obtain:
$$\mathcal I(u_0,u_1) =\sum_{j=0}^{n-1} \int_X i \partial (u_0 - u_1) \wedge \dbar (u_0 - u_1) \wedge \o_{u_0}^{j} \wedge \o_{u_1}^{n-j-1}.$$
In particular, $\mathcal I(u_0,u_1) \geq 0$ for bounded potentials. Applying the next lemma to canonical cutoffs, we deduce that this also holds in general:
\begin{lemma}\label{lem: I_script_Pyt} Suppose $u_0,u_0^j, u_1, u_1^j \in \mathcal E_1(X,\o)$ and $u^j_0 \searrow u_0$, $u^j_1 \searrow u_1$. Then the following hold:\\
\noindent (i) $\mathcal I(u_0,u_1)=\mathcal I(u_0,\max(u_0,u_1)) + \mathcal I(\max(u_0,u_1),u_1).$\\
\noindent (ii) $\lim_{j \to \infty}\mathcal I(u^j_0,u^j_1)=\mathcal I(u_0,u_1).$
\end{lemma}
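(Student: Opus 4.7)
\emph{Part (i).} Set $\alpha := \max(u_0, u_1)$. My plan is a set-decomposition argument. Observe that $u_0 - \alpha$ vanishes on $\{u_0 \geq u_1\}$ and equals $u_0 - u_1$ on $\{u_0 < u_1\}$; symmetrically, $\alpha - u_1$ vanishes on $\{u_0 \leq u_1\}$ and equals $u_0 - u_1$ on $\{u_0 > u_1\}$. Combining this with Lemma \ref{lem: MP_forE}, which gives $\omega_\alpha^n = \omega_{u_0}^n$ on $\{u_0 > u_1\}$ and $\omega_\alpha^n = \omega_{u_1}^n$ on $\{u_1 > u_0\}$, the integrands of $\mathcal{I}(u_0, \alpha)$ and $\mathcal{I}(\alpha, u_1)$ simplify to $(u_0-u_1)(\omega_{u_1}^n - \omega_{u_0}^n)$ on the sets $\{u_0 < u_1\}$ and $\{u_0 > u_1\}$ respectively. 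Since this integrand vanishes on $\{u_0 = u_1\}$, summing the two pieces recovers $\mathcal{I}(u_0, u_1)$.

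\emph{Part (ii).} This is a continuity statement along monotone sequences, so the natural engine is Proposition \ref{prop: MA_cont}. With $\chi = \chi_1$ and $h(l) = l$ the growth hypothesis $|h|/\chi_1 \leq 1$ holds. To apply that proposition one needs a common ``floor'' $\psi$ dominated by both $\phi$ and $v$; I will take this floor to be $\alpha_j := P(u_0^j, u_1^j)$, and use the algebraic identity
\[
\mathcal{I}(u_0^j, u_1^j) = \int_X (u_0^j - \alpha_j)\omega_{u_1^j}^n - \int_X (u_0^j - \alpha_j)\omega_{u_0^j}^n - \int_X (u_1^j - \alpha_j)\omega_{u_1^j}^n + \int_X (u_1^j - \alpha_j)\omega_{u_0^j}^n,
\]
obtained by writing $u_0^j - u_1^j = (u_0^j - \alpha_j) - (u_1^j - \alpha_j)$ inside the definition of $\mathcal{I}$. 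Each of the four terms fits the framework of Proposition \ref{prop: MA_cont} with $\phi$ chosen to be $u_0^j$ or $u_1^j$, $v$ similarly, and floor $\psi = \alpha_j$; testing weak convergence against the constant $1$ converts this into convergence of each integral to its $j$-free analog, after which recombining reproduces $\mathcal{I}(u_0, u_1)$.

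\emph{Main obstacle.} The step requiring the most care is verifying $\alpha_j \searrow \alpha := P(u_0, u_1)$ with $\alpha_j, \alpha \in \mathcal{E}_1(X, \omega)$. Membership in $\mathcal{E}_1$ follows from Proposition \ref{prop: env_exist}; the monotone convergence is a routine sandwich argument: from $u_0 \leq u_0^j$ and $u_1 \leq u_1^j$ we have $P(u_0, u_1) \leq P(u_0^j, u_1^j)$, so $\alpha_j$ decreases to some $\tilde\alpha \geq \alpha$, while $\tilde\alpha \leq \lim_j \min(u_0^j, u_1^j) = \min(u_0, u_1)$ means $\tilde\alpha$ is a competitor in the definition of $P(u_0, u_1)$, giving $\tilde\alpha \leq \alpha$. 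Once this convergence is in place, the memberships of $u_0^j, u_1^j, \alpha_j$ in $\mathcal{E}_1(X, \omega)$ follow from Corollary \ref{cor: monotonicity_E_chi}, and Proposition \ref{prop: MA_cont} applies to each of the four summands. The a.e.\ increasing case is handled identically using the ``increase'' branch of Proposition \ref{prop: MA_cont}, with the floor $\alpha_j$ now increasing a.e.\ to $\alpha$ by a symmetric sandwich.
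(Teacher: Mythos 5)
Your proof of part (i) is essentially the paper's: both rest on the plurifine locality $\mathbbm{1}_{\{u_0>u_1\}}\o_{\max(u_0,u_1)}^n = \mathbbm{1}_{\{u_0>u_1\}}\o_{u_0}^n$ (Lemma~\ref{lem: MP_forE}), the two decompositions of $\mathcal I$ over $\{u_0<u_1\}$ and $\{u_0>u_1\}$ are the same computation, and the fact that the integrand vanishes on $\{u_0=u_1\}$ closes it. For part (ii) you take a genuinely different route. The paper applies (i) to $(u^j_0,u^j_1)$ to reduce to the two terms $\mathcal I(u^j_0,\max(u^j_0,u^j_1))$ and $\mathcal I(\max(u^j_0,u^j_1),u^j_1)$, in each of which one potential dominates the other, so Proposition~\ref{prop: MA_cont} applies directly with floor $\psi_k=u^j_0$ (respectively $u^j_1$), and then reassembles via (i) once more. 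You instead expand $u_0^j-u_1^j=(u_0^j-\alpha_j)-(u_1^j-\alpha_j)$ against $\o_{u_1^j}^n-\o_{u_0^j}^n$ with $\alpha_j := P(u_0^j,u_1^j)$, apply Proposition~\ref{prop: MA_cont} to the four resulting integrals with the single common floor $\alpha_j$, and recombine. Your route is correct, and the auxiliary step it requires --- that $\alpha_j \searrow P(u_0,u_1)$ in $\mathcal E_1(X,\o)$ --- is handled cleanly by your sandwich argument together with Proposition~\ref{prop: env_exist}. The paper's variant avoids that step by using the boundary data themselves as floors, a small economy rather than a substantive difference; both are valid. Your closing remark about the a.e.\ increasing case is harmless but unnecessary, since the lemma as stated only concerns decreasing sequences.
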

\begin{proof} (i) follows from the locality of the complex Monge--Amp\`ere measure on plurifine open sets (see \eqref{eq: GZ_MP_local}). Indeed, $\mathbbm{1}_{\{u_1 > u_0\}}(\o_{\max(u_0,u_1)}^n - \o_{u_0}^n) = \mathbbm{1}_{\{u_1 > u_0\}}(\o_{u_1}^n - \o_{u_0}^n)$, consequently
$$\mathcal I(u_0,\max(u_0,u_1))=\int_{\{u_1>u_0 \}} (u_0 - u_1)(\o_{u_1}^n-\o_{u_0}^n),$$
and for similar reasons, $\mathcal I(\max(u_0,u_1),u_1)=\int_{\{u_0>u_1 \}} (u_0 - u_1)(\o_{u_1}^n-\o_{u_0}^n)$. Adding these last two identities yields (i).

To address (ii), we start with $\mathcal I(u^j_0,u^j_1) = \mathcal I(u^j_0,\max(u^j_0,u^j_1)) + \mathcal I(\max(u^j_0,u^j_1),u^j_1)$. Trivially, $u^j_0,u^j_1 \leq\max(u^j_0,u^j_1)$, hence we can apply Proposition \ref{prop: MA_cont} to each term on the right hand side of \eqref{eq: I_script_def} to conclude that $\mathcal I(u^j_0,\max(u^j_0,u^j_1)) \to \mathcal I(u_0,\max(u_0,u_1))$ and $\mathcal I(\max(u^j_0,u^j_1),u^j_1) \to \mathcal I(\max(u_0,u_1),u_1)$. Another application of (i) yields (ii).
\end{proof}

The next proposition contains the main analytic properties of the $\mathcal I$ functional:

\begin{proposition}\textup{\cite[Lemma 3.13, Lemma 5.8]{BBGZ}}\label{prop: I_script_est} Suppose $C>0$ and $\phi,\psi,u,v \in \mathcal E_1(X,\o)$ satisfies
\begin{equation}\label{eq: C_estimates} 
-C \leq I(\phi),I(\psi),I(u),I(v),\sup_X \phi, \sup_X \psi,\sup_X u,\sup_X v \leq C. 
\end{equation}
Then there exists $f_C: \Bbb R^+ \to \Bbb R^+ $ (only dependent on $C$) continuous with $f_C(0)=0$ such that
\begin{equation}\label{eq: I_script_est1}\Big|\int_X \phi (\o_u^n - \o_v^n)\Big| \leq f_C(\mathcal I(u,v)),
\end{equation}
\begin{equation}\label{eq: I_script_est2}\Big|\int_X (u-v) (\o_\phi^n-\o_\psi^n)\Big| \leq f_C(\mathcal I(u,v)).
\end{equation}
\end{proposition}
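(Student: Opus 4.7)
The plan is to reduce both estimates to the case of bounded potentials by truncation, and then prove the bounded case via integration by parts combined with the Bedford--Taylor Cauchy--Schwarz inequality. First I would approximate $u,v,\phi,\psi$ by their canonical cutoffs at level $-k$: Proposition \ref{prop: MA_cont} (with $h \equiv 1$) guarantees convergence of both sides of \eqref{eq: I_script_est1} and \eqref{eq: I_script_est2} as $k \to \infty$, Lemma \ref{lem: I_script_Pyt}(ii) gives $\mathcal I(u_k,v_k) \to \mathcal I(u,v)$, and Proposition \ref{prop: Energy_est} together with Proposition \ref{prop: I_finiteness_and_cont} keeps the $I$- and sup-bounds uniform up to a harmless enlargement of $C$. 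It therefore suffices to treat $u,v,\phi,\psi \in \textup{PSH}(X,\o) \cap L^\infty$.

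For bounded potentials, I would write $\o_u^n - \o_v^n = i\ddbar(u-v) \wedge T$ with $T = \sum_{j=0}^{n-1}\o_u^j \wedge \o_v^{n-1-j}$ and integrate by parts:
\begin{equation*}
\int_X \phi\,(\o_u^n - \o_v^n) = -\sum_{j=0}^{n-1}\int_X i\del\phi \wedge \dbar(u-v) \wedge \o_u^j \wedge \o_v^{n-1-j}.
\end{equation*}
The Bedford--Taylor Cauchy--Schwarz inequality applied summand by summand, followed by Cauchy--Schwarz on the resulting sum, yields
\begin{equation*}
\Big|\int_X \phi\,(\o_u^n - \o_v^n)\Big|^2 \leq A(\phi; u,v) \cdot \mathcal I(u,v),
\end{equation*}
where $A(\phi;u,v) := \sum_{j=0}^{n-1}\int_X i\del\phi \wedge \dbar\phi \wedge \o_u^j \wedge \o_v^{n-1-j}$ and the factor $\mathcal I(u,v)$ is recovered from $\sum_j \int_X i\del(u-v)\wedge\dbar(u-v)\wedge\o_u^j\wedge\o_v^{n-1-j}$ after one more integration by parts. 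Estimate \eqref{eq: I_script_est2} proceeds analogously, this time moving the derivatives onto $\phi - \psi$ and producing an analogous factor $A(u-v;\phi,\psi)$ to control.

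The hard part will be bounding $A(\phi;u,v)$ (and its analogue for \eqref{eq: I_script_est2}) \emph{uniformly} in terms of the constant $C$ from \eqref{eq: C_estimates}. After normalizing $\phi \leq 0$ via the sup-bound, one further integration by parts rewrites each summand of $A(\phi;u,v)$ as a difference of mixed Monge--Amp\`ere integrals of the form $\int_X(-\phi)\o_\phi^\alpha \wedge \o_u^\beta \wedge \o_v^\gamma \wedge \o^{n-\alpha-\beta-\gamma}$. The $I$-bound combined with $\sup_X\phi \leq C$ forces $E_1(\phi) = \int_X(-\phi)\o_\phi^n$ to be uniformly bounded via \eqref{eq: I_energy_diff_est}, and then iterating the scheme from the proof of Proposition \ref{prop: Energy_est} --- swapping one $\o_u$ or $\o_v$ factor at a time for $\o_\phi$ or $\o$, and dealing with the arising ``cross'' terms via Proposition \ref{prop: mixed_finite_prop: Energy_est} --- produces $A(\phi;u,v) \leq M(C)$ for an explicit continuous $M$. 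Consequently both \eqref{eq: I_script_est1} and \eqref{eq: I_script_est2} will hold with $f_C(t) = \sqrt{M(C)\,t}$.
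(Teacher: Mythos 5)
Your treatment of \eqref{eq: I_script_est1} is essentially sound and close in spirit to the paper's: same telescoping/integration by parts plus Bedford--Taylor Cauchy--Schwarz, with the ``hard'' uniform bound on $A(\phi;u,v)$ handled slightly differently. (The paper's route there is cleaner: it dominates the mixed wedge products by $\o_{\phi/4+u/8+v/8}^n$ and converts $\int_X |\phi|\,\o_{\phi/4+u/8+v/8}^n$ into a $d_1$-distance via Theorem \ref{thm: Energy_Metric_Eqv} and Lemma \ref{lem: halwayest}; your scheme via Proposition \ref{prop: mixed_finite_prop: Energy_est} should also close, but needs more bookkeeping since $u,\phi,v$ have no order relation, so you cannot literally ``swap factors'' as in Proposition \ref{prop: Energy_est} --- you would instead dominate by the Monge--Amp\`ere measure of an average potential, which brings you back to the same trick.)

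The real gap is in your treatment of \eqref{eq: I_script_est2}, which you dismiss as ``analogous.'' It is not. After one or two integrations by parts, Cauchy--Schwarz on
$$-\int_X i\del(u-v)\wedge\dbar(\phi-\psi)\wedge T',\qquad T'=\sum_j\o_\phi^j\wedge\o_\psi^{n-1-j},$$
produces the product of $\sqrt{\mathcal I(\phi,\psi)}$ and $\sqrt{A(u-v;\phi,\psi)}$. But $\mathcal I(\phi,\psi)$ is only \emph{bounded} by $C$ --- it has no reason to be small --- so even a perfect uniform bound $A(u-v;\phi,\psi)\leq M(C)$ (the ``analogue'' you propose to prove) gives a bound independent of $\mathcal I(u,v)$, which is useless. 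The quantity you actually need to squeeze down to zero with $\mathcal I(u,v)$ is precisely the gradient integral $A(u-v;\phi,\psi)$, tested against measures that are \emph{not} of the form $\o_u^j\wedge\o_v^{n-1-j}$, and it is a genuine (and somewhat subtle) fact that this quantity is controlled by $f_C(\mathcal I(u,v))$ rather than just by $C$. The paper handles this in two stages: first it interpolates only one endpoint, bounding $|\int_X(u-v)(\o_u^n-\o_\phi^n)|$ by telescoping $b_k := \int_X(u-v)\,\o_u^k\wedge\o_\phi^{n-k}$; each step produces the bounded factor $\mathcal I(u,\phi)$ and the problematic factor $c_k := \int_X i\del(u-v)\wedge\dbar(u-v)\wedge\o_\beta^k\wedge\o_\phi^{n-k-1}$, with $\beta=(u+v)/2$. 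Then, and this is the step missing from your proposal, it runs a \emph{recursion} on $c_k$ (again via integration by parts plus Cauchy--Schwarz) to get $c_k\leq c_{k+1}+2D''c_{k+1}^{1/2}$, hence $c_k\leq f_C(c_{n-1})$, and finally observes $c_{n-1}\lesssim\mathcal I(u,v)$ since $\o_\beta^{n-1}$ is a convex combination of $\o_u^j\wedge\o_v^{n-1-j}$. Without such an argument propagating smallness of $\mathcal I(u,v)$ into the mixed $\phi$-measures, \eqref{eq: I_script_est2} does not follow from a uniform bound on the gradient term.
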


Before we get into the argument, we note that by the lemma following this proposition, the condition \eqref{eq: C_estimates} is seen to be equivalent with
\begin{equation}\label{eq: C_estimates_d1} 
d_1(0,\phi),d_1(0,\psi),d_1(0,u),d_1(0,v) \leq C'.
\end{equation} 

\begin{proof}By repeated application of 
the dominated convergence theorem, \eqref{eq: CMA_general_def} and Lemma \ref{lem: I_script_Pyt}(ii), one can see that it is enough to show \eqref{eq: I_script_est1} and \eqref{eq: I_script_est2} for bounded potentials. 

We focus now on \eqref{eq: I_script_est1} and introduce the quantities $a_k: = \int_X \phi \o_{u}^k \wedge \o_v^{n-k}$. We note that \eqref{eq: I_script_est1} follows if we are able to prove  
\begin{equation}\label{eq: ak_ak1}
|a_{k+1} - a_k| \leq f_C(\mathcal I(u,v)), \ \ k \in \{0,\ldots,n-1\}.
\end{equation}
As our potentials are bounded, we can use integration by parts and start writing:
\begin{flalign*}
a_{k+1}-a_k & = \int_X \phi i\ddbar (u - v) \wedge \o_u^{k} \wedge \o_v^{n-k-1}\\
&=\int_X i \partial \phi \wedge \dbar (v - u) \wedge \o_u^{k} \wedge \o_v^{n-k-1},  
\end{flalign*}
Consequently, by the Cauchy--Schwarz inequality we can write:
\begin{flalign*}
|a_{k+1}-a_k|^2 & \leq \int_X i \partial \phi \wedge \dbar \phi \wedge \o_u^{k} \wedge \o_v^{n-k-1} \cdot \int_X i \partial (u-v) \wedge \dbar (u-v) \wedge \o_u^{k} \wedge \o_v^{n-k-1}\\
& \leq \int_X i \partial \phi \wedge \dbar \phi \wedge \o_u^{k} \wedge \o_v^{n-k-1} \cdot \mathcal I(u,v).
\end{flalign*}
By the above, \eqref{eq: ak_ak1} would follow if we can show that $\int_X i \partial \phi \wedge \dbar \phi \wedge \o_u^{k} \wedge \o_v^{n-k-1}$ is uniformly bounded, and this exactly what we argue:
\begin{flalign*}
\int_X i \partial \phi \wedge \dbar \phi \wedge \o_u^{k} \wedge \o_v^{n-k-1}&=\int_X \phi \o \wedge \o_u^{k} \wedge \o_v^{n-k-1} - \int_X \phi \o_\phi \wedge \o_u^{k} \wedge \o_v^{n-k-1}\\
& \leq \int_X |\phi| \o \wedge \o_u^{k} \wedge \o_v^{n-k-1} + \int_X |\phi| \o_\phi \wedge \o_u^{k} \wedge \o_v^{n-k-1} \\
& \leq D \int_X |\phi| \o^n_{ \phi/4 + u/8 + v/8}\\
& \leq D'd_1(u/8 + v/8,\phi/4 + u/8 + v/8).
\end{flalign*}
where in the last estimate we have used Theorem \ref{thm: Energy_Metric_Eqv}. To finish, by the triangle inequality we have to argue that $d_1(0,u/8+v/8)$ and $d_1(0,\phi /4+u/8+v/8)$ are bounded. This can be deduced from \eqref{eq: C_estimates_d1} by repeated application of Lemma \ref{lem: halwayest} and the triangle inequality for $d_1$.

Now we turn to \eqref{eq: I_script_est2}. The proof has similar philosophy, but it is slightly more intricate. We introduce $\alpha := u-v$, and  the quantities $b_k := \int_X \alpha  \o_{u}^k \wedge \o_\phi^{n-k}$. We will show that
\begin{equation}\label{eq: b_k_main_est}
|b_{k+1}-b_k| \leq f_C(\mathcal I(u,v)), \ \ k \in \{0,\ldots,k-1\}.
\end{equation}
This will imply that $\Big|\int_X (u -v)(\o_u^n - \o_{\phi}^n) \Big| \leq f_C(\mathcal I(u,v))$, and using the symmetry in $u,v$, and basic properties of the absolute value, we obtain \eqref{eq: I_script_est2}.
Integration by parts yields the following:
\begin{flalign*}
b_{k+1}-b_k &=\int_X \alpha i\ddbar (u-\phi) \wedge \o_u^{k} \wedge  \o_\phi^{n-k-1}= -\int_X i\partial \alpha \wedge \dbar (u-\phi) \wedge \o_u^{k} \wedge  \o_\phi^{n-k-1}
\end{flalign*}
Introducing $\beta := (u + v)/2$ and $c_k := \int_X i\partial \alpha \wedge \dbar \alpha \wedge \o_\beta^{k} \wedge  \o_\phi^{n-k-1} $, by the Cauchy--Schwarz inequality we deduce that
\begin{flalign}\label{eq: c_k_est}
|b_{k+1}-b_k|^2 &\leq \mathcal I(u,\phi) \int_X i\partial \alpha \wedge \dbar \alpha \wedge \o_u^{k} \wedge  \o_\phi^{n-k-1}  \leq 2^k \mathcal I(u,\phi) c_k \leq D c_k,
\end{flalign}
where in the last inequality we have used that $\mathcal I(u,\phi)$ is bounded. Indeed, this follows from \eqref{eq: C_estimates_d1}, the triangle inequality for $d_1$, and Theorem \ref{thm: Energy_Metric_Eqv}. Consequently, to prove \eqref{eq: b_k_main_est} it suffices to show that
\begin{equation}\label{eq: c_k_main_est}
c_k \leq f_C(\mathcal I(u,v)).
\end{equation}  
To obtain this, we integrate by parts again:
\begin{flalign}\label{eq: someiniedaf}
c&_{k+1}-c_k = \int_X i \partial \alpha \wedge \dbar \alpha \wedge i\ddbar (\beta-\phi) \wedge \o_{\beta}^{k} \wedge \o_{\phi}^{n-k-2} \nonumber \\
&= \int_X i \partial \alpha \wedge \dbar (\beta-\phi) \wedge i\ddbar \alpha \wedge \o_{\beta}^{k} \wedge \o_{\phi}^{n-k-2}\\
&= \int_X i \partial \alpha \wedge \dbar (\beta-\phi) \wedge \o_u \wedge \o_{\beta}^{k} \wedge \o_{\phi}^{n-k-2}-\int_X i \partial \alpha \wedge \dbar (\beta-\phi) \wedge \o_v \wedge \o_{\beta}^{k} \wedge \o_{\phi}^{n-k-2}. \nonumber
\end{flalign}
Using that $\o_u \leq 2 \o_\beta$ and the Cauchy--Schwarz inequality we can estimate the first term from the right hand side in the following manner:
\begin{flalign*}
\Big|\int_X i \partial \alpha \wedge \dbar (\beta-\phi) \wedge \o_u \wedge \o_{\beta}^{k-1} \wedge \o_{\phi}^{n-k-1}\Big|^2 &\leq D' \mathcal I(\beta,\phi) \int_X i \partial \alpha \wedge \dbar \alpha  \wedge \o_{\beta}^{k+1} \wedge \o_{\phi}^{n-k-2}\\
&= D' \mathcal I\Big(\frac{u+v}{2},\phi\Big)c_{k+1} \leq D'' c_{k+1},
\end{flalign*}
where in the last inequality we used that $\mathcal I({(u+v)}/{2},\phi)$ is bounded. This follows from \eqref{eq: C_estimates_d1}, as repeated application of Lemma \ref{lem: halwayest} and the triangle inequality for $d_1$ yields that $d_1(\phi,(u+v)/2)$ is bounded. 

We can similarly estimate the other term on the right hand side of \eqref{eq: someiniedaf} and putting everything together we obtain that  $c_k \leq c_{k+1} + 2D'' c_{k+1}^{1/2}$. After a successive application of this inequality, we find that $c_{k} \leq f_C(c_{n-1})$, which is equivalent to \eqref{eq: c_k_main_est}.
\end{proof}
\begin{lemma}\label{lem: d1bounded_char} Suppose $u \in \mathcal E_1(X,\o)$ and $-C \leq I(u) \leq \sup_X u \leq C$ for some $C>0$. Then $d_1(0,u) \leq 3C$.
\end{lemma}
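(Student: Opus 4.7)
The plan is to invoke Proposition \ref{prop: d_1_I_formula} with $v = 0$. Since $I(0) = 0$, this yields
\[
d_1(0,u) = I(u) - 2 I(P(0,u)),
\]
so the whole problem reduces to producing an adequate lower bound on $I(P(0,u))$.

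I would split into two cases based on the sign of $\sup_X u$. If $\sup_X u \leq 0$ then $u \leq 0$, so $u$ itself is a candidate in the envelope \eqref{eq: P_env_def} defining $P(0,u)$; combined with the trivial $P(0,u) \leq u$, this forces $P(0,u) = u$, and the displayed identity collapses to $d_1(0,u) = -I(u) \leq C$, which is even stronger than needed.

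In the generic case $\sup_X u > 0$, the candidate $u - \sup_X u$ is $\omega$-psh, is $\leq 0$, and is $\leq u$, so it competes in the envelope defining $P(0,u)$. This gives the pointwise bound $P(0,u) \geq u - \sup_X u$. Applying monotonicity of $I$ (the corollary following Proposition \ref{prop: I_energy_prop}) together with the translation identity $I(v + c) = I(v) + c$, which is immediate from the definition \eqref{eq: I_energy_def}, yields
\[
I(P(0,u)) \;\geq\; I(u - \sup_X u) \;=\; I(u) - \sup_X u.
\]
Substituting back produces
\[
d_1(0,u) \;\leq\; I(u) - 2(I(u) - \sup_X u) \;=\; -I(u) + 2 \sup_X u \;\leq\; C + 2C \;=\; 3C,
\]
using the hypotheses $-I(u) \leq C$ and $\sup_X u \leq C$. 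There is no real obstacle here — the proof is little more than a one-line envelope comparison plugged into the Pythagorean identity of Proposition \ref{prop: d_1_I_formula}.
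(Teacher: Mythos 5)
Your proof is correct, but it takes a different route from the paper's. The paper first splits off the constant via the triangle inequality,
\[
d_1(0,u) \le d_1(0,\sup_X u) + d_1(\sup_X u,u) = |\sup_X u| + d_1(0,u-\sup_X u),
\]
then evaluates the second term by the monotone case of Proposition \ref{prop: d_1_I_formula} (i.e.\ $d_1(v,w)=I(v)-I(w)$ when $v\ge w$, applied to $0 \ge u - \sup_X u$), arriving at $d_1(0,u)\le |\sup_X u| + \sup_X u - I(u)\le 3C$. You instead feed the general Pythagorean identity $d_1(0,u)=I(u)-2I(P(0,u))$ directly into the envelope comparison $P(0,u)\ge u-\sup_X u$ plus monotonicity and translation-invariance of $I$. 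Both work and land on the same numeric bound, and your envelope step is in fact lifted verbatim from the proof of Proposition \ref{prop: d_1_growth_J}, so it is entirely in the spirit of the paper; the paper's version is marginally more economical because it avoids the envelope $P(0,u)$ altogether, needing only the monotone case of the $d_1$--$I$ formula, and it does not require the case split on the sign of $\sup_X u$ (it simply writes $|\sup_X u|$ once). One minor remark: your case $\sup_X u\le 0$ could be absorbed into the general one if you bound $|\sup_X u|\le C$ as the paper does, but your bifurcation is harmless and correct.
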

\begin{proof}By the triangle inequality and Proposition \ref{prop: d_1_I_formula} we can write:
\begin{flalign*}
d_1(0,u) &\leq d_1(0,\sup_X u) + d_1(\sup_X u,u)=d_1(0,\sup_X u) + d_1(0,u-\sup_X u)\\
&=  |\sup_X u| + I(0) - I(u - \sup_X u)\\
&= |\sup_X u| +\sup_X u  - I(u) \\
& \leq 3C.
\end{flalign*}
\end{proof}

We note the following important corollary of Proposition \ref{prop: I_script_est}:

\begin{corollary} \label{cor: int_d1_est} For any $C>0$ there exists $\tilde f_C:\Bbb R^+ \to \Bbb R^+$ continuous with $\tilde f_C(0)=0$ such that
\begin{equation}\label{eq: int_d_1_est} 
\int_X |u-v|\o_{\psi}^n \leq \tilde f_C(d_1(u,v)),
\end{equation}
for $u,v,\psi \in \mathcal E_1(X,\o)$  satisfying $d_1(0,u),d_1(0,v),d_1(0,\psi) \leq C$. 
\end{corollary}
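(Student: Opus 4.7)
The plan is to reduce the bound on $\int_X |u-v|\o_\psi^n$ to two bounds on non-negative integrands by introducing $w := \max(u,v)$, which lies in $\mathcal E_1(X,\o)$ by the monotonicity property (Corollary \ref{cor: monotonicity_E_chi}), and writing $|u-v| = (w-u) + (w-v)$. Each of the two integrals $\int_X (w-u)\o_\psi^n$ and $\int_X (w-v)\o_\psi^n$ will then be handled by Proposition \ref{prop: I_script_est}, in the form of estimate \eqref{eq: I_script_est2}.

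To invoke \eqref{eq: I_script_est2} we must first verify that $d_1(0,w)$ is bounded by a constant $C'=C'(C)$, so that all relevant potentials fit into the hypothesis of that estimate (given the equivalence between \eqref{eq: C_estimates} and \eqref{eq: C_estimates_d1} noted right after Proposition \ref{prop: I_script_est}, together with Lemma \ref{lem: d1bounded_char}). Since $u \leq w$, Proposition \ref{prop: d_1_I_formula} gives $d_1(u,w) = I(w) - I(u)$. The upper estimate in \eqref{eq: I_energy_diff_est}, extended to $\mathcal E_1(X,\o)$ by approximation on canonical cutoffs and Proposition \ref{prop: MA_cont}, bounds this by $\tfrac{1}{V}\int_X (w-u)\o_u^n \leq \tfrac{1}{V}\int_X |u-v|\o_u^n$, which in turn is controlled by $d_1(u,v)$ through Theorem \ref{thm: Energy_Metric_Eqv}. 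The triangle inequality then yields $d_1(0,w) \leq d_1(0,u) + d_1(u,w) \leq C'(C)$.

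Next, applying \eqref{eq: I_script_est2} to the pair $(w,u)$ with potentials $(\psi,w)$ produces
\[
\int_X (w-u)\o_\psi^n \leq \int_X (w-u)\o_w^n + f_{C'}(\mathcal I(w,u)).
\]
The first term is controlled using Lemma \ref{lem: MP_forE}: since $w=v$ on $\{v>u\}$, one has $\int_X (w-u)\o_w^n = \int_{\{v>u\}}(v-u)\o_v^n \leq \int_X |u-v|\o_v^n$, which is again dominated by $d_1(u,v)$ via Theorem \ref{thm: Energy_Metric_Eqv}. Lemma \ref{lem: I_script_Pyt}(i) together with non-negativity of each $\mathcal I$-term gives $\mathcal I(w,u) \leq \mathcal I(u,v)$, and the latter is dominated by $\int_X |u-v|(\o_u^n + \o_v^n)$, hence by $d_1(u,v)$, again through Theorem \ref{thm: Energy_Metric_Eqv}. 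A symmetric argument handles $\int_X (w-v)\o_\psi^n$, and adding the two estimates produces a continuous function of $d_1(u,v)$ vanishing at zero, yielding the desired $\tilde f_C$. The main obstacle is the bound on $d_1(0,w)$ established in the second paragraph: without it, the constant $C'$ in \eqref{eq: I_script_est2} could not be chosen uniformly in $u,v$ with $d_1(0,\cdot) \leq C$, and the whole argument collapses.
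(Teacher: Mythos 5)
Your argument is correct and follows essentially the same route as the paper: both hinge on applying estimate \eqref{eq: I_script_est2}, controlling $d_1(0,\max(u,v))$, using $\mathcal I(\max(u,v),\cdot)\leq \mathcal I(u,v)$ from Lemma \ref{lem: I_script_Pyt}(i), and converting everything to $d_1(u,v)$ through Theorem \ref{thm: Energy_Metric_Eqv}. The only difference is cosmetic: you decompose $|u-v|=(w-u)+(w-v)$ with $w=\max(u,v)$ and treat both pieces symmetrically, whereas the paper first estimates $|\int (u-v)\o_\psi^n|$ directly and then uses the identity $|u-v|=2(\max(u,v)-v)-(u-v)$.
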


\begin{proof}By Lemma \ref{prop: I_finiteness_and_cont} and Theorem \ref{thm: Energy_Metric_Eqv} we note that $d_1(0,u),d_1(0,v),d_1(0,\psi) \leq C$ implies that  $I(u),I(v),I(\psi)$ and $\sup_X u,\sup_X v, \sup_X \psi$ are uniformly bounded. Consequently \eqref{eq: I_script_est2} gives that
$$\Big|\int_X (u-v)\o_{\psi}^n\Big| \leq f_C(\mathcal I(u,v)) + \int_X |u-v|\o_u^n \leq \tilde f_C(d_1(u,v)).$$

Next, by Theorem \ref{thm: Energy_Metric_Eqv}, $d_1(u,\max(u,v))$ is uniformly bounded. By the triangle inequality, so is $d_1(0,\max(u,v))$. Consequently, since $\mathcal I(\max(u,v),v) \leq \mathcal I(u,v)$ (see Lemma \eqref{lem: I_script_Pyt}) we also have:
$$\Big|\int_X (\max(u,v)-v)\o_{\psi}^n\Big| \leq \tilde f_C(d_1(u,v)).$$
Using $|u - v|= 2 (\max(u,v) - v) - (u-v)$, we can add these last two inequalities to obtain \eqref{eq: int_d_1_est}.\end{proof}

As another corollary of Proposition \ref{prop: I_script_est} we obtain the second part of Theorem \ref{thm: d_1-convergence}:

\begin{corollary}Suppose $u_k,u,v \in \mathcal E_1(X,\o)$ with $d_1(u_k,u) \to 0$. The following hold:\\
\noindent (i) $\o_{u_k}^n \to \o_u^n$ weakly.\\
\noindent (ii) $\int_X |u_k - u|\o_v^n \to 0.$ 
\end{corollary}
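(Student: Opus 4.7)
Both parts of the corollary reduce to the machinery of Proposition \ref{prop: I_script_est} once the required uniform bounds are verified. Since $d_1(u_k,u)\to 0$, the triangle inequality gives a uniform bound $d_1(0,u_k)\leq C$ valid for all $k$, and Theorem \ref{thm: Energy_Metric_Eqv} further yields
\[
\mathcal I(u_k,u) \;\leq\; \int_X |u_k-u|\,(\omega_{u_k}^n + \omega_u^n) \;\leq\; 2^{2n+6}\, d_1(u_k,u) \;\longrightarrow\; 0.
\]

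With these bounds in hand, part (ii) follows immediately from the preceding Corollary \ref{cor: int_d1_est}: applied with a single $C$ covering $u_k,u,v$, it gives $\int_X |u_k-u|\,\omega_v^n \leq \tilde f_C(d_1(u_k,u))$, and continuity of $\tilde f_C$ at zero finishes the matter.

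For part (i), the plan is to establish weak convergence against smooth test functions first, and then extend to continuous ones by density. Given $\phi\in C^\infty(X)$, I would write $\phi$ as a difference of two smooth $\omega$-psh functions using the openness of $\mathcal H_\omega$ in $C^\infty(X)$: pick any $g\in\mathcal H_\omega$ and take $\varepsilon>0$ small enough that $g+\varepsilon\phi\in\mathcal H_\omega$; then $\phi = \varepsilon^{-1}\bigl((g+\varepsilon\phi) - g\bigr)$. Both pieces lie in $\mathcal H_\omega\subset\mathcal E_1(X,\omega)$ and, being smooth on the compact manifold $X$, trivially satisfy the hypotheses \eqref{eq: C_estimates} of Proposition \ref{prop: I_script_est} (after enlarging $C$). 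Applying \eqref{eq: I_script_est1} to each piece yields $|\int_X \phi\,(\omega_{u_k}^n - \omega_u^n)| \leq 2\varepsilon^{-1} f_C(\mathcal I(u_k,u))\to 0$. The upgrade to continuous test functions is a standard $3\varepsilon$-approximation argument: given $\psi\in C(X)$ and $\delta>0$, choose smooth $\tilde\psi$ with $\|\psi-\tilde\psi\|_{C^0}<\delta$, and use the uniform mass bound $\omega_{u_k}^n(X)=\omega_u^n(X)=\mathrm{Vol}(X)$ (which is precisely the defining property of $\mathcal E(X,\omega)$) to absorb the $L^\infty$-error.

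I do not anticipate a serious obstacle, since the analytic heavy lifting is already contained in Proposition \ref{prop: I_script_est}; this corollary is essentially a translation of those integral estimates into the language of weak convergence of measures. The only mild points worth flagging are the algebraic trick decomposing a smooth function as a difference of Kähler potentials, and the observation that the bounds \eqref{eq: C_estimates} are automatic for elements of $\mathcal H_\omega$.
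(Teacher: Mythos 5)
Your proof is correct and follows essentially the same route as the paper: reduce both parts to Proposition \ref{prop: I_script_est} and Corollary \ref{cor: int_d1_est} after noting that $d_1(u_k,u)\to 0$ forces $\mathcal I(u_k,u)\to 0$ via Theorem \ref{thm: Energy_Metric_Eqv}. The one cosmetic difference is in part (i): where you write $\phi=\varepsilon^{-1}\bigl((g+\varepsilon\phi)-g\bigr)$ as a difference of two K\"ahler potentials, the paper observes more directly that $\varepsilon\phi\in\mathcal H_\o$ for small $\varepsilon>0$ (since $i\ddbar\phi\geq -C\o$ on the compact $X$) and applies \eqref{eq: I_script_est1} once, then divides by $\varepsilon$; your version is correct but does the same job with two applications instead of one.
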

\begin{proof} Given $\phi \in C^\infty(X)$, after possibly multiplying with a small positive constant, we can assume that $\phi \in \mathcal H_\o$. As $d_1(u_k,u) \to 0$ implies that $\mathcal I(u_k,u) \to 0$ (Theorem \ref{thm: Energy_Metric_Eqv}), we see that \eqref{eq: I_script_est1} gives (i). The convergence statement of (ii) follows immediately from the previous corollary.
\end{proof}

Now we prove the rest of Theorem \ref{thm: d_1-convergence}:

\begin{proposition}Suppose $u_k,u \in \mathcal E_1(X,\o)$. Then $d_1(u,u_k)\to 0$ if and only if $\int_X |u - u_k|\o^n \to 0$ and $I(u_k) \to I(u)$.
\end{proposition}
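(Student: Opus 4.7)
The forward direction will follow quickly from what is already established. Assume $d_1(u,u_k)\to 0$. The inequality $|I(u_k)-I(u)|\leq d_1(u_k,u)$ from Proposition \ref{prop: I_finiteness_and_cont} immediately gives $I(u_k)\to I(u)$, and Corollary \ref{cor: int_d1_est} applied with the fixed reference potential $\psi\equiv 0\in\mathcal E_1(X,\o)$ (noting that $d_1(0,u_k)$ stays bounded by the triangle inequality) gives $\int_X|u-u_k|\o^n\leq\tilde f_C(d_1(u,u_k))\to 0$.

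For the reverse implication, the plan is to sandwich $u_k$ between $u$ and a decreasing sequence that converges to $u$. Suppose $\int_X|u-u_k|\o^n\to 0$ and $I(u_k)\to I(u)$, and set $v_k:=\max(u,u_k)\in\mathcal E_1(X,\o)$. Since $v_k\geq u$ and $v_k\geq u_k$, Proposition \ref{prop: d_1_I_formula} gives the clean formulas $d_1(u,v_k)=I(v_k)-I(u)$ and $d_1(u_k,v_k)=I(v_k)-I(u_k)$. The triangle inequality therefore reduces everything to showing $I(v_k)\to I(u)$: if this holds, then $d_1(u,u_k)\leq 2I(v_k)-I(u)-I(u_k)\to 0$.

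The main work is to prove $I(v_k)\to I(u)$, and this is where the singularity of elements of $\mathcal E_1(X,\o)$ makes things delicate. By a standard subsequence argument (any subsequence of $u_k$ still satisfies both hypotheses), it suffices to argue up to passing to a subsequence, so I will assume $u_k\to u$ a.e. on $X$. Note $0\leq v_k-u\leq|u_k-u|$, so $v_k\to u$ both a.e.\ and in $L^1(\o^n)$. Lemma \ref{lem: sup_int_psh_eqv} applied to the $\o$-psh functions $v_k$ yields a uniform upper bound for $\sup_X v_k$, so the usc regularizations $\psi_k:=(\sup_{j\geq k}v_j)^*$ are well-defined elements of $\textup{PSH}(X,\o)$, are decreasing in $k$, and satisfy $\psi_k\geq v_k\geq u$. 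Using the fact that the regularization only alters the sup on an $\o^n$-negligible set together with dominated convergence, $\int_X\psi_k\,\o^n\searrow\int_X u\,\o^n$; combined with $\psi_k\geq u$ and the pointwise $\lim\psi_k\geq\limsup v_k=u$, this forces $\psi_k\searrow u$ everywhere.

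With the sandwich $u\leq v_k\leq\psi_k$ in hand, the finish is quick. Because $\psi_k\geq u\in\mathcal E_1(X,\o)$, the monotonicity property (Corollary \ref{cor: monotonicity_E_chi}) places each $\psi_k$ in $\mathcal E_1(X,\o)$, and Corollary \ref{cor: d_p_monotone_limit} then gives $d_1(\psi_k,u)\to 0$. Proposition \ref{prop: I_finiteness_and_cont} transfers this to $I(\psi_k)\to I(u)$, and the monotonicity of $I$ along $u\leq v_k\leq\psi_k$ produces $I(v_k)\to I(u)$, completing the proof along the subsequence and hence, by the subsequence trick above, along the original sequence. The main obstacle I anticipate is the measure-theoretic manipulation showing $\psi_k\searrow u$ globally rather than just $\geq u$; everything else is a direct assembly of the tools already built.
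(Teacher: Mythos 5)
Your argument is correct and takes a genuinely different route from the paper's. The paper attacks the converse direction analytically: it invokes the estimates on the bi-functional $\mathcal I(\cdot,\cdot)$ from Proposition \ref{prop: I_script_est} to show first that $\int_X u_k\,\o_u^n\to\int_X u\,\o_u^n$, deduces $\mathcal I(u,u_k)\to 0$ from the cocycle formula for $I$, then bounds $\int_X|u-u_k|\o_u^n$ and $\int_X|u-u_k|\o_{u_k}^n$ via the algebraic identity $|u_k-u|=2(\max(u_k,u)-u)-(u_k-u)$ before finishing with Theorem \ref{thm: Energy_Metric_Eqv}. You instead work geometrically: the Pythagorean identity in the form $d_1(u,\max(u,u_k))=I(\max(u,u_k))-I(u)$ (Proposition \ref{prop: d_1_I_formula}) reduces everything to a single convergence $I(\max(u,u_k))\to I(u)$, which you obtain by trapping $\max(u,u_k)$ between $u$ and the monotone roof $\psi_k=(\sup_{j\geq k}\max(u,u_j))^*$ and invoking Corollary \ref{cor: d_p_monotone_limit}. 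Your approach avoids the technically heaviest input of the section (the $\mathcal I$-estimates inherited from \cite{BBGZ}), at the modest cost of a subsequence/a.e.-convergence extraction and the usc-regularization manipulation, which you handle correctly: $\psi_k$ agrees a.e.\ with $\sup_{j\geq k}v_j$, dominated convergence identifies $\int_X\psi_k\,\o^n\searrow\int_X u\,\o^n$, and since $\lim_k\psi_k\geq u$ with equal integral, equality holds a.e.\ and hence everywhere between $\o$-psh functions. Both proofs are valid; yours is shorter once the $d_1$-machinery of the preceding sections is in place, while the paper's yields the quantitative $\mathcal I$-to-$d_1$ comparison as a byproduct, which is used elsewhere.
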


\begin{proof} The fact that $d_1(u_k,u) \to 0$ implies $\int_X |u - u_k|\o^n \to 0$, follows from the previous corollary. $I(u_k) \to I(u)$ follows from the $d_1$--continuity of $I$.

We focus on the reverse direction. The first step is to show that 
\begin{equation}\label{eq: v_conv}
\int_X u_k \o_u^n \to \int_X u\o_u^n.
\end{equation}
By Theorem \ref{thm: BK_approx}, pick $v_k\in \mathcal H_\o$ decreasing to $u$. Consequently $\sup_X v_k$ and $I(v_k)$ is uniformly bounded, hence by \eqref{eq: I_script_est1} we can write:
$$
\Big | \int_X u_k (\o_u^n - \o_{v_j}^n)\Big| \leq f_C(\mathcal I(v_j,u)), \ \ j,k \in \Bbb N.
$$
As $v_j$ is smooth and $\int_X |u_k - u|\o^n \to 0$, we can write
$$\Big|\limsup_k \int_X u_k \o_u^n - \int_X u \o_{v_j}^n\Big| \leq f_C(\mathcal I(v_j,u)), \ \ j \in \Bbb N.$$
As $v_j \searrow u$, we can use Proposition \ref{prop: MA_cont} to get $\limsup_k \int_X u_k \o_u^n =  \int_X u \o_u^n$. The analogous statement also holds for $\liminf$, and we obtain \eqref{eq: v_conv}.

Next we use the following estimate, which is a consequence of \eqref{eq: I_energy_diff_E1} and \eqref{eq: I_energy_int_parts_ineq}:
$$\frac{\mathcal I(u,u_k)}{(n+1)V} \leq I(u_k) - I(u) - \frac{1}{V}\int_X (u - u_k) \o_u^n.$$
By \eqref{eq: v_conv} we have $\mathcal I(u,u_k) \to 0$. Using \eqref{eq: v_conv} again and \eqref{eq: I_script_est2} we can write
\begin{equation}\label{eq: v_k_conv}
\int_X (u_k-u) \o_{u_k}^n \to 0.
\end{equation}
Since, $\mathcal I(u,u_k) \to 0$, Lemma \ref{lem: I_script_Pyt}(i) gives that  $
\mathcal I(\max(u_k,u),u) \to 0$.  Also, by our assumptions, $\int_X (\max(u_k,u)-u) \o^n \to 0$, hence \eqref{eq: I_script_est2} implies that 
$$\int_X (\max(u_k,u)-u) \o_{u}^n \to 0 \ \ \textup{ and } \ \ \int_X (\max(u_k,u)-u) \o_{u_k}^n \to 0,$$ 
where in the last limit we also used the locality of the complex Monge--Amp\`ere operator with respect to the plurifine topology (see \eqref{eq: GZ_MP_local}). Using the fact that $|u_k - u|= 2 (\max(u_k,u) - u) - (u_k - u)$, together with \eqref{eq: v_conv} and \eqref{eq: v_k_conv} we get that
$$\int_X |u-u_k|\o_u^n + \int_X |u-u_k|\o_{u_k}^n \to 0.$$
Theorem \ref{thm: Energy_Metric_Eqv} now gives $d_1(u_k,u) \to 0$.
\end{proof}

For the last result of this section we return to the general $d_p$ metric topologies. First observe that for any $u \in \mathcal H_\o$ and $\xi \in T_u \mathcal H_\o$ the H\"older inequality gives $\| \xi\|_{u,p'} \leq \| \xi\|_{u,p}$ for any $p' \leq p$. This in turn implies that the $L^{p'}$ length of smooth curves in $\mathcal H_\o$ is shorter then their $L^p$ length, ultimately giving that the $d_p$ metric dominates $d_p'$. Consequently, all $d_p$ metrics dominate the $d_1$ metric, hence Theorem \eqref{thm: d_1-convergence}(ii) holds for $d_p$--convergence as well. We record this (in a slightly stronger form) in our last result:

\begin{proposition}\label{prop: dp_mixed_convergence} Suppose $v,u,u_k \in \mathcal E_p(X,\o)$ and $d_p(u_k,u) \to 0$. Then $\o_{u_k}^n \to \o_u^n$ weakly and $
\int_X |u -u_k|^p \o_v^n \to 0.$
\end{proposition}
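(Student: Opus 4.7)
The plan is to derive both conclusions from the $d_1$-theory already established in Theorem \ref{thm: d_1-convergence} combined with the Energy--Metric Equivalence (Theorem \ref{thm: Energy_Metric_Eqv}). First I would observe that Jensen's inequality applied to the normalized probability measures $\omega_u^n/V$ yields $\|\xi\|_{1,u} \leq \|\xi\|_{\chi_p,u}$ for every $u \in \mathcal H_\o$ and $\xi \in T_u\mathcal H_\o$; consequently the $L^1$-length of any smooth curve is bounded by its $L^p$-length, giving $d_1 \leq d_p$ on $\mathcal H_\o$. Since $\mathcal H_\o$ is dense in both $(\mathcal E_1, d_1)$ and $(\mathcal E_p, d_p)$ by Theorem \ref{thm: EpComplete}, and $\mathcal E_p \subseteq \mathcal E_1$, the dominance passes to the completion, so $d_p(u_k,u) \to 0$ forces $d_1(u_k,u) \to 0$. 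Then Theorem \ref{thm: d_1-convergence}(ii) immediately supplies the weak convergence $\omega_{u_k}^n \to \omega_u^n$ together with the $L^1$ statement $\int_X |u-u_k|\,\omega_v^n \to 0$.

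To upgrade the integral convergence to $L^p$, I would combine the preceding step with Theorem \ref{thm: Energy_Metric_Eqv}, which already delivers $\int_X |u-u_k|^p \omega_u^n, \int_X |u-u_k|^p \omega_{u_k}^n \to 0$. For a truncation parameter $M > 0$ I split
$$
\int_X |u-u_k|^p \omega_v^n = \int_X \min(|u-u_k|^p, M^p)\,\omega_v^n + \int_{\{|u-u_k|>M\}} |u-u_k|^p\,\omega_v^n.
$$
The elementary inequality $\min(t^p, M^p) \leq M^{p-1}t$ bounds the first piece by $M^{p-1}\int_X |u-u_k|\,\omega_v^n$, which tends to zero as $k \to \infty$ for each fixed $M$ by the $L^1$ convergence just established.

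The main obstacle will be the uniform control of the tail term as $M \to \infty$, that is, the uniform integrability of $\{|u-u_k|^p\}_k$ with respect to $\omega_v^n$. The $d_p$-boundedness of $\{u_k\}$ gives, via Theorem \ref{thm: Energy_Metric_Eqv} and Proposition \ref{prop: mixed_finite_prop: Energy_est} (after shifting by the uniform bound on $\sup_X u_k$ coming from Lemma \ref{lem: sup_int_psh_eqv}), a uniform bound on $\int_X |u_k|^p\,\omega_v^n$ and hence a Chebyshev decay $\omega_v^n(\{|u-u_k|>M\}) \leq C/M^p$ uniformly in $k$. The $|u|^p$-contribution to the tail then vanishes uniformly as $M \to \infty$ by absolute continuity of the finite measure $|u|^p\omega_v^n$ with respect to $\omega_v^n$. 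For the more delicate $|u_k|^p$-contribution I would apply Lemma \ref{lem: CompBiggerEn} jointly to the envelope $P(u,v)$ to secure a weight $\tilde\chi \in \mathcal W_{2p+1}^+$ with $\tilde\chi(t)/t^p \to \infty$ and $u,v \in \mathcal E_{\tilde\chi}$, and then exploit Proposition \ref{prop: mixed_finite_prop: Energy_est} together with the monotonicity property of finite energy classes (Corollary \ref{cor: monotonicity_E_chi}) to extract the de la Vall\'ee Poussin-type uniform integrability of the $u_k$-contribution, thereby closing the argument.
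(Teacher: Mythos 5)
Your reduction to the $d_1$-theory and the observation that $d_1\leq d_p$ (via Jensen/H\"older on tangent spaces, which descends to the path length metrics and then to the completions) are both correct, and they do correctly dispose of the weak convergence of measures and the $L^1$-statement $\int_X|u-u_k|\,\o_v^n\to 0$. The truncation estimate $\min(t^p,M^p)\leq M^{p-1}t$ also correctly reduces the problem to uniform integrability of $\{|u-u_k|^p\}_k$ with respect to $\o_v^n$.

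The genuine gap is in the last step, where you try to establish this uniform integrability. Applying Lemma \ref{lem: CompBiggerEn} to $P(u,v)$ produces a weight $\tilde\chi\in\mathcal W^+_{2p+1}$ with $\tilde\chi(t)/t^p\to\infty$ and $P(u,v)\in\mathcal E_{\tilde\chi}$; monotonicity (Corollary \ref{cor: monotonicity_E_chi}) then gives $u,v\in\mathcal E_{\tilde\chi}$. But this tells you nothing about the sequence $\{u_k\}_k$. The monotonicity property of $\mathcal E_{\tilde\chi}$ requires a member of $\mathcal E_{\tilde\chi}$ \emph{below} all the $u_k$, and you have produced no such common lower bound. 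Uniform boundedness of $E_p(u_k)$ (equivalently, $d_p$-boundedness) does not by itself imply uniform boundedness of $E_{\tilde\chi}(u_k)$ for a faster-growing $\tilde\chi$, so the de la Vall\'ee Poussin criterion cannot be verified this way. The paper's proof supplies exactly the missing ingredient: after passing to a sparse subsequence with $d_p(u_k,u_{k+1})\leq 2^{-k}$, the contractivity of $P$ (Proposition \ref{prop: contractivity}) makes the decreasing sequence $h_k:=P(u,u_0,\ldots,u_k)$ a $d_p$-Cauchy sequence, whose limit $h\in\mathcal E_p(X,\o)$ satisfies $h\leq u_k$ for all $k$ and $h\leq u$. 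Together with a uniform upper bound $M$ from Lemma \ref{lem: sup_int_psh_eqv}, this gives the domination $|u_k-u|\leq M-h$ with $M-h\in L^p(\o_v^n)$ by Proposition \ref{prop: mixed_finite_prop: Energy_est}; the $L^1$-convergence then gives a.e. convergence along a further subsequence, and the dominated convergence theorem closes the argument (the initial ``arbitrary subsequence has a convergent sub-subsequence'' framing takes care of the full sequence). Note that, had you constructed such an $h$, your de la Vall\'ee Poussin route \emph{would} then close (apply Lemma \ref{lem: CompBiggerEn} to $P(h,v)$ rather than $P(u,v)$), but the domination argument is more direct; in either case, the envelope construction of a common lower bound is the essential step your proposal is missing.
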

\begin{proof} By the previous result, we only need to argue that $\int_X |u - u_k|^p \o_v^n \to 0.$ In fact, it is enough to prove this convergence for a subsequence of $\{u_k\}_k$.

Given an arbitrary subsequence of $u_k$, there exists a sub-subsequence, again denoted by $u_k$, satisfying the sparsity condition:
\begin{equation}\label{sparsity}
d_p(u_k,u_{k+1}) \leq \frac{1}{2^k}, \ \ k \in \Bbb N.
\end{equation}
Using this sparsity condition and Proposition \ref{prop: contractivity} we can write
\begin{flalign*}
d_p(P(u,u_0,\ldots,u_k),P(u,&u_0,\ldots,u_{k+1}))= \\ &=d_p(P(P(u,u_0,\ldots,u_k),u_k),P(P(u,u_0,\ldots,u_k),u_{k+1}))\\
&\leq d_p(u_k,u_{k+1}) \leq \frac{1}{2^k}.
\end{flalign*}
Hence, the decreasing sequence $h_k = P(u,u_0,u_1, \ldots,u_k), \ k \geq 1$ is $d_p$-bounded and Lemma \ref{lem: lemma mononton_seq} (or completeness) implies that the limit satisfies $h:=\lim_k h_k \in \mathcal E_p(X,\o).$
As $d_p(u_k,0),d_p(u,0)$ are bounded, by Theorem \ref{thm: Energy_Metric_Eqv} and Lemma \ref{lem: sup_int_psh_eqv} there exists $M>0$ such that 
$h \leq u_k,u \leq M$. Putting everything together we get
\begin{equation}\label{eq: u_uk_domination}
h - M \leq u_k - u \leq M - h.
\end{equation}   
By Theorem \ref{thm: d_1-convergence}(ii) we have $\int_X |u - u_k| \o_v^n \to 0$. Hence, after passing to a further subsequence, $u_k \to u$ a.e. with respect to $\o_v^n$. Finally, using \eqref{eq: u_uk_domination}, the dominated convergence theorem gives $\int_X |u_k - u|^p \o_v^n \to 0.$
\end{proof}

\paragraph{Brief historical remarks.} In this chapter we put extensive focus on the particular case of the $L^1$ geometry. Historically however, the study of the  $L^2$ structure was the one developed first. In particular, Calabi-Chen showed that $C^{1,1}$ geodesics of $\mathcal H_\o$ satisfy the CAT(0) inequality \cite{cc}. As shown in \cite{da1}, this implies that the completion $(\mathcal E_2(X,\o),d_2)$ is a CAT(0) geodesic metric space. The possibility of identifying $\mathcal E_2(X,\o)$ with the metric completion of $\mathcal H_\o$ was conjectured by Guedj \cite{g}, who worked out the case of toric K\"ahler manifolds. As detailed in this chapter, this conjecture was confirmed in \cite{da1}, and later generalized to the case of $L^p$ Finsler structures \cite{da2}.

As noticed by J. Streets \cite{st1,st2}, the CAT(0) property allows to study of the Calabi flow \cite{ch} in the context of the metric completion, leading to precise convergence results for this flow in \cite{bdl1}.

Endowing the space of K\"ahler metrics with natural geometries goes back to the work of Calabi in the 50's \cite{clb}. Calabi's Riemannian metric is defined in terms of the Laplacian of the potentials, and the resulting geometry differs from that of Mabuchi. The study of this structure was taken up by Calamai \cite{clm} and Clarke-Rubinstein \cite{cr}. In the latter work the completion of the Calabi path lengh metric was identified, and was compared to the Mabuchi geometry in \cite{da5}. 

It is also possible to introduce a Dirichlet type Riemannian metric on $\mathcal H_\o$ in terms of the gradient of the potentials \cite{clm,czh}. Not much is known about the metric theory of this structure. However properties of this space seem to be  closely immeresed with the study of the K\"ahler-Ricci flow \cite{bl}.

\chapter{Applications to K\"ahler--Einstein metrics}


Given a K\"ahler manifold $(X,\o)$, we will be interested 
in picking a metric with special curvature properties from $\mathcal H$, the space of all K\"ahler metrics $\o'$ whose de Rham cohomology class equals that of $\o$:
\begin{equation}\label{eq: H_def}
\mathcal H = \{\o' \textup{ is a K\"ahler metric on }X \textup{ such that } [\o']=[\o] \in H^2(X,\Bbb R)\}. 
\end{equation}
By the $\ddbar$--lemma of Hodge theory, there exists $u \in \mathcal H_\o$ \eqref{eq: H_o_def}, unique up to a constant, s.t. $\o' = \o_u$ (see \cite[Theorem 3]{bl1}). As follows from Stokes' theorem (see more generally Lemma \ref{lem: const_volume_lem}), the total volume of each metric in $\mathcal H$ is the same, and we introduce the constant
$$V := \int_X \o_u^n=\int_X \o^n, \ \ u \in \mathcal H_\o.$$
As a result of the above observations, we will focus on the space of potentials $\mathcal H_\o$ instead of $\mathcal H$, and our goal will be to find $u \in \mathcal H_\o$ whose Ricci curvature is a multiple of the metric $\o_u$:
\begin{equation}\label{eq: KE_eq}
\textup{Ric } \o_u = \lambda \o_u.
\end{equation}
Such metrics are called \emph{K\"ahler--Einstein} (KE) metrics. Recall from Appendix 6.1 that $\textup{Ric } \o_u$ is always closed. Moreover, by the well known formula \eqref{prelricdifference} for the change of Ricci curvature
\begin{equation}\label{eq: Ricci_diff_eq}
\textup{Ric } \o_u - \textup{Ric } \o_v = i \ddbar \log \Big(\frac{\o_v^n}{\o_u^n}\Big), \ \ u,v \in \mathcal H_\o
\end{equation}
we deduce that the de Rham class $[\textup{Ric }\o_u]$ does not depend on the choice of $u \in \mathcal H_\o$. What is more, $[\textup{Ric }\o_u]$ agrees with $c_1(X)$, the first Chern class of $X$ (see \cite[Section III.3]{we} for more details). Consequently, if \eqref{eq: KE_eq} holds then $X$ needs to have special cohomological properties depending on the sign of $\lambda \in \Bbb R$: \vspace{0.2cm}\\
\noindent (i) if $\lambda =0$ then $c_1(X)=0$, i.e., $X$ is \emph{Calabi--Yau}. \\
\noindent (ii) if $\lambda <0$ then $K_X$ is an ample line bundle, i.e., $X$ is of \emph{general type}.\\
\noindent (iii) if $\lambda >0$ then $-K_X$ is an ample line bundle, i.e., $X$ is \emph{Fano}. \vspace{0.2cm}

After close inspection it turns out that \eqref{eq: KE_eq} is a fourth order PDE in terms of the derivatives of $u$. 
As we show now, with the help of \eqref{eq: Ricci_diff_eq} one can write down a scalar equation equivalent to \eqref{eq: KE_eq}, that is merely a second order PDE, greatly simplifying our subsequent treatment. Indeed, from $\lambda [\o]=c_1(X)$ it follows that for each $u \in \mathcal H_\o$ there exists a unique $f_u \in C^\infty(X)$ such that $\int_X e^{f_u}\o_u^n = V$ and
\begin{equation}\label{eq: Ricci_pot_def}
\textup{Ric }\o_u = \lambda \o + i \ddbar f_u.
\end{equation}
Fittingly, the potential $f_u$ is called the \emph{Ricci potential} of $\o_u$, and $\o_u$ is KE if and only if $f_u=0$. Consequently, by \eqref{eq: Ricci_diff_eq}, \eqref{eq: KE_eq} is equivalent to 
$$
i \ddbar \log \Big(\frac{\o_u^n}{\o^n}\Big) = \textup{Ric } \o - \textup{Ric } \o_u = \lambda \o + i \ddbar f_0 - \lambda \o_u = i\ddbar (f_0 - \lambda u).
$$
And now the (magical!) drop of order takes place. As there are only constants in the kernel of $i\ddbar$, there exists $c \in \Bbb R$ such that
$$\log \Big(\frac{\o_u^n}{\o^n}\Big)= f_0 - \lambda u + c.$$
When $\lambda \neq 0$, the constant $c$ can be ``contracted" into $u$. When $\lambda =0$, then the normalization condition on $f_0$ implies that $c=0$. Summarizing, we arrive at the scalar KE equation:
\begin{equation}\label{eq: KE_scalar_eq}
\o_u^n = e^{- \lambda u + f_0} \o^n, \ \ u \in \mathcal H_\o.
\end{equation}
When $\lambda <0$, the existence of unique solutions was proved by Aubin and Yau \cite{A,Y}. In the case $\lambda =0$, existence and uniqueness was obtained by Yau, as a particular case of the solution of the Calabi conjecture \cite{Y}. 

When $X$ is Fano ($\lambda>0$) the situation is more involved. Uniqueness up to holomorphic automorpshims was shown by Bando--Mabuchi (\cite{bm}, see also Section 4.5 below). As it turns out, on a  general Fano manifold $(X,\o)$, there are numerous obstructions to existence of KE metrics (see \cite{mat,fu}). Recently the algebro--geometric notion of K--stability has been found to be equivalent with existence of KE metrics (\cite{cds,t2}), with this verifying an important particular case of the Yau--Tian--Donaldson conjecture. Our goal in this chapter is to give equivalent characterizations, more in line with the variational study of partial differential equations, eventually verifying related conjectures of Tian. 

As mentioned above, in the rest of this chapter we will assume that $(X,\o)$ is Fano. Also, after possibly rescaling $\o$, we can also assume that $\lambda =1$, i.e., $[\o]=c_1(X)$. Next we introduce Ding's $\mathcal F$  functional \cite{ding}:
\begin{equation}\label{eq: F_def}
\mathcal F(u) = -I(u) - \log \frac{1}{V}\int_X e^{-u+f_0}\o^n, \ \ u \in \mathcal H_\o,
\end{equation}
where $I$ is the Monge--Amp\`ere energy (see \eqref{eq: I_energy_def}). By definition, $\mathcal F$ is invariant under adding constants, i.e., $\mathcal F(u+c)=\mathcal F(u)$, and as a consequence of Lemma \ref{lem: I_differential} it follows that the critical points of $\mathcal F$ are exactly the KE metrics:
\begin{lemma}\label{lem: F_func_differential}Suppose $[0,1] \ni t \to v_t \in \mathcal H_\o$ is a smooth curve. Then
$$V \frac{d}{dt}\mathcal F(v_t) = \int_X \dot v_t \bigg ( - \o_{v_t}^n + \frac{V}{\int_X e^{-v_t + f_0} \o^n}e^{-v_t + f_0} \o^n \bigg).$$
\end{lemma}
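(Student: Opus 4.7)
The plan is to differentiate the two terms of $\mathcal F(v_t) = -I(v_t) - \log \frac{1}{V}\int_X e^{-v_t+f_0}\o^n$ separately and then combine them. Since this is a routine computation, the only conceptual issues are invoking the right prior result for the $I$-term and justifying differentiation under the integral sign for the $\log$-term.

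First I would handle the Monge--Amp\`ere energy term. By Lemma \ref{lem: I_differential}, which applies directly since $t \to v_t$ is a smooth curve in $\mathcal H_\o$, we have
\[
\frac{d}{dt}\bigl(-I(v_t)\bigr) = -\frac{1}{V}\int_X \dot v_t \,\o_{v_t}^n.
\]
This supplies the first term on the right-hand side of the claimed identity.

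Next I would address the logarithmic term. Because $(t,x) \mapsto v_t(x)$ is smooth on $[0,1]\times X$ and $X$ is compact, the integrand $e^{-v_t + f_0}$ together with its $t$-derivative $-\dot v_t \, e^{-v_t + f_0}$ is uniformly bounded on $[0,1] \times X$; hence differentiation under the integral sign is justified and yields
\[
\frac{d}{dt}\int_X e^{-v_t + f_0}\o^n = -\int_X \dot v_t \, e^{-v_t + f_0}\o^n.
\]
Applying the chain rule to the logarithm gives
\[
\frac{d}{dt}\left(-\log \frac{1}{V}\int_X e^{-v_t+f_0}\o^n\right) = \frac{\int_X \dot v_t \, e^{-v_t + f_0}\o^n}{\int_X e^{-v_t + f_0}\o^n}.
\]

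Finally I would add the two contributions, multiply through by $V$, and factor out $\dot v_t$ to obtain the advertised formula. There is no real obstacle; if anything, the only small subtlety is bookkeeping of the normalizing factor $V$ between the two terms (the $I$-term naturally carries a $1/V$ while the $\log$-term carries a $1/V$ inside the logarithm that disappears after differentiation), which is why the final identity is stated with $V$ on the left-hand side.
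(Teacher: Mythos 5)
Your proof is correct and follows exactly the approach the paper implicitly indicates: the paper omits an explicit proof, stating only that the lemma is "a consequence of Lemma \ref{lem: I_differential}," and your argument fills in the elementary differentiation of the logarithmic term in precisely the expected way.
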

 
Comparing with \eqref{eq: KE_scalar_eq}, we deduce that existence of KE potentials in $\mathcal H_\o$ is equivalent with existence of critical points of $\mathcal F$. As we will see, the natural domain of definiton of $\mathcal F$ is not $\mathcal H_\o$, but rather its $d_1$ metric completion $\mathcal E_1(X,\o)$. In addition to this, $\mathcal F$ will be shown to be $d_1$--continuous and it will be convex along the finite energy geodesics of $\mathcal E_1(X,\o)$. Using convexity we will deduce that critical points of $\mathcal F$ are exactly the minimizers of $\mathcal F$, and we can use properness properties of $\mathcal F$ to characterize existence of these minimizers. All this will be done in the forthcoming sections.

\section{The action of the automorphism group}

We continue to assume that $(X,\o)$ is Fano with $[\o]=c_1(X)$. Let $\textup{Aut}_0(X,J)$ denote the connected component of the complex Lie group of biholomorphisms of $(X,J)$, and denote by  $\textup{aut}(M,J)$ its complex Lie algebra of holomorphic vector fields. $\textup{Aut}_0(X,J)$  acts on $\mathcal H$ by pullback of metrics. Indeed, 
$f^\star\eta \in \mathcal H$ for any $f\in\textup{Aut}_0(X,J)$ and $\eta\in\mathcal H.$

Given the one-to-one correspondence between $\mathcal H$ and $\mathcal H_\o \cap I^{-1}(0)$ (recall \eqref{eq: H_0_def}), the group $\textup{Aut}_0(X,J)$  also acts on $\mathcal H_\o \cap I^{-1}(0)$ and we describe this action more precisely in the next lemma:
\begin{lemma}\textup{\cite[Lemma 5.8]{dr2}}
For $\varphi \in \mathcal H_\o \cap I^{-1}(0)$ and $f \in \textup{Aut}_0(X,J)$ let $f.\varphi\in\mathcal H_\o \cap I^{-1}(0)$ be the unique element such that $f^*\o_\varphi=\o_{f.\varphi}$.
Then,
\begin{equation}
\label{eq: factionIEq}
f.\varphi=f.0+\varphi\circ f.
\end{equation}
\end{lemma}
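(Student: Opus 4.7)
The plan is to show that $f.0 + \varphi\circ f$ is a potential for the metric $f^*\omega_\varphi$ and lies in $\mathcal H_\o \cap I^{-1}(0)$; then by the uniqueness of such a potential, it must equal $f.\varphi$.

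First I would verify the metric identity. Since $f$ is a biholomorphism, $i\ddbar$ commutes with $f^*$, so
$$f^*\omega_\varphi = f^*\omega + i\ddbar(\varphi\circ f) = \omega + i\ddbar(f.0) + i\ddbar(\varphi\circ f) = \omega_{f.0 + \varphi\circ f},$$
where the middle equality uses the defining relation $f^*\omega = \omega_{f.0}$ (i.e.\ \eqref{eq: factionIEq} for $\varphi=0$). In particular $f.0 + \varphi\circ f \in \mathcal H_\o$, and it differs from $f.\varphi$ by a real constant.

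The main step is then to show $I(f.0 + \varphi\circ f) = 0$ for every $\varphi \in \mathcal H_\o \cap I^{-1}(0)$; combined with $I(f.\varphi)=0$, this forces the constant to vanish. Define $g: \mathcal H_\o \cap I^{-1}(0) \to \Bbb R$ by $g(\varphi) := I(f.0 + \varphi\circ f)$. Given a tangent vector $\psi$ to $I^{-1}(0)$ at $\varphi$, which by Lemma \ref{lem: I_differential} is characterized by $\int_X \psi\,\omega_\varphi^n = 0$, the chain rule and Lemma \ref{lem: I_differential} give
$$dg_\varphi(\psi) = \frac{1}{V}\int_X (\psi\circ f)\,\omega_{f.0+\varphi\circ f}^n = \frac{1}{V}\int_X (\psi\circ f)\,f^*\omega_\varphi^n = \frac{1}{V}\int_X \psi\,\omega_\varphi^n = 0,$$
using the metric identity from the first step and the change-of-variables formula for the biholomorphism $f$. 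Hence $g$ is locally constant on $\mathcal H_\o \cap I^{-1}(0)$. Since this set is path connected---any $\varphi$ is joined to $0$ by $t\mapsto t\varphi - I(t\varphi)$, noting that $t\varphi = (1-t)\cdot 0 + t\varphi \in \mathcal H_\o$ by convexity of $\mathcal H_\o$---and $g(0) = I(f.0) = 0$, we conclude $g\equiv 0$.

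The argument is largely formal; the only slightly delicate point will be bookkeeping the path-connectedness of $\mathcal H_\o\cap I^{-1}(0)$ and invoking the correct characterization of its tangent spaces via Lemma \ref{lem: I_differential}. Everything else reduces to $i\ddbar$-pullback and the change of variables for $f$.
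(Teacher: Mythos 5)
Your proposal is correct, and the first step (the $i\ddbar$--pullback identity showing $f.0+\varphi\circ f$ is a potential for $f^*\omega_\varphi$) matches the paper exactly. The second step, however, takes a genuinely different route. The paper invokes the closed-form difference formula \eqref{eq: I_energy_diff}, writing $I(f.0+\varphi\circ f)=I(f.0+\varphi\circ f)-I(f.0)=\frac{1}{(n+1)V}\sum_j\int_X (\varphi\circ f)\, f^*\omega^{n-j}\wedge f^*\omega_\varphi^j$ and then pulling back in a single stroke to get $I(\varphi)=0$. You instead differentiate $g(\varphi)=I(f.0+\varphi\circ f)$ along tangent directions $\psi$ to the slice $I^{-1}(0)$ using Lemma \ref{lem: I_differential}, show $dg\equiv 0$ by change of variables, and close the argument with path-connectedness of $\mathcal H_\omega\cap I^{-1}(0)$ and the base point computation $g(0)=I(f.0)=0$. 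Both proofs hinge on the same two ingredients (pullback and change of variables), but the paper's direct use of \eqref{eq: I_energy_diff} is shorter and avoids the bookkeeping about path-connectedness and tangent spaces, while your variational argument is conceptually parallel to the standard proof that $I$ is the primitive of the Monge--Amp\`ere operator and generalizes more readily to situations where one only has access to the differential of the functional.
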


\begin{proof}
First we note that the right hand side of \eqref{eq: factionIEq} is a K\"ahler  potential for $f^*\o_\varphi$. Indeed, since  $f\in\textup{Aut}_0(X,J)$ we have $f^\star i\ddbar\varphi=i\ddbar \varphi\circ f$.
The identity $I(f.0+\varphi\circ f)=0$ follows from \eqref{eq: I_energy_diff} as we have:
\begin{flalign*}
I(f.0+\varphi\circ f)&=I(f.0+\varphi\circ f) - I(f.0)=\frac{1}{(n+1)V}\int_X \varphi \circ f \sum_{j=0}^n f^\star \o^{n-j}\wedge f^\star \o_\varphi^j \\
&=\frac{1}{(n+1)V}\int_X \varphi  \sum_{j=0}^n  \o^{n-j}\wedge \o_\varphi^j = I(\varphi)=0.
\end{flalign*}
\end{proof}
With the formula of the above lemma, we show that $\textup{Aut}_0(X,J)$ acts on $\mathcal H_\o \cap I^{-1}(0)$ by $d_p$--isometries:
\begin{lemma}
\label{lem: dpIsomLemma}
The action of $\textup{Aut}_0(X,J)$ on $\mathcal H_\o \cap I^{-1}(0)$ is by $d_p$--isometries, for any  $p \geq 1$.
\end{lemma}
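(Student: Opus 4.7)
The plan is to show that pullback by $f \in \textup{Aut}_0(X,J)$ transports smooth curves in $\mathcal H_\o$ to smooth curves in $\mathcal H_\o$ while preserving their $l_p$--length, so that the infimum defining $d_p$ is unchanged. Let $\varphi_0,\varphi_1 \in \mathcal H_\o \cap I^{-1}(0)$ and let $[0,1] \ni t \to \varphi_t \in \mathcal H_\o$ be an arbitrary smooth curve joining them. Setting $\psi_t := f.0 + \varphi_t \circ f$, the formula \eqref{eq: factionIEq} (applied with parameter $t$) guarantees that $\psi_t \in \mathcal H_\o$, that $\omega_{\psi_t} = f^*\omega_{\varphi_t}$, and that $\psi_0 = f.\varphi_0,\ \psi_1 = f.\varphi_1$. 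Smoothness of $t \to \psi_t$ is immediate from smoothness of $f$ and of $t\to\varphi_t$.

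Next, I would compute the $L^p$-Finsler length of $t \to \psi_t$. Since $f.0$ is independent of $t$, differentiation gives $\dot\psi_t = \dot\varphi_t \circ f$. As $f$ is a biholomorphism of $X$, the change of variables formula yields
\begin{equation*}
\int_X |\dot\psi_t|^p \omega_{\psi_t}^n = \int_X |\dot\varphi_t \circ f|^p\, f^*(\omega_{\varphi_t}^n) = \int_X |\dot\varphi_t|^p \omega_{\varphi_t}^n.
\end{equation*}
Dividing by $V$ and taking $p$-th roots, this shows $\|\dot\psi_t\|_{p,\psi_t} = \|\dot\varphi_t\|_{p,\varphi_t}$ for every $t$, and hence $l_p(\psi_t) = l_p(\varphi_t)$ by \eqref{eq: curve_length_def}. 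Taking the infimum over all such smooth curves $t \to \varphi_t$, the definition \eqref{eq: d_chi_def} of $d_p$ gives $d_p(f.\varphi_0, f.\varphi_1) \leq d_p(\varphi_0,\varphi_1)$.

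The reverse inequality follows by applying the same argument to $f^{-1} \in \textup{Aut}_0(X,J)$ with endpoints $f.\varphi_0,\ f.\varphi_1 \in \mathcal H_\o \cap I^{-1}(0)$ and observing that $f^{-1}.(f.\varphi_i) = \varphi_i$, which is a direct consequence of \eqref{eq: factionIEq}. This yields the equality $d_p(f.\varphi_0,f.\varphi_1) = d_p(\varphi_0,\varphi_1)$, which is the isometry claim. There is really no substantive obstacle here: the argument reduces to the observation that both the Finsler weight $\omega_{\varphi_t}^n$ and the tangent vector $\dot\varphi_t$ transform covariantly under pullback by a biholomorphism, which makes the Orlicz--type integral \eqref{eq: FinslerDef} manifestly invariant. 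The only point requiring care is to use \eqref{eq: factionIEq} (rather than naively pulling back $\varphi_t$) so that we stay inside $\mathcal H_\o$ and so that the endpoints match.
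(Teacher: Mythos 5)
Your proof is correct and is essentially the same argument the paper gives: differentiate \eqref{eq: factionIEq} to get $\frac{d}{dt}(f.\varphi_t)=\dot\varphi_t\circ f$, then observe that the change-of-variables formula makes the $l_p$-length invariant under pullback, and take the infimum over curves. Your write-up is slightly more explicit than the paper's in two small ways (you allow arbitrary curves in $\mathcal H_\o$ rather than restricting to $I^{-1}(0)$, and you spell out the $f^{-1}$ step to upgrade the inequality to an equality), but these are cosmetic; the core computation is identical.
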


\begin{proof}
From \eqref{eq: factionIEq} it follows that $
{d}(f.\varphi_t)/dt=\dot\varphi_t\circ f,$
for any smooth curve $[0,1] \ni t \to \varphi_t \in \mathcal H_\o \cap I^{-1}(0)$. Thus, the $d_p$--length of $t \to f.\varphi_t$ satisfies:
$$
l_p(f.\varphi_t)=\int_0^1 \bigg(\frac{1}{V}\int_X|\dot\varphi_t\circ f|^p f^\star \o_{\varphi_t}^n\bigg)^{\frac{1}{p}} dt
=
\int_0^1 \bigg(\frac{1}{V}\int_X|\dot\varphi_t|^p \o_{\varphi_t}^n \bigg)^{\frac{1}{p}}dt.
$$
Since this last quantity is exactly the the $d_p$-length of $t \to \varphi_t$, it follows that $\textup{Aut}_0(X,J)$ acts by $d_p$--isometries.
\end{proof}

As a consequence of this last result (see also Lemma \ref{lem: LipschitzExt} below), the action of $\textup{Aut}_0(X,J)$ on $\mathcal H_\o \cap I^{-1}(0)$ has a unique $d_p$--isometric extension to the $d_p$--metric completion  $\mathcal E_p(X,\o) \cap I^{-1}(0)$. In what follows we will focus on the case $p=1$.

Recall the definition of the $J$ functional \eqref{eq: J_def} and let $H\leq \textup{Aut}_0(X,J)$ be a subgroup. The ``$H$--dampened" functional $J_H:\big(\mathcal E_1 \cap I^{-1}(0)\big)/H \to \Bbb R$ is introduced by the formula
\begin{equation}
\label{eq: JGEq}
J_H(Hu):= \inf_{f \in H}  {J}(f.u).
\end{equation}
As a direct consequence of Proposition \ref{prop: d_1_growth_J} we have the following estimates for this functional:
\begin{lemma} There exists $C:=C(X,\o)>1$ such that for any $u \in \mathcal E_1 \cap I^{-1}(0)$ we have 
\label{lem: JGPropernessLemma}
\begin{equation}
\label{lem: JGdGEqv}
 \frac{1}{C} J_H(Hu) -C \leq d_{1,H}(H0,Hu) 
\leq  C J_H(Hu) + C,
\end{equation}
where $d_{1,H}$ is the pseudo--metric of the quotient $\big(\mathcal E_1 \cap I^{-1}(0)\big)/H$ given by the formula $d_{1,H}(Hu,Hv):=\inf_{f\in H} d_1(u,f.v)$.
\end{lemma}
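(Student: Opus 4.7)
The plan is to reduce this to the non-equivariant estimate of Proposition \ref{prop: d_1_growth_J}, which gives the analogous two-sided comparison between $d_1(0,\cdot)$ and $J$ on $\mathcal E_1(X,\o)\cap I^{-1}(0)$, and then take infima over the orbit.

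First I would verify the auxiliary identity
\[
d_{1,H}(H0,Hu)=\inf_{f\in H} d_1(0,f.u).
\]
By definition $d_{1,H}(H0,Hu)=\inf_{f,g\in H} d_1(g.0,f.u)$. By Lemma \ref{lem: dpIsomLemma} (extended to $\mathcal E_1\cap I^{-1}(0)$ by continuity of the $H$-action for the $d_1$-metric, guaranteed by the $d_1$-isometry property together with the Lipschitz extension principle), each $g\in H$ acts as a $d_1$-isometry on $\mathcal E_1\cap I^{-1}(0)$, so $d_1(g.0,f.u)=d_1(0,g^{-1}.(f.u))=d_1(0,(g^{-1}f).u)$. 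Letting $h=g^{-1}f$ range over $H$ yields the claimed identity. (Implicitly, we also use that $f.u$ again lies in $I^{-1}(0)$, which is automatic by the way the action on $\mathcal H_\o\cap I^{-1}(0)$ was defined, and that this extension property passes to the metric completion.)

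Next, for every $f\in H$ the potential $f.u$ lies in $\mathcal E_1(X,\o)\cap I^{-1}(0)$, so Proposition \ref{prop: d_1_growth_J} applies and gives
\[
\tfrac{1}{C}J(f.u)-C\ \leq\ d_1(0,f.u)\ \leq\ C\,J(f.u)+C
\]
with $C=C(X,\o)>1$ independent of $f$. Taking the infimum over $f\in H$ on each side, and using the monotonicity of infima together with the identity of the previous paragraph and the definition \eqref{eq: JGEq} of $J_H$, we obtain
\[
\tfrac{1}{C}J_H(Hu)-C\ \leq\ \inf_{f\in H} d_1(0,f.u)= d_{1,H}(H0,Hu)\ \leq\ C\,J_H(Hu)+C,
\]
which is precisely \eqref{lem: JGdGEqv}.

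The only nontrivial point is the first paragraph: making sure that the $\textup{Aut}_0(X,J)$--action extends to $\mathcal E_1(X,\o)\cap I^{-1}(0)$ by $d_1$-isometries preserving $I^{-1}(0)$. Once this is secured, the rest is a one-line argument: apply Proposition \ref{prop: d_1_growth_J} pointwise along the orbit and take infima. I expect no real obstacle beyond bookkeeping.
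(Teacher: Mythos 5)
Your proof is correct and takes exactly the route the paper intends: the paper simply labels the lemma as ``a direct consequence of Proposition \ref{prop: d_1_growth_J}'' without elaboration, and your argument supplies the bookkeeping that is implicitly meant — rewriting $d_{1,H}(H0,Hu)$ as $\inf_{f\in H}d_1(0,f.u)$ via the isometric action, applying Proposition \ref{prop: d_1_growth_J} to each $f.u\in\mathcal E_1\cap I^{-1}(0)$, and taking infima on both sides.
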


Lastly, we note that $\mathcal F$ is affine along one parameter subgroups of $\textup{Aut}_0(X,J)$, and that the map $(u,v) \to \mathcal F(u)-\mathcal F(v)$ is $\textup{Aut}_0(X,J)$--invariant:
\begin{lemma} \label{lem: F_one_par_linear} Suppose $\Bbb R \ni t \to \rho_t \in \textup{Aut}_0(X,J)$ is a one parameter subgroup. Then:\\
(i) for any $u \in \mathcal H_\o \cap I^{-1}(0)$ the map $t \to \mathcal F(\rho_t. u)$ is affine.\\
\noindent (ii) if $u,v \in \mathcal H_\o \cap I^{-1}(0)$ and $g \in \textup{Aut}_0(X,J)$ then $\mathcal F(u)-\mathcal F(v)=\mathcal F(g.u)-\mathcal F(g.v)$.
\end{lemma}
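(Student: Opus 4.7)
I will aim to establish the stronger cocycle identity
\begin{equation*}
\mathcal F(g.u)=\mathcal F(u)+\mathcal F(g.0), \qquad g\in\textup{Aut}_0(X,J),\ u\in\mathcal H_\o\cap I^{-1}(0),
\end{equation*}
from which both (i) and (ii) follow quickly. The key input is the tautological identity $e^{f_v}\o_v^n=e^{f_0}\o^n$ valid for every $v\in\mathcal H_\o$: comparing \eqref{eq: Ricci_pot_def} and \eqref{eq: Ricci_diff_eq} one sees that $f_v-f_0+\log(\o_v^n/\o^n)$ is constant on $X$, and the common normalization $\int_X e^{f_v}\o_v^n=V=\int_X e^{f_0}\o^n$ pins that constant to zero.

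To prove the cocycle identity, note that since the $\textup{Aut}_0(X,J)$-action preserves $I^{-1}(0)$ we have $I(g.u)=I(u)=0$, and therefore
$$\mathcal F(g.u)-\mathcal F(u)=\log\frac{\int_X e^{-u+f_0}\o^n}{\int_X e^{-g.u+f_0}\o^n}.$$
I would rewrite the denominator via the substitution $x=g^{-1}(y)$, which produces $\int_X F\,\o^n=\int_X (F\circ g^{-1})\,\o_{g^{-1}.0}^n$. Combining \eqref{eq: factionIEq} with the easy observation $g^{-1}.(g.0)=0$ (both sides lie in $\mathcal H_\o\cap I^{-1}(0)$ and share K\"ahler form $\o$) gives $g.u\circ g^{-1}=u-g^{-1}.0$. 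A parallel computation, comparing $\textup{Ric}((g^{-1})^{\ast}\o)$ with \eqref{eq: Ricci_pot_def} applied to $g^{-1}.0$, yields $f_{g^{-1}.0}=g^{-1}.0+f_0\circ g^{-1}+c(g)$, and the tautological identity combined with the definition of $\mathcal F$ pins the constant to $c(g)=-\mathcal F(g.0)$. Substituting these two relations into the transformed integral and invoking the tautological identity one more time collapses the denominator to $e^{-\mathcal F(g.0)}\int_X e^{-u+f_0}\o^n$, which is exactly the cocycle identity.

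Item (ii) is then immediate by subtracting the cocycle identity applied to $u$ and to $v$. For (i), specialize $g=\rho_t$ in the cocycle identity to obtain $\mathcal F(\rho_t.u)=\mathcal F(u)+\mathcal F(\rho_t.0)$, so it suffices to show that $t\mapsto\mathcal F(\rho_t.0)$ is affine. The group law gives $\rho_{s+t}.0=\rho_s.(\rho_t.0)$ (both lie in $\mathcal H_\o\cap I^{-1}(0)$ and share K\"ahler form $(\rho_{s+t})^{\ast}\o$), and a second application of the cocycle identity produces $\mathcal F(\rho_{s+t}.0)=\mathcal F(\rho_s.0)+\mathcal F(\rho_t.0)$; since $\mathcal F(0)=0$ and $t\mapsto\mathcal F(\rho_t.0)$ is smooth, Cauchy's functional equation forces this map to be linear in $t$. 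The main technical nuisance I anticipate is careful bookkeeping of the several normalization constants that appear in the relations $f_{g.0}=g.0+f_0\circ g+c(g)$ and $\o_{g.0}^n=e^{f_0-f_{g.0}}\o^n$; once the identity $e^{f_v}\o_v^n=e^{f_0}\o^n$ is isolated and used as the unifying normalization, what remains is routine change of variables.
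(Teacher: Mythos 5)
Your proof is substantively correct but takes a genuinely different route from the paper's. The paper proves (i) by differentiating $t\mapsto\mathcal F(\rho_t.u)$ along the orbit, using the first-variation formula for $\mathcal F$, the identity \eqref{eq: Ricci_pot_eq} relating the twisted reference measure to the Ricci potential, and the equivariance $f_{\rho_t.u}=f_u\circ\rho_t$, reducing the derivative to $\frac{1}{V}\int_X h(-1+e^{f_u})\o_u^n$ — a constant independent of $t$; it proves (ii) by a similar calculation comparing a smooth segment $t\mapsto\gamma_t$ joining $u,v$ with its push-forward $t\mapsto g.\gamma_t$. You instead establish the global cocycle identity $\mathcal F(g.u)=\mathcal F(u)+\mathcal F(g.0)$ outright, by a change of variables in the exponential integral combined with the observation that $f_{g^{-1}.0}-(g^{-1}.0+f_0\circ g^{-1})$ is constant, and then derive (ii) by subtraction and (i) via the additivity $\mathcal F(\rho_{s+t}.0)=\mathcal F(\rho_s.0)+\mathcal F(\rho_t.0)$ plus smoothness (Cauchy's functional equation). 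Since $\mathcal F(0)=0$, the cocycle identity is in fact equivalent to (ii), so this is a natural packaging: one global formula yields both claims and avoids differentiating $\mathcal F$ along the orbit entirely. Both approaches are valid; yours is arguably the more structural one.

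One small slip worth flagging: you assert $c(g)=-\mathcal F(g.0)$, but the normalization $\int_X e^{f_{g^{-1}.0}}\o_{g^{-1}.0}^n=V$ together with the change of variables and the identity $(g^{-1}.0)\circ g=-g.0$ gives $e^{c(g)}\int_X e^{-g.0+f_0}\o^n=V$, i.e.\ $c(g)=+\mathcal F(g.0)$. Your next sentence — that the denominator collapses to $e^{-\mathcal F(g.0)}\int_X e^{-u+f_0}\o^n$ — is the one consistent with the correct sign (it produces $e^{-c(g)}\int_X e^{-u+f_0}\o^n$), and it is exactly what the cocycle identity requires, so the error is confined to the intermediate assertion about $c(g)$ and does not propagate. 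You should also note that your tautological identity $e^{f_v}\o_v^n=e^{f_0}\o^n$ rests on the displayed convention in \eqref{eq: Ricci_pot_def}, $\textup{Ric}\,\o_u=\lambda\o+i\ddbar f_u$; the paper's own proof of this lemma actually uses $\textup{Ric}\,\o_u=\lambda\o_u+i\ddbar f_u$ (consistent with the claim "KE iff $f_u=0$''), under which the tautology picks up an $e^{-\lambda v}$ factor. Since $\mathcal F$ only involves $f_0$, which is the same under either convention, your argument is internally consistent and proves the correct statement — but it is worth being explicit about which normalization you are using.
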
 

\begin{proof} First we show that for any $u \in \mathcal H_\o$ we have that
\begin{equation}\label{eq: Ricci_pot_eq}
\frac{e^{-u + f_0}}{\int_X e^{-u + f_0}\o^n}{\o^n} = \frac{e^{f_u}}{V}\o_u^n,
\end{equation}
where $f_u$ is the Ricci potential of $u$ (see \eqref{eq: Ricci_pot_def}). By definition, we have $\textup {Ric } \o_u = \o_u + i\ddbar f_u$ and $\textup {Ric } \o = \o + i\ddbar f_0$. Substituting this into \eqref{eq: Ricci_diff_eq} we see that \eqref{eq: Ricci_pot_eq} holds.

Now we argue (i). Let $h \in C^\infty(X)$ be the unique  function such that $\int_X h \o_u^n = 0$ and $\frac{d}{dt}\big|_{t=0} \rho_t^* \o_u = i\ddbar h$. We claim that
\begin{equation}\label{eq: action_diff}
\frac{d}{dt} \rho_t.u = h \circ \rho_t, \ \ t \in \Bbb R.
\end{equation}
Indeed, this follows from a simple calculation: $i\ddbar \frac{d}{dt}\rho_t.u = \frac{d}{dt} \o_{\rho_t.u} = \frac{d}{dt} \rho_t^* \o_{u} = \rho^*_t i\ddbar h = i\ddbar h \circ \rho_t$. 

Finally, using Lemma \ref{lem: F_func_differential}, \eqref{eq: Ricci_pot_eq}, \eqref{eq: action_diff}, and the fact that $f_u \circ \rho_t = f_{\rho_t .u}$ we can conclude that $t \to \mathcal F(\rho_t.u)$ is indeed affine:
\begin{flalign}\label{eq: F_deriv_calc}
V \frac{d}{dt}\mathcal F(\rho_t.u) &= \int_X \frac{d}{dt} \rho_t.u \bigg ( -\o_{\rho_t.u}^n + \frac{V}{\int_X e^{-\rho_t.u + f_0} \o^n}e^{-\rho_t.u + f_0} \o^n \bigg) \nonumber\\
&=\int_X \frac{d}{dt}\rho_t.u \Big( -\o_{\rho_t.u}^n +e^{f_{\rho_t.u}} \o^n_{\rho_t. u}\Big) \nonumber\\
&=\int_X h \circ \rho_t \Big( -\rho_t^*\o_{u}^n + e^{f_{u}\circ \rho_t}\rho_t^* \o^n_{u}\Big)=\int_X h \big(-1 +   e^{f_{u}}\big) \o_{u}^n.\nonumber
\end{flalign}
Now we focus on (ii). Let $[0,1] \ni t \to \gamma_t\in \mathcal H_\o \cap I^{-1}(0)$ be any smooth segment connecting $u,v$. For example, one can take $ \gamma_t:= (1-t)u + tv - I((1-t)u + tv)$. Consequently, $t \to g.\gamma_t$ connects $g.u,g.v$. Using Lemma \ref{lem: F_func_differential}, \eqref{eq: factionIEq},
 and \eqref{eq: Ricci_pot_eq} we can finish the proof:
\begin{flalign*}
V(\mathcal F(g.v) - \mathcal F(g.u)) &= V\int_0^1 \frac{d}{dt}\mathcal F(\gamma_t) dt = \int_0^1 \int_X \dot \gamma_t \circ g (-1 + e^{f_{g.\gamma_t}})\o_{g.\gamma_t}^n dt.\\
&= \int_0^1 \int_X \dot \gamma_t \circ g (-1 + e^{f_{\gamma_t} \circ g})g^*\o_{\gamma_t}^n dt\\
&= \int_0^1 \int_X \dot \gamma_t  (-1 + e^{f_{\gamma_t}})\o_{\gamma_t}^n dt=V(\mathcal F(v) - \mathcal F(u)).
\end{flalign*}
\end{proof}

\section{The existence/properness principle and relation to Tian's conjectures}

Staying with a Fano manifold $(X,\o)$, our main goal is to show that existence of KE metrics in $\mathcal H$ is equivalent with properness of the Ding energy (and later the K-energy). As it turns out, our proof will rest on a very general existence/properness principle for abstract  metric spaces, and we describe this now as it provides the skeleton for our later arguments concerning K\"ahler geometry. 

To begin, we introduce $(\mathcal R,d,F,G)$, a metric space structure with additional data satisfying the following \emph{axioms}:
\begin{enumerate}[label = (A\arabic*)]
  \item\label{a1} $(\mathcal R,d)$ is a metric space with a 
distinguished 
element $0\in\mathcal R$,
        whose metric completion is denoted by $(\overline{\mathcal R},d)$.
  \item\label{a2} $F : \mathcal R \to \Bbb R$ 
is $d$--lsc. Let  
        $F: \overline{\mathcal R} \to \Bbb R \cup \{ +\infty\}$ 
        be the largest $d$--lsc extension
        of $F: \mathcal R \to \Bbb R$:
$$F(u) = \sup_{\varepsilon > 0 } \bigg(\inf_{\substack{v \in \mathcal R\\ d(u,v) \leq \varepsilon}} F(v) \bigg), \ \ u \in \overline{ \mathcal R}.$$
  \item\label{a3} 
By $\mathcal M$ we denote the set of minimizers of $F$ on $\overline{\mathcal R}$:
$$
\mathcal M:= 
\Big\{ u \in \overline{\mathcal R} \ : \ F (u)= 
\inf_{v \in \overline{\mathcal R}} F(v)
\Big\}.
$$
  \item\label{a4} $G$ is a group  
  acting on ${\mathcal R}$ by 
  $G\times{\mathcal R}\ni(g,u) \to g.u\in {\mathcal R}$. Denote by
${\mathcal R} /G$ the orbit space, by $Gu\in{\mathcal R} /G$ the 
  orbit of $u\in{\mathcal R}$, 
  and~define~$d_G:{\mathcal R}/G\times {\mathcal R}/G\to\Bbb R_+$ by
$$
d_G(Gu,Gv):=\inf_{f,g\in G}d(f.u,g.v).
$$
\end{enumerate}

In addition to the above, our data $(\mathcal R,d,F,G)$ also enjoys the following \emph{properties}:
\begin{enumerate}[label = (P\arabic*)]
  \item\label{p1}
 For any $u_0,u_1 \in \mathcal R$ there exists a $d$--geodesic segment $[0,1] \ni t \mapsto u_t \in \overline{\mathcal R}$ connecting $u_0,u_1$ (see \eqref{eq: d_geod_def}) for which
 $t\mapsto F(u_t) \textup{ is continuous and convex on }[0,1].$
  \item \label{p2} 
If  $\{u_j\}_j\subset \overline{\mathcal R}$  satisfies 
$\lim_{j\to\infty}F(u_j)= \inf_{\overline{\mathcal R}} F$, and
for some $C>0$, $d(0,u_j) \leq C$ for all $j$, then there exists $u\in \mathcal M$ and a subsequence $\{u_{j_k}\}_k$ s.t.   $d(u_{j_k},u) \to 0$.
 \item[(P2)] \hspace{-0.2cm}$^*$ \label{p2*} If  $\{u_j\}_j\subset \overline{\mathcal R}$  satisfies 
$F(u_j) \leq C$, and
 $d(0,u_j) \leq C, \ j \geq 0$ for some $C>0$, then there exists $u\in \mathcal R$ and a subsequence $\{u_{j_k}\}_k$ s.t.   $d(u_{j_k},u) \to 0$.
  \item\label{p3} 
  $\mathcal M \subset \mathcal R.$
  \item\label{p4} $G$ acts on ${\mathcal R}$ by $d$-isometries.
  \item\label{p5} $G$ acts on $\mathcal M$ transitively.
\item \label{p7} For all $u,v \in \mathcal R$ and $g \in G$,
$F(u) - F(v)=F(g.u) - F(g.v)$.

\end{enumerate}
We make three remarks.  First, by \ref{a2}, 
\begin{equation}
\label{eq: InfRealization}
\inf_{v \in \overline{\mathcal R}} F(v)
=
\inf_{v \in \mathcal R} F(v).
\end{equation}
Second, condition $\textup{\ref{p2*}}^*$ is stronger than \ref{p2} and we will require that only one of these conditions holds.

Third, thanks to \ref{p4} and the next lemma,
the action of $G$, originally defined on $\mathcal R$ \ref{a4}, 
extends to an action by $d$--isometries on the completion $\overline{\mathcal R}$.

\begin{lemma}
\label{lem: LipschitzExt} 
Let $(X,\rho)$ and $(Y,\delta)$ be two complete metric spaces, $W$ a dense subset of $X$ and $f:W \to Y$ a $C$-Lipschitz function, i.e.,
\begin{equation}\label{eq: lipineq}
\delta(f(a),f(b)) \leq C \rho(a,b), \ \ \forall\, a,b \in W.
\end{equation}
Then $f$ has a unique $C$--Lipschitz continuous extension 
to a map $\bar f:X\to Y$.
\end{lemma}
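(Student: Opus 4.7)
The plan is to define $\bar f$ pointwise on $X$ via limits of $f$ along approximating sequences from $W$, and then check that this construction is well-defined, Lipschitz, and unique. Fix $x \in X$ and pick a sequence $\{w_n\} \subset W$ with $\rho(w_n,x) \to 0$ (possible by density). Since $\{w_n\}$ is $\rho$-Cauchy, the Lipschitz estimate \eqref{eq: lipineq} gives $\delta(f(w_n),f(w_m)) \leq C\rho(w_n,w_m)$, so $\{f(w_n)\}$ is $\delta$-Cauchy in $Y$. By completeness of $(Y,\delta)$, the limit $y := \lim_n f(w_n) \in Y$ exists, and I set $\bar f(x) := y$.

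Next I would verify that $\bar f(x)$ is independent of the choice of approximating sequence. If $\{w'_n\} \subset W$ is another sequence with $w'_n \to x$, then interlacing the two sequences produces a single sequence still converging to $x$ whose image under $f$ is Cauchy by the same argument, so both $\{f(w_n)\}$ and $\{f(w'_n)\}$ share the same limit. In particular, for $x \in W$ the constant sequence $w_n = x$ shows $\bar f(x) = f(x)$, so $\bar f$ genuinely extends $f$.

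To establish the Lipschitz property of $\bar f$, take $a,b \in X$ with approximating sequences $\{a_n\},\{b_n\} \subset W$. Then $\delta(f(a_n),f(b_n)) \leq C\rho(a_n,b_n)$, and since both sides are continuous in the relevant limits (by the triangle inequality applied to $\delta$ and $\rho$ respectively), passing $n \to \infty$ yields $\delta(\bar f(a),\bar f(b)) \leq C\rho(a,b)$. Uniqueness is essentially automatic: any continuous extension $\tilde f: X \to Y$ of $f$ must satisfy $\tilde f(x) = \lim_n f(w_n) = \bar f(x)$ for every $x \in X$ and any sequence $w_n \to x$ in $W$, so $\tilde f = \bar f$. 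The main ``obstacle'' is really just bookkeeping; no step is deep, since completeness of $Y$ does all the heavy lifting in constructing the limit, and the Lipschitz inequality propagates to the closure by the standard two-sided approximation argument.
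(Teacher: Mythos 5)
Your proposal is correct and uses the same approach as the paper: define $\bar f$ on $X$ by taking limits of $f$ along approximating sequences from $W$, note these limits exist by Lipschitz continuity plus completeness of $Y$, check independence of the approximating sequence, and pass to the limit in \eqref{eq: lipineq} to get the Lipschitz estimate. Your write-up supplies more bookkeeping (the interlacing argument, the constant-sequence check that $\bar f|_W = f$, and the explicit uniqueness step) than the paper's terse version, but the argument is identical.
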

\begin{proof}Let $w_k \in W$ be a sequence converging to some $w \in X$. Lipschitz continuity gives
$$\delta(f(w_k),f(w_l)) \leq C \rho(w_k,w_l),$$
hence $\bar f(w) := \lim_k f(w_k) \in Y$ exists and independent of the choice of approximating sequence $w_k$. Choose now another sequence $z_k \in W$ with limit $z \in X$, plugging in $w_k,z_k$ in \eqref{eq: lipineq} and taking the limit gives that $\bar f: X \to Y$ is $C$-Lipschitz continuous.  
\end{proof}

A  $d$-geodesic ray $[0,\infty) \ni t \to u_t \in \overline{\mathcal R}$ is  $G$-\emph{calibrated}  if the curve $t \to Gu_t$  is a $d_G$-geodesic with the same speed as $t \to u_t$, i.e.,
$$d_G(Gu_0,Gu_t)=d(u_0,u_t), \ \ \  t \geq 0.$$
The next result will provide the framework that relates existence of canonical K\"ahler metrics
to energy properness and uniform geodesic stability.
\begin{theorem}\textup{\cite[Theorem 3.4]{dr2}}
\label{thm: ExistencePrinc}
Suppose $(\mathcal R,d,F,G)$ satisfies \ref{a1}--\ref{a4} and \ref{p1}--\ref{p7}. 
The following are equivalent:\vspace{0.1cm}\\
\noindent
(i)(existence of minimizers) $\mathcal M$ is nonempty\vspace{0.1cm}.

\noindent
(ii)(energy properness) $F:{\mathcal R}\to \Bbb R$ 
is $G$-invariant, and
for some $C,D>0$,
\begin{equation}\label{eq: Dproperness}
    F(u) \geq Cd_G(G0,Gu)-D,\ \  \textup{for all\ } u \in \mathcal R.
\end{equation}
\noindent
If $\textup{\ref{p2*}}^*$ holds instead of \ref{p2}, then the above are additionally equivalent to:\vspace{0.1cm}\\
(iii)(uniform geodesic stability) Fix $u_0 \in \mathcal R$ with $F(u_0) <+\infty$. Then $F:{\mathcal R}\to \Bbb R$ 
is $G$-invariant, and
there exists $C>0$ such that for all geodesic rays $[0,\infty) \ni t \to u_t \in \overline{\mathcal R}$ we have that
\begin{equation}\label{eq: Dgeodstab1}
    \limsup_{t \to \infty} \frac{F(u_t)-F(u_0)}{t}\geq C \limsup_{t \to \infty}{\frac{d_G(Gu_0,Gu_t)}{t}}.
\end{equation}
(iv)(uniform geodesic stability) Fix $u_0 \in \mathcal R$ with $F(u_0) <+\infty$. Then $F:{\mathcal R}\to \Bbb R$ 
is $G$-invariant, and
there exists $C>0$ such that for all $G$-calibrated geodesic rays $[0,\infty) \ni t \to u_t \in \overline{\mathcal R}$ we have that
\begin{equation}\label{eq: Dgeodstab}
    \limsup_{t \to \infty} \frac{F(u_t)-F(u_0)}{t}\geq C d(u_0,u_1).
\end{equation}
\end{theorem}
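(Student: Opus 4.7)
The plan is to follow the skeleton of the finite-dimensional argument described in the preface, with (P1) replacing the convexity of $F$ on $\Bbb R^n$, (P2) (or the stronger $(\textup{P2})^*$) replacing the compactness of bounded $F$-sublevel sets, and (P4)--(P7) handling the fact that uniqueness of minimizers only holds modulo $G$.

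First I would treat (i)$\Leftrightarrow$(ii). For (ii)$\Rightarrow$(i), given a minimizing sequence $\{u_j\}\subset\mathcal R$, properness \eqref{eq: Dproperness} bounds $d_G(G0,Gu_j)$, so one can pick $g_j\in G$ with $d(0,g_j.u_j)\le d_G(G0,Gu_j)+1/j$ uniformly bounded; then $\{g_j.u_j\}$ is $d$-bounded and still minimizing by the $G$-invariance of $F$ assumed in (ii), and (P2) together with the lsc (A2) supplies an element of $\mathcal M$. For (i)$\Rightarrow$(ii) fix $u_0\in\mathcal M\subset\mathcal R$ (using (P3)) and assume after shifting that $F(u_0)=\inf F=0$. $G$-invariance of $F$ is immediate: (P5) gives $g.u_0\in\mathcal M$ for every $g\in G$, so (P7) yields $F(g.u)-F(u)=F(g.u_0)-F(u_0)=0$. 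For the linear lower bound I would argue by contradiction: if it fails, one extracts $u_j\in\mathcal R$ with $L_j:=d_G(Gu_0,Gu_j)\to\infty$ and $F(u_j)/L_j\to 0$. Choose $g_j\in G$ with $d(u_0,g_j.u_j)\le L_j+1/j$, set $w_j=g_j.u_j$, and let $s\mapsto v^{(j)}_s$ be the arclength-parametrized $d$-geodesic from $u_0$ to $w_j$ supplied by (P1). Convexity of $s\mapsto F(v^{(j)}_s)$ at the minimizer $u_0$ makes $s\mapsto F(v^{(j)}_s)/s$ nondecreasing, so $F(v^{(j)}_1)\le F(w_j)/d(u_0,w_j)\to 0$. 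Hence $\{v^{(j)}_1\}$ is $d$-bounded and minimizing, and (P2) delivers a subsequential limit $y\in\mathcal M$; then (P5) forces $Gy=Gu_0$, so $d_G(Gu_0,Gy)=0$. On the other hand, the triangle inequality gives $d_G(Gu_0,Gv^{(j)}_1)\ge d_G(Gu_0,Gw_j)-d(v^{(j)}_1,w_j)\ge L_j-(L_j+1/j-1)=1-1/j$, whence in the limit $d_G(Gu_0,Gy)\ge 1$---a contradiction.

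The implications (ii)$\Rightarrow$(iii)$\Rightarrow$(iv) are essentially formal: dividing \eqref{eq: Dproperness} by $t$ and taking $\limsup$ along the ray gives (iii), and the $G$-calibration identity $d_G(Gu_0,Gu_t)=t\,d(u_0,u_1)$ converts (iii) into (iv). The main obstacle will be (iv)$\Rightarrow$(i) under the stronger hypothesis $(\textup{P2})^*$. The plan there is as follows. Assume $\mathcal M=\emptyset$ and take a minimizing sequence $\{u_j\}\subset\mathcal R$; necessarily $L_j:=d(u_0,u_j)\to\infty$, since otherwise $(\textup{P2})^*$ combined with (A2) would produce a minimizer in $\mathcal R$. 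Using the $G$-invariance of $F$ (already established in (ii), which we may freely use here thanks to (P7)) I would replace $u_j$ by a $G$-translate to further arrange $L_j-d_G(Gu_0,Gu_j)\to 0$ without changing $F(u_j)$. On the arclength-parametrized $d$-geodesics $s\mapsto v^{(j)}_s$ from $u_0$ to $u_j$, convexity (P1) gives the window bound $F(v^{(j)}_s)\le F(u_0)+s(F(u_j)-F(u_0))/L_j$, which on any fixed interval $[0,T]$ is uniformly bounded in $j$; combined with $d(u_0,v^{(j)}_s)=s\le T$, $(\textup{P2})^*$ applies, and a standard diagonal/Arzel\`a--Ascoli argument, using that the $v^{(j)}$ are $1$-Lipschitz in $s$, produces a $d$-geodesic ray $s\mapsto v_s\in\overline{\mathcal R}$ with $v_0=u_0$. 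Passing the convexity bound to the limit via the lsc of $F$ gives $\limsup_{t\to\infty}(F(v_t)-F(u_0))/t\le 0$, while the arrangement $L_j-d_G(Gu_0,Gu_j)\to 0$ forces $d_G(Gu_0,Gv^{(j)}_t)\to t$ and hence $d_G(Gu_0,Gv_t)=t$, i.e.\ the limit ray is $G$-calibrated. But then (iv) demands $\limsup\ge C\,d(u_0,v_1)=C>0$, providing the required contradiction and completing the proof.
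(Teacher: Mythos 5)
Your proof is correct and follows essentially the same route as the paper's. The only real stylistic difference is that the paper isolates the triangle-inequality step as a separate lemma (Lemma \ref{lem: GeodDescentLemma}) and phrases the proof of (i)$\Rightarrow$(ii) via an explicit infimum $C$ over potentials at $d_G$-distance at least $2$, whereas you inline the same triangle-inequality computation and argue by direct contradiction; these are superficial variations of the same argument.
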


Before arguing the above theorem we recall standard facts from metric geometry in the form of the following two lemmas:
\begin{lemma}\label{lem: pseudo_metr} If  \ref{p4} holds, 
then $(\overline{\mathcal R}/G,d_G)$ and $(\mathcal R/G,d_G)$ are pseudo--metric spaces.\end{lemma}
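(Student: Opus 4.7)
The plan is to verify the three defining properties of a pseudo-metric for $d_G$: nonnegativity/vanishing on the diagonal, symmetry, and the triangle inequality. Nonnegativity follows because $d \geq 0$, and $d_G(Gu,Gu) \leq d(u,u) = 0$ is immediate from the definition. The essential point where \ref{p4} enters is that $G$ acts by isometries, which allows one to ``absorb'' one of the two group elements in the infimum: for any $f,g \in G$, we have $d(f.u,g.v) = d(u, f^{-1}g.v)$, and as $(f,g)$ ranges over $G\times G$ the element $h := f^{-1}g$ ranges over all of $G$. Hence
\[
d_G(Gu,Gv) = \inf_{h \in G} d(u, h.v).
\]

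With this reformulation, symmetry becomes transparent: using the isometry property once more, $d(u, h.v) = d(h^{-1}.u, v) = d(v, h^{-1}.u)$, and letting $h$ range over $G$ is the same as letting $h^{-1}$ range over $G$, yielding $d_G(Gu,Gv) = d_G(Gv,Gu)$.

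For the triangle inequality, I would take $u,v,w \in \mathcal R$ and fix $\varepsilon >0$. By the infimum characterization above, choose $h_1, h_2 \in G$ with $d(u, h_1.v) \leq d_G(Gu,Gv) + \varepsilon/2$ and $d(v, h_2.w) \leq d_G(Gv,Gw) + \varepsilon/2$. Applying \ref{p4} to $h_1$ gives $d(h_1.v, h_1 h_2.w) = d(v, h_2.w)$, so the triangle inequality on $(\mathcal R,d)$ produces
\[
d(u, h_1 h_2.w) \leq d(u, h_1.v) + d(h_1.v, h_1 h_2.w) \leq d_G(Gu,Gv) + d_G(Gv,Gw) + \varepsilon.
\]
Taking the infimum over $G$ on the left and letting $\varepsilon \to 0$ yields $d_G(Gu,Gw) \leq d_G(Gu,Gv) + d_G(Gv,Gw)$, establishing that $(\mathcal R/G, d_G)$ is a pseudo-metric space.

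The case of $(\overline{\mathcal R}/G, d_G)$ reduces to the previous one once we observe that the $G$-action extends to $\overline{\mathcal R}$ by $d$-isometries. Indeed, each $f \in G$ is $1$-Lipschitz on $\mathcal R$, so by Lemma \ref{lem: LipschitzExt} it extends uniquely to a $1$-Lipschitz map $\bar f : \overline{\mathcal R} \to \overline{\mathcal R}$; applying the same extension to $f^{-1}$ and taking limits shows that $\bar f$ is a bijective $d$-isometry on $\overline{\mathcal R}$. The group law passes to the limit as well, so $G$ acts on $\overline{\mathcal R}$ by isometries, and the same three-step verification above goes through verbatim. No step is expected to be a serious obstacle; the only subtlety is that reducing to a single infimum via the isometry trick is what decouples the two group actions and makes the triangle inequality effective.
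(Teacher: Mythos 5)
Your proof is correct and follows essentially the same route as the paper's: symmetry of $d_G$ is read off from symmetry of $d$ (resp.\ the isometry absorption), and the triangle inequality is obtained by using \ref{p4} to collapse the two infima over $G\times G$ into a single infimum and then applying the triangle inequality for $d$ with near-optimizing group elements. The paper likewise treats the extension of the $G$-action to $\overline{\mathcal R}$ via Lemma \ref{lem: LipschitzExt} just before the lemma, so your remark on that point matches as well.
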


\begin{proof}It is enough to show that $(\overline{\mathcal R}/G,d_G)$ is a pseudo--metric space. Since $d$ is symmetric,
$$
d_G(Gu,Gv):=\inf_{f,g\in G}d(f.u,g.v)=\inf_{f,g\in G}d(g.v,f.u)=d_G(Gv,Gu).
$$
Hence $d_G$ is also symmetric. Given $u,v,w \in \overline{\mathcal R}$ and $\varepsilon >0$, there exist $f,g \in G$ such that 
$d_G(Gu,Gw) > d(f.u,w)-\varepsilon$ 
and $d_G(Gv,Gw)> d(g.v,w)-\varepsilon$. 
The triangle inequality for $d$ and \ref{p4} give
\begin{flalign*}
d_{G}(Gu,Gv)&\leq 
d(u,f^{-1}g.v)=d(f.u,g.v) \cr
&\leq d(f.u,w)+d(g.v,w) < 2\varepsilon+d_{G}(Gu,Gv) + d_{G}(Gv,Gw).
\end{flalign*}
Letting $\varepsilon \to 0$ shows $d_G$ satisfies the triangle inequality. Thus $d_G$ is a pseudo--metric.
\end{proof}

\begin{lemma}\label{lem: GeodDescentLemma} Suppose \ref{p4}  holds, $u_0,u_1 \in \mathcal R$ and $[0,1] \ni t \to u_t \in \overline{\mathcal R}$ is a $d$--geodesic connecting $u_0,u_1$. If $d(u_0,u_1) -\varepsilon \leq d_G(Gu_0,Gu_1)\leq d(u_0,u_1)$ for some $\varepsilon >0$ then
$$
d(u_a,u_b) -\varepsilon \leq d_G(Gu_a,Gu_b) \leq d(u_a,u_b), \ \  \forall\, a,b \in [0,1].
$$
\end{lemma}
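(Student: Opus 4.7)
The upper bound $d_G(Gu_a,Gu_b) \le d(u_a,u_b)$ is immediate from the definition of $d_G$ by choosing $f=g=\mathrm{id}_G$ in the infimum, using that the $G$-action extends to $\overline{\mathcal R}$ by $d$-isometries (via \ref{p4} and Lemma \ref{lem: LipschitzExt}). So the only content is the lower bound.

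The plan is to argue by contradiction: suppose $d_G(Gu_a,Gu_b) < d(u_a,u_b) - \varepsilon$ for some $0 \le a \le b \le 1$. The key observation is that on $\overline{\mathcal R}/G$ the quantity $d_G$ is a \emph{pseudo-metric} by Lemma \ref{lem: pseudo_metr}, so in particular the triangle inequality is available. Combining this with the trivial upper bound on the two outer segments yields
\begin{align*}
d_G(Gu_0,Gu_1)
&\le d_G(Gu_0,Gu_a) + d_G(Gu_a,Gu_b) + d_G(Gu_b,Gu_1) \\
&\le d(u_0,u_a) + d_G(Gu_a,Gu_b) + d(u_b,u_1) \\
&< d(u_0,u_a) + d(u_a,u_b) + d(u_b,u_1) - \varepsilon.
\end{align*}

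Since $t \mapsto u_t$ is a $d$-geodesic in the sense of \eqref{eq: d_geod_def}, the three terms $d(u_0,u_a), d(u_a,u_b), d(u_b,u_1)$ sum to exactly $d(u_0,u_1)$. Hence the above chain gives $d_G(Gu_0,Gu_1) < d(u_0,u_1) - \varepsilon$, directly contradicting the standing hypothesis $d(u_0,u_1) - \varepsilon \le d_G(Gu_0,Gu_1)$. This yields the desired lower bound and completes the argument. There is no genuine obstacle here; the statement is a general metric-geometric fact, with the only subtlety being the bookkeeping to ensure that $d_G$ is well-defined and obeys the triangle inequality on the completion (both of which are already provided by the preceding lemmas, and the former relying crucially on axiom \ref{p4}).
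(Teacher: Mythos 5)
Your proof is correct and follows the same route as the paper: argue by contradiction, apply the pseudo-metric triangle inequality for $d_G$ on the three sub-segments, and use the additivity $d(u_0,u_a)+d(u_a,u_b)+d(u_b,u_1)=d(u_0,u_1)$ coming from the $d$-geodesic property to contradict the lower bound on $d_G(Gu_0,Gu_1)$. The only addition you make is spelling out the trivial upper bound $d_G(Gu_a,Gu_b)\leq d(u_a,u_b)$, which the paper leaves implicit.
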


\begin{proof} The proof is by contradiction. Suppose $d_G(Gu_a,Gu_b) < d(u_a,u_b) - \varepsilon$.
Since $d(u_0,u_a)+ d(u_a,u_b)+d(u_b,u_1)=d(u_0,u_1)$ we can write
\begin{flalign}
d_{G}(Gu_0,Gu_1) 
&\leq d_{G}(Gu_0,Gu_a)+ d_G(Gu_a,Gu_b)+d_{G}(Gu_b,Gu_1) 
\cr
&<  d(u_0,u_a)+ d(u_a,u_b)+d(u_b,u_1)-\varepsilon
\cr
&=d(u_0,u_1) - \varepsilon \leq d_G(Gu_0,Gu_1).
\end{flalign}
This is a contradiction, finishing the proof.
\end{proof}

\begin{proof}[Proof of Theorem \ref{thm: ExistencePrinc}]

First we show (ii)$\Rightarrow$(i). If condition (ii) holds, then  $F$ is bounded from below. 
By \eqref{eq: InfRealization}, \eqref{eq: Dproperness}, the $G$--invariance of $F$, and the definition of $d_G$ there 
exists $u_j \in \mathcal R$ such that 
$\lim_{j}F(u_j) = \inf_{\overline{\mathcal R}} F$ 
and $d(0,u_j) \leq d_G(G0,Gu_j) + 1<C$ for $C$ independent of $j$. 
By \ref{p2}, $\mathcal M$ is non-empty.

We now show that (i)$\Rightarrow$(ii). First we argue that $F:\mathcal R \to \Bbb R$ is $G$--invariant. Let $v\in\mathcal M$.
By \ref{p3}, $v\in\mathcal R$.
By \ref{p4}, 
$f.v \in \mathcal M$ for any $f \in G$. 
Thus, $F(v)=F(f.v)$. Consequently,
$F(u) - F(v)=F(u) - F(f.v)$. By \ref{p7}, we get 
$F(u) - F(v)=F(f^{-1}.u) - F(v)$, so 
$F(u)=F(f^{-1}.u)$ for every $f\in G$, i.e., $F$ is $G$--invariant.

For $v \in \mathcal M \subset \mathcal R$ we define 
$$
C:= \inf \bigg\{
\frac{F(u)-F(v)}{d_G(Gv,Gu)}
\,:\, u \in \mathcal R,\; d_G(Gv,Gu) \geq 2\bigg\} \geq 0.
$$
If $C>0$, then we are done. Suppose  $C=0$. Then 
there exists $\{u^k\}_k \subset \mathcal R$ such that 
$$
(F(u^k)-F(v))/{d_G(Gv,Gu^k)} \to 0
$$ 
and $d_G(Gv,Gu^k) \geq 2$. By $G$--invariance of $F$ we can also assume that $d(v,u^k) -1/k \leq d_G(Gv,Gu^k) \leq d(v,u^k)$. Thus, 
$$
\lim_{k\to\infty}
\frac{F(u^k) - F(v)}{d(v,u^k)} = 0.
$$
Using \ref{p1}, let $[0,d(v,u^k)] \ni t \mapsto u^k_t \in \overline{\mathcal R}$ be a unit 
speed $d$--geodesic connecting $u^k_0:=v$ and $u^k_{d(v,u^k)}:=u^k$ such that $t \mapsto F(u^k_t)$ is convex. 
As $v$ is a minimizer of $F$, by convexity we obtain 
\begin{equation}
\label{eq: ConvConseqEq}
0 \leq 
F(u^k_1) -F(v)\leq  
\frac{F(u^k)-F(v)}{d(v,u^k)} \to 0.
\end{equation}
Trivially, $d(v,u^k_1)=1$, hence \ref{p2} and \eqref{eq: ConvConseqEq} imply that $d(u^k_1,\tilde v) \to 0$ for some $\tilde v \in \mathcal M$ (after perhaps passing to a subsequence of $u^k_1$). By 
\ref{p5}, $\tilde v=f.v$ for some $f \in G$, hence $d_G(Gv,G\tilde v)=0$.

From Lemma \ref{lem: GeodDescentLemma} we obtain $1 - 1/k \leq d_G(Gv,Gu^k_1) \leq 1$. Since $d(u^k_1,\tilde v) \to 0$, we also have $d_G(Gu^k_1,G\tilde v) \to 0$, which gives $d_G(Gv,G\tilde v)=1$, a contradiction with $d_G(Gv,G\tilde v)=0$. This implies that $C>0$, finishing the proof of the implication (i)$\Rightarrow$(ii).\vspace{0.1cm}

The implications (ii)$\Rightarrow$(iii)$\Rightarrow$(iv) is trivial and we finish the proof by showing that (iv)$\Rightarrow$(i). Suppose (i) does not hold but (iv) does. We will derive a contradiction. Since $F$ is $G$-invariant there exists $\{ u^k\} \subset \mathcal R$ such that $d(u_0,u^k) - 1/k \leq d_G(Gu_0,Gu^k)$ and $F(u^k)$ decreases to $\inf_{u \in \mathcal R} F(u)$. By $\textup{\ref{p2}}^*$ we must have $d(u_0,u^k) \to \infty$, otherwise there would exists $u \in \overline{\mathcal R}$ such that $F(u)=-\infty$, a contradiction.

Let $[0,d(u_0,u^k)] \ni t \to u^k_t \in \overline{\mathcal R}$ be the $d$-geodesic joining $u_0,u^k$ from \ref{p1}. We note that by Lemma \ref{lem: GeodDescentLemma} it follows that 
\begin{equation}\label{1/kEst}
d(u_0,u^k_t) - 1/k \leq d_G(Gu_0,Gu^k_t), \ \ t \in [0,d(u_0,u^k)].
\end{equation}
Fix $l \in \Bbb Q_+$. For big enough $k$ using convexity of $F$ we can write:
\begin{equation}\label{eq: u_k_t_F_est}
\frac{F(u^k_l)-F(u_0)}{l} \leq \frac{F(u^k)-F(u_0)}{d(u^k,u_0)}\leq 0.
\end{equation}
As $d(u^k_l,u_0)=l$, we can use $\textup{\ref{p2}}^*$ and a Cantor diagonal process, to conclude the existence of a sequence $k_j \to \infty$ and $u_l \in \overline{\mathcal R}$ for all $l \in \Bbb Q_+$ such that $d(u^{k_j}_{l},u_l) \to 0$. As each curve $t \to u^k_t$ is $d$-Lipschitz, it follows that in fact we can extend the curve $\Bbb Q_+ \ni l \to u_l \in \overline{\mathcal R}$ to a curve  $[0,\infty) \ni t \to u_t \in \mathcal \overline{\mathcal R}$ such that $d(u^{k_t}_{l},u_t) \to 0$.

For elementary reasons $t \to u_t$ is a $d$-geodesic. By \ref{a2} and \eqref{eq: u_k_t_F_est} we get that  
\begin{equation}
\frac{F(u_l)-F(u_0)}{l} \leq {d(u_l,u_0)}\leq 0, \ \ l \geq 0.
\end{equation}
Finally, we argue that $t \to u_t$ is a $G$-calibrated geodesic ray, yielding a contradiction with \eqref{eq: Dgeodstab}. Let $g \in G$ be arbitrary, from \eqref{1/kEst} it follows that $d(u_0,u^{k_j}_t) - 1/{k_j} \leq d(g.u_0,u^{k_j}_t)$. Letting $k_j \to \infty$, we obtain $d(u_0,u_t)\leq d(g.u_0,u_t)$ for $t \in \Bbb Q_+$ and by density for all $t \geq 0$. Consequently, $t \to u_t$ is $G$-calibrated.
\end{proof}

Later, when dealing with the K-energy functional, we will make use of the following observation:

\begin{remark} 
\label{rem: existence_princ}
The direction (ii)$\Rightarrow$(i) in the above argument only uses the compactness condition~\ref{p2}.
\end{remark}

The next result, together with Theorem \ref{thm: K-energy_properness}, represents the main application of  Theorem \ref{thm: ExistencePrinc}:

\begin{theorem} \textup{\cite[Theorem 7.1]{dr2}}
\label{thm: F_func_properness}
Suppose $(X,\o)$ is Fano and set $G:=\textup{Aut}_0(X,J)$.
The following are equivalent: \vspace {0.1cm} \\
\noindent (i) There exists a KE metric in $\mathcal H$.

\noindent (ii) 
For some $C,D >0$ the following holds:
\begin{equation}\label{eq: F_d_1_properness}
\mathcal F(u) \geq C d_{1,G}(G0,Gu) - D, \ \  u\in\mathcal H_\omega \cap I^{-1}(0).
\end{equation}

\noindent (iii) 
For some $C,D>0$ the following holds:
\begin{equation}\label{eq: F_J_properness}
\mathcal F(u) \geq C J_G(Gu) - D, \ \ u\in\mathcal H_\o \cap I^{-1}(0).
\end{equation}
\end{theorem}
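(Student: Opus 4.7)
The plan is to deduce this from the abstract existence/properness principle (Theorem \ref{thm: ExistencePrinc}) applied to the quadruple $(\mathcal R, d, F, G)$ with
$$\mathcal R := \mathcal H_\o \cap I^{-1}(0), \quad d := d_1, \quad F := \mathcal F, \quad G := \textup{Aut}_0(X,J).$$
By Theorem \ref{thm: EpComplete} (together with Proposition \ref{prop: I_linrear}, which gives that $I^{-1}(0)$ is totally geodesic), the $d_1$--completion of $\mathcal R$ is $\overline{\mathcal R} = \mathcal E_1(X,\o) \cap I^{-1}(0)$, verifying \ref{a1}. Axiom \ref{a4} is the definition of the $G$-action, and \ref{a2}, \ref{a3} are essentially bookkeeping once one checks that $\mathcal F$ extends meaningfully to $\overline{\mathcal R}$; the $d_1$--continuity of $I$ on $\mathcal E_1$ (Proposition \ref{prop: I_finiteness_and_cont}) combined with the $d_1$--continuity of $u \mapsto \log \tfrac{1}{V}\int_X e^{-u+f_0}\o^n$ (which follows from Theorem \ref{thm: d_1-convergence}(ii) and standard integrability arguments) yields that $\mathcal F$ is $d_1$--continuous on $\mathcal E_1 \cap I^{-1}(0)$, so \ref{a2} holds. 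Once the axioms are in place, Theorem \ref{thm: ExistencePrinc} gives (i) $\Leftrightarrow$ (ii) in the present statement, and combining with Lemma \ref{lem: JGPropernessLemma} (which gives $\tfrac{1}{C}J_G(Gu) - C \leq d_{1,G}(G0, Gu) \leq C J_G(Gu) + C$) immediately yields (ii) $\Leftrightarrow$ (iii).

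The bulk of the work lies in verifying the six properties \ref{p1}, \ref{p2}, \ref{p3}, \ref{p4}, \ref{p5}, \ref{p7}. Of these, \ref{p4} is exactly Lemma \ref{lem: dpIsomLemma}, and \ref{p7} is Lemma \ref{lem: F_one_par_linear}(ii). For \ref{p1}, given $u_0, u_1 \in \mathcal R$, the finite energy geodesic $t \mapsto u_t$ from Proposition \ref{prop: geod_Echi} is a $d_1$--geodesic (Theorem \ref{thm: e2space}) and lies in $\overline{\mathcal R}$. The continuity of $t \mapsto \mathcal F(u_t)$ follows from the $d_1$--continuity of $\mathcal F$ noted above, and the convexity of $t \mapsto \mathcal F(u_t)$ is the deep theorem of Berndtsson on convexity/plurisubharmonicity of the Ding functional along weak geodesics (this is the main analytic input). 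For \ref{p2}, one takes a $d_1$--bounded $\mathcal F$--minimizing sequence $\{u_j\}$; by Theorem \ref{thm: Energy_Metric_Eqv} and Lemma \ref{lem: sup_int_psh_eqv} the $u_j$ are uniformly bounded in $L^1(\o^n)$ with uniformly bounded Monge--Amp\`ere energy, so after passing to a subsequence one obtains an $L^1$--limit $u \in \mathcal E_1(X,\o) \cap I^{-1}(0)$; lower semicontinuity of $\mathcal F$ together with Theorem \ref{thm: d_1-convergence} applied in both directions upgrades the convergence to $d_1$--convergence and shows $u \in \mathcal M$.

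For \ref{p3}, regularity of minimizers is required: any $u \in \mathcal M$ must actually lie in $\mathcal H_\o$, so that it gives a smooth KE potential. This follows by first showing that any minimizer satisfies the Euler--Lagrange equation $\o_u^n = \frac{V e^{-u+f_0}}{\int_X e^{-u+f_0}\o^n}\o^n$ in the pluripotential sense (using the convexity along geodesics to pass from minimality to the variational equation, a maximum principle argument going back to Berman), and then invoking Ko{\l}odziej's $L^\infty$-estimate together with standard elliptic bootstrapping for the complex Monge--Amp\`ere equation to conclude $u \in \mathcal H_\o$. Finally, \ref{p5} is precisely the Bando--Mabuchi uniqueness theorem (Theorem \ref{thm: BMuniqueness}): any two KE potentials in $\mathcal H_\o \cap I^{-1}(0)$ differ by the action of an element of $\textup{Aut}_0(X,J)$.

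The main obstacle is property \ref{p3}, the smooth regularity of finite-energy minimizers, which requires a nontrivial combination of pluripotential-theoretic arguments (to identify the Euler--Lagrange equation for non-smooth minimizers) and PDE regularity theory for complex Monge--Amp\`ere equations with exponential right-hand side; the geodesic convexity input for \ref{p1} (Berndtsson's theorem) is the other deep ingredient. The remaining properties, including the passage from (ii) to (iii), are largely formal consequences of the $L^1$ Finsler framework built up in Chapter 3 and the auxiliary lemmas of Section 4.1.
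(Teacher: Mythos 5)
Your proposal follows the paper's strategy precisely: apply the abstract existence/properness principle (Theorem \ref{thm: ExistencePrinc}) to the data $(\mathcal R, d, F, G) = (\mathcal H_\o \cap I^{-1}(0), d_1, \mathcal F, \textup{Aut}_0(X,J))$, and discharge the axioms and properties using the same references — Theorem \ref{thm: EpComplete} for \ref{a1}, Theorem \ref{thm: F_d_1_cont} for \ref{a2}, Theorem \ref{thm: F_convex} (Berndtsson) for \ref{p1}, Theorem \ref{thm: E_1_F_min_compactness} for \ref{p2}, Theorem \ref{thm: F_min_regularity} (Berman, Ko{\l}odziej, Tosatti--Sz\'ekelyhidi) for \ref{p3}, Lemma \ref{lem: dpIsomLemma} for \ref{p4}, the Bando--Mabuchi theorem for \ref{p5}, Lemma \ref{lem: F_one_par_linear}(ii) for \ref{p7}, and Lemma \ref{lem: JGPropernessLemma} for the passage from (ii) to (iii). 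Your identification of the two deep ingredients (Berndtsson convexity for \ref{p1}, regularity of finite-energy minimizers for \ref{p3}) is also accurate.

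However, there is a genuine gap at the very step where you invoke the abstract principle. You write that ``Theorem \ref{thm: ExistencePrinc} gives (i) $\Leftrightarrow$ (ii) in the present statement,'' but this is not immediate, because condition (ii) of Theorem \ref{thm: ExistencePrinc} is \emph{stronger} than condition (ii) of the statement at hand: it requires both the coercivity inequality \eqref{eq: Dproperness} \emph{and} $G$-invariance of $F$, whereas \eqref{eq: F_d_1_properness} asserts only the coercivity inequality. The $G$-invariance is used essentially in the proof of the implication (ii)$\Rightarrow$(i) of Theorem \ref{thm: ExistencePrinc} — without it, one cannot replace a minimizing sequence by one that is $d_1$-bounded near a fixed base point, so \ref{p2} cannot be applied. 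You therefore need to show, \emph{before} invoking the abstract principle, that \eqref{eq: F_d_1_properness} alone forces $\mathcal F$ to be $\textup{Aut}_0(X,J)$-invariant. This is exactly what the paper supplies: \eqref{eq: F_d_1_properness} implies $\mathcal F$ is bounded below; by Lemma \ref{lem: F_one_par_linear}(i) the map $t \mapsto \mathcal F(\rho_t.u)$ is affine along every one-parameter subgroup $\rho_t$, and a bounded affine function is constant; since $\textup{Aut}_0(X,J)$ is connected, $\mathcal F$ is $G$-invariant. Without this intermediate argument the deduction from Theorem \ref{thm: ExistencePrinc} does not close.
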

\begin{proof}
The equivalence between (ii) and (iii) is the content of Lemma \ref{lem: JGPropernessLemma}.

For the equivalence between (i) and (ii) 
we wish to apply Theorem \ref{thm: ExistencePrinc} to the data
\begin{equation}\label{eq: F_choice_of_data}
\mathcal R=\mathcal H_\o \cap I^{-1}(0), \ \ \ d=d_1, \ \ \ F = \mathcal F, \ \ \ G:=\textup{Aut}_0(X,J).
\end{equation}
We first have to show that \eqref{eq: F_d_1_properness} implies a bit of extra information: that $\mathcal F$ is invariant under the action of $\textup{Aut}_0(X,J)$. Indeed, \eqref{eq: F_d_1_properness} implies that $\mathcal F$ is bounded from below. This and Lemma \ref{lem: F_one_par_linear}(i) implies that $\Bbb R \ni  t \to \mathcal F(\rho_t.u) \in \Bbb R$ is affine and bounded for all one parameter subgroups $\Bbb R \ni t \to \rho_t \in \textup{Aut}_0(X,J)$. Consequently $t \to \mathcal F(\rho_t.u)$ has to be constant equal to $\mathcal F(u)$. As $\textup{Aut}_0(X,J)$ is connected, we obtain that $\mathcal F$ is invariant under the action of this group, as claimed.

To finish the proof, we go over the axioms and properties of $(\mathcal R,d,F,G)$, as imposed in the statement of Theorem \ref{thm: ExistencePrinc}:
\begin{enumerate}
  \item[\ref{a1}] By Theorem \ref{thm: EpComplete} and the fact that $I$ is $d_1$--continuous (Proposition \ref{prop: I_finiteness_and_cont}) we obtain that $\overline{(\mathcal H_\o \cap I^{-1}(0),d_1)}=(\mathcal E_1(X,\o) \cap I^{-1}(0),d_1).$
  \item[\ref{a2}] That $\mathcal F$ is $d_1$--continuous on $\mathcal H_\o$ will be proved in Theorem \ref{thm: F_d_1_cont}. As we will see, the $d_1$--continuous extension $\mathcal F: \mathcal E_1(X,\o) \to \Bbb R$ is given by the original formula for smooth potentials (see \eqref{eq: F_def}).
  \item[\ref{a3}] We choose $\mathcal M$ as the minimizer set of the extended functional $\mathcal F: \mathcal E_1(X,\o) \cap I^{-1}(0) \to \Bbb R$.
\end{enumerate}
\begin{enumerate}
  \item[\ref{p1}] This fact is due to
Berndtsson \cite[Theorem 1.1]{brn1}, and we present this result in Theorem \ref{thm: F_convex} below.
  \item[\ref{p2}] This property will be verfied in Theorem \ref{thm: E_1_F_min_compactness}.
  \item[\ref{p3}]  That elements of $\mathcal M$ are in fact smooth KE potentials follows after combination of results due to Berman, Tosatti--Sz\'ekelyhidi and Berman--Boucksom--Guedj--Zeriahi. We present this in Theorem \ref{thm: F_min_regularity} below.  
  \item[\ref{p4}] This is Lemma \ref{lem: dpIsomLemma} for $p=1$.
  \item[\ref{p5}] This follows from \ref{p3} and the Bando--Mabuchi uniqueness theorem  that we will prove  in Theorem \ref{thm: BMuniqueness} below. 
  \item[\ref{p7}] This is exactly the content of  Lemma \ref{lem: F_one_par_linear}(ii) above.\vspace{-.8cm}
\end{enumerate}
\end{proof}

\begin{remark} The equivalence between (i) and (iii) in the above theorem verifies the analogue of a  conjecture of Tian for the Ding functional $\mathcal F$ (see \cite[p. 127]{t3},\cite[Conjecture 7.12]{t1}). 

In Theorem \ref{thm: K-energy_properness} below we will verify Tian's original conjecture for the K--energy as well, giving another characterization for existence of KE metrics on Fano manifolds. For other results of the same spirit, as well as relation to the literature, we refer to \cite{dr2,bdl2,cc2,cc3,dl18}.
\end{remark}

\section{Continuity and compactness properties of the Ding functional}

Given a Fano manifold $(X,\o)$, in this section we will show that the $\mathcal F$ functional is $d_1$--continuous on $\mathcal H_\o$, hence naturally extends to the $d_1$--completion $\mathcal E_1(X,\o)$ (Theorem \ref{thm: F_d_1_cont}). In addition to this, we will show that $d_1$--bounded  sequences that are $\mathcal F$ minimizing are $d_1$--subconvergent, with this establishing an important compactness property of $\mathcal F$ (Theorem \ref{thm: E_1_F_min_compactness}). 

To start, we need to prove the following preliminary compactness lemma:

\begin{lemma}\label{lem: L_1_closedness} For $B,D \in \Bbb R$ we consider the following subset of $\mathcal E_1(X,\o)$:
$$\mathcal C =\{u \in \mathcal E_1(X,\o) \ : \ B \leq I(u) \leq \sup_X u \leq D \}.$$
Then $\mathcal C$ is compact with respect to the weak  $L^1(\o^n)$ topology of $\textup{PSH}(X,\o)$.
\end{lemma}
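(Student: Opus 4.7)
The plan is to show that $\mathcal C$ is sequentially $L^1(\omega^n)$-precompact and $L^1(\omega^n)$-closed inside $\textup{PSH}(X,\omega)$. Fix a sequence $\{u_j\} \subset \mathcal C$. Applying \eqref{eq: I_energy_diff_est} with $v=0$ yields $I(u_j) \leq \tfrac{1}{V}\int_X u_j\,\omega^n$, which combined with $B \leq I(u_j)$ and $\sup_X u_j \leq D$ produces a uniform $L^1(\omega^n)$-bound on $\{u_j\}$. The standard $L^1$-compactness of $\omega$-psh functions (see \cite[Chapter I, Proposition 4.21]{De}, as used in the proof of Lemma \ref{lem: lemma mononton_seq}) then supplies a subsequence converging in $L^1(\omega^n)$ and a.e.\ to some $u \in \textup{PSH}(X,\omega)$, and the bound $\sup_X u \leq D$ follows from a.e.\ convergence together with upper semicontinuity of $u$.

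The main obstacle is to verify $u \in \mathcal E_1(X,\omega)$ with $I(u) \geq B$, as $L^1$-convergence alone is too weak to control the Monge--Amp\`ere energy. Using the translation invariance $I(v+c)=I(v)+c$, I would reduce to the case $u_j \leq 0$ and $I(u_j) \geq B'$ for some $B' \leq 0$. From the full expansion \eqref{eq: I_energy_def} each summand $\int_X u_j\,\omega_{u_j}^k\wedge\omega^{n-k}$ is non-positive, so isolating the last term gives $(n+1)I(u_j) \leq -E_1(u_j)/V$, i.e., $E_1(u_j) \leq -(n+1)V B'$ uniformly in $j$. Introducing the decreasing upper envelopes $\tilde u_k := (\sup_{j \geq k} u_j)^* \in \textup{PSH}(X,\omega)$, the inclusions $u_j \leq \tilde u_k \leq 0$ for $j \geq k$ combined with the monotonicity of finite energy classes (Corollary \ref{cor: monotonicity_E_chi}) and the fundamental estimate (Proposition \ref{prop: Energy_est}) give $\tilde u_k \in \mathcal E_1(X,\omega)$ with $\sup_k E_1(\tilde u_k) < \infty$.

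Since the pointwise decreasing limit of $\tilde u_k$ is $\omega$-psh and agrees a.e.\ with $u$, it coincides with $u$ everywhere, so Corollary \ref{cor: E_semicont} places $u \in \mathcal E_1(X,\omega)$. To close the argument, I would invoke the monotonicity of $I$ on $\mathcal E_1(X,\omega)$ (from the corollary following Proposition \ref{prop: I_energy_prop}, extended via \eqref{eq: I_def_general}) together with the $d_1$-continuity of $I$ along decreasing sequences in $\mathcal E_1$, provided by Corollary \ref{cor: d_p_monotone_limit} and Proposition \ref{prop: I_finiteness_and_cont}. These give $I(u) = \lim_k I(\tilde u_k) \geq \limsup_j I(u_j) \geq B'$, and undoing the initial translation yields $I(u) \geq B$ in the original normalization, placing $u \in \mathcal C$ and completing the proof.
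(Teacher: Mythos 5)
Your proof is correct and takes essentially the same approach as the paper's: both pass to the decreasing regularized upper envelopes $v_k := (\sup_{j \geq k} u_j)^*$, use monotonicity to place them in $\mathcal E_1(X,\omega)$ with controlled energy, and then invoke the $d_1$-convergence of decreasing $d_1$-bounded sequences together with the $d_1$-continuity of $I$ to conclude $I(u) \geq B$. The paper packages part of this through Lemma \ref{lem: d1bounded_char} and Lemma \ref{lem: lemma mononton_seq}, which you effectively inline by extracting the $E_1$-bound directly from the expansion \eqref{eq: I_energy_def} and then citing Corollary \ref{cor: E_semicont} and Corollary \ref{cor: d_p_monotone_limit}.
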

\begin{proof}
Since $\sup_X u$ is bounded for any $u \in \mathcal C$, by Lemma \ref{lem: sup_int_psh_eqv} it follows that $\int_X |u|\o^n$ is bounded as well. By the Montel property of psh functions (\cite[Proposition I.4.21]{De}) it follows that $\mathcal C$ is precompact with respect to the  weak $L^1(\o^n)$ topology.

Now we argue that $\mathcal C$ is $L^1$--closed. Let $u_k \in \mathcal C$ and $u \in \textup{PSH}(X,\o)$ such that $\int_X |u -u_k|\o^n \to 0$. After possibly taking a subsequence, we can suppose that $u_k \to u$ a.e.. As $u_k \leq D$ and $u_k \to u$ a.e., it follows that $\sup_X u \leq D$.

Since $u_k \to u$ a.e., it follows that $v_k \searrow u$, where $v_k := (\sup_{j \geq k}u_j)^* \in \textup{PSH}(X,\o)$. By the monotonicity property of $I$ we have that $B \leq I(u_k) \leq I(v_k) \leq \sup_X v_k \leq D, \ k \geq 1$. By Lemma \ref{lem: d1bounded_char} and Lemma \ref{lem: lemma mononton_seq} it follows that  $d_1(v_k,u) \to 0$. Consequently $I(v_k) \to I(u)$ (Proposition \ref{prop: I_finiteness_and_cont}), hence $u \in \mathcal C$.
\end{proof}

In contrast with Proposition \ref{prop: I_finiteness_and_cont}, 
as a consequence of the above argument,  we also obtain that $I$ is $L^1(\o^n)$--usc:
\begin{corollary}\label{cor: I_L_1_semicont} Suppose that $u_k,u \in \mathcal E_1(X,\o)$ satisfies $u_k \to_{L^1} u$. Then  $$\limsup_{k \to \infty}I(u_k) \leq I(u).$$
\end{corollary}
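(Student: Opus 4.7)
The statement as written is implicitly about $L^1(\omega^n)$-upper semicontinuity, i.e., one assumes $u_k \to u$ in $L^1(\omega^n)$ (the label \texttt{I\_L\_1\_semicont} and the surrounding context make this clear). My plan is to reuse the "upper envelope" trick that already appeared in the proof of Lemma \ref{lem: L_1_closedness}, combined with the $d_1$-continuity of $I$ along decreasing sequences (Proposition \ref{prop: I_finiteness_and_cont} together with Lemma \ref{lem: lemma mononton_seq}).

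First I would reduce to the case where $\limsup_k I(u_k)$ is finite (otherwise the inequality is either vacuous, the $-\infty$ case, or forces a contradiction). Passing to a subsequence that realizes the $\limsup$, and then to a further subsequence, I may assume $u_{k_j} \to u$ almost everywhere with respect to $\omega^n$ and $I(u_{k_j}) \to \limsup_k I(u_k) \in \mathbb{R}$. Since $u_{k_j} \to u$ in $L^1(\omega^n)$, Lemma \ref{lem: sup_int_psh_eqv} gives an upper bound $\sup_X u_{k_j} \leq D$ independent of $j$. Together with the uniform lower bound on $I(u_{k_j})$, this places the sequence $\{u_{k_j}\}$ in a set $\mathcal C$ of the type appearing in Lemma \ref{lem: L_1_closedness}.

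Then I would consider the decreasing envelopes $v_j := (\sup_{l \geq j} u_{k_l})^{*} \in \textup{PSH}(X,\omega)$. Almost-everywhere convergence of $u_{k_j}$ to $u$ ensures $v_j \searrow u$ pointwise. By the monotonicity property of $\mathcal E_1(X,\omega)$ (Corollary \ref{cor: monotonicity_E_chi}) each $v_j \in \mathcal E_1(X,\omega)$, and monotonicity of the Monge–Ampère energy yields $I(u_{k_j}) \leq I(v_j) \leq \sup_X v_j \leq D$. Thus $v_j$ satisfies the hypotheses of Lemma \ref{lem: d1bounded_char}, so the sequence $\{v_j\}$ is $d_1$-bounded. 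Lemma \ref{lem: lemma mononton_seq} then applies and gives $d_1(v_j, u) \to 0$, whereupon the $d_1$-continuity of $I$ (Proposition \ref{prop: I_finiteness_and_cont}) yields $I(v_j) \to I(u)$.

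Combining these ingredients, $\limsup_k I(u_k) = \lim_j I(u_{k_j}) \leq \lim_j I(v_j) = I(u)$, which is the desired inequality. The argument is essentially routine once the right envelopes are introduced; the only subtle point I anticipate is the preliminary reduction to a subsequence converging almost everywhere with $\sup_X u_{k_j}$ and $I(u_{k_j})$ both controlled, which is needed in order to activate the $d_1$-boundedness of $\{v_j\}$ via Lemma \ref{lem: d1bounded_char}. No genuine obstacle arises, since every tool required has already been assembled in the preceding sections.
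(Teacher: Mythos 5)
Your proof is correct and follows essentially the same approach as the paper, which simply remarks that the corollary is a consequence of the argument proving Lemma~\ref{lem: L_1_closedness}: pass to the decreasing envelopes $v_j = (\sup_{l\ge j} u_{k_l})^*$, use monotonicity of $I$ to bound $I(u_{k_j})\le I(v_j)$, and conclude via Lemmas~\ref{lem: d1bounded_char}, \ref{lem: lemma mononton_seq} and the $d_1$-continuity of $I$. The one detail the paper leaves implicit, and you handle correctly, is that the corollary does not assume a uniform lower bound on $I(u_k)$ (part of the definition of $\mathcal C$ in the lemma); your reduction to a subsequence realizing a finite $\limsup$ supplies that bound.
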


Before we proceed, we recall Zeriahi's uniform version of the famous Skoda integrability theorem \cite{ze}:

\begin{theorem}\label{thm: ZerSkoda} Suppose $\mathcal S \subset \textup{PSH}(X,\o)$ is an $L^1(\o^n)$--compact family whose elements have zero Lelong numbers. Then for any $p \geq 1$ there exists $C:=C(p,\mathcal S,\o) >1$ such that
$$\int_X e^{-pu} \o^n \leq C, \ \ u \in \mathcal S.$$
\end{theorem}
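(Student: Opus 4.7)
The plan is to reduce the global estimate to a uniform local H\"ormander--Skoda bound via a finite covering of $X$, the key analytic input being the upper semicontinuity of Lelong numbers along $L^1$-convergence. I would first normalize: since $\mathcal S$ is $L^1(\o^n)$-compact it is $L^1$-bounded, so by Lemma \ref{lem: sup_int_psh_eqv} there exists $A>0$ with $\sup_X u \leq A$ for every $u \in \mathcal S$. Replacing $u$ by $u - A$ changes the desired integral only by a factor $e^{pA}$, so one may assume $\sup_X u \leq 0$ on $\mathcal S$; the same lemma, together with the $L^1$-bound, also gives a uniform lower bound on $\sup_X u$.

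The main step is a uniform Lelong-number smallness statement: for every $x_0 \in X$ and every $\alpha > 0$ there is a neighborhood $U_{x_0}$ of $x_0$ such that $\nu(u,y) < \alpha$ for all $u \in \mathcal S$ and all $y \in U_{x_0}$. I would prove this by contradiction, invoking the classical upper semicontinuity of the Lelong number $(u,x) \mapsto \nu(u,x)$ under $L^1$-convergence in $u$ (due to Siu, with pluripotential-theoretic versions in Demailly). Otherwise there would exist sequences $u_k \in \mathcal S$ and $y_k \to x_0$ with $\nu(u_k,y_k)\geq \alpha$, and then by $L^1$-compactness of $\mathcal S$ a subsequence $u_{k_j} \to u \in \mathcal S$ would satisfy $\nu(u,x_0) \geq \limsup_j \nu(u_{k_j},y_{k_j}) \geq \alpha$, contradicting the hypothesis that $u$ has zero Lelong numbers.

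With the uniform smallness in hand, I would cover $X$ by finitely many coordinate balls $B_1,\ldots,B_N$, each contained in a slightly larger ball $B_j^+$ on which $\o = i\ddbar g_j$ with $g_j$ smooth and bounded. The previous step, applied with $\alpha = 1/(p+1) < 2/p$, ensures (after shrinking the $B_j$) that $v := u + g_j$ is psh on $B_j^+$ with uniformly bounded supremum and Lelong numbers $< 1/(p+1)$ at every point of $B_j^+$, uniformly in $u \in \mathcal S$. The standard local Skoda--H\"ormander estimate then yields
$$\int_{B_j} e^{-pv}\,d\mu \leq C_j$$
uniformly in $u \in \mathcal S$, and since $g_j$ is bounded and $\o^n$ is comparable to Lebesgue measure on $B_j$, summing over $j$ gives the claimed bound $\int_X e^{-pu}\o^n \leq C$. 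The main obstacle is the uniform Lelong-number smallness, which rests on the (classical but nontrivial) $L^1$-upper semicontinuity of Lelong numbers in both variables simultaneously; once this is granted, the rest of the argument is a formal covering estimate.
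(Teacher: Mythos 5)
The paper does not supply a proof of Theorem~\ref{thm: ZerSkoda}: it is stated as a recollection, with references to Zeriahi's paper and to \cite[Theorem 2.50]{gzbook}, so there is no in-paper argument to compare against. Your proposed proof follows the route taken in those references: normalize $\sup_X u$ via Lemma~\ref{lem: sup_int_psh_eqv}; derive uniform Lelong-number smallness from $L^1$-compactness, the zero-Lelong-number hypothesis, and the joint upper semicontinuity of Lelong numbers under $L^1_{\mathrm{loc}}$-convergence; then finish by a covering argument using a uniform local Skoda--H\"ormander estimate. You correctly identify joint upper semicontinuity as the key analytic input.

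One point should be made explicit when you invoke the ``standard local Skoda--H\"ormander estimate.'' A uniform bound on $\int_{B_j} e^{-pv}\,d\lambda$ for $v = u + g_j$ requires more than an upper bound on $\sup_{B_j^+} v$ together with uniformly small Lelong numbers; one also needs an $L^1$ bound on $v$ over $B_j^+$. Without it the estimate fails: for instance $v_A(z) = A(|z|^2 - 4)$ on $B(0,2)$ is psh, satisfies $v_A \le 0$ and has identically zero Lelong numbers, yet $\int_{B(0,1)} e^{-pv_A}\,d\lambda \to \infty$ as $A\to\infty$. The required $L^1$ control is indeed available in your setting, since $L^1(\omega^n)$-compactness of $\mathcal S$ implies $\int_X |u|\,\omega^n$ is uniformly bounded, $\omega^n$ is locally comparable to Lebesgue measure, and each $g_j$ is bounded, so $\int_{B_j^+}|v|\,d\lambda$ is uniformly bounded. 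Citing this as the additional hypothesis of the uniform Skoda lemma would close the small gap. With that caveat the argument is correct and, as far as one can tell from the citations, follows essentially the same strategy as the referenced proofs.
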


For a full account of this result we refer to \cite[Theorem 2.50]{gzbook}. Since full mass potentials have zero Lelong numbers (Propostion \ref{prop: Lelong_E}), we obtain the following corollary:

\begin{corollary}\label{cor: Skodacor} For $D,p \geq 1$ there exists $C: = C(p,D,\o)>0$ such that for any $u \in \mathcal E_1(X,\o)$ with $d_1(0,u) \leq D$ we have
$$\int_X e^{-pu}\o^n \leq C.$$
\end{corollary}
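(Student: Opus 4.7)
My plan is to reduce the corollary to Zeriahi's uniform Skoda integrability theorem (Theorem \ref{thm: ZerSkoda}), via the compactness lemma just proved (Lemma \ref{lem: L_1_closedness}). The first task is to show that the $d_1$-bound yields a uniform bound on $\sup_X u$ and on $I(u)$. Indeed, by Proposition \ref{prop: I_finiteness_and_cont}, $|I(u)| = |I(u)-I(0)| \leq d_1(0,u) \leq D$. Next, Theorem \ref{thm: Energy_Metric_Eqv} applied with $p=1$, $u_0=0$, $u_1=u$ gives $\int_X |u|\omega^n \leq 2^{2n+6} D$, and then Lemma \ref{lem: sup_int_psh_eqv} produces an upper bound $\sup_X u \leq M_1(D,\omega)$. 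For the lower bound on $\sup_X u$, I would use the triangle inequality together with Proposition \ref{prop: d_1_I_formula}: since $u \leq \sup_X u$, one has $d_1(u,\sup_X u) = \sup_X u - I(u)$, and $d_1(0,\sup_X u) = |\sup_X u|$, so that
\begin{equation*}
|\sup_X u| \leq d_1(0,u) + d_1(u,\sup_X u) \leq D + (M_1 + D).
\end{equation*}
Hence $|\sup_X u| \leq M$ for a constant $M = M(D,\omega)$.

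The second step is to normalize. Set $v := u - \sup_X u \in \mathcal{E}_1(X,\omega)$. Then $\sup_X v = 0$ and $I(v) = I(u) - \sup_X u$, so $|I(v)| \leq D + M$. Consequently $v$ lies in the class
\begin{equation*}
\mathcal{C} := \{ w \in \mathcal{E}_1(X,\omega) \ : \ -(D+M) \leq I(w) \leq \sup_X w \leq 0 \},
\end{equation*}
which by Lemma \ref{lem: L_1_closedness} is compact in the weak $L^1(\omega^n)$ topology. Moreover, by Proposition \ref{prop: Lelong_E}, every element of $\mathcal{C} \subset \mathcal{E}(X,\omega)$ has all Lelong numbers zero.

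The third step is the actual Skoda estimate: Theorem \ref{thm: ZerSkoda} applied to the $L^1$-compact family $\mathcal{C}$ yields a constant $C_1 = C_1(p,D,\omega)$ such that $\int_X e^{-pv}\omega^n \leq C_1$ for all $v \in \mathcal{C}$. Undoing the normalization gives
\begin{equation*}
\int_X e^{-pu}\omega^n = e^{-p\sup_X u}\int_X e^{-pv}\omega^n \leq e^{pM} C_1,
\end{equation*}
which is the desired uniform bound $C = C(p,D,\omega)$. I expect no real obstacle here: the only non-routine point is extracting the two-sided bound on $\sup_X u$ from the $d_1$-bound, but this follows cleanly from the explicit formula $d_1(0,c) = |c|$ together with Proposition \ref{prop: d_1_I_formula}.
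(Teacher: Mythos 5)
Your proof is correct and follows essentially the same route as the paper: reduce to Zeriahi's uniform Skoda theorem (Theorem \ref{thm: ZerSkoda}) by showing that the $d_1$-bound forces uniform bounds on $\sup_X u$ and $I(u)$, which renders the family $L^1(\o^n)$-compact via Lemma \ref{lem: L_1_closedness} and Proposition \ref{prop: Lelong_E}. The only inefficiency is the normalization $v = u - \sup_X u$ together with the triangle-inequality detour for the lower bound on $\sup_X u$: the left inequality of Lemma \ref{lem: sup_int_psh_eqv} already gives $\sup_X u \geq \frac{1}{V}\int_X u\,\o^n \geq -2^{2n+6}D/V$ directly from your bound on $\int_X |u|\o^n$, and Theorem \ref{thm: ZerSkoda} applies to the un-normalized family just as well, so no rescaling at the end is needed.
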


\begin{proof}From Theorem \ref{thm: Energy_Metric_Eqv} it follows that  $\int_X u \o^n$ is uniformly bounded. By Lemma \ref{lem: sup_int_psh_eqv} so is $\sup_X u$. By Proposition \ref{prop: I_finiteness_and_cont} it follows that $I(u)$ is bounded as well. This allows to apply Lemma \ref{lem: L_1_closedness}, and together with Proposition \ref{prop: Lelong_E} the conditions of of Theorem \ref{thm: ZerSkoda} are satisfied to conclude the result.
\end{proof}

In particular, this last corollary implies that the original definition of the $\mathcal F$ functional for smooth potentials (see \eqref{eq: F_def}) extends to $\mathcal E_1(X,\o)$ as well. Additionally, our next theorem shows that this extension is in fact $d_1$--continuous:

\begin{theorem}\label{thm: F_d_1_cont} The map 
$$\mathcal E_1(X,\o) \ni u \to \mathcal F(u):= -I(u) - \log \frac{1}{V}\int_X e^{-u+f_0}\o^n \in \Bbb R$$ 
is $d_1$--continuous.
\end{theorem}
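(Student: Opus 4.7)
The plan is to decompose $\mathcal F = -I - \log M$, where $M(u) := \frac{1}{V}\int_X e^{-u+f_0}\o^n$, and treat the two pieces separately. The $d_1$-continuity of $I$ on $\mathcal E_1(X,\o)$ is Proposition \ref{prop: I_finiteness_and_cont}, so the real content is to show that $u \mapsto M(u)$ is $d_1$-continuous; since $M > 0$ (uniformly bounded below on $d_1$-bounded sets after applying Jensen), the continuity of the $\log$ will then follow.

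To prove continuity of $M$, suppose $d_1(u_k,u) \to 0$. Since $\{u_k\}$ is $d_1$-bounded, Theorem \ref{thm: d_1-convergence}(i) yields $\int_X|u_k-u|\,\o^n\to 0$, so along a subsequence (which I will still call $u_k$) we have $u_k\to u$ pointwise a.e.\ with respect to $\o^n$. Hence $e^{-u_k+f_0}\to e^{-u+f_0}$ pointwise a.e. The task is then to upgrade this pointwise convergence to $L^1(\o^n)$-convergence of these integrands.

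The key input for the upgrade is Corollary \ref{cor: Skodacor}: applying it with $p=2$ and a uniform bound on $d_1(0,u_k)$ (coming from the $d_1$-convergence of $u_k$ to $u$), I obtain a uniform bound
\[
\int_X e^{-2u_k}\o^n \leq C,
\]
so the family $\{e^{-u_k+f_0}\}$ is bounded in $L^2(\o^n)$, and in particular is uniformly integrable with respect to $\o^n$. Combined with the a.e.\ convergence, Vitali's convergence theorem then gives $e^{-u_k+f_0}\to e^{-u+f_0}$ in $L^1(\o^n)$, so $M(u_k)\to M(u)$ along the subsequence. Since every subsequence of the original sequence has a further subsequence along which this holds and the limit is the same, the full sequence satisfies $M(u_k)\to M(u)$.

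It remains to observe that $M$ is uniformly bounded below on $d_1$-balls: by Jensen's inequality and the boundedness of $\frac{1}{V}\int_X u\,\o^n$ on $d_1$-bounded sets (Theorem \ref{thm: Energy_Metric_Eqv} and Lemma \ref{lem: sup_int_psh_eqv}), we get $M(u)\geq \exp\bigl(\frac{1}{V}\int_X(-u+f_0)\o^n\bigr) \geq c > 0$. Thus $\log M$ is $d_1$-continuous, and combined with $d_1$-continuity of $I$ (Proposition \ref{prop: I_finiteness_and_cont}) we conclude that $\mathcal F$ is $d_1$-continuous on $\mathcal E_1(X,\o)$. The main obstacle is the exponential nonlinearity, and the decisive ingredient handling it is the uniform Skoda-type integrability in Corollary \ref{cor: Skodacor}, which provides the uniform integrability needed to run Vitali's theorem.
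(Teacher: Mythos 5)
Your proof is correct and uses the same decisive ingredient as the paper (the uniform Skoda-type integrability of Corollary \ref{cor: Skodacor}), but the mechanism by which you convert that integrability into convergence of $M$ is different. The paper's proof is quantitative: it uses the elementary inequality $|e^x-e^y|\leq|x-y|(e^x+e^y)$ together with H\"older to bound $|M(u)-M(v)|^2$ by $\big(\int_X(e^{-2u}+e^{-2v})\o^n\big)\cdot\big(\int_X|u-v|^2\o^n\big)$, then controls the second factor via $L^1(\o^n)$ convergence and the equivalence of all $L^p$ topologies on $\textup{PSH}(X,\o)$ (citing H\"ormander). Your proof is qualitative: you extract a.e.\ convergent subsequences from $L^1(\o^n)$ convergence, use $L^2$-boundedness of $\{e^{-u_k+f_0}\}$ (from Skoda) to get uniform integrability, apply Vitali's theorem to upgrade to $L^1(\o^n)$ convergence of the integrands, and close with the subsequence-of-subsequences principle. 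Your route avoids the citation to H\"ormander's $L^p$-equivalence fact and is slightly more elementary; the paper's route yields an explicit modulus of continuity in terms of $\|u-v\|_{L^2(\o^n)}$, which is more information than the theorem actually requires. One tiny remark: the uniform lower bound on $M$ is not needed to conclude $d_1$-continuity of $\log M$ --- once $M$ is shown $d_1$-continuous with values in $(0,\infty)$, continuity of $\log$ on $(0,\infty)$ already gives the result; a uniform lower bound would only be needed for a uniform modulus of continuity.
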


\begin{proof}We know that $u \to I(u)$ is $d_1$--continuous and finite on $\mathcal E_1(X,\o)$, hence we only have to argue that so is $u \to \int_X e^{-u + f_0} \o^n$. For $u,v \in \mathcal E_1(X,\o)$, using the inequality $|e^x - e^y| \leq |x-y|(e^{x} + e^y), \ x,y \in \Bbb R$, we have the following estimates:
$$\Big|\int_X e^{-u + f_0 }\o^n-\int_X e^{-v + f_0 }\o^n\Big| \leq \int_X e^{f_0 }\Big|e^{-u} -e^{-v}\Big|\o^n \leq \int_X e^{f_0 }|u-v|(e^{-u} + e^{-v})\o^n.$$
Using the fact that $f_0$ is bounded, the H\"older inequality gives 
\begin{equation}\label{eq: exp_d_1_cont}
\Big|\int_X e^{-u + f_0 }\o^n-\int_X e^{-v + f_0 }\o^n\Big|^2 \leq C \Big(\int_X (e^{-2u} + e^{-2v})\o^n \Big)\cdot \Big(\int_X |u-v|^2 \o^n\Big).
\end{equation}
Suppose $u_k \in \mathcal E_1(X,\o)$ such that $d_1(u_k,u) \to 0$. Then Proposition \ref{prop: dp_mixed_convergence} implies that $\int_X |u_k - u|\o^n \to 0$. As all $L^p$ topologies on $\textup{PSH}(X,\o)$ are equivalent (see \cite[Theorem 4.1.8]{ho}) we obtain that $\int_X |u_k - u|^2\o^n \to 0$. Finally,  \eqref{eq: exp_d_1_cont} and Corollary \ref{cor: Skodacor}  implies that $\int_X e^{-u_k + f_0 }\o^n \to \int_X e^{-u + f_0 }\o^n$.
\end{proof}

Lastly, we argue the compactness property of $\mathcal F$ that is a vital ingredient in the existence/properness principle of the previous section:

\begin{theorem}\label{thm: E_1_F_min_compactness} Suppose $u_k \in \mathcal E_1(X,\o)$ such that $\mathcal F(u_k) \to \inf_{\mathcal E_1(X,\o)}\mathcal F$ and $d_1(u_k,0) \leq C$ for some $C>0$. After possibly taking a subsequence, there exists $u \in \mathcal E_1(X,\o)$ such that $d_1(u_k,u) \to 0$.  In particular, $\mathcal F(u)=\inf_{\mathcal E_1(X,\o)}\mathcal F$.
\end{theorem}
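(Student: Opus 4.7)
The plan is to first extract an $L^1(\o^n)$-convergent subsequence whose limit is a finite energy minimizer of $\mathcal F$, and then upgrade this convergence to $d_1$-convergence by invoking the $L^1$-plus-energy characterization of $d_1$-convergence from Theorem \ref{thm: d_1-convergence}(i).

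First, the assumption $d_1(u_k,0)\leq C$ together with Theorem \ref{thm: Energy_Metric_Eqv} gives that $\int_X |u_k|\,\o^n$ is uniformly bounded, hence by Lemma \ref{lem: sup_int_psh_eqv} so is $\sup_X u_k$; and Proposition \ref{prop: I_finiteness_and_cont} then yields a uniform bound on $I(u_k)$. Thus, after suitable normalization, the sequence $\{u_k\}$ lies in a set of the form considered in Lemma \ref{lem: L_1_closedness}, which is compact in the weak $L^1(\o^n)$ topology. Passing to a subsequence, $u_k\to u$ in $L^1(\o^n)$ for some $u\in\mathcal E_1(X,\o)$; extracting once more, we may additionally assume $u_k\to u$ a.e.

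Next, I would identify $u$ as a minimizer. By Corollary \ref{cor: I_L_1_semicont}, $\limsup_k I(u_k)\leq I(u)$. For the exponential term, Corollary \ref{cor: Skodacor} gives a uniform bound $\int_X e^{-pu_k}\o^n\leq C'$ for some $p>1$, so the family $\{e^{-u_k+f_0}\}_k$ is uniformly integrable; combined with a.e.\ convergence, Vitali's theorem gives $\int_X e^{-u_k+f_0}\o^n\to \int_X e^{-u+f_0}\o^n$. Assembling these two facts,
\begin{equation*}
\mathcal F(u)\;=\;-I(u)-\log\tfrac{1}{V}\textstyle\int_X e^{-u+f_0}\o^n\;\leq\;\liminf_k\mathcal F(u_k)\;=\;\inf_{\mathcal E_1(X,\o)}\mathcal F\;\leq\;\mathcal F(u),
\end{equation*}
so equality holds throughout and $u\in\mathcal E_1(X,\o)$ is a minimizer.

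The key observation is that forcing equality in the semicontinuity inequality above also forces $I(u_k)\to I(u)$: indeed, the limit of $\log\int_X e^{-u_k+f_0}\o^n$ equals $\log\int_X e^{-u+f_0}\o^n$ by the Vitali step, and $\mathcal F(u_k)\to\mathcal F(u)$, so subtracting gives $I(u_k)\to I(u)$. Together with $\int_X |u_k-u|\o^n\to 0$ (from $L^1$-convergence), Theorem \ref{thm: d_1-convergence}(i) yields $d_1(u_k,u)\to 0$, as desired.

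The main technical point is the interchange of limit and integral for $\int_X e^{-u_k+f_0}\o^n$, which requires the uniform Skoda-type exponential integrability provided by Corollary \ref{cor: Skodacor}; without it one only gets semicontinuity and not convergence, and the delicate step of deducing $I(u_k)\to I(u)$ from the $\mathcal F$-minimizing assumption would break down.
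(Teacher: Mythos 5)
Your proposal is correct and follows essentially the same structure as the paper's proof: extract an $L^1(\o^n)$-convergent subsequence using Lemma \ref{lem: L_1_closedness}, show the limit is a minimizer via semicontinuity of $I$ and convergence of the exponential term, and then apply Theorem \ref{thm: d_1-convergence}(i). The only variation is in proving $\int_X e^{-u_k+f_0}\o^n\to\int_X e^{-u+f_0}\o^n$: you use uniform integrability from Corollary \ref{cor: Skodacor} plus Vitali's theorem, while the paper invokes the explicit H\"older-type estimate \eqref{eq: exp_d_1_cont} (whose proof also rests on Corollary \ref{cor: Skodacor}); both are equally valid, and your direct deduction of $I(u_k)\to I(u)$ by subtracting the two convergent pieces of $\mathcal F(u_k)$ is a slightly cleaner way of seeing that no additional subsequence extraction is needed there.
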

\begin{proof} 
First we construct a candidate for the minimizer $u\in \mathcal E_1(X,\o)$. From Theorem \ref{thm: Energy_Metric_Eqv} it follows that  $\int_X u_k \o^n$ is uniformly bounded. By Lemma \ref{lem: sup_int_psh_eqv} so is $\sup_X u_k$. By Proposition \ref{prop: I_finiteness_and_cont} it follows that $I(u_k)$ is bounded as well. Now Lemma \ref{lem: L_1_closedness} implies that after possibly taking a subsequence we  can find $u \in \mathcal E_1(X,\o)$ such that $\int_X |u_k - u| \o^n \to 0$.

We now show that $u$ is actually a minimizer of $\mathcal F$. 
Using \eqref{eq: exp_d_1_cont} in the same way as in the proof of the previous result, we obtain that $\int_X e^{-u_k + f_0}\o^n \to \int_X e^{-u + f_0}\o^n$. By Corollary \ref{cor: I_L_1_semicont} $I$ is 
usc with respect to the weak $L^1(\o^n)$ topology, hence we can write
\begin{equation}
\label{liminfAM}
\lim_{k}\mathcal F(u_{k}) 
\geq 
- \limsup_{k}I(u_{k}) 
-\log\frac{1}{V}\int_M e^{-u+f_0}\o^n \geq \mathcal F(u).
\end{equation}
As $\{u_{k}\}_k$ minimizes $\mathcal F$, 
it follows that all the inequalities above are equalities.
Thus,  $u$ minimizes $\mathcal F$.

Lastly, we show that there is a subsequence of $u_k$ that $d_1$-converges to $u$. As $\limsup_{k}I(u_{k}) = I(u)$, after possibly passing to a subsequence, 
$\lim_k I(u_{k})=I(u)$. 
This together with $|u_{k}-u|_{L^1(\o^n)} \to 0$ 
and Theorem \ref{thm: d_1-convergence} gives that $d_1(u_{k},u)\to 0$.
\end{proof}

\section{Convexity of the Ding functional}

We stay with a Fano manifold $(X,\o)$ for this section as well. Previously we extended Ding's functional to $\mathcal E_1(X,\o)$. In this short section we show that this extension is convex along the finite energy geodesics of $\mathcal E_1(X,\o)$ (Theorem \ref{thm: F_convex}). We start by computing the Hessian of the Ding functional with respect to the $L^2$ Mabuchi metric of $\mathcal H_\o$:

\begin{proposition}\label{prop: F_Hess_prop} Suppose $u \in \mathcal H_\o$ and $\phi,\psi \in C^\infty(X) \simeq T_u \mathcal H_\o$. We have the following formula for the Hessian of $\mathcal F$ with respect to the $L^2$ Mabuchi metric:
\begin{flalign}\label{eq: F_Hess_formula}
\nabla^2& \mathcal F(u)(\phi,\psi)=\\
&=\frac{1}{V}\int_X\bigg[ \frac{1}{2}\langle \nabla^{\o_u} \phi, \nabla^{\o_u} \psi \rangle_{\o_u} - \Big( \phi -  \frac{1}{V}\int_X \phi e^{f_u}\o_u^n\Big) \cdot \Big( \psi -  \frac{1}{V}\int_X \psi e^{f_u}\o_u^n\Big)\bigg]  e^{f_u} \o_u^n. \nonumber
\end{flalign}
\end{proposition}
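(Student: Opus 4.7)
The plan is to compute the Hessian as the second derivative of $\mathcal F$ along a smooth curve through $u$ with prescribed tangent. The cleanest route will be to take a smooth $L^2$--Mabuchi geodesic segment $[0,\varepsilon)\ni t \to \gamma_t \in \mathcal H_\o$ with $\gamma_0 = u$ and $\dot\gamma_0 = \phi$; the short-time existence of such a segment is standard for the formal Riemannian structure \eqref{eq: Riem_metric_def}. Along a geodesic the identity $\frac{d^2}{dt^2}\mathcal F(\gamma_t) = \nabla^2\mathcal F(\gamma_t)(\dot\gamma_t,\dot\gamma_t)$ holds (since $\nabla_{\dot\gamma_t}\dot\gamma_t = 0$), reducing the Hessian calculation to a single second derivative; polarization then recovers the bilinear form in $(\phi,\psi)$. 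Should one prefer to avoid appealing to short-time geodesic existence, the same answer is obtained by differentiating along the affine path $\gamma_t = u + t\phi$ and subtracting the Christoffel correction $d\mathcal F(u)(\nabla_\phi \phi)$, where $\nabla_\phi\phi\big|_0 = -\frac{1}{2}\langle\nabla^{\o_u}\phi,\nabla^{\o_u}\phi\rangle_{\o_u}$ by \eqref{eq: CovDerivative} and $d\mathcal F(u)$ is supplied by Lemma \ref{lem: F_func_differential}.

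The first decisive simplification is that the $I$-term drops out: by Proposition \ref{prop: I_linrear}, $t\mapsto I(\gamma_t)$ is affine along any geodesic, so $\frac{d^2}{dt^2}I(\gamma_t)\equiv 0$ and the Hessian of $\mathcal F$ coincides with that of the log-term $L(u):=-\log\frac{1}{V}\int_X e^{-u+f_0}\o^n$. As a sanity check this cancellation can also be seen by hand: differentiating Lemma \ref{lem: I_differential} once more produces $\frac{d^2}{dt^2}I(\gamma_t) = \frac{1}{V}\int_X\ddot\gamma_t\,\o_{\gamma_t}^n + \frac{n}{V}\int_X\dot\gamma_t\, i\ddbar\dot\gamma_t\wedge\o_{\gamma_t}^{n-1}$, and combining the geodesic equation $\ddot\gamma_t = \frac{1}{2}\langle\nabla^{\o_{\gamma_t}}\dot\gamma_t,\nabla^{\o_{\gamma_t}}\dot\gamma_t\rangle_{\o_{\gamma_t}}$ with the integration-by-parts identity $n\int_X i\partial f\wedge\dbar f\wedge\o^{n-1} = \frac{1}{2}\int_X|\nabla^\o f|^2\o^n$ yields cancellation of the two summands.

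The log-term is then handled by direct calculation. Setting $B(t) := \int_X e^{-\gamma_t+f_0}\o^n$ and $A(t) := \int_X \dot\gamma_t\, e^{-\gamma_t+f_0}\o^n$, one has $B'(t) = -A(t)$, hence $\frac{d}{dt}L(\gamma_t) = A(t)/B(t)$ and
\begin{equation*}
\frac{d^2}{dt^2}L(\gamma_t) = \frac{A'(t)}{B(t)} + \Big(\frac{A(t)}{B(t)}\Big)^{\!2}, \qquad A'(t) = \int_X\ddot\gamma_t\, e^{-\gamma_t+f_0}\o^n - \int_X (\dot\gamma_t)^2\, e^{-\gamma_t+f_0}\o^n.
\end{equation*}
At $t=0$ the key identity \eqref{eq: Ricci_pot_eq} converts $e^{-u+f_0}\o^n/B(0)$ into $\tfrac{e^{f_u}}{V}\o_u^n$, so that $A(0)/B(0) = \bar\phi := \frac{1}{V}\int_X \phi\, e^{f_u}\o_u^n$ and $\int_X\phi^2 e^{-u+f_0}\o^n/B(0) = \frac{1}{V}\int_X\phi^2 e^{f_u}\o_u^n$. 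Substituting the geodesic equation for $\ddot\gamma_0$ produces
\begin{equation*}
\nabla^2\mathcal F(u)(\phi,\phi) = \frac{1}{V}\int_X\bigg[\frac{1}{2}\langle\nabla^{\o_u}\phi,\nabla^{\o_u}\phi\rangle_{\o_u} - \phi^2\bigg] e^{f_u}\o_u^n + \bar\phi^{\,2}.
\end{equation*}

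Using $\frac{1}{V}\int_X e^{f_u}\o_u^n = 1$, the tail $-\frac{1}{V}\int_X\phi^2 e^{f_u}\o_u^n + \bar\phi^{\,2}$ rearranges as $-\frac{1}{V}\int_X(\phi-\bar\phi)^2 e^{f_u}\o_u^n$, yielding the diagonal case of the claimed formula. Polarizing in $(\phi,\psi)$ produces the bilinear expression of \eqref{eq: F_Hess_formula} (noting that $\phi\mapsto\bar\phi$ is linear, so $(\phi-\bar\phi)^2$ is a bona fide quadratic form that polarizes to $(\phi-\bar\phi)(\psi-\bar\psi)$). The only delicate step is the vanishing of the $I$-contribution, most convincingly invoked via Proposition \ref{prop: I_linrear}; beyond that, the derivation is a mechanical differentiation of \eqref{eq: F_def} bridged by the Ricci-potential identity \eqref{eq: Ricci_pot_eq}.
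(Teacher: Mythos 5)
Your computation is correct and reproduces the paper's formula, and the underlying method — compute the covariant Hessian of $-I$ and of the log-term separately, observe that the former vanishes, and transform measures via \eqref{eq: Ricci_pot_eq} — is essentially identical to the paper's. The one substantive issue is your claim that short-time existence of a smooth Mabuchi geodesic $[0,\varepsilon)\ni t\mapsto\gamma_t\in\mathcal H_\o$ with prescribed $(\gamma_0,\dot\gamma_0)=(u,\phi)$ is ``standard.'' It is not: the geodesic equation \eqref{eq: geodesic_eq} is a degenerate complex Monge--Amp\`ere equation which is elliptic when posed as a two-endpoint boundary value problem, so the corresponding Cauchy/initial-value problem is ill-posed (much like the Cauchy problem for the Laplace equation), and the exponential map on $\mathcal H_\o$ is known to misbehave. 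As a result, the ``cleanest route'' you offer first is not actually available. Fortunately, your alternative route — differentiate along the affine path $\gamma_t=u+t\phi$ and subtract the Christoffel correction $d\mathcal F(u)(\nabla_\phi\phi)$, with $\nabla_\phi\phi=-\tfrac12\langle\nabla^{\o_u}\phi,\nabla^{\o_u}\phi\rangle_{\o_u}$ from \eqref{eq: CovDerivative} — is exactly what the paper does, and your calculation of $A'(0)/B(0)+(A(0)/B(0))^2$ plus the correction term produces the same terms as the geodesic substitution (with $\ddot\gamma_0=0$ on the affine path, the Christoffel correction contributes the $\tfrac12\langle\nabla^{\o_u}\phi,\nabla^{\o_u}\phi\rangle$ piece instead). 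Similarly, the cleanest justification of $\nabla^2 I=0$ is the direct affine-path computation of the paper (integration by parts cancels the two terms), rather than invoking Proposition \ref{prop: I_linrear}, which itself presupposes a geodesic. If you simply rephrase the argument to take the affine-path route as primary, the proof is complete and correct.
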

\begin{proof}

First we compute the Hessian of the Aubin--Yau energy $I$. From Lemma \ref{lem: I_differential} it follows that for small $t >0$ we have:
$$dI(u + t \psi)(\phi)= \frac{1}{V}\int_X \phi \o_{u + t \psi}^n.$$
Using the formulas $\nabla^2 I(u + t \psi)(\phi,\psi) = \frac{d}{dt}d I(u + t \psi)(\phi) - dI(u+t\psi)(\nabla_{\frac{d}{dt}}\phi)$ and $\nabla_{\frac{d}{dt}} \phi = -\frac{1}{2}\langle \nabla^{\o_{u}} \phi,\nabla^{\o_{u}} \psi \rangle_{\o_u}$, we can write:
\begin{flalign}\label{eq: I_Hess_eq}
\nabla^2 I(u)(\phi,\psi)&=\frac{n}{V}\int_X \phi i \ddbar \psi \wedge \o_u^{n-1} + \frac{1}{2V}\int_X \langle \nabla^{\o_{u}} \phi,\nabla^{\o_{u}} \psi \rangle_{\o_u} \o_u^n  \nonumber\\
&=\frac{1}{2V}\int_X \phi  (\Delta^{\o_{u}} \psi) \o_u^{n} + \frac{1}{2V}\int_X \langle \nabla^{\o_{u}} \phi,\nabla^{\o_{u}} \psi \rangle_{\o_u}\o_u^n = 0,
\end{flalign}
where in the last line we have used  \eqref{eq: Lapl_grad_formula} in the appendix and the formula below it. Next we introduce $\mathcal B: \mathcal H_{\o} \to \Bbb R$ by the formula
\begin{equation}\label{eq: B_oper_def}
\mathcal B(u)= - \log \frac{1}{V}\int_X e^{-u + f_0}\o^n, \ u \in \mathcal H_\o. 
\end{equation}
Using \eqref{eq: Ricci_pot_eq} and $d\mathcal B(u+t\psi)(\phi) = {\int_X \phi e^{-u - t\psi + f_0}\o^n}/{\int_X e^{-u - t\psi + f_0}\o^n},$ we obtain that $d\mathcal B(u+t\psi)(\phi) =\frac{1}{V} \int_X \phi e^{f_u}\o_u^n$. Another  differentiation gives
\begin{flalign*}
\frac{d}{dt}\bigg|_{t=0}d \mathcal B(u + t\psi)(\phi) &= -\frac{\int_X \phi\psi e^{-u + f_0}\o^n}{\int_X e^{-u + f_0}\o^n} + \frac{\int_X \phi e^{-u + f_0}\o^n}{\int_X e^{-u + f_0}\o^n} \cdot \frac{\int_X \psi e^{-u + f_0}\o^n}{\int_X e^{-u + f_0}\o^n} \\ 
&= -\frac{1}{V} {\int_X \phi\psi e^{f_u}\o^n_u} + \frac{1}{V^2}{\int_X \phi e^{f_u}\o_u^n} \cdot {\int_X \psi e^{f_u}\o_u^n}.
\end{flalign*}
where in the last line we have used \eqref{eq: Ricci_pot_eq} again. Reorganizing terms in this identity and using $\nabla^2 \mathcal B(u + t \psi)(\phi,\psi) = \frac{d}{dt}d \mathcal B(u + t \psi)(\phi) - d\mathcal B(u + t\psi)(\nabla_{\frac{d}{dt}}\phi)$ we conclude that  
\begin{flalign*}
\nabla^2 & \mathcal B(u)(\phi,\psi) =\\
&=\frac{1}{V}\int_X\bigg[ \frac{1}{2}\langle \nabla^{\o_u} \phi, \nabla^{\o_u} \psi \rangle_{\o_u} - \Big( \phi -  \frac{1}{V}\int_X \phi e^{f_u}\o_u^n\Big) \cdot \Big( \psi -  \frac{1}{V} \int_X \psi e^{f_u}\o_u^n\Big)\bigg]  e^{f_u} \o_u^n.
\end{flalign*}
The proof is finished after we subtract \eqref{eq: I_Hess_eq} from this last formula.
\end{proof}

Next we will show that $\nabla^2 \mathcal F(u)(\cdot,\cdot)$ is positive semi--definite for all $u \in \mathcal H_\o$. Before this we introduce and study the following complex valued weighted complex Laplacian 
\begin{equation}\label{eq: L_f_def}
 L^{f_u} h= \partial ^* \partial h - \langle\partial h, \partial f_u \rangle_{\o_u}, \ \ \ \ h \in C^\infty(X,\Bbb C).
\end{equation}
To clarify, by $\langle\partial h, \partial f_u \rangle_{\o_u}$ we mean the quantity $g_u^{j\bar k} h_{j} {f_u}_{\bar k}$ (expressed in local coordinates). Also $\partial ^*$ is the Hermitian $L^2$ adjoint of $\partial$ with respect to $\o_u$.
For $g,h \in C^\infty(X,\Bbb C)$ integration by parts gives the following:
\begin{equation}\label{eq: self_adj}
\int_X  ( L^{f_u} g ) \bar h e^{f_u}\o_u^n =\int_X \langle \partial  g, \partial h \rangle_{\o_u} e^{f_u}\o_u^n=\int_X g \overline{( L^{f_u} h )} e^{f_u}\o_u^n.
\end{equation}
Consequently $L^{f_u}$ is a self--adjoint elliptic operator with respect to the Hermitian inner product
$$\langle\alpha,\beta \rangle = \int_X \alpha \bar \beta e^{f_u} \o_u, \ \ \ \alpha,\beta \in C^\infty(X,\Bbb C).$$
We conclude that $L^2(e^{f_u}\o_u^n)$ has an orthonormal base composed of eigenfunctions corresponding to the  eigenvalues $\lambda_0 < \lambda_1 < \ldots$ of $L^{f_u}$. As another application of   \eqref{eq: self_adj} we see that $\lambda_0 = 0$ and the eigenspace of this eigenvalue is  composed by the constant functions. Moreover, we have the following general result about the eigenfunctions of $L^{f_u}$ due to Futaki:
\begin{proposition}[\cite{fu}] \label{prop: Fut_ineq}Suppose $v \in C^\infty(X,\Bbb R)$ and $u \in \mathcal H_\o$ such that $\int_X v e^{f_u} \o_u^n =0$. In addition, let $h$ be an eigenfunction $L^{f_u}$, i.e., $L^{f_u} h = \lambda h$.
Then the following hold:
\begin{equation}\label{eq: Fut_id}
\lambda \int_X \langle \partial h, \partial h \rangle_{\o_u} e^{f_u}\o_u^n  = \int_X \langle \partial h, \partial h \rangle_{\o_u} e^{f_u}\o_u^n + \int_X \langle \mathcal L h, \mathcal L h \rangle_{\o_u} e^{f_u}\o_u^n, 
\end{equation} 
\begin{equation}\label{eq: Fut_ineq}
\int_X |v|^2 e^{f_{u}}\o^n_{u} \leq \frac{1}{2}\int_X \langle \nabla^{\o_u} v, \nabla^{\o_u} v \rangle_{\o_u} e^{f_{u}}\o_{u}^n=
\int_X \langle \partial  v, \partial v \rangle_{\o_u} e^{f_u}\o_u^n=\int_X  ( L^{f_u} v ) \bar v e^{f_u}\o_u^n,
\end{equation} 
where $\mathcal L$ is the Lichnerowitz operator \textup{(}see \eqref{eq: Lich_def} in the appendix\textup{)}. Also, one has equality in \eqref{eq: Fut_ineq} if and only if $\mathcal L v=0$, or equivalently, $\nabla^{\o_u}_{0,1} v \in T^{1,0}_{\Bbb C}X$ is a holomorphic vector field.
\end{proposition}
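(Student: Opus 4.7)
My plan is to prove the proposition in two stages, first establishing \eqref{eq: Fut_id}, and then using it to deduce the Poincar\'e-type inequality \eqref{eq: Fut_ineq} by spectral decomposition. The identity \eqref{eq: Fut_id} is of Bochner--Weitzenb\"ock type and is where the Ricci identity $\text{Ric}(\o_u) = \o_u + i\ddbar f_u$ enters in an essential way: it is precisely this normalization (equivalently, the Fano/K\"ahler assumption combined with our cohomological hypothesis $[\o] = c_1(X)$) that makes the weighted Bakry--\'Emery Ricci curvature equal $\o_u$, which in turn produces the sharp constant $1$ on the right-hand side.

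For \eqref{eq: Fut_id}, I would first rewrite the quantities on both sides using the weighted integration by parts \eqref{eq: self_adj}. Writing everything in local coordinates with respect to $g=g_u$, one has $L^{f_u} h = -g^{j\bar k} h_{j\bar k} - g^{j\bar k} h_j (f_u)_{\bar k}$ (up to convention) and $(\mathcal L h)^j_{\bar k} = g^{j\bar l} h_{\bar l \bar k}$. The plan is then to compute $\int_X \langle \partial L^{f_u}h, \partial h\rangle_{\o_u} e^{f_u} \o_u^n$ in two ways. On one hand, since $L^{f_u}h=\lambda h$, this equals $\lambda \int_X \langle \partial h, \partial h\rangle_{\o_u} e^{f_u}\o_u^n$. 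On the other hand, integrating by parts and commuting covariant derivatives produces $\int_X \langle \partial h, \partial h\rangle_{\o_u} e^{f_u}\o_u^n$ plus a Ricci-curvature correction term, which by the identity $\text{Ric}(\o_u) = \o_u + i\ddbar f_u$ combines with the Hessian of $f_u$ arising from the weight to produce exactly the remaining $\int_X \langle \mathcal L h, \mathcal L h\rangle_{\o_u} e^{f_u}\o_u^n$ (the $(0,2)$-Hessian contribution), yielding \eqref{eq: Fut_id}. The delicate step is the bookkeeping of the commutator $[\nabla_j, L^{f_u}]$ on functions, which is the main obstacle: the $i\ddbar f_u$ term from the weight and the $\o_u$ term from Ricci precisely cancel the ``spurious'' $\nabla_j f_u$ contributions, leaving the clean identity.

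For \eqref{eq: Fut_ineq}, the first equality is essentially the definition of $\langle \partial v, \partial v\rangle_{\o_u}$ for real $v$ (recall on a K\"ahler manifold $\tfrac{1}{2}\langle \nabla v, \nabla v\rangle_{\o_u} = \langle \partial v, \partial v\rangle_{\o_u}$ pointwise), and the second equality is just \eqref{eq: self_adj} with $g=h=v$. For the remaining nontrivial inequality, I would expand $v$ in an orthonormal basis of eigenfunctions $\{\phi_k\}_{k\geq 0}$ of $L^{f_u}$ with eigenvalues $0 = \lambda_0 < \lambda_1 \leq \lambda_2 \leq \cdots$, using that $L^{f_u}$ is a self-adjoint elliptic operator on $L^2(e^{f_u}\o_u^n)$. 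The hypothesis $\int_X v\, e^{f_u}\o_u^n = 0$ means $v$ is orthogonal to the $\lambda_0 = 0$ eigenspace (the constants), hence $v = \sum_{k\geq 1} c_k \phi_k$. Plancherel and \eqref{eq: self_adj} give
\begin{equation*}
\int_X |v|^2 e^{f_u}\o_u^n = \sum_{k\geq 1} |c_k|^2, \qquad \int_X \langle \partial v, \partial v\rangle_{\o_u} e^{f_u}\o_u^n = \sum_{k\geq 1} \lambda_k |c_k|^2.
\end{equation*}
Applying \eqref{eq: Fut_id} to each $\phi_k$ yields $\lambda_k \geq 1$ for $k \geq 1$, with equality iff $\mathcal L \phi_k = 0$. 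Consequently the desired inequality follows, and equality holds iff $c_k = 0$ whenever $\lambda_k > 1$, which forces $\mathcal L v = 0$; the equivalence with $\nabla^{\o_u}_{0,1} v$ being holomorphic is immediate from the definition of $\mathcal L$ in the appendix.
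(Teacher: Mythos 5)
Your proposal follows essentially the same approach as the paper: the spectral decomposition of $v$ in eigenfunctions of $L^{f_u}$ reduces \eqref{eq: Fut_ineq} to \eqref{eq: Fut_id}, and a Bochner-type commutator argument using $\textup{Ric}\,\o_u - \o_u = i\ddbar f_u$ yields \eqref{eq: Fut_id}. The only difference is that you sketch the normal-coordinate bookkeeping for \eqref{eq: Fut_id} rather than carrying it out, but you have correctly identified the key cancellation and the rest of the argument matches the paper.
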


Recall from the discussion following \eqref{eq: Lich_def} in the appendix that the condition $\mathcal L v=0$ is indeed equivalent with $\nabla^{\o_u}_{0,1} v \in T^{1,0}_{\Bbb C}X$ being holomorphic.

\begin{proof} By the the discussion preceding the proposition, the condition $\int_X v e^{f_u} \o_u^n =0$ simply means that $v$ is orthogonal to the eigenspace of $\lambda_0$, hence to prove \eqref{eq: Fut_ineq} we only need to argue \eqref{eq: Fut_id}. Indeed, the rightmost term in \eqref{eq: Fut_id} is nonnegative hence either $\lambda = \lambda_0 =0$ (in which case $h$ is a constant) or $\lambda \geq \lambda_1 \geq 1$. Consequently the inequality between the first and last term of \eqref{eq: Fut_ineq} follows after expressing $v$ using the orthonormal base of $L^2(e^{f_u}\o_u^n)$ composed of eigenfunctions of $L^{f_u}$. 

Lastly, we note that the middle identities of \eqref{eq: Fut_ineq} are simply a consequence of \eqref{eq: self_adj} and the formula following \eqref{eq: Lapl_grad_formula}, since $v$ is real valued.

We now argue \eqref{eq: Fut_id}. To ease notation, we will drop the subscript of $f_u$ and $\o_u$ in the rest of the proof. Also, recall that by a choice of normal coordinates identifying a neighborhood of $x \in X$ with that of $0 \in \Bbb C^n$, we can assure that locally $\o = i\ddbar g$, with $g_{j\bar k}(0)=\delta_{jk}$ and $g_{j\bar k l}(0)=g_{j\bar k \bar l}(0)=0$ (see Proposition \ref{prop: prelnormcoord} below). With such a choice of coordinates we also have $\textup{Ric}\ \o _{j\bar k}|_x = - g_{j \bar k a \bar a}(0)$ (see \eqref{eq: prelcurvnormalcoord}). Making use of this and  integrating by parts multiple times we get
\begin{flalign*}\lambda \int_X \langle \partial h,\partial h \rangle e^{f}\o^n & = \int_X \langle \partial (L^{f} h), \partial h \rangle e^{f}\o^n= \int_X g^{j\bar k}(L^{f}h)_j \bar h_{\bar k} e^{f}\o^n\\
&= \int_X g^{j\bar k}(-g^{a\bar b} h_{a\bar b} - g^{a\bar b} h_{a} {f}_{\bar b})_j \bar h_{\bar k} e^{f}\o^n\\
&= \int_X (g_{\bar a jb} h_{a\bar b} \bar h_{\bar j} - h_{a\bar a j} \bar h_{\bar j} -h_{aj}f_{\bar a}\bar h_{\bar j} - h_a f_{\bar a j} \bar h_{\bar j})e^{f}\o^n\\
&= \int_X (g_{\bar a jb} h_{a\bar b} \bar h_{\bar j} + h_{aj} \bar h_{\bar a \bar j} + h_{aj}f_{\bar a}\bar h_{\bar j}-h_{aj}f_{\bar a}\bar h_{\bar j} - h_a f_{\bar a j} \bar h_{\bar j})e^{f}\o^n\\
&= \int_X (g_{\bar a jb} h_{a\bar b} \bar h_{\bar j} + h_{aj} \bar h_{\bar a \bar j} - h_a f_{\bar a j} \bar h_{\bar j})e^{f}\o^n\\
&= \int_X (-g_{\bar a jb \bar b} h_{a} \bar h_{\bar j} + h_{aj} \bar h_{\bar a \bar j} - h_a f_{\bar a j} \bar h_{\bar j})e^{f}\o^n\\
&= \int_X (\textup{Ric} \ \o_{u}(\partial h,\dbar \bar h) + h_{aj} \bar h_{\bar a \bar j} - i\ddbar f_u(\partial h,\dbar h))e^{f}\o^n\\
&= \int_X (\langle \partial h,\partial h \rangle + h_{aj} \bar h_{\bar a \bar j})e^{f}\o^n = \int_X \langle \partial h,\partial h \rangle e^{f}\o^n + \int _X \langle \mathcal L h, \mathcal L h  \rangle e^{f}\o^n,
\end{flalign*}
where in the last line we have used the identity $\textup{Ric }\o_u - \o_u = i\ddbar f_u$, and the expression of $\mathcal L$ in normal coordinates (see \eqref{eq: Lich_def}). 
\end{proof}

From \eqref{eq: Fut_ineq} and Proposition \ref{prop: F_Hess_prop} it follows that $\nabla^2 \mathcal F(u)(\cdot,\cdot)$ is indeed positive semi--definite. As an additional consequence we obtain that $\mathcal B$ is convex along $\varepsilon$--geodesics:

\begin{lemma}\label{lem: F_eps_convex}Suppose $u_0,u_1 \in \mathcal H_\o$ and $\varepsilon > 0$. Let $[0,1] \ni t \to u^\varepsilon_t \in \mathcal H_\o$ be the smooth $\varepsilon$--geodesic joining $u_0,u_1$ \textup{(}see \eqref{eq: eps_geod_eq_Lev_Civ}\textup{)}. Then $t \to \mathcal B(u^\varepsilon_t):=-\log\big(\int_X e^{-u_t^{\varepsilon} + f_0}\o^n\big)$ is convex.
\end{lemma}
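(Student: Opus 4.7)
The plan is to compute $\frac{d^2}{dt^2}\mathcal{B}(u^\varepsilon_t)$ directly using the $L^2$ Levi--Civita connection formula together with the Hessian calculation already carried out in Proposition \ref{prop: F_Hess_prop}. For a smooth curve $t \to \alpha_t$, the standard second variation formula reads
$$\frac{d^2}{dt^2}\mathcal{B}(\alpha_t) = \nabla^2\mathcal{B}(\alpha_t)(\dot{\alpha}_t,\dot{\alpha}_t) + d\mathcal{B}(\alpha_t)(\nabla_{\dot{\alpha}_t}\dot{\alpha}_t),$$
so I will show that both terms are nonnegative when $\alpha_t = u^\varepsilon_t$.

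For the first term, I will extract from Proposition \ref{prop: F_Hess_prop} (since $\mathcal{F}=-I+\mathcal{B}$ and $\nabla^2 I \equiv 0$ by \eqref{eq: I_Hess_eq}) the formula
$$\nabla^2\mathcal{B}(u)(\phi,\phi) = \frac{1}{V}\int_X\bigg[\frac{1}{2}\langle \nabla^{\omega_u}\phi,\nabla^{\omega_u}\phi\rangle_{\omega_u} - \Big(\phi - \tfrac{1}{V}\int_X \phi\, e^{f_u}\omega_u^n\Big)^{\!2}\bigg]e^{f_u}\omega_u^n.$$
Setting $v := \phi - \tfrac{1}{V}\int_X \phi\, e^{f_u}\omega_u^n$ so that $\int_X v\, e^{f_u}\omega_u^n=0$, nonnegativity of this expression is precisely Futaki's inequality \eqref{eq: Fut_ineq} of Proposition \ref{prop: Fut_ineq}. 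Hence $\nabla^2\mathcal{B}(u)(\phi,\phi)\geq 0$ for every $u\in\mathcal{H}_\omega$ and $\phi\in T_u\mathcal{H}_\omega$.

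For the second term, I will use the computation preceding \eqref{eq: B_oper_def} together with identity \eqref{eq: Ricci_pot_eq} to write
$$d\mathcal{B}(u)(\psi) = \frac{1}{V}\int_X \psi\, e^{f_u}\omega_u^n,$$
and then plug in $\psi = \nabla_{\dot u^\varepsilon_t}\dot u^\varepsilon_t$. By the $\varepsilon$-geodesic equation \eqref{eq: eps_geod_eq_Lev_Civ}, we have the pointwise identity $(\nabla_{\dot u^\varepsilon_t}\dot u^\varepsilon_t)\,\omega_{u^\varepsilon_t}^n = \varepsilon\,\omega^n$, so $\nabla_{\dot u^\varepsilon_t}\dot u^\varepsilon_t\geq 0$, and consequently
$$d\mathcal{B}(u^\varepsilon_t)(\nabla_{\dot u^\varepsilon_t}\dot u^\varepsilon_t) = \frac{\varepsilon}{V}\int_X e^{f_{u^\varepsilon_t}}\omega^n \geq 0.$$

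Combining the two contributions gives $\frac{d^2}{dt^2}\mathcal{B}(u^\varepsilon_t)\geq 0$, yielding convexity. No real obstacle is expected: the Hessian formula has already been established, and Futaki's inequality is the quantitative input that makes the unsigned term in $\nabla^2\mathcal{B}$ dominate the negative variance term. The only mild care is to confirm smoothness of $t\mapsto \mathcal{B}(u^\varepsilon_t)$ so that the second variation formula applies, which follows from the smoothness of $u^\varepsilon$ on $\overline{S}\times X$ granted by Theorem \ref{thm: ueps_estimates} together with positivity of $e^{-u^\varepsilon_t+f_0}$.
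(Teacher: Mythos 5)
Your proposal is correct and follows essentially the same route as the paper: both use the second-variation decomposition $\frac{d^2}{dt^2}\mathcal{B}(u^\varepsilon_t) = \nabla^2\mathcal{B}(\dot u^\varepsilon_t,\dot u^\varepsilon_t) + d\mathcal{B}(\nabla_{\dot u^\varepsilon_t}\dot u^\varepsilon_t)$, deduce the first term is nonnegative from Futaki's inequality (Proposition \ref{prop: Fut_ineq}), and get the second term nonnegative from the $\varepsilon$-geodesic equation forcing $\nabla_{\dot u^\varepsilon_t}\dot u^\varepsilon_t \geq 0$ pointwise. You spell out the formula for $\nabla^2\mathcal{B}$ and $d\mathcal{B}$ slightly more explicitly than the paper, but the argument is the same.
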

\begin{proof} As it will not cause confusion, we will drop the reference to $\varepsilon$ in our argument. 

Using the $L^2$ Mabuchi structure of $\mathcal H_\o$ we have the following formula:
\begin{flalign*}
\frac{d^2}{dt^2}\mathcal B(u_t) = \nabla^2 \mathcal B(u_t)(\dot u_t,\dot u_t) + d \mathcal B(u_t)(\nabla_{\dot u_t} \dot u_t).
\end{flalign*}
From Proposition \ref{prop: Fut_ineq} it follows that $\nabla^2 \mathcal B(u_t)(\dot u_t,\dot u_t) \geq 0$. In addition to this, the equation of $\varepsilon$--geodesics (see \eqref{eq: eps_geod_eq_Lev_Civ}) gives $\nabla_{\dot u_t} \dot u_t > 0$, hence we obtain $d \mathcal B(u_t)(\nabla_{\dot u_t} \dot u_t)  \geq 0$. Putting the last two facts together we get that $\frac{d^2}{dt^2}\mathcal B(u_t) \geq 0$.
\end{proof}

Finally, we argue that $\mathcal F$ is convex along the finite energy geodesics of $\mathcal E_1(X,\o)$. 

\begin{theorem}\label{thm: F_convex} Suppose $u_0,u_1 \in \mathcal E_1(X,\o)$ and let $t \to u_t$ be the finite energy geodesic connecting $u_0,u_1$. Then $t \to \mathcal F(u_t)$ is convex and continuous on $[0,1]$.
\end{theorem}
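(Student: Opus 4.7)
The plan is to reduce the theorem to convexity of the functional $\mathcal B(u)=-\log\frac{1}{V}\int_X e^{-u+f_0}\o^n$ along the finite energy geodesic, since by Proposition \ref{prop: I_linrear} the Monge--Amp\`ere energy $I$ is affine along $t\mapsto u_t$, and $\mathcal F=-I+\mathcal B$. Continuity on $[0,1]$ will follow from the convexity (which forces continuity on the open interval) together with the fact that $t\mapsto u_t$ is a $d_1$-geodesic (Theorem \ref{thm: e2space}), combined with the $d_1$-continuity of $\mathcal F$ established in Theorem \ref{thm: F_d_1_cont}: since $d_1(u_t,u_0)=t\, d_1(u_0,u_1)\to 0$ as $t\to 0$ and similarly at $t=1$, $d_1$-continuity supplies continuity at the endpoints.

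The core convexity argument I would carry out in two approximation stages. First, assume $u_0,u_1\in\mathcal H_\o$, and let $t\mapsto u_t^\varepsilon\in\mathcal H_\o$ denote the smooth $\varepsilon$-geodesic from \eqref{eq: epsBVPGeod}. By Lemma \ref{lem: F_eps_convex}, the map $t\mapsto\mathcal B(u_t^\varepsilon)$ is convex. As $\varepsilon\searrow 0$, the uniqueness theorem (Theorem \ref{thm: uniqueness_BVP}) gives $u^\varepsilon\nearrow u$, the weak $C^{1,\bar 1}$-geodesic, and by the uniform bounds in Theorem \ref{thm: ueps_estimates} the $u_t^\varepsilon$ are uniformly bounded independently of $\varepsilon$. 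Hence dominated convergence applied to $\int_X e^{-u_t^\varepsilon+f_0}\o^n$ yields $\mathcal B(u_t^\varepsilon)\to\mathcal B(u_t)$ pointwise in $t$. Since pointwise limits of convex functions are convex, $t\mapsto\mathcal B(u_t)$, and hence $t\mapsto\mathcal F(u_t)$, is convex in the smooth endpoint case.

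For general $u_0,u_1\in\mathcal E_1(X,\o)$, apply Theorem \ref{thm: BK_approx} to obtain decreasing approximating sequences $\{u_0^k\},\{u_1^k\}\subset\mathcal H_\o$ converging to $u_0,u_1$, and let $t\mapsto u_t^k\in\mathcal H_\o^{1,\bar 1}$ be the $C^{1,\bar 1}$-geodesic joining $u_0^k,u_1^k$. By Proposition \ref{prop: weak_geod_approx}(i) we have $u_t^k\searrow u_t$ for each $t\in[0,1]$, and Corollary \ref{cor: d_p_monotone_limit} gives $d_1(u_t^k,u_t)\to 0$. Theorem \ref{thm: F_d_1_cont} then yields $\mathcal F(u_t^k)\to\mathcal F(u_t)$ for each fixed $t$. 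By the previous step each $t\mapsto\mathcal F(u_t^k)$ is convex, so the pointwise limit $t\mapsto\mathcal F(u_t)$ is convex as well, and the endpoint continuity argument above completes the proof.

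The only point requiring genuine care — rather than routine limit-passing — is the uniform control needed to pass from convexity along $\varepsilon$-geodesics to convexity along the $C^{1,\bar 1}$-geodesic; fortunately the $L^\infty$-bound in Theorem \ref{thm: ueps_estimates} makes dominated convergence straightforward, and every other step is an application of results already in place (Proposition \ref{prop: I_linrear}, Proposition \ref{prop: weak_geod_approx}, Corollary \ref{cor: d_p_monotone_limit}, Theorem \ref{thm: F_d_1_cont}, Theorem \ref{thm: e2space}).
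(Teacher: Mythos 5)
Your proposal is correct and follows essentially the same two-stage approximation scheme as the paper: reduce to convexity of $\mathcal B$ via Proposition \ref{prop: I_linrear}, pass from $\varepsilon$-geodesics (Lemma \ref{lem: F_eps_convex}) to the $C^{1,\bar 1}$-geodesic for smooth endpoints, then approximate finite energy endpoints via Theorem \ref{thm: BK_approx} and Proposition \ref{prop: weak_geod_approx}(i). The only cosmetic difference is that you justify $\mathcal B(u_t^\varepsilon)\to\mathcal B(u_t)$ by dominated convergence using the $L^\infty$-bound, while the paper invokes $d_1$-continuity of $\mathcal B$ directly; also note that continuity on all of $[0,1]$ follows at once since $t\mapsto u_t$ is a $d_1$-geodesic and $\mathcal F$ is $d_1$-continuous, so your interior-vs-endpoint split is an unnecessary detour.
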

\begin{proof} As $t \to I(u_t)$ is known to be affine (Proposition \ref{prop: I_linrear}), we only need to argue that $t \to \mathcal B(u_t):=-\log \big( \int_X e^{-u_t + f_0}\o^n\big)$ is also affine.
As $u \to \mathcal F(u)$ and $u \to I(u)$ is $d_1$-continuous (see Proposition \ref{prop: I_finiteness_and_cont} and Theorem \ref{thm: F_d_1_cont}), we obtain that so is $u \to \mathcal B(u)$. 

Let us assume first that $u_0,u_1 \in \mathcal H_\o$. In this case  $u^\varepsilon_t \nearrow u_t$ as $\varepsilon \to 0$, where $t \to u^\varepsilon_t$ is the $\varepsilon$--geodesic joining $u_0,u_1$ (see \eqref{eq: epsgeod_limit}). Consequently $d_1(u^\varepsilon_t,u_t) \to 0, \ t \in [0,1]$. Hence $\mathcal B(u^\varepsilon_t) \to \mathcal B(u_t), \ t \in [0,1]$. By Lemma \ref{lem: F_eps_convex} it follows that $t \to \mathcal B(u_t)$ is convex.

In the general case $u_0,u_1 \in \mathcal E_1(X,\o)$, let $u_0^j,u_1^j \in \mathcal H_\o$ be smooth decreasing approximants that exist by Theorem \ref{thm: BK_approx}. Let $t \to u^j_t$ be the $C^{1,\bar 1}$--geodesics joining $u^j_0,u^j_1$. By Proposition \ref{prop: weak_geod_approx}(i) it follows that $d_1(u^j_t,u_t) \to 0, \ t \in [0,1]$ and consequently $\mathcal B(u^j_t) \to \mathcal B(u_t), \ t \in [0,1]$, implying that $t \to \mathcal B(u_t)$ is convex, finishing the proof.  
\end{proof}

For weak geodesics joining bounded potentials the above theorem was proved by Berndtsson in much more general context \cite[Theorem 1.2]{brn0}. The argument that we presented in this section follows more closely the simplified treatment in \cite{he2}.

\section{Uniqueness of KE metrics and reductivity of the automorphism group}

In this section we will give the proof of an important theorem of Bando--Mabuchi according to which on a Fano manifold $(X,\o)$ K\"ahler--Einstein metrics are unique up to pullback by an automorphism:

\begin{theorem}[\cite{bm}] \label{thm: BMuniqueness} Suppose $u,v \in \mathcal H_\o$ both solve \eqref{eq: KE_scalar_eq}, i.e., they are both KE potentials. Then there exists $g \in \textup{Aut}_0(X,J)$ such that $g^* \o_u = \o_v$.
\end{theorem}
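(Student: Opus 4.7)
The plan is to join the two KE potentials by a finite energy geodesic and exploit the convexity machinery from Section 4.4, reducing uniqueness to the equality case in Futaki's inequality. After normalizing, I may assume $u,v \in \mathcal H_\o \cap I^{-1}(0)$. Let $[0,1] \ni t \to w_t \in \mathcal H_\o^{1,\bar 1}$ be the weak $C^{1,\bar 1}$-geodesic connecting $u,v$ (bounded, since $u,v$ are smooth). By Lemma \ref{lem: F_func_differential} both $u,v$ are critical points of $\mathcal F$. Combined with the convexity and continuity of $t \mapsto \mathcal F(w_t)$ proved in Theorem \ref{thm: F_convex}, this forces $\mathcal F(u)=\mathcal F(v)=\inf_{\mathcal H_\o} \mathcal F$ and, moreover, that $t \mapsto \mathcal F(w_t)$ is constant, equal to this infimum. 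In particular, each $w_t$ is a minimizer of $\mathcal F$ on $\mathcal E_1(X,\o)$.

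Next, by the regularity result for $\mathcal F$-minimizers (Theorem \ref{thm: F_min_regularity}), each $w_t$ is in fact a smooth KE potential, yielding a one-parameter family of KE metrics $\o_{w_t}$ interpolating between $\o_u$ and $\o_v$. It remains to produce a biholomorphism $g \in \textup{Aut}_0(X,J)$ with $g^*\o_u = \o_v$, and this is the heart of the matter: I must convert the existence of the interpolating family into a flow of automorphisms.

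To this end, I would approximate $w$ by the smooth $\varepsilon$-geodesics $[0,1] \ni t \to w_t^\varepsilon$ of \eqref{eq: epsBVPGeod} and compute, using Proposition \ref{prop: F_Hess_prop} and \eqref{eq: eps_geod_eq_Lev_Civ},
\begin{equation*}
\frac{d^2}{dt^2}\mathcal F(w_t^\varepsilon)= \nabla^2 \mathcal F(w_t^\varepsilon)(\dot w_t^\varepsilon,\dot w_t^\varepsilon) + \varepsilon\cdot \frac{1}{V}\int_X(-1+e^{f_{w_t^\varepsilon}})\o^n.
\end{equation*}
Futaki's identity \eqref{eq: Fut_id} applied to $\dot w_t^\varepsilon$ (after subtracting its $e^{f_{w_t^\varepsilon}}\o_{w_t^\varepsilon}^n$-average) expresses $\nabla^2 \mathcal F(w_t^\varepsilon)(\dot w_t^\varepsilon,\dot w_t^\varepsilon)$ as the sum of two nonnegative contributions: the Lichnerowicz term $\tfrac{1}{V}\int_X |\mathcal L \dot w_t^\varepsilon|^2 e^{f_{w_t^\varepsilon}}\o_{w_t^\varepsilon}^n$ and the spectral gap in the weighted Laplacian inequality. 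Since $t \mapsto \mathcal F(w_t)$ is affine and $w_t^\varepsilon \to w_t$ in $C^{1,\alpha}$ with matching norm convergence (cf.\ the proof of Theorem \ref{thm: XXChenThm}), letting $\varepsilon \to 0$ forces the Lichnerowicz term to vanish in the limit, i.e., $\mathcal L\dot w_t \equiv 0$ for a.e.\ $t$. Hence $\nabla^{\o_{w_t}}_{0,1}\dot w_t$ is a holomorphic vector field on $X$.

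The final step is to integrate this time-dependent real-holomorphic vector field $V_t := \tfrac{1}{2}(\nabla^{\o_{w_t}}\dot w_t - J \nabla^{\o_{w_t}}\dot w_t)$ into a smooth path $t \mapsto \rho_t \in \textup{Aut}_0(X,J)$ with $\rho_0=\textup{id}$; the defining ODE $\tfrac{d}{dt}\rho_t = V_t \circ \rho_t$ then gives $\rho_t^*\o_u = \o_{w_t}$ by \eqref{eq: action_diff}, and setting $g = \rho_1$ finishes the proof. The main obstacle is the rigorous $\varepsilon \to 0$ passage in the Hessian: $\dot w_t^\varepsilon$ converges only in a weak sense while $\mathcal L$ is a second-order operator, so some care (and perhaps additional regularity for the $w_t$, as furnished by Theorem \ref{thm: F_min_regularity}) is required to extract that $\dot w_t$ itself yields a genuine holomorphic vector field rather than merely a distributional one.
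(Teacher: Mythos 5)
Your strategy --- connect the two KE potentials by the weak geodesic, argue $\mathcal F$ is constant along it so each $w_t$ is a smooth KE potential, extract a holomorphic vector field from the equality case in Futaki's identity, and integrate the time-dependent flow --- is the classical Bando--Mabuchi template and is genuinely different from the route taken here. But the difficulty you flag at the end is a real gap, not a polishing step. The $\varepsilon$-geodesics converge to $w_t$ only in $C^{1,\alpha}$ (Theorem \ref{thm: ueps_estimates} and \eqref{eq: epsgeod_limit}), so $\dot w_t^\varepsilon \to \dot w_t$ uniformly but with no $C^2$ control. The Lichnerowicz term $\int_X \langle \mathcal L\dot w_t^\varepsilon,\mathcal L\dot w_t^\varepsilon\rangle e^{f_{w_t^\varepsilon}}\o_{w_t^\varepsilon}^n$ involves second anti-holomorphic derivatives of $\dot w_t^\varepsilon$, none of which converge, and there is no lower semi-continuity statement available to push this term through the limit. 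Independently of that, even granting $\mathcal L\dot w_t = 0$ distributionally, the joint $C^{1,\bar 1}$ bound on the geodesic controls $\ddot w_t$ and $\partial_{\bar z_j}\dot w_t$ but not $\partial_{\bar z_j}\partial_{\bar z_k}\dot w_t$: smoothness of each slice $w_t$ in $x$ does \emph{not} give smoothness of $\dot w_t$ in $x$, so the vector field $V_t$ lacks the spatial and temporal regularity needed to pose, let alone solve, the ODE $\dot\rho_t = V_t\circ\rho_t$.

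The paper sidesteps both obstacles by a perturbation device that stays entirely at first order in the geodesic. After pulling $u,v$ back by automorphisms so that $\int_X h\,\o^n = 0$ for every $h \in \mathfrak g_{\o_u}\cup\mathfrak g_{\o_v}$ (Proposition \ref{prop: J_group_minimizer}), one solves the elliptic equation $\nabla^2\mathcal F(u)(\cdot,g_u)=-\tfrac1V\int_X(\cdot)\o^n$ for a smooth $g_u$ (Lemma \ref{lem: F_Hess_range}), using the Fredholm description of $\ker\mathcal D^u$; the orthogonalization step is exactly what makes this solvable. The perturbed potentials $u+sg_u$, $v+sg_v$ are then $O(s^2)$-near-critical for the twisted functional $\mathcal F_s = \mathcal F + sJ_\o$. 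Convexity of both $\mathcal F$ and $J_\o$ along the $C^{1,\bar1}$-geodesic joining them pins the jump in $\tfrac{d}{dt}J_\o$ between $0$ and $Cs$, and letting $s\to0$ shows $J_\o$ is affine along the geodesic joining $u$ and $v$; a short integration-by-parts lemma then forces $u=v$. No Hessian of $\mathcal F$ is ever passed to the $\varepsilon\to0$ limit and no vector field is ever integrated --- the only limit of $\varepsilon$-geodesics used is the sup-norm convergence of $\dot u_t^\varepsilon$, precisely the robust part of Theorem \ref{thm: u_estimates}. The price is the automorphism-group normalization and the elliptic theory of $\mathcal D^u$, which your outline omits entirely.
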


As we plan to use some of the machinery that we developed in previous parts, we will not follow the original proof of Bando--Mabuchi. Instead our proof will be a combination of the arguments of Berndtsson \cite{brn1} and Berman--Berndtsson \cite[Section 4]{bb}, and bears similarities with the treatment in \cite{Li}.

The proof will need a sequence of preliminary results about the automorphism group of K\"ahler--Einstein manifolds and will also use the classical theory of self adjoint elliptic differential operators on compact manifolds (see \cite[Chapter 4]{we}).

\begin{lemma}\label{lem: Fano_coh_vanishing} Suppose $X$ is a Fano manifold. Then $H^{0,q}(X,\Bbb C)$ is trivial for $q \in \{1,\ldots,n\}$.
\end{lemma}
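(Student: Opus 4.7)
The plan is to deduce the statement from the Kodaira--Nakano vanishing theorem combined with the Dolbeault isomorphism. First I would recall that the Fano hypothesis $c_1(X) > 0$ is equivalent to ampleness of the anticanonical line bundle $K_X^{-1}$, so $K_X^{-1}$ admits a smooth Hermitian metric whose Chern curvature form is a positive $(1,1)$-form. This positivity is the only input one needs from the Fano hypothesis.

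Next I would invoke Kodaira--Nakano vanishing: for any positive holomorphic line bundle $L$ on a compact complex manifold $X$ of complex dimension $n$, one has $H^q(X, K_X \otimes L) = 0$ for every $q \geq 1$. Specializing to $L := K_X^{-1}$, which is positive by the previous step, this yields $H^q(X, \mathcal{O}_X) = 0$ for all $q \in \{1, \ldots, n\}$. The Dolbeault isomorphism $H^q(X, \mathcal{O}_X) \cong H^{0,q}(X, \mathbb{C})$ then finishes the argument.

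If one prefers a proof in keeping with the analytic flavor of the survey and avoiding the citation of Kodaira--Nakano as a black box, the approach would instead be via the Bochner--Kodaira--Nakano identity. Choose any $\omega \in \mathcal H$; since $[\omega] = c_1(X)$, Yau's theorem produces a representative with $\mathrm{Ric}\,\omega > 0$, and for such a metric the Weitzenb\"ock formula applied to a $\bar\partial$-harmonic $(0,q)$-form $\alpha$ expresses $0 = \tfrac12 \Delta_{\bar\partial}|\alpha|^2$ as $|\nabla \alpha|^2 + \langle R(\alpha),\alpha\rangle$, where the curvature term is a non-negative contraction of $\mathrm{Ric}\,\omega$ against $\alpha$. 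Strict positivity of the Ricci tensor forces $\alpha \equiv 0$, and by Hodge theory $H^{0,q}(X,\mathbb C) = 0$.

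The main (minor) obstacle in the first approach is really just quoting Kodaira--Nakano; in the second approach the obstacle is setting up the Weitzenb\"ock identity carefully enough to see that the curvature term is a genuinely positive contraction of the Ricci tensor on $(0,q)$-forms, and ensuring a metric with positive Ricci in $[\omega]$ is available (which, as noted, follows from Yau's solution to the Calabi conjecture).
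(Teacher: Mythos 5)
Your first argument is exactly the paper's proof: the paper phrases the isomorphism as $H^{0,q}(X,\Bbb C)\simeq H^{n,q}(X,-K_X)$ and then quotes Kodaira vanishing for $-K_X>0$, which is the same reduction you give via $H^q(X,K_X\otimes K_X^{-1})=H^q(X,\mathcal O_X)$. The Bochner--Weitzenb\"ock alternative you sketch is a correct but different route that the paper does not take.
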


\begin{proof}We start with the observation $H^{0,q}(X,\Bbb C) \simeq H^{n,q}(X,-K_X)$. As $-K_X >0$ by the Fano condition, the triviality of $H^{n,q}(X,-K_X)$ follows from the Kodaira vanishing theorem (\cite[Theorem VII.3.3]{De}).
\end{proof}

Note that by the Hodge decomposition we also have $H^1(X,\Bbb C)\simeq H^{1,0}(X,\Bbb C) \oplus H^{0,1}(X,\Bbb C)$, hence this group is trivial as well. Though we will not use this, we mention that by an argument involving the Bonnet--Myers theorem and the Euler characteristic, we can further deduce that $X$ is in fact simply connected.

Now we focus on the Lie algebra $\mathfrak g$ of $G:=\textup{Aut}_0(X,J)$. Pick $U \in \mathfrak g$.
As $U= U^{1,0} + \overline{U^{1,0}}$ is real holomorphic it follows that $\dbar (U^{1,0} \lrcorner \o_u) =0$ for all $u \in \mathcal H_\o$. Indeed, this is immediate after one computes $\partial_{\bar l}(U^{1,0}_j {g^u}_{j\bar k})=0, \ l \in \{1,\ldots,n\}$ in normal coordinates (here and below $g^u$ is a local potential of $\o_u$). 

Using the previous lemma it follows that there exists  a unique $v_{\o_u}^U \in C^\infty(X,\Bbb C)$ with $\int_X v_{\o_u}^U\o_u^n =0$ such that
$$U^{1,0} \lrcorner \o_u = \dbar v_{\o_u}^U.$$ 
Equivalently, using Hamiltonian formalism this can be written as $X^{\o_u}_{1,0}({ v_{\o_u}^U})=U^{1,0},$ and we have the following identification for $\mathfrak g$ using $\o_u$:
\begin{equation}\label{eq: mathfrak_g_def}
\mathfrak g \simeq \mathfrak g_{\o_u}:=\{v \in C^\infty(X,\Bbb C)\ : \ X^{\o_u}_{1,0} v \in T^{1,0}_{\Bbb C} X  \textup{ is holomorphic and } \int_X v \o_u^n = 0\}.
\end{equation}
Recall that the ``complex" gradient $\nabla^{\o_u}_{1,0} v = JX^{\o_u}_{1,0} v$ (see \eqref{eq: grad_Ham_relation}) is holomorphic precisely when $\mathcal L v=0$, where $\mathcal L$ is the Lichnerowitz operator of the metric $\o_u$ (see \eqref{eq: Lich_def}). 

When $\mathcal H_\o$ contains a KE potential $u$ then $S_{\o_u}$ is trivially  constant, hence by Proposition \ref{prop: prelLichform} we have that 
$$\mathcal L^* \mathcal L(f)= \frac{1}{4}\Delta^{\o_u}(\Delta^{\o_u} f) + \langle Ric_{\o_u}, i\partial \bar \partial f\rangle_{\o_u}=\frac{1}{4}\Delta^{\o_u}(\Delta^{\o_u} f) + \frac{1}{2}\Delta^{\o_u} f$$
is a real differential operator. As a result, $v \in \ker \mathcal L^* \mathcal L = \ker \mathcal L$ if and only if $\textup{Re }v, \textup{Im }v \in \ker \mathcal L$. Consequently, for KE Fano manifolds the above description of  $\mathfrak g$ can be sharpened, to imply that $\textup{Aut}_0(X,J)$ is reductive, which was one of the first known obstructions to existence of KE metrics:

\begin{proposition}[\cite{mat}]\label{prop: Matsushima_thm} Suppose $(X,\o_u)$ is a Fano KE manifold. Introducing $\mathfrak k_{\o_u} = \mathfrak g_{\o_u} \cap C^\infty(X,\Bbb R)$ we can write 
$$\mathfrak g_{\o_u} = \mathfrak k_{\o_u} \oplus i \mathfrak k_{\o_u}.$$
In particular, $\textup{Aut}_0(X,J)$ is the complexification of the compact connected Lie group $\textup{Isom}_0(X,\o_u,J)$, with Lie algebra $\mathfrak k_{\o_u}$.
\end{proposition}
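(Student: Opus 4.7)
The plan is to reduce the group statement to a Lie-algebra statement and prove the decomposition $\mathfrak{g}_{\o_u}=\mathfrak{k}_{\o_u}\oplus i\mathfrak{k}_{\o_u}$ by exploiting the fact that on a KE manifold the operator $\mathcal{L}^*\mathcal{L}$ is \emph{real}, so its kernel is preserved by complex conjugation. Concretely, from the discussion preceding the statement we already know that, under the KE assumption $\Ric\,\o_u=\o_u$ and Proposition \ref{prop: prelLichform}, the fourth order operator
\begin{equation*}
\mathcal{L}^*\mathcal{L}(f)=\tfrac{1}{4}\Delta^{\o_u}(\Delta^{\o_u}f)+\tfrac{1}{2}\Delta^{\o_u}f
\end{equation*}
is a real elliptic self--adjoint operator on $C^\infty(X,\Bbb C)$. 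The first step is to note that integration by parts gives $\ker(\mathcal{L}^*\mathcal{L})=\ker\mathcal{L}$, and since the coefficients of $\mathcal{L}^*\mathcal{L}$ are real, $\ker\mathcal{L}$ is stable under $f\mapsto\bar f$, hence also under taking real and imaginary parts.

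Next I would take an arbitrary $v\in\mathfrak{g}_{\o_u}$ and decompose $v=\textup{Re}(v)+i\,\textup{Im}(v)$. By the characterization of $\mathfrak{g}_{\o_u}$ via \eqref{eq: mathfrak_g_def} and the discussion following it, $v\in\mathfrak{g}_{\o_u}$ is equivalent to $\mathcal{L}v=0$ together with the normalization $\int_X v\,\o_u^n=0$. By Step 1 both $\textup{Re}(v)$ and $\textup{Im}(v)$ lie in $\ker\mathcal{L}$. The normalization is preserved since $\o_u^n$ is a real measure, so the vanishing of $\int_X v\,\o_u^n$ forces both $\int_X \textup{Re}(v)\,\o_u^n=0$ and $\int_X \textup{Im}(v)\,\o_u^n=0$. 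This places both real and imaginary parts in $\mathfrak{k}_{\o_u}$ and establishes $\mathfrak{g}_{\o_u}=\mathfrak{k}_{\o_u}\oplus i\mathfrak{k}_{\o_u}$ as real vector spaces; the fact that this is a Lie-algebra decomposition follows from the observation that $\mathfrak{k}_{\o_u}$ is closed under the bracket induced on $\mathfrak{g}_{\o_u}$, since real holomorphic Killing fields form a real Lie subalgebra of $\mathfrak{g}$.

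For the group statement, I would argue that $\mathfrak{k}_{\o_u}$ is precisely the Lie algebra of $\textup{Isom}_0(X,\o_u,J)$ under the Hamiltonian identification $v\mapsto X^{\o_u}_{1,0}v+\overline{X^{\o_u}_{1,0}v}$: the resulting vector field is automatically holomorphic (because $\mathcal{L}v=0$), and Killing for $\o_u$ because its real Hamiltonian $v$ satisfies $L_{X^{\o_u}v}\o_u=d(v\lrcorner\o_u)=0$. Compactness of $\textup{Isom}_0(X,\o_u,J)$ follows from Myers--Steenrod applied to the compact Riemannian manifold $(X,g_{\o_u})$. Since $\mathfrak{g}$ is the complexification of $\mathfrak{k}_{\o_u}$ and $\textup{Aut}_0(X,J)$ is a connected complex Lie group, standard Lie theory then gives $\textup{Aut}_0(X,J)=\textup{Isom}_0(X,\o_u,J)^{\Bbb C}$.

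The main obstacle is the clean reduction of $\mathcal{L}^*\mathcal{L}$ to a real operator: this is where the KE hypothesis enters decisively, via Proposition \ref{prop: prelLichform}. Without the KE equation, the Ricci term $\langle\Ric\,\o_u,i\partial\bar\partial f\rangle_{\o_u}$ need not be real, the kernel of $\mathcal{L}^*\mathcal{L}$ need not be conjugation-invariant, and the Lie-algebra splitting fails, as it should (reductivity can genuinely fail on non-KE Fano manifolds). Once the realness of $\mathcal{L}^*\mathcal{L}$ is secured, the remaining steps are essentially formal.
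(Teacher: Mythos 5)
Your approach matches the paper's: both exploit the fact that on a KE manifold the Lichnerowicz operator $\mathcal L^*\mathcal L$ is real, so $\ker\mathcal L^*\mathcal L=\ker\mathcal L$ is stable under $f\mapsto\bar f$, which gives the splitting $\mathfrak g_{\o_u}=\mathfrak k_{\o_u}\oplus i\mathfrak k_{\o_u}$; and both identify $\mathfrak k_{\o_u}$ with the Lie algebra of $\textup{Isom}_0(X,\o_u,J)$ via Hamiltonian potentials.

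There is a small gap in the Lie-algebra identification. You assert that $\mathfrak k_{\o_u}$ is \emph{precisely} $\textup{Lie}(\textup{Isom}_0(X,\o_u,J))$, but you only prove $\mathfrak k_{\o_u}\subseteq\textup{Lie}(\textup{Isom}_0(X,\o_u,J))$ (every real $v$ with $\mathcal L v=0$ produces a holomorphic Killing field). The converse is also needed: if $U$ is a holomorphic Killing field, then $0=d(\iota_U\o_u)=d(\dbar v^U_{\o_u}+\partial\overline{v^U_{\o_u}})=2i\ddbar\,\textup{Im}\,v^U_{\o_u}$, which together with the normalization $\int_X v^U_{\o_u}\,\o_u^n=0$ forces $v^U_{\o_u}$ to be real, i.e.\ $v^U_{\o_u}\in\mathfrak k_{\o_u}$. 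Without this direction one cannot conclude that the decomposition of $\mathfrak g_{\o_u}$ is a decomposition relative to the isometry Lie algebra, which is what is needed to conclude $\textup{Aut}_0(X,J)=\textup{Isom}_0(X,\o_u,J)^{\Bbb C}$. Minor typo: in your Killing-field step, $d(v\lrcorner\o_u)$ should read $d(\iota_{X^{\o_u}_v}\o_u)$, as $v$ is a function and cannot be contracted with $\o_u$.
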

Here $\textup{Isom}_0(X,\o_u,J)$ is the identity component of the group of holomorphic isometries of the KE metric $\o_u$. As $X$ is compact, the group of smooth isometries of $(X,\o_u)$ is compact as well, hence so is its subgroup $\textup{Isom}_0(X,\o_u,J)$. 
\begin{proof} The decomposition $\mathfrak g_{\o_u} = \mathfrak k_{\o_u} \oplus i \mathfrak k_{\o_u}$ follows from the discussion preceding the proposition. We have to argue that the Lie algebra of $\textup{Isom}_0(X,\o_u,J)$ is exactly $\mathfrak k_{\o_u}$.

Suppose $U \in \frak k_{\o_u}$. Trivially $v := v^U_{\o_u} \in C^\infty(X,\Bbb R)$, and since $U^{1,0}=X^{\o_u}_{1,0} v$, by conjugation we obtain that in fact $U = X^{\o_u}_v$. Consequently,  
$$d (U \lrcorner \o_u) =  d (U^{1,0} \lrcorner \o_u +  U^{0,1} \lrcorner \o_u)= d(\dbar h + \partial h)=d d h=0,$$ hence $U$ represents an infinitesimal symplectomorphism. Since $U$ is holomorphic, by \eqref{eq: form_Riem_rel} $U$ represents an infinitesimal isometry as well, i.e., $U \in \textup{Lie}(\textup{Isom}_0(X,\o_u,J))$ as claimed.

Conversely, if $U \in \textup{Lie}(\textup{Isom}_0(X,\o_u,J))$ then $0=d (U \lrcorner \o_u) = d (U^{1,0}+ U^{0,1}) \lrcorner \o_u = d(\dbar v^U_{\o_u} + \partial \overline{v^U_{\o_u}})= 2i \ddbar \textup{Im }v^U_{\o_u}$. Consequently $v^U_{\o_u} \in \mathfrak g_{\o_u} \cap C^\infty(X,\Bbb R)=\mathfrak k_{\o_u}$.
\end{proof}

According to the next result the action of a one parameter subgroup of automorphisms in the direction of $i\mathfrak k$ gives $d_p$--geodesic rays inside $\mathcal H_\o$:

\begin{lemma}\label{lem: aut_geod_dp} Suppose $u \in \mathcal H_\o \cap I^{-1}(0)$ is a KE potential. Let  $U \in i\mathfrak k_{\o_u}$ and $\Bbb R \ni t \to \rho_t \in \textup{Aut}_0(X,J)$ be the associated one parameter subgroup. Then $t \to u_t :=\rho_t.u$ is a smooth $d_p$--geodesic ray for any $p \geq 1$.
\end{lemma}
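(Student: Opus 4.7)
The plan is to verify that the smooth curve $t\mapsto u_t=\rho_t.u$ satisfies the geodesic ODE \eqref{eq: geod_eq_Lev_Civ}. Once this is shown, its complexification is a smooth solution of \eqref{eq: BVPGeod} on every finite strip $\{0<\textup{Re }s<T\}\times X$, hence by Theorem \ref{thm: uniqueness_BVP} it coincides with the weak $C^{1,\bar 1}$-geodesic joining $u_0$ and $u_T$. Theorem \ref{thm: XXChenThm} together with Lemma \ref{lem: chilengthgeodconst} then gives
$$d_p(u_a,u_b)=(b-a)\,\|\dot u_0\|_{p,u}\quad \text{for all } 0\le a\le b \text{ and every } p\ge 1,$$
exhibiting $t\mapsto u_t$ as a $d_p$-geodesic ray.

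To establish the geodesic ODE, first invoke reductivity. By Proposition \ref{prop: Matsushima_thm}, since $u$ is KE one may write $U=JV$ for some $V\in\mathfrak k_{\o_u}$, with real Hamiltonian $v^V$ satisfying $V^{1,0}\lrcorner\o_u=\dbar v^V$ and $\int_X v^V\o_u^n=0$. Because $V$ is Killing for $\o_u$, its flow $\sigma_s$ is by $\o_u$-isometries, which forces $\sigma_s.u=u$ in $\mathcal H_\o\cap I^{-1}(0)$. Next, using $\rho_{t+\varepsilon}=\rho_\varepsilon\rho_t$ together with the naturality relation $\mathcal L_U\o_{u_t}=\rho_t^*\mathcal L_U\o_u$ (a consequence of $\rho_t$ being the flow of $U$), one shows $\dot u_t = h\circ\rho_t$, where $h$ is the real potential of $\mathcal L_U\o_u$ normalized by $\int h\,\o_u^n=0$. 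The direct computation $\mathcal L_U\o_u = d(JV\lrcorner\o_u)=2i\partial\dbar v^V$ forces $h=2v^V$, giving $\dot u_t=2v^V\circ\rho_t$ and $\ddot u_t=2\,JV(v^V)\circ\rho_t$.

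The heart of the argument is the pointwise identity on $X$,
\begin{equation}\label{eq: JVvVid}
JV(v^V)=|\nabla^{\o_u} v^V|^2_{\o_u},
\end{equation}
which reflects the standard K\"ahler identification of the real gradient with the image of the Hamiltonian field under $J$: writing $V=X^{\o_u}_{v^V}$ for the real Hamiltonian vector field, one has $\nabla^{\o_u}v^V=JX^{\o_u}_{v^V}=JV$, and hence $JV(v^V)=dv^V(\nabla^{\o_u}v^V)=|\nabla^{\o_u}v^V|^2_{\o_u}$. Since $\rho_t$ is a biholomorphism with $\o_{u_t}=\rho_t^*\o_u$, the gradient norm is preserved under pullback, so $|\nabla^{\o_u} v^V|^2_{\o_u}(\rho_t(x))=|\nabla^{\o_{u_t}}(v^V\circ\rho_t)|^2_{\o_{u_t}}(x)=\tfrac{1}{4}|\nabla^{\o_{u_t}}\dot u_t|^2_{\o_{u_t}}(x)$. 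Combining this with $\ddot u_t = 2\,JV(v^V)\circ\rho_t$ yields $\ddot u_t=\tfrac{1}{2}|\nabla^{\o_{u_t}}\dot u_t|^2_{\o_{u_t}}$, which is \eqref{eq: geod_eq_Lev_Civ}.

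The main obstacle is the identity \eqref{eq: JVvVid}: it is a short, classical K\"ahler calculation, but sensitive to sign conventions for Hamiltonian vector fields, the complex structure, and the gradient (a direct verification in local coordinates, using $V^{1,0}=-ig_{\o_u}^{j\bar k}v^V_{\bar k}\partial_j$, is reassuring). Everything else is assembly: reductivity (Proposition \ref{prop: Matsushima_thm}) produces the decomposition $U=JV$; the isometry property of $\sigma_s$ delivers the tidy formula $\dot u_t=2v^V\circ\rho_t$; and the results of Chapter 3 (Theorems \ref{thm: uniqueness_BVP} and \ref{thm: XXChenThm}, Lemma \ref{lem: chilengthgeodconst}) convert the smooth geodesic into a $d_p$-geodesic ray for every $p\ge 1$.
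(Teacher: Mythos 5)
Your proof is correct and takes essentially the same route as the paper. The paper works directly with the imaginary Hamiltonian $v^U_{\o_u}=ih$ (so $h=\operatorname{Im} v^U_{\o_u}=v^V$ in your notation), obtains $\dot u_t=2h\circ\rho_t$ from $\rho_t^*\o_u=\o_{u_t}$ together with the $I$--normalization, and computes $\ddot u_t=2\rho_t^*\langle\nabla^{\o_u}h,JX^{\o_u}h\rangle_{\o_u}$ using precisely the Hamiltonian--gradient identity $\nabla^\o v=JX^\o v$ of \eqref{eq: grad_Ham_relation} that you isolate as \eqref{eq: JVvVid}; your Matsushima decomposition $U=JV$ and the paper's ``$v^U_{\o_u}=ih$'' are the same decomposition viewed from the Lie-algebra and the function sides respectively. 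The only genuine divergence is in the final assembly: you route through Theorem \ref{thm: uniqueness_BVP} and Chen's formula \eqref{eq: ChiDistGeodFormula}, while the paper simply cites Theorem \ref{thm: EpComplete}; both are valid, and your version makes the constant-speed identity $d_p(u_a,u_b)=(b-a)\|\dot u_0\|_{p,u}$ explicit. (Your aside that $\sigma_s.u=u$ for the flow of the Killing part is correct but unused in the remainder of the argument.)
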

\begin{proof} As $U \in i \mathfrak k_{\o_u}$, it follows that $v^U_{\o_u}=ih$ for some $h \in C^\infty(X,\Bbb R)$ with $\int_X h \o_u^n =0$. Differentiating $\rho_t^* \o_u = \o_{u_t}$ we find that $i\ddbar \dot u_t = \rho_t^*d (U \lrcorner \o_{u})=\rho_t^*d (\dbar v^U_{\o_u} + \partial \overline{v^U_{\o_u}}) = 2i \rho_t^*\ddbar (\textup{Im }v^U_{\o_u}) = 2i\ddbar h \circ \rho_t$. 

Since $I(u_t)=0, \ t  \geq 0$, Lemma \ref{lem: I_differential} gives that $\int_X \dot u_t \o_{u_t}^n = 0$. Also, $\int_X h \circ \rho_t \o_{u_t}^n=\int_X h \circ \rho_t \rho_t^*\o_{u}^n=\int_X h \o_{u}^n=0$, so we conclude that 
\begin{equation}\label{eq: tang_vect_id}
\dot u_t = 2h \circ \rho _t.
\end{equation}
Differentiating this identity we obtain
\begin{flalign*}
\ddot u _t &= 2\rho^*_t (U \lrcorner d h) = 2\rho^*_t \langle \nabla^{\o_u} h , U \rangle_{\o_u} = 2\rho^*_t \langle \nabla^{\o_u} h , J X^{\o_u} h \rangle_{\o_u}\\
&=2\langle \nabla^{\o_{u_t}} h\circ \rho_t , \nabla^{\o_{u_t}} h\circ \rho_t \rangle_{\o_{u_t}}=\frac{1}{2} \langle \nabla^{\o_{u_t}} \dot u_t, \nabla^{\o_{u_t}} \dot u_t \rangle_{\o_{u_t}},
\end{flalign*}
where we used \eqref{eq: grad_Ham_relation}   in the second to last equality, and \eqref{eq: tang_vect_id} again in the last equality. The above arguments show that $t \to u_t$ satisfies \eqref{eq: geod_eq_Lev_Civ}, hence by Theorem \ref{thm: EpComplete} we obtain that $t \to u_t$ is a geodesic ray for any $p \geq 1$. 
\end{proof}

The following result will play an important role in the proof of Theorem \ref{thm: BMuniqueness}:

\begin{proposition}\label{prop: J_group_minimizer} Let $(X,\o)$ be Fano. Suppose $u \in \mathcal H_\o \cap I^{-1}(0)$ is a KE potential. Then the map $\textup{Aut}_0(X,J) \ni h \to J_\o(h.u) \in \Bbb R$ admits a minimizer $g \in \textup{Aut}_0(X,J)$ that satisfies 
\begin{equation}\label{eq: v_orthog}
\int_X v \o^n =0 \ \ \textup{ for all } \ \ v \in \mathfrak g_{\o_{g.u}}.
\end{equation}
\end{proposition}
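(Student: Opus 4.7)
I will set the problem up via the Matsushima decomposition of Proposition \ref{prop: Matsushima_thm}, writing $G := \textup{Aut}_0(X,J) = K^{\Bbb C}$ with $K := \textup{Isom}_0(X,\o_u,J)$ compact and $\mathfrak{g}_{\o_u} = \mathfrak{k}_{\o_u} \oplus i\mathfrak{k}_{\o_u}$. Since $h \in K$ satisfies $h^*\o_u = \o_u$ and $I(h.u) = 0$, uniqueness of the potential on $\mathcal H_\o \cap I^{-1}(0)$ forces $h.u = u$, so $h \mapsto J_\o(h.u)$ descends to $G/K$. By the Cartan decomposition, $G/K$ is parametrized by $i\mathfrak{k}_{\o_u}$ via $W \mapsto \exp(W).u$, reducing the problem to minimizing $\Psi(W) := J_\o(\exp(W).u)$ over the finite-dimensional real vector space $i\mathfrak{k}_{\o_u}$.

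Granted a minimizer $g$, the orthogonality condition \eqref{eq: v_orthog} would be the standard first-order criticality statement. Indeed, setting $w := g.u$ and varying along any one-parameter subgroup $\rho_t = \exp(tU)$ with $U \in \mathfrak g$, the calculation from the proof of Lemma \ref{lem: aut_geod_dp} (the computation of $\mathcal L_U \o_w = 2i\ddbar\,\textup{Im}(v^U_{\o_w})$, combined with the normalization $I(\rho_t.w) = 0$) gives $\tfrac{d}{dt}\big|_{t=0}\rho_t.w = 2\,\textup{Im}(v^U_{\o_w})$. The criticality of $g$ then forces $\tfrac{2}{V}\int_X \textup{Im}(v^U_{\o_w})\,\o^n = 0$ for all $U \in \mathfrak g$. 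Replacing $U$ by $JU$ (which sends $v^U_{\o_w}$ to $i\,v^U_{\o_w}$, hence swaps real and imaginary parts) yields $\int_X \textup{Re}(v^U_{\o_w})\,\o^n = 0$ as well, so that $\int_X v\,\o^n = 0$ for all $v \in \mathfrak{g}_{\o_w}$.

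The main obstacle will be existence of such a minimizer. I would take a minimizing sequence $g_n.u = \exp(W_n).u$ with $W_n \in i\mathfrak{k}_{\o_u}$; boundedness of $\Psi(W_n)$ and Proposition \ref{prop: d_1_growth_J} yield a uniform bound on $d_1(0,g_n.u)$, hence by the triangle inequality on $d_1(u,g_n.u)$. The key step is to upgrade this to a bound on $\|W_n\|$ in $i\mathfrak{k}_{\o_u}$ itself. If $\|W_n\| \to \infty$, write $W_n = t_n Y_n$ with $\|Y_n\| = 1$ and $t_n \to \infty$, and pass to a subsequence with $Y_n \to Y^* \in i\mathfrak{k}_{\o_u}$, $\|Y^*\| = 1$. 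By Lemma \ref{lem: aut_geod_dp}, $t \mapsto \exp(tY).u$ is a smooth $d_1$-geodesic; by Lemma \ref{lem: chilengthgeodconst}, its constant $d_1$-speed equals $c(Y) := \tfrac{2}{V}\int_X |\textup{Im}(v^Y_{\o_u})|\,\o_u^n$. The linear bijection $Y \leftrightarrow v^Y_{\o_u}$ identifies $i\mathfrak{k}_{\o_u}$ with the purely imaginary members of $\mathfrak{g}_{\o_u}$, so $c$ is continuous and strictly positive off $\{0\}$; in particular $c(Y^*) > 0$. Then $d_1(u,g_n.u) = d_1(u,\exp(t_n Y_n).u) = t_n\,c(Y_n) \to \infty$, contradicting the uniform bound on $d_1(u,g_n.u)$. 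Hence $(W_n)$ is bounded; extracting $W_n \to W^* \in i\mathfrak{k}_{\o_u}$, smoothness of the $G$-action gives $\exp(W_n).u \to \exp(W^*).u$ in $C^\infty(X)$, hence in $d_1$, so $g := \exp(W^*)$ realizes the infimum of $\Psi$ and satisfies \eqref{eq: v_orthog} by the previous paragraph.
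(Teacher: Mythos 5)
Your proof is correct and follows the same route as the paper: Matsushima (Proposition \ref{prop: Matsushima_thm}) gives $G = K^{\Bbb C}$ with $\mathfrak g_{\o_u} = \mathfrak k_{\o_u} \oplus i\mathfrak k_{\o_u}$, the (partial) Cartan decomposition reduces the minimization to $i\mathfrak k_{\o_u}$, the growth equivalence of $J_\o$ with $d_1$ together with the geodesicity of $t \mapsto \exp(tY).u$ from Lemma \ref{lem: aut_geod_dp} yields properness and hence a minimizer, and first-order criticality gives \eqref{eq: v_orthog}. The only differences are cosmetic: you spell out properness via compactness of the unit sphere in $i\mathfrak k_{\o_u}$ and continuity of the geodesic speed $c(Y)$, a step the paper asserts in a single line, and you derive \eqref{eq: v_orthog} for all $v\in\mathfrak g_{\o_{g.u}}$ at once via the $U\leftrightarrow JU$ symmetry, whereas the paper first treats $v \in \mathfrak k_{\o_{g.u}}$ and then invokes the decomposition $\mathfrak g_{\o_{g.u}} = \mathfrak k_{\o_{g.u}} \oplus i\mathfrak k_{\o_{g.u}}$.
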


Recall the definition of the $J$ functional from \eqref{eq: J_def}. To avoid the possibility of confusion with the complex structure, we denoted this functional with $J_\o$ in the above proposition, and will continue to do so in the rest of this section.

\begin{proof} By Proposition \ref{prop: d_1_growth_J} the functional $J_\o$  has the same growth as the metric  $d_1$. We turn to the group $\textup{Aut}_0(X,J)$ which is reductive by Proposition \ref{prop: Matsushima_thm}, hence we can apply Proposition \ref{prop: PartialCartanProp} in the appendix to deduce that the map $C:  \textup{Isom}_0(X,\o_u,J)\oplus \mathfrak k_{\o_u} \to \textup{Aut}_0(X,J)$ given by $C(k,U)=k \textup{exp}_{I}(JU)$ is surjective. 

For any $k \in \textup{Isom}_0(X,\o_u,J)$  and any $U \in \mathfrak k_{\o_u}$ the previous proposition gives that $t \to k\textup{exp}_I(tJU).u =\textup{exp}_I(tJU).u=: u_t \in \mathcal H_\o \cap I^{-1}(0)$ is a smooth $d_1$--geodesic. As the growth of $J_\o$ is equivalent with the growth of the $d_1$ metric, it follows that the map
$$(K,\mathfrak k_{\o_u}) \ni (k,U) \to \Theta(k,U):=J_\o(C(k,U).u)=J_\o(\textup{exp}_I(JU).u) =\Theta(I,U)\in \Bbb R$$
is proper (meaning that $\Theta(k_j,U_j)=\Theta(I,U_j) \to \infty$ if $|U_j | \to \infty$), hence it admits a minimizer. As $C$ is surjective, it follows that $\textup{Aut}_0(X,J) \ni h \to J_\o(h.u) \in \Bbb R$ admits a  minimizer $g \in \textup{Aut}_0(X,J)$ as well. 

Fix $v \in \mathfrak k_{\o_{g.u}}$. We introduce the vector field $W = X^{\o_{g.u}}v$, and by the previous proposition $[0,\infty) \ni t \to h_t := \textup{exp}(tJW).(g.u) \in \mathcal H_0 \cap I^{-1}(0)$ is a geodesic ray.

As the identity element minimizes $\textup{Aut}_0(X,J) \ni h \to J_\o(h.(g.u)) \in \Bbb R$, we can write
$$0=\frac{d}{dt}\Big|_{t=0}J_\o(\textup{exp}(tJW).(g.u))=\frac{1}{V}\frac{d}{dt}\Big|_{t=0} \int_X h_t \o^n = \frac{1}{V}\int_X \dot h_0 \o^n.$$
Using \eqref{eq: tang_vect_id} we conclude that $\dot h_0 = 2v$, hence $\int_X v \o^n =0$. Finally, the decomposition formula of Proposition \ref{prop: Matsushima_thm} implies that \eqref{eq: v_orthog} in fact holds for any $v \in \mathfrak g_{\o_{g.u}}$.
\end{proof}

For $u \in \mathcal H_\o$, recall the self adjoint differential operator $L^{f_u}$ from \eqref{eq: L_f_def}. Motivated by the explicit formula for the Hessian of $\mathcal F$ (see \ref{eq: F_Hess_formula}) we introduce the differential operator $\mathcal D^u:C^\infty(X,\Bbb C) \to \Bbb R$:
$$\mathcal D^u (h) = L^{f_u}(h) - h + \frac{1}{V}\int_X h e^{f_u}\o_u^n.$$
Integrating by parts in \eqref{eq: F_Hess_formula} we get the following formula, relating $\nabla^2 \mathcal F(u)$ and $\mathcal D^u$:
\begin{flalign}\label{eq: F_Hess_D_rel}
\nabla^2 \mathcal F(u)(\phi,\psi)&= \frac{1}{V}\int_X \phi\Big( \overline{L^{f_u}\psi -\psi + \frac{1}{V}\int_X \psi e^{f_u}\o_u^n}\Big) e^{f_u} \o_u^n = \frac{1}{V}\int_X \phi\overline{\mathcal D^{u}(\psi)} e^{f_u} \o_u^n. \nonumber\\
&= \frac{1}{V}\int_X \Big({L^{f_u}\phi -\phi + \frac{1}{V}\int_X \phi e^{f_u}\o_u^n}\Big)\psi e^{f_u} \o_u^n = \frac{1}{V}\int_X {\mathcal D^{u}(\phi)}\overline{\psi} e^{f_u} \o_u^n.
\end{flalign}
This implies that $\mathcal D^u$ is a self--adjoint differential operator as well. Additionally, the kernel of $\mathcal D^u$ is exactly equal to the eigenspace of $\mathcal L^{f_u}$ corresponding to the eigenvalue $\lambda = 1$. In Proposition \ref{prop: Fut_ineq} (see especially the identity \eqref{eq: Fut_id}) we gave an exact description of this space that we now recall:
$$\textup{Ker } \mathcal D^u = \{v \in C^\infty(X,\Bbb C) \textup{ s.t.} \int_X v e^{f_u}\o^n_u =0 \textup{ and }\nabla^{\o_u}_{1,0}v \in T^{1,0}_{\Bbb C} X\textup{ is holomorphic} \}.$$
In case $u \in \mathcal H_\o \cap I^{-1}(0)$ is a KE potential we trivially have $f_u =0$, and comparing with \eqref{eq: mathfrak_g_def} we get the following identification:
\begin{equation}\label{eq: Ker_D_u_mathfrak_g}
\textup{Ker } \mathcal D^u = \mathfrak g_{\o_u}.
\end{equation}
As $\mathcal D^u$ is self--adjoint and elliptic, for any $h \in C^\infty(X,\Bbb C)$ such that $ h \perp \textup{Ker } \mathcal D^u$ there exists $v \in C^\infty(X,\Bbb C)$ such that $\mathcal D^u(v)=h$ (see \cite[Theorem IV.4.11]{we}). We note this fact in slightly more precise form in the following lemma:

\begin{lemma}\label{lem: F_Hess_range} Let $(X,\o_u)$ is a KE manifold. Suppose that $\int_X v \o^n =0$ for all $v \in \mathfrak g_{\o_u}$. Then there exists $g \in C^\infty(X,\Bbb R)$ such that
$$\nabla^2 \mathcal F(u)(f,g)= \frac{1}{V} \int_X f \overline{\mathcal D^u(g)} \o_u^n = \frac{1}{V}\int_X f\o^n, \ \ \forall \ f \in C^\infty(X,\Bbb C).$$ 
\end{lemma}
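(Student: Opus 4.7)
The first equality in the claim is already the content of the identity \eqref{eq: F_Hess_D_rel} applied with $\psi = g$, so the real work is to produce a real-valued $g$ for which the second equality holds. The plan is to rewrite that second equality as a scalar PDE and then solve it by the Fredholm alternative.

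More precisely, for the second equality to hold for \emph{every} $f \in C^\infty(X,\Bbb C)$, it is equivalent to the measure identity $\overline{\mathcal D^u(g)}\,\o_u^n = \o^n$, i.e.\ to the scalar equation
\begin{equation}\label{eq: mathcal_D_u_eq}
\overline{\mathcal D^u(g)} \;=\; \frac{\o^n}{\o_u^n}.
\end{equation}
Because $u$ is KE we have $f_u \equiv 0$, so $L^{f_u}(h) = \partial^*\partial h$, and \eqref{eq: self_adj} together with the definition of $\mathcal D^u$ show that $\mathcal D^u$ maps real smooth functions to real smooth functions and is self-adjoint and elliptic with respect to the real Hilbert space $L^2(\o_u^n,\Bbb R)$. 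Seeking $g$ real, and noting that $\o^n/\o_u^n$ is real, the conjugation in \eqref{eq: mathcal_D_u_eq} disappears and we are reduced to solving the real elliptic equation
\[
\mathcal D^u(g) \;=\; \frac{\o^n}{\o_u^n}, \qquad g \in C^\infty(X,\Bbb R).
\]

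By standard theory of self-adjoint elliptic operators on compact manifolds (\cite[Theorem IV.4.11]{we}) this has a smooth real solution if and only if $\o^n/\o_u^n$ lies in the $L^2(\o_u^n)$--orthogonal complement of $\Ker \mathcal D^u|_{C^\infty(X,\Bbb R)}$. Using \eqref{eq: Ker_D_u_mathfrak_g} and the Matsushima decomposition in Proposition \ref{prop: Matsushima_thm}, the real kernel is exactly $\mathfrak k_{\o_u} = \mathfrak g_{\o_u}\cap C^\infty(X,\Bbb R)$, so the solvability condition reads
\[
\int_X v\, \frac{\o^n}{\o_u^n}\,\o_u^n \;=\; \int_X v\, \o^n \;=\; 0, \qquad \forall\, v \in \mathfrak k_{\o_u}.
\]
This is precisely the hypothesis of the lemma (restricted from $\mathfrak g_{\o_u}$ to its real part $\mathfrak k_{\o_u}$), so the orthogonality is in place and a real solution $g$ exists.

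With $g$ in hand, the second equality in the statement follows by unpacking \eqref{eq: mathcal_D_u_eq}, and the first equality is just \eqref{eq: F_Hess_D_rel}. I do not foresee any substantive obstacle: the only point to be careful about is the real-vs-complex bookkeeping—checking that on a KE manifold $\mathcal D^u$ preserves real-valuedness, that $\mathfrak g_{\o_u}$ is stable under complex conjugation (so that $\int_X v\,\o^n = 0$ on $\mathfrak g_{\o_u}$ descends cleanly to $\mathfrak k_{\o_u}$), and that the conjugate sign in \eqref{eq: mathcal_D_u_eq} can be dropped once $g$ is chosen real. Each of these is a short verification from the definitions already recorded in the excerpt.
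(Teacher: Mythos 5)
Your proof is correct and takes essentially the same approach as the paper: both reduce to solving the elliptic equation $\mathcal D^u(g)=\o^n/\o_u^n$ via the Fredholm alternative, using the hypothesis to verify orthogonality to $\Ker\mathcal D^u=\mathfrak g_{\o_u}$, and both exploit $f_u\equiv0$ to keep $g$ real. The only minor difference is bookkeeping: the paper solves over $\Bbb C$ first and then observes that since $\mathcal D^u$ is a real operator the solution may be taken real, whereas you restrict to the real Hilbert space and real kernel $\mathfrak k_{\o_u}$ from the outset — a slightly tidier presentation of the same argument.
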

\begin{proof} Denote $h := \o^n/\o_u^n \in C^\infty(X,\Bbb R)$. By our assumption we have that $h \perp \mathfrak g_{\o_u}$ with respect to the Hermitian product $\langle \alpha, \beta \rangle= \frac{1}{V} \int_X  \alpha \overline{\beta} \o_u^n$, hence by our above remarks there exists $g \in C^\infty(X,\Bbb C)$ such that $\mathcal D^u(g)=h$. As $u$ is a KE potential, we have $f_u =0$, and as a result $\mathcal D^u$ is a real differential operator. Consequently, we can make sure that $g \in C^\infty(X,\Bbb R)$, finishing the argument.
\end{proof}

\begin{proof}[Proof of Theorem \ref{thm: BMuniqueness}] Without loss of generality we can assume that our KE potentials $u,v$ satisfy $u,v \in \mathcal H_\o \cap I^{-1}(0)$. Also, by Proposition \ref{prop: J_group_minimizer}, after possibly pulling back $u$ and $v$ by an element of $\textup{Aut}_0(X,J)$ we can assume that  
$$\int_X h \o^n=0, \ \ \forall \in h \in \mathfrak g_{\o_u} \cup \mathfrak g_{\o_v}.$$
Using this, by the previous lemma we can find $g_u,g_v \in C^\infty(X,\Bbb R)$ such that 
\begin{equation}\label{eq: Hessian_F_u}
\nabla^2 \mathcal F(u)(f,g_u) =  \nabla^2 \mathcal F(v)(f,g_v) = -\frac{1}{V}\int_X f \o^n, \ \ \forall \ f \in C^\infty(X,\Bbb R).
\end{equation}
For the rest of the proof we will be working with the twisted Ding functional $\mathcal F_s: \mathcal H_\o \to \Bbb R, \ s \geq 0$ given by the formula:
$$\mathcal F_s(h)= \mathcal F(h) + s J_\o(h).$$

For small enough $s>0$  we can suppose that the potentials $u_0^s := u + s g_u$ and  $u_1^s := v + s g_u$ satisfy $u^s_0,u^s_1 \in \mathcal H_\o$.

The differential of $\mathcal F_s$ is equal to $d \mathcal F + s d J$. Choosing  $w \in C^\infty(X,\Bbb R)$ with $\int_X w \o_{u}^n=0$ a simple differentiation gives 
$$\frac{d}{ds}\Big|_{s=0} d\mathcal F_s(u^s_0)(w)= \nabla^2 \mathcal F(u)(w,g_u) + d \mathcal F(u)(\nabla_{\frac{d}{ds}} w) + dJ_\o(u)(w).$$
Since $u$ minimizes $\mathcal F$ we have that $d \mathcal F(u)(\nabla_{\frac{d}{ds}} w)=0$. Since $\int_X w \o_u^n=0$ we also have $d J_\o(u)(w)=\frac{1}{V}\int_X w\o^n$, so we can continue the above identity and write:
$$\frac{d}{ds}\Big|_{s=0} d\mathcal F_s(u^s_0)(w)= \nabla^2 \mathcal F(u)(w,g_u) + \frac{1}{V}\int_X w\o^n=0,$$
where in the last identity we have used \eqref{eq: Hessian_F_u}. Consequently $s \to d \mathcal F_s(u^s_0)(w)=O(s^2)$. Since $d \mathcal F_s(u^s_0)(w)= \frac{1}{V}\int_X w f_s \o^n$ for some smooth curve $s \to f_s$, we can conclude that $f_s = O(s^2)$ and we have 
\begin{equation}\label{eq: dF_s_est}
\big| d\mathcal F_s(u^s_0)(w) \big|\leq C s^2 \sup_X |w| \textup{ for all }w \in C(X,\Bbb R).
\end{equation}
A similar estimate holds for $\big| d\mathcal F_s(u^s_1)(w) \big|$ as well.

Let $[0,1] \ni t \to u^s_t \in \mathcal H_\o^{1,\bar 1}$ be the $C^{1,\bar 1}$--geodesic connecting $u_0^s$ and $u_1^s$. By convexity of $t \to \mathcal F(u^s_t)$ and $t \to J_\o(u^s_t)$ it follows that 
\begin{flalign*}
0 \leq s \Big( \frac{d}{dt}\Big|_{t = 1} J_\o(u^s_t)- \frac{d}{dt}\Big|_{t = 0} J_\o(u^s_t)\Big)
&\leq  \frac{d}{dt}\Big|_{t = 1} \mathcal F_s(u^s_t)- \frac{d}{dt}\Big|_{t = 0} \mathcal F_s(u^s_t) \\
&=d \mathcal F_s(u^s_1)(\dot u^s_1) -  d \mathcal F_s(u^s_0)(\dot u^s_0) \leq C s^2,
\end{flalign*}
where the last inequality is a consequence of \eqref{eq: dF_s_est}. Taking the limit $s \searrow 0$ in the above estimate, (by convexity) we obtain that $t \to J_\o(u^0_t)$ is affine. By the lemma below, this implies that $u = u^0_0=u^0_1 =v$, finishing the proof. 
\end{proof}

\begin{lemma}
Suppose $u_0,u_1 \in \mathcal H_\o \cap I^{-1}(0)$ and $t \to u_t$ is the $C^{1,\bar 1}$--geodesic connecting $u_0,u_1$. If $t \to J_\o(u_t)$ is affine then $u_0 = u_1$.
\end{lemma}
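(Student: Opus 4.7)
The strategy is to show that if $t\mapsto J_\o(u_t)$ is affine then $u_t$ must coincide with the naive affine interpolation $(1-t)u_0+tu_1$, and then to extract from the positivity of $\pi^*\o+i\ddbar u$ on the strip the rigidity statement that $u_1-u_0$ is forced to be constant; the normalization $I(u_0)=I(u_1)=0$ finishes the argument.

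First, the Monge--Amp\`ere energy is affine along $C^{1,\bar 1}$ geodesics by Proposition \ref{prop: I_linrear}, and $J_\o=\frac{1}{V}\int_X u\,\o^n-I(u)$ is affine by hypothesis, so $t\mapsto \frac{1}{V}\int_X u_t\,\o^n$ is affine on $[0,1]$. On the other hand, the complexification $u(s,x):=u_{\textup{Re }s}(x)$ lies in $\textup{PSH}(S\times X,\pi^*\o)$ and is $\textup{Im }s$-invariant. Restricting to the vertical slice $S\times\{x_0\}$, on which $\pi^*\o$ pulls back to zero, shows that $u(\cdot,x_0)$ is subharmonic on $S$; combined with $\textup{Im }s$-invariance this forces $t\mapsto u_t(x_0)$ to be convex for every $x_0\in X$. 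Hence $u_t(x)\leq (1-t)u_0(x)+tu_1(x)$ on $[0,1]\times X$, and integrating against $\o^n$ together with the affineness just noted yields equality $\o^n$-almost everywhere. Both sides are continuous in $(t,x)$ (the right hand side by smoothness of $u_0,u_1$, the left by Theorem \ref{thm: u_estimates}), whence
\[
u_t(x)=(1-t)u_0(x)+tu_1(x),\qquad (t,x)\in[0,1]\times X.
\]

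Next set $v:=u_1-u_0\in C^\infty(X,\Bbb R)$; then $u(s,x)=u_0(x)+(\textup{Re }s)v(x)$ is smooth on $\overline{S}\times X$. In local coordinates, using the basis $\{\partial_s,\partial_{z^1},\ldots,\partial_{z^n}\}$, the Hermitian matrix of $\pi^*\o+i\ddbar u$ takes the block form
\[
\begin{pmatrix} 0 & \tfrac{1}{2}v_{\bar k}\\ \tfrac{1}{2}v_j & (\o_{u_t})_{j\bar k}\end{pmatrix},
\]
with vanishing $s\bar s$ entry because $\ddot u_t\equiv 0$. A standard Schur complement argument shows that a positive semi-definite Hermitian matrix with a zero diagonal entry must have the whole associated row and column vanish, so $v_j=0$ for each $j$, i.e.\ $\partial v\equiv 0$. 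Thus $v$ is antiholomorphic on $X$, and being real-valued on the compact manifold $X$ it must be constant. Finally $I(u_0+v)=I(u_0)+v$, so $0=I(u_1)=I(u_0)+v=v$, and we conclude $u_0=u_1$.

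The only step that requires some care is the passage from the affineness of the single integral $t\mapsto \int_X u_t\,\o^n$ to pointwise affineness of $t\mapsto u_t(x)$: it is the convexity of vertical slices, resting crucially on plurisubharmonicity of the complexified potential, that upgrades an integral identity to a pointwise one. Once pointwise affineness is in hand, the subgeodesic positivity makes the rigidity essentially automatic.
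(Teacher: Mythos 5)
Your proof is correct. The first half (showing $u_t(x)=(1-t)u_0(x)+tu_1(x)$) arrives at the same conclusion as the paper's, by an essentially equivalent route: both exploit affineness of $I$ along the geodesic and the $t$-convexity of $u_t(x)$, the only cosmetic difference being that the paper works with $\dot u_0=\dot u_1$ while you integrate the convexity inequality directly. The second half, however, is genuinely different. The paper differentiates $I(u_t)$ once more to obtain $\int_X (u_1-u_0)(\o_{u_0}^n-\o_{u_1}^n)=0$, integrates by parts, and reads off $\partial(u_1-u_0)=0$ from the resulting sum of nonnegative terms. You instead substitute the affine form $u(s,x)=u_0(x)+(\operatorname{Re}s)\,v(x)$ into the positivity condition $\pi^*\o+i\ddbar u\geq 0$ and observe that the $s\bar s$-entry of the Hermitian matrix vanishes identically, so by the elementary linear-algebra fact that a positive semi-definite Hermitian matrix with a zero diagonal entry must have the corresponding row and column vanish, one gets $v_j\equiv 0$; being real and $\bar\partial$-closed on the compact connected $X$, $v$ is constant. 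Your route bypasses the Bedford--Taylor integration-by-parts argument and instead reads the rigidity off the degeneracy of the complexified Hessian, which is conceptually tidy; the paper's argument is more self-contained within the pluripotential-theoretic toolkit it has already set up, and has the small advantage of working verbatim for merely bounded endpoints (whereas your pointwise matrix argument leans on the smoothness of $u_0,u_1$ to interpret $i\ddbar u$ classically). Both are correct and roughly the same length.
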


\begin{proof} We know that $t \to I(u_t)$ is affine (Proposition \ref{prop: I_linrear}). Using this, our assumption implies that $t \to \int_X u_t \o^n$ is linear as well, hence $\frac{d}{d_t}\big|_{t =0} \int_X u_t \o^n=\frac{d}{d_t}\big|_{t =1} \int_X u_t \o^n$. This implies that $\int_X (\dot u_1 - \dot u_0)\o^n =0$. By convexity in the  $t$ variable we have $\dot u_1 \geq \dot u_0$, so we conclude that $\dot u_0 = \dot u_1$, hence $t \to u_t$ is affine, i.e., 
\begin{equation}\label{eq: dot_u_t_special}
\dot u_0 = \dot u_1 = u_1 - u_0.
\end{equation}
Since $t \to I(u_t)$ is affine too, we have $\frac{d}{dt}\big|_{t =0} I(u_t)=\frac{d}{dt}\big|_{t =1} I(u_t)$, and by   \eqref{eq: dIdt_C11geod}  and \eqref{eq: dot_u_t_special} we get that $\int_X (u_1 - u_0)\o_{u_0}^n = \int_X (u_1 - u_0)\o_{u_0}^n$. Subtracting the right hand side from the left and integrating by parts we obtain
$$\sum_{j=0}^{n-1}\int_X i \partial (u_1 - u_0) \wedge \dbar (u_1 - u_0) \wedge \o_{u_0}^{j} \wedge \o_{u_1}^{n-1-j}=0.$$ 
As all terms in the above sum are nonnegative, we get that $\int_X i \partial (u_1 - u_0) \wedge \dbar (u_1 - u_0) \wedge \o_{u_0}^{n-1}=0$. Consequently, $\langle \nabla^{\o_{u_0}} (u_1 - u_0),\nabla^{\o_{u_0}} (u_1 - u_0) \rangle_{\o_{u_0}}=0$, hence $u_1= u_0 + c$. As $I(u_0)=I(u_1)=0$, we have in fact $u_0 = u_1$.
\end{proof}

\section{Regularity of weak minimizers of the Ding functional}

In this short section we will show that minimizers of the extended $\mathcal F$ functional are actually smooth, with this verifying another important condition in the existence/properness principle described earlier (see Theorem \ref{thm: ExistencePrinc}):

\begin{theorem}If $u \in \mathcal E_1(X,\o)$ minimizes $\mathcal F: \mathcal E_1(X,\o) \to \Bbb R$ then $u$ is a smooth K\"ahler--Einstein potential. \label{thm: F_min_regularity} 
\end{theorem}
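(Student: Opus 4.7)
The plan is to split the argument into two parts: (a) establish that any minimizer $u\in\mathcal E_1(X,\o)$ satisfies the weak K\"ahler--Einstein equation
$$\o_u^n = \mu_u := \frac{V e^{-u+f_0}\o^n}{\int_X e^{-u+f_0}\o^n}$$
as an identity of Borel measures on $X$, and (b) bootstrap from this equation to smoothness, after which $u\in \mathcal H_\o$ solves \eqref{eq: KE_scalar_eq} and is a genuine KE potential.

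For part (a), the obstacle is that $\mathcal E_1(X,\o)$ is not a linear manifold and $u$ need not be bounded, so one cannot simply differentiate $\mathcal F$ along $u+tv$ with $v\in C^\infty(X)$. Instead I would follow the strategy of Berman and of Berman--Boucksom--Guedj--Zeriahi using envelope perturbations. For $v\in C^\infty(X)$ and small $|t|$, introduce the perturbed potential $u_t:=P(u+tv)\in \mathcal E_1(X,\o)$ (finiteness follows from Proposition \ref{prop: env_exist}, since $u+tv \in \mathcal E_1(X,\o)$ up to an additive smooth function). The non-contact set $\{u_t < u+tv\}$ has $\o_{u_t}^n$-measure zero (a consequence of \eqref{eq: coinc_vanish} extended to the finite energy setting using the approximation techniques from Section~2.4 and the domination principle, Proposition~\ref{prop: domination principle}). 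Combining the $d_1$-continuity of $\mathcal F$ (Theorem \ref{thm: F_d_1_cont}), the differentiation formulas for $I$ (Lemma \ref{lem: I_differential}) and for $\mathcal B$ (from the proof of Proposition \ref{prop: F_Hess_prop}), one computes the one-sided derivatives
$$\frac{d}{dt}\bigg|_{t=0^\pm}\mathcal F(u_t)=\mp\frac{1}{V}\int_X v\,\o_u^n \pm \frac{1}{V}\int_X v\,\mu_u,$$
each of which must be nonnegative since $u$ minimizes $\mathcal F$. Taking both signs of $t$ (and running the argument with $v$ and $-v$) yields $\int_X v(\o_u^n-\mu_u)=0$ for every $v\in C^\infty(X)$, hence $\o_u^n=\mu_u$ as measures.

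For part (b), first note that by Corollary \ref{cor: Skodacor} (since $d_1(0,u)<\infty$) one has $e^{-u+f_0}\in L^p(\o^n)$ for every $p\geq 1$; hence the density of $\mu_u$ with respect to $\o^n$ lies in every $L^p$. Kolodziej's $L^\infty$ estimate applied to the Monge--Amp\`ere equation $\o_u^n=\mu_u$ then upgrades $u$ from $\mathcal E_1(X,\o)$ to $\text{PSH}(X,\o)\cap L^\infty$. With $u$ bounded, the density $e^{-u+f_0}/\int_X e^{-u+f_0}\o^n$ is strictly positive and H\"older continuous (in fact $C^\alpha$ for some $\alpha>0$, by an Evans--Krylov / Tosatti--Sz\'ekelyhidi type argument applied first to the equation itself, giving $u\in C^{1,\beta}$ and hence continuous), and then standard Aubin--Yau / Evans--Krylov / Schauder bootstrap on the (now) uniformly elliptic complex Monge--Amp\`ere equation gives $u\in C^\infty(X)$. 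In particular $u\in\mathcal H_\o$ and satisfies \eqref{eq: KE_scalar_eq}.

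The main obstacle is step (a): establishing the Euler--Lagrange equation rigorously requires that the envelope perturbation $u_t=P(u+tv)$ be used with care, in particular verifying that the contact set carries all of $\o_{u_t}^n$ when $u$ is merely in $\mathcal E_1(X,\o)$, and that the one-sided derivatives of $I(u_t)$ and $\log\int e^{-u_t+f_0}\o^n$ exist and have the claimed expressions. Once the weak equation is in hand, step (b) is essentially routine in view of the Skoda estimate from Corollary \ref{cor: Skodacor} and the well-developed regularity theory for complex Monge--Amp\`ere equations with positive smooth density.
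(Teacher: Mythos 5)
Your overall strategy matches the paper's: perturb via the envelope $P(u+tv)$, derive the weak Monge--Amp\`ere equation $\o_u^n=\mu_u$, then bootstrap regularity using Skoda's integrability (Corollary \ref{cor: Skodacor}), Ko\l odziej's $L^\infty$ estimate, and the Tosatti--Sz\'ekelyhidi regularity theorem. Step (b) is exactly right and matches the paper's Theorem \ref{thm: SzToregularity}.

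However, step (a) contains a gap that you partly acknowledge but do not close. Your formula for the one-sided derivatives of $t\mapsto \mathcal F(P(u+tv))$ requires, in particular, computing $\tfrac{d}{dt}\big|_{t=0}\log\int_X e^{-P(u+tv)+f_0}\o^n$. The differentiability of $t\mapsto I(P(u+tv))$ (proved in the paper via a proposition of Berman--Boucksom, following Lu--Nguyen) exploits two special facts: $\o_{P(u+tv)}^n$ is concentrated on the coincidence set $\{P(u+tv)=u+tv\}$, and Monge--Amp\`ere masses converge weakly under uniform convergence of potentials. Neither of these helps with the log-term, which integrates against $\o^n$ rather than $\o_{P(u+tv)}^n$: off the coincidence set the quotient $\tfrac{1}{t}\big(P(u+tv)-u\big)$ has no obvious a.e.\ limit, so differentiating the exponential integral along the envelope curve is not routine. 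Also note that Lemma \ref{lem: I_differential}, which you cite for the $I$-derivative, concerns smooth curves in $\mathcal H_\o$ and does not apply to $t\mapsto P(u+tv)$.

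The paper sidesteps this difficulty with a simple majorant trick. It introduces
$$g(t):=-I(P(u+tv))-\log\frac{1}{V}\int_X e^{-u-tv+f_0}\o^n,$$
i.e.\ the log-term is evaluated along the affine curve $u+tv$, not along $P(u+tv)$. Since $P(u+tv)\le u+tv$, one has $\int_X e^{-P(u+tv)+f_0}\o^n\ge\int_X e^{-(u+tv)+f_0}\o^n$, whence $\mathcal F(P(u+tv))\le g(t)$ with equality $g(0)=\mathcal F(u)$. As $u$ minimizes $\mathcal F$, this gives $g(t)\ge g(0)$ for all small $t$. Moreover $g$ is genuinely differentiable at $0$: the $I$-part by the Lu--Nguyen proposition, the log-part by a dominated-convergence argument using Skoda's estimate (Corollary \ref{cor: Skodacor}) and the elementary bound $|e^x-e^y|\le|x-y|(e^x+e^y)$. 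Hence $g'(0)=0$, which yields $\o_u^n=\mu_u$. You would need either to supply this trick or to prove an analogue of the Lu--Nguyen proposition for the exponential integral; the former is far easier and is what the paper does.
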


In case $u \in \mathcal H_\o$ minimizes $\mathcal F$, then  after computing the first order variation of $t \to \mathcal F(u + tv)$ for all $v \in C^\infty(X)$, Lemma \ref{lem: F_func_differential} allows to conclude that:\begin{equation}\label{eq: KE_weak_eq}
\o_u^n= \frac{V}{\int_X e^{- u + f_0}\o^n} e^{-u + f_0}\o^n,
\end{equation}
hence $u$ is indeed a KE potential. In case $u \in \mathcal E_1(X,\o)$ we can't even guarantee that $t \to u +tv \in \mathcal E_1(X,\o)$ for small $t$. Getting around this obstacle will represent one of main technical ingredients in the proof of Theorem \ref{thm: F_min_regularity}. 

Eventually we will be able to show that \eqref{eq: KE_weak_eq} holds for minimizers from $\mathcal E_1(X,\o)$ as well. 
Notice that by Zeriahi's version of Skoda's theorem (Corollary \ref{cor: Skodacor}) the right hand side of this equation does indeed makes sense for potentials of $\mathcal E_1(X,\o)$. The proof is completed by appealing to work of Kolodziej and Tosatti--Sz\'ekelyhidi on the apriori regularity theory of such equations:

\begin{theorem}\label{thm: SzToregularity} If $u \in \mathcal E_1(X,\o)$ solves \eqref{eq: KE_weak_eq} then $u \in \mathcal H_\o$, i.e., $u$ is a smooth KE potential.
\end{theorem}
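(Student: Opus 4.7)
The plan is to proceed by a classical two-step bootstrap for degenerate complex Monge--Amp\`ere equations: first promote $u \in \mathcal E_1(X,\omega)$ to $u \in L^\infty(X)$ by Kolodziej's $L^\infty$ estimate, and then promote $u \in L^\infty$ to $u \in C^\infty$ by the Sz\'ekelyhidi--Tosatti regularity theorem together with a Schauder--type bootstrap. Throughout, the density on the right--hand side of the equation will be treated as a quantity depending on $u$ itself, and the self--improving nature of the regularity will come from feeding back the improved regularity of $u$ into the formula for this density.

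For the first step, set
$$
F \;:=\; \frac{V}{\int_X e^{-u+f_0}\omega^n}\, e^{-u+f_0},
$$
so that $u$ solves $\omega_u^n = F\omega^n$ with $\int_X F\, \omega^n = V$. Since $u \in \mathcal E_1(X,\omega)$ we have $d_1(0,u)<\infty$, so Corollary \ref{cor: Skodacor} (Zeriahi's uniform version of Skoda integrability) yields $\int_X e^{-pu}\omega^n \leq C_p$ for every $p \geq 1$; combined with the smoothness and hence boundedness of $f_0$, this gives $F \in L^p(\omega^n)$ for every $p \geq 1$. Kolodziej's $L^\infty$ estimate applied to the equation $\omega_u^n = F\omega^n$ (with $\|F\|_{L^p}$ playing the role of the only apriori datum) then produces a constant $C$, depending only on $(X,\omega)$, $p$, $\|F\|_{L^p}$, such that $\|u\|_{L^\infty(X)} \leq C$. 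In particular, $u \in \textup{PSH}(X,\omega) \cap L^\infty$ and the density $F$ is bounded above and strictly bounded below.

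For the second step, Kolodziej's stability theorem then upgrades $u$ from bounded to continuous; consequently $F = c\, e^{-u+f_0}$ is continuous and strictly positive. The Sz\'ekelyhidi--Tosatti regularity theorem asserts that a bounded $\omega$-psh solution of the complex Monge--Amp\`ere equation with continuous, strictly positive right--hand side lies in $C^\alpha(X)$ for some $\alpha \in (0,1)$. Once $u \in C^\alpha$, the density $F \in C^\alpha$ is strictly positive, so Evans--Krylov together with Yau's $C^2$--estimate yields $u \in C^{2,\alpha}$; iterating Schauder estimates gives $u \in C^{k,\alpha}$ for all $k$, hence $u \in C^\infty \cap \textup{PSH}(X,\omega)$. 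Strict positivity of $\omega_u^n = F\omega^n$ at every point forces the smooth $\omega$-psh form $\omega_u$ to be strictly positive everywhere, so $u \in \mathcal H_\omega$ and $u$ is the desired smooth KE potential.

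The main obstacle in this strategy is the very first step: transferring the energy condition $u \in \mathcal E_1(X,\omega)$ into the pointwise $L^\infty$ bound. Every other part of the argument is a standard PDE bootstrap available once we know $u$ is bounded, but the $L^\infty$ bound genuinely rests on Kolodziej's theorem, which lies outside the pluripotential--variational machinery developed in this survey and must be imported from the literature (see \cite{gzbook} and the references in the paragraph preceding Theorem \ref{thm: F_min_regularity}).
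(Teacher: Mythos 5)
Your high-level strategy agrees with the paper's: Skoda (via Corollary \ref{cor: Skodacor}) gives $e^{-u+f_0}\in L^p(\o^n)$ for every $p$, then Kolodziej's $L^\infty$ estimate applied to $\o_u^n = F\o^n$ gives $u \in L^\infty$, and the Tosatti--Sz\'ekelyhidi theorem then supplies full smoothness. The paper invokes these three ingredients in exactly this order and with the same references.

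Where your write-up goes astray is in the description of the last step. You attribute to Tosatti--Sz\'ekelyhidi only the passage from bounded solutions with continuous positive density to $C^\alpha$ solutions, and then propose to upgrade $C^\alpha$ to $C^{2,\alpha}$ ``by Evans--Krylov together with Yau's $C^2$--estimate.'' This misrepresents both what their theorem says and where the real difficulty lies. The statement of \cite[Theorem 1.1]{szto} is that a \emph{bounded} $\o$-psh solution of $\o_u^n = e^{F(z,u)}\o^n$ with $F$ smooth in $(z,u)$ is already smooth; their result is designed precisely to close the gap you are trying to fill with a naive bootstrap. Indeed, the chain you sketch does not work as written: Evans--Krylov requires a uniform $C^{1,1}$ bound on $u$ as input, and Yau's $C^2$ estimate, which would supply such a bound, requires control on first and second derivatives of the density. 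Since here the density is $F = ce^{-u+f_0}$, those derivatives involve $\nabla u$ and $\nabla^2 u$, producing exactly the circularity that must be broken by a regularization argument (this is what Tosatti--Sz\'ekelyhidi do, building on Blocki--Kolodziej approximation). So the intermediate passage $C^0 \to C^\alpha \to C^{2,\alpha}$ via ``Evans--Krylov + Yau'' is not a valid bootstrap, and no such intermediate $C^\alpha$ step is needed: once $u$ is known to be bounded, Tosatti--Sz\'ekelyhidi apply directly and deliver $u \in C^\infty$, after which strict positivity of $\o_u^n$ gives $u \in \mathcal H_\o$. Everything preceding that point in your proposal is correct and matches the paper.
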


\begin{proof}[Sketch of proof.] As $u \in \mathcal E_1(X,\o)$ Corollary \ref{cor: Skodacor} implies that $e^{-u + f_0} \in L^p(\o^n)$ for all $p > 1$. By Kolodziej's theorem \cite{k0}, since $u$ solves  \eqref{eq: KE_weak_eq}, we obtain that $u$ is bounded (for a full proof of this fact see \cite[Theorem 14.1]{gzbook}). Using a result of Tosatti--Sz\'ekelyhidi \cite[Theorem 1.1]{szto} we obtain that $u$ is actually a smooth KE potential (see also \cite[Theorem 14.1]{gzbook}).
\end{proof}

\begin{proof}[Proof of Theorem \ref{thm: F_min_regularity}] Let $v \in C^\infty(X)$. Recall from \eqref{eq: P_env_def} that $P(u+tv) \in \textup{PSH}(X,\o),$ $t \in \Bbb R$ is defined as follows:
$$P(u+tv)= \sup\{v \in \textup{PSH}(X,\o) \textup{ s.t. } v \leq u + tv \}.$$
Since $u - t \sup_X |v| \leq P(u + tv)$, Corollary \ref{cor: monotonicity_E_chi} implies that $P(u+tv) \in \mathcal E_1(X,\o)$, and this allows to introduce the function
$$g(t)= - I(P(u + tv)) - \log \int_X e^{-u -tv+ f_0}\o^n + \log (V).$$
We claim that $g(t)$ is differentiable at $t=0$ and the following formula holds:
\begin{equation}\label{eq: Dg(t)}
\frac{d}{dt}\Big|_{t=0} g(t) = -\frac{1}{V}\int_X v\bigg(\o_u^n - \frac{V}{\int_X e^{-u + f_0}\o^n}e^{-u + f_0}\o^n\bigg).
\end{equation}
From the proposition below it follows that $\frac{d}{dt}\big|_{t=0} I(P(u+tv))= \frac{1}{V}\int_X v \o_u^n.$ Consequently, to prove \eqref{eq: Dg(t)} it suffices to show that
\begin{equation}\label{eq: interm_dexp(t)}
\frac{d}{dt}\Big|_{t=0} \log \int_X e^{-u -tv + f_0}\o^n=\frac{1}{\int_X e^{-u + f_0}\o^n}e^{-u + f_0}\o^n.
\end{equation}
Using the elementary inequality $|e^{x}-e^y| \leq |x-y|(e^x + e^y)$ we can write that 
$$\Big|\frac{e^{-u -tv + f_0} - e^{-u -lv + f_0}}{t - l}\Big| \leq |v|( e^{-u - tv + f_0} + e^{-u -lv + f_0}) \leq C e^{-u}, \ \ \ l,t \in (-1,1).$$
Corollary \ref{cor: Skodacor} implies that the right most quantity in this inequality is integrable, hence we can conclude \eqref{eq: interm_dexp(t)} using the dominated convergence theorem.

Since $P(u+tv) \leq u +tv$, we notice that $g(0)= \mathcal F(u) \leq \mathcal F(P(u + tv)) \leq g(t), \  t \in \Bbb R.$ This implies that $\frac{d}{dt}\big|_{t=0} g(t)=0$, and by 
\eqref{eq: Dg(t)} we can conclude that \eqref{eq: KE_weak_eq} holds for $u \in \mathcal E_1(X,\o)$. Now using Theorem \ref{thm: SzToregularity} we conclude that $u$ is smooth, finishing the argument.
\end{proof}

\begin{proposition}\textup{\cite[Lemma 3.10]{bebo}} Suppose $u \in \mathcal E_1(X,\o)$ and $v \in C^\infty(X)$. Then $P(u + tv) \in \mathcal E_1(X,\o)$ for all $t \in \Bbb R$, and $t \to I(P(u+tv))$ is differentiable at $t=0$. More precisely,
$$\frac{d}{dt}\Big|_{t=0} I(P(u+tv))= \frac{1}{V}\int_X v \o_u^n.$$ 
\end{proposition}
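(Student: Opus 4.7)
The plan has three main steps: verify $P_t := P(u+tv) \in \mathcal{E}_1(X,\o)$ and that $\phi(t) := I(P_t)$ is concave, bracket $\phi'_\pm(0)$ between quantities both equal to $V^{-1}\int_X v\,\o_u^n$, and handle the non-monotone passage to the limit $t \to 0$ as the main technical obstacle.

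\textbf{Membership and concavity of $\phi$.} Since $v$ is smooth, $u+t\inf_X v$ is $\o$-psh and $\leq u+tv$ for $t\geq 0$ (and similarly $u+t\sup_X v$ for $t\leq 0$), so $u-|t|\|v\|_{C^0}\leq P_t\leq u+|t|\|v\|_{C^0}$. The monotonicity property (Corollary \ref{cor: monotonicity_E_chi}) then gives $P_t\in\mathcal{E}_1(X,\o)$. Concavity of $\phi$ comes from the observation that $(1-s)P_{t_0}+sP_{t_1}$ is $\o$-psh and dominated by $u+((1-s)t_0+st_1)v$, hence is a candidate for $P_{(1-s)t_0+st_1}$; combining the monotonicity of $I$ with its concavity along affine segments (obtained from \eqref{eq: I_energy_diff} by an integration by parts as in \eqref{eq: I_energy_int_parts_ineq}) yields $(1-s)\phi(t_0)+s\phi(t_1)\leq\phi((1-s)t_0+st_1)$. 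In particular the one-sided derivatives $\phi'_\pm(0)$ exist and satisfy $\phi'_+(0)\leq\phi'_-(0)$.

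\textbf{Bracketing $\phi'_\pm(0)$.} First, extending \eqref{eq: I_energy_diff_est} from bounded potentials to pairs in $\mathcal{E}_1$ with bounded difference (via canonical cutoffs and Proposition \ref{prop: MA_cont}), and applying its right-hand inequality with $P_t-u\leq tv$ yields
\[
\phi(t)-\phi(0) \leq \frac{1}{V}\int_X (P_t-u)\,\o_u^n \leq \frac{t}{V}\int_X v\,\o_u^n,
\]
so $\phi'_+(0)\leq V^{-1}\int_X v\,\o_u^n\leq\phi'_-(0)$. For the reverse inequalities, approximate $u$ by a decreasing sequence $u_k\in\mathcal{H}_\o$ from Theorem \ref{thm: BK_approx}, and set $P_t^{(k)}:=P(u_k+tv)$. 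Since $u_k+tv$ is smooth, \eqref{eq: coinc_vanish} shows that $\o_{P_t^{(k)}}^n$ is concentrated on the contact set $\{P_t^{(k)}=u_k+tv\}$, on which $P_t^{(k)}-u_k=tv$, so the left-hand inequality of \eqref{eq: I_energy_diff_est} becomes $I(P_t^{(k)})-I(u_k)\geq \frac{t}{V}\int_X v\,\o_{P_t^{(k)}}^n$. Since $u_k\searrow u$ and $P_t^{(k)}\searrow P_t$, Proposition \ref{prop: MA_cont} lets us pass to the limit to obtain $\phi(t)-\phi(0)\geq \frac{t}{V}\int_X v\,\o_{P_t}^n$.

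\textbf{The main obstacle.} It remains to show $\int_X v\,\o_{P_t}^n\to \int_X v\,\o_u^n$ as $t\to 0$, which does not follow directly from Proposition \ref{prop: MA_cont} since $t\mapsto P_t$ is not monotone. I plan to handle this at the level of the bounded approximants: using the identity $\o_{P_t^{(k)}}^n-\o_{u_k}^n = i\partial\bar\partial(P_t^{(k)}-u_k)\wedge\sum_{j=0}^{n-1}\o_{P_t^{(k)}}^j\wedge\o_{u_k}^{n-1-j}$ and integrating by parts twice (valid since $P_t^{(k)},u_k$ are bounded),
\[
\Big|\int_X v\big(\o_{P_t^{(k)}}^n-\o_{u_k}^n\big)\Big| = \Big|\sum_{j=0}^{n-1}\int_X (P_t^{(k)}-u_k)\, i\partial\bar\partial v \wedge \o_{P_t^{(k)}}^j\wedge \o_{u_k}^{n-1-j}\Big| \leq C\|v\|_{C^2}|t|V,
\]
the bound being uniform in $k$ thanks to the envelope sandwich $|P_t^{(k)}-u_k|\leq |t|\|v\|_{C^0}$. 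Letting $k\to\infty$ with Proposition \ref{prop: MA_cont} preserves the bound with $P_t$ and $u$ in place of $P_t^{(k)}$ and $u_k$, so $\int_X v(\o_{P_t}^n-\o_u^n)=O(t)$ as $t\to 0$. Feeding this into the lower estimate yields $\phi'_+(0)\geq V^{-1}\int_X v\,\o_u^n$ and $\phi'_-(0)\leq V^{-1}\int_X v\,\o_u^n$, closing the bracket and producing the claimed derivative formula.
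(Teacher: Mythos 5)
Your argument is correct and reaches the same conclusion, but it closes the key limit by a genuinely different device. Both you and the paper sandwich the difference quotient between $\frac{1}{V}\int_X v\,\o_{P(u+tv)}^n$ and $\frac{1}{V}\int_X v\,\o_u^n$, using \eqref{eq: I_energy_diff_est} on each side and the concentration of $\o_{P(u+tv)}^n$ on the contact set. The difference is in how the limit $\int_X v\,\o_{P(u+tv)}^n\to\int_X v\,\o_u^n$ is justified. The paper invokes ``continuity of the complex Monge--Amp\`ere operator under uniform convergence''; since $P(u+tv)$ and $u$ lie in $\mathcal E_1(X,\o)$ and may be unbounded, this is not the bare Bedford--Taylor statement and requires a separate argument (for instance passing through Theorem \ref{thm: d_1-convergence}: uniform convergence $\Rightarrow$ $I$ and $L^1(\o^n)$ convergence $\Rightarrow$ $d_1$-convergence $\Rightarrow$ weak convergence of the measures). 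Your integration-by-parts estimate
\[
\Big|\int_X v\big(\o_{P_t^{(k)}}^n-\o_{u_k}^n\big)\Big|=\Big|\sum_{j=0}^{n-1}\int_X (P_t^{(k)}-u_k)\,i\ddbar v\wedge \o_{P_t^{(k)}}^j\wedge\o_{u_k}^{n-1-j}\Big|\leq C(\o,\|v\|_{C^2})\,|t|\,V
\]
avoids that subtlety entirely and produces a quantitative $O(t)$ bound, at the modest cost of re-deriving the contact-set concentration through the smooth approximants $u_k$ (the paper quotes that concentration as a standalone lemma). Your remark that $t\mapsto I(P(u+tv))$ is concave is also a nice addition -- it guarantees the one-sided derivatives exist a priori, something the paper sidesteps by working with $\limsup$/$\liminf$ and reducing to $t>0$ via $v\mapsto -v$. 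All the pieces you assemble (monotonicity of $P$, the envelope sandwich, Proposition \ref{prop: MA_cont} for the decreasing $k$-limit, and the two sides of \eqref{eq: I_energy_diff_est}) are applied correctly.
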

In our approach we will follow closely the simplified argument proposed by Lu and Nguyen \cite{ln}. 
\begin{proof} We want to show that
$$\frac{I(P(u + tv)) - I(u)}{t} \to \frac{1}{V} \int_X v \o_u^n.$$
After changing $v$ to $-v$, it suffices to consider $t > 0$ in the above limit.
Using \eqref{eq: I_energy_diff} we can write 
$$\frac{I(P(u + tv)) - I(u)}{t}\leq  \frac{1}{V}\int_X \frac{P(u + tv)-u}{t} \o_u^n \leq \frac{1}{V}\int_X v \o_u^n, \ t >0.$$
By the same inequality we also have that 
$$\frac{1}{V}\int_X \frac{P(u + tv)-u}{t} \o_{P(u + tv)}^n \leq \frac{I(P(u + tv)) - I(u)}{t}.$$
By the lemma below $\o_{P(u + tv)}^n$ is concentrated on the coincidence set $\{P(u + tv)=u+tv \}$,  thus we have 
$$\frac{1}{V}\int_X v \o_{P(u + tv)}^n \leq \frac{I(P(u + tv)) - I(u)}{t}.$$
We also have  $|P(u + tv) - u| \leq t \sup_X |v|$, and 
since the complex Monge--Amp\`ere operator is continuous under uniform convergence we conclude that
$$\frac{1}{V}\int_X v \o_{u}^n \leq \liminf_{t \to 0}\frac{I(P(u + tv)) - I(u)}{t},$$
finishing the argument.
\end{proof}

Finally, we provide the lemma promised in the proof of the above proposition:
\begin{lemma} Suppose $u \in \mathcal E_1(X,\o)$ and $v \in C^\infty(X)$. Then $\int_{\{u + tv > P(u + tv)\}} \o_{P(u+tv)}^n=0.$
\end{lemma}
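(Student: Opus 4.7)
The strategy is to approximate $u$ from above by smooth potentials and reduce to the classical fact \eqref{eq: coinc_vanish} that for smooth data the Monge--Amp\`ere measure of the envelope vanishes on the noncontact set. By Theorem \ref{thm: BK_approx}, pick a decreasing sequence $\{u_k\} \subset \mathcal H_\omega$ with $u_k \searrow u$, and write $f_k := u_k + tv$, $f := u + tv$, $w_k := P(f_k)$, and $w := P(f)$. A standard argument gives $w_k \searrow w$ pointwise: monotonicity of the envelope in its datum yields $w_k \geq w_{k+1} \geq w$, while the limit $w_\infty := \lim_k w_k$ satisfies $w_\infty \leq f_k$ for all $k$, hence $w_\infty \leq f$, so $w_\infty \leq w$. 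Because $f \geq u - |t|\,\|v\|_{C^0}$, the monotonicity of finite energy classes (Corollary \ref{cor: monotonicity_E_chi}) yields $w \in \mathcal E_1(X,\omega)$, and each $w_k$ lies in $\mathcal H_\omega^{1,\bar 1}$ by Theorem \ref{thm: reg_thm_for_P(f)}, so $w_k$ is in particular continuous.

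Proposition \ref{prop: MA_cont} (with $h \equiv 1$) then gives the weak convergence $\omega_{w_k}^n \to \omega_w^n$. For each $k$, since $f_k$ is smooth and $w_k$ is continuous, the noncontact set $\{f_k > w_k\}$ is open, and the classical Perron-type fact \eqref{eq: coinc_vanish} gives
$$\omega_{w_k}^n(\{f_k > w_k\}) = 0.$$
The goal is to promote this to $\omega_w^n(\{f > w\}) = 0$. For each $\delta > 0$, I would use the set-theoretic inclusion
$$\{f > w + \delta\} \;\subset\; \{f_k > w_k + \delta/2\} \;\cup\; \{w_k - w \geq \delta/2\},$$
which holds because $f \leq f_k$ and, if $w_k(x) - w(x) < \delta/2$ and $f(x) > w(x) + \delta$, then $f_k(x) \geq f(x) > w(x) + \delta > w_k(x) + \delta/2$. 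As $w_k \searrow w$ pointwise and $\omega_w^n$ is a finite measure that assigns zero mass to the pluripolar set $\{w=-\infty\}$ (a characterizing property of $\mathcal E$; cf.\ Proposition \ref{prop: Lelong_E}), the second term tends to $0$ in $\omega_w^n$-measure by continuity of measures from above.

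For the first term, the issue is that $\omega_{w_k}^n$ vanishes on $\{f_k > w_k + \delta/2\}$ but we need to control $\omega_w^n$ there. The most efficient route is via the Portmanteau theorem on closed sets: by inner regularity it suffices to show $\omega_w^n(K) = 0$ for every compact $K \subset \{f > w + \delta\}$, and then use that $\omega_w^n(K) \geq \limsup_j \omega_{w_j}^n(K)$ if $K$ were closed in the weak-convergence framework—but this inequality goes the wrong way for closed sets. Instead, I would invoke the fact that the noncontact region is plurifine open, and use the Bedford--Taylor locality of the complex Monge--Amp\`ere operator on plurifine open sets (equation \eqref{eq: GZ_MP_local}) together with the comparison principle (Proposition \ref{prop: comp_princ_E}) applied to $w$ and $w + \varepsilon$-type perturbations to conclude directly that mass cannot concentrate on $\{f > w\}$, modulo the pluripolar set $\{u = -\infty\}$ which is $\omega_w^n$-null.

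\emph{Main obstacle.} The principal difficulty is precisely this last passage: because $u$ is only upper semicontinuous and unbounded, the noncontact set $\{u+tv > w\}$ is neither open nor closed, so weak convergence of $\omega_{w_k}^n$ to $\omega_w^n$ does not translate directly into vanishing of $\omega_w^n$ there. Overcoming this requires exploiting the plurifine locality of the Monge--Amp\`ere operator together with the fact that $\omega_w^n$ puts no mass on the pluripolar locus $\{u = -\infty\}$, reducing to arguments on sublevel sets where $u$ is bounded and $f$ is effectively continuous in the plurifine sense.
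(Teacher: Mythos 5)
There is a genuine gap. Your setup is correct (approximate $u$ by $u_k\searrow u$ from $\mathcal H_\o$, set $w_k := P(u_k+tv)$, $w := P(u+tv)$, invoke $\omega_{w_k}^n(\{u_k+tv>w_k\})=0$ for smooth data, note $w_k\searrow w$ and $\omega_{w_k}^n\to\omega_w^n$ weakly), and you correctly identify the obstacle: $\{u+tv>w\}$ is neither open nor closed, so weak convergence of measures does not by itself yield $\omega_w^n(\{u+tv>w\})=0$. But you do not overcome it. Your first attempt (the $\delta$-splitting combined with Portmanteau) you yourself acknowledge fails because the inequality goes the wrong way on closed sets, and your fallback -- invoking plurifine locality of the Monge--Amp\`ere operator together with the comparison principle applied to ``$w+\varepsilon$-type perturbations'' -- is stated only as a plan; it is not a proof, and it is not clear how to make it one given that $u$ is unbounded and only upper semicontinuous (quasi-continuity controls capacity, but capacity bounds for $\omega_w^n$ degenerate as $w$ becomes unbounded).

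The paper's argument closes exactly this gap by a simple but essential reformulation that you missed. Since $P(g)\leq g$, the function $u_k+tv-w_k$ is nonnegative, vanishing precisely on the contact set. Hence the statement $\omega_{w_k}^n(\{u_k+tv>w_k\})=0$ is \emph{equivalent} to the integral identity
$$\int_X (u_k+tv-w_k)\,\o_{w_k}^n=0.$$
This converts the assertion ``$\o_w^n$ gives no mass to an ugly set'' into ``a fixed nonnegative integrand integrates to zero,'' which is stable under the convergence machinery already in hand: Proposition \ref{prop: MA_cont} applies (with $\phi_k=u_k+t\sup_X|v|$, $\psi_k=v_k=w_k$, $h(l)=l$, and the bounded continuous factor $tv$ handled separately by weak convergence of $\o_{w_k}^n$), giving $\int_X(u+tv-w)\,\o_w^n=0$ in the limit, which by nonnegativity of the integrand is again equivalent to $\o_w^n(\{u+tv>w\})=0$. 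Your proposal never makes this conversion, and without it the set-level limit simply does not close.
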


\begin{proof} Using Theorem \ref{thm: BK_approx} we choose $u_k  \in \mathcal H_\o$ such that $u_k \searrow u$. By a  classical Perron type argument it follows that $\int_{\{u_k + tv > P(u_k + tv)\}} \o_{P(u_k+tv)}^n=0$ (see \cite[Corollary 9.2]{BT1}). This is equivalent to $\int_X (u_k + tv - P(u_k + t v))\o_{P(u_k + tv)}^n =0.$ 
As $u_k + t \sup_X |v| \geq P(u_k +tv)$,  Proposition \ref{prop: MA_cont} allows to take the limit $k \to \infty$ and conclude that 
$$\int_X (u + tv - P(u + t v))\o_{P(u + tv)}^n =0,$$
which is equivalent to $\int_{\{u + tv > P(u + tv)\}} \o_{P(u+tv)}^n=0.$ 
\end{proof}

\section{Properness of the K--energy and existence of KE metrics}

Given $u \in \mathcal H_\o$, the average of the scalar curvature of the metric $\o_u$ is independent of $u$, as by \eqref{prelricdifference} and integration by parts yields
\begin{flalign}\label{eq: av_Scal_curv}
\bar S = \frac{1}{V}\int_X S_{\o_u} \o_u^n=\frac{n}{V} \int_X \textup{Ric } \o_{u} \wedge \o_u^{n-1}= \frac{n}{V} \int_X \textup{Ric } \o \wedge \o^{n-1}.
\end{flalign}
Next we introduce Mabuchi's K--energy functional $\mathcal K: \mathcal H_\o \to \Bbb R$ \cite{m}:
\begin{equation}\label{eq: Ken_def}
\mathcal K(u)=\frac{1}{V} \int_X [\log\Big(\frac{\o_u^n}{\o^n}\Big)\o_u^n - {u}\sum_{j=0}^{n-1}\text{Ric }\o\wedge\o_u^j \wedge \o^{n-j-1}] + \bar S I(u).
\end{equation}
The reason behind this specific definition is the following variational  formula, which shows that critical points of the K--energy are exactly the \emph{constant scalar curvature K\"ahler} (csck) metrics:
\begin{proposition} \label{prop: prelKenergyvariation} For a smooth curve $(0,1) \ni t \to u_t \in \mathcal H_\o$ we have
$$\frac{d}{dt}\mathcal K(u_t)= \frac{1}{V} \int_X \dot u_t (\bar S - S_{\o_{u_t}})\o_{u_t}^n.$$
\end{proposition}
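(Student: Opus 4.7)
The plan is to split $\mathcal K$ into three pieces, differentiate each, and observe that auxiliary terms involving $\mathrm{Ric}\,\o$ cancel. Write $\mathcal K(u) = H(u) + L(u) + \bar S I(u)$, where
\begin{align*}
H(u) &= \tfrac{1}{V}\int_X \log(\o_u^n/\o^n)\,\o_u^n, \\
L(u) &= -\tfrac{1}{V}\int_X u\sum_{j=0}^{n-1}\text{Ric}\,\o\wedge\o_u^j\wedge\o^{n-j-1}.
\end{align*}
Set $\o_t:=\o_{u_t}$ and $f_t:=\log(\o_t^n/\o^n)$, so that $\o_t^n=e^{f_t}\o^n$. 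Using the conventions of the paper, $\tfrac{d}{dt}\o_t^n = n\,i\ddbar\dot u_t\wedge\o_t^{n-1} = \tfrac{1}{2}\Delta^{\o_t}\dot u_t\,\o_t^n$, from which $\dot f_t = \tfrac{1}{2}\Delta^{\o_t}\dot u_t$. I would also invoke the classical identity \eqref{eq: Ricci_diff_eq}, which gives $i\ddbar f_t = \mathrm{Ric}\,\o - \mathrm{Ric}\,\o_t$, together with the trace relation $n\,\mathrm{Ric}\,\o_t\wedge\o_t^{n-1}=S_{\o_t}\o_t^n$ that is implicit in \eqref{eq: av_Scal_curv}.

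For $H$, I would differentiate by the product rule, use $\int_X\Delta^{\o_t}\dot u_t\,\o_t^n=0$, and integrate by parts (self-adjointness of $\Delta^{\o_t}$) to get
\[
\tfrac{d}{dt}H(u_t) \;=\; \tfrac{1}{2V}\int_X \dot u_t\,\Delta^{\o_t}f_t\,\o_t^n \;=\; \tfrac{n}{V}\int_X \dot u_t\,i\ddbar f_t\wedge\o_t^{n-1}.
\]
Substituting $i\ddbar f_t=\mathrm{Ric}\,\o-\mathrm{Ric}\,\o_t$ and using the trace identity for $S_{\o_t}$,
\[
\tfrac{d}{dt}H(u_t) \;=\; \tfrac{n}{V}\int_X \dot u_t\,\mathrm{Ric}\,\o\wedge\o_t^{n-1} \;-\; \tfrac{1}{V}\int_X \dot u_t\,S_{\o_t}\,\o_t^n.
\]
For $\bar S I$, Lemma \ref{lem: I_differential} gives directly $\tfrac{d}{dt}\bar S I(u_t)=\tfrac{\bar S}{V}\int_X\dot u_t\,\o_t^n$.

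The decisive step, and the main obstacle, is to show that $\tfrac{d}{dt}L(u_t) = -\tfrac{n}{V}\int_X\dot u_t\,\mathrm{Ric}\,\o\wedge\o_t^{n-1}$, so the $\mathrm{Ric}\,\o\wedge\o_t^{n-1}$ pieces cancel between $H$ and $L$. I would differentiate $L$ by the product rule, producing a term in which $i\ddbar\dot u_t$ appears; integrating by parts moves $i\ddbar$ onto $u_t$, giving $\o_t-\o$ via $i\ddbar u_t=\o_t-\o$. Setting $c_j:=\mathrm{Ric}\,\o\wedge\o_t^j\wedge\o^{n-j-1}$, this yields
\[
\tfrac{d}{dt}L(u_t) \;=\; -\tfrac{1}{V}\int_X \dot u_t\Bigl(\sum_{j=0}^{n-1}c_j \;+\; \sum_{j=1}^{n-1} j\,(c_j-c_{j-1})\Bigr).
\]
An Abel summation on the second sum gives $\sum_{j=1}^{n-1}j(c_j-c_{j-1})=(n-1)c_{n-1}-\sum_{k=0}^{n-2}c_k$, so the bracket collapses to $n\,c_{n-1}=n\,\mathrm{Ric}\,\o\wedge\o_t^{n-1}$, as needed. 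Adding the three derivatives yields the claimed formula $\tfrac{d}{dt}\mathcal K(u_t)=\tfrac{1}{V}\int_X\dot u_t(\bar S-S_{\o_t})\o_t^n$.
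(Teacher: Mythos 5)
Your proposal is correct and follows essentially the same route as the paper: split $\mathcal K$ into the entropy, twisted-energy, and $\bar S I$ pieces, differentiate each under the integral, integrate by parts, and invoke \eqref{prelricdifference} together with the trace identity $n\,\mathrm{Ric}\,\o_u\wedge\o_u^{n-1}=S_{\o_u}\o_u^n$. The only difference is cosmetic—you make explicit the Abel summation that collapses $\sum_{j=0}^{n-1}c_j+\sum_{j=1}^{n-1}j(c_j-c_{j-1})$ to $n\,c_{n-1}$, which the paper folds into the phrase ``straightforward calculations.''
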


\begin{proof} By straightforward calculations we arrive at the identities:
$$\frac{d}{dt}\Big[\log\Big(\frac{\o_{u_t}^n}{\o^n}\Big)\o_{u_t}^n \Big] = \frac{1}{2}\Delta^{\o_{u_t}}\dot u_t\o_{u_t}^n + n\log\Big(\frac{\o_{u_t}^n}{\o^n}\Big) i\partial \bar \partial \dot u_t \wedge \o_{u_t}^{n-1},$$
$$\frac{d}{dt}\Big[{u_t}\sum_{j=0}^{n-1}\textup{Ric } \o\wedge\o_{u_t}^j \wedge \o^{n-j-1}\Big]= \sum_{j=0}^{n-1}\big({\dot u_t} \o_{u_t}^j +  j u_t \cdot i \ddbar \dot u_t \wedge \o_{u_t}^{j-1})\wedge \textup{Ric } \o \wedge \o^{n-j-1}.$$
Consequently, integration by parts and \eqref{prelricdifference} gives:
$$\frac{d}{dt}\int_X \log\Big(\frac{\o_{u_t}^n}{\o^n}\Big)\o_{u_t}^n = n \int_X \dot u_t \Big(i\partial\bar \partial \log\frac{\o_{u_t}^n}{\o^n} \Big)  \wedge \o_{u_t}^{n-1}=n \int_X \dot u_t \Big(\textup{Ric }\o -  \textup{Ric }\o_u\Big)  \wedge \o_{u_t}^{n-1},$$
$$\frac{d}{dt}\int_X{u_t}\sum_{j=0}^{n-1}\textup{Ric }\o\wedge\o_{u_t}^j \wedge \o^{n-j-1}= n\int_X\dot u_t \textup{Ric }\o \wedge\o_{u_t}^{n-1}.$$
The desired formula now follows after differentiating $t \to \mathcal K(u_t)$, and using the last two identities together with  Lemma \ref{lem: I_differential}.
\end{proof}

Trivially, KE metrics are csck. By the next result, in case $c_1(X)$ is a multiple of $[\o]$, the reverse is also true:

\begin{lemma} Suppose $c_1(X)=\lambda [\o]$. Then $u \in \mathcal H_\o$ is a csck potential if and only if it is a KE potential.
\end{lemma}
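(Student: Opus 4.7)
The forward implication is immediate: if $\textup{Ric }\o_u=\lambda\o_u$, then tracing with respect to $\o_u$ gives $S_{\o_u}=n\lambda$, which is constant, so $u$ is csck.

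For the reverse implication, the plan is to exploit the cohomological hypothesis $c_1(X)=\lambda[\o]$ to promote constancy of the scalar curvature to vanishing of a single Ricci potential. First I would observe that, since $[\textup{Ric }\o_u]=c_1(X)=\lambda[\o]=\lambda[\o_u]$, the closed real $(1,1)$-form $\textup{Ric }\o_u-\lambda\o_u$ is $d$-exact, and hence by the global $i\ddbar$-lemma of Hodge theory there exists $h\in C^\infty(X,\Bbb R)$ with
$$\textup{Ric }\o_u-\lambda\o_u=i\ddbar h.$$
Next I would compute $\bar S$ from the cohomological hypothesis: using \eqref{eq: av_Scal_curv}, $\bar S=\frac{n}{V}\int_X\textup{Ric }\o\wedge\o^{n-1}=\frac{n\lambda}{V}\int_X\o^n=n\lambda$, where the middle equality uses $c_1(X)=\lambda[\o]$.

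Now I would take the trace of the displayed identity with respect to $\o_u$. Since $n\,i\ddbar h\wedge\o_u^{n-1}=\tfrac12\Delta^{\o_u}h\cdot\o_u^n$ (the convention used implicitly throughout the excerpt, see the calculation following \eqref{eq: Riem_metric_def}), tracing yields
$$S_{\o_u}=n\lambda+\tfrac12\Delta^{\o_u}h.$$
Assuming $u$ is csck, so that $S_{\o_u}=\bar S=n\lambda$, this forces $\Delta^{\o_u}h\equiv 0$ on the compact manifold $X$; integrating against $h$ and using integration by parts shows $h$ is constant. Then $i\ddbar h=0$, so $\textup{Ric }\o_u=\lambda\o_u$, i.e., $u$ is a KE potential.

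The argument is essentially routine once the $\ddbar$-lemma is invoked, so I do not anticipate any serious obstacle; the only subtlety is making sure the trace convention aligns with the one used in the rest of the excerpt, and keeping careful track of the identification $\bar S=n\lambda$, which is what makes the cohomological normalization do all the work.
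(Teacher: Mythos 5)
Your argument is correct and essentially the same as the paper's: both trace the identity $\textup{Ric}\,\o_u-\lambda\o_u=i\ddbar(\textup{Ricci potential})$ with respect to $\o_u$, use $\bar S=n\lambda$, and conclude that the constancy of $S_{\o_u}$ forces the Ricci potential to be harmonic, hence constant, hence $\textup{Ric}\,\o_u=\lambda\o_u$. The only cosmetic difference is that you reconstruct the Ricci potential from the $\ddbar$-lemma on the spot, while the paper invokes its previously defined normalized potential $f_u$; the mechanism is identical.
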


\begin{proof} Suppose $u$ is a csck potential, i.e., $S_{\o_u}=\bar S$. Since $c_1(X)=\lambda [\o]$, by definition of the Ricci potential (see \eqref{eq: Ricci_pot_def}) and invariance of $\bar S$ (see \eqref{eq: av_Scal_curv}) we have 
$$\bar{S }=S_{\o_u} = n \lambda  + \frac{1}{2}\Delta^{\o_u} f_u=\overline{S }+ \frac{1}{2}\Delta^{\o_u} f_u,$$
hence  $\Delta^{\o_u} f_u =0$. The condition $\int_X e^{f_u}\o^n=V$ gives $f_u=0$, i.e., $\o_u$ is a KE metric.
\end{proof}

By this last lemma, in case $(X,\o)$ is Fano, the critical points of $\mathcal K$ are exactly the KE potentials, hence the K--energy plays a role similar to the $\mathcal F$ functional. Developing this analogy further, our main result in this section parallels Theorem \ref{thm: F_func_properness}, giving another characterization of existence of KE metrics, confirming a related conjecture of Tian (see \cite[Conjecture 7.12]{t1},\cite[p. 127]{t3}):

\begin{theorem}\textup{({\cite[Theorem 2.4]{dr2}})} 
\label{thm: K-energy_properness}
Suppose $(X,\o)$ is Fano with $c_1(X)=[\o]$, and set $G:=\textup{Aut}(X,J)_0$. The following are equivalent:\vspace{0.1cm}\\
\noindent (i) there exists a KE metric in $\mathcal H$.

\noindent (ii) 
For some $C,D >0$ the following holds:
\begin{equation}\label{eq: K_d_1_properness}
\mathcal K(u) \geq C d_{1,G}(G0,Gu) - D, \ \  u\in\mathcal H_\omega \cap I^{-1}(0).
\end{equation}

\noindent (iii) 
For some $C,D>0$ the following holds:
\begin{equation}\label{eq: K_J_properness}
\mathcal K(u) \geq C J_G(Gu) - D, \ \ u\in\mathcal H_\o \cap I^{-1}(0).
\end{equation}
\end{theorem}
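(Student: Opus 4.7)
The strategy is to mirror the proof of Theorem \ref{thm: F_func_properness} by applying the abstract existence/properness principle of Theorem \ref{thm: ExistencePrinc} to the data
$$
\mathcal R = \mathcal H_\o \cap I^{-1}(0), \quad d = d_1, \quad F = \mathcal K, \quad G = \textup{Aut}_0(X,J).
$$
The equivalence between (ii) and (iii) follows directly from Lemma \ref{lem: JGPropernessLemma}, just as in the Ding case, so the substantive work is to verify axioms \ref{a1}--\ref{a4} and properties \ref{p1}--\ref{p7}. Also, exactly as in the proof of Theorem \ref{thm: F_func_properness}, one argues upfront that energy properness \eqref{eq: K_d_1_properness} forces $G$-invariance of $\mathcal K$: indeed, $\mathcal K$ would then be bounded below, while along any one-parameter subgroup $\Bbb R\ni t\to \rho_t\in G$ the function $t\to\mathcal K(\rho_t.u)$ is affine (the analog of Lemma \ref{lem: F_one_par_linear}(i) for $\mathcal K$, provable from Proposition \ref{prop: prelKenergyvariation} by a calculation identical in spirit to \eqref{eq: F_deriv_calc}), hence constant.

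Most items carry over from the Ding case with only minor modifications. Axiom \ref{a1} is just Theorem \ref{thm: EpComplete} plus $d_1$-continuity of $I$ (Proposition \ref{prop: I_finiteness_and_cont}). For \ref{a2} one must first extend $\mathcal K$ to $\mathcal E_1(X,\o)$; the natural extension uses the Chen--Tian decomposition which splits $\mathcal K$ as the sum of the relative entropy $u\mapsto \frac{1}{V}\int_X \log(\o_u^n/\o^n)\,\o_u^n$ (which is automatically $d_1$-lsc by Fatou-type arguments) plus integrals that are $d_1$-continuous by Proposition \ref{prop: MA_cont} and Theorem \ref{thm: d_1-convergence}; the largest lsc extension coincides with this formula. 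Axiom \ref{a4} and property \ref{p4} come from Lemma \ref{lem: dpIsomLemma}. Property \ref{p7} follows from a straightforward change-of-variables using $f^* \o_u^n = \o_{f.u}^n$ and $\textup{Ric}\,f^* \o = f^* \textup{Ric}\,\o$ applied term-by-term in \eqref{eq: Ken_def}, mimicking Lemma \ref{lem: F_one_par_linear}(ii).

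The three genuinely substantial ingredients are the following. Property \ref{p1} (continuity and convexity of $t\mapsto \mathcal K(u_t)$ along finite energy geodesics) is the theorem of Berman--Berndtsson; unlike for $\mathcal F$, where we gave a clean elementary proof via $\varepsilon$-geodesics in Section 4.4, for $\mathcal K$ this convexity is deep and requires delicate plurisubharmonic variation arguments. Property \ref{p2} (compactness of $d_1$-bounded $\mathcal K$-minimizing sequences in $\mathcal E_1$) is obtained by combining the $L^1$-compactness of sublevel sets from Lemma \ref{lem: L_1_closedness} with entropy lower semicontinuity and a Skoda-type integrability bound (Corollary \ref{cor: Skodacor}), in the spirit of Theorem \ref{thm: E_1_F_min_compactness}. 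Property \ref{p3} (smoothness of $\mathcal K$-minimizers) is a regularity result: a weak minimizer satisfies a weak csck equation, and on a Fano manifold with $c_1(X) = [\o]$ one shows any such minimizer is automatically a weak $\mathcal F$-minimizer (using the standard inequality $\mathcal K \geq \mathcal F -C$), reducing smoothness to Theorem \ref{thm: F_min_regularity}. Finally, \ref{p5} follows from \ref{p3} and the Bando--Mabuchi uniqueness (Theorem \ref{thm: BMuniqueness}).

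The main obstacle is property \ref{p1}, the convexity of $\mathcal K$ along weak geodesics of $\mathcal E_1(X,\o)$. Unlike the Ding case, where convexity reduces to Lemma \ref{lem: F_eps_convex} via a pointwise eigenfunction estimate (Proposition \ref{prop: Fut_ineq}) combined with passage to the limit through $\varepsilon$-geodesics, the convexity of the K-energy mixes the behavior of the Monge--Amp\`ere measures $\o_{u_t}^n$ along the geodesic with that of the entropy term, and only the latter is handled by general positivity-of-direct-image machinery. Once \ref{p1} is available, the remaining pieces assemble as above, and Theorem \ref{thm: ExistencePrinc} (together with Remark \ref{rem: existence_princ}, which allows one to use \ref{p2} rather than the stronger $\textup{\ref{p2*}}^*$) delivers the equivalence of (i) and (ii).
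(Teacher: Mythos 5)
Your proposal applies Theorem \ref{thm: ExistencePrinc} directly to the data $(\mathcal H_\o\cap I^{-1}(0),d_1,\mathcal K,\textup{Aut}_0(X,J))$, which is precisely the route the paper \emph{declines} to take (``Instead of doing this, we will rely on the special relationship between the K-energy and the $\mathcal F$ functional''). The paper's actual proof is shorter and, within the survey, fully self-contained: for (i)$\Rightarrow$(ii) it just combines the inequality $\mathcal F\leq\mathcal K-\frac{1}{V}\int_X f_0\,\o^n$ of Proposition \ref{prop: K_F_relation} with the Ding-energy properness already established in Theorem \ref{thm: F_func_properness}; for (ii)$\Rightarrow$(i) it extracts a $d_1$-bounded $\mathcal K$-minimizing sequence, bounds the entropy via \eqref{eq: K-en_alt_def}, Proposition \ref{prop: I_finiteness_and_cont} and Lemma \ref{lem: I_twist_ExtLemma}, applies the compactness Theorem \ref{thm: E_1_compactness} together with $d_1$-lower-semicontinuity of $\mathcal K$ to produce a minimizer, and then invokes the equivalence (iii)$\Leftrightarrow$(iv) of Proposition \ref{prop: K_F_relation} to conclude that the minimizer is a smooth KE potential. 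The principal advantage of this route is that it completely sidesteps the geodesic convexity of $\mathcal K$: you correctly identify your property \ref{p1} as the crux of the direct approach, and that it is the deep Berman--Berndtsson theorem --- but this theorem is never proven in the survey (only the far more elementary $\mathcal F$-convexity of Theorem \ref{thm: F_convex} is), so your argument imports a substantial external ingredient that the paper's shortcut renders unnecessary. A second, smaller gap in your sketch concerns property \ref{p3}: the inequality $\mathcal K\geq\mathcal F-c$ alone does not imply that a $\mathcal K$-minimizer is an $\mathcal F$-minimizer; one also needs the two infima to coincide up to the constant, and in the paper this extra step is supplied exactly by the Ricci-iteration estimate of Lemma \ref{lem: Ricci_it}(ii), which you would need to invoke as well. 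So your proposal is a coherent alternative strategy in principle, but it is less elementary and not self-contained, whereas the paper's reduction via the Ding energy keeps everything internal to the survey.
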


 For the resolution of other closely related conjectures we refer to \cite{dr2}. It is possible to give a proof for this theorem by verifying the conditions of the existence/properness principle (Theorem \ref{thm: ExistencePrinc}) directly, the same way as we did with Theorem \ref{thm: F_func_properness}. Instead of doing this, we will rely on the special relationship between the K--energy and the $\mathcal F$ functional (see Proposition \ref{prop: K_F_relation} below). 

First we have to show that $\mathcal K$ is $d_1$--lsc, and \eqref{eq: Ken_def} gives the $d_1$--lsc extension of this functional to $\mathcal E_1(X,\o)$. This will be done in a series of lemmas and propositions:

\begin{lemma}\textup{\cite[Lemma 5.23]{dr2}}
\label{lem: I_twist_ExtLemma}
Suppose $\alpha$ is a smooth closed $(1,1)$-form on $X$. The functional $I_\alpha: \mathcal H_\o \to \Bbb R$ given by
$$ I_\alpha(u)=  \sum_{j=0}^{n-1}\int_X u \alpha \wedge \o_u \wedge \o^{n-1-j}
$$
is $d_1$--continuous, and extends uniquely to a $d_1$--continuous functional $I_\alpha:\mathcal E_1(X,\o)\to \Bbb R$. Additionally, $I_\alpha$ is bounded on $d_1$--bounded subsets of $\mathcal E_1(X,\o)$.
\end{lemma}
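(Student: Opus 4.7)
The plan is to establish a modulus-of-continuity estimate
$$|I_\alpha(u)-I_\alpha(v)|\le g_R(d_1(u,v)),\qquad g_R(0)=0,$$
on $d_1$-balls of radius $R$ in $\mathcal H_\omega$; from this, the $d_1$-continuous extension to $\mathcal E_1(X,\omega)$ and the boundedness on $d_1$-bounded sets will both follow by density of $\mathcal H_\omega$ (Theorem \ref{thm: BK_approx}, Corollary \ref{cor: d_p_monotone_limit}, Theorem \ref{thm: EpComplete}) together with the Cauchy-sequence extension principle of Lemma \ref{lem: LipschitzExt}. Since $\alpha$ is smooth and $X$ compact, there exists $C_\alpha>0$ with $C_\alpha\omega\pm\alpha\ge 0$, which will allow all estimates to be reduced to the semi-positive case.

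First I would derive a variational formula for $I_\alpha$: differentiating term-by-term along a smooth curve $t\mapsto v_t\in\mathcal H_\omega$, performing integration by parts as in Lemma \ref{lem: I_differential} to move $i\partial\bar\partial$ from $\dot v_t$ onto $v_t$, and reindexing the resulting sums so that they telescope (only the contribution with $j=n-1$ surviving), one obtains
$$\frac{d}{dt}I_\alpha(v_t)=n\int_X\dot v_t\,\alpha\wedge\omega_{v_t}^{n-1}.$$
Applied along the affine path $v_t=(1-t)v+tu$, then expanding $\omega_{v_t}^{n-1}$ by the binomial theorem and using $\int_0^1 t^k(1-t)^{n-1-k}\,dt=k!(n-1-k)!/n!$, this yields the key identity
$$I_\alpha(u)-I_\alpha(v)=\sum_{k=0}^{n-1}\int_X(u-v)\,\alpha\wedge\omega_u^k\wedge\omega_v^{n-1-k},\qquad u,v\in\mathcal H_\omega,$$
in full analogy with \eqref{eq: I_energy_diff}.

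Combining this identity with $C_\alpha\omega\pm\alpha\ge 0$ (after splitting $u-v$ into its positive and negative parts) produces
$$|I_\alpha(u)-I_\alpha(v)|\le C_\alpha\sum_{k=0}^{n-1}\int_X|u-v|\,\omega\wedge\omega_u^k\wedge\omega_v^{n-1-k}.$$
The crucial geometric step is to dominate each mixed measure by a single complex Monge--Amp\`ere measure. Setting $\psi:=(u+v)/3\in\mathcal H_\omega$, one has $3\omega_\psi=\omega+\omega_u+\omega_v$, so each of $\omega,\omega_u,\omega_v$ is majorised by $3\omega_\psi$ (their differences being sums of semi-positive $(1,1)$-forms). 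Iterating this factor-by-factor gives $\omega\wedge\omega_u^k\wedge\omega_v^{n-1-k}\le 3^n\omega_\psi^n$, and Corollary \ref{cor: int_d1_est} then produces
$$|I_\alpha(u)-I_\alpha(v)|\le nC_\alpha 3^n\,\tilde f_{C(R)}(d_1(u,v)),$$
provided $\psi$ is itself $d_1$-bounded in terms of $R$. Setting $v=0$ (and observing $I_\alpha(0)=0$) gives the claimed boundedness on $d_1$-balls, and uniform continuity on $d_1$-balls produces the continuous extension to $\mathcal E_1(X,\omega)$ in the manner indicated above.

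The main technical obstacle will be the $d_1$-boundedness of $\psi=(u+v)/3$ in terms of $d_1(0,u)$ and $d_1(0,v)$. Membership $\psi\in\mathcal E_1(X,\omega)$ is immediate from Corollary \ref{cor: E_chi_convex}, but the metric bound requires care. I would use Theorem \ref{thm: Energy_Metric_Eqv} to reduce to controlling $\int_X|\psi|\omega^n$ (immediate from $|\psi|\le\tfrac{1}{3}(|u|+|v|)$ together with Theorem \ref{thm: Energy_Metric_Eqv} applied to $u$ and $v$) and $\int_X|\psi|\omega_\psi^n$; for the latter, expanding $\omega_\psi^n=3^{-n}(\omega+\omega_u+\omega_v)^n$ via the multinomial formula reduces matters to mixed integrals $\int_X|\psi|\omega^a\wedge\omega_u^b\wedge\omega_v^c$, each of which is handled by the same AM--GM-type domination combined with Corollary \ref{cor: int_d1_est} applied to $u$ and $v$, or by direct Cauchy--Schwarz bounds in the spirit of Proposition \ref{prop: mixed_finite_prop: Energy_est}. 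Closing this loop completes the proof of the lemma.
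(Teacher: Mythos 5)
Your overall strategy matches the paper's: derive an explicit difference formula for $I_\alpha$, bound $\alpha$ by a multiple of $\omega$, dominate the resulting mixed Monge--Amp\`ere measures by a single one coming from a convex combination of $u,v$, and then invoke Corollary \ref{cor: int_d1_est}. The difference formula you derive and the domination of $\omega\wedge\omega_u^k\wedge\omega_v^{n-1-k}$ by $3^n\omega_\psi^n$ with $\psi=(u+v)/3$ are both correct.

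The gap is in your final step, controlling $d_1(0,\psi)$. As stated, your argument is circular: to bound $\int_X|\psi|\,\omega_\psi^n$ you propose a multinomial expansion of $\omega_\psi^n$, re-domination by $3^n\omega_\psi^n$, and then Corollary \ref{cor: int_d1_est}; but that corollary takes as an \emph{input} a bound on $d_1(0,\psi)$, which is exactly what you are trying to produce. The paper sidesteps this by choosing $\psi = (u+v)/4$ rather than $(u+v)/3$. This choice is deliberate: one still has $\omega_{(u+v)/4}=\tfrac12\omega+\tfrac14\omega_u+\tfrac14\omega_v$, so the measure domination goes through with $4^n$ in place of $3^n$, \emph{and} $(u+v)/4 = \tfrac12\bigl(u/2 + v/2\bigr)$ is an iterated midpoint, so two applications of Lemma \ref{lem: halwayest} together with the triangle inequality give $d_1\bigl(0,(u+v)/4\bigr)\le C(n)\bigl(d_1(0,u)+d_1(0,v)\bigr)$ directly. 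Your $1/3$ normalization does not have this midpoint structure, which is why the halving lemma cannot be applied to it. If you insist on keeping $(u+v)/3$, a non-circular route is to use Lemma \ref{lem: d1bounded_char}: $\sup_X\psi$ is trivially controlled, and $I(\psi)\ge I(P(u,v,0))$ by monotonicity of $I$ (since $\psi\ge P(u,v,0)$), with the right-hand side controlled via Proposition \ref{prop: env_exist} and Proposition \ref{prop: I_finiteness_and_cont}; but switching to $1/4$ and Lemma \ref{lem: halwayest} is the cleaner fix.
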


\begin{proof}Let $u_k \in \mathcal H_\o$ and $u \in \mathcal E_1(X,\o)$ 
be such that $d_1(u_k,u) \to 0$.  An argument similar to that yielding \eqref{eq: I_energy_diff} shows that
$$
I_\alpha(u_l) -I_\alpha(u_k)= \sum_{j=0}^{n-1} \int_X (u_l-u_k) \alpha \wedge \o_{u_l}^{j} \wedge \o_{u_k}^{n-1-j}.
$$
For some $D>0$ we have $-D\o \leq \alpha \leq D\o$. Thus,
$\o_{(u_l+u_k)/{4}}=\o/2+\o_{u_l}/4+\o_{u_k}/4$ and for some $C>0$ we can write 
\begin{equation}\label{eq: eqintegralest}
\big|I_\alpha(u_l) - I_\alpha(u_k)\big| \leq C \int_X |u_l-u_k| 
\o_{(u_l+u_k)/{4}}^n.
\end{equation}
Using Lemma \ref{lem: halwayest} and the triangle inequality, $d_1(0,(u_l + u_k)/4)$ is bounded. By Corollary \ref{cor: int_d1_est}, we obtain that $\{I_\alpha(u_k)\}_k$ is a Cauchy sequence, showing that $I_\alpha$ is $d_1$--continuous and it that extends $d_1$--continuously to $\mathcal E_1(X,\o)$.

To argue $d_1$--boundedness of $I_\alpha$, we turn again to \eqref{eq: eqintegralest} (with $u_k=0$). By this estimate and Corollary \ref{cor: int_d1_est}, it is enough to show that if $d_1(0,u)$ is bounded then so is $d_1(0,u/4)$. This is a consequence of Lemma \ref{lem: halwayest}.
\end{proof}

As we will see shortly, the entropy functional $u \to \int_X \log\big(\frac{\o_u^n}{\o^n}\big)\o_u^n$ is only $d_1$--lsc. In fact, this functional is already lsc with respect to weak convergence of measures, as it follows from our discussion below and the next proposition.

Suppose $\nu,\mu$ are Borel probability measures on $X$. If $\nu$ is absolutely continuous with respect to $\mu$ then the entropy of $\nu$ with respect to $\mu$ is $\textup{Ent}(\mu,\nu)= \int_X \log \big(\frac{\nu}{\mu}\big)\frac{\nu}{\mu} \mu$, otherwise $\textup{Ent}(\mu,\nu)= \infty.$

The next well known result follows from the classical Jensen  inequality:
\begin{lemma} \label{lem: Ent_ineq_class} Suppose $\nu,\mu$ are Borel probability measures on $X$. Then $\textup{Ent}(\mu,\nu) \geq 0$ and equality holds if and only if $\nu = \mu$.
\end{lemma}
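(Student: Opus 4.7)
The plan is to reduce to the absolutely continuous case and apply Jensen's inequality to the strictly convex function $\phi(t)=t\log t$ (with the convention $\phi(0)=0$), which is the standard route for this classical inequality.

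First I would dispose of the singular case: if $\nu$ is not absolutely continuous with respect to $\mu$, then by definition $\textup{Ent}(\mu,\nu)=+\infty$, so the inequality is trivial, and equality cannot hold because $\nu=\mu$ would force absolute continuity.

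Next, assume $\nu\ll\mu$ and write $f=d\nu/d\mu\ge 0$. Since both measures are probability measures, $\int_X f\, d\mu=\nu(X)=1$. The function $\phi(t)=t\log t$ is convex on $[0,\infty)$ and strictly convex on $(0,\infty)$, so Jensen's inequality applied with respect to the probability measure $\mu$ yields
$$\textup{Ent}(\mu,\nu)=\int_X \phi(f)\, d\mu \;\ge\; \phi\!\left(\int_X f\, d\mu\right)=\phi(1)=0.$$
For the equality case, strict convexity of $\phi$ on $(0,\infty)$ forces $f$ to be constant $\mu$-almost everywhere on $\{f>0\}$; combined with the normalization $\int_X f\, d\mu=1$, this gives $f\equiv 1$ $\mu$-a.e., hence $\nu=\mu$. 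Conversely, if $\nu=\mu$ then $f\equiv 1$ and $\textup{Ent}(\mu,\nu)=\int_X 1\cdot\log 1\, d\mu=0$.

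There is no real obstacle: the only minor care needed is the treatment of the set $\{f=0\}$ in the strict-convexity argument, but since $\phi(0)=\lim_{t\to 0^+}\phi(t)=0$ and the contribution of this set to the integral vanishes, the equality discussion is unaffected.
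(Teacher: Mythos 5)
Your proposal is correct and follows the same route as the paper: Jensen's inequality applied to the convex weight $t\mapsto t\log t$, with strict convexity yielding the equality case. One small imprecision: strict convexity only forcing $f$ constant on $\{f>0\}$ does not by itself give $f\equiv 1$; but in fact $t\mapsto t\log t$ (with the value $0$ at $t=0$) is strictly convex on all of $[0,\infty)$, which forces $f$ constant $\mu$-a.e., and then $\int_X f\,d\mu=1$ finishes the argument — alternatively one can use the pointwise inequality $t\log t\ge t-1$ with equality iff $t=1$.
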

\begin{proof} $\textup{Ent}(\mu,\nu) \geq 0$ follows from an application of Jensen's inequality to the convex weight $\rho(x):=x \log x$. As $\rho$ is strictly convex on $\Bbb R^+$, the proof of Jensen's inequality implies that $\textup{Ent}(\mu,\nu) = 0$ if and only if $\frac{\mu}{\nu}=1$  (see \cite[Chapter 3, Theorem 3.3]{rud}).  
\end{proof}

Let us recall the following classical formula for the entropy of two measures:

\begin{proposition}\label{prop: ent_dual_form} Suppose $\mu,\nu$ are probability Borel measures on $X$. Then the following holds: 
\begin{equation}\label{eq: Ent_formula}
\textup{Ent}(\mu,\nu) = \sup_{f \in B(X)}\Big(\int_X f \nu - \log \int_X e^{f}\mu \Big),
\end{equation}
where $B(X)$ is the set of bounded Borel measurable functions on $X$.
\end{proposition}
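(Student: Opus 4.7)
The identity is the classical Gibbs variational (Donsker--Varadhan) formula, and I would establish the two inequalities separately, with the previous lemma playing the central role in the nontrivial direction.

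For the inequality $\geq$, fix $f \in B(X)$. If $\nu$ is not absolutely continuous with respect to $\mu$, then $\textup{Ent}(\mu,\nu) = +\infty$ and there is nothing to prove. Otherwise set $h := d\nu/d\mu$ and introduce the probability measure $\tilde\mu_f := e^f \mu / \int_X e^f d\mu$, which is well defined since $f$ is bounded. A direct computation gives
\begin{equation*}
\textup{Ent}(\tilde\mu_f,\nu) = \int_X \log\!\bigg(\frac{d\nu}{d\tilde\mu_f}\bigg) d\nu = \textup{Ent}(\mu,\nu) - \int_X f \, d\nu + \log \int_X e^f d\mu,
\end{equation*}
and Lemma \ref{lem: Ent_ineq_class} applied to $(\tilde\mu_f,\nu)$ forces the right hand side to be nonnegative, yielding the desired inequality.

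For the inequality $\leq$, I first dispose of the case where $\nu$ is not absolutely continuous with respect to $\mu$: pick a Borel set $A$ with $\mu(A) = 0$ and $\nu(A) > 0$, take $f_t := t \mathbbm{1}_A \in B(X)$, and observe that $\int_X f_t d\nu - \log \int_X e^{f_t} d\mu = t \nu(A) - \log(1 - \mu(A) + e^t \mu(A)) = t\nu(A) \to +\infty$, matching $\textup{Ent}(\mu,\nu) = +\infty$. When $\nu \ll \mu$ with density $h$, I would take the natural candidate $f = \log h$, but truncate it to stay in $B(X)$: set $h_n := \min(\max(h,1/n),n)$ and $f_n := \log h_n$, which is bounded. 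A straightforward algebraic identity gives
\begin{equation*}
\int_X f_n \, d\nu - \log \int_X e^{f_n} d\mu = \int_X h \log h_n \, d\mu - \log \int_X h_n \, d\mu.
\end{equation*}

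The main technical step is to show $\liminf_n \int_X h \log h_n \, d\mu \geq \int_X h \log h \, d\mu$ and $\int_X h_n \, d\mu \to 1$. The second convergence is immediate from dominated convergence since $0 \leq h_n \leq h + 1 \in L^1(\mu)$. For the first, I split $h \log h_n$ into its positive and negative parts: on $\{h \geq 1\}$ the integrand $h \log h_n$ is nonnegative and increases to $h \log h$, so monotone convergence gives the limit; on $\{h < 1\}$ the integrand is bounded in absolute value by $h |\log(1/n)|$ on the set $\{h < 1/n\}$, whose $\nu$-mass vanishes in the limit, plus $h |\log h|$ on the remaining set, which is integrable precisely because $\textup{Ent}(\mu,\nu) < \infty$ (the only interesting case). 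Combining these limits yields $\textup{Ent}(\mu,\nu) \leq \lim_n (\int_X f_n d\nu - \log \int_X e^{f_n} d\mu) \leq \sup_{f \in B(X)}(\int_X f d\nu - \log \int_X e^f d\mu)$, completing the proof. The integrability step on $\{h < 1\}$ is the only subtle point; everything else is bookkeeping with Jensen's inequality and monotone/dominated convergence.
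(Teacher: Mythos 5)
Your proof is correct, and it is worth comparing the two arguments direction by direction. For the inequality $\textup{Ent}(\mu,\nu)\leq\sup_f(\ldots)$ your truncation $f_n=\log h_n$ with $h_n=\min(\max(h,1/n),n)$ is exactly the paper's choice (there denoted $\log g_k$); you simply carry out the limiting argument more carefully than the paper, which essentially waves at it. For the reverse inequality $\textup{Ent}(\mu,\nu)\geq\sup_f(\ldots)$ the routes genuinely diverge: the paper applies Jensen's inequality by hand, writing $\log\int_X e^f\mu\geq\int_{\{g>0\}}(f-\log g)\,g\,\mu$, whereas you introduce the exponentially tilted probability measure $\tilde\mu_f=e^f\mu/\int e^f d\mu$, observe that $\textup{Ent}(\tilde\mu_f,\nu)=\textup{Ent}(\mu,\nu)-\int f\,d\nu+\log\int e^f d\mu$, and invoke Lemma \ref{lem: Ent_ineq_class} to conclude. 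The two arguments are logically equivalent (both reduce to convexity of $x\log x$), but your version is cleaner in that it reuses the previous lemma as a black box rather than redoing Jensen, and makes the Donsker--Varadhan structure of the formula visible.

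One small inaccuracy that does not affect correctness: on $\{h<1\}$ the function $h|\log h|$ is \emph{always} integrable since $x\log x\geq -1/e$ there, so no finiteness hypothesis on $\textup{Ent}(\mu,\nu)$ is needed for that piece. The finiteness of the entropy only controls the positive part $\int_{\{h\geq 1\}}h\log h\,d\mu$, and on that set you already use monotone convergence, which handles the case $\textup{Ent}(\mu,\nu)=+\infty$ automatically. Your argument goes through either way, but the phrase ``integrable precisely because $\textup{Ent}(\mu,\nu)<\infty$'' attributes the work to the wrong part of the decomposition.
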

\begin{proof} In case $\nu$ is not absolutely continuous with respect to $\mu$ then there exists a Borel set $M \subset X$ with $\mu(M)=0$ but $\nu(M) >0$. Then we trivially have that $\int_X c\mathbbm{1}_M \nu - \log \int_X e^{c\mathbbm{1}_M}\mu =\int_X c\mathbbm{1}_M \nu - \log \int_X e^0 \mu= c \nu(M)$ for any $c>0$ and consequently,
$$\sup_{f \in B(X)}\Big(\int_X f \nu - \log \int_X e^{f}\mu \Big) = \infty,$$ which is equal to $\textup{Ent}(\mu,\nu)$ by definition.

We assume now that $\nu$ is absolutely continuous with respect to $\mu$, i.e. $\nu = g \mu$ for some non--negative Borel measurable function $g$. To conclude \eqref{eq: Ent_formula} we need to show that
\begin{equation}\label{eq: Ent_formula_alternative}
\int_X g \log g \mu = \sup_{f \in B(X)}\Big(\int_X f g \mu - \log \int_X e^{f}\mu \Big).
\end{equation}
By choosing $f_k = \log g_k:=\log (\min(\max(g,1/k),k)), \ k \in \Bbb N$. We get that the right hand side of \eqref{eq: Ent_formula_alternative} is greater then $\int_X g \log g_k \mu - \log \int_X g_k \mu$. Letting $k \to \infty$, we conclude that the right hand side of \eqref{eq: Ent_formula_alternative} is greater then the left hand side.

For the other direction, we need to argue that  $\int_X g \log g \mu  \geq \int_X f g \mu - \log \int_X e^{f}\mu$ for any $f \in B(X)$. For this it is enough to invoke Jensen's inequality:
$$\log \int_X e^f \mu \geq \log \int_{\{g >0\}} \frac{e^f}{g}g \mu \geq \int_{\{g >0\}} ({f} - \log g)g \mu=\int_{X} ({f} - \log g)g \mu.$$
\end{proof}
As the supremum of continuous functionals is lsc, it follows that $\nu \to \textup{Ent}(\mu,\nu)$ is lsc with respect to weak convergence of Borel measures.

Theorem \ref{thm: d_1-convergence}(ii) implies that for any $u_k,u \in \mathcal E_1(X,\o)$ we have that $d_1( u_k,u) \to 0$ implies $\o_{u_k}^n \to \o_u^n$ weakly. 
We arrive at the following important corollary:

\begin{corollary} \label{cor: Ent_d1_lsc} The functional $\mathcal E_1(X,\o) \ni u \to \textup{Ent}\big(\frac{1}{V}\o^n,\frac{1}{V}\o_u^n\big) \in \Bbb R$ is $d_1$--lsc.
\end{corollary}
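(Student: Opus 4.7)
The plan is to combine two ingredients that are essentially already in place. First, by Theorem \ref{thm: d_1-convergence}(ii), if $u_k,u \in \mathcal E_1(X,\o)$ with $d_1(u_k,u) \to 0$, then $\o_{u_k}^n \to \o_u^n$ weakly as Borel measures; equivalently, the probability measures $\tfrac{1}{V}\o_{u_k}^n$ converge weakly to $\tfrac{1}{V}\o_u^n$. Second, I would invoke the weak lower semicontinuity of the relative entropy functional $\nu \mapsto \textup{Ent}(\mu,\nu)$, which follows from the variational formula of Proposition \ref{prop: ent_dual_form}. Once both facts are in hand, the corollary is immediate:
$$\textup{Ent}\big(\tfrac{1}{V}\o^n,\tfrac{1}{V}\o_u^n\big) \leq \liminf_k \textup{Ent}\big(\tfrac{1}{V}\o^n,\tfrac{1}{V}\o_{u_k}^n\big).$$

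For the lower semicontinuity of $\nu \mapsto \textup{Ent}(\mu,\nu)$ with respect to weak convergence, I would argue as follows. By Proposition \ref{prop: ent_dual_form}, $\textup{Ent}(\mu,\nu)$ equals the supremum of $\nu \mapsto \int_X f\,d\nu - \log \int_X e^{f}d\mu$ over $f \in B(X)$. The same supremum is attained (in value) over $f \in C(X)$: the inequality $\sup_{C(X)} \leq \sup_{B(X)} = \textup{Ent}(\mu,\nu)$ is trivial, while the reverse follows by approximating the optimizing Borel functions $\log g_k$ (with $g_k$ the truncations of the Radon–Nikodym density) by continuous functions in $L^1(\mu)$, using Lusin's theorem and dominated convergence, which is permissible because the $g_k$ are bounded above and below. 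Each functional $\nu \mapsto \int_X f\,d\nu - \log\int_X e^{f}d\mu$ with $f \in C(X)$ is weakly continuous in $\nu$, so the supremum over $f \in C(X)$ is weakly lsc.

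The only mild subtlety is the step identifying the $C(X)$-supremum with the $B(X)$-supremum; everything else is a direct appeal to results stated earlier. In particular, no new pluripotential-theoretic input is required beyond Theorem \ref{thm: d_1-convergence}(ii).
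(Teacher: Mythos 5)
Your proposal matches the paper's approach exactly: both combine Theorem \ref{thm: d_1-convergence}(ii) (which gives weak convergence of the Monge--Amp\`ere measures under $d_1$--convergence) with the weak lower semicontinuity of relative entropy coming from the variational formula of Proposition \ref{prop: ent_dual_form}. Your additional care in showing that the supremum over $B(X)$ in Proposition \ref{prop: ent_dual_form} may be taken over $C(X)$ without changing its value (so that the inner functionals are genuinely weakly continuous) fills in a step the paper leaves implicit, and is a welcome clarification.
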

Comparing with \eqref{eq: Ken_def}, we observe that for $u \in \mathcal H_\o$ we actually have 
\begin{equation}\label{eq: K-en_alt_def}
\mathcal K(u) = \textup{Ent}\Big(\frac{1}{V}\o^n,\frac{1}{V}\o_u^n\Big) - \frac{1}{V} I_{\textup{Ric }\o}(u) + \bar S I(u).
\end{equation} 
This observation together with Proposition \ref{prop: I_finiteness_and_cont}, Lemma \ref{lem: I_twist_ExtLemma} and Corollary \ref{cor: Ent_d1_lsc} allows to conclude that $\mathcal K$ is $d_1$--lsc on $\mathcal H_\o$ and it extends to $\mathcal E_1(X,\o)$ in a $d_1$--lsc manner, using the formula of \eqref{eq: K-en_alt_def}.

Lastly, before we prove Theorem \ref{thm: K-energy_properness}, we provide a precise inequality between the K--energy and $\mathcal F$ functional, and we point out the relationship between the minimizers of these functionals:

\begin{proposition}[\cite{brm0}] \label{prop: K_F_relation}Suppose $(X,\o)$ is a Fano manifold with $c_1(X)=[\o]$. For any $u \in \mathcal E_1(X,\o)$ we have
\begin{equation}\label{eq: K_F_ineq}
\mathcal F(u) \leq \mathcal K(u) - \frac{1}{V} \int_X f_0 \o^n.
\end{equation}
Moreover, for $u \in \mathcal E_1(X,\o)$ the following are equivalent:\\
(i) $\mathcal F(u) = \mathcal K(u)- \frac{1}{V} \int_X f_0 \o^n$.\\
(ii) $u$ minimizes $\mathcal F$.\\
(iii) $u$ minimizes $\mathcal K$.\\
(iv) $u$ is  a smooth KE potential.
\end{proposition}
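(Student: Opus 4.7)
The plan is to prove the inequality first by rewriting $\mathcal K(u) - \frac{1}{V}\int_X f_0\,\o^n$ as an entropy term with Monge--Amp\`ere corrections, and then applying the dual characterization of entropy (Proposition \ref{prop: ent_dual_form}) to produce the sharp lower bound $\mathcal F(u)$. Starting from the alternative form \eqref{eq: K-en_alt_def} of the K-energy, I would use the Fano condition $\textup{Ric}\,\o = \o + i\ddbar f_0$, the normalization $\bar S = n$, and integration by parts in the twisted functional $I_{\textup{Ric}\,\o}(u)$. The key step is to recognize that
$$\sum_{j=0}^{n-1}\int_X u\,i\ddbar f_0\wedge\o_u^j\wedge\o^{n-1-j}=\sum_{j=0}^{n-1}\int_X f_0(\o_u-\o)\wedge\o_u^j\wedge\o^{n-1-j}$$
telescopes to $\int_X f_0(\o_u^n-\o^n)$, which after rearrangement yields the clean identity
$$\mathcal K(u) - \frac{1}{V}\int_X f_0\,\o^n = \textup{Ent}\Big(\tfrac{\o^n}{V},\tfrac{\o_u^n}{V}\Big) - I(u) + \frac{1}{V}\int_X u\,\o_u^n - \frac{1}{V}\int_X f_0\,\o_u^n.$$
Applying Proposition \ref{prop: ent_dual_form} with the test function $f = -u+f_0$ (for $u$ bounded, with a truncation/monotonicity argument passing to general $u \in \mathcal E_1$ via Corollary \ref{cor: Skodacor} to handle integrability) gives a lower bound on the entropy whose cross-terms cancel with the correction terms above, leaving exactly $\mathcal F(u)$.

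For the equivalences, the engine is the equality case in Proposition \ref{prop: ent_dual_form}: equality in $\int f\,\nu - \log\int e^f\mu \leq \textup{Ent}(\mu,\nu)$ for $f=-u+f_0$ occurs precisely when $\nu/\mu$ is a constant multiple of $e^f$, which here is the weak KE equation \eqref{eq: KE_weak_eq}. Thus (i) $\Rightarrow$ (iv) follows by invoking the regularity result Theorem \ref{thm: SzToregularity}, and (iv) $\Rightarrow$ (i) is a direct computation: when $\log(\o_u^n/\o^n)=\log c - u + f_0$, the entropy is explicitly evaluated and the constant $\log c$ is exactly $\mathcal F(u)+I(u)$ by the definition of $\mathcal F$. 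The equivalence (ii) $\Leftrightarrow$ (iv) splits into Theorem \ref{thm: F_min_regularity} for one direction, and for the other direction one uses that a smooth KE potential is a critical point of $\mathcal F$ (Lemma \ref{lem: F_func_differential}) combined with convexity of $\mathcal F$ along finite energy geodesics (Theorem \ref{thm: F_convex}), which turns critical points into global minimizers.

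Finally, (iv) $\Rightarrow$ (iii) is clean: if $u$ is KE, then by (iv)$\Rightarrow$(i)+(ii), for any $v \in \mathcal E_1(X,\o)$,
$$\mathcal K(u) - \tfrac{1}{V}\textstyle\int_X f_0\,\o^n = \mathcal F(u) \leq \mathcal F(v) \leq \mathcal K(v) - \tfrac{1}{V}\int_X f_0\,\o^n,$$
so $u$ minimizes $\mathcal K$. The main obstacle will be (iii) $\Rightarrow$ (iv): given only that $u$ minimizes $\mathcal K$, one must force the entropy bound to be tight at $u$, thereby obtaining (i) and then (iv) via Step~2. My plan here is to use both the convexity of $\mathcal K$ and of $\mathcal F$ along finite energy geodesics from $u$ to show that the pointwise gap $\mathcal K(v) - \mathcal F(v) - \tfrac{1}{V}\int_X f_0\,\o^n \geq 0$ must vanish at the $\mathcal K$-minimizer $u$: a Berman-type variational argument (testing $\mathcal K$ against perturbations of the form $P(u+t\phi)$, in parallel with the proof of Theorem \ref{thm: F_min_regularity}) should yield the weak csck equation for $u$, which in the Fano normalized case collapses to the weak KE equation, and Theorem \ref{thm: SzToregularity} then supplies smoothness. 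Handling the non-smoothness of the minimizer in this final step, and in particular bootstrapping from a subsolution/supersolution obtained by pushing the inequality to equality, is where I expect the technical core of the argument to lie.
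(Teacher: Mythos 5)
Your proof of the inequality and of the cycle $(\textup{i})\Leftrightarrow(\textup{ii})\Leftrightarrow(\textup{iv})$ and $(\textup{iv})\Rightarrow(\textup{iii})$ is essentially sound. Using the dual formula $\textup{Ent}(\mu,\nu)=\sup_f(\int f\,\nu-\log\int e^f\mu)$ with $f=-u+f_0$ is a legitimate variant of the paper's argument: the paper instead changes base measure, writing $\mathcal K(u)-\tfrac{1}{V}\int f_0\o^n=\textup{Ent}\big(\tfrac1V e^{-u+f_0}\o^n,\tfrac1V\o_u^n\big)+\mathcal F(u)$ and then invoking $\textup{Ent}\geq 0$. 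Both routes need the identity you call the telescoping of $I_{\textup{Ric}\,\o}$, which is exactly Lemma \ref{lem: K_en_alt_formula}. Your added convexity observation for $(\textup{iv})\Rightarrow(\textup{ii})$ (KE is critical, $\mathcal F$ geodesically convex, hence global minimum) is a correct way to fill a step the paper compresses.

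The genuine gap is $(\textup{iii})\Rightarrow(\textup{iv})$. You propose a Berman-type variational argument testing $\mathcal K$ against $P(u+t\phi)$, hoping to extract a weak csck equation from a finite-energy $\mathcal K$-minimizer, and you flag this as the technical core. That instinct is right -- but the problem is that the entropy term $\int\log(\o_u^n/\o^n)\o_u^n$ in $\mathcal K$ has no usable first-variation formula at a mere $\mathcal E_1$ minimizer: unlike $\mathcal F$, whose nonlinear part $-\log\int e^{-u+f_0}\o^n$ is Fr\'echet-differentiable under $C^0$-perturbations (the key fact that makes the proof of Theorem \ref{thm: F_min_regularity} work), the entropy involves the Monge--Amp\`ere density itself and differentiating it requires a priori regularity on $u$. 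Carrying out your plan would amount to proving the regularity of $\mathcal K$-minimizers from scratch, which is the Chen--Cheng theorem \cite{cc2,cc3} -- far beyond the available tools here, and out of scope for this survey. The paper sidesteps the variational argument entirely with the inverse Ricci operator: Lemma \ref{lem: Ricci_it}(ii) shows that for any $u\in\mathcal E_1(X,\o)$ the potential $v=\textup{Ric}^{-1}(u)$ is bounded and satisfies $\mathcal K(v)-\tfrac1V\int f_0\o^n\leq\mathcal F(u)$, while the inequality \eqref{eq: K_F_ineq} you already proved gives $\mathcal F(u)\leq\mathcal K(u)-\tfrac1V\int f_0\o^n$. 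If $u$ minimizes $\mathcal K$, this squeeze forces equality throughout, yielding $(\textup{i})$ and then $(\textup{iv})$ by the equality case you already handled. No first variation of $\mathcal K$ ever enters, and that is the trick you are missing.
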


Consequently, the minimizers of $\mathcal K$ and $\mathcal F$ on $\mathcal E_1(X,\o)$ are the same and coincide with the set of smooth KE potentials. 

\begin{proof} Let $u \in \mathcal H_\o$. As both $\mathcal K$ and $\mathcal F$ are constant invariant, we can assume that $\int_X e^{-u + f_0}\o^n = V$ and note the following identity:
$$\textup{Ent}\Big(\frac{1}{V}e^{-u + f_0}\o^n, \frac{1}{V} \o_u\Big) = \textup{Ent}\Big(\frac{1}{V}e^{f_0}\o^n, \frac{1}{V} \o_u\Big) + \frac{1}{V}\int_X u \o^n_u.$$
By the formula of the next lemma, we can write that
$$\mathcal K(u) - \frac{1}{V} \int_X f_0 \o^n= \textup{Ent}\Big(\frac{1}{V}e^{-u + f_0}\o^n, \frac{1}{V} \o_u\Big) - I(u)= \textup{Ent}\Big(\frac{1}{V}e^{-u + f_0}\o^n, \frac{1}{V} \o_u\Big) + \mathcal F(u).$$
By Lemma \ref{lem: Ent_ineq_class} we have $\textup{Ent}\big(\frac{1}{V}e^{-u + f_0}\o^n, \frac{1}{V} \o_u\big) \geq 0$, hence \eqref{eq: K_F_ineq} follows. 

Moreover, by this same lemma, equality holds in \eqref{eq: K_F_ineq} if and only if $\o_u = e^{-u + f_0}\o^n$, which is equivalent with $u$ being a smooth KE potential (see Theorem \ref{thm: SzToregularity}). By Theorem \ref{thm: F_min_regularity} it immediately follows that (i), (ii) and (iv) are equivalent.

If (iv) holds, then $u$ minimizes $\mathcal F$,  and $\mathcal F(u)=\mathcal K(u) - \frac{1}{V}\int_X f_0 \o^n$, by (i). Consequently $u$ minimizes $\mathcal K$ as well, hence (iii) holds.

Suppose (iii) holds. By Lemma \ref{lem: Ricci_it}(ii) proved below, there exists $v \in \textup{PSH}(X,\omega) \cap L^\infty$ such that
$$\mathcal K(v) - \frac{1}{V}\int_X f_0 \o^n \leq \mathcal F(u) \leq \mathcal K(u) - \frac{1}{V}\int_X f_0 \o^n.$$
As $u$ minimizes $\mathcal K$, it follows that the inequalities above are actually equalities, hence (i) holds. 
\end{proof}

As pointed out in the above argument, we need a special expression for the K-energy:

\begin{lemma}\label{lem: K_en_alt_formula} Suppose $(X,\o)$ is a Fano manifold with $c_1(X)=[\o]$. For $u \in \mathcal H_\o$ we have
$$\mathcal K(u) = \textup{Ent}\Big(\frac{1}{V} e^{f_0} \o^n, \frac{1}{V}\o_u^n \Big) - I(u)  + \frac{1}{V} \int_X u \o_u^n + \frac{1}{V} \int_X f_0 \o^n.$$
\end{lemma}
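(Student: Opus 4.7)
The plan is to expand both sides in terms of concrete integrals and check that they agree via integration by parts and a telescoping identity, exploiting the Fano normalization $c_1(X)=[\o]$.

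First, I would unfold the entropy term by definition: since $\o_u^n$ is absolutely continuous with respect to $\o^n$,
\begin{equation*}
\textup{Ent}\Big(\tfrac{1}{V} e^{f_0} \o^n, \tfrac{1}{V}\o_u^n \Big)
= \frac{1}{V}\int_X \log\Big(\frac{\o_u^n}{e^{f_0}\o^n}\Big)\o_u^n
= \frac{1}{V}\int_X \log\Big(\frac{\o_u^n}{\o^n}\Big)\o_u^n - \frac{1}{V}\int_X f_0\,\o_u^n.
\end{equation*}
The first term on the right is exactly the entropy piece already present in $\mathcal K(u)$ (see \eqref{eq: Ken_def}), so after cancellation the claimed identity reduces to showing
\begin{equation*}
-\frac{1}{V}\int_X u\sum_{j=0}^{n-1}\textup{Ric }\o\wedge\o_u^j\wedge\o^{n-1-j} + \bar S\, I(u)
= -I(u) + \frac{1}{V}\int_X u\,\o_u^n - \frac{1}{V}\int_X f_0\,\o_u^n + \frac{1}{V}\int_X f_0\,\o^n.
\end{equation*}

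Next I would use the Fano normalization. Since $c_1(X)=[\o]$, the definition of the Ricci potential $f_0$ at $u=0$ (see \eqref{eq: Ricci_pot_def} with $\lambda=1$) gives $\textup{Ric }\o = \o + i\ddbar f_0$, and consequently $\bar S = \frac{n}{V}\int_X \textup{Ric }\o\wedge\o^{n-1} = n$. Substituting into the sum and integrating by parts on the $i\ddbar f_0$ term produces
\begin{equation*}
\int_X u\, i\ddbar f_0\wedge\o_u^j\wedge\o^{n-1-j} = \int_X f_0\,(\o_u - \o)\wedge\o_u^j\wedge\o^{n-1-j},
\end{equation*}
so that
\begin{equation*}
\int_X u\sum_{j=0}^{n-1}\textup{Ric }\o\wedge\o_u^j\wedge\o^{n-1-j}
= \int_X u\sum_{j=0}^{n-1}\o_u^j\wedge\o^{n-j} + \int_X f_0\sum_{j=0}^{n-1}(\o_u-\o)\wedge\o_u^j\wedge\o^{n-1-j}.
\end{equation*}
The $f_0$-sum telescopes to $\o_u^n-\o^n$, while the first sum, combined with the missing $j=n$ term, equals $(n+1)V\, I(u) - \int_X u\,\o_u^n$ by the definition \eqref{eq: I_energy_def} of the Monge--Amp\`ere energy.

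Assembling these and using $\bar S I(u) = n\, I(u)$, the coefficient of $I(u)$ becomes $-(n+1)+n=-1$, and the remaining three terms produce exactly $\tfrac{1}{V}\int_X u\,\o_u^n - \tfrac{1}{V}\int_X f_0\,\o_u^n + \tfrac{1}{V}\int_X f_0\,\o^n$, as required. There is no real obstacle here; the only mild subtlety is keeping track of signs and ensuring the telescoping of the $f_0$ sum is aligned correctly with the missing $j=n$ term in the definition of $I(u)$, but all manipulations are legitimate for smooth $u\in\mathcal H_\o$ via standard integration by parts.
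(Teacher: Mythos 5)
Your proof is correct and is essentially the same as the paper's: both decompose $\textup{Ric}\,\o=\o+i\ddbar f_0$, integrate by parts to transfer $i\ddbar$ from $f_0$ to $u$, telescope the resulting sum, relate the truncated sum $\sum_{j=0}^{n-1}\int_X u\,\o_u^j\wedge\o^{n-j}$ to $(n+1)VI(u)-\int_X u\,\o_u^n$, and use $\bar S=n$ together with the entropy shift $\textup{Ent}(\tfrac1V e^{f_0}\o^n,\cdot)=\textup{Ent}(\tfrac1V\o^n,\cdot)-\tfrac1V\int_X f_0\,\o_u^n$. The only difference is cosmetic: you expand the entropy term first and then match the remainder, whereas the paper first rewrites $I_{\textup{Ric}\,\o}(u)$ and then brings in the entropy shift at the end.
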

\begin{proof} We start by deriving an alternative formula for $I_{\Ric \o}$:
\begin{flalign*}
I_{\Ric \o}(u)& =  \sum_{j=0}^{n-1}\int_X u \Ric \o \wedge \o_u^j \wedge \o^{n-1-j}=\sum_{j=0}^{n-1}\int_X u (\o + i \ddbar f_0) \wedge \o_u \wedge \o^{n-1-j}\\
& =  \sum_{j=0}^{n-1}\int_X u \o_u^j \wedge \o^{n-j} +\sum_{j=0}^{n-1}\int_X f_0 i \ddbar u \wedge \o_u \wedge \o^{n-1-j}\\
& =  \sum_{j=0}^{n-1}\int_X u \o_u \wedge \o^{n-j} +\sum_{j=0}^{n-1}\int_X f_0 \o_u^{j+1} \wedge \o^{n-1-j} - \sum_{j=0}^{n-1}\int_X f_0 \o_u^{j} \wedge \o^{n-j}\\
& =  \sum_{j=0}^{n-1}\int_X u \o_u \wedge \o^{n-j} + \int_X f_0 \o_u^{n} -\int_X f_0 \o^{n}.
\end{flalign*}
Since $c_1(X)=[\o]$, we have $\bar S = n$ (see \eqref{eq: av_Scal_curv}), so by the above we conclude that 
\begin{flalign*}
-\frac{1}{V}I_{\Ric \o}(u) + \bar S I(u) &=-\frac{1}{V}I_{\Ric \o}(u) + n I(u)\\
&=-I(u) + \frac{1}{V}\Big(\int_X u \o_u^n - \int_X f_0 \o_u^{n} +\int_X f_0 \o^{n}\Big).
\end{flalign*}
We note that $\textup{Ent}\big(\frac{1}{V} \o^n, \frac{1}{V}\o_u^n \big)=\textup{Ent}\big(\frac{1}{V} e^{f_0} \o^n, \frac{1}{V} \o_u^n \big) + \frac{1}{V}\int_X f_0\o_u^n$. Adding this identity to the above, and comparing with \eqref{eq: K-en_alt_def} finishes the proof.
\end{proof}

In the next lemma, we will make us of the inverse Ricci operator, introduced in \cite{Ru08} in connection with the so-called Ricci iteration.
Given a potential $u \in \mathcal E_1(X,\o)$, Corollary \ref{cor: Skodacor} and Kolodziej's estimate \cite{k0,K1} give another potential $\Ric^{-1}(u) \in \textup{PSH}(X,\o) \cap L^\infty$, unique up to a constant, such that
$$\o_{\Ric^{-1}(u)}^n = \frac{V}{\int_X e^{-u + f_0}\o^n}e^{-u + f_0}\o^n.$$
In case $u \in \mathcal H_\o$, we notice that $\Ric \o_{\Ric^{-1}u} = \o_u,$ motivating the terminology.

By the next lemma, the inverse Ricci operator decreases the $\mathcal F$ functional and sheds further light on the intimate relationship between $\mathcal K$ and $\mathcal F$: 

\begin{lemma}[\cite{Ru08}] \label{lem: Ricci_it} Suppose $u \in \mathcal E_1(X,\o)$. Then the following hold:\\
(i) $\mathcal F(\Ric ^{-1}(u)) \leq \mathcal F(u).$  \\
(ii)$\ \mathcal K(\Ric ^{-1}(u)) - \frac{1}{V}\int_X f_0 \o^n \leq \mathcal F(u).$\\
(iii)$\ \mathcal K(\Ric ^{-1}(u))\leq \mathcal K(u).$
\end{lemma}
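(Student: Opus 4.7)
Set $v := \Ric^{-1}(u)$. Since both $\mathcal F$ and $\mathcal K$ are invariant under adding constants, I can normalize $v$ so that $\int_X e^{-u+f_0}\o^n = V$; then $\o_v^n = e^{-u+f_0}\o^n$ and $\mathcal F(u) = -I(u)$. Note that $v$ is automatically bounded: Corollary \ref{cor: Skodacor} gives $e^{-u+f_0} \in L^p(\o^n)$ for all $p>1$, so Kolodziej's estimate applies as in the proof of Theorem \ref{thm: SzToregularity}.

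The heart of the argument is (ii). Using Lemma \ref{lem: K_en_alt_formula} together with the observation that $\log(\o_v^n/(e^{f_0}\o^n)) = -u$ after the normalization, the entropy term reduces explicitly to $-\frac{1}{V}\int_X u\,\o_v^n$. Substituting back and cancelling yields
\begin{equation*}
\mathcal K(v) - \frac{1}{V}\int_X f_0\,\o^n = -I(v) + \frac{1}{V}\int_X (v-u)\,\o_v^n.
\end{equation*}
Comparing with $\mathcal F(u) = -I(u)$, the desired inequality (ii) becomes equivalent to
\begin{equation*}
I(u) - I(v) \leq \frac{1}{V}\int_X (u-v)\,\o_v^n,
\end{equation*}
which is precisely the upper bound from \eqref{eq: I_energy_diff_est}. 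To handle the fact that $u$ may only lie in $\mathcal E_1(X,\o)$, while \eqref{eq: I_energy_diff_est} is stated for bounded potentials, I would apply the inequality to the canonical cutoffs $u_k = \max(u,-k) \in \textup{PSH}(X,\o) \cap L^\infty$ (with $v$ kept fixed, as it is already bounded) and pass to the limit: $I(u_k) \to I(u)$ by Proposition \ref{prop: I_finiteness_and_cont}, while $\int_X u_k\,\o_v^n \to \int_X u\,\o_v^n$ by monotone convergence, with the finiteness of the limit guaranteed by Proposition \ref{prop: mixed_finite_prop: Energy_est}.

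Parts (i) and (iii) then follow at once from the inequality $\mathcal F(\cdot) \leq \mathcal K(\cdot) - \frac{1}{V}\int_X f_0\,\o^n$ on $\mathcal E_1(X,\o)$ from Proposition \ref{prop: K_F_relation}. Crucially, that inequality is proved via Jensen's inequality (Lemma \ref{lem: Ent_ineq_class}) independently of the present lemma, so no circularity arises. Applying it to $v$ and combining with (ii) yields $\mathcal F(v) \leq \mathcal K(v) - \frac{1}{V}\int_X f_0\,\o^n \leq \mathcal F(u)$, giving (i); applying it to $u$ and combining with (ii) gives $\mathcal K(v) \leq \mathcal F(u) + \frac{1}{V}\int_X f_0\,\o^n \leq \mathcal K(u)$, giving (iii). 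The main technical point throughout is the cutoff-and-limit extension of the concavity bound \eqref{eq: I_energy_diff_est} from $\textup{PSH}\cap L^\infty$ to $\mathcal E_1(X,\o)$; this is routine given the machinery of Chapter 2, but is the only step that requires any care.
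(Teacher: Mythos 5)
Your proof is correct and follows the same route as the paper: normalize so that $\int_X e^{-u+f_0}\o^n = V$, rewrite $\mathcal K(v) - \frac{1}{V}\int_X f_0\,\o^n$ via Lemma \ref{lem: K_en_alt_formula} and collapse the entropy term, then invoke \eqref{eq: I_energy_diff_est} to obtain (ii), with (i) and (iii) deduced from (ii) and \eqref{eq: K_F_ineq}. The one point you elaborate on — extending \eqref{eq: I_energy_diff_est} from bounded potentials to $u \in \mathcal E_1(X,\o)$ via canonical cutoffs — is left implicit in the paper, and your treatment of it is correct.
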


\begin{proof} First we argue (ii). We introduce $v:=\textup{Ric}^{-1}(u)$. As both $\mathcal F$ and $\mathcal K$ are constant invariant, we can assume that $\int_X e^{-u + f_0}\o^n = V$. By the previous lemma, notice that we have
\begin{flalign*}
\mathcal K(v)- \frac{1}{V}\int_X {f_0}\o^n &=\textup{Ent}\Big(\frac{1}{V}e^{f_0}\o^n, \frac{1}{V}\o_v^n\Big) -I(v) + \frac{1}{V}\int_X v\o_v^n \\
&=\textup{Ent}\Big(\frac{1}{V}e^{f_0}\o^n, \frac{1}{V}e^{-u + f_0}\o^n\Big) -I(v) + \frac{1}{V}\int_X v\o_v^n\\
&= -I(v) + \frac{1}{V}\int_X (v-u)\o_v^n \leq -I(u)= \mathcal F(u),
\end{flalign*}
where in the penultimate estimate we have used \eqref{eq: I_energy_diff_est_E1}. 

Lastly, (i) and (iii) follow from (ii) and \eqref{eq: K_F_ineq}.
\end{proof}

We now prove the main compactness theorem of the space $(\mathcal E_1(X,\o),d_1)$:

\begin{theorem}\label{thm: E_1_compactness} Let $u_j\in \mathcal E_1(X,\o)$ be a $d_1$--bounded sequence for which $\textup{Ent}\big(\frac{1}{V}\o^n,\frac{1}{V}\o_{u_j}^n\big)$ is bounded. Then $\{u_j \}_j$ contains a $d_1$--convergent subsequence.
\end{theorem}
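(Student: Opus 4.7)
The plan is to extract a weak $L^1(\omega^n)$ limit of the sequence and upgrade it to $d_1$-convergence. Since $d_1(0,u_j)$ is bounded, Theorem \ref{thm: Energy_Metric_Eqv} gives $\int_X |u_j|\,\omega^n$ bounded, Lemma \ref{lem: sup_int_psh_eqv} then controls $\sup_X u_j$, and Proposition \ref{prop: I_finiteness_and_cont} yields $|I(u_j)|$ bounded. Hence $\{u_j\}$ lies in one of the weakly $L^1(\omega^n)$-compact sets of Lemma \ref{lem: L_1_closedness}, so after passing to a subsequence $u_j \to u$ in $L^1(\omega^n)$ for some $u\in\mathcal E_1(X,\omega)$. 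By Theorem \ref{thm: d_1-convergence}(i), the goal $d_1(u_j,u)\to 0$ reduces to proving $I(u_j)\to I(u)$; Corollary \ref{cor: I_L_1_semicont} already gives $\limsup_j I(u_j)\leq I(u)$, so everything comes down to the reverse inequality $\liminf_j I(u_j)\geq I(u)$.

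This is where the entropy hypothesis is essential. I would first pass to a further subsequence along which the Monge--Amp\`ere measures converge weakly, $\omega_{u_j}^n \to \omega_u^n$, which can be arranged by truncating at level $-M$, invoking Bedford--Taylor continuity on the bounded cutoffs $\max(u_j,-M)$, and then letting $M\to\infty$ with the uniform tail bound $\omega_{u_j}^n(\{u_j\leq -M\})\leq E_1(u_j)/M$ coming from Chebyshev and the $E_1$-control from Theorem \ref{thm: Energy_Metric_Eqv}. The dual formula of Proposition \ref{prop: ent_dual_form} then makes entropy lsc under weak convergence of measures, so the limit satisfies $\textup{Ent}(\omega^n/V,\omega_u^n/V)<\infty$ and in particular $\omega_u^n\ll\omega^n$.

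The hard part is converting the weak convergence $\omega_{u_j}^n\to\omega_u^n$ and the $L^1(\omega^n)$-convergence $u_j\to u$ into convergence of the unbounded pairing $\int_X u_j\,\omega_{u_j}^n\to\int_X u\,\omega_u^n$, which together with the analogous mixed terms in Proposition \ref{prop: I_energy_prop} yields $I(u_j)\to I(u)$. The key tool is the Fenchel--Young pairing for entropy from Proposition \ref{prop: ent_dual_form}: for any $\alpha>0$,
\[
\alpha\int_X (-u_j-M)_{+}\,\frac{\omega_{u_j}^n}{V}\;\leq\;\textup{Ent}\Big(\frac{\omega^n}{V},\frac{\omega_{u_j}^n}{V}\Big)\;+\;\log\int_X e^{\alpha(-u_j-M)_{+}}\,\frac{\omega^n}{V}.
\]
Bounding the right-hand exponential integral by $1+C(\alpha)e^{-\alpha M}$ via Corollary \ref{cor: Skodacor}, and then letting $\alpha$ grow slowly with $M$, shows that the tail $\int_{\{u_j\leq -M\}}(-u_j)\,\omega_{u_j}^n\to 0$ as $M\to\infty$, uniformly in $j$. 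This uniform equi-integrability, combined with the weak convergence of $\omega_{u_j}^n$ applied to the bounded truncations $\max(u_j,-M)$ and $\max(u,-M)$, upgrades to $\int_X u_j\,\omega_{u_j}^n\to\int_X u\,\omega_u^n$. The same entropy--Skoda argument, applied to the measures $\omega_{u_j}^k\wedge\omega^{n-k}$ (dominated by $2^n\omega_{u_j/2}^n$ via the expansion of $(\omega+\omega_{u_j})^n$), handles the remaining mixed Monge--Amp\`ere terms, producing $\liminf_j I(u_j)\geq I(u)$ and completing the proof.
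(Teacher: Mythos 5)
Your first paragraph matches the paper exactly: extract a weak $L^1(\omega^n)$ limit $u$ via Lemma \ref{lem: L_1_closedness}, reduce to $I(u_j)\to I(u)$ via Theorem \ref{thm: d_1-convergence}(i), and note that Corollary \ref{cor: I_L_1_semicont} gives the upper semicontinuity for free. You also correctly identify the entropy duality (Proposition \ref{prop: ent_dual_form} / Young--Orlicz pairing) combined with the Skoda-type bound (Corollary \ref{cor: Skodacor}) as the engine that makes the entropy hypothesis bite. That much is sound.

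The gap is in the second paragraph, at the step where you claim $\omega_{u_j}^n\to\omega_u^n$ weakly ``by truncating at level $-M$, invoking Bedford--Taylor continuity on the bounded cutoffs.'' The cutoffs $\max(u_j,-M)$ do converge to $\max(u,-M)$ in $L^1(\omega^n)$, but Bedford--Taylor continuity of the Monge--Amp\`ere operator holds along monotone sequences, uniformly convergent sequences, or sequences converging in capacity --- \emph{not} along merely $L^1$-convergent sequences of bounded $\omega$-psh functions. In complex dimension $\geq 2$ the operator $v\mapsto(\omega+i\partial\bar\partial v)^n$ is genuinely discontinuous for the $L^1$ topology on $\textup{PSH}(X,\omega)\cap L^\infty$ (this is a classical observation of Cegrell), and nothing in the a.e.\ pointwise or $L^1$ convergence you have at this stage rules out such discontinuities. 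In the paper's framework the weak convergence $\omega_{u_j}^n\to\omega_u^n$ is obtained as a \emph{consequence} of $d_1$-convergence (Theorem \ref{thm: d_1-convergence}(ii)), which is precisely what you are trying to prove, so you cannot assume it without circularity. Everything downstream of this claim --- the identification of the weak limit measure with $\omega_u^n$, the upgrade to $\int u_j\,\omega_{u_j}^n\to\int u\,\omega_u^n$, and the handling of the mixed terms --- depends on it.

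The paper sidesteps weak convergence entirely. It pairs the quantity you actually need, $|u_j-u|$, directly against the density $\omega_{u_j}^n/\omega^n$ using the H\"older inequality for the complementary Young pair $\phi(t)\sim t\log t$, $\psi(t)\sim e^{|t|}$: the entropy bound controls $\|\omega_{u_j}^n/\omega^n\|_{\phi}$, and Skoda together with the $L^1(\omega^n)$-convergence of $u_j$ forces $\|u_j-u\|_\psi\to 0$. This gives $\int_X|u_j-u|\,\omega_{u_j}^n\to 0$ outright, and then the one-sided inequality \eqref{eq: I_energy_diff_est}, $I(u_j)-I(u)\geq\frac{1}{V}\int_X(u_j-u)\,\omega_{u_j}^n$, gives $\liminf_j I(u_j)\geq I(u)$ with no need to control the mixed wedge products or to know anything about the limit of the measures $\omega_{u_j}^n$. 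If you want to repair your argument, replace the weak-convergence step by this direct pairing.
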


\begin{proof} As a consequence of  Corollary \ref{cor: Skodacor}, for any $p>0$ there exists $C(p)>0$ such that that  $\int_X e^{-pu_j}\o^n \leq C$. Since $|\sup_X u_j|$ is bounded, we get that 
\begin{equation}\label{eq: exp_int_est}\int_X e^{|pu_j|}\o^n \leq C.
\end{equation}
Consider $\phi,\psi: \Bbb R \to \Bbb R^+$ given by 
$$
\phi(|t|) = \frac{(|t|+1)\log(|t|+1)-|t|}{\log 2} \ \textup{ and } \ \psi(t)=\frac{e^{|t|} - |t| -1}{e-1}.
$$
An elementary calculation verifies that both of these functions are normalized Young weights and $\phi^* = \psi$ (in the sense of \eqref{eq: Legendre_trans_def}).

Since $\textup{Ent}\big(\frac{1}{V}\o^n,\frac{1}{V}\o_{u_j}^n\big)$ is bounded, so is $\int_X \phi\big(\omega_{u_j}^n/\omega^n \big) \omega^n$. As $\phi$ is convex and $\phi(0)=0$, for some $D \in (0,1)$ we get that 
\begin{equation}\label{eq: Ent_less_one}
\int_X \phi\bigg(D \frac{\omega_{u_j}^n}{\omega^n} \bigg) \frac{1}{V}\omega^n \leq 1.
\end{equation}

From $d_1$-boundedness we have that $|\sup_X u_j|$  and $I(u_j)$ are bounded. By Lemma \ref{lem: L_1_closedness}, after possibly taking a subsequence, we can find $u \in \mathcal E_1(X,\o)$ such that $\int_X |u_j-u| \o^n  \to 0$.

Recalling the definition of Orlicz norms from \eqref{eq: OrliczNormDef}, we can use the H\"older inequality \eqref{eq: HolderIneq} to deduce that
\begin{equation}\label{eq: intintintest}
\frac{1}{V}\int_X |u_j - u|\o_{u_j}^n =\frac{1}{V}\int_X |u_j - u|\frac{\o_{u_j}^n}{\o^n}\o^n \leq \Big\|\frac{1}{D}(u_j - u)\Big\|_{\psi, \frac{1}{V}\o^n} \Big \| D  \frac{\o_{u_j}^n}{\o^n} \Big \|_{\phi, \frac{1}{V}\o^n}.
\end{equation}

From \eqref{eq: Ent_less_one} it follows that $ \| D {\o_{u_j}^n}/{\o^n} \|_{\phi, \frac{1}{V}\o^n} \leq 1$. Since $\psi(t) \leq t^2 e^{|t|}$, for any $r>0$ we can write
\begin{flalign*}
\int_X \psi\Big(\frac{r}{D}(u_j - u)\Big) \o^n &\leq \int_X \frac{r^2}{D^2}|u_j - u|^2e^{\frac{r}{D}|u_j - u|}\o^n\\
&\leq \frac{r^2}{D^2} \| (u_j - u)^2 \|_{L^3(\o^n)} \big\|e^{\frac{r}{D}|u|}\big\|_{L^3(\o^n)}\big\|e^{\frac{r}{D}|u_j|}\big\|_{L^3(\o^n)}.
\end{flalign*}
The last two terms on right hand side are bounded by  \eqref{eq: exp_int_est}. As $\int_X |u_j - u|\o^n \to 0$ it follows that $\int_X |u_j - u|^6\o^n \to 0$, hence $\lim_j \|D^{-1}(u_j - u)\|_{\psi, \frac{1}{V}\o^n} \leq \varepsilon$ for any $\varepsilon >0$.  

Using \eqref{eq: intintintest}, we conclude that $\int_X |u_j-u|\o_{u_j}^n \to 0$. As a result, \eqref{eq: I_energy_diff_est_E1}  gives that $\liminf_j I(u_j) \geq I(u)$. Together with Corollary \ref{cor: I_L_1_semicont} we obtain that $\lim_j I(u_j) = I(u)$. Since additionally $\int_X |u_j - u|\o^n \to 0$, Theorem \ref{thm: d_1-convergence}(i) implies that $d_1(u_j,u) \to 0$, finishing the proof.
\end{proof}

Lastly, we prove our main theorem:
\begin{proof}[Proof of Theorem \ref{thm: K-energy_properness}] The equivalence between (ii) and (iii) follows from Lemma \ref{lem: JGPropernessLemma}.

Suppose (i) holds. Then Proposition \ref{prop: K_F_relation} and Theorem \ref{thm: F_func_properness} implies  properness of $\mathcal K$, giving (ii). 

Now assume that (ii) holds. Then $\mathcal K$ is bounded from below and we can find $u_j \in \mathcal E_1(X,\o)$ that is $d_1$--bounded and $\lim_{j} \mathcal K(u_j) = \inf_{v \in \mathcal E_1(X,\o)} \mathcal K(v)$. 

In particular, $\{\mathcal K(u_j)\}_j$ is bounded. Recalling \eqref{eq: K-en_alt_def}, and the fact that both $I$ and $I_{\Ric \o}$ are bounded on $d_1$--bounded sets (see Proposition \ref{prop: I_finiteness_and_cont} and Lemma \ref{lem: I_twist_ExtLemma}), it follows that $\textup{Ent}\big(\frac{1}{V}\o^n,\frac{1}{V}\o_{u_j}^n\big)$ is bounded as well. 

By the previous compactness theorem, after possibly passing to a subsequence, we have $d_1(u_j,u) \to 0$ for some $u \in \mathcal E_1(X,\o)$. Since $\mathcal K$ is $d_1$--lsc, we immediately obtain that $\mathcal K(u)= \inf_{v \in \mathcal E_1(X,\o)} \mathcal K(v)$, i.e., $u$ minimizes $\mathcal K$.

By the equivalence between (iii) and (iv) in Theorem \ref{prop: K_F_relation}, $u$ is in fact a smooth KE potential.
\end{proof}

\paragraph{Brief historical remarks.}  Motivated by results in conformal geometry, the relationship between energy properness and existence of canonical metrics in K\"ahler geometry goes back to the work of Tian and collaborators in the nineties \cite{t0,t1}. Numerous conjectures were proposed during this time, a number of which where adressed in the case of Fano manifolds without vectorfields \cite{dt,t,tz}. For general Fano manifolds, all the remaining conjectures where addressed in \cite{dr2}, and we refer to this work for more details. 

The sharpest form of the energy properness condition was identified in \cite{pssw} and was later adopted in the literature, including in the present work. 

Regarding general K\"ahler manifolds, in \cite{dr2} the equivalence between energy properness and existence of consant scalar curvature (csck) metrics is linked to a regularity problem for fourth order PDE's. In case a csck metric exists, this regularity conjecture was confirmed in \cite{bdl2}, showing that on such manifolds the K-energy is indeed proper, partially generalizing Theorem \ref{thm: K-energy_properness}.

In this chapter we tackled problems related to energy properness directly via the existence/properness principle of \cite{dr2} (Theorem \ref{thm: ExistencePrinc}). The use of geodesic convexity in this context was initially proposed by X.X.  Chen, however he advocated for the use of the $L^2$ Mabuchi geometry instead \cite{che2}.

One of the advantages of our method (that uses pluripotential theory predominantly) over previous approaches in the literature is its adaptability to K\"ahler structures with mild singularities \cite{da4,DiG}. For generalizations in other directions, as well as a more thourough overview of the vast related literature we refer to \cite{dr2,r2}.

\appendix 
\chapter[Appendix]{}
\section{Basic formulas of K\"ahler geometry}

In this section we recall the most important facts about K\"ahler manifolds. Our minimalist approach follows \cite{bl1} closely, and we refer to \cite{De},\cite{we} and \cite{sze} for more exhaustive treatments.

Suppose $(X,J)$ is compact connected complex manifold with holomorphic structure $J$. A Hermitian structure $(X,h)$ is the complex analog of a Riemannian structure, i.e., a smooth choice of $J-$compatible Hermitian metrics on each fiber of $TX.$ In local complex coordinates $h$ can be expressed as
$$h = g_{j\bar k}d z_j \otimes d\bar z_k,$$
where we have used the Einstein summation. The real part of $h$ induces a Riemannian structure: 
$$\langle \cdot, \cdot \rangle = 2 \textup{Re }  h=g_{j\bar k}d z_j \otimes d\bar z_k + g_{k\bar j}d \bar z_j \otimes d z_k.$$ 
This metric is compatible with $J$ in the sense that $\langle\cdot,\cdot\rangle= \langle J(\cdot),J(\cdot) \rangle$. 

By $T_{\Bbb C}X = TX \otimes \Bbb C$ we denote the complexification of $TX$ and extend
$J$ and $\langle \cdot,\cdot \rangle$ to $T_{\Bbb C}X$ in a $\Bbb C$--linear way. In local coordinates $z_j = x_j + iy_j$ the vector fields $\partial/\partial x_j$ , $\partial /\partial y_j$ span $TX$ over $\Bbb R$. We also have $J \partial/\partial x_j=\partial/\partial y_j, \ J \partial/ \partial y_j=-\partial /\partial x_j$ and the eigenbase of $J$ composed of  the vector fields
$$\Big\{\partial_j = \frac{\partial}{\partial z_j} =\frac{1}{2}\Big( \frac{\partial}{\partial x_j}-i\frac{\partial}{\partial y_j}\Big)\Big\}_{\{j=1,\ldots,n\}}, \ \  \Big\{\partial_{\bar j} = \frac{\partial}{\partial \bar z_j} =\frac{1}{2}\Big( \frac{\partial}{\partial x_j}+i\frac{\partial}{\partial y_j}\Big)\Big\}_{\{j=1,\ldots,n\}}$$
that span $T^{(1,0)}_\Bbb C X$ and $T^{(0,1)}_\Bbb C X$ respectively over $\Bbb C$. We also have the identities $$J \partial_j = i\partial_j \ \textup{ and } \ J \partial_{\bar j} = -i\partial_{\bar j}.$$ 

Compared to other works, we chose to multiply by a factor of two in the definition of $\langle \cdot,\cdot \rangle$ so that the following formula holds:
$$h(\partial_j,\partial_{\bar k})=\langle \partial_j,\partial_{\bar k} \rangle=g_{j\bar k}.$$
Consequently, for any real vector field $Y \in C^\infty(X,T X)$ we have that $Y = Y^j \partial_j + \overline{Y^ j} \partial_{\bar j}$ and $\| Y\|^2=\langle Y,Y\rangle= 2 Y^j\overline{Y^{k}} g_{j\bar k}$.

We will be also interested in the imaginary part of $h$:
\begin{equation}\label{eq: prelKahlerlocal}
\o = -2\text{Im }h = ig_{j\bar k}d z_j \wedge d\bar z_k.
\end{equation}
It is straightforward to see that 
\begin{equation}\label{eq: form_Riem_rel}
\o(\cdot,\cdot)= \langle J(\cdot),\cdot\rangle.
\end{equation}

We say that $\o$ is a \emph{K\"ahler form} if $d \o =0$. In this case $(X,\o)$ is called a \emph{K\"ahler manifold} and we fix such a manifold for the remainder of this section.

By the Poincar\'e lemma \cite[Lemma II.2.15]{we}, the K\"ahler condition implies that for any $x \in X$ there exists an open neighborhood $U \ni x$ and a \emph{local K\"ahler potential} $g \in C^\infty(\overline{U})$ satisfying
$$\o|_U = i\partial \bar \partial g = i \frac{\partial^2 g}{\partial z_j \partial \bar z_k} dz_j \wedge d \bar z_k.$$
From this and \eqref{eq: prelKahlerlocal} it follows that $g_{j\bar k}=\partial ^2 g/\partial z_j \partial \bar z_k$, hence in K\"ahler geometry, when doing calculations in local coordinates, one can interchange indices of the metric with its partial derivatives. We will do this quite frequently in our study.

\paragraph{The Christoffel symbols.} Given a hermitian structure $(X,J,h)$, it is possible to derive the following identity relating $\o$, the Riemannian metric $\langle\cdot,\cdot \rangle$ and its Levi--Civita connection:
$$3 d \o(W,Y,Z)- 3d\o(W,JY,JZ)= 2\langle (\nabla_W J) Y, Z\rangle,$$
where $W,Y,Z$ are smooth vector fields on $X$. Using the K\"ahler condition, this identity gives that $$\nabla_{(\cdot)} J =0.$$
As a consequence of this we obtain the following formulas for Christoffel symbols of $\nabla_{(\cdot)}(\cdot)$ in holomorphic local coordinates: 
\begin{flalign}\label{prelCristoffelform}
\nabla_{\partial_j}\partial_{\bar k}&= \Gamma_{j\bar k}^l \partial_l+ \Gamma_{j\bar k}^{\bar l} \partial_{\bar l}=0, \ \nabla_{\partial_{\bar j}}\partial_{ k}= \Gamma_{\bar j  k}^l \partial_l+\Gamma_{ \bar j  k}^{\bar l} \partial_{\bar l}=0, \nonumber\\
\nabla_{\partial_{ j}}\partial_{k}&= \Gamma_{ j k}^{ l} \partial_{ l}= g^{ l\bar h}\partial_{j} g_{ k\bar h} \partial_{ l}=g^{ l\bar h} g_{ j  k\bar h} \partial_{ l},\\
\nabla_{\partial_{\bar j}}\partial_{\bar k}&= \Gamma_{\bar j\bar k}^{\bar l} \partial_{\bar l}= g^{h \bar l}\partial_{\bar j} g_{h \bar k} \partial_{\bar l}=g^{ h\bar l} g_{h \bar j \bar k} \partial_{\bar l}. \nonumber
\end{flalign}
More concretely, to derive the above formulas we used that :\\
(i) the Levi--Civita connection is torsion free, i.e. $\Gamma^{j}_{l \bar k}=\Gamma^{j}_{\bar k l}, \ \Gamma^{j}_{l  k}=\Gamma^{j}_{k l}$, etc.\\
(ii) the Levi--Civita connection is real, i.e., $\overline{\Gamma^{j}_{l \bar k}}=\Gamma^{\bar j}_{\bar lk}, \ \overline{\Gamma^{j}_{l  k}}=\Gamma^{\bar j}_{\bar l \bar k},$ etc.\\
(iii) $\nabla_{(\cdot)} J =0$, i.e., $i\nabla_{\partial_j} \partial_k =\nabla_{\partial_j}J \partial_k=J \nabla_{\partial_j} \partial_k, \ -i\nabla_{\partial_j} \partial_{\bar k} =\nabla_{\partial_j}J \partial_{\bar k}=J \nabla_{\partial_j} \partial_{\bar k}$. \\
(iv) the product rule, $g_{lj\bar k}=\partial_l\langle \partial_j,\partial_{\bar k}\rangle = \langle \nabla_{\partial_l}\partial_j,\partial_{\bar k}\rangle + \langle \partial_j,\nabla_{\partial_l} \partial_{\bar k}\rangle=\langle \nabla_{\partial_l}\partial_j,\partial_{\bar k}\rangle=\Gamma_{lj}^h g_{h \bar k}.$

\paragraph{The volume form, gradient, Laplacian and Hamiltonian.} For K\"ahler geometers the volume form is equal to $\o^n$. This is a non--degenerate top form that is a scalar multiple of the usual volume form of Riemannian geometry.

For a smooth function $v \in C^\infty(X)$ the K\"ahler gradient and Laplacian are given as follows:
\begin{equation}\label{eq: Lapl_grad_formula}
\nabla^\o v = \nabla^\o_{1,0} v + \nabla^\o_{1,0} v = g^{j\bar k}v_{\bar k}\partial_j + g^{k\bar j}v_{ k}\partial_{\bar j}, \ \ \ \Delta^\o v =2g^{j\bar k}v_{j\bar k}= 2n i \partial \bar \partial v \wedge \o^{n-1}/\o^n.
\end{equation}
The length squared of the gradient is $\langle \nabla^\o v,\nabla^\o v \rangle = 2n i \partial v \wedge \bar \partial v \wedge \o^{n-1}/\o^n= 2g^{j\bar k}v_jv_{\bar k}.$  Also, given $u,v \in C^\infty(X)$, integration by parts gives the well known identity
$$\int_X u (\Delta^\o v) \o^n = -\int_X \langle \nabla^\o u,  \nabla^\o v\rangle\o^n.$$
Lastly, from \eqref{eq: form_Riem_rel} it follows that $d v = \langle \nabla^\o v, \cdot\rangle = - \o(J \nabla^\o v, \cdot)$, hence we have the following formula between the gradient and Hamiltonian of $v$:
\begin{equation}\label{eq: grad_Ham_relation}
\nabla^\o v = J X^\o v.
\end{equation}

\paragraph{The Ricci and scalar curvatures.} We recall the definition of the curvature tensor. Suppose $T,Y,Z,W$ are smooth sections of the complexified tangent bundle $TX^{\Bbb C}$. The Riemannian curvature tensor is introduced by the formulas:
$$R(T,Y)Z = \nabla_T \nabla_Y Z - \nabla_Y \nabla_T Z - \nabla_{[T,Y]}Z,$$
$$R(T,Y,Z,W)=\langle R(T,Y)Z,W \rangle.$$
The classical curvature identities are:
\begin{flalign}\label{prelcurvidentites}
R&(T,Y,Z,W)=-R(Y,T,Z,W),\nonumber\\
R&(T,Y,Z,W)=R(Z,W,T,Y),\\
R&(T,Y,Z,W)+R(Z,T,Y,W)+R(Y,Z,T,W)=0. \nonumber
\end{flalign}
From $\nabla_{(\cdot )} J =0$ it follows that $R(T,Y)J = J R(T,Y)$, hence also
$$R(T,Y,Z,W)=R(T,Y,JZ,JW).$$
From the curvature identity $R(T,Y,Z,W)=R(Z,W,T,Y)$ we also obtain
$$R(JT,JY,Z,W)=R(T,Y,Z,W),$$
hence $R(JT,JY)Z=R(T,Y)Z.$ We use this to compute the curvature in local coordinates:
\begin{flalign}\label{eq: prelcurvaturelocal}
R&(\partial_{\bar j},\partial_{\bar k})\partial_l=R(\partial_{\bar j},\partial_{\bar k})\partial_{\bar l}=R(\partial_j,\partial_k)\partial_l=R(\partial_j,\partial_k)\partial_{\bar l}=0, \nonumber\\
R&(\partial_{ j},\partial_{\bar k})\partial_l=-\nabla_{\partial_{\bar k}}\nabla_{\partial j} \partial_l=-(\partial_{\bar k} \Gamma_{jl}^h )\partial_h,\\
R&(\partial_{ j},\partial_{\bar k})\partial_{\bar l}=\nabla_{\partial_{j}}\nabla_{\partial_{\bar k}} \partial_{\bar l}=(\partial_{j} \Gamma_{\bar k\bar l}^{\bar h}) \partial_{\bar h}. \nonumber
\end{flalign}
The Ricci curvature is the trace of the curvature tensor:
$$\textup{Ric}(T,Y)= \textup{Tr}\{Z \to R(Z,T)Y\}.$$
From the curvature identities \eqref{prelcurvidentites} it follows that $\textup{Ric}$ is symmetric. Moreover, from  \eqref{eq: prelcurvaturelocal} it also follows that $\textup{Ric}(JT,JY)=\textup{Ric}(T,Y)$ and in local coordinates we have:
\begin{flalign*}
\textup{Ric}&_{jk}=0, \ \textup{Ric}_{\bar j \bar k}=0, \nonumber\\
\textup{Ric}&_{j\bar k} = R(\partial_{\bar l},\partial_j,\partial_{\bar k},\partial_{ m})g^{m \bar l }=-\partial_{ j} \Gamma_{\bar l \bar k}^{\bar l}= -\partial_{j} (g^{h\bar l}g_{\bar k h\bar l})= -\partial_{ j}\partial_{\bar k}\log(\det(g_{p\bar q})).
\end{flalign*}
Similar to the contruction of the K\"ahler form \eqref{eq: prelKahlerlocal} we consider the Ricci form:
\begin{equation}\label{prelriclocal}
\textup{Ric } \omega=-2\textup{Im Ric}=i\textup{Ric}_{j\bar k}dz_j\wedge d \bar z_k= -i\partial \bar \partial \log(\det(g_{p\bar q})).
\end{equation}
This last formula tells us that the Ricci form is closed, and given two K\"ahler metrics $\o,\o'$ on $X$ we have
\begin{equation}\label{prelricdifference}
\textup{Ric }\o - \textup{Ric }{\o'} = i\partial \bar \partial \log\Big(\frac{\o'^n}{\o^n}\Big).
\end{equation}
The scalar curvature $S_\o$ is the trace of the Ricci curvature. In local coordinates it can be expressed as
\begin{equation}\label{prelscallocal}
S_\o = g^{j\bar k}\textup{Ric}_{j\bar k}.
\end{equation}
We compute the variation of the Ricci and scalar curvatures in the next proposition:
\begin{proposition} \label{prop: PrelRicScalVariation}Suppose $[0,1] \ni t \to \o_t$ is a smooth curve of K\"ahler metrics on $X$. Then the following holds for the variation of the Ricci and scalar curvatures:
$$\frac{d}{dt}\textup{Ric }{\o_t}=-i\partial\bar\partial \textup{Tr}_{\o_t}\Big(\frac{d}{dt}\o_t\Big),$$
$$\frac{d}{dt}S_{\o_t}=-\frac{1}{2}\Delta_{\o_t}\textup{Tr}_{\o_t}\Big(\frac{d}{dt}\o_t\Big) - \big \langle \textup{Ric }{\o_t}, \frac{d}{dt}\o_t\big \rangle_{\o_t}.$$
\end{proposition}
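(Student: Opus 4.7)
The plan is to work entirely in local holomorphic coordinates and exploit the scalar formula \eqref{prelriclocal}, namely $\textup{Ric}\,\o_t = -i\partial\bar\partial\log\det(g_{t,p\bar q})$. For the first identity, I would differentiate this local expression, using Jacobi's formula $\frac{d}{dt}\log\det(g_{t,p\bar q}) = g_t^{p\bar q}\frac{d}{dt}g_{t,p\bar q}$. The right-hand side is precisely $\textup{Tr}_{\o_t}\bigl(\frac{d}{dt}\o_t\bigr)$, since if we write $\frac{d}{dt}\o_t = i\,\dot g_{t,j\bar k}\,dz_j\wedge d\bar z_k$ then the pointwise trace against $\o_t$ equals $g_t^{j\bar k}\dot g_{t,j\bar k}$. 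Applying $-i\partial\bar\partial$ to both sides, and noting that this operator is independent of $t$, yields the first formula. The interchange of $\partial\bar\partial$ with $d/dt$ is justified since $t\to\o_t$ is a smooth family.

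For the second identity, I would differentiate the local expression \eqref{prelscallocal}, $S_{\o_t}=g_t^{j\bar k}\textup{Ric}_{t,j\bar k}$, via the product rule:
\begin{equation*}
\frac{d}{dt}S_{\o_t} = \Bigl(\frac{d}{dt}g_t^{j\bar k}\Bigr)\textup{Ric}_{t,j\bar k} + g_t^{j\bar k}\,\frac{d}{dt}\textup{Ric}_{t,j\bar k}.
\end{equation*}
For the first term I would use the standard identity $\frac{d}{dt}g_t^{j\bar k} = -g_t^{j\bar b}g_t^{a\bar k}\frac{d}{dt}g_{t,a\bar b}$, which after contraction with $\textup{Ric}_{t,j\bar k}$ produces precisely $-\bigl\langle\textup{Ric}\,\o_t,\frac{d}{dt}\o_t\bigr\rangle_{\o_t}$ (this is the standard formula for the pointwise inner product of two real $(1,1)$-forms in terms of local coefficients).

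For the second term, I would substitute the already established first identity, which in local coordinates reads $\frac{d}{dt}\textup{Ric}_{t,j\bar k} = -\partial_j\partial_{\bar k}\,\textup{Tr}_{\o_t}\bigl(\frac{d}{dt}\o_t\bigr)$. Then
\begin{equation*}
g_t^{j\bar k}\,\frac{d}{dt}\textup{Ric}_{t,j\bar k} = -g_t^{j\bar k}\partial_j\partial_{\bar k}\,\textup{Tr}_{\o_t}\Bigl(\frac{d}{dt}\o_t\Bigr) = -\tfrac{1}{2}\Delta_{\o_t}\textup{Tr}_{\o_t}\Bigl(\frac{d}{dt}\o_t\Bigr),
\end{equation*}
using the formula for the K\"ahler Laplacian from \eqref{eq: Lapl_grad_formula}. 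Adding the two contributions gives the second formula. There is no real obstacle here beyond careful bookkeeping of indices; the only subtle point is to recognize that the trace $\textup{Tr}_{\o_t}\dot\o_t$ and the local expression $g_t^{j\bar k}\dot g_{t,j\bar k}$ coincide, and that the inner product $\langle\cdot,\cdot\rangle_{\o_t}$ on $(1,1)$-forms is given by the double contraction of the coefficient matrices with $g_t^{\bar k}{}^j$ twice, which is exactly what emerges from differentiating the inverse metric.
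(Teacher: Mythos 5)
Your proposal is correct and is exactly the computation the paper has in mind: the paper's proof just says to differentiate \eqref{prelriclocal} and \eqref{prelscallocal}, and you have carried out precisely that differentiation, using Jacobi's formula for the first identity and the product rule (together with the derivative of the inverse metric and the just-proved first formula) for the second. The index bookkeeping, the identification of $g_t^{j\bar k}\dot g_{t,j\bar k}$ with $\textup{Tr}_{\o_t}(\dot\o_t)$, and the use of $\Delta^{\o_t}v = 2g_t^{j\bar k}v_{j\bar k}$ from \eqref{eq: Lapl_grad_formula} are all consistent with the conventions laid down in the appendix.
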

\begin{proof} The proof of the formulas follow after we differentiate \eqref{prelriclocal} and \eqref{prelscallocal}.
\end{proof}

\paragraph{Normal coordinates.} One of the advantages of dealing with K\"ahler metrics is the fact that for local calculations one can choose coordinates in a convenient way:
\begin{proposition}\label{prop: prelnormcoord}Suppose $(X,\o)$ is a K\"ahler manifold. Given $x \in X$ there exist holomorphic local coordinates $z = (z_1, \ldots, z_n)$ mapping a neighborhood of $x$ to a neighborhood $U$ of $0 \in \Bbb C^n$ such that in these coordinates $\o$ can be expressed as:
$$\o|_U =i  dz_j \wedge d \bar z_j +  i g_{j\bar k p \bar q}z_p \bar z_q dz_j \wedge d \bar z_k +O(|z^3|)$$
\end{proposition}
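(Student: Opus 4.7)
The strategy is to work with a local K\"ahler potential and perform three successive adjustments: a linear normalization, a pluriharmonic modification, and a polynomial change of coordinates. Start with arbitrary holomorphic coordinates $w$ sending $x$ to $0\in\mathbb C^n$; since $d\o=0$, the $\ddbar$-lemma on a small polydisc yields a real $\varphi\in C^\infty$ with $\o=i\ddbar\varphi$. Because $(g_{j\bar k}(0))$ is positive definite Hermitian, there is $A\in\mathrm{GL}(n,\mathbb C)$ such that, after the linear change $w=A\tilde z$, we have $g_{j\bar k}(0)=\delta_{jk}$ and hence $\varphi(\tilde z)=|\tilde z|^2+(\text{higher order})$.

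Next, Taylor-expand $\varphi$ through order four around $\tilde z=0$ and subtract from it the purely holomorphic and purely antiholomorphic parts of this Taylor polynomial; since they form a pluriharmonic function, this does not alter $\o$. What remains takes the form
$$\varphi(\tilde z)=|\tilde z|^2+C(\tilde z,\bar{\tilde z})+Q(\tilde z,\bar{\tilde z})+O(|\tilde z|^5),$$
where $C=a_{jkl}\tilde z_j\tilde z_k\bar{\tilde z}_l+\overline{a_{jkl}}\bar{\tilde z}_j\bar{\tilde z}_k\tilde z_l$ collects the mixed cubic terms (with $a_{jkl}$ automatically symmetric in $j,k$) and $Q$ collects the quartic pieces of bidegrees $(3,1),(2,2),(1,3)$. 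Now perform a polynomial biholomorphic change $\tilde z_j=z_j+\tfrac12 b^j_{kl}z_kz_l+\tfrac16 c^j_{klm}z_kz_lz_m$ with $b^j_{kl}$, $c^j_{klm}$ symmetric in their lower indices. Expanding $|\tilde z|^2$, one finds
$$|\tilde z|^2=|z|^2+\tfrac12 b^j_{kl}z_kz_l\bar z_j+\tfrac12\overline{b^j_{kl}}\bar z_k\bar z_lz_j+(\text{cubic}\cdot\bar z\text{ terms from }c)+O(|z|^4)_{(2,2)},$$
so choosing $b^l_{jk}:=-2a_{jkl}$ (which is consistent with $b^l_{jk}=b^l_{kj}$ thanks to the $j\leftrightarrow k$ symmetry of $a$) kills $C$, and an analogous choice of $c^j_{klm}$ cancels the $(3,1)+(1,3)$ part of $Q$. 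A final subtraction of the newly produced $(4,0)+(0,4)$ pluriharmonic piece gives $\varphi(z)=|z|^2+(\text{pure }(2,2)\text{ quartic})+O(|z|^5)$, and differentiating $\partial_j\partial_{\bar k}$ yields precisely $g_{j\bar k}(z)=\delta_{jk}+g_{j\bar k p\bar q}(0)z_p\bar z_q+O(|z|^3)$, which is the claimed expansion of $\o$.

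The main obstacle, such as it is, is bookkeeping the symmetries of the index tensors so that the coefficients $b^j_{kl}$ and $c^j_{klm}$ in the coordinate change can actually be chosen to cancel $C$ and the off-diagonal part of $Q$. This is where the K\"ahler hypothesis enters: the scalar-potential structure forces the cubic and quartic coefficient tensors of $\varphi$ to be symmetric in all their $z$-indices (partial derivatives commute), and these are precisely the symmetries demanded of $b$ and $c$. No further input is needed beyond the existence of the local potential, which is equivalent to $d\o=0$.
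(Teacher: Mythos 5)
Your proof is correct and follows essentially the same strategy as the paper: a linear normalization at the center, followed by a polynomial holomorphic coordinate change killing the cubic (bidegree $(2,1)+(1,2)$) and the $(3,1)+(1,3)$ quartic coefficients of the local potential, with the K\"ahler symmetry of partial derivatives guaranteeing the coefficients of the change of variables can be chosen with the required symmetry. The cosmetic differences are that you Taylor-expand the potential and subtract pluriharmonic pieces explicitly, while the paper works directly with the derivatives $\tilde g_{j\bar k l}(0)$ and $\tilde g_{j\bar k ab}(0)$ of the pulled-back potential and splits the polynomial change of coordinates into a quadratic step ($F$) followed by a cubic step ($H$); the underlying computation and the use of the index symmetries are the same. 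One minor imprecision to note: after fixing $b$ to kill $C$, the residual $(3,1)+(1,3)$ quartic terms that $c$ must cancel include not only those of $Q$ but also the $(3,1)+(1,3)$ terms generated by substituting the quadratic change into $C$; this is harmless since they still have the requisite symmetry in the unbarred indices, but it should be accounted for when choosing $c$.
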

\begin{proof} First choose an arbitrary coordinate patch that maps $x$ and a neighborhood of $x$ to $0 \subset \Bbb C^n$ and $V \subset \Bbb C^n$ respectively. It is possible to apply a linear change of coordinates in $\Bbb C^n$ so that $\o|_{x} = i\ddbar g(0) =i d z_j \wedge d \bar z_j$. Now we deal with the first order coefficients by making the following local change of coordinates:
$$F_m(z)= z_m + c_{mjk} z_j z_k, \ m \in \{1,\ldots,n\}.$$
An elementary calculation for $F^* \o = i \ddbar g \circ F = : i \ddbar \tilde g$ yields
$$\tilde g_{j\bar k l}(0)= g_{j \bar k l}(0) + 2c_{k jl}.$$
Choosing $c_{kjl}=- g_{j \bar k l}(0)/2$, we obtain that after composing our local coordinate map with $F$ the first order coefficients of $\o$ vanish. Now we deal with the second order coefficients.
For this we introduce a local diffeomorphism of the form  
$$H_m(z)= z_m + b_{mjkl} z_j z_k z_l, \ m \in \{1,\ldots,n\}.$$
Pulling back again by  $H$, for $\tilde g = g \circ H$ we can write
$$\tilde g_{j \bar k a b}(0)=g_{j \bar k a b}(0) + 6 b_{kjab}.$$
Choosing $b_{kjab} = -g_{j \bar k a b}(0)/6$ we get $\tilde g_{j \bar k a b}(0)=0$. By taking conjugates and applying the Leibniz rule for derivatives we also get $\tilde g_{j \bar k \bar a \bar b}(0)=0$, finishing the proof.
\end{proof}
It immediately follows that all Christoffel symbols of $\o$ vanish  at $x$ after a choice of \emph{normal coordinates} around this point, as described in the above proposition. Also, from \eqref{eq: prelcurvaturelocal},\eqref{prelriclocal} and \eqref{prelscallocal} we obtain convenient formulas for geometric quantities related to curvature:
\begin{flalign}
R(\partial_j,\partial_{\bar k},\partial_p,\partial_{\bar q})(x)= - g_{j \bar k p \bar q}, \ \ \
\textup{Ric}_{j\bar k}(x)=  - g_{j \bar k p \bar p},\label{eq: prelcurvnormalcoord} \ \ \ 
S_\o(x)=  - g_{j \bar j p \bar p}.
\end{flalign}

\paragraph{The Lichnerowicz operator.} Given a complex valued function $u \in C^\infty(X,\Bbb C)$, the Riemannian Hessian of $u$ is computed by the well known formula $\nabla ^2 u(X,Y) = X du(Y) - du(\nabla_X Y)$. We denote by $\mathcal L u$ the $(0,2)$ part of $\nabla ^2 u$. In local coordinates this can be expressed as
\begin{equation}\label{eq: Lich_def}
\mathcal L u = (\mathcal L u)_{\bar j\bar k}d\bar z_j  \otimes d\bar z_k=(u_{\bar j\bar k} - \Gamma_{\bar j\bar k}^{\bar l}u_{\bar l})d\bar z_j  \otimes d\bar z_k=(u_{\bar j\bar k} - g^{h\bar l}g_{h\bar j\bar k}u_{\bar l})d\bar z_j  \otimes d\bar z_k.
\end{equation}
After switching to normal coordinates, one can easily see that $\mathcal L u=0$ if and only if $(g^{j\bar k} u_{\bar k})_{\bar l} = 0$ for all $\bar l \in \{1,\ldots,n\}$ which is equivalent to $\nabla^\o_{1,0} u$ being a  holomorphic vector field. $\mathcal L$ is the \emph{Lichnerowicz operator} and the following formula for the self--adjoint complex operator $\mathcal L^* \mathcal L$ will be very useful for us:
\begin{proposition} \label{prop: prelLichform} ${\mathcal L}^* \mathcal L u = \frac{1}{4}\Delta^\o(\Delta^\o u) + \langle \textup{Ric}_\o, i\partial \bar \partial u\rangle_{\o_u} +\langle \partial S_\o,\partial u\rangle, \ u \in C^\infty(X).$
\end{proposition}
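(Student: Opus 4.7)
The plan is to reduce to a pointwise computation at an arbitrary $x \in X$, carried out in normal coordinates centered at $x$ (Proposition \ref{prop: prelnormcoord}). At such a point $g_{j\bar k}=\delta_{jk}$, all Christoffel symbols vanish, and first derivatives of $g$ vanish; only second derivatives survive, and these are controlled by curvature via \eqref{eq: prelcurvnormalcoord}. In particular $\partial_q \Gamma^{l}_{jk}(x) = g_{j k \bar h q}(x) g^{l\bar h}(x)$ and similarly for the barred version, so all curvature quantities we need at $x$ can be read off from $g_{j\bar k p\bar q}$.

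First I would identify $\mathcal L^*$ by pairing. The natural Hermitian inner product on symmetric $(0,2)$--tensors is $\langle \alpha,\beta\rangle = g^{j\bar a}g^{k\bar b}\alpha_{\bar j\bar k}\overline{\beta_{\bar a\bar b}}$, and from \eqref{eq: Lich_def} we have $(\mathcal L u)_{\bar j\bar k}=\nabla_{\bar j}\nabla_{\bar k}u$, the $(0,2)$ part of the Hessian. Integration by parts twice (using $\int (\nabla_{\bar j}f)\o^n=0$ for compactly supported $f$ via Stokes and the compatibility of $\nabla$ with $g$) then yields
\begin{equation*}
\mathcal L^*\alpha \;=\; g^{j\bar a}g^{k\bar b}\,\nabla_{a}\nabla_{b}\,\alpha_{\bar j\bar k}.
\end{equation*}
Applied to $\alpha=\mathcal L u$ this gives
\begin{equation*}
\mathcal L^*\mathcal L u \;=\; g^{j\bar a}g^{k\bar b}\,\nabla_a\nabla_b\nabla_{\bar j}\nabla_{\bar k}u.
\end{equation*}

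The main step is to commute the middle two derivatives $\nabla_b\nabla_{\bar j}$ past each other, producing the Ricci and scalar curvature contributions. Since $u$ is a scalar, $[\nabla_b,\nabla_{\bar j}]u=0$, but on the $(0,1)$--tensor $\nabla_{\bar k}u$ one has the curvature commutator $[\nabla_b,\nabla_{\bar j}](\nabla_{\bar k}u)=R_{b\bar j\bar k}{}^{\bar m}\nabla_{\bar m}u$. After the first commutation one further commutes $\nabla_a$ past $\nabla_{\bar j}$ (on a $(0,2)$--tensor) to bring the two antiholomorphic derivatives to the outside, at the cost of two more curvature terms. Tracing against $g^{j\bar a}g^{k\bar b}$ collapses the Riemann tensor contractions into Ricci: indeed in normal coordinates at $x$, $g^{j\bar a}g^{k\bar b}R_{b\bar j\bar k}{}^{\bar m} = -g^{k\bar m}\mathrm{Ric}^{\,\bar b}{}_k\,\delta^{\bar a}_{\bar b}$ up to sign (this bookkeeping is the routine but delicate part). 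The leading term reorganizes as $g^{j\bar a}g^{k\bar b}\nabla_{\bar j}\nabla_{\bar k}\nabla_a\nabla_b u$, which by the scalar commutator $[\nabla_a,\nabla_{\bar k}]u=0$ and the definition \eqref{eq: Lapl_grad_formula} equals $\tfrac{1}{4}\Delta^\o(\Delta^\o u)$. The Ricci commutator terms contract with $u_{a\bar k}$ to yield $\langle \mathrm{Ric}_\o,i\partial\bar\partial u\rangle_\o$.

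The residual term, produced when a covariant derivative lands on the Ricci curvature itself during the rearrangement, is handled by the contracted second Bianchi identity $g^{j\bar k}\nabla_j \mathrm{Ric}_{l\bar k}=\nabla_l S_\o$ (equivalently $\nabla^{\bar k}\mathrm{Ric}_{j\bar k}=\nabla_j S_\o$); it collapses exactly to $\langle \partial S_\o,\partial u\rangle$. Summing the three contributions yields the claimed identity. The main obstacle is precisely this commutator bookkeeping: carefully tracking which tensor type each $\nabla$ acts on, applying the right curvature commutator formula at each step, and invoking the Bianchi identity at the unique place where a derivative falls on the Ricci tensor. Once the point $x$ is fixed and normal coordinates are adopted the computation is purely algebraic and the identity, being tensorial, then holds everywhere.
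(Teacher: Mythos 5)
Your approach is valid and arrives at the correct identity, but it takes a genuinely different route from the paper. You work invariantly: express $\mathcal L^*\mathcal L u$ as the traced fourth covariant derivative $g^{a\bar j}g^{b\bar k}\nabla_a\nabla_b\nabla_{\bar j}\nabla_{\bar k}u$, commute holomorphic past antiholomorphic covariant derivatives (generating Riemann/Ricci commutator terms), and invoke the contracted second Bianchi identity $g^{j\bar k}\nabla_j\textup{Ric}_{l\bar k}=\nabla_l S_\o$ to turn the term where a derivative falls on Ricci into $\langle\partial S_\o,\partial u\rangle$. This is the classical Bochner--Weitzenb\"ock mechanism and does produce the formula.

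The paper avoids both the commutator formulas and the Bianchi identity. Once the computation is localized at $x$ with normal coordinates (Proposition~\ref{prop: prelnormcoord}), all Christoffel symbols vanish, so covariant derivatives reduce to partial derivatives up to terms quadratic in $\Gamma$ or linear in $\partial\Gamma$; and because $g_{j\bar k}=\partial_j\partial_{\bar k}g$ for a local K\"ahler potential $g$, every curvature quantity is a \emph{fully symmetric} higher partial of $g$. The ``Bianchi identity'' you invoke is then just commutativity of partial derivatives and carries no content: the relation $\partial_l S_\o = -g_{lj\bar j p\bar p}$ and the matching Ricci term both appear from the same symmetric array $g_{\cdot\cdot\cdot\cdot\cdot}$. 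So the paper simply integrates by parts once, expands everything in terms of $g_{jk\bar l}$, $g_{jk\bar l\bar m}$, etc.\ in normal coordinates, discards the terms that vanish by symmetry (those of type $g_{abc\bar d}$, $g_{ab\bar c}$), and reads off the three pieces directly, bypassing the commutator bookkeeping. Your approach is more structural and coordinate-free, but requires carefully tracking which tensor type each $\nabla$ acts on and which curvature commutator applies at each step (the part you rightly flag as delicate); the paper's is more pedestrian but avoids precisely that delicacy. Minor typographical point: in your formula for $\mathcal L^*\alpha$ the index placement in $g^{j\bar a}g^{k\bar b}\nabla_a\nabla_b\alpha_{\bar j\bar k}$ is inconsistent (holomorphic indices should pair with the barred indices on $\alpha$); it should read $g^{a\bar j}g^{b\bar k}\nabla_a\nabla_b\alpha_{\bar j\bar k}$, matching the Hermitian pairing you use. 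Both arguments prove the proposition.
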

To clarify, in the above formula $\mathcal L^*$ is the Hermitian dual of $\mathcal L$, as will be detailed in the proof below. Also, $\langle \partial S_\o,\partial u\rangle$ denotes the Hermitian inner product of $\partial S_\o$ and $\partial u$ (equaling $g^{j\bar k} {S_\o}_j u_{\bar k}$ in local coordinates).

\begin{proof} Suppose $x \in X$. One has the following local expression for $\langle\mathcal L u,\mathcal L v \rangle$:
$$\langle\mathcal L u,\mathcal L v \rangle=(\mathcal L u)_{\bar j\bar k}g^{p\bar j}g^{q\bar k}\overline{(\mathcal L v)_{\bar p\bar q}}=(u_{\bar j\bar k} - g^{h\bar l}g_{h\bar j\bar k}u_{\bar l})g^{p\bar j}g^{q\bar k}(v_{ pq} - g^{ a\bar b}g_{ p q \bar b}v_{a})$$
We choose normal coordinates around $x$ as in Proposition \ref{prop: prelnormcoord}. Using integration by parts and \eqref{eq: Lich_def} one can write
$$\mathcal L^* \mathcal L u = ((u_{\bar j\bar k} - g^{h\bar l}g_{h\bar j\bar k}u_{\bar l})g^{p\bar j}g^{q\bar k})_{pq} - (((u_{\bar j\bar k} - g^{h\bar l}g_{h\bar j\bar k}u_{\bar l})g^{p\bar j}g^{q\bar k})g^{ a \bar b}g_{ p q \bar b})_a.$$

As we are working in normal coordinates, all terms of type $g_{abc\bar d},g_{ab\bar c}$ vanish, hence the second term in the above sum is 0. By expanding the first term we arrive at:
$$\mathcal L^* \mathcal L u=u_{jk\bar j\bar k} - g_{l j k \bar j\bar k}u_{\bar l}-g_{lj\bar j \bar k}u_{k\bar l} -g_{lk\bar j \bar k}u_{j\bar l}.$$
By \eqref{eq: Lapl_grad_formula} and \eqref{eq: prelcurvnormalcoord}, in normal coordinates we have $4\Delta^\o(\Delta^\o u) = g^{a \bar b} (g^{c\bar d}u_{c\bar d})_{a\bar b}=u_{a\bar a c \bar c} - u_{c\bar d}g_{a\bar a c \bar d}$, $\langle \partial S_\o, \partial u \rangle=g_{l j k \bar j\bar k}u_{\bar l}$ and $\langle \textup{Ric}_\o,i\ddbar u \rangle=-g_{lj\bar j \bar k}u_{k\bar l}$  hence the desired identity follows.
\end{proof}

\section{Approximation of $\o$--psh functions on K\"ahler manifolds}

Given a compact K\"ahler manifold, we show that any $\o$--psh function can be approximated by a decreasing sequence of smooth K\"ahler potentials. Much stronger results have been derived by Demailly \cite{de1,de2}, using  sophisticated techniques. Here we will follow closely the arguments of  Blocki--Kolodziej \cite{bk}. Our main result is the following:

\begin{theorem}\label{thm: BKapprox_app}  Let $(X,\o)$ be a compact K\"ahler manifold. Given $u \in \textup{PSH}(X,\o)$, there exists a decreasing sequence $\{u_k\}_k \subset \mathcal H_\o$ such that $u_k \searrow u$. 
\end{theorem}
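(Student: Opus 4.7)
The plan is to construct a smooth approximation of $u$ by integrating against a bump function using the exponential map of the underlying Riemannian metric, then to correct the resulting function so it lands in $\mathcal H_\o$, and finally to extract a decreasing subsequence converging to $u$.

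First I would define, for small $\rho>0$, the regularization
$$u_\rho(x) := \int_{T_xX} u(\exp_x(v))\,\chi\!\Big(\tfrac{|v|^2}{\rho^2}\Big)\rho^{-2n}\,dV(v),$$
where $\chi:\Bbb R\to\Bbb R^+$ is a smooth cutoff supported in $[0,1]$ with normalized integral, and $\exp_x$ is the exponential map at $x$ for the Riemannian metric associated to $\o$. For $\rho$ smaller than the injectivity radius, $u_\rho\in C^\infty(X)$ is well-defined, and the upper semi-continuity of $u$ together with the monotone convergence theorem give $u_\rho(x)\to u(x)$ pointwise as $\rho\to 0$.

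Next I would establish the key curvature estimate
$$\o+i\ddbar u_\rho\geq -K\rho^2\,\o,$$
for some constant $K$ depending only on curvature bounds of $(X,\o)$. Working in normal holomorphic coordinates at a reference point (Proposition \ref{prop: prelnormcoord}), one compares $u_\rho$ with the Euclidean convolution of $u$ against $\chi$: in the flat case, convolution preserves $\o$-plurisubharmonicity exactly, and the deviation of $\exp_x$ from being holomorphic contributes error terms involving the Riemann curvature tensor, which by a Taylor expansion contribute $O(\rho^2)$ corrections to $i\ddbar u_\rho$.

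The main obstacle is achieving simultaneous monotonicity in $\rho$ and strict $\o$-positivity. For the positivity, I would fix a smooth function $\eta\in C^\infty(X)$ (for example $\eta=-|g|^2$ in some gauge, or simply using that $[\o]$ is a K\"ahler class to produce strictly positive curvature forms), absorb the $-K\rho^2\o$ error into a correction term, and replace $u_\rho$ by $\tilde u_\rho:=u_\rho+K\rho^2\eta+C_\rho$, with $C_\rho$ chosen so that $\tilde u_\rho$ is bounded above by $u$ plus some controlled error, and so that $\o+i\ddbar\tilde u_\rho>0$. For monotonicity, since naive kernel convolutions are not monotone in $\rho$, I would use a diagonal extraction: pick $\rho_k\searrow 0$ so fast that $\tilde u_{\rho_{k+1}}\leq \tilde u_{\rho_k}$ holds pointwise, exploiting the pointwise convergence $\tilde u_\rho(x)\to u(x)$ and compactness of $X$ together with Dini-type uniform control to inductively select each $\rho_{k+1}$ making $\tilde u_{\rho_{k+1}}$ dominated by $\tilde u_{\rho_k}$. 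The hard part is controlling the correction terms so the chosen subsequence is genuinely decreasing while $\tilde u_{\rho_k}\searrow u$ and each $\tilde u_{\rho_k}\in\mathcal H_\o$; this requires balancing the $O(\rho^2)$ curvature error, the correction from $\eta$, and the (non-monotone) pointwise behavior of the raw convolutions.
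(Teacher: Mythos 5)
Your proposal takes a genuinely different route from the paper. The paper follows Blocki--Kolodziej: it uses the \emph{sup-convolution} $v_\delta(x) = \sup_{B(x,\delta)} v$ inside local coordinate charts, glues the local pieces via cutoff functions and the Richberg regularized maximum (Proposition~\ref{prop: BKapprox_app}), reduces to bounded potentials first, and finishes with the rescaling trick $u_k = \tfrac{1}{k} + (1 - \tfrac{1}{k}) w_k$ to move from $\mathcal H_{(1+1/k)\o}$ into $\mathcal H_\o$. Your proposal is essentially Demailly's exponential-map mollification, which the paper explicitly cites as the more sophisticated alternative it is deliberately avoiding. Both are legitimate, but your sketch as written has two genuine gaps.

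\textbf{Gap 1: the curvature estimate fails without a Lelong number hypothesis.} The bound $\o + i\ddbar u_\rho \geq -K\rho^2\,\o$ is false for a general $u \in \textup{PSH}(X,\o)$; the exponential-map mollification loses positivity by an amount controlled by the Lelong numbers $\nu(u,\cdot)$, not just by $O(\rho^2)$. If $\nu(u,x_0) > 0$, the negative part of $i\ddbar u_\rho$ near $x_0$ does not shrink to zero as $\rho\to 0$, so the correction term $K\rho^2\eta$ cannot absorb it. This is precisely why Proposition~\ref{prop: BKapprox_app} is stated under the hypothesis that all Lelong numbers vanish, and why the paper's proof of Theorem~\ref{thm: BK_approx} first passes to the bounded cutoffs $v_k = \max(u,-k)$ (which do have zero Lelong numbers), regularizes each $v_k$, and then diagonalizes. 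Your argument needs that reduction too; without it the key estimate simply does not hold.

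\textbf{Gap 2: the monotonicity extraction does not follow from Dini.} Dini's theorem requires the pointwise limit to be continuous, and $u$ is merely upper semicontinuous; the convergence $\tilde u_\rho \to u$ is pointwise but not uniform, and in general one cannot choose $\rho_{k+1} < \rho_k$ so small that $\tilde u_{\rho_{k+1}} \leq \tilde u_{\rho_k}$ everywhere (a family of continuous functions tending pointwise to an usc function from above need not admit a decreasing subsequence, even after constant shifts). What actually rescues the Demailly approach is a \emph{structural quasi-monotonicity in $\rho$}: the sub-mean-value property of psh functions shows that the Euclidean mollification is non-decreasing in the radius, and transporting this through the exponential map introduces only an $O(\rho^2)$ error, so that $u_\rho + C\rho^2$ can be arranged to be decreasing in $\rho$. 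The sup-convolution used by the paper has monotonicity built in (which is the reason Blocki--Kolodziej work with it), so the paper never has to face this issue. You need to prove the quasi-monotonicity explicitly; a diagonal extraction unsupported by it has no reason to terminate.

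In short: the Demailly-style route you sketch is viable, but it requires (a) the reduction to bounded cutoffs and a diagonalization to eliminate Lelong numbers, and (b) a genuine quasi-monotonicity estimate for the mollification in $\rho$ in place of the Dini argument.
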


The main technical ingredient will be the following proposition, which provides an intermediate result:

\begin{proposition}\label{prop: BKapprox_app} Let $(X,\o)$ be a K\"ahler manifold (not necessarily compact). Let $X' \subset X$ be a relatively compact open set. If the Lelong numbers of $\varphi$ are zero at any $x\in X$, then there exists $\varphi_k \in \textup{PSH}(X',(1 + \frac{1}{k})\o) \cap C^\infty(X')$ such that $\varphi_k \searrow \varphi$.  
\end{proposition}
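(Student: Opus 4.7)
My plan is to produce $\varphi_k$ by locally regularizing $\varphi$ via convolution in finitely many coordinate charts and then gluing the local regularizations globally via Demailly's regularized maximum, absorbing the unavoidable patching error into the positivity loss $1/k$.

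First I would fix a finite open cover $\{U_\alpha\}_{\alpha=1}^N$ of a neighborhood of $\overline{X'}$ by holomorphic charts $U_\alpha \simeq B(0,2) \subset \mathbb{C}^n$ on which $\omega$ admits smooth local K\"ahler potentials $\psi_\alpha$ with $\omega|_{U_\alpha} = i\ddbar \psi_\alpha$, together with shrunken subcovers $U_\alpha'' \Subset U_\alpha' \Subset U_\alpha$ such that $\{U_\alpha''\}_\alpha$ still covers $\overline{X'}$. Pick cutoffs $\chi_\alpha \in C^\infty(U_\alpha,(-\infty,0])$ with $\chi_\alpha \equiv 0$ on $U_\alpha''$ and $\chi_\alpha(x) \to -\infty$ as $x \to \partial U_\alpha'$, and let $C_0 := \max_\alpha \sup_{U_\alpha'} \| i\ddbar \chi_\alpha\|_\omega$.

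Next, on each chart the function $\tilde\varphi_\alpha := \varphi + \psi_\alpha$ is psh, so its Euclidean convolution $\tilde\varphi_\alpha^\varepsilon := \tilde\varphi_\alpha \star \rho_\varepsilon$ (with $\rho$ a standard radial mollifier) is smooth and psh on $U_\alpha^\varepsilon := \{z \in U_\alpha : B(z,\varepsilon) \Subset U_\alpha\}$, and $\tilde\varphi_\alpha^\varepsilon \searrow \tilde\varphi_\alpha$ pointwise as $\varepsilon \searrow 0$ by the submean value property. Setting $\varphi_\alpha^\varepsilon := \tilde\varphi_\alpha^\varepsilon - \psi_\alpha$, we obtain $\varphi_\alpha^\varepsilon \in \textup{PSH}(U_\alpha^\varepsilon,\omega)\cap C^\infty$ with $\varphi_\alpha^\varepsilon \geq \varphi$ and $\varphi_\alpha^\varepsilon \searrow \varphi$ as $\varepsilon \searrow 0$. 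On an overlap $U_\alpha \cap U_\beta$ the discrepancy is
\begin{equation*}
\varphi_\alpha^\varepsilon - \varphi_\beta^\varepsilon = (\psi_\alpha - \psi_\beta)\star \rho_\varepsilon - (\psi_\alpha - \psi_\beta),
\end{equation*}
which is $O(\varepsilon^2)$ in $C^0$ on compact subsets because $\psi_\alpha - \psi_\beta$ is smooth (in fact pluriharmonic).

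For each $k$ I would introduce the parameter-dependent functions
\begin{equation*}
\hat\varphi_{\alpha,k} := \varphi_\alpha^{\varepsilon_k} + \delta_k \chi_\alpha \ \textup{ on } \ U_\alpha^{\varepsilon_k}\cap U_\alpha',
\end{equation*}
extended by $-\infty$ to the complement. Choosing $\delta_k := 1/(kC_0)$ gives $\hat\varphi_{\alpha,k} \in \textup{PSH}(U_\alpha^{\varepsilon_k}\cap U_\alpha',(1+1/k)\omega)$. Because $\chi_\alpha \to -\infty$ at $\partial U_\alpha'$ while $\chi_\alpha \equiv 0$ on $U_\alpha''$, a suitably small $\eta_k > 0$ makes Demailly's regularized maximum
\begin{equation*}
\varphi_k := \textup{Max}_{\eta_k}\big(\hat\varphi_{1,k},\ldots,\hat\varphi_{N,k}\big)
\end{equation*}
a well-defined smooth $(1+1/k)\omega$-psh function on all of $X'$: on a neighborhood of $\partial U_\alpha'$ the value $\hat\varphi_{\alpha,k}$ is so negative that it is ignored by the regularized max, while on $U_\alpha''$ one has $\hat\varphi_{\alpha,k} = \varphi_\alpha^{\varepsilon_k} \geq \varphi$, yielding $\varphi_k \geq \varphi$ globally.

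The main obstacle is arranging the monotonicity $\varphi_k \searrow \varphi$ while maintaining the positivity bound. I would choose $\varepsilon_k \searrow 0$ inductively: having defined $\varphi_{k-1}$, select $\varepsilon_k$ so small that $\hat\varphi_{\alpha,k} \leq \varphi_{k-1} - 2\eta_k$ uniformly on $\overline{X'}$ for each $\alpha$, which then forces $\varphi_k \leq \varphi_{k-1}$ by the basic properties of $\textup{Max}_{\eta_k}$. This step is the crux, and it is here that the vanishing Lelong number hypothesis enters: it implies that $\tilde\varphi_\alpha^\varepsilon \to \tilde\varphi_\alpha$ \emph{uniformly on compacts} (since $\varphi$ with zero Lelong numbers is almost continuous in the sense that $\sup_{B(z,\varepsilon)}\varphi - \varphi(z)\to 0$ locally uniformly, cf.\ \cite[Chapter I]{De}), giving the decay control needed to shrink $\varepsilon_k$ far enough at each stage. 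Combined with $\delta_k \chi_\alpha \to 0$ uniformly on $U_\alpha''$, pointwise convergence $\varphi_k(x) \to \varphi(x)$ then follows by taking, for each $x \in X'$, an index $\alpha$ with $x \in U_\alpha''$ and using $\varphi \leq \varphi_k \leq \varphi_\alpha^{\varepsilon_k} + \eta_k \to \varphi(x)$.
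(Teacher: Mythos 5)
There is a genuine gap, and it concerns the overlap discrepancy formula — which is where the entire content of the Lelong number hypothesis lives.

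You write that on $U_\alpha \cap U_\beta$ one has
$\varphi_\alpha^\varepsilon - \varphi_\beta^\varepsilon = (\psi_\alpha - \psi_\beta)\star\rho_\varepsilon - (\psi_\alpha - \psi_\beta)$,
and conclude this is $O(\varepsilon^2)$. But this cancellation requires the two convolutions to be taken in the \emph{same} Euclidean structure, whereas $\tilde\varphi_\alpha^\varepsilon$ is a convolution in the coordinates of $U_\alpha$ and $\tilde\varphi_\beta^\varepsilon$ in those of $U_\beta$. The actual discrepancy contains the term $\varphi\star_\alpha\rho_\varepsilon - \varphi\star_\beta\rho_\varepsilon$, the difference of mollifications of the same psh function under two different coordinate identifications related by a nontrivial biholomorphism. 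This term does not vanish identically, is not $O(\varepsilon^2)$, and cannot be estimated from smoothness of $\psi_\alpha - \psi_\beta$: $\varphi$ is merely psh. Controlling this chart-change error is precisely what the Lelong number hypothesis is for, and the paper isolates this in Lemma~\ref{lem: change_var_convolution}: it proves that for sup-convolutions $u_\delta(x)=\sup_{B(x,\delta)}u$ one has $u_\delta - u^F_\delta \to 0$ locally uniformly as $\delta\to 0$, using the containments $B(F(z),\delta)\subset F(B(z,a\delta))$ and the convexity of $r\mapsto\sup_{B(z,e^r)}u$ together with formula \eqref{eq: Lelong_id}. Euclidean convolutions do not enjoy the clean containment structure that makes this argument run; that is why the paper first produces \emph{continuous} approximants via sup-convolution, and only then passes to Euclidean convolution plus Richberg's regularized maximum in a second step, where uniform convergence on compacts is free because the approximant is already continuous.

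Your diagnosis of where the Lelong hypothesis enters is also incorrect. You assert that zero Lelong numbers force $\sup_{B(z,\varepsilon)}\varphi - \varphi(z)\to 0$ locally uniformly, and hence $\tilde\varphi_\alpha^\varepsilon\to\tilde\varphi_\alpha$ uniformly on compacts. This is false whenever $\varphi$ has a pole: there are $\o$-psh functions with $\varphi(z_0)=-\infty$ and $\mathcal L(\varphi,z_0)=0$ (e.g.\ local models like $-(-\log|z|)^{1/2}$), and for such a function $\sup_{B(z_0,\varepsilon)}\varphi - \varphi(z_0)=+\infty$ for every $\varepsilon>0$. Zero Lelong numbers control the \emph{slope} $\sup_{B(z,\varepsilon)}\varphi/\log\varepsilon$, not the oscillation $\sup_{B(z,\varepsilon)}\varphi-\varphi(z)$; the passage from one to the other is exactly the Dini-type argument in the proof of Lemma~\ref{lem: change_var_convolution}, applied to a difference of two sup-convolutions at comparable scales, not to $\varphi$ itself. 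Because of this, your inductive choice of $\varepsilon_k$ forcing $\hat\varphi_{\alpha,k}\leq\varphi_{k-1}-2\eta_k$ on $\overline{X'}$ cannot be carried out: the required uniform approximation fails near the poles of $\varphi$. The monotonicity $\varphi_k\searrow\varphi$ therefore cannot be secured by your mechanism, and the construction does not close.
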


Based on this proposition we quickly give the proof of Theorem \ref{thm: BKapprox_app}:

\begin{proof}[Proof of Theorem \ref{thm: BKapprox_app}] We can assume that $u \leq -1$, and consider the cutoffs $v_k := \max(u,-k) \in \textup{PSH}(X,\o) \cap L^\infty$. An application of Proposition \ref{prop: BKapprox_app} to $\{v_k\}_k$ gives  a sequence $w_k \in \mathcal H_{(1+ \frac{1}{k})\o}$ such that $w_k \searrow u$ and $v_k \leq w_k \leq 0$. Introducing $u_k:= \frac{1}{k} + (1-\frac{1}{k})w_k \in \mathcal H_\o$,  we see that $\{u_k\}_k$ is decreasing and $u_k \searrow u$.
\end{proof} 

Returning to the local situation of a moment, let $v \in \textup{PSH}(U)$ for some open set $U \subset \Bbb C^n$. It is well known that the correspondence $r \to f(r)=\sup_{{B(z_0,e^r)}} v=\max_{\partial{B(z_0,e^r)}} v$ is  convex for any $z_0 \in U$ (this follows from the comparison principle for the complex Monge--Amp\`ere operator and the fact that $i\ddbar (\log |z -z_0|)^n =0$ on $\Bbb C^n \setminus \{z_0 \}$). Convexity of $r \to f(r)$ gives in particular that this map is continuous, hence so are the maps $v_\delta \in \textup{PSH}(U_\delta)$, defined by the formula
\begin{equation}\label{eq: def_sup_conv}
v_\delta(x) := \sup_{y \in B(x,\delta)} v(y),
\end{equation}
where $U_\delta= \{x \in U, \ B(x,\delta) \subset U \}.$ Lastly, as $v$ is usc, we also obtain that $v_\delta \searrow v$. 

Given $u \in \textup{PSH}(X,\o)$ and $x_0 \in X$, recall the definition of the Lelong number $\mathcal L(u,x_0)$ from \eqref{eq: Lelong_def}:
$$\mathcal L(u,x_0):= \sup \{ r \geq 0 \ | \ u(x) \leq r \log |x| + C_r, \ \forall \ x \in U_r, \textup{ for some } \ C_r>0\},$$
where $U_r \subset \Bbb C^n$ is some coordinate neighborhood of $x_0$ (dependent on $r$) that identifies $x_0$ with $0 \in \Bbb C^n$. We now give an alternative description of $\mathcal L(u,x_0)$ that will be of great use in the proof of Proposition \ref{prop: BKapprox_app}. Choose a coordinate neighborhood $U \subset \Bbb C^n$ of $x_0$, that identifies $x_0$ with $0 \in \Bbb C^n$, and a potential $f \in C^\infty(X)$ such that $i\ddbar f = \o$ on $U$. Clearly $u + f \in \textup{PSH}(U)$, and the following is well known:
\begin{equation}\label{eq: Lelong_id}
\mathcal L(u,x_0) = \lim_{r \to 0} \frac{\sup_{B(x_0,r)} (u + f)}{\log r}.
\end{equation}
The limit on the right hand side is well defined as $\log r \to \sup_{B(x_0,r)} (u + f)$ is convex, hence the quotients involved are  decreasing as $r \to 0$. Also, this limit is clearly independent of the choice of potential $f$. For an extensive treatment of Lelong numbers we refer to \cite[Section 2.3]{gzbook}.

Finally, we arrive at the following result which will allow to compare approximation via \eqref{eq: def_sup_conv} using different coordinate charts: 
\begin{lemma}\label{lem: change_var_convolution} Let $U, V\subset \Bbb C^n$ and $F: U \to V$ a biholomorphic
map. Suppose $u \in \textup{PSH}(U)$ has zero Lelong numbers. For $u^F_\delta :=(u \circ F^{-1})_\delta \circ F$, the difference $u_\delta - u^F_\delta$ converges to zero uniformly on compact sets as $\delta \to 0$.
\end{lemma}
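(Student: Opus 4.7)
The plan is to reduce the statement to a uniform estimate on how fast the sup-convolutions of $u$ change with radius, and then extract this from the zero-Lelong-number assumption. First, unpacking the definitions, $u^F_\delta(x) = \sup\{u(y) : |F(y) - F(x)| < \delta\}$; call this last set $E(x,\delta)$. Fix a compact $K \subset U$. Since $F$ is biholomorphic there are uniform bounds $0 < m \leq \sigma_{\min}(DF) \leq \sigma_{\max}(DF) \leq M$ on a neighborhood of $K$, and a Taylor expansion of $F$ around each $x \in K$ shows that for every $\varepsilon > 0$ there exists $\delta_0 > 0$ such that for all $\delta \in (0,\delta_0)$ and $x \in K$,
\[
B\Bigl(x, \tfrac{\delta}{(1+\varepsilon)M}\Bigr) \subset E(x,\delta) \subset B\Bigl(x, \tfrac{\delta}{(1-\varepsilon)m}\Bigr).
\]
Setting $c_2 := \min\bigl(1, \tfrac{1}{(1+\varepsilon)M}\bigr)$ and $c_1 := \max\bigl(1, \tfrac{1}{(1-\varepsilon)m}\bigr)$, monotonicity of $\delta \mapsto u_\delta(x)$ places both $u_\delta(x)$ and $u^F_\delta(x)$ in the interval $[u_{c_2\delta}(x), u_{c_1\delta}(x)]$, so
\[
|u_\delta(x) - u^F_\delta(x)| \leq u_{c_1 \delta}(x) - u_{c_2 \delta}(x), \quad x \in K.
\]

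The core task is therefore to prove the following uniform claim: for every compact $K \subset U$ and every $c > 1$, $u_{c\delta} - u_\delta \to 0$ uniformly on $K$ as $\delta \to 0$ (the case $c < 1$ is equivalent after rescaling). I plan to argue this by contradiction. If it fails there exist $\varepsilon > 0$, $\delta_k \to 0$, and $x_k \to x_0 \in K$ with $u_{c\delta_k}(x_k) - u_{\delta_k}(x_k) \geq \varepsilon$. The convex functions $h_k(r) := u_{e^r}(x_k) = \sup_{B(x_k, e^r)} u$ then satisfy $h_k(\log\delta_k + \log c) - h_k(\log \delta_k) \geq \varepsilon$, and the convexity inequality $h(b) - h(a) \leq h'(b)(b-a)$ for $a < b$ yields $h_k'(\log \delta_k + \log c) \geq \varepsilon / \log c$. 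Since $h_k'$ is increasing in $r$, for any fixed $r_0$ and large $k$ we have $h_k'(r_0) \geq \varepsilon/\log c$.

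By the classical continuity of sup-convolutions of $\textup{PSH}$ functions in the base point, $h_k(r) \to h_\infty(r) := u_{e^r}(x_0)$ pointwise in $r$. Standard theory of convex functions then gives that right-derivatives of the $h_k$ converge to those of $h_\infty$ at each continuity point of the limit, which form a dense set; right-continuity of the right-derivative of $h_\infty$ then promotes the inequality to $h_\infty'(r) \geq \varepsilon/\log c$ for every $r$. Using $h_\infty(r_1) \leq h_\infty(r_0) + (\varepsilon/\log c)(r_1 - r_0)$ for $r_1 < r_0$ (a consequence of $h_\infty$ lying above its tangent at $r_1$) and dividing by $r_1 < 0$, the Lelong-number characterization $\mathcal L(u, x_0) = \lim_{r \to -\infty} h_\infty(r)/r$ yields $\mathcal L(u, x_0) \geq \varepsilon / \log c > 0$, contradicting the hypothesis. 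The main obstacle is precisely this passage from the pointwise zero-Lelong condition to uniform control over $K$: it requires combining the continuity of $u_\delta$ in $x$ with the stability of derivatives of convex functions under pointwise limits, whereas both the Lelong number and the estimate $u_{c\delta} - u_\delta$ are a priori only pointwise quantities.
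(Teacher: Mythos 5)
Your proof is correct, and at the core step it takes a genuinely different route from the paper. Both proofs share the same preliminary reduction: a Taylor expansion of $F$ on the compact $K$ traps $u^F_\delta$ and $u_\delta$ between $u_{c_2\delta}$ and $u_{c_1\delta}$ for fixed $c_2<1<c_1$, so everything comes down to showing $u_{c\delta}-u_\delta\to 0$ uniformly on $K$ for fixed $c>1$. The paper argues this step directly: convexity of $r\mapsto u_{e^r}(z)$ gives, for fixed $r_0$, the bound $0\leq u_{c\delta}(z)-u_\delta(z)\leq \frac{\log c}{\log r_0-\log\delta}\bigl(u_{r_0}(z)-u_\delta(z)\bigr)$, and the factor $\frac{u_{r_0}(z)-u_\delta(z)}{\log r_0-\log\delta}$ is a secant slope that decreases monotonically (again by convexity) to $\mathcal L(u,z)=0$ as $\delta\to 0$; the continuity of $z\mapsto u_\delta(z)$ plus Dini's theorem, left implicit in the paper, then upgrades this to uniform convergence. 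You instead argue by contradiction: you extract $x_k\to x_0$ with $u_{c\delta_k}(x_k)-u_{\delta_k}(x_k)\geq\varepsilon$, convert this via convexity into the derivative bound $h_k'(r_0)\geq\varepsilon/\log c$ for every fixed $r_0$ and large $k$, and pass to the limit using continuity of $u_\delta$ in the base point together with the convergence of (right) derivatives of convex functions under pointwise convergence, ending with $\mathcal L(u,x_0)\geq\varepsilon/\log c>0$. Both proofs lean on the same two facts (continuity of $u_\delta$ in $z$, convexity of $\log\delta\mapsto u_\delta$); the paper's version is more compact once Dini is made explicit, while yours makes the compactness mechanism by which pointwise vanishing of Lelong numbers yields a uniform bound more transparent, at the cost of invoking the slightly heavier machinery of derivative convergence for convex functions.
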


\begin{proof} Fix $a > 1, \ r >0$ and $z \in U$. As $\log \delta \to u_\delta(z)$ is convex, for small enough $\delta>0$ we can write
$$0 \leq u_{a\delta}(z) - u_\delta(z) \leq  \frac{\log a}{\log r - \log \delta} (u_r(z) - u_\delta(z)).$$
Consequently, as $\mathcal L(u,z)=0$, \eqref{eq: Lelong_id} implies that $u_\delta - u_{a \delta} \to 0$ uniformly on compact sets as $\delta \to 0$. 

Next we notice that
$$u^F_\delta(z) = \max_{
F^{-1}(\overline{B(F(z),\delta)})}u.$$
As $F$ is a biholomorphism, for a fixed compact set $K \subset U$ it is possible to find $a>1$ and $\delta_0 >0$ such that for $\delta  \in (0,\delta_0)$ and $z \in K$ we have
$$B(F(z), \delta) \subset F(B(z, a\delta)), \ \ F(B(z, \delta)) \subset B(F(z), a\delta).$$
As a result of these containments, we get that $u^F_\delta \leq u_{a\delta}$ and $u_\delta \leq u^F_{a\delta}$ on $K$. Since $(u_\delta - u_{a \delta}) \to 0$ uniformly on compacts, the statement of the lemma follows.
\end{proof}

\begin{proof}[Proof of Proposition \ref{prop: BKapprox_app}] First we find $\tilde \varphi_j \in \textup{PSH}(X',(1 + \frac{1}{j}) \o) \cap C(X')$ satisfying the requirements of the proposition.

Fix $\varepsilon >0$. We can find a finite number of nested charts $V_\alpha \subset U_\alpha$
such that $\{V_\alpha\}_\alpha$ covers $X'$, $\overline{V_\alpha} \subset U_\alpha$, and $i\ddbar f_\alpha =\o$ on $U_\alpha$,
for some potentials $f_\alpha \in C^\infty(U_\alpha)$. 

Then $\varphi_\alpha := \varphi + f_\alpha \in \textup{PSH}(U_\alpha)$ and by the previous lemma we have
\begin{equation}\label{eq: overlap_conv}
\varphi_{\alpha,\delta} - \varphi_{\beta,\delta} = \varphi_{\alpha,\delta} - \varphi^F_{\alpha,\delta} + (\varphi_\alpha - \varphi_\beta)^F_\delta \to f_\alpha - f_\beta
\end{equation}
locally uniformly on $U_\alpha \cap U_\beta$ as $\delta \to 0$, where $F$ is the change of coordinates on the overlap $U_\alpha \cap U_\beta$. Let $\eta_\alpha$ be smooth on $U_\alpha$ such that $\eta_\alpha=0$ on $V_\alpha$ and $\eta_\alpha = -1$ in a neighborhood of $\partial U_\alpha$. We have $i\ddbar \eta_\alpha > -C \o$
some $C >0$. 

We set
$$\tilde \varphi_\delta:= \max_\alpha \Big(\varphi_{\alpha,\delta} - f_\alpha + \frac{\varepsilon \eta_\alpha}{C}\Big).$$
By \eqref{eq: overlap_conv}, for $\delta>0$  sufficiently small, the values of $\varphi_{\alpha,\delta} - f_\alpha + {\varepsilon \eta_\alpha}/{C}$ do not contribute to the maximum on the set  $\{\eta_\alpha = -1\}$, thus $\tilde \varphi_\delta$ is continuous on $X'$ and also $\tilde \varphi_\delta \in \textup{PSH}(X',(1 + \varepsilon) \o)$. It is also clear that $\tilde \varphi_\delta$ decreases to $\varphi$ as $\delta \searrow 0$, allowing to construct $\tilde \varphi_j \in \textup{PSH}(X',(1+ \frac{1}{j}) \o) \cap C(X')$ such that $\tilde \varphi_j \searrow \varphi$.

Next we argue that is also possible to approximate with smooth potentials. For this we only need to show that any $\psi$ that is continuous and $(1 + \varepsilon) \o$--psh on an open neighborhood $Y$ of $\overline{X'}$ can be approximated uniformly on $X'$ by smooth $(1 + 2\varepsilon \o)$--psh potentials.

This can be done by using classical Richberg approximation. Indeed, let $V_\alpha \subset U_\alpha$, $f_\alpha$ and $\eta_\alpha$ be as in the first part of the proof. Additionally, let $\rho \in C^\infty_c(B(0,1))$ be a smooth, non--negative, compactly supported and spherically invariant bump function with $\int_{ \Bbb C^n} \rho = 1$. 

We introduce the smooth mollifications $\psi_{\alpha,\delta} := (\psi + (1 + \varepsilon)f_\alpha) * \rho_\delta \in \textup{PSH}(U_{\alpha,\delta})$ that converge uniformly to $\psi + (1 + \varepsilon)f_\alpha$ on $V_\alpha$, since $\psi$ is continuous. As a result, we can apply the Richberg regularized maximum (\cite[Corollary I.5.19 and Theorem II.5.21]{De}) to get that
$$\psi_\delta := M_\alpha\Big(\psi_{\alpha,\delta} - (1 + \varepsilon)f_\alpha + \frac{\varepsilon \eta_\alpha}{C}\Big)$$
is smooth on $\cup_\alpha V_\alpha$ (for small enough $\delta$) and also $\psi_\delta \in \textup{PSH}(\cup_\alpha V_\alpha,(1+2\varepsilon)\o)$. Consequently, $\psi_\delta \to \psi$ uniformly on $X' \subset \cup_\alpha V_\alpha$, finishing the proof.
\end{proof}

\section{Regularity of envelopes of $\o$--psh functions}

Suppose $(X,\o)$ is a K\"ahler manifold and $f$ is an usc function on $X$. Recall the definition of the envelope $P(f)$ from \eqref{eq: P_env_def}:
\begin{equation}
P(f)= \sup \{u \in \textup{PSH}(X,\o)  \textup{ s.t. }  u \leq f\}.
\end{equation}
As it was argued in Section 3.4, $P(f)$ is $\o$--psh and the purpose of this short section is to show the following regularity result:
\begin{theorem}\label{thm: BD_reg} If $f \in C^\infty(X)$ then $\|P(f)\|_{C^{1,\bar 1}} \leq C(X,\o,\| f\|_{C^{1,\bar 1}}).$
\end{theorem}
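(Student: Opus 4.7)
The plan is to prove Theorem \ref{thm: BD_reg} via Berman's PDE approach, using a family of Monge--Amp\`ere approximations to $P(f)$. For each $\beta > 0$, I will consider the equation
\begin{equation*}
(\o + i\ddbar u_\beta)^n = e^{\beta(u_\beta - f)}\o^n, \qquad u_\beta \in \mathcal H_\o,
\end{equation*}
which by Aubin--Yau has a unique smooth solution $u_\beta$ (one can write it as $\o^n_{u_\beta} = e^{-\beta(f - u_\beta)}\o^n$, a standard prescribed measure problem for fixed $\beta$). The strategy is to show $(a)$ $u_\beta \to P(f)$ as $\beta \to \infty$, and $(b)$ $\|u_\beta\|_{C^{1,\bar 1}} \leq C(X,\o,\|f\|_{C^{1,\bar 1}})$ uniformly in $\beta$; then Arzel\`a--Ascoli delivers the bound for $P(f)$.

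For convergence, I will first get a uniform $C^0$ bound. At an interior maximum of $u_\beta - f$ we have $i\ddbar(u_\beta - f) \leq 0$, hence $\o_{u_\beta} \leq \o + i\ddbar f$, so the Monge--Amp\`ere equation yields $e^{\beta(u_\beta - f)(x_{\max})} \leq C(\|f\|_{C^{1,\bar 1}})$, i.e.\ $u_\beta \leq f + \beta^{-1}\log C$ everywhere. Conversely, given any candidate $v \in \textup{PSH}(X,\o)$ with $v \leq f$, one compares $v$ with $u_\beta + \varepsilon$ using the comparison principle on $\{v > u_\beta + \varepsilon\}$: on this set $e^{\beta(u_\beta - f)} \leq e^{-\beta\varepsilon}$, which forces the set to be empty as $\beta \to \infty$, yielding $v \leq u_\beta + o(1)$. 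Taking $\sup$ over $v$ gives $P(f) \leq u_\beta + o(1)$ and combined with the upper bound, $u_\beta \to P(f)$ uniformly.

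The core of the proof is the uniform Laplacian estimate. I will apply the maximum principle to an auxiliary function of the form
\begin{equation*}
H = \log(n + \Delta^\o u_\beta) - A u_\beta,
\end{equation*}
with $A=A(X,\o)$ a large constant dominating the negative part of the bisectional curvature of $\o$. Differentiating the equation twice and using the Aubin--Yau inequality
\begin{equation*}
\Delta^{\o_{u_\beta}}\log(n+\Delta^\o u_\beta) \geq \frac{\Delta^\o \log \det g_{u_\beta}}{n+\Delta^\o u_\beta} - C(n+\Delta^\o u_\beta),
\end{equation*}
and computing $\Delta^\o\log(e^{\beta(u_\beta-f)})=\beta\Delta^\o(u_\beta-f)=\beta(\operatorname{tr}_\o\o_{u_\beta}-n-\Delta^\o f)$, the ``bad" term $\beta\Delta^\o u_\beta$ is controlled at the max of $H$ precisely because the sign is favorable: $\beta(u_\beta - f) \leq 0$ (up to $O(\beta^{-1})$), so the gradient terms absorb. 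At the maximum point of $H$ one derives $n+\Delta^\o u_\beta \leq C(X,\o,\|f\|_{C^{1,\bar 1}})$ independently of $\beta$. The hard part here is arranging the estimate so that the $\beta$-dependence truly cancels: this is the place where Berman's trick of exploiting $u_\beta - f \leq O(\beta^{-1})$ (and differentiating $e^{\beta(u_\beta-f)}$ only once, in the direction where the good sign is available) is essential.

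Once $\|\Delta^\o u_\beta\|_{C^0} \leq C$ uniformly, the Calder\'on--Zygmund estimate gives $u_\beta \in C^{1,\alpha}$ with uniform bounds, so a subsequence converges in $C^{1,\alpha}$ to $P(f)$, which therefore lies in $\mathcal H^{1,\bar 1}_\o$ with the required bound. The main technical obstacle, as indicated, is producing the Laplacian estimate that is independent of $\beta$; the monotone convergence and comparison arguments are essentially routine pluripotential theory given the machinery of Chapter 2, but the maximum-principle calculation needs the explicit positivity structure of the exponential right-hand side.
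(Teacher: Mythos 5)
Your overall route --- the family $(\omega + i\partial\bar\partial u_\beta)^n = e^{\beta(u_\beta - f)}\omega^n$, $C^0$ convergence of $u_\beta$ to $P(f)$, and a uniform Laplacian bound via an auxiliary function of the form $\log\textup{Tr}_\omega\omega_{u_\beta} - A u_\beta$ plus the Siu/Aubin--Yau inequality --- is exactly the paper's (Berman's PDE approach), and your $C^0$ upper bound $u_\beta \leq f + C\beta^{-1}$ from the interior max of $u_\beta - f$ is correct. However, the $C^0$ \emph{lower} bound has a genuine gap. You assert that for a candidate $v\in\textup{PSH}(X,\omega)$ with $v\leq f$, the inequality $\omega_{u_\beta}^n \leq e^{-\beta\varepsilon}\omega^n$ on $\{v > u_\beta + \varepsilon\}$ ``forces the set to be empty as $\beta\to\infty$.'' It does not. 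The comparison principle only yields $\int_{\{v > u_\beta + \varepsilon\}}\omega_v^n \leq e^{-\beta\varepsilon}\textup{Vol}(X)$, which controls the $\omega_v^n$-mass of the set --- not whether $v$ actually exceeds $u_\beta + \varepsilon$ somewhere --- and it is vacuous if $\omega_v^n$ vanishes there. The paper closes this by fixing one $v \in \mathcal H_\omega$ with $\omega_v \geq D\omega$ and $v \leq f$, and then comparing $u_\beta$ not with $P(f)$ directly but with the interpolation $u_{\delta,\varepsilon} := (1-\delta)P(f) + \delta v - \varepsilon$. Because of the $\delta v$ term, $\omega_{u_{\delta,\varepsilon}}^n \geq \delta^n D^n\omega^n$; choosing $\delta = \beta^{-1}$ and $\varepsilon = 2n\log\beta/\beta$ forces $\delta^n D^n \geq e^{-\beta\varepsilon}$, hence $\omega_{u_{\delta,\varepsilon}}^n \geq e^{\beta(u_{\delta,\varepsilon} - f)}\omega^n$ (using $u_{\delta,\varepsilon} \leq f - \varepsilon$), and then the comparison principle for the exponential Monge--Amp\`ere equation gives the \emph{pointwise} inequality $u_{\delta,\varepsilon} \leq u_\beta$, hence $P(f) \leq u_\beta + O(\log\beta/\beta)$. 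The missing device in your argument is a comparison function whose Monge--Amp\`ere measure is bounded away from zero.

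A smaller imprecision concerns your description of the $\beta$-cancellation in the Laplacian estimate. The contribution $\beta\textup{Tr}_\omega\omega_{u_\beta}$ coming from $\Delta^\omega\big(\beta(u_\beta-f)\big)$ is the \emph{good} (positive) term, not a ``bad'' one to be absorbed by gradients, and the normalization $u_\beta - f \leq O(\beta^{-1})$ is used for the $C^0$ step, not here. At the interior maximum of $\textup{Tr}_\omega\omega_{u_\beta}\,e^{-Bu_\beta}$ the Siu inequality produces an estimate of the shape $\beta s \leq 2nB\,s + C\beta\,e^{-Bu_\beta(x_0)}$; for $\beta \geq 3nB$ the $\beta$ on the left dominates the $2nB$-term, and the uniform $C^0$ bound on $u_\beta$ then closes the estimate --- that competition, not a gradient-absorption, is what makes the bound $\beta$-independent. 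Also, the last term in the Aubin--Yau/Siu inequality as you wrote it should be $-2B\,\textup{Tr}_{\omega_{u_\beta}}\omega$, not $-C(n + \Delta^\omega u_\beta)$; these are different traces.
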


A bound on the $C^{1,\bar1}$ norm of $P(f)$ simply means a uniform bound on all mixed second order derivatives $\partial^2 P(f)/\partial z_j \bar \partial z_k$. Since $P(f)$ is $\o$-psh, this is equivalent to saying that $\Delta^\o P(f)$ is bounded, and by the Calderon--Zygmund estimate \cite[Chapter 9, Lemma 9.9]{GT}, we automatically obtain that $P(f_1,f_2,\ldots,f_k) \in C^{1,\alpha}(X)$.

Theorem \ref{thm: BD_reg} was first proved by Berman--Demailly using methods from pluripotential theory\cite{bd}. Here we will follow an alternative path proposed by Berman, that uses more classical PDE techniques \cite{Brm2}. The point is to consider the following complex Monge--Amp\`ere equation:
\begin{equation}\label{eq: MAeq_beta}
\o_{u_\beta}^n = e^{\beta(u_\beta - f)}\o^n.
\end{equation}  
As $f \in C^\infty(X)$, it follows from work of Aubin and Yau that this equation always has a unique smooth solution $u_\beta \in \mathcal H_\o$  for any $\beta >0$ \cite{A,Y} (for a survey see \cite[Theorem 14.1]{gzbook}). Theorem \ref{thm: BD_reg} will follow from the following regularity result:
\begin{proposition}[\cite{Brm2}] \label{prop: zero_temp_est} The unique solutions $\{ u_\beta\}_{\beta >0}$ of \eqref{eq: MAeq_beta} satisfy the following:\\
\noindent (i) $\|u_\beta - P(f)\|_{C^0} \to 0$ as $\beta \to \infty$.\\
\noindent (ii) there exists $\beta_0 >0$ and $C>0$ such that $\|\Delta^{\o} u_\beta\|_{C^0} < C$ for all $\beta \geq \beta_0$.
\end{proposition}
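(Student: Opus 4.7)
The plan is to establish uniform $C^0$ bounds on $u_\beta$ by a maximum principle argument, upgrade these to a uniform Laplacian bound via the Aubin--Yau/Chern--Lu technique, and then deduce the $C^0$ convergence to $P(f)$ by a compactness-plus-uniqueness argument.

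First, I would apply the maximum principle to the equation \eqref{eq: MAeq_beta}. At an interior minimum of $u_\beta$, $i\ddbar u_\beta \geq 0$ yields $\o_{u_\beta}^n \geq \o^n$, so the equation forces $e^{\beta(u_\beta - f)} \geq 1$ at that point, giving $\inf_X u_\beta \geq \inf_X f$. At an interior maximum of $u_\beta - f$, $\o_{u_\beta} \leq \o + i\ddbar f$, and taking trace with respect to $\o$ together with the AM--GM inequality forces $\o_{u_\beta}^n/\o^n \leq C_f$ at that point, where $C_f$ depends only on $\|f\|_{C^{1,\bar 1}}$. The equation then gives $u_\beta - f \leq (\log C_f)/\beta$ everywhere. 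In particular $u_\beta - (\log C_f)/\beta$ is a candidate in the definition of $P(f)$, so $u_\beta \leq P(f) + (\log C_f)/\beta$, which yields one half of (i) and also a uniform $C^0$ bound on $u_\beta$ for $\beta \geq 1$.

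For (ii) I would apply the Aubin--Yau/Chern--Lu calculation to $H := \log \textup{tr}_\o \o_{u_\beta} - A u_\beta$, with $A$ larger than a lower bound of the bisectional curvature of $\o$. At an interior maximum of $H$, the inequality $\Delta^{\o_{u_\beta}} H \leq 0$ together with $\Delta^\o \log(\o_{u_\beta}^n/\o^n) = \beta(\textup{tr}_\o \o_{u_\beta} - n - \Delta^\o f)$ produces a contribution of order $+\beta$ whenever $\textup{tr}_\o \o_{u_\beta}$ exceeds a fixed multiple of $n + \sup_X |\Delta^\o f|$; for $\beta$ large this is impossible, so at the maximum $\textup{tr}_\o \o_{u_\beta}$ is bounded by a $\beta$-independent constant. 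Combined with the uniform $C^0$ bound from Step~1 this bounds $H$ globally, hence $\Delta^\o u_\beta$ uniformly. The main obstacle will be precisely this step, where the large factor $\beta$ in $\Delta^\o \log F$ must be absorbed using the identity $\Delta^\o u_\beta = \textup{tr}_\o \o_{u_\beta} - n$.

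Finally for the remaining half of (i), Calder\'on--Zygmund together with Sobolev embedding upgrades (ii) to a uniform $C^{1,\alpha}$ bound on $u_\beta$ for every $\alpha < 1$. Along any sequence $\beta_k \to \infty$ I extract a $C^1$-convergent sub-subsequence with limit $u_\infty \in \mathcal H_\o^{1,\bar 1}$; the upper bound gives $u_\infty \leq f$. To identify the limit, I integrate the equation to obtain $\int_{\{f - u_\beta > \varepsilon\}} \o_{u_\beta}^n \leq e^{-\beta \varepsilon} V \to 0$ for any fixed $\varepsilon > 0$. Uniform convergence $u_{\beta_k} \to u_\infty$ gives the inclusion $\{f - u_\infty > 2\varepsilon\} \subset \{f - u_{\beta_k} > \varepsilon\}$ for $k$ large, and Bedford--Taylor continuity of the complex Monge--Amp\`ere operator along uniform limits together with the openness of $\{f - u_\infty > 2\varepsilon\}$ forces $\o_{u_\infty}^n$ to be supported on $\{u_\infty = f\}$. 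Proposition \ref{prop: domination principle} then identifies $u_\infty$ with $P(f)$: for any $w \in \textup{PSH}(X,\o) \cap L^\infty$ with $w \leq f$, the set $\{w > u_\infty\}$ lies in $\{u_\infty < f\}$, on which $\o_{u_\infty}^n$ vanishes, so $w \leq u_\infty$ almost everywhere with respect to $\o_{u_\infty}^n$, and hence globally. Taking the supremum over such $w$ gives $P(f) \leq u_\infty$, and since $u_\infty \leq P(f)$ as well, $u_\infty = P(f)$. Uniqueness of the subsequential limit upgrades convergence to the full family $u_\beta \to P(f)$ in $C^0$ (indeed in $C^{1,\alpha}$).
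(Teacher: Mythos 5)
Your proposal is correct, but the route you take for the second half of (i) — the lower bound / convergence from below — is genuinely different from the paper's. For the upper bound $u_\beta \leq P(f) + C/\beta$ you use the same maximum principle argument as the paper (evaluate the equation at a maximum of $u_\beta - f$, where $\o_{u_\beta} \leq \o_f$). For (ii) you use the same Aubin--Yau/Siu second-order estimate, absorbing the dangerous factor $\beta$ via $\Delta^\o u_\beta = \textup{Tr}_\o\o_{u_\beta} - 2n$ exactly as the paper does. The divergence is in completing (i): the paper constructs an explicit subsolution $(1-\delta)P(f) + \delta v - \varepsilon$ (with $v \in \mathcal H_\o$ a strictly psh barrier below $f$, $\delta = 1/\beta$, $\varepsilon = 2n\log\beta/\beta$) and invokes the comparison principle of Lemma \ref{lem: comp_princ_MA_beta} to get $u_\beta \geq (1-\delta)P(f) + \delta v - \varepsilon$, which yields the quantitative rate $\|u_\beta - P(f)\|_{C^0} \leq C\log\beta/\beta$ \emph{before} and independently of (ii). You instead first secure a crude two-sided $C^0$ bound ($\inf_X f \leq u_\beta$ from the minimum principle), use it to prove (ii), then upgrade to $C^{1,\alpha}$ compactness, extract a subsequential limit $u_\infty$, and identify it with $P(f)$: the integrated equation gives $\o^n_{u_\beta}(\{f - u_\beta > \varepsilon\}) \leq e^{-\beta\varepsilon}V$, weak Bedford--Taylor continuity on the open set $\{f - u_\infty > 2\varepsilon\}$ forces $\o_{u_\infty}^n(\{u_\infty < f\}) = 0$, and the domination principle (Proposition \ref{prop: domination principle}) then pins down $u_\infty = P(f)$. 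This is a valid and rather elegant alternative — you essentially rediscover the concentration property \eqref{eq: coinc_vanish} of the envelope from the limiting equation — but it is only qualitative (no rate) and logically more entangled, since (i) becomes a corollary of (ii) rather than an independent input to it. The paper's barrier argument is shorter, quantitative, and keeps (i) and (ii) cleanly ordered.

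One small point worth tightening in your step 1: when you write ``$\o_{u_\beta} \leq \o + i\ddbar f$'' at the maximum of $u_\beta - f$, note that $\o_f := \o + i\ddbar f$ need not be positive globally since $f$ is an arbitrary smooth function; but at that maximum point $\o_f \geq \o_{u_\beta} > 0$, so the trace and AM--GM step goes through with a constant $C_f$ controlled by a uniform upper bound for $\o_f$ in terms of $\o$.
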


To argue Proposition \ref{prop: zero_temp_est}(i), first we prove the following comparison principle :

\begin{lemma}\label{lem: comp_princ_MA_beta} Assume that $u,v \in \textup{PSH}(X,\o) \cap L^\infty$ such that
$$\o_v^n \geq e^{\beta(v-f)}\o^n \ \textup{ and } \ \o_u^n \leq e^{\beta(u-f)}\o^n.$$ 
Then $v \leq u$.
\end{lemma}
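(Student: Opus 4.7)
My approach will be a proof by contradiction, modelled on the classical Bedford--Taylor style argument used to establish Proposition \ref{prop: comp_princ_E}, combined with an exponential comparison trick that exploits the strict monotonicity of $t \mapsto e^{\beta t}$. Suppose $M := \sup_X (v - u) > 0$, and for $0 < \varepsilon < M$ set $\tilde v := v - M + \varepsilon$, which is bounded $\omega$-psh and satisfies $\omega_{\tilde v}^n = \omega_v^n$. On the (nonempty for $\varepsilon$ near $M$) Borel set $A_\varepsilon := \{u < \tilde v\}$ we have the pointwise inequality $v - u > M - \varepsilon$.

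The core estimate is the following chain on $A_\varepsilon$. By the bounded case of the comparison principle (Proposition \ref{prop: comp_princ_E}, applied to the bounded pair $u, \tilde v \in \textup{PSH}(X,\omega) \cap L^\infty \subset \mathcal E(X,\omega)$), and by the two hypotheses of the lemma,
\begin{equation*}
\int_{A_\varepsilon} e^{\beta(v-f)} \omega^n \;\leq\; \int_{A_\varepsilon} \omega_v^n \;=\; \int_{A_\varepsilon} \omega_{\tilde v}^n \;\leq\; \int_{A_\varepsilon} \omega_u^n \;\leq\; \int_{A_\varepsilon} e^{\beta(u-f)} \omega^n.
\end{equation*}
On $A_\varepsilon$ one has $e^{\beta(v-f)} > e^{\beta(M-\varepsilon)}\, e^{\beta(u-f)}$, so plugging this into the leftmost integral yields
\begin{equation*}
e^{\beta(M-\varepsilon)} \int_{A_\varepsilon} e^{\beta(u-f)} \omega^n \;\leq\; \int_{A_\varepsilon} e^{\beta(u-f)} \omega^n.
\end{equation*}
Since $M - \varepsilon > 0$, the factor $e^{\beta(M-\varepsilon)} > 1$ forces the common integral to vanish. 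But $u$ and $f$ are bounded, so $e^{\beta(u-f)}$ is bounded below by a positive constant on $X$; consequently $\omega^n(A_\varepsilon) = 0$ for every $\varepsilon \in (0, M)$. Taking $\varepsilon \nearrow M$ the sets $A_\varepsilon$ increase to $\{v > u\}$, so $\omega^n(\{v > u\}) = 0$ by continuity of measure.

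Finally, I will upgrade the a.e.\ inequality to a pointwise one. Since $\omega^n$ is a smooth volume form, $v \leq u$ holds Lebesgue-a.e. Fix $x_0 \in X$ and work in a coordinate ball where $\omega = i\partial\bar\partial g$ with $g$ smooth; then $g+v$ and $g+u$ are psh, and the psh sub-mean-value property gives
\begin{equation*}
g(x_0) + v(x_0) \;\leq\; \frac{1}{|B_r|}\int_{B_r} (g+v)\,dV \;\leq\; \frac{1}{|B_r|}\int_{B_r} (g+u)\,dV.
\end{equation*}
Letting $r \to 0$ and using that $r \mapsto |B_r|^{-1}\int_{B_r}(g+u)\,dV$ decreases to $(g+u)(x_0)$ for psh $g+u$ (and $g$ is continuous) yields $v(x_0) \leq u(x_0)$, contradicting $M>0$.

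The main obstacle is essentially absent here since the hypotheses provide both a lower bound for $\omega_v^n$ and an upper bound for $\omega_u^n$ with the \emph{same} exponential weight $e^{\beta(\cdot - f)}$; the strict monotonicity of the exponential is what manufactures the contradictory factor $e^{\beta(M-\varepsilon)} > 1$. The only mildly delicate point is the passage from an a.e. to a pointwise inequality, but this is standard for quasi-psh functions via the sub-mean-value property (alternatively one can invoke the domination principle in the form used later in Proposition \ref{prop: domination principle}, noting that bounded $\omega$-psh functions lie in $\mathcal E_1(X,\omega)$).
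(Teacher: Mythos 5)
Your proof is correct and rests on the same two ingredients as the paper's argument: the comparison principle (Proposition \ref{prop: comp_princ_E}) applied to bounded $\o$-psh functions, followed by the strict monotonicity of $t\mapsto e^{\beta t}$ to force the relevant contact set to be Lebesgue-null, then the standard sub-mean-value upgrade to a pointwise inequality. The shift $\tilde v := v-M+\varepsilon$ is a superfluous detour, though: the paper applies the comparison principle directly on $\{u<v\}$ and observes that $e^{\beta(v-f)}>e^{\beta(u-f)}$ holds pointwise and strictly on that set, so the chain $\int_{\{u<v\}} e^{\beta(v-f)}\o^n\leq\int_{\{u<v\}}\o_v^n\leq\int_{\{u<v\}}\o_u^n\leq\int_{\{u<v\}} e^{\beta(u-f)}\o^n$ already forces $\o^n(\{u<v\})=0$ without manufacturing the uniform factor $e^{\beta(M-\varepsilon)}$.
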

\begin{proof}According to the comparison
principle  $\int_{\{u < v\}} \o_v^n \leq \int_{\{u < v\}} \o_u^n$ (Proposition \ref{prop: comp_princ_E}). Consequently, we can write that $\int_{\{u < v\}} e^{\beta(v -f)}\o^n \leq \int_{\{u < v\}} e^{\beta(u-f)}\o^n$. As a result, $v \leq u$ a.e. with respect to the Lebesque measure. Since $u$ and $v$ are $\o$--psh, we obtain that in fact $v \leq u$ globally on $X$.   
\end{proof}

Now we argue Proposition \ref{prop: zero_temp_est}(i):
\begin{lemma}  There exists $C:=C(f)>0$ such that the unique solutions $\{ u_\beta\}_{\beta >0}$ of \eqref{eq: MAeq_beta} satisfy the following estimate:
$$\sup_X |u_\beta - P(f)| \leq C \frac{\log \beta}{\beta}, \ \beta >2.$$
\end{lemma}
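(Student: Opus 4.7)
The strategy is to apply Lemma \ref{lem: comp_princ_MA_beta} twice: once with a super-solution to get $u_\beta \leq P(f) + O(1/\beta)$, and once with a carefully engineered sub-solution to get $u_\beta \geq P(f) - O(\log\beta/\beta)$.

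For the upper bound, the idea is purely local: let $x_0 \in X$ be a maximum point of $u_\beta - f$. Then $i\ddbar u_\beta \leq i\ddbar f$ at $x_0$, so $0 \leq \o_{u_\beta}(x_0) \leq \o_f(x_0)$ as $(1,1)$-forms. In particular $\o_f(x_0) \geq 0$ and both are positive definite at $x_0$, so taking $n$-th powers pointwise yields $\o_{u_\beta}^n(x_0) \leq \o_f^n(x_0) \leq C(\| f\|_{C^{1,\bar 1}})\,\o^n(x_0)$. Substituting into \eqref{eq: MAeq_beta} gives $e^{\beta(u_\beta(x_0)-f(x_0))} \leq C(f)$, i.e. $u_\beta(x_0) - f(x_0) \leq \log C(f)/\beta$. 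Since $x_0$ is the global max of $u_\beta - f$, this holds on all of $X$, so $u_\beta - \log C(f)/\beta \in \textup{PSH}(X,\o)$ is a candidate in the definition of $P(f)$, giving $u_\beta \leq P(f) + \log C(f)/\beta$.

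For the lower bound, fix any $\psi \in \mathcal H_\o$ with $\o_\psi \geq \delta \o$ for some $\delta>0$; note that $P(f)$ is bounded (between $\inf_X f - C(X,\o)$ and $\sup_X f$). Define for $s \in (0,1)$ and $c>0$ the candidate
\[
v := (1-s) P(f) + s\psi - c \in \textup{PSH}(X,\o) \cap L^\infty.
\]
Convexity of $i\ddbar$ gives $\o_v \geq s\o_\psi \geq s\delta \o$, hence $\o_v^n \geq (s\delta)^n \o^n$. On the other hand, since $P(f)\leq f$, we have $v - f \leq s(\psi - f) - c \leq sK - c$ with $K := \| \psi - f\|_{C^0}$. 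Thus if we arrange
\[
(s\delta)^n \geq e^{\beta(sK - c)}, \qquad \text{i.e.,}\qquad c \geq sK + \tfrac{n}{\beta}\log\!\tfrac{1}{s\delta},
\]
then $\o_v^n \geq e^{\beta(v-f)}\o^n$. Optimizing over $s$ yields $s = n/(\beta K)$ and $c = O(\log\beta/\beta)$. Lemma \ref{lem: comp_princ_MA_beta} then gives $v \leq u_\beta$, and since $\| \psi - P(f)\|_{C^0}\leq M$ is bounded, $v \geq P(f) - sM - c \geq P(f) - C(f,\o)\log\beta/\beta$ for $\beta > 2$. Combining the two sides gives the result.

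The only mild obstacle is checking that the sub-solution $v$ really lies in $\textup{PSH}(X,\o)\cap L^\infty$ so that Lemma \ref{lem: comp_princ_MA_beta} applies, and that the constants $K,M,\delta$ can all be chosen depending only on $f$ and $(X,\o)$; this is handled by fixing $\psi \in \mathcal H_\o$ once and for all and using the trivial bounds $\inf_X f - C \leq P(f) \leq \sup_X f$.
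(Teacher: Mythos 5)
Your proof is correct and follows essentially the same approach as the paper: both bounds come from the maximum principle and the comparison principle of Lemma \ref{lem: comp_princ_MA_beta} applied to the same type of sub-solution $(1-s)P(f)+s\psi - c$ with the same $s \sim 1/\beta$, $c \sim \log\beta/\beta$ scaling. The only cosmetic difference is that the paper normalizes the auxiliary K\"ahler potential to satisfy $v \leq f$, while you keep a general $\psi \in \mathcal H_\o$ and absorb $\|\psi - f\|_{C^0}$ into the constant $K$.
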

\begin{proof} Since $u_\beta$ is smooth, at the point $x_0 \in X$ where the maximum of $u_\beta-f$ is attained we have $i\ddbar (u_\beta-f) \leq  0$. Consequently,  \eqref{eq: MAeq_beta} implies that at $x_0$ we have 
$$u_\beta -f \leq \frac{1}{\beta} \log \Big(\frac{\o_f^n}{\o^n}\Big).$$
Denoting $C := \sup_X \log \big(\frac{\o_f^n}{\o^n}\big)$, we obtain that $u_\beta - \frac{C}{\beta} \leq f$, hence
\begin{equation}\label{eq: C_0_est_half}
u_\beta - P(f) \leq \frac{C}{\beta}.
\end{equation}

Conversely, fix $v \in \mathcal H_\o$ and $D>0$ such that $\o_v \geq D \o$, and $v \leq f$. Also, choose $\delta,\varepsilon \in (0,1)$ such that 
\begin{equation}\label{eq: delta_eps_cond}
\delta^n D^n \geq e^{-\beta \varepsilon}. 
\end{equation}
As a result, $u_{\delta,\varepsilon} := (1-\delta)P(f) + \delta v - \varepsilon \leq f -\varepsilon$ and
$$\o_{u_{\delta,\varepsilon}}^n \geq \delta^n D^n\o^n \geq e^{-\beta\varepsilon} \o^n \geq  e^{\beta(u_{\delta,\varepsilon} - f)}\o^n.$$
By Lemma \ref{lem: comp_princ_MA_beta} we obtain that $u_{\delta,\varepsilon} \leq u_\beta$. Also, for $\beta:=\beta(D)>2$ big enough, the choice  $\delta := 1/\beta$ and $\varepsilon := {2n \log \beta}/{\beta}$  satisfies \eqref{eq: delta_eps_cond}, and we obtain that $(1- \frac{1}{\beta}) P(f) + \frac{1}{\beta} v - \frac{2n \log \beta}{\beta} \leq u_\beta$. Putting this together with \eqref{eq: C_0_est_half}, we arive at
$$u_\beta - \frac{C}{\beta} \leq P(f) \leq \frac{\beta}{\beta - 1} u_\beta + \frac{2n \log \beta}{\beta - 1} - \frac{1}{\beta - 1}\inf_X v.$$
In particular, since $P(f)$ is bounded, this implies that $u_\beta$ is uniformly bounded, and the estimate of the lemma follows.
\end{proof}

Lastly, we argue Proposition \ref{prop: zero_temp_est}(ii):

\begin{lemma}  There exists $C>0$ and $\beta_0 >0$ such that the unique solutions $\{ u_\beta\}_{\beta}$ of \eqref{eq: MAeq_beta} satisfy the following estimate:
$$-C \leq  \Delta^\o u_\beta \leq C, \ \ \beta \geq \beta_0,$$
where $C$ only depends on an upper bound for $\Delta^\omega f$.
\end{lemma}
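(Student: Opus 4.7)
The lower bound $\Delta^\o u_\beta \geq -2n$ is immediate from $\o$-plurisubharmonicity of $u_\beta$: since $\textup{tr}_\o \o_{u_\beta} = n + \frac{1}{2}\Delta^\o u_\beta \geq 0$. So only the upper bound requires work, and I would prove it by the classical Aubin--Yau second-order maximum principle argument, adapted to the parameter-dependent equations \eqref{eq: MAeq_beta}. Throughout I use freely that $\|u_\beta\|_{C^0(X)}$ is uniformly bounded for $\beta \geq 2$, as supplied by the preceding lemma.

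Set $H_\beta := \log \textup{tr}_\o \o_{u_\beta} - A u_\beta$, where $A>0$ will be chosen later depending only on a lower bound for the bisectional curvature of $(X,\o)$. The standard Aubin--Yau pointwise inequality (a normal-coordinate calculation; see for example \cite{sze,bl1}) reads
$$\Delta^{\o_{u_\beta}} \log \textup{tr}_\o \o_{u_\beta} \;\geq\; \frac{\Delta^\o \log(\o_{u_\beta}^n/\o^n)}{\textup{tr}_\o \o_{u_\beta}} - C_0\,\textup{tr}_{\o_{u_\beta}} \o,$$
with $C_0$ depending only on the curvature of $\o$. Differentiating the identity $\log(\o_{u_\beta}^n/\o^n) = \beta(u_\beta - f)$ and using $\Delta^\o u_\beta = 2(\textup{tr}_\o \o_{u_\beta} - n)$ together with the upper bound on $\Delta^\o f$ gives
$$\Delta^\o \log(\o_{u_\beta}^n/\o^n) \;\geq\; 2\beta\,\textup{tr}_\o \o_{u_\beta} - K\beta,$$
where $K = K(n, \sup_X \Delta^\o f)$. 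A direct computation in local coordinates also gives $\Delta^{\o_{u_\beta}} u_\beta = 2n - 2\,\textup{tr}_{\o_{u_\beta}} \o$.

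At a maximum point $x_0$ of $H_\beta$ the two previous displays combine to yield
$$0 \;\geq\; \Delta^{\o_{u_\beta}} H_\beta(x_0) \;\geq\; 2\beta - \frac{K\beta}{\textup{tr}_\o \o_{u_\beta}(x_0)} + (2A - C_0)\,\textup{tr}_{\o_{u_\beta}}\o(x_0) - 2An.$$
Choosing $A := C_0 + 1$ makes the penultimate term nonnegative, and setting $\beta_0 := 2An + 1$ forces $\textup{tr}_\o \o_{u_\beta}(x_0) \leq K$ whenever $\beta \geq \beta_0$: otherwise the quotient is bounded by $\beta$, and what remains gives $\beta \leq 2An$, a contradiction. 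Hence $H_\beta(x_0) \leq \log K + A\|u_\beta\|_{C^0}$, and since $x_0$ is a global maximum, $H_\beta \leq \log K + A\|u_\beta\|_{C^0}$ on $X$. Rearranging and using the $C^0$ bound on $u_\beta$ produces a uniform upper bound on $\textup{tr}_\o \o_{u_\beta}$, hence on $\Delta^\o u_\beta$.

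The one genuinely technical input is the Aubin--Yau pointwise inequality in the second display, which is a delicate but by now textbook normal-coordinate computation invoking the bisectional curvature of $\o$ (and the formulas of Appendix~6.1). The remainder is algebraic bookkeeping, and the parameter-dependence is what makes the scheme close: the ``good'' term $2\beta$ produced by the equation absorbs both the curvature defect (via the choice of $A$) and the remainder quotient $K\beta/\textup{tr}_\o \o_{u_\beta}$ (via the choice of $\beta_0$), provided $\beta$ is large relative to the geometry of $(X,\o)$ and $\sup_X \Delta^\o f$.
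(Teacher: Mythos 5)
Your proof is correct and follows essentially the same approach as the paper: both arguments rest on the Aubin--Yau/Siu pointwise inequality and a maximum-principle argument applied to $\log\textup{tr}_\o\o_{u_\beta} - \text{const}\cdot u_\beta$, with the ``good'' term $2\beta$ produced by differentiating the right-hand side of \eqref{eq: MAeq_beta} absorbing the curvature defect and the remainder once $\beta$ exceeds a threshold depending only on $(X,\o)$. The only cosmetic difference is that the paper multiplies the rearranged inequality through by $\textup{Tr}_\o\o_{u_\beta}e^{-Bu_\beta}$ and works with $s=\sup_X\textup{Tr}_\o\o_{u_\beta}e^{-Bu_\beta}$, whereas you evaluate $\Delta^{\o_{u_\beta}}H_\beta$ directly at the maximum of $H_\beta$; these are the same argument.
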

\begin{proof} The lower bound on $\Delta^\o u_\beta$ follows immediately from $\o + i\ddbar u_\beta \geq 0$. To prove the upper bound, we recall a variant of a Laplacian estimate due to Aubin--Yau (provided by Siu in this context \cite[page 99]{siu}, for a survey we refer to \cite[Proposition 4.1.2]{beg}): if $u \in \mathcal H_\o$ satisfies $\o_u^n = e^g \o^n$, then
$$\Delta^{\o_u} \log \textup{Tr}_{\o} \o_u \geq \frac{\Delta^\o g}{\textup{Tr}_{\o} \o_u} - 2B \textup{Tr}_{\o_u}\o,$$
where $B>0$ depends only on the magnitude of the holomorphic bisectional curvature of $\o$. Applying this estimate to \eqref{eq: MAeq_beta} we obtain
$$2B \textup{Tr}_{\o_{u_\beta}}\o + \Delta^{\o_{u_\beta}} \log \textup{Tr}_{\o} \o_{u_\beta} \geq \beta \frac{\Delta^\o (u_\beta -f)}{\textup{Tr}_{\o} \o_{u_\beta}}.$$
Rearranging terms we conclude:
$$2nB + \Delta^{\o_{u_\beta}}( \log \textup{Tr}_{\o} \o_{u_\beta}  - B u_\beta)\geq \beta \frac{ \textup{Tr}_{\o} \o_{u_\beta} - 2n - \Delta^\o f}{\textup{Tr}_{\o} \o_{u_\beta}}.$$
Denoting $C := \sup_X (2n + \Delta^\o f) >0$, and multiplying this inequality with $\textup{Tr}_{\o} \o_{u_\beta} e^{- Bu_\beta}$, we arrive at:
$$(C\beta + 2nB \textup{Tr}_{\o} \o_{u_\beta}) e^{- Bu_\beta} + \Delta^{\o_{u_\beta}}( \log \textup{Tr}_{\o} \o_{u_\beta}  - B u_\beta)\textup{Tr}_{\o} \o_{u_\beta} e^{- Bu_\beta}\geq \beta \textup{Tr}_{\o} \o_{u_\beta} e^{- Bu_\beta}.$$
Let $s:= \sup_X \textup{Tr}_{\o} \o_{u_\beta} e^{- Bu_\beta}$ and suppose that this last supremum is realized at $x_0 \in X$. By the previous estimate we can write:
$$\beta s \leq 2nB s + C\beta e^{-B u_\beta(x_0)}.$$
By the previous lemma, $u_\beta$ is uniformly bounded, hence for $\beta \geq \beta_0 := 3nB$ we obtain an upper bound for $s$. Using again that $u_\beta$ is uniformly bounded, we conclude that $\Delta^\o u_\beta \leq C$ for any $\beta \geq \beta_0$. 
\end{proof}

Notice that $g := \min(f_1,f_2,\ldots,f_k) = - \max(-f_1,-f_2,\ldots,-f_k)$. From this it follows that $\Delta^\omega g$ is uniformly bounded from above by a common upper bound for $\Delta^\omega f_j, \ j \in \{1,\ldots,k \}$. Using the fact that the constant $C$ in the previous lemma only depends on an upper bound for the Laplacian of $f$, we can conclude the following more general result, proved in \cite{DR1} using different methods:

\begin{theorem} \label{thm: DR_reg} Given $f_1,...,f_k \in C^\infty(X)$, then $P(f_1,f_2,...,f_k) \in C^{1,\alpha}(X), \ \alpha \in (0,1)$. More precisely, the following estimate holds: 
$$\|P(f_1,f_2,...,f_k)\|_{C^{1,\bar 1}} \leq C(X,\o,\| f_1\|_{C^{1,\bar 1}},\| f_2\|_{C^{1,\bar 1}},\ldots,\| f_k\|_{C^{1,\bar 1}}).$$
\end{theorem}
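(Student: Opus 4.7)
\emph{Reduction to the single-function case.} My first move would be to set $g := \min(f_1,\ldots,f_k)$ and observe that, by definition of $P$, one has $P(f_1,\ldots,f_k)=P(g)$. The trouble is that $g$ is not smooth, so Theorem \ref{thm: BD_reg} does not apply directly; however, the remark immediately preceding the statement already points to the resolution --- the constant in Proposition \ref{prop: zero_temp_est}(ii) depends only on an \emph{upper bound} for $\Delta^\o f$, and such a bound will be available uniformly for any reasonable smoothing of $g$ because $g$ is a pointwise minimum of smooth functions.

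\emph{The smoothing I would use} is the log--sum--exp regularized minimum
\[
g_\varepsilon := -\varepsilon \log\Big(\sum_{j=1}^k e^{-f_j/\varepsilon}\Big),\qquad \varepsilon>0,
\]
which is smooth and satisfies $g-\varepsilon\log k\le g_\varepsilon\le g$ (so $g_\varepsilon\to g$ uniformly from below). Writing $p_j := e^{-f_j/\varepsilon}\big/\sum_i e^{-f_i/\varepsilon}$, so that $\bar\partial g_\varepsilon=\sum_j p_j\bar\partial f_j$, a direct computation of $\partial p_j$ yields
\[
i\partial\bar\partial g_\varepsilon \;=\; \sum_j p_j\, i\partial\bar\partial f_j \;+\; \tfrac{i}{\varepsilon}\Big(\partial g_\varepsilon\wedge\bar\partial g_\varepsilon \;-\; \sum_j p_j\,\partial f_j\wedge\bar\partial f_j\Big).
\]
The second term is a \emph{non-positive} $(1,1)$-form: for any tangent vector $v$, the pointwise Cauchy--Schwarz inequality gives $\big|\sum_j p_j \partial f_j(v)\big|^2\le \sum_j p_j |\partial f_j(v)|^2$. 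Hence $i\partial\bar\partial g_\varepsilon\le \sum_j p_j\, i\partial\bar\partial f_j$, and taking trace with respect to $\o$ yields $\Delta^\o g_\varepsilon \le \max_j\|\Delta^\o f_j\|_{C^0}$ uniformly in $\varepsilon$. Likewise $\|g_\varepsilon\|_{C^0}$ is uniformly bounded.

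\emph{Applying Theorem \ref{thm: BD_reg} to each $g_\varepsilon$.} Inspecting the proof through Proposition \ref{prop: zero_temp_est}, the $C^{1,\bar1}$ bound on $P(g_\varepsilon)$ arises from (i) the uniform $C^0$ bound on $u_\beta$, which depends only on $\|g_\varepsilon\|_{C^0}$, and (ii) the Aubin--Yau--Siu Laplacian estimate, whose constant depends only on $\sup_X(2n+\Delta^\o g_\varepsilon)$ and the bisectional curvature of $\o$. By the previous step, all of these are controlled by $X$, $\o$ and $\max_j\|f_j\|_{C^{1,\bar1}}$, independently of $\varepsilon$. Thus
\[
\|P(g_\varepsilon)\|_{C^{1,\bar1}} \le C\big(X,\o,\|f_1\|_{C^{1,\bar1}},\ldots,\|f_k\|_{C^{1,\bar1}}\big).
\]

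\emph{Passage to the limit.} From $g-\varepsilon\log k\le g_\varepsilon\le g$ and monotonicity of $P$ I get $P(g)-\varepsilon\log k\le P(g_\varepsilon)\le P(g)$, so $P(g_\varepsilon)\to P(g)$ uniformly. Each $P(g_\varepsilon)$ is $\o$-psh with $\Delta^\o P(g_\varepsilon)$ uniformly bounded, so the positive $(1,1)$-currents $\o+i\partial\bar\partial P(g_\varepsilon)$ have coefficients uniformly bounded in $L^\infty$; extracting a weak-$*$ limit and identifying it with $\o+i\partial\bar\partial P(g)$ transfers the bound to $P(g)$, so all mixed second partials of $P(g)$ lie in $L^\infty$ with the desired control. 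The Calder\'on--Zygmund estimate \cite[Lemma 9.9]{GT} then gives $P(g)\in C^{1,\alpha}(X)$ for every $\alpha\in(0,1)$. The single genuinely nontrivial step is the Cauchy--Schwarz computation bounding $\Delta^\o g_\varepsilon$ from above --- once that is in hand, everything else is a routine invocation of Theorem \ref{thm: BD_reg} plus a weak-$*$ compactness argument.
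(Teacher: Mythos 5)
Your proposal is correct and follows essentially the same route the paper sketches: reduce to $g := \min(f_1,\ldots,f_k)$, observe that only an upper bound for $\Delta^\o$ of the obstacle enters the constants in the proof of Theorem~\ref{thm: BD_reg}, and conclude. The paper's closing remark leaves the necessary smoothing of $g$ implicit (it merely observes that the distributional $\Delta^\o g$ is bounded above and defers to \cite{DR1}), whereas you carry this step out explicitly via the soft-min $g_\varepsilon = -\varepsilon\log\sum_j e^{-f_j/\varepsilon}$; your Cauchy--Schwarz computation showing $i\partial\bar\partial g_\varepsilon \le \sum_j p_j\, i\partial\bar\partial f_j$, hence a uniform upper bound on $\Delta^\o g_\varepsilon$, is correct and is exactly the point needed to invoke the single-function theorem uniformly in $\varepsilon$. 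One small imprecision: the $C^0$ estimate in Proposition~\ref{prop: zero_temp_est}(i) depends not just on $\|g_\varepsilon\|_{C^0}$ but also on an upper bound for $\o_{g_\varepsilon}^n/\o^n$ (hence again on the upper Laplacian bound), but since you control both quantities uniformly this does not affect the argument. The passage to the limit via weak-$*$ compactness of the currents $\o + i\partial\bar\partial P(g_\varepsilon)$, identified with $\o + i\partial\bar\partial P(g)$ using the uniform convergence $P(g_\varepsilon)\to P(g)$, is standard and correct.
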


\section{Cartan type decompositions of Lie groups} 

In the proof of the Bando--Mabuchi uniqueness theorem, some basic facts about Lie groups are needed. This section is based on \cite[Section 6.1]{dr2}. For an extensive study of such groups we refer to \cite{Bump, Helg}.

Recall the following form of the classical Cartan decomposition for complexifications of compact semisimple Lie groups (see \cite[Proposition 32.1, Remark 31.1]{Bump}):

\begin{theorem}
\label{thm: CartanThm}
Let $K$ be a compact connected semisimple Lie group. Denote
by $(K^\Bbb C,J)$ the complexification of $K$, namely the unique connected
complex Lie group whose Lie algebra is the complexification 
of $\mathfrak k$, the Lie algebra of $K$. Then
the map $C: K\times\mathfrak k \to K^\Bbb C$ given by $C(k,X):=k\exp_IJ X$ is a diffeomorphism.
\end{theorem}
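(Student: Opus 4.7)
The plan is to reduce Theorem \ref{thm: CartanThm} to the classical polar decomposition in $GL(n,\Bbb C)$ via a faithful unitary representation. Since $K$ is compact, by the Peter--Weyl theorem it admits a faithful unitary representation $\rho:K\to U(n)$. Since $K^\Bbb C$ is the complexification, $\rho$ extends uniquely to a holomorphic homomorphism $\rho^\Bbb C: K^\Bbb C \to GL(n,\Bbb C)$ whose differential at $I$ satisfies $d\rho^\Bbb C(JX)=i\,d\rho(X)$ for $X\in\mathfrak k$. For semisimple $K$ one can check $\rho^\Bbb C$ is still faithful, since the kernel is a complex normal subgroup whose Lie algebra intersects $\mathfrak k$ trivially and $\mathfrak k^\Bbb C = \mathfrak k \oplus i\mathfrak k$ has no nontrivial complex ideal of that form. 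Under $\rho^\Bbb C$, elements of $K$ map to unitary matrices, while $JX$ with $X\in\mathfrak k$ maps to $i\rho_*(X)$, which is Hermitian (as $\rho_*(X)$ is skew-Hermitian), so $\exp_I(JX)$ maps to a positive-definite Hermitian matrix. Consequently, $\rho^\Bbb C \circ C$ sends $(k,X)$ to $\rho(k)\cdot \exp(i\rho_*(X))$, which is precisely the polar decomposition of $\rho^\Bbb C(C(k,X))$.

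With this translation in hand, I would establish injectivity, the local diffeomorphism property, and surjectivity as three separate steps. For injectivity, suppose $C(k_1,X_1)=C(k_2,X_2)$; applying $\rho^\Bbb C$ and invoking uniqueness of the matrix polar decomposition forces $\rho(k_1)=\rho(k_2)$ and $\exp(i\rho_*(X_1))=\exp(i\rho_*(X_2))$, whence $k_1=k_2$ by faithfulness of $\rho$, and $\rho_*(X_1)=\rho_*(X_2)$ because $\exp$ is a diffeomorphism from Hermitian matrices onto positive-definite ones; since $\rho^\Bbb C$ is faithful, $\rho_*$ is injective on $\mathfrak k^\Bbb C$, so $X_1=X_2$. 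For the local diffeomorphism property, I would compute $dC_{(k,X)}$ by left-translating the image back to the identity and then use the analogous calculation in $GL(n,\Bbb C)$ (where the differential of polar decomposition is known to be invertible everywhere) to conclude $dC$ is an isomorphism at every point.

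Surjectivity will be the connectedness argument: the image $C(K\times\mathfrak k)$ is open in $K^\Bbb C$ by the local diffeomorphism property, and I expect to show it is closed by proper limits. If $C(k_n,X_n)\to g\in K^\Bbb C$, then continuity of the polar decomposition on $GL(n,\Bbb C)$ yields $\rho(k_n)\to U$ and $\exp(i\rho_*(X_n))\to P$ for some unitary $U$ and positive Hermitian $P$. Compactness of $K$ extracts a convergent subsequence $k_n\to k_\infty\in K$; the inverse of the matrix exponential on Hermitian matrices then gives $\rho_*(X_n)\to \log(P)/i$, and injectivity of $\rho_*|_{\mathfrak k}$ upgrades this to convergence of $X_n$ in $\mathfrak k$ (since $\rho_*(\mathfrak k)$ is a closed linear subspace of $\mathfrak u(n)$). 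Since $K^\Bbb C$ is connected, the image being both open and closed and nonempty forces it to equal $K^\Bbb C$.

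The main obstacle will be verifying faithfulness of $\rho^\Bbb C$ and, relatedly, that $\rho_*:\mathfrak k\to\mathfrak u(n)$ is a proper linear embedding whose image is closed---this is genuinely where the semisimplicity hypothesis enters, since for a general compact Lie group with nontrivial torus factor, $K^\Bbb C$ contains noncompact directions (like $\Bbb C^*$) not detected by $\exp(J\mathfrak k)$ alone in a single chart, and one would need a more careful argument. A secondary subtlety is justifying the construction of $K^\Bbb C$ itself (as a complex Lie group with the universal property), which I would take from standard references such as \cite{Bump,Helg} rather than reconstruct here.
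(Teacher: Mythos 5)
The paper does not supply a proof of this theorem at all; it is quoted as a black box with a citation to Bump's textbook (Proposition~32.1 and Remark~31.1 there). Your proposal, by contrast, is a genuine proof, reducing the abstract Cartan decomposition of $K^{\Bbb C}$ to the polar decomposition of $GL(n,\Bbb C)$ through a faithful unitary representation furnished by Peter--Weyl, and this reduction is sound: the three steps (injectivity from uniqueness of polar decomposition, local diffeomorphism from the corresponding fact for matrices, surjectivity by an open--closed argument using continuity of polar decomposition and compactness of $K$) all go through. This is a legitimate alternative route; Bump's treatment goes through the structure theory of reductive groups and symmetric spaces rather than a matrix embedding, so your argument is more elementary and self-contained, at the cost of being specific to the linear setting.

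One correction to your own diagnosis of the difficulties: you do not actually need $\rho^{\Bbb C}$ to be faithful, only $\rho$. In the injectivity step, once $\exp(i\rho_*(X_1))=\exp(i\rho_*(X_2))$ forces $\rho_*(X_1)=\rho_*(X_2)$, the conclusion $X_1=X_2$ requires only injectivity of $\rho_*|_{\mathfrak k}$, which follows immediately from faithfulness of $\rho$ (a homomorphism of Lie groups injective near the identity has injective differential). In the closedness step, you in fact derive $(k_{n_j},X_{n_j})\to(k_\infty,X_\infty)$ in $K\times\mathfrak k$, so $g=\lim C(k_{n_j},X_{n_j})=C(k_\infty,X_\infty)$ by continuity of $C$ alone---you never need to invert $\rho^{\Bbb C}$. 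Likewise, closedness of $\rho_*(\mathfrak k)$ in $\mathfrak u(n)$ is automatic for any finite-dimensional linear subspace and has nothing to do with semisimplicity. The genuine role of semisimplicity in the statement as written is that it pins down what ``the'' complexification means (and makes $Z(K^{\Bbb C})$ finite), but it plays no role in the mechanics of your polar-decomposition argument; indeed, the Cartan decomposition in this form already holds for arbitrary compact connected $K$, which is why the paper can bootstrap from the semisimple case to the general one in Proposition~\ref{prop: PartialCartanProp} via the decomposition $K=Z(K)\cdot L$.
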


The following result is a partial extension of the above classical theorem  to  compact but not necessary semisimple Lie groups.
We state the result in a form that will be most useful for our applications in K\"ahler geometry, albeit it is likely not optimal.

\begin{proposition}
\label{prop: PartialCartanProp}
Let $K$ be a compact connected subgroup of a connected complex Lie group $(G,J)$ and denote by $\mathfrak k$ and $\mathfrak g$ their Lie algebras. If $\mathfrak g ={\mathfrak k}\oplus J{\mathfrak k}$ then the map 
$C:K \times {\mathfrak k} \to G$ given by 
$C(k,X)=k\exp_{I}J X$ is surjective.
\end{proposition}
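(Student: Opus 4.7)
The plan is to reduce the statement to the classical Cartan decomposition (Theorem \ref{thm: CartanThm}) by lifting the inclusion $K \hookrightarrow G$ to the universal complexification $K^{\mathbb C}$, applying the Cartan decomposition there, and pushing the decomposition forward to $G$. Let $K^{\mathbb C}$ denote the universal complexification of $K$: a connected complex Lie group with Lie algebra $\mathfrak k \otimes_{\mathbb R} \mathbb C = \mathfrak k \oplus i \mathfrak k$, containing $K$ as a maximal compact subgroup. By its universal property, the inclusion $\iota: K \hookrightarrow G$ extends uniquely to a holomorphic homomorphism $\rho: K^{\mathbb C} \to G$. Because $\rho$ is holomorphic with respect to the complex structure $i$ on $K^{\mathbb C}$ and $J$ on $G$, its differential $d\rho_e: \mathfrak k \oplus i\mathfrak k \to \mathfrak g$ is the $\mathbb C$-linear extension of $d\iota_e = \mathrm{id}_{\mathfrak k}$; in particular $d\rho_e(iX) = JX$ for $X \in \mathfrak k$. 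Under the hypothesis $\mathfrak g = \mathfrak k \oplus J\mathfrak k$ this makes $d\rho_e$ a linear isomorphism.

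Next I would deduce that $\rho$ is surjective. Since $d\rho_e$ is an isomorphism, $\rho$ is a local diffeomorphism at the identity, hence $\rho(K^{\mathbb C})$ contains an open neighborhood of $e \in G$. But $\rho(K^{\mathbb C})$ is a subgroup of $G$, and a subgroup containing an open set is open; since $G$ is connected, open subgroups are all of $G$. Thus $\rho(K^{\mathbb C}) = G$.

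Now I would invoke the Cartan decomposition for the complexification of a compact connected (not necessarily semisimple) Lie group: the map $\widetilde C: K \times \mathfrak k \to K^{\mathbb C}$, $(k,X) \mapsto k \exp_I(iX)$, is a diffeomorphism. Granting this, surjectivity of $C$ follows immediately: given $g \in G$, pick $x \in K^{\mathbb C}$ with $\rho(x) = g$, write $x = k \exp_I(iX)$ via $\widetilde C$, and compute
$$g = \rho(k)\, \rho(\exp_I(iX)) = k \exp_I(d\rho_e(iX)) = k \exp_I(JX) = C(k,X).$$

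The main obstacle is justifying the Cartan decomposition of $K^{\mathbb C}$ in the non-semisimple case, since Theorem \ref{thm: CartanThm} as cited only covers semisimple $K$. The standard way around this is to write $K$ as an almost direct product of its connected center $Z^0$ (a torus) and its semisimple part $K_{\mathrm{ss}}$: for $K_{\mathrm{ss}}$ the cited theorem applies directly, while for the toral factor $T^m$ one has $(T^m)^{\mathbb C} \cong (\mathbb C^*)^m$ with the elementary decomposition $(\mathbb C^*)^m = T^m \cdot \exp(\mathbb R^m)$, and the almost-direct-product structure (together with connectedness of $K$) allows one to glue the two decompositions into a global one for $K^{\mathbb C}$. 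Once this classical fact is in place, the remaining steps are formal, relying only on the universal property of $K^{\mathbb C}$ and the fact that in a connected Lie group every open subgroup is the full group.
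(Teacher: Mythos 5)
Your proof is correct, and it takes a genuinely different route from the paper's. The paper avoids the universal complexification entirely and works inside $G$: it splits $\mathfrak k = \mathfrak z(\mathfrak k) \oplus [\mathfrak k,\mathfrak k]$, proves that $\mathfrak z(\mathfrak g) = \mathfrak z(\mathfrak k) \oplus J\mathfrak z(\mathfrak k)$, handles the center via a surjective homomorphism $Z(K)\times\mathfrak z(\mathfrak k)\to Z(G)$ and the semisimple part via Theorem \ref{thm: CartanThm} applied to the complexification $L^{\Bbb C}\subset G$ of the subgroup $L$ with Lie algebra $[\mathfrak k,\mathfrak k]$, and then glues with a surjectivity-of-multiplication dimension count. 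Your approach instead factors the inclusion $K\hookrightarrow G$ through a holomorphic homomorphism $\rho: K^{\Bbb C}\to G$, shows $\rho$ is onto by the local-diffeomorphism/open-subgroup argument, and pushes forward the known polar decomposition of $K^{\Bbb C}$. Both proofs ultimately depend on the same decomposition of a compact group into its connected center and semisimple part, so neither escapes that technical step; what you buy by routing through $K^{\Bbb C}$ is a cleaner modular structure (surjectivity of $\rho$ is easy once the differential is seen to be an isomorphism) and a slightly stronger conclusion ($\rho$ is in fact a covering, so $G\cong K^{\Bbb C}/\Gamma$ for a discrete central $\Gamma$), at the cost of invoking the existence and universal property of the universal complexification, which the paper's more hands-on argument deliberately sidesteps. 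One small caveat: the last paragraph of your proposal only sketches the gluing in $K^{\Bbb C}$; the paper carries out the analogous gluing in full (surjectivity of the multiplication maps $Z(K)\times L\to K$ and $Z(G)\times L^{\Bbb C}\to G$), and a complete version of your argument would need to make that step precise in the same way.
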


\begin{proof}
First we note that
\begin{equation}\label{eq: tfkLemma}
{\mathfrak k}=\mathfrak z({\mathfrak k})\otimes[{\mathfrak k},{\mathfrak k}],
\end{equation}
where $\mathfrak z(\mathfrak k)$ is the Lie algebra of $Z(K)$. This follows from \cite[Proposition 6.6 (ii), p. 132]{Helg}, as $K$ is compact.
Next we note the following identity for the Lie algebra of the center $Z(G)$:
\begin{equation}
\label{eq: centerLemma}
\mathfrak z(\mathfrak g)=\mathfrak z({\mathfrak k})\oplus J\mathfrak z({\mathfrak k}).
\end{equation}
Since $\mathfrak z(\mathfrak g)$ is complex, we immediately obtain that
$\mathfrak z(\mathfrak g)\supset \mathfrak z({\mathfrak k})\oplus J\mathfrak z({\mathfrak k})$.
For the reverse inclusion, we use that $\mathfrak g = \mathfrak k \oplus J \mathfrak k$. Consequently, for any $X\in\mathfrak z(\mathfrak g)$ we have  $X=X_1+X_2$, 
with $X_2\in {\mathfrak k} \cap \mathfrak z(\mathfrak g)=\mathfrak z({\mathfrak k})$, 
and $X_2\in J{\mathfrak k}\cap\mathfrak z(\mathfrak g)=J({\mathfrak k}\cap\mathfrak z(\mathfrak g))=J\mathfrak z({\mathfrak k})$,
since $\mathfrak z(\mathfrak g)$ is complex. This finishes the proof of \eqref{eq: centerLemma}.

Next we claim that the map 
$$\Theta_1:Z(K)\times\mathfrak z({\mathfrak k})\to Z(G)$$ given by $(z,X)\mapsto z\exp_IJ X$ is surjective. 
Indeed, \eqref{eq: centerLemma} implies that $\dim Z(K)+\dim {\mathfrak z(\mathfrak k)}=\dim Z(G)$ and the differential of
$\Theta_1$ at $(I,0)$ is invertible (see \cite[Proposition 1.6, p. 104]{Helg}). As we are dealing with abelian groups it follows that $\Theta_1$ is a Lie group homomorphism,
thus it must be surjective as its image is a connected subgroup of the
same dimension as that of $Z(G)$.

Let $L$ denote the connected compact Lie subgroup of $K$ whose Lie algebra is $[{\mathfrak k},{\mathfrak k}]$ (since the Killing form is negative definite on $[{\mathfrak k},{\mathfrak k}]$, $L$ is indeed compact). 
By \cite[Proposition 6.6 (i), p. 132]{Helg}, $L$ is semisimple.
By Theorem \ref{thm: CartanThm}, the map 
$$\Theta_2:L\times[{\mathfrak k},{\mathfrak k}]\to L^\Bbb C$$ 
given by $\Theta_2(l,X)=l\exp_IJ X, $
is a diffeomorphism, where $L^\Bbb C$ is the complexification of $L$ inside $G$. 

Next we note that the multiplication maps $Z(K)\times L\to K$, $Z(G)\times L^\Bbb C\to G$ are surjective.
By \eqref{eq: tfkLemma} the multiplication map $Z(K)\times L\to K$ is a local isomorphism near $(I,I)$ by dimension count. The map is also a group homomorphism since elements of $Z(K)$ commute with elements of $L$. Thus, it is surjective. The same argument works for the multiplication map $Z(G)\times L^\Bbb C \to G$, with dimension count provided by \eqref{eq: centerLemma}.

Now we put all the above ingredients together. Given $k \in K$ and $X \in \mathfrak k$, observe that
$$
C(k,X)=zl\exp_{I}J X,
$$
for some $z\in Z(K)$ and $l\in L$ such that  $k=zl$
(these exist by the surjectivity of the multiplication map $Z(K) \times L \to K$). Now let $X_1$ and $X_2$  
be the unique elements such that
$X_1\in \mathfrak z({\mathfrak k})$, $X_2\in [{\mathfrak k},{\mathfrak k}]$, and $X=X_1+X_2$,
given by \ref{eq: tfkLemma}. Since $\exp_IJ X_1\in Z(G)$ we can write 
$$
C(k,X)=z\exp_{I}J X_1 l\exp_IJ X_2
=\Theta_1(z,X_1)\Theta_2(l,X_2).
$$
Since both $\Theta_2$ and $\Theta_1$ are surjective, as well as the multiplication map $Z(G)\times L^\Bbb C\to G$, it follows that $C$ is surjective, concluding the proof.
\end{proof}

\let\OLDthebibliography\thebibliography 
\renewcommand\thebibliography[1]{
  \OLDthebibliography{#1}
  \setlength{\parskip}{1pt}
  \setlength{\itemsep}{1pt plus 0.3ex}
}

\end{document}